\DeclareMathOperator{\Leb}{Leb}
\DeclareMathOperator{\id}{id}
\DeclareMathOperator{\closure}{cl}
\DeclareMathOperator{\dom}{dom}
\DeclareMathOperator{\At}{At} 
\DeclareMathOperator{\diam}{diam} 
\DeclareMathOperator{\tr}{tr} 
\numberwithin{equation}{section}
\theoremstyle{definition}
\newtheorem{exm} {Example}[section]
\newtheorem{dfn}[exm] {Definition}
\newtheorem{rem}[exm] {Remark}
\theoremstyle{plane}
\newtheorem{lem}[exm]{Lemma}
\newtheorem{prop}[exm]{Proposition}
\newtheorem{thm}[exm]{Theorem}
\newtheorem{cor}[exm]{Corollary}
\newtheorem{assum}[exm]{Assumption}
\newcommand{\NN}{\mathbb{N}}
\newcommand{\ZN}{\mathbb{Z}}
\newcommand{\ZNp}{\mathbb{Z}_{\geq 0}}
\newcommand{\QN}{\mathbb{Q}}
\newcommand{\RN}{\mathbb{R}} 
\newcommand{\RNp}{\mathbb{R}_{\geq 0}}
\newcommand{\RNpp}{\mathbb{R}_{>0}}
\newcommand{\Rspace}{\mathbb{F}}
\newcommand{\GHVspace}{\mathbb{G}}
\newcommand{\GHPspace}{\mathbb{G}_{c}}
\newcommand{\Borel}{\mathcal{B}}
\newcommand{\closed}{\mathcal{C}}
\newcommand{\compact}{\mathcal{C}_{c}}
\newcommand{\domain}{\mathcal{D}}
\newcommand{\form}{\mathcal{E}}
\newcommand{\rdomain}{\mathcal{F}}
\newcommand{\law}{\mathcal{L}}
\newcommand{\cS}{\mathcal{S}}
\newcommand{\cT}{\mathcal{T}}
\newcommand{\cX}{\mathcal{X}}
\newcommand{\Meas}{\mathcal{M}}
\newcommand{\finMeas}{\Meas_{\mathrm{fin}}}
\newcommand{\disMeasures}{\mathcal{M}^{\mathrm{dis}}}
\newcommand{\intMeasures}{\mathcal{N}}
\newcommand{\pointMeas}{\mathscr{P}}
\newcommand{\rbcM}{\mathfrak{M}}
\newcommand{\pointMap}{\mathfrak{p}}
\newcommand{\vertMap}{\mathfrak{m}}
\newcommand{\MeasureMap}{\mathfrak{v}}
\newcommand{\probFunct}{\tau_{\mathcal{P}}}
\newcommand{\PointFunct}{\tau^{\mathrm{pt}}}
\newcommand{\nPointFunct}[1]{\tau^{#1 \text{-}\mathrm{pts}}}
\newcommand{\finMeasFunct}{\tau^{\finMeas}}
\newcommand{\MeasFunct}{\tau^{\Meas}}
\newcommand{\markedMeasFunct}[1]{\tau^{\Meas(\cdot \times #1)}}
\newcommand{\markedfinMeasFunct}[1]{\tau^{\finMeas(\cdot \times #1)}}
\newcommand{\SkorohodFunct}{\tau^{J_{1}}}
\newcommand{\fixedFunct}[1]{\tau^{#1}}
\newcommand{\unifFunct}{\tau^{\mathrm{Unif}}}
\newcommand{\disMeasFunct}{\tau^{\Meas^{\mathrm{dis}}}}
\newcommand{\hatC}{\widehat{C}}
\newcommand{\HausdMet}[1]{d_{H}^{#1}}
\newcommand{\lHausdMet}[2]{d_{\bar{H}}^{#1, #2}}
\newcommand{\neighb}[2]{#1^{\langle #2 \rangle}}
\newcommand{\muh}{\mu^{\#}}
\newcommand{\dotmuh}{\dot{\mu}^{\#}}
\title{Aging and sub-aging for Bouchaud trap models on resistance metric spaces} 
\date{}
\author{Ryoichiro Noda\thanks{Research Institute for Mathematical Sciences, Kyoto University, Kyoto, 606-8502,
JAPAN. E-mail:sgrndr@kurims.kyoto-u.ac.jp}}
\begin{document}

\maketitle
\begin{abstract}
  In this paper,
  we prove that if a sequence of electrical networks converges in the local Gromov-Hausdorff topology 
  and satisfies a non-explosion condition,
  then the associated Bouchaud trap models (BTMs) also converge 
  and exhibit aging.
  Moreover, when local structures of electrical networks converge,
  we prove sub-aging.
  Our results are applicable to a wide class of low-dimensional graphs, 
  including the two-dimensional Sierpi\'{n}ski gasket, critical Galton-Watson trees, and the critical Erd\H{o}s-R\'{e}nyi random graph.
  The proof consists of two main steps:
  Polish metrization of the vague-and-point-process topology and showing the precompactness of transition densities of BTMs.
\end{abstract}

\tableofcontents


\section{Introduction} \label{sec: introduction}

Aging refers to the phenomenon in which a system never reaches equilibrium on laboratory time scales, 
and is typically observed in disordered media such as spin glasses at low temperatures.
This out-of-equilibrium physical behavior has been of great interest in condensed matter physics for over thirty years 
and has been much discussed in the literature; for the physical background see \cite{Bouchaud_Cugliandolo_etc_97_Out}, for example.

Research on aging in mathematics began about twenty years ago, 
and aging has been proven in several spin glass models:
see \cite{Arous_02_aging,Mathieu_Mourrat_15_Aging} and the references therein. 
To understand the mechanism of aging,
in \cite{Bouchaud_92_Weak},
Bouchaud proposed a toy model, now called the Bouchaud trap model (BTM).
Over several papers,
Ben Arous and \v{C}ern\'{y} studied aging in the Bouchaud trap model (BTM), 
and their results are summarized in \cite{Arous_Cerny_06_Dynamics}. 
The BTM treated in this article is set out in Definition \ref{1. dfn: BTM} below,
but, for discussion, here we introduce a simplified version of it.
The (symmetric and simplified) Bouchaud trap model refers to a Markov chain on a randomly weighted graph defined as follows:
fix a connected, simple, undirected graph $G = (V, E)$ with finite vertex set $V$ and edge set $E$;
let $\nu = (\xi_{x})_{x \in V}$ be a family of i.i.d.\ random variables
built on a probability space with probability measure $\mathsf{P}$
such that $u^{\alpha}\mathsf{P}(\xi_{x} > u) \to 1$
as $u \to \infty$ for some $\alpha \in (0,1)$;
conditional on $\nu$, the BTM is the continuous-time Markov chain $(X^{\nu}, \{P_{x}^{\nu}\}_{x \in V})$ on $V$ 
whose jump rate $w_{xy}$ is given by $w_{xy} \coloneqq \xi_{x}^{-1}$ if $\{x,y\} \in E$ and otherwise $w_{xy} \coloneqq 0$.
The idea to quantify aging is to consider a suitably chosen two-point function $C(t_{w}, t_{w} + t)$ of $X^{\nu}$ between times $t_{w}$ and $t_{w} + t$.
It has been observed that good two-point functions are given by, for example,
\begin{gather}
  C_{1}(t_{w}, t_{w} + t) 
  \coloneqq 
  \mathsf{E} 
  \left[
    P^{\nu}
    \bigl( X^{\nu}(t_{w}) = X^{\nu}(t_{w}+t) \bigr)
  \right],\\
  C_{2}(t_{w}, t_{w} + t) 
  \coloneqq 
  \mathsf{E} 
  \left[
    P^{\nu}
    \bigl(
      X^{\nu}(t_{w}) = X^{\nu}(t_{w}+s),\, \forall s \in [0, t]
    \bigr)
  \right],
\end{gather}
where $\mathsf{E}$ denotes the expectation with respect to $\mathsf{P}$.
Physical experiments that are represented by this model suggest that $C(t_{w}, t_{w}+t)$ only depends on $t/h(t_{w})$
(for sufficiently large $t_{w}$),
where $h$ is an increasing function.
Therefore,
aging is proven by showing the existence of the following non-trivial limit:
\begin{equation}
  R(\theta) 
  = 
  \lim_{t_{w} \to \infty} 
  C(t_{w}, t_{w}+\theta h(t_{w})).
\end{equation}
If $h(t_{w}) = t_{w}$, then it is called (full) aging,
and if $h(t_{w}) = o(t_{w})$, 
then it is called sub-aging.

In \cite{Arous_Cerny_05_Bouchaud,Arous_Cerny_08_The,Arous_Cerny_Mountford_06_Aging},
Ben Arous, \v{C}ern\'{y} and Mountford studied the BTM for $G = \ZN^{d}$ with nearest neighbor edges,
proving aging for $C_{1}$ and sub-aging for $C_{2}$, as defined above.
In particular, 
it was found that the descriptions of the limits of the two-point functions are significantly different between $d=1$ and $d \geq 2$. 
This is due to the difference in the scaling limits of the BTMs. 
When $d=1$, 
the scaling limit of the BTM is a Markov process called Fontes-Isopi-Newman (F.I.N.) diffusion,
firstly proven in \cite{Fontes_Isopi_Newman_02_Random}.
On the other hand, 
when $d \geq 2$, 
it converges to a non-Markovian process called the fractional-kinetics process \cite{Arous_Cerny_07_Scaling}. 
This suggests that the BTMs can be divided into low-dimensional and high-dimensional regimes. 
Ben Arous and \v{C}ern\'{y} generalized their discussions of the BTM on $\ZN^{d}$ with $d \geq 2$
and obtained a method to prove (sub-)aging for a class of high-dimensional graphs, including complete graphs. 
Example graphs in the low-dimensional regime other than $\ZN$ were found by Croydon, Hambly and Kumagai \cite{Croydon_Hambly_Kumagai_17_Time-changes}. 
Using the theory of resistance forms developed by Kigami \cite{Kigami_01_Analysis, Kigami_12_Resistance}, 
they obtained that if a sequence of graphs converges in the local Gromov-Hausdorff-vague topology
(introduced in Section \ref{sec: GH-type topologies}) 
as resistance metric spaces equipped with the counting measures
and it satisfies the uniform volume doubling (UVD) condition
(see \cite[Definition 1.1]{Croydon_Hambly_Kumagai_17_Time-changes}),
then the associated BTMs converge.
In particular, their results are applicable to the (two-dimensional) Sierpi\'{n}ski gasket.
However, (sub-)aging results were left open. 
In this paper, 
we improve their results by replacing the UVD condition with a weaker condition, the non-explosion condition introduced in \cite{Croydon_18_Scaling}.
Moreover,
we show (sub-)aging for the associated BTMs.
Our results are applicable to a wide class of low-dimensional graphs, 
including the Sierpi\'{n}ski gasket, the critical Galton-Watson tree, and the critical Erd\H{o}s-R\'{e}nyi random graph.

To present our main results,
we begin by introducing several pieces of notation.
We write $\RNp \coloneqq [0, \infty)$, $\RNpp \coloneqq (0, \infty)$ and $\ZNp \coloneqq \ZN \cap \RNp$.
For $a, b \in \mathbb{R}$,
we write $a \vee b \coloneqq \max\{a, b\}$ and $a \wedge b \coloneqq \min\{a, b\}$.
Given a metric space $(S, d)$,
we set, for $x \in S$ and $r>0$,
\begin{equation}
  B_{S}(x,r) 
  =
  B_{d}(x, r) 
  \coloneqq 
  \{
    y \in S \mid d(x, y) < r
  \}, 
  \quad 
  D_{S}(x,r)
  =
  D_{d}(x, r) 
  \coloneqq 
  \{
    y \in S \mid d(x, y) \leq r
  \}.
\end{equation}
We say that $(S, d)$ is \textit{boundedly compact} 
if and only if $D_{S}(x, r)$ is compact for all $x \in S$ and $r>0$.
Note that a boundedly-compact metric space is complete, separable and locally compact.
A tuple $(S, d, \rho, \mu)$ is said to be a \textit{rooted-and-measured boundedly-compact metric space} 
if and only if $(S, d)$ is a boundedly-compact metric space,
$\rho$ is a distinguished element of $S$ called the \textit{root},
and $\mu$ is a Radon measure on $S$, 
that is,
$\mu$ is a Borel measure on $S$ 
such that $\mu(K) < \infty$ for every compact subset $K$.
Given a rooted-and-measured boundedly-compact metric space $G = (S, d, \rho, \mu)$,
we define a rooted-and-measured compact metric space $G^{(r)} = (S^{(r)}, d^{(r)}, \rho^{(r)}, \mu^{(r)})$
by setting 
\begin{equation}  \label{1. eq: dfn of restriction operator}
  S^{(r)} \coloneqq \closure(B_{d}(\rho, r)), \quad 
  d^{(r)} \coloneqq d|_{S^{(r)} \times S^{(r)}}, \quad 
  \rho^{(r)} \coloneqq \rho, \quad 
  \mu^{(r)}(\cdot) \coloneqq \mu( \cdot \cap S^{(r)}),
\end{equation}
where $\closure(\cdot)$ denotes the closure of a set.
We write $\mathbb{G}$ for the collection of rooted-and-measured isometric equivalence classes 
of rooted-and-measured boundedly-compact metric spaces
and equip $\mathbb{G}$ with the local Gromov-Hausdorff-vague topology.
(See Section \ref{sec: GH-type topologies} for details).

Our argument relies on the theory of resistance forms. 
Here we will prepare the minimum necessary information on resistance forms 
in order to state our main results. 
See Section \ref{sec: resistance forms} for details.
Let $(F, R)$ be a regular resistance metric space and write $(\form, \rdomain)$ for the corresponding regular resistance form.
Given a Radon measure $\mu$ on $F$ of full support,
there exists a related regular Dirichlet form $(\form, \domain)$ on $L^{2}(F, \mu)$ 
and also an associated strong Markov process $(X^{\mu} = (X_{t}^{\mu})_{t \geq 0}, \{P_{x}^{\mu}\}_{x \in F})$,
which we call the \textit{process associated with} $(F, R, \mu)$
(it is known that the process $X^{\mu}$ can be chosen so that it is a Hunt process).
We note that if the resistance metric $R$ is recurrent in the sense of Definition \ref{4. dfn: recurrent resistance metric} below,
then it is automatically regular \cite[Corollary 3.22]{Noda_pre_Scaling}.

We next introduce electrical networks and associated resistance forms.

\begin{dfn} [{Electrical networks}]
  Let $(V, E)$ be a connected, simple, undirected graph with finite or countably many vertices,
  where $V$ denotes the vertex set and $E$ denotes the edge set.
  (NB.\ A graph being simple means that it has no loops and no multiple edges.)
  For $x, y \in V$,
  we write $x \sim y$ if and only if $\{ x, y \} \in E$.
  Let $\{\mu(x, y)\}_{x, y \in V}$ be a family of non-negative real numbers such that 
  $\mu(x,y) = \mu(y, x)$ for all $x, y \in V$,
  $\mu(x, y) > 0$ if and only if $x \sim y$,
  and
  \begin{equation}
    \mu(x) 
    \coloneqq 
    \sum_{y \in V} \mu(x, y) < \infty,
    \quad 
    \forall x \in V.
  \end{equation}
  We call $\mu(x,y)$ the \textit{conductance} on the edge $\{x,y\}$
  and $(V, E, \mu)$ an \textit{electrical network}.
  Note that the edge set $E$ is uniquely determined by conductances.
  We equip $V$ with the discrete topology 
  and define a Radon measure $\muh$ as the \textit{counting measure} on $V$, 
  which is a Radon measure on $V$ given by 
  \begin{equation}
    \muh(A) 
    \coloneqq 
    \# A
    = 
    \sum_{x \in V} \delta_{x}(A)
    \quad 
    A \subseteq V,
  \end{equation}
  where $\delta_{x}$ is the Dirac measure putting mass $1$ at $x$.
  Given an electrical network $G$,
  we write $V_{G}$, $E_{G}$, $\{\mu_{G}(x,y)\}_{x, y \in V_{G}}$, and $\muh_{G}$
  for the vertex set, the edge set, the conductances, and the counting measure, respectively.
  When we say that $G$ is a \textit{rooted electrical network},
  there exists a distinguished vertex,
  which we denote by $\rho_{G} \in V_{G}$.
\end{dfn}

\begin{dfn} [{Resistance forms associated with electrical networks}]  
  \label{1. dfn: resistance forms associated with electrical networks}
  Let $G$ be an electrical network.
  For functions $f,g: V_{G} \to \RN$,
  we set 
  \begin{equation}
    \form_{G}(f, g) 
    \coloneqq 
    \frac{1}{2} 
    \sum_{x, y \in V_{G}} 
    \mu_{G}(x,y) (f(x) - f(y)) (g(x) - g(y))
  \end{equation}
  (if the right-hand side is well-defined).
  We then set $ \rdomain_{G} \coloneqq \{ f \in \RN^{V_{G}} \mid \form(f, f) < \infty \}$
  and, for each $x, y \in F$,
  \begin{equation}
    R_{G}(x,y) 
    \coloneqq 
    \sup 
    \{
      \form(f, f)^{-1} \mid f \in \rdomain,\, f(x)=1,\, f(y) = 0
    \},
  \end{equation}
  where we define $\sup \emptyset \coloneqq 0$.
\end{dfn}

We note that, for any electrical network $G$,
the pair $(\form_{G}, \rdomain_{G})$ is a regular resistance form,
$R_{G}$ is the corresponding resistance metric,
and the topology on $V_{G}$ induced from $R_{G}$ is the discrete topology
(see \cite[Theorem 4.1]{Noda_pre_Scaling}). 
We say that $G$ is a \textit{recurrent electrical network} if $R_{G}$ is a recurrent resistance metric
in the sense of Definition \ref{4. dfn: recurrent resistance metric}.
Given an electrical network $G$ and a measure $\nu$ on $V_{G}$ of full support,
we write $X_{G}^{\nu} = (X_{G}^{\nu}(t))_{t \geq 0}$ for the process associated with $(V_{G}, R_{G}, \nu)$.
By \cite[Theorem 4.1]{Noda_pre_Scaling},
$X_{G}^{\nu}$ is the minimal continuous-time Markov chain on $V_{G}$
with generator 
\begin{equation}
  (\Delta_{G}^{\nu}f)(x) 
  \coloneqq 
  \sum_{y \in V_{G}} \frac{\mu_{G}(x, y)}{\nu(\{x\})} (f(y) - f(x)).
\end{equation} 

Now, we define the (symmetric) Bouchaud trap model on an electrical network.
Throughout this paper,
we fix a constant $\alpha \in (0, 1)$.

\begin{dfn} [{The random variable $\xi$}] \label{1. dfn: random variable xi}
  Let $\xi$ be a positive random variable
  built on a probability space with probability measure $P_{\xi}$ 
  such that there exists a slowly varying function $\ell$ satisfying 
  $P_{\xi}(\xi \geq u) = u^{-\alpha} \ell(u)$.
\end{dfn}

\begin{dfn} [The Bouchaud trap model] \label{1. dfn: BTM}
  Fix an electrical network $G$.
  Let $\{\xi_{x}^{G}\}_{x \in V_{G}}$ be i.i.d.\ random variables with $\xi_{x}^{G} \stackrel{\mathrm{d}}{=} \xi$.
  Set $\nu_{G} \coloneqq \sum_{x \in V_{G}} \xi_{x}^{G} \delta_{x}$,
  which is a random measure on $V_{G}$.
  Conditional on $\nu_{G}$,
  the (symmetric) Bouchaud trap model (BTM) is defined as the continuous-time Markov chain $X_{G}^{\nu_{G}}$. 
  The random measure $\nu_{G}$ is called a \textit{trap}.
\end{dfn}

Our first result concerns BTMs for a convergent sequence of deterministic (scaled) electrical networks.
For each $n \in \mathbb{N}$,
let $G_{n}$ be a rooted recurrent electrical network such that $(V_{G_{n}}, R_{G_{n}})$ is boundedly compact.
We simply write 
\begin{equation}
  V_{n} \coloneqq V_{G_{n}}, \quad 
  \mu_{n} \coloneqq \mu_{G_{n}}, \quad 
  \muh_{n} \coloneqq \muh_{G_{n}}, \quad 
  \rho_{n} \coloneqq \rho_{G_{n}}, \quad 
  R_{n} \coloneqq R_{G_{n}}, \quad 
  \nu_{n} \coloneqq \nu_{G_{n}}, \quad 
  X_{n}^{\nu_{n}} \coloneqq X_{G_{n}}^{\nu_{G_{n}}}
\end{equation}
We write $\mathsf{P}_{n}$ for the underlying probability measure of the random measure $\nu_{n}$.
Let $(a_{n})_{n \geq 1}$ and $(b_{n})_{n \geq 1}$ be two sequences of positive real numbers 
with $a_{n} \wedge b_{n} \to \infty$.
We then define 
\begin{equation}
  c_{n} 
  \coloneqq
  \inf\{u >0 \mid P_{\xi}(\xi > u) < b_{n}^{-1}\}.
\end{equation}

\begin{assum} \label{1. assum: aging, deterministic version} \leavevmode
  \begin{enumerate} [label = (\roman*)]
    \item \label{1. assum item: deterministic, convergence of spaces}
      It holds that 
      \begin{equation}
        (V_{n}, a_{n}^{-1}R_{n}, \rho_{n}, b_{n}^{-1} \muh_{n}) 
        \to 
        (F, R, \rho, \mu)
      \end{equation}
      in the local Gromov-Hausdorff-vague topology 
      for some $(F, R, \rho, \mu) \in \mathbb{G}$,
      where $\mu$ is of full support and non-atomic, that is,
      $\mu(\{x\})  = 0$ for all $x \in F$.
    \item \label{1. assum item: deterministic, the non-explosion condition} 
      It holds that 
      \begin{equation}
        \lim_{r \to \infty} 
        \liminf_{n \to \infty}
        a_{n}^{-1} R_{n}( \rho_{n}, B_{R_{n}}( \rho_{n}, a_{n}r)^{c}) 
        = 
        \infty.
      \end{equation}
  \end{enumerate}
\end{assum}

Under Assumption \ref{1. assum: aging, deterministic version},
the limiting metric space $(F, R)$ is a recurrent resistance metric space
(see \cite[Theorem 5.1]{Noda_pre_Scaling}).
Given a Radon measure $\nu$ on $F$ of full support,
we write $X^{\nu} = (X^{\nu}(t))_{t \geq 0}$ for the process associated with $(F, R, \nu)$.
For the description of the limit of Bouchaud trap models $X_{n}^{\nu_{n}}$,
we define a random measure $\nu$ on $F$ as follows.
Let $\pi$ be a Poisson random measure on $F \times \RNpp$ 
with intensity measure $\mu(dx) \alpha v^{-1-\alpha} dv$
defined on a probability space equipped with probability measure $\mathsf{P}$.
We then define a random measure $\nu$ on $F$ by setting 
\begin{equation}
  \nu(A) \coloneqq \int 1_{A}(x) v\, \pi(dx dv),
  \quad 
  \forall A \in \Borel(F),
\end{equation}
where $\Borel(F)$ denotes the collection of Borel subsets of $F$.
The random measure $\nu$ is a fully-supported Radon measure almost surely (see Lemma \ref{6. lem: aging, properties of limiting trap} below)
and is an analogue of the F.I.N. measure, the speed measure of the F.I.N.\ diffusion.
The random measures $\pi$ and $\nu$ are the limits of traps.
More precisely,
in Section \ref{sec: Proof of main results},
it will be proven that, under Assumption \ref{1. assum: aging, deterministic version},
$(\pi_{n}, c_{n}^{-1} \nu_{n}) \xrightarrow{\mathrm{d}} (\pi, \nu)$,
where $\pi_{n}$ is given by 
\begin{gather}
  \pi_{n} \coloneqq \sum_{x \in V_{n}} \delta_{(x, c_{n}^{-1} \nu_{n}(\{x\}))}.
\end{gather}
Note that the convergence of $\pi_{n}$ to $\pi$ contains information of atoms of traps,
which the convergence of $c_{n}^{-1} \nu_{n}$ to $\nu$ does not guarantee.

Write $\tilde{X}_{n}^{\nu_{n}}(t) \coloneqq X_{n}^{\nu_{n}}(a_{n}c_{n}t)$
and define $\tilde{\law}_{n}^{\nu_{n}}$ to be the law of $\tilde{X}_{n}^{\nu_{n}}$
under $P_{\rho_{n}}^{\nu_{n}}$, i.e.,
\begin{equation}
  \tilde{\law}_{n}^{\nu_{n}} (\cdot)
  \coloneqq 
  P_{\rho_{n}}^{\nu_{n}} \bigl( (\tilde{X}_{n}^{\nu_{n}}(t))_{t \geq 0} \in \cdot \bigr).
\end{equation}
Note that conditional on $\nu_{n}$,
$\tilde{\law}_{n}^{\nu_{n}}$ is a probability measure on $D(\RNp, V_{n})$,
which denotes the space of cadlag functions with values in $V_{n}$ equipped with the usual $J_{1}$-Skorohod topology.
Since $P_{\rho_{n}}^{\nu_{n}}(X_{n}^{\nu_{n}} \in \cdot )$ is measurable with respect to $\nu_{n}$ 
(see \cite[Theorem 6.1]{Noda_pre_Convergence}),
$\tilde{\law}_{n}^{\nu_{n}}$ is a random element of $\mathcal{P}(D(\RNp, V_{n}))$, 
which is defined to be the space of the probability measures on $D(\RNp, V_{n})$ equipped with the weak topology.
Similarly, we define 
\begin{equation}
  \law^{\nu} (\cdot)
  \coloneqq 
  P_{\rho}^{\nu} ( X^{\nu} \in \cdot),
\end{equation}
which is a random element of $\mathcal{P}(D(\RNp, F))$.

For aging,
we consider the following two-point functions:
\begin{equation}
  \tilde{\Phi}_{n}^{\nu_{n}}(s, t)
  \coloneqq 
  P_{\rho_{n}}^{\nu_{n}} \bigl( \tilde{X}_{n}^{\nu_{n}}(s) = \tilde{X}_{n}^{\nu_{n}} ( t ) \bigr),
  \quad 
  \Phi^{\nu}(s, t)
  \coloneqq 
  P_{\rho}^{\nu} \bigl( X^{\nu}(s) = X^{\nu}(t) \bigr)
\end{equation}
for $s, t > 0$. 
We note that $\tilde{\Phi}_{n}^{\nu_{n}}$ and $\Phi^{\nu}$ are continuous (see Section \ref{sec: (sub-)aging for deterministic traps}).
In the first result below,
we obtain that the BTMs $\tilde{X}_{n}^{\nu_{n}}$ and the associated aging functions $\tilde{\Phi}_{n}^{\nu_{n}}$ converge 
to $X^{\nu}$ and $\Phi^{\nu}$.

\begin{thm} \label{1. thm: aging for deterministic models}
  Under Assumption \ref{1. assum: aging, deterministic version}, 
  it holds that
  \begin{equation} \label{1. thm eq: convergence for aging result}
    \Bigl( V_{n}, a_{n}^{-1}R_{n}, \rho_{n}, b_{n}^{-1} \muh_{n}, 
      \mathsf{P}_{n}
      \bigl(
        (c_{n}^{-1} \nu_{n}, \tilde{\law}_{n}^{\nu_{n}}, \tilde{\Phi}_{n}^{\nu_{n}}) \in \cdot
      \bigr) 
    \Bigr)
    \to 
    \Bigl( F, R, \rho, \mu, 
      \mathsf{P} 
      \bigl( 
        (\nu, \law^{\nu}, \Phi^{\nu}) \in \cdot 
      \bigr) 
    \Bigr)
  \end{equation}
  in the space $\rbcM(\MeasFunct \times \probFunct(\disMeasFunct \times \probFunct(\SkorohodFunct) \times \fixedFunct{C(\RNpp^{2}, \RNp)}))$ 
  (defined in Section \ref{sec: GH-type topologies} below).
  In particular, $\tilde{\Phi}_{n}^{\nu_{n}} \xrightarrow{\mathrm{d}} \Phi^{\nu}$ in $C(\RNpp^{2}, \RNp)$
  with respect to the compact-convergence topology,
  where the limit is positive with probability $1$. 
\end{thm}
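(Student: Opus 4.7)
The plan is to decompose the joint convergence in \eqref{1. thm eq: convergence for aging result} into three component convergences---of traps, of process laws, and of aging functions---and to assemble them via the Polish metrization of the vague-and-point-process topology mentioned in the abstract.

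For the traps, I introduce $\pi_n \coloneqq \sum_{x \in V_n} \delta_{(x, c_n^{-1} \xi_x^n)}$ so that $c_n^{-1} \nu_n(\cdot) = \int 1_\cdot(x) v \, \pi_n(dx\, dv)$. The normalisation $c_n$ is chosen so that the tail condition in Definition \ref{1. dfn: random variable xi} produces the Poisson intensity $\alpha v^{-1-\alpha} dv$ in the limit. Combined with Assumption \ref{1. assum: aging, deterministic version}\ref{1. assum item: deterministic, convergence of spaces}, a standard superposition argument for i.i.d.\ heavy-tailed families then gives
\begin{equation}
  \bigl( V_n, a_n^{-1} R_n, \rho_n, b_n^{-1} \muh_n, \pi_n \bigr)
  \xrightarrow{\mathrm{d}}
  \bigl( F, R, \rho, \mu, \pi \bigr)
\end{equation}
in the vague-and-point-process topology, and joint convergence of $c_n^{-1} \nu_n$ to $\nu$ then follows from continuity of integration against the second coordinate (using non-atomicity of $\mu$ to rule out boundary effects).

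Next, since $\tilde X_n^{\nu_n}$ is the Markov process associated with the resistance metric space $(V_n, a_n^{-1} R_n, c_n^{-1} \nu_n)$, and the non-explosion condition in Assumption \ref{1. assum: aging, deterministic version}\ref{1. assum item: deterministic, the non-explosion condition} is a purely metric statement that is insensitive to the speed measure, the general scaling theorem for processes on resistance metric spaces (cf.\ \cite{Croydon_18_Scaling} and the extensions in \cite{Noda_pre_Scaling}) applies. This yields $\tilde\law_n^{\nu_n} \to \law^\nu$ in $\mathcal{P}(D(\RNp, \cdot))$ jointly with the convergences of the previous paragraph.

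The final and subtlest step is convergence of the aging functions. Writing $\tilde p_n$ and $p$ for the transition densities of $\tilde X_n^{\nu_n}$ and $X^\nu$ with respect to $c_n^{-1} \nu_n$ and $\nu$ respectively, one has
\begin{equation}
  \tilde\Phi_n^{\nu_n}(s,t)
  =
  \int \tilde p_n(s, \rho_n, y) \tilde p_n(t-s, y, y) v^2 \, \pi_n(dy\, dv),
\end{equation}
and analogously for $\Phi^\nu$ in terms of $p$ and $\pi$. The main obstacle is upgrading the process convergence to joint convergence of $(c_n^{-1}\nu_n, \tilde p_n)$ that is uniform on compact sets, which is precisely the precompactness of transition densities emphasised in the abstract: one must establish heat-kernel estimates that are robust to the heavy-tailed speed measure, in particular near atoms of $\nu$ at which the process is trapped. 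Once this is in hand, continuity of $\Phi^\nu$ on $\RNpp^2$ and its almost-sure positivity follow, the latter because $\nu$ almost surely places atoms arbitrarily close to $\rho$ at which the process can reside throughout $[s,t]$ with positive probability.
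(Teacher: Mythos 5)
Your high-level decomposition mirrors the paper's: trap convergence via Poisson superposition in the vague-and-point-process topology, then a Skorohod coupling to reduce to the deterministic-trap case, then process convergence by the resistance-form scaling limit theorem, and finally transition-density arguments for the aging functions. The first two steps are essentially correct modulo imprecision (the map $\MeasureMap\colon\pi\mapsto\int v\,\pi(\cdot\,dv)$ is not vaguely continuous on $\Meas(S\times\RNpp)$; one needs the $W^{(r)}$ and $M_\varepsilon^{(r)}$ tightness controls of Theorem~\ref{2. thm: tightness in sP} to upgrade vague convergence of $\pi_n$ to convergence in $\pointMeas(S)$, and the role of non-atomicity of $\mu$ is to show $\pi\in\pointMeas^*(F)$ a.s.\ so that $\nu$ has an atomic decomposition with distinct atoms, not to ``rule out boundary effects'').

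The genuine gap is in the aging-function step. Your representation $\tilde\Phi_n^{\nu_n}(s,t)=\int\tilde p_n(s,\rho_n,y)\,\tilde p_n(t-s,y,y)\,v^2\,\pi_n(dy\,dv)$ together with heat-kernel precompactness only controls $\tilde\Phi_n^{\nu_n}$ \emph{away} from the diagonal $\{s=t\}$: Proposition~\ref{4. prop: precompactness of transition densities} gives uniformity on sets of the form $[T,\infty)\times D_R(\rho,r)^2$ with $T>0$ bounded away from zero, whereas the diagonal kernel $\tilde p_n(t-s,y,y)$ blows up as $t-s\downarrow0$, so the integrand is uncontrolled there. The near-diagonal equicontinuity $\sup_n\sup_{|t-s|\le\eta}|\tilde\Phi_n^{\nu_n}(s,t)-1|\to0$ as $\eta\to0$ is essential for precompactness in $C(\RNpp^2,\RNp)$ and does not follow from ``heat-kernel estimates robust to the heavy-tailed speed measure.'' The paper's Lemma~\ref{5. lem: pcpt of aging functions near diagonal} has to combine the small-mass control $M_\varepsilon^{(r)}$ from the vague-and-point-process topology, the exit-time estimate of Lemma~\ref{4. lem: exit time estimate}, and the improved lower bound on point probabilities of Proposition~\ref{4. prop: improved lower bound for point probability} -- genuinely non-heat-kernel inputs that your plan does not supply. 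Without this the family $(\tilde\Phi_n^{\nu_n})$ is not shown to be precompact, and pointwise convergence off the diagonal does not yield the claimed convergence in $C(\RNpp^2,\RNp)$. Your positivity sketch is also slightly off: the aging function does not require the process to reside at an atom throughout $[s,t]$ (that is the sub-aging quantity), and the paper's argument instead picks $x$ with $p^\nu(s,\rho,x)\nu(\{x\})>0$ (which exists because these quantities integrate to one) and invokes Proposition~\ref{4. prop: positivity of diagonal heat kernel}, which itself needs a short Dirichlet-form argument.
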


\begin{rem} \label{1. rem: convergence of annealed aging functions}
  Fix $s, t > 0$.
  Since $\tilde{\Phi}_{n}^{\nu_{n}}(s,t) \leq 1$,
  the family $(\tilde{\Phi}_{n}^{\nu_{n}}(s,t))_{n \geq 1}$ of random variables is uniformly integrable.
  Combining this with Theorem \ref{1. thm: aging for deterministic models},
  we obtain that 
  \begin{equation}
    \lim_{n \to \infty}
    \mathsf{E}_{n}
    \bigl[ \tilde{\Phi}_{n}^{\nu_{n}}(s,t) \bigr]
    = 
    \mathsf{E} 
    \bigl[ \Phi^{\nu}(s,t) \bigr],
  \end{equation}
  where $\mathsf{E}_{n}$ and $\mathsf{E}$ denote the expectations with respect to $\mathsf{P}_{n}$ and $\mathsf{P}$, respectively.
  This recovers the aging result of \cite{Fontes_Isopi_Newman_02_Random}.
\end{rem}

For sub-aging,
we additionally assume convergence of local structures of electrical networks.
Roughly speaking,
we assume that a uniformly chosen vertex and the total conductance at the vertex converge jointly;
this assumption is precisely stated below.
Given an electrical network $G$,
we define a map $\psi_{G}: V_{G} \to V_{G} \times \RNp$ by setting 
\begin{equation} \label{1. eq: mark map}
  \psi_{G}(x) \coloneqq (x, \mu_{G}(x)).
\end{equation}
Write $\dotmuh_{G}$ for the pushforward measure of $\muh_{G}$ by $\psi_{G}$,
which is a Radon measure on $V_{G} \times \RNp$.
We emphasize that $\dotmuh_{G}$ is a natural object in terms of graph theory.
Indeed,
if $G$ is a simple electrical network, that is, $\mu_{G}(x,y) = 1$ if $\mu_{G}(x,y) > 0$,
then $\dotmuh_{G}(A \times \{k\})$ is the number of vertices in $A \subseteq V_{G}$ whose degree is $k$.
We simply write $\dotmuh_{n} \coloneqq \dotmuh_{G_{n}}$.

\begin{assum} \label{1. assum: sub-aging, deterministic version}
  Assumption \ref{1. assum: aging, deterministic version}\ref{1. assum item: deterministic, the non-explosion condition} is satisfied,
  and 
  \begin{equation} \label{1. assum item: sub-aging, convergence of spaces for random spaces}
    (V_{n}, a_{n}^{-1}R_{n}, \rho_{n}, b_{n}^{-1} \dotmuh_{n} ) 
    \to 
    (F, R, \rho, \dot{\mu})
  \end{equation}
  for some  $(F, R, \rho, \dot{\mu}) \in \rbcM(\markedMeasFunct{\RNp})$ in the space $\rbcM(\markedMeasFunct{\RNp})$ 
  (defined in Section \ref{sec: GH-type topologies} below).
  Moreover,
  the measure $\mu$ on $F$ defined by 
  \begin{equation} \label{1. eq: dfn of mu for sub-aging}
    \mu(A) 
    \coloneqq 
    \dot{\mu}(A \times \RNp),
    \quad 
    \forall A \in \Borel(F),
  \end{equation}
  is of full support and non-atomic.
\end{assum}

Under Assumption \ref{1. assum: sub-aging, deterministic version},
we let $\dot{\pi}(dx dw dv)$ be the Poisson random measure on $F \times \RNp \times \RNpp$ 
with intensity measure $\dot{\mu}(dx dw) \alpha v^{-1-\alpha} dv$ defined on a probability space with probability measure $\mathsf{P}$.
Define a random measure $\nu$ on $F$ by setting 
\begin{equation}
  \nu(A)
  \coloneqq 
  \int 1_{A}(x) v\, \dot{\pi}(dx dw dv),
  \quad 
  \forall A \in \Borel(F).
\end{equation}
Then, $\nu$ is a fully-support Radon measure almost surely (see Lemma \ref{6. lem: sub-aging, properties of limiting trap} below).
In Section \ref{sec: Proof of main results},
it will be proven that, under Assumption \ref{1. assum: sub-aging, deterministic version},
$(\dot{\pi}_{n}, c_{n}^{-1} \nu_{n}) \to (\dot{\pi}, \nu)$,
where $\dot{\pi}_{n}$ is given by 
\begin{equation}
  \dot{\pi}_{n} 
  \coloneqq 
  \sum_{x \in V_{n}} \delta_{(x, \mu_{n}(x), c_{n}^{-1} \nu_{n}(\{x\}))}.
\end{equation}
Note that $\dot{\pi}_{n}$ contains information of local structure:
the total conductance at every vertex.

For sub-aging, we consider the following two-point function:
\begin{equation}
  \tilde{\Psi}_{n}^{\nu_{n}}(s, t)
  \coloneqq
  P_{\rho_{n}}^{\nu_{n}}
  \bigl(
    X_{n}^{\nu_{n}}(a_{n}c_{n}t) = X_{n}^{\nu_{n}}(a_{n}c_{n}t + t'),\ \forall t' \in [0, c_{n}s]
  \bigr)
\end{equation}
for $s \geq 0, t >0$.
By the Markov property and the fact that the waiting time of $X_{n}^{\nu_{n}}$ at $x$
has the exponential distribution with mean $\nu_{n}(\{x\})/ \mu_{n}(x)$,
we deduce that 
\begin{align}
  \tilde{\Psi}_{n}^{\nu_{n}}(s, t)
  &=
  E_{\rho_{n}}^{\nu_{n}}
  \left[
    \exp(- \mu_{n}(\tilde{X}_{n}^{\nu_{n}}(t)) s/ c_{n}^{-1} \nu_{n}(\tilde{X}_{n}^{\nu_{n}}(t)))
  \right] \\
  &=
  \int e^{-ws/v} P_{\rho_{n}}^{\nu_{n}}(\tilde{X}_{n}^{\nu_{n}}(t) = x)\, \dot{\pi}_{n}(dx dw dv).
\end{align}
Following this expression,
we define 
\begin{equation}
  \Psi^{\nu}(s, t)
  \coloneqq 
  \int
    e^{- ws/ v} P_{\rho}^{\nu}(X^{\nu}(t) = x)\,
  \dot{\pi}(dx dw dv)
\end{equation}
for $s \geq 0, t > 0$.
We note that the functions $\tilde{\Psi}_{n}^{\nu_{n}}$ and $\Psi^{\nu}$ are continuous
on $\RNp \times \RNpp$ (see Section \ref{sec: (sub-)aging for deterministic traps}).
Under Assumption \ref{1. assum: sub-aging, deterministic version},
we obtain not only the same aging result as Theorem \ref{1. thm: aging for deterministic models},
but also a sub-aging result,
that is,
$\tilde{\Psi}_{n}^{\nu_{n}}$ converges to $\Psi^{\nu}$. 

\begin{thm} \label{1. thm: sub-aging for deterministic models}
  Under Assumption \ref{1. assum: sub-aging, deterministic version},
  it holds that
  \begin{align} \label{1. thm eq: convergence for sub-aging result}
    &
    \Bigl( V_{n}, a_{n}^{-1}R_{n}, \rho_{n}, b_{n}^{-1} \dotmuh_{n}, 
      \mathsf{P}_{n}
      \bigl(
        (c_{n}^{-1} \nu_{n}, \tilde{\law}_{n}^{\nu_{n}}, \tilde{\Phi}_{n}^{\nu_{n}}, \tilde{\Psi}_{n}^{\nu_{n}}) \in \cdot
      \bigr) 
    \Bigr)\\
    \to &
    \Bigl( F, R, \rho, \dot{\mu}, 
      \mathsf{P} 
      \bigl( 
        (\nu, \law^{\nu}, \Phi^{\nu}, \Psi^{\nu}) \in \cdot 
      \bigr) 
    \Bigr)
  \end{align}
  in the space $\rbcM(\markedMeasFunct{\RNp} \times \probFunct(\disMeasFunct \times \probFunct(\SkorohodFunct) \times \fixedFunct{C(\RNpp^{2}, \RNp)} \times \tau^{C(\RNp \times \RNpp, \RNp)}))$.
  In particular, $\tilde{\Psi}_{n}^{\nu_{n}} \xrightarrow{\mathrm{d}} \Psi^{\nu}$ in $C(\RNp \times \RNpp, \RNp)$,
  where the limit is positive with probability $1$.
\end{thm}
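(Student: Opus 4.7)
The plan is to deduce Theorem~\ref{1. thm: sub-aging for deterministic models} by bootstrapping from Theorem~\ref{1. thm: aging for deterministic models}: enrich the convergence with the mark $\mu_n(x)$ at every vertex, upgrade the trap to its Poisson-measure avatar, and then identify $\tilde{\Psi}_n^{\nu_n}$ as a continuous functional of the enriched data. First, observe that since $\dot{\mu}(\cdot \times \RNp) = \mu$ in \eqref{1. eq: dfn of mu for sub-aging} satisfies Assumption~\ref{1. assum: aging, deterministic version}\ref{1. assum item: deterministic, convergence of spaces}, the conclusion of Theorem~\ref{1. thm: aging for deterministic models} applies and gives the convergence of $(V_n, a_n^{-1}R_n, \rho_n, b_n^{-1}\muh_n)$ jointly with $c_n^{-1}\nu_n$, $\tilde{\law}_n^{\nu_n}$ and $\tilde{\Phi}_n^{\nu_n}$. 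Adding $\mu_n(x)$ as a second coordinate to the counting measure produces a strictly finer joint topology on the enriched ambient space $\rbcM(\markedMeasFunct{\RNp} \times \cdots)$, and Assumption~\ref{1. assum: sub-aging, deterministic version} is exactly $b_n^{-1}\dotmuh_n \to \dot{\mu}$ in this richer topology, so one can repeat the proof of Theorem~\ref{1. thm: aging for deterministic models} with $\muh_n$ replaced by $\dotmuh_n$ throughout.

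Second, I would lift the trap convergence to the marked Poisson measure $\dot{\pi}_n \to \dot{\pi}$, jointly with $c_n^{-1}\nu_n \to \nu$. Since $\{\xi_x^{G_n}\}_{x \in V_n}$ are i.i.d.\ with tail index $\alpha$, a Laplace-functional argument against continuous compactly supported test functions on $F \times \RNp \times \RNpp$ (compact in the $v$-variable away from $0$), combined with the marked vague convergence $b_n^{-1}\dotmuh_n \to \dot{\mu}$ and the definition of $c_n$, yields this convergence; integrating out the $v$-coordinate against the identity recovers $c_n^{-1}\nu_n \to \nu$ compatibly.

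The key step is the convergence $\tilde{\Psi}_n^{\nu_n} \to \Psi^\nu$. Using
\begin{equation}
\tilde{\Psi}_n^{\nu_n}(s,t)
=
\int e^{-ws/v}\, P_{\rho_n}^{\nu_n}\bigl(\tilde{X}_n^{\nu_n}(t)=x\bigr)\, \dot{\pi}_n(dx\,dw\,dv),
\end{equation}
I would view $\tilde{\Psi}_n^{\nu_n}$ as the pairing of $\dot{\pi}_n$ with the atomic measure $\sum_x P_{\rho_n}^{\nu_n}(\tilde{X}_n^{\nu_n}(t)=x)\delta_x$ against a bounded kernel. Joint convergence of $\dot{\pi}_n$ and of the path-space law $\tilde{\law}_n^{\nu_n}$ is not by itself enough, because $\tilde{\Psi}$ requires the time-$t$ one-dimensional marginal; here the precompactness of transition densities advertised in the abstract is the crucial input, ensuring that the atomic time marginals $P_{\rho_n}^{\nu_n}(\tilde{X}_n^{\nu_n}(t) \in \cdot)$ converge vaguely to $P_\rho^\nu(X^\nu(t) \in \cdot)$ as atomic measures supported on atoms of the limiting Poisson process. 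To handle the fact that $(w,v) \mapsto e^{-ws/v}$ is only bounded and not continuous at $v=0$, I would truncate to $v \geq \varepsilon$ (on which the integrand is continuous with the required compactness, so the continuous mapping theorem closes the truncated pairing) and control the small-$v$ remainder uniformly in $n$ by the bound $e^{-ws/v} \leq 1$ together with the first-moment Poisson estimate $\int_{v<\varepsilon} v\, \alpha v^{-1-\alpha}\,dv \to 0$ as $\varepsilon \to 0$, which bounds the expected contribution to $\nu_n$ from atoms with tiny $v$. The main obstacle is the joint continuity of the pairing precisely where the kernel $e^{-ws/v}$ amplifies small-$v$ atoms of $\dot{\pi}_n$, at which both the Poisson mass and the heat kernel fluctuate strongly; vague convergence alone is insufficient there, and it is the equicontinuity of transition densities---the second main step flagged in the abstract---that rescues the argument and upgrades the pairing to a continuous one.
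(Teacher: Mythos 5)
Your proposal correctly identifies the paper's overall strategy: lift the trap convergence to the marked point process $\dot\pi_n$, use Skorohod representation to pass to an almost-sure coupling, apply the Gromov-Hausdorff-vague limit theorem for the process laws, and then analyze $\tilde\Psi_n^{\nu_n}$ by truncating the integral and invoking precompactness of transition densities. The trap-convergence step is essentially equivalent to the paper's Lemma~\ref{6. lem: sub-aging, trap convergence} (the paper uses Kallenberg's characterization via void-probabilities and first moments rather than Laplace functionals, but either works). However, there are two genuine gaps in the argument for $\tilde\Psi_n^{\nu_n}\to\Psi^\nu$.

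First, you only address \emph{pointwise} convergence in $(s,t)$, while the claim is convergence in $C(\RNp\times\RNpp,\RNp)$, i.e.\ uniform on compacts. Pointwise convergence plus the uniform bound $\tilde\Psi_n^{\nu_n}\leq 1$ is not sufficient; you need equicontinuity of the family $\{\tilde\Psi_n^{\nu_n}\}_n$ on compacta. This is a separate Arzel\`a-Ascoli argument (the paper's Lemma~\ref{5. lem: precompactness of sub-aging functions}), and it is precisely where the modulus-of-continuity half of Proposition~\ref{4. prop: precompactness of transition densities} enters, via an estimate of the form \eqref{5. eq: sub-aging functions, eq 2}--\eqref{5. eq: sub-aging functions, eq 3}. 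You invoke the precompactness of transition densities only for the convergence of atomic time marginals; but actually that convergence already follows from $J_1$-convergence of path laws and quasi-left-continuity (paper's Lemma~\ref{5. lem: convergence of atoms and weights}), and the role of Proposition~\ref{4. prop: precompactness of transition densities} is mainly to furnish the equicontinuity you have not addressed.

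Second, the truncation strategy is incomplete. After cutting to $v\geq\varepsilon$ and $x$ in a ball, the atoms of $\dot\pi_n$ can still escape in the $w$-direction, and vague convergence of $\dot\pi_n$ on $F\times\RNp\times\RNpp$ gives no control of atoms with unbounded $w$. Your bound $e^{-ws/v}\leq 1$ is useless there: at the boundary $s=0$ (which lies in the domain $\RNp\times\RNpp$) the kernel does not decay in $w$, so the contribution from atoms with $w\geq W$ is not small merely because $W$ is large. The paper resolves this by a counting argument (Lemma~\ref{5. lem: sub-aging fucntion, smaller parts}\ref{5. lem item: sub-aging function, large conductances}): the vague-and-point-process convergence of $\nu_n$ pins down the number of atoms with $(x,v)$ in a fixed compact set, and for $W$ large the atoms with $w<W$ already exhaust that count, so the number with $w\geq W$ tends to zero. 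A related imprecision: your control of the small-$v$ remainder invokes the limiting Poisson intensity $\int_{v<\varepsilon}v\,\alpha v^{-1-\alpha}\,dv$, but after Skorohod representation you are working pathwise with a fixed realization, and the correct control is the $M_\varepsilon^{(r)}$-condition built into the $\pointMeas$-topology (Theorem~\ref{2. thm: convergence in sP}), not a first-moment estimate for the limit. Finally, the ``continuous mapping theorem'' cannot literally close the truncated pairing, since $x\mapsto P_{\rho_n}^{\nu_n}(\tilde X_n^{\nu_n}(t)=x)$ is an atomic function, not a continuous one; the paper instead matches the finitely many atoms in the truncated region (Proposition~\ref{2. prop: atom convergence from simple measure convergence}) and proves convergence atom-by-atom via Lemma~\ref{5. lem: convergence of atoms and weights}.
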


\begin{rem} \label{1. rem: convergence of annealed sub-aging functions}
  Similarly to Remark \ref{1. rem: convergence of annealed aging functions},
  from Theorem \ref{1. thm: sub-aging for deterministic models},
  we obtain the convergence of the annealed sub-aging functions, i.e., for any $s \geq 0$ and $t > 0$,
  \begin{equation}
    \lim_{n \to \infty} 
    \mathsf{E}_{n} 
    \bigl[ \tilde{\Psi}_{n}^{\nu_{n}}(s,t) \bigr]
    = 
    \mathsf{E} 
    \bigl[ \Psi^{\nu}(s,t) \bigr].
  \end{equation}
\end{rem}

\begin{rem}
  By definition,
  the waiting time of the BTM $X_{G}^{\nu_{G}}$ at a vertex $x$ is determined by the trap $\nu_{G}(\{x\})$ and a $1$-neighbor local structure of $G$,
  that is, the total conductance $\mu_{G}(x) = \sum_{y \sim x} \mu_{G}(x,y)$.
  This is the reason why we assume the convergence of $\dotmuh_{n}$ in Assumption \ref{1. assum: sub-aging, deterministic version}.
  Even when one considers other trap models,
  it seems possible to derive similar results by assuming convergence of corresponding local structures.
  (For a generalized trap model, see \cite{BenArous_Cabezas_Cerny_Royfman_15_Randomly}.)
\end{rem}

Next, we consider random electrical networks.
Namely,
we assume that $(V_{n}, R_{n}, \rho_{n})$ is a random element of $\GHVspace$
and we denote its underlying probability measure by $\mathbf{P}_{n}$.
For aging, we consider a random version of Assumption \ref{1. assum: aging, deterministic version}, given below.

\begin{assum} \label{1. assum: aging, random version} \leavevmode
  \begin{enumerate} [label = (\roman*)]
    \item \label{1. assum item: random, convergence of spaces}
      It holds that 
      \begin{equation}
        (V_{n}, a_{n}^{-1}R_{n}, \rho_{n}, b_{n}^{-1} \muh_{n}) 
        \xrightarrow{\mathrm{d}}
        (F, R, \rho, \mu)
      \end{equation}
      in the local Gromov-Hausdorff-vague topology 
      for some random element $(F, R, \rho, \mu)$ of $\mathbb{G}$,
      where $\mu$ is of full support and non-atomic with probability $1$.
    \item \label{1. assum item: random, the non-explosion condition} 
      It holds that 
      \begin{equation}
        \lim_{r \to \infty} 
        \liminf_{n \to \infty}
        \mathbf{P}_{n}
        \bigl(
          a_{n}^{-1} R_{n}( \rho_{n}, B_{R_{n}}( \rho_{n}, a_{n}r)^{c}) > \lambda
        \bigr)
        = 1,
        \quad 
        \forall \lambda > 0.
      \end{equation}
  \end{enumerate}
\end{assum}

\begin{thm} \label{1. thm: aging for random models}
  Under Assumption \ref{1. assum: aging, random version}, 
  the convergence \eqref{1. thm eq: convergence for aging result} holds in distribution.
\end{thm}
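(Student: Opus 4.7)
The plan is to reduce the random-version claim to the deterministic version (Theorem \ref{1. thm: aging for deterministic models}) via the Skorohod representation theorem. Since the local Gromov--Hausdorff-vague topology on $\mathbb{G}$ is Polish by the metrisability results of Section \ref{sec: GH-type topologies}, Assumption \ref{1. assum: aging, random version}\ref{1. assum item: random, convergence of spaces} combined with Skorohod representation yields a probability space $(\tilde{\Omega}, \tilde{\mathcal{F}}, \tilde{\mathbf{P}})$ supporting random elements with the stated joint laws along which
\begin{equation}
  (V_n, a_n^{-1} R_n, \rho_n, b_n^{-1} \muh_n) \to (F, R, \rho, \mu)
\end{equation}
holds $\tilde{\mathbf{P}}$-almost surely in the local Gromov--Hausdorff-vague topology; we retain the original symbols for these coupled versions.

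The crucial step is to verify that Assumption \ref{1. assum: aging, deterministic version}\ref{1. assum item: deterministic, the non-explosion condition} holds $\tilde{\mathbf{P}}$-a.s.\ on this coupling. Continuity of the effective resistance from the root to the complement of a ball under LGHV convergence of recurrent resistance metric spaces---a consequence of the GHP convergence of the restrictions $G^{(r)}$ and the stability of resistance metrics from \cite{Noda_pre_Scaling}---yields that $\tilde{\mathbf{P}}$-a.s.\ and for Lebesgue-a.e.\ $r > 0$,
\begin{equation}
  a_n^{-1} R_n\bigl(\rho_n, B_{R_n}(\rho_n, a_n r)^c\bigr) \longrightarrow R\bigl(\rho, B_R(\rho, r)^c\bigr) \quad \text{as } n \to \infty.
\end{equation}
Pushing Assumption \ref{1. assum: aging, random version}\ref{1. assum item: random, the non-explosion condition}---whose distributional content is preserved under Skorohod coupling---through this pointwise convergence gives that $R(\rho, B_R(\rho, r)^c) \to \infty$ in $\tilde{\mathbf{P}}$-probability as $r \to \infty$; since this quantity is non-decreasing in $r$, the convergence is in fact $\tilde{\mathbf{P}}$-a.s. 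Combined with the previous pointwise convergence, this yields $\tilde{\mathbf{P}}$-a.s.\
\begin{equation}
  \lim_{r \to \infty} \liminf_n a_n^{-1} R_n\bigl(\rho_n, B_{R_n}(\rho_n, a_n r)^c\bigr) = \infty,
\end{equation}
which is exactly Assumption \ref{1. assum: aging, deterministic version}\ref{1. assum item: deterministic, the non-explosion condition}.

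With Assumption \ref{1. assum: aging, deterministic version} holding $\tilde{\mathbf{P}}$-a.s.\ for the coupled sequence, Theorem \ref{1. thm: aging for deterministic models} applied $\tilde{\omega}$-wise yields $\tilde{\mathbf{P}}$-a.s.\ convergence in \eqref{1. thm eq: convergence for aging result}; the matching of marginals under Skorohod coupling transfers this to the desired convergence in distribution on the original probability space. The main technical obstacle is the continuity of $R_n(\rho_n, B_{R_n}(\rho_n, r)^c)$ in $n$ under LGHV convergence, which relies on the boundedly compact resistance metric space framework developed in the paper and its companion \cite{Noda_pre_Scaling}; should this continuity fail at problematic radii, a fallback is a subsequence-and-diagonal Borel--Cantelli extraction, using monotonicity in $r$ to convert the in-probability non-explosion into almost-sure non-explosion along a sub-subsequence, combined with the subsequence characterisation of convergence in distribution.
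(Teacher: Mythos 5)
Your argument hinges on the continuity claim
\begin{equation}
  a_n^{-1}R_n\bigl(\rho_n, B_{R_n}(\rho_n, a_n r)^c\bigr) \longrightarrow R\bigl(\rho, B_R(\rho, r)^c\bigr)
\end{equation}
under local Gromov--Hausdorff--vague convergence, and this is where the proposal breaks down. This statement is not established in the paper or in \cite{Noda_pre_Scaling}, and it does not follow from GHP convergence of the restrictions $G^{(r)}$. The quantity $R_n(\rho_n, B_{R_n}(\rho_n, a_n r)^c)$ is the effective resistance from the root to the \emph{entire complement} of a ball, a global object that depends on the structure of $V_n$ arbitrarily far from the root; LGHV convergence constrains only the restrictions of the spaces to bounded balls, and says nothing about what lies outside. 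GHP convergence of the balls gives you convergence of resistance \emph{distances} between points of the balls (via the trace form), but not of the resistance from $\rho_n$ to the complement computed in the full, possibly transient, ambient network. Moreover, if the continuity did hold, then together with the recurrence of $(F,R)$ the non-explosion condition Assumption \ref{1. assum: aging, deterministic version}\ref{1. assum item: deterministic, the non-explosion condition} would be implied by Assumption \ref{1. assum: aging, deterministic version}\ref{1. assum item: deterministic, convergence of spaces} alone, rendering it redundant---contradicting its explicit inclusion here and in \cite{Croydon_18_Scaling}. You would also be running in a circle: recurrence of the limit is deduced in \cite[Theorem 5.1]{Noda_pre_Scaling} \emph{under the full} Assumption \ref{1. assum: aging, deterministic version}, so it cannot be fed back in to establish part \ref{1. assum item: deterministic, the non-explosion condition}.

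The paper avoids this entirely. Rather than trying to upgrade Assumption \ref{1. assum: aging, random version}\ref{1. assum item: random, the non-explosion condition} from an in-probability to an almost-sure statement, Lemma \ref{6. lem: sub-aging, convergence of traps for random electrical networks} (and its aging analogue Lemma \ref{6. lem: aging, convergence of traps for random electrical networks}) first establish joint convergence in distribution of the whole tuple including the process law. The mechanism is a truncation argument: the BTM is compared to its trace on a compact ball $V_n^{(a_n r)}$, with Lemmas \ref{6. lem: Prohorov distance estimate} and \ref{6. lem: distance between original and trace} giving a \emph{quantitative} Prohorov-distance bound in terms of $R_n(\rho_n, B_{R_n}(\rho_n, a_n r)^c)$ and $\nu_n(B_{R_n}(\rho_n, a_n))$. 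The non-explosion hypothesis and Lemma \ref{6. lem: uniform lower bound for random traps} are used exactly as in-probability controls on these error terms, and one averages over $s \in (r, r+1)$ to handle exceptional radii. Because the truncated objects live on compact spaces, the deterministic convergence applies to them without any explosion concern, and letting $r \to \infty$ closes the gap. Only after Lemma \ref{6. lem: aging, convergence of traps for random electrical networks} is established does the paper invoke Skorohod representation to pass to an almost-sure coupling and apply Theorem \ref{5. thm: aging}.

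Your fallback---a Borel--Cantelli extraction along a subsequence---does not repair the gap: Assumption \ref{1. assum: aging, random version}\ref{1. assum item: random, the non-explosion condition} is a $\liminf$ of probabilities with no independence or summability structure across $n$, so one cannot convert it into an almost-sure bound on $a_n^{-1}R_n(\rho_n, B_{R_n}(\rho_n, a_n r)^c)$ valid for a sequence of radii tending to infinity, even along a subsequence, without something like the truncation estimate the paper uses.
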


\begin{rem} \label{1. rem: convergence of annealed aging functions for random electrical networks}
  Similarly to Remark \ref{1. rem: convergence of annealed aging functions},
  under Assumption \ref{1. assum: aging, random version},
  we deduce from Theorem \ref{1. thm: aging for random models} that,
  for any $s, t> 0$,
  $\mathsf{E}_{n} \bigl[\tilde{\Phi}_{n}^{\nu_{n}}(s,t)\bigr] \xrightarrow{\mathrm{d}} \mathsf{E} \bigl[\Phi^{\nu}(s,t)\bigr]$
  and 
  \begin{equation}
    \lim_{n \to \infty} 
    \mathbf{E}_{n} 
    \bigl[
      \mathsf{E}_{n} \bigl[\tilde{\Phi}_{n}^{\nu_{n}}(s,t)\bigr]
    \bigr]
    = 
    \mathbf{E}
    \bigl[
      \mathsf{E} \bigl[\Phi^{\nu}(s,t)\bigr]
    \bigr].
  \end{equation}
  where $\mathbf{E}_{n}$ and $\mathbf{E}$ denote the expectations with respect to $\mathbf{P}_{n}$ and $\mathbf{P}$, respectively.
\end{rem}

For sub-aging, we consider a random version of Assumption \ref{1. assum: sub-aging, deterministic version}, given below.

\begin{assum} \label{1. assum: sub-aging, random version}
  Assumption \ref{1. assum: aging, random version}\ref{1. assum item: random, the non-explosion condition} is satisfied,
  and there exists a random element $(F, R, \dot{\mu})$ of $\rbcM(\markedMeasFunct{\RNp})$ 
  such that 
  \begin{equation} \label{1. assum eq: sub-aging, convergence for random version}
    (V_{n}, a_{n}^{-1}R_{n}, \rho_{n}, b_{n}^{-1} \dotmuh_{n} ) 
    \xrightarrow{\mathrm{d}} 
    (F, R, \rho, \dot{\mu})
  \end{equation}
  in the space $\rbcM(\markedMeasFunct{\RNp})$.
  Moreover,
  a measure $\mu$ on $F$ defined by 
  \begin{equation} \label{1. eq: dfn of mu for sub-aging, random version}
    \mu(A) 
    \coloneqq 
    \dot{\mu}(A \times \RNp),
    \quad 
    \forall A \in \Borel(F)
  \end{equation}
  is of full support and non-atomic with probability $1$.
\end{assum}

\begin{thm} \label{1. thm: sub-aging for random models}
  Under Assumption \ref{1. assum: sub-aging, random version},
  the convergence \eqref{1. thm: sub-aging for deterministic models} holds in distribution.
\end{thm}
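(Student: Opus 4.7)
The plan is to reduce Theorem \ref{1. thm: sub-aging for random models} to its deterministic counterpart, Theorem \ref{1. thm: sub-aging for deterministic models}, using the Skorohod representation theorem together with the subsequence principle for convergence in distribution. The target space $\rbcM(\markedMeasFunct{\RNp} \times \probFunct(\disMeasFunct \times \probFunct(\SkorohodFunct) \times \fixedFunct{C(\RNpp^{2}, \RNp)} \times \tau^{C(\RNp \times \RNpp, \RNp)}))$ is Polish by the metrization results of this paper, so Skorohod representation is applicable.

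By the subsequence principle, it suffices to show that any subsequence admits a further subsequence along which \eqref{1. thm eq: convergence for sub-aging result} holds in distribution. Fix such a subsequence. Applying the Skorohod representation theorem to the convergence \eqref{1. assum eq: sub-aging, convergence for random version}, I would extract a further subsequence and construct a probability space on which
\[
  (V_{n}, a_{n}^{-1} R_{n}, \rho_{n}, b_{n}^{-1} \dotmuh_{n}) \to (F, R, \rho, \dot{\mu})
\]
in $\rbcM(\markedMeasFunct{\RNp})$ almost surely, and on which the measure $\mu$ defined via \eqref{1. eq: dfn of mu for sub-aging, random version} is almost surely fully supported and non-atomic by Assumption \ref{1. assum: sub-aging, random version}.

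The main obstacle is upgrading the in-probability non-explosion condition of Assumption \ref{1. assum: aging, random version}\ref{1. assum item: random, the non-explosion condition} to the almost sure pathwise form required by Assumption \ref{1. assum: aging, deterministic version}\ref{1. assum item: deterministic, the non-explosion condition}. Exploiting the monotonicity of $r \mapsto a_{n}^{-1} R_{n}(\rho_{n}, B_{R_{n}}(\rho_{n}, a_{n}r)^{c})$, one can perform a diagonal extraction over countably many thresholds $(\lambda_{\ell}, r_{m}) \in \NN^{2}$ with $\lambda_{\ell}, r_{m} \to \infty$ and invoke Borel-Cantelli to pass to a further sub-subsequence on which the pathwise non-explosion holds almost surely. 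Alternatively, one may enrich \eqref{1. assum eq: sub-aging, convergence for random version} with the non-decreasing random functions $r \mapsto a_{n}^{-1} R_{n}(\rho_{n}, B_{R_{n}}(\rho_{n}, a_{n}r)^{c})$, establish joint convergence in distribution, and then apply Skorohod once to the enriched tuple.

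With the pathwise non-explosion secured, Assumption \ref{1. assum: sub-aging, deterministic version} holds for almost every realization on the constructed probability space. For each such realization, $(V_{n}, R_{n}, \rho_{n}, \dotmuh_{n})$ is a deterministic sequence to which Theorem \ref{1. thm: sub-aging for deterministic models} applies pathwise, yielding the convergence \eqref{1. thm eq: convergence for sub-aging result} for that realization. Hence the convergence holds almost surely on the coupling, which implies the claimed convergence in distribution along the extracted sub-subsequence. Since the original subsequence was arbitrary, the subsequence principle yields the theorem.
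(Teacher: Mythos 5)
The step that upgrades the in-probability non-explosion condition to the pathwise statement required by Assumption \ref{1. assum: aging, deterministic version}\ref{1. assum item: deterministic, the non-explosion condition} is a genuine gap, and your proposed fixes do not close it. After coupling, the marginal laws are preserved, so what you have is: for each $\lambda$,
\begin{equation}
  \lim_{r \to \infty} \liminf_{n \to \infty} \mathbb{P}\bigl( Y_{n}(r) > \lambda \bigr) = 1,
  \quad \text{where } Y_{n}(r) := a_{n}^{-1} R_{n}\bigl( \rho_{n}, B_{R_{n}}(\rho_{n}, a_{n}r)^{c} \bigr).
\end{equation}
This is strictly weaker than $\lim_{r}\liminf_{n} Y_{n}(r) = \infty$ a.s., and the implication fails even after passing to subsequences. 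For a fixed $\lambda$, the condition only gives $\limsup_{n}\mathbb{P}(Y_{n}(r) \leq \lambda) \leq \delta$ once $r$ is large; it does \emph{not} force $\mathbb{P}(Y_{n}(r) \leq \lambda) \to 0$ in $n$ for a fixed $r$. If, say, $\mathbb{P}(Y_{n}(r) \leq \lambda)$ is bounded away from zero uniformly in $n$ (tending to zero only as $r \to \infty$), no subsequence makes these probabilities summable, so Borel--Cantelli I cannot yield ``eventually $Y_{n_{k}}(r) > \lambda$'' for a fixed $r$, and with enough independence Borel--Cantelli II shows the pathwise condition in fact fails a.s. Your alternative of enriching the Skorohod tuple with $r \mapsto Y_{n}(r)$ has the same problem: the assumption gives no tightness for these functions, and local Gromov--Hausdorff(--vague) convergence of the spaces does not control effective resistance to infinity, so there is no reason the enriched tuple converges, nor that any limit would satisfy the pathwise property.

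The paper's proof (Lemma \ref{6. lem: sub-aging, convergence of traps for random electrical networks} combined with Theorems \ref{5. thm: aging} and \ref{5. thm: sub-aging}) avoids exactly this: it applies the deterministic machinery only to the \emph{truncated} spaces $(V_{n}^{(a_{n}r)}, a_{n}^{-1} R_{n}^{(a_{n}r)}, \ldots)$, which are compact and hence satisfy the deterministic non-explosion condition trivially, and then uses the in-probability non-explosion together with the exit-time estimate (Lemma \ref{4. lem: exit time estimate} via Lemma \ref{6. lem: distance between original and trace}) and Lemma \ref{6. lem: uniform lower bound for random traps} to bound the Prohorov distance between the truncated and untruncated objects, uniformly in $n$ and small as $r \to \infty$. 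This truncation-and-Prohorov-approximation argument is the essential ingredient that your proposal is missing; the rest of your reduction (Skorohod coupling of the space convergence and invoking the deterministic Theorem \ref{1. thm: sub-aging for deterministic models}) is in the right spirit but cannot stand on its own.
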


\begin{rem} \label{1. rem: convergence of annealed sub-aging functions for random electrical networks}
  Similarly to Remark \ref{1. rem: convergence of annealed aging functions},
  under Assumption \ref{1. assum: sub-aging, random version},
  we deduce from Theorem \ref{1. thm: sub-aging for random models} that,
  for any $s \geq 0$ and $t > 0$,
  $\mathsf{E}_{n} \bigl[\tilde{\Psi}_{n}^{\nu_{n}}(s,t)\bigr] \xrightarrow{\mathrm{d}} \mathsf{E} \bigl[\Psi^{\nu}(s,t)\bigr]$ 
  and 
  \begin{equation}
    \lim_{n \to \infty} 
    \mathbf{E}_{n} 
    \bigl[
      \mathsf{E}_{n} \bigl[\tilde{\Psi}_{n}^{\nu_{n}}(s,t)\bigr]
    \bigr]
    = 
    \mathbf{E}
    \bigl[
      \mathsf{E} \bigl[\Psi^{\nu}(s,t)\bigr]
    \bigr].
  \end{equation}
\end{rem}

\begin{rem}
  More generally than our BTMs,
  one can consider non-symmetric BTMs (cf.\ \cite[Definition 2.1]{Arous_Cerny_06_Dynamics}).
  Fix $a \in (0, 1]$ and an electrical network $G$.
  Conditional on $\nu_{G}$,
  the non-symmetric BTM with parameter $a$ on $G$ refers to the continuous-time Markov chain $X^{\nu_{G}}$
  with generator 
  \begin{equation}
    (\Delta^{\nu_{G}}f)(x) 
    \coloneqq 
    \sum_{y \in V_{G}} \frac{\mu_{G}(x, y) (\nu_{G}(\{x\})\nu_{G}(\{y\}))^{a}}{\nu_{G}(\{x\})} (f(y) - f(x)).
  \end{equation} 
  If we define an electrical network $G'$ by setting $V_{G'} \coloneqq V_{G}$ 
  and $\mu_{G'}(x,y) \coloneqq \mu_{G}(x, y) (\nu_{G}(\{x\})\nu_{G}(\{y\}))^{a}$,
  then $X^{\nu_{G}}$ is the process associated with $(V_{G'}, R_{G'}, \nu_{G})$.
  Thus,
  when one considers a non-symmetric BTM,
  the associated resistance metric becomes random and does not coincide with the resistance metric on $G$.
  Our arguments work even for these non-symmetric BTMs,
  and similar results hold once the corresponding assumptions with respect to associated resistance metrics are verified.
  However, it is not easy to check in general,
  and so we only consider symmetric BTMs in this article.
\end{rem}

In the proof of our main results,
it is crucial to find a coupling of traps $\nu_{n}$ and $\nu$ 
so that $c_{n}^{-1} \nu_{n} \to \nu$ almost surely 
vaguely and in the point process sense,
where we recall that the convergence in the point process sense is notion of convergence of discrete measures 
introduced in \cite[Definition 2.2]{Fontes_Isopi_Newman_02_Random}
and means convergence of atoms.
When discrete measures converge both vaguely and in the point process sense,
we say that they converge in the vague-and-point-process topology. 
In that paper, 
where they prove the convergence of the scaled BTM on $\ZN$ to the F.I.N.\ diffusion,
they constructed such a coupling by using L\'{e}vy processes and the usual $J_{1}$-Skorohod topology.
However,
their argument cannot be applied to general graphs.
In this paper,
we construct the coupling by the Skorohod representation theorem (cf.\ \cite[Theorem 5.31]{Kallenberg_21_Foundations}).
Specifically,
we first show that it is possible to define a complete, separable metric on the set of discrete measures 
that induces the vague-and-point-process topology,
which can be seen as an extension of the usual $J_{1}$-Skorohod topology.
Then, it is an immediate consequence of the Skorohod representation theorem 
that there exists a desired coupling once the convergence $c_{n}^{-1} \nu_{n} \to \nu$ in distribution is verified.

The remainder of the article is organized as follows.
In Section \ref{sec: The vague-and-point-process topology},
we prove that the vague-and-point-process topology introduced above is a Polish topology.
In Section \ref{sec: GH-type topologies},
we introduce the Gromov-Hausdorff-type topologies which are used to discuss convergence of objects on different metric spaces.
In Section \ref{sec: resistance forms},
we recall some fundamental results 
about the theory of resistance forms and study transition densities of processes on measured resistance metric spaces.
In particular,
it is proven that if a family of measured resistance metric spaces is precompact in the local Gromov-Hausdorff-vague topology,
then the family of the transition densities of associated processes is precompact.
This result is used to prove the precompactness of two-point functions.
In Section \ref{sec: (sub-)aging for deterministic traps},
we prove that if deterministic traps converge in the vague-and-point-process topology,
then the (sub-)aging functions converge.
Combining this result with the above-mentioned coupling of traps,
we establish the main results in Section \ref{sec: Proof of main results}.
Finally, in Section \ref{sec: applications},
we present some examples to which our main results are applicable.


\section{The vague-and-point-process topology} \label{sec: The vague-and-point-process topology}

Convergence of discrete measures in the vague-and-point-process topology 
means the convergence both in the vague topology and in the point process sense \cite[Definition 2.2]{Fontes_Isopi_Newman_02_Random}.
In Section \ref{sec: The vague metric},
we recall fundamental results on the vague topology,
and then we introduce and study the vague-and-point-process topology in Sections \ref{sec: the space of discrete measures} and \ref{sec: a space including the space of discrete measures}.
We note that, given a topological space $S$, $\Borel(S)$ denotes the totality of Borel subsets in $S$,
$\id_{S}$ denotes the identity map from $S$ to itself,
and, for a subset $A$ of $S$, $\partial A = \partial_{S} A$ denotes the boundary of $A$ in $S$.
For a function $f: S \to \RN$, we write $\| f\|_{\infty} \coloneqq \sup\{|f(x)| \mid x \in S\}$.


\subsection{The vague metric} \label{sec: The vague metric}

In this subsection,
we introduce the vague metric, which induces the vague topology on the set of measures.
Let $(S, d^{S}, \rho_{S})$ be a rooted boundedly-compact metric space.
Write $\finMeas(S)$
for the set of finite Borel measures on $S$,
which we equip with the weak topology.
Recall that the weak topology is induced from the \textit{Prohorov metric} $d_{P}^{S}$ 
given by 
\begin{equation} \label{2. eq: dfn of Prohorov metric}
  d_{P}^{S}(\mu, \nu) 
  \coloneqq
  \inf\{
    \varepsilon > 0 \mid \mu(A) \leq \nu(\neighb{A}{\varepsilon}) + \varepsilon,\, 
    \nu(A) \leq \mu(\neighb{A}{\varepsilon}) + \varepsilon,\, \forall A \subseteq \Borel(S)
  \},
\end{equation}
where we set 
\begin{equation} \label{2. eq: dfn of closed neighborhood}
  \neighb{A}{\varepsilon} 
  \coloneqq
  \{
    x \in S \mid \exists y \in A\ \text{such that}\ d^{S}(x,y) \leq \varepsilon
  \}.
\end{equation} 

\begin{dfn} [{The vague metric $d_{V}^{S, \rho_{S}}$}]
  We denote the set of Radon measures on $S$ by $\mathcal{M}(S)$.
  For $\mu \in \mathcal{M}(S)$,
  we write $\mu^{(r)}$ for the restriction of $\mu$ to $S^{(r)} \coloneqq \closure (B_{S}(\rho, r))$, 
  that is,
  $\mu^{(r)}$ is a finite Borel measure given by
  \begin{equation}  \label{2. dfn eq: restriction of a measure to a closed ball}
    \mu^{(r)}(\cdot) 
    \coloneqq
    \mu|_{S^{(r)}}(\cdot)
    =
    \mu \left( \cdot \cap S^{(r)} \right).
  \end{equation} 
  We then define, for each $\mu, \nu \in \mathcal{M}(S)$,
  \begin{equation}
    d_{V}^{S, \rho_{S}}(\mu, \nu)
    \coloneqq
    \int_{0}^{\infty} e^{-r} \left( 1 \wedge d_{P}^{S}(\mu^{(r)}, \nu^{(r)}) \right) dr.
  \end{equation}
\end{dfn}

\begin{thm} [{\cite[Theorems 2.37 and 2.39]{Noda_pre_Metrization}}] \label{2. thm: convergence in the vague topology}
  The function $d_{V}^{S, \rho_{S}}$ is a complete, separable metric on $\mathcal{M}(S)$.
  Let $\mu, \mu_{1}, \mu_{2}, \ldots$ be 
  Radon measures on $S$.
  Then these conditions are equivalent:
  \begin{enumerate}
    \item $\mu_{n}$ converges to $\mu$ with respect to $d_{V}^{S, \rho_{S}}$;
    \item $\mu_{n}^{(r)}$ converges weakly to $\mu^{(r)}$ for all but countably many $r>0$;
    \item $\mu_{n}$ converges vaguely to $\mu$, that is, for all continuous functions $f : S \to \RN$ with compact support,
      it holds that 
      \begin{equation}
        \lim_{n \to \infty} \int_{S} f(x)\, \mu_{n}(dx) = \int_{S} f(x)\, \mu(dx).
      \end{equation}
  \end{enumerate}
\end{thm}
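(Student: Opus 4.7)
The plan is to establish the metric properties, then the three-way equivalence of convergence notions, and finally completeness and separability. The whole argument leverages the fact that, because $(S, d^S)$ is boundedly compact, each closed ball $S^{(r)}$ is compact, so $(\finMeas(S^{(r)}), d_P^S)$ is a Polish space, and any Radon measure $\mu$ on $S$ restricts to a finite measure $\mu^{(r)}$ on $S^{(r)}$.

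For the basic metric properties of $d_V^{S, \rho_S}$: symmetry and the triangle inequality pass under the integral from the corresponding properties of the Prohorov metric. If $d_V^{S, \rho_S}(\mu, \nu) = 0$, then $d_P^S(\mu^{(r)}, \nu^{(r)}) = 0$ for Lebesgue-a.e.\ $r > 0$, so $\mu^{(r)} = \nu^{(r)}$ on a dense set of radii, and since every bounded Borel set sits inside some $S^{(r)}$, the Radon measures $\mu$ and $\nu$ coincide.

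For the three-way equivalence, my plan is: (iii) $\Rightarrow$ (ii) follows from the Portmanteau theorem, using that $\{r > 0 : \mu(\partial S^{(r)}) > 0\}$ is countable, since for a good radius $r$ the set $S^{(r)}$ is a continuity set and vague convergence on $S$ gives weak convergence of the restrictions. The implication (ii) $\Rightarrow$ (i) is dominated convergence, since the integrand $r \mapsto 1 \wedge d_P^S(\mu_n^{(r)}, \mu^{(r)})$ is bounded by $1$ and converges to $0$ for a.e.\ $r$ against the weight $e^{-r}$. For (i) $\Rightarrow$ (iii), I use the standard double-extraction trick: any subsequence of $(\mu_n)$ has a further subsequence along which $d_P^S(\mu_{n_k}^{(r)}, \mu^{(r)}) \to 0$ for a.e.\ $r$; for any continuous compactly supported test function on $S$, pick a good radius $r$ containing its support, and weak convergence of $\mu_{n_k}^{(r)}$ to $\mu^{(r)}$ yields convergence of the integrals.

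Separability will be established by approximating: the countable family of rational-coefficient finite sums of Dirac masses located on a countable dense subset of $S$ is $d_V^{S, \rho_S}$-dense, since each $(\finMeas(S^{(r)}), d_P^S)$ is separable and the weight $e^{-r}$ makes only finitely many radii effectively matter. The main obstacle will be completeness. Given a $d_V^{S, \rho_S}$-Cauchy sequence $(\mu_n)$, the strategy is to extract a subsequence along which $(\mu_n^{(r)})$ is $d_P^S$-Cauchy, hence convergent to some $\tilde{\mu}_r \in \finMeas(S^{(r)})$, for a.e.\ $r > 0$; then patch these candidate limits into a single Radon measure $\mu$ on $S$. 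The delicate point will be the consistency across radii: for $r_1 < r_2$ both good, I must show that $\tilde{\mu}_{r_2}|_{S^{(r_1)}} = \tilde{\mu}_{r_1}$, which I would handle by working along a countable exhaustion of $S$ by balls whose boundaries carry no limiting mass and using uniqueness of weak limits in each $\finMeas(S^{(r)})$. Once $\mu$ is constructed, convergence $\mu_n \to \mu$ in $d_V^{S, \rho_S}$ will follow from dominated convergence plus the standard upgrade of subsequential convergence for a Cauchy sequence.
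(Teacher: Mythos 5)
This theorem is not proved in the paper at hand; it is imported as Theorems 2.37 and 2.39 of the cited preprint \cite{Noda_pre_Metrization}, so there is no internal proof against which to match your approach. On the merits, your outline is substantively correct. The cycle (iii) $\Rightarrow$ (ii) $\Rightarrow$ (i) $\Rightarrow$ (iii) is the right decomposition: for (iii) $\Rightarrow$ (ii) you use that $\partial S^{(r)} \subseteq \{x : d^{S}(\rho_S,x)=r\}$, so these boundaries are pairwise disjoint and only countably many can carry $\mu$-mass, and a $\mu$-continuity-set restriction upgrades vague convergence to weak convergence of $\mu_n^{(r)}$ (squeeze $1_{S^{(r)}}$ between compactly supported continuous functions); (ii) $\Rightarrow$ (i) is dominated convergence; and (i) $\Rightarrow$ (iii) is the a.e.-subsequence trick combined with the fact that every compactly supported $f$ has support inside $B_{S}(\rho_S, r)$ for all large enough $r$. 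Positive-definiteness and separability are fine as sketched.

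The one place your plan is genuinely fragile is the consistency step in completeness that you yourself flag. If $\mu_{n_k}^{(r_1)} \to \tilde\mu_{r_1}$ and $\mu_{n_k}^{(r_2)} \to \tilde\mu_{r_2}$ weakly for two good radii $r_1 < r_2$, you only get $\tilde\mu_{r_2}|_{S^{(r_1)}} = \tilde\mu_{r_1}$ once you know $\tilde\mu_{r_2}(\partial S^{(r_1)}) = 0$, and arranging that is circular unless you build the good-radius set adaptively. A cleaner route that sidesteps this entirely: from the Cauchy property, pick for each $R$ a good radius $r > R$ at which $d_P^S(\mu_m^{(r)}, \mu_n^{(r)})$ is small, deduce $\sup_n \mu_n(S^{(R)}) < \infty$, then diagonalize over a countable uniformly dense family of compactly supported continuous functions to obtain a positive linear functional, hence by Riesz a Radon measure $\mu$ with $\mu_{n_k} \to \mu$ vaguely; finally invoke the already-established (iii) $\Rightarrow$ (i) and the Cauchy property to conclude the whole sequence converges in $d_V^{S,\rho_S}$. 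A very minor additional point worth stating explicitly: the integral defining $d_V^{S,\rho_S}$ needs the integrand $r \mapsto d_P^S(\mu^{(r)}, \nu^{(r)})$ to be measurable; this follows because $r \mapsto \mu(S^{(r)})$ is non-decreasing, so $r\mapsto \mu^{(r)}$ is $d_P^S$-continuous off a countable set, but you omitted this.
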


We call $d_{V}^{S, \rho_{S}}$ the \textit{vague metric (with the root $\rho_{S}$)}. 
The vague metric is preserved by root-and-distance-preserving maps. 
This is important for metrization of Gromov-Hausdorff-type topologies in Section \ref{sec: GH-type topologies}.
Here, we note that, for two rooted boundedly-compact metric spaces $(S_{i}, d^{S_{i}}, \rho_{S_{i}})$,
a map $f: S_{1} \to S_{2}$ is said to be \textit{root-preserving} if $f(\rho_{S_{1}}) = \rho_{S_{2}}$,
and \textit{distance-preserving} if $d^{S_{2}}(f(x), f(y)) = d^{S_{1}}(x,y)$ for all $x,y \in S_{1}$. 

\begin{prop} [{\cite[Propositions 2.32 and 2.41]{Noda_pre_Metrization}}] \label{2. prop: hausdorff and vague metrics are preserved}
  Let $(S_{i}, d^{S_{i}}, \rho_{i}),\, i=1,2$ be rooted boundedly-compact metric spaces 
  and $f: S_{1} \to S_{2}$ be a root-and-distance-preserving map.
  Then, the map from $(\Meas(S_{1}),\allowbreak d_{V}^{S_{1}, \rho_{S_{1}}})$ to $(\Meas(S_{2}),\allowbreak d_{V}^{S_{2}, \rho_{S_{2}}})$ 
  given by $\mu \mapsto \mu \circ f^{-1}$ is distance-preserving.
\end{prop}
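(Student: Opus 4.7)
The plan is to reduce the claim to a preservation statement for the Prohorov metric and then integrate in the radius parameter, carefully handling a subtle mismatch between the restriction sets on $S_1$ and $S_2$.

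First, I would establish as an auxiliary lemma that for any distance-preserving map $g: T_1 \to T_2$ between metric spaces with $T_1$ complete, and any $\mu, \nu \in \finMeas(T_1)$,
\begin{equation*}
  d_P^{T_2}(\mu \circ g^{-1}, \nu \circ g^{-1}) = d_P^{T_1}(\mu, \nu).
\end{equation*}
The $\leq$ direction is immediate: for any Borel $A \subseteq T_2$, the preimage $B \coloneqq g^{-1}(A)$ satisfies $\neighb{B}{\varepsilon} \subseteq g^{-1}(\neighb{A}{\varepsilon})$ (since $g$ is distance-preserving), so every $\varepsilon$-Prohorov inequality for $\mu, \nu$ in $T_1$ transfers to one for $\mu \circ g^{-1}, \nu \circ g^{-1}$ in $T_2$. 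The $\geq$ direction requires slightly more care: completeness of $T_1$ together with $g$ being an isometric embedding imply that $g(T_1)$ is closed in $T_2$ and that $g: T_1 \to g(T_1)$ is a homeomorphism, so for every Borel $B \subseteq T_1$, $g(B)$ is Borel in $T_2$; moreover injectivity plus distance preservation yield the exact identity $g^{-1}(\neighb{g(B)}{\varepsilon}) = \neighb{B}{\varepsilon}$, which lets us pull back Prohorov inequalities in $T_2$ to ones in $T_1$.

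Next, I would identify the two restricted measures $(\mu \circ f^{-1})^{(r)}$ and $\mu^{(r)} \circ f^{-1}$. A direct computation from the definitions gives
\begin{equation*}
  (\mu \circ f^{-1})^{(r)}(A) = \mu(f^{-1}(A) \cap f^{-1}(S_2^{(r)})), \qquad (\mu^{(r)} \circ f^{-1})(A) = \mu(f^{-1}(A) \cap S_1^{(r)}),
\end{equation*}
so they agree provided $\mu$ assigns no mass to $f^{-1}(S_2^{(r)}) \setminus S_1^{(r)}$. Since $f$ is root-and-distance-preserving, one easily verifies $f^{-1}(B_{S_2}(\rho_{S_2}, r)) = B_{S_1}(\rho_{S_1}, r)$ and that $x \in f^{-1}(S_2^{(r)})$ forces $d^{S_1}(x, \rho_{S_1}) \leq r$, so the difference set $f^{-1}(S_2^{(r)}) \setminus S_1^{(r)}$ lies in the sphere $\{x \in S_1 : d^{S_1}(x, \rho_{S_1}) = r\}$. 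Because $\mu$ is Radon (and every closed ball of $S_1$ has finite $\mu$-mass), the set of radii $r > 0$ for which this sphere has positive $\mu$-mass is countable; the same holds for $\nu$.

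Combining, for all but countably many $r > 0$ we have $(\mu \circ f^{-1})^{(r)} = \mu^{(r)} \circ f^{-1}$ and $(\nu \circ f^{-1})^{(r)} = \nu^{(r)} \circ f^{-1}$, whereupon the auxiliary lemma applied to $g = f$ (valid since $S_1$ is boundedly compact, hence complete) yields equality of the Prohorov distances for almost every $r$. Integrating against $e^{-r}\,dr$ in the definition of $d_V$ then gives the stated identity $d_V^{S_2, \rho_{S_2}}(\mu \circ f^{-1}, \nu \circ f^{-1}) = d_V^{S_1, \rho_{S_1}}(\mu, \nu)$. The main obstacle is precisely the mismatch between $S_1^{(r)} = \closure(B_{S_1}(\rho_{S_1}, r))$ and $f^{-1}(S_2^{(r)})$, which can differ because $S^{(r)}$ need not equal $D_S(\rho_S, r)$; the countable-exceptions argument resolves this cleanly since we integrate over $r$.
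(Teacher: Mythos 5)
The paper imports this proposition from \cite{Noda_pre_Metrization} without reproducing the argument, so there is no in-text proof to compare against; the assessment is therefore whether your proof stands on its own, and it does. Both ingredients are sound: the isometry-invariance of the Prohorov metric is established correctly in both directions (the reverse direction relies, as you say, on $g(T_1)$ being closed so that $g(B)$ is Borel, and on the exact identity $g^{-1}(\neighb{g(B)}{\varepsilon}) = \neighb{B}{\varepsilon}$), and the countable-exceptions argument correctly handles the genuine subtlety that $S_1^{(r)} = \closure(B_{S_1}(\rho_{S_1},r))$ is a closure \emph{in $S_1$}, so $f(S_1^{(r)})$ can miss boundary points of $S_2^{(r)}$ that pull back to the radius-$r$ sphere of $S_1$. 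One step you leave implicit but which is needed so that the symmetric difference reduces to the one-sided difference $f^{-1}(S_2^{(r)}) \setminus S_1^{(r)}$: the containment $S_1^{(r)} \subseteq f^{-1}(S_2^{(r)})$, which follows at once from $B_{S_1}(\rho_{S_1},r) = f^{-1}(B_{S_2}(\rho_{S_2},r)) \subseteq f^{-1}(S_2^{(r)})$ and the closedness of the latter set. With that noted, the argument is complete.
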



\subsection{The space \texorpdfstring{$\disMeasures(S)$}{of discrete measures}} \label{sec: the space of discrete measures}

In this subsection,
we define the vague-and-point-process topology,
which yields jointly vague convergence and convergence in the point process sense 
introduced in \cite[Definition 2.2]{Fontes_Isopi_Newman_02_Random}.
In particular, 
various characterizations of this topology in terms of convergence are given in Theorem \ref{2. thm: convergence in Mdis}.

Let $(S, d^{S}, \rho_{S})$ be a rooted boundedly-compact metric space.
Recall that a Radon measure $\nu$ is called a \textit{discrete measure} 
if it is written in the following form:
\begin{equation}  \label{2. eq: dfn of discrete measure}
  \nu 
  =
  \sum_{i \in I} w_{i} \delta_{x_{i}},
\end{equation}
where $I$ is a countable set,
$w_{i}$ is a positive number,
$x_{i}$ is an element of $S$ such that $x_{i} \neq x_{j}$ if $i \neq j$,
and $\delta_{x_{i}}$ denotes the Dirac measure putting mass $1$ at $x_{i}$.
The representation of \eqref{2. eq: dfn of discrete measure} is called an \textit{atomic decomposition} of $\nu$
and is unique up to the order of terms 
(cf.\ \cite[Lemma 1.6]{Kallenberg_17_Random}).
We say that $\nu$ is \textit{simple} if $w_{i} = 1$ for all $i \in I$.
We write $\At(\nu) = \{x_{i}\}_{i \in I}$ for the set of the atoms of $\nu$.

\begin{prop} \label{2. prop: atom convergence from simple measure convergence}
  Let $\nu, \nu_{1}, \nu_{2}, \cdots$ be simple measures on $S$.
  For each compact subset $K$ of $S$,
  consider the following condition.
  \begin{enumerate} [label = (VC), leftmargin = *]
    \item \label{2. prop: characterization of vague convergence of simple measures}
      Write $\{x_{i}\}_{i \in I_{K}} = \At(\nu) \cap K$ for the atoms lying in $K$.
      (NB. $I_{K}$ is a finite set.)
      Then, for all sufficiently large $n$,
      we can write $\At(\nu_{n}) \cap K = \{x_{i}^{(n)} \mid i \in I_{K}\}$ 
      in such a way that $x_{i}^{(n)} \to x_{i}$ for each $i \in I_{K}$.
  \end{enumerate}
  Then, the statements below are equivalent with each other.
  \begin{enumerate} [label = (\roman*)]
    \item \label{2. prop item: vague convergence of simple measures}
      The measures $\nu_{n}$ converge to $\nu$ vaguely.
    \item \label{2. prop item: atoms pairing for any compact subsets}
      Any compact subset $K$ of $S$ with $\nu(\partial K) = 0$ 
      satisfies \ref{2. prop: characterization of vague convergence of simple measures}.
    \item \label{2. prop item: atoms pairing for increasing compact subsets}
      There exists an increasing sequence $(D_{k})_{k \geq 1}$ of relatively compact open subsets of $S$ 
      such that $\bigcup_{k \geq 1} D_{k} = S$, $\mu(\partial D_{K}) = 0$ for each $k$, 
      and each $\closure(D_{k})$ satisfies \ref{2. prop: characterization of vague convergence of simple measures}.
  \end{enumerate}
\end{prop}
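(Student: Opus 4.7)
The plan is to establish the three-way equivalence via the cycle (iii) $\Rightarrow$ (i) $\Rightarrow$ (ii) $\Rightarrow$ (iii), using Theorem \ref{2. thm: convergence in the vague topology} as the main tool for vague convergence, together with the crucial observation that a simple measure assigns a non-negative integer value to every relatively compact Borel set.

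For (iii) $\Rightarrow$ (i), I would fix $f \in C(S, \RN)$ with compact support. Since $\{D_{k}\}_{k \geq 1}$ is an open exhaustion of $S$, the compactness of $\supp(f)$ forces $\supp(f) \subseteq D_{k_0}$ for some $k_0$. Applying (VC) to $\closure(D_{k_0})$ gives, for large $n$, a bijection $i \mapsto x_i^{(n)}$ between $\At(\nu_n) \cap \closure(D_{k_0})$ and $\At(\nu) \cap \closure(D_{k_0})$ with $x_i^{(n)} \to x_i$. Because $f$ vanishes outside $D_{k_0}$, one has $\int f\, d\nu_n = \sum_i f(x_i^{(n)})$ and $\int f\, d\nu = \sum_i f(x_i)$, and continuity of $f$ together with the finiteness of the index set yields convergence.

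For (i) $\Rightarrow$ (ii), I would fix a compact $K$ with $\nu(\partial K) = 0$ and write $\At(\nu) \cap K = \{x_1, \ldots, x_m\}$; this set is finite since $\nu$ is a Radon simple measure. For each $i$, only countably many radii make $\nu(\partial B_{S}(x_i, r)) > 0$, so I can pick a small $r > 0$ with all balls $A_i \coloneqq B_{S}(x_i, r)$ pairwise disjoint, contained in the interior of $K$, and of $\nu$-null boundary. A portmanteau-type consequence of Theorem \ref{2. thm: convergence in the vague topology} then yields $\nu_n(A_i) \to \nu(A_i) = 1$ for each $i$ and $\nu_n(K) \to m$. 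Since $\nu_n$ is integer-valued on relatively compact Borel sets, these limits become equalities for all sufficiently large $n$, which forces exactly one atom of $\nu_n$ in each $A_i$ and no atoms in $K \setminus \bigcup_i A_i$. Running $r = r_k \downarrow 0$ and observing that uniqueness of the atom in each $B_{S}(x_i, r_k)$ makes the assignment consistent as $k$ increases, a standard diagonal extraction produces the desired sequence $x_i^{(n)} \to x_i$.

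For (ii) $\Rightarrow$ (iii), I would choose $r_k \uparrow \infty$ so that $\nu(\{x \in S : d^{S}(x, \rho_{S}) = r_k\}) = 0$ for every $k$, which is possible because the Radon property of $\nu$ forces only countably many radii to carry positive sphere mass. Setting $D_k \coloneqq B_{S}(\rho_{S}, r_k)$ yields the required exhaustion by relatively compact open sets, and the inclusion $\partial \closure(D_k) \subseteq \{d^{S}(\cdot, \rho_{S}) = r_k\}$ gives $\nu(\partial \closure(D_k)) = 0$, so (ii) immediately produces (VC) on each $\closure(D_k)$. The main obstacle lies in (i) $\Rightarrow$ (ii): one must simultaneously pair all $m$ atoms of $\nu$ in $K$ with atoms of $\nu_n$ and also rule out stray atoms. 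This is precisely where the simplicity hypothesis is essential, as it upgrades the real-valued convergence $\nu_n(A) \to \nu(A)$ to an integer-to-integer equality for large $n$, which is the mechanism that both supplies the pairing and prohibits extras.
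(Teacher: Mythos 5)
Your proof is correct and takes essentially the same route as the paper: (iii)$\Rightarrow$(i) via finite sums over atoms, (i)$\Rightarrow$(ii) via weak convergence of restrictions plus integer-valuedness of simple measures on small balls around each atom, and (ii)$\Rightarrow$(iii) via an exhaustion by balls with $\nu$-null spheres. The one phrasing to tighten in (i)$\Rightarrow$(ii): ``diagonal extraction'' suggests passing to a subsequence, which you neither need nor want here; the correct statement is that for $n$ large, the unique atom $x_i^{(n)}$ of $\nu_n$ in $B_S(x_i, r_1)$ eventually lies in $B_S(x_i, r_k)$ for every $k$ (since the nested balls each contain exactly one atom and they are nested), giving $x_i^{(n)} \to x_i$ along the full sequence.
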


\begin{proof}
  Assume that \ref{2. prop item: vague convergence of simple measures} is satisfied.
  Fix a compact subset of $F$ with $\nu(\partial K) = 0$.
  Since $\nu_{n}|_{K} \to \nu|_{K}$ weakly,
  we have that $\nu_{n}(K) \to \nu(K) = \# I_{K}$.
  Hence,
  for all sufficiently large $n$,
  $\#(\At(\nu_{n}) \cap K) = \# I_{K}$.
  Fix a small $\varepsilon \in (0,1)$ such that 
  $D_{S}(x_{i}, 2\varepsilon) \subseteq K$ for each $i \in I_{K}$
  and 
  $D_{S}(x_{i}, 2\varepsilon) \cap D_{S}(x_{j}, 2\varepsilon) = \emptyset$ if $i \neq j$. 
  Since we have that $d_{P}^{S}(\nu_{n}|_{K}, \nu|_{K}) < \varepsilon$ for all sufficiently large $n$,
  we deduce from the definition of the Prohorov metric that 
  \begin{equation}
    1-\varepsilon
    \leq 
    \nu_{n}(\{x_{i}\}^{\langle \varepsilon \rangle})
    \leq 
    1+\varepsilon,
  \end{equation}
  which implies that there is exactly one atom of $\nu_{n}$ in $D_{S}(x_{i}, \varepsilon)$,
  which is denoted by $x_{i}^{(n)}$.
  We then have that $\At(\nu_{n}) \cap K = \{x_{i}^{(n)} \mid i \in I_{K}\}$ for all sufficiently large $n$.
  Fix $i \in I_{K}$.
  Given $\eta \in (0, \varepsilon)$,
  by the same argument,
  one can check that 
  for all sufficiently large $n$
  there is exactly one atom of $\nu_{n}$ in $D_{S}(x_{i}, \eta)$.
  Since $D_{S}(x_{i}, \eta) \subseteq D_{S}(x_{i}, \varepsilon)$,
  the atom must be $x_{i}^{(n)}$.
  Therefore,
  we obtain that $x_{i}^{(n)} \to x_{i}$.

  The implication \ref{2. prop item: atoms pairing for any compact subsets} $\Rightarrow$ \ref{2. prop item: atoms pairing for increasing compact subsets} is straightforward.
  Assume that \ref{2. prop item: atoms pairing for increasing compact subsets} is satisfied.
  Fix a compactly supported continuous function $f$ on $S$.
  Let $D_{k}$ be such that $D_{k}$ contains the support of $f$.
  Write $\At(\nu) \cap D_{k} = \{x_{i}\}_{i \in I_{D_{k}}}$ and $\At(\nu_{n}) = \{x_{i}^{(n)}\}_{i \in I_{D_{k}}}$
  in such a way that $x_{i}^{(n)} \to x_{i}$ for each $i \in I_{D_{k}}$.
  Then, since $I_{K}$ is finite, we deduce that 
  \begin{equation}
    \lim_{n \to \infty} \int f(x)\, \nu_{n}(dx)
    = 
    \lim_{n \to \infty} \sum_{i \in I_{D_{k}}} f(x_{i}^{(n)})
    = 
    \sum_{i \in I_{D_{k}}} f(x_{i})
    = 
    \int f(x)\, \nu(dx),
  \end{equation}
  which establishes \ref{2. prop item: vague convergence of simple measures}.
\end{proof}

\begin{dfn} [{The space $\disMeasures(S)$}]
  We define $\disMeasures(S)$ to be the collection of discrete Radon measures $\mu$ on $S$.
\end{dfn}

Given $\nu \in \disMeasures(S)$ written in the form \eqref{2. eq: dfn of discrete measure},
we define 
\begin{equation} \label{2. eq: dfn of point operator}
  \pointMap(\nu) 
  \coloneqq 
  \sum_{i \in I} \delta_{(x_{i}, w_{i})}.
\end{equation}
It is easy to check that $\pointMap(\nu)$ is a discrete Radon measure on $S \times \RNpp$
by observing that there are only finitely many atoms of $\pointMap(\nu)$ 
in any compact subset of $S \times \RNpp$.

Before proceeding with the discussion of discrete measures, 
we make some technical remarks regarding the space $\RNpp$.
The space $\RNpp$ equipped with the usual Euclidean metric $d^{\RN}(v, w) = |v-w|$
is not a boundedly-compact metric space.
Indeed, $(0, 1]$ is bounded and closed in $\RNpp$ but not compact.
Throughout this paper,
we equip $\RNpp$ with another metric $d^{\RNpp}$ given by 
\begin{equation}
  d^{\RNpp}(v,w) 
  \coloneqq 
  |\log v - \log w|,
  \quad 
  v, w \in \RNpp.
\end{equation} 
It is elementary to check 
that the topology on $\RNpp$ induced from $d^{\RNpp}$ coincides with the Euclidean topology 
and $(\RNpp, d^{\RNpp})$ is boundedly compact.
We set $1 \in \RNpp$ to be the root of $\RNpp$.
Note that the closed ball with radius $r$ centered at $1$ is $[e^{-r}, e^{r}]$.
We then think the product space $S \times \RNpp$ as a rooted boundedly-compact metric space 
by equipping it with the root $\tilde{\rho}_{S} \coloneqq (\rho_{S}, 1)$ 
and the max product metric $d^{S \times \RNpp}$ given by
\begin{equation} \label{2. eq: max product metric}
  d^{S \times \RNpp}( (x, v), (y, w) )
  \coloneqq 
  d^{S}(x, y) \vee d^{\RNpp}(v, w),
  \quad 
  (x,v), (y, w) \in S \times \RNpp.
\end{equation}

For $\nu_{1}, \nu_{2} \in \disMeasures(S)$,
we define 
\begin{equation}
  d_{\disMeasures}^{S, \rho_{S}}(\nu_{1}, \nu_{2})
  \coloneqq 
  d_{V}^{S, \rho_{S}}(\nu_{1}, \nu_{2})
  \vee 
  d_{V}^{S \times \RNpp, \tilde{\rho}_{S}}(\pointMap(\nu_{1}), \pointMap(\nu_{2})).
\end{equation}

\begin{prop} \label{2. prop: checking metric on Mdis}
  The function $d_{\disMeasures}^{S, \rho_{S}}$ is a metric on $\disMeasures(S)$.
\end{prop}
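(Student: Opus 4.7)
The plan is to view $d_{\disMeasures}^{S,\rho_{S}}$ as the maximum of two pseudometrics on $\disMeasures(S)$, one of which (namely $d_{V}^{S,\rho_{S}}$) is already known to be a full metric, and then invoke the elementary fact that such a maximum is itself a metric. In this way everything reduces to Theorem 2.2 together with a small check that the second term in the definition is well-defined.

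First I would verify that both ingredients in the definition of $d_{\disMeasures}^{S,\rho_{S}}$ make sense on $\disMeasures(S)$. By Theorem 2.2 the function $d_{V}^{S,\rho_{S}}$ is a (complete, separable) metric on $\Meas(S)$, hence on the subset $\disMeasures(S)$ as well. For the second term, I would note that $(S \times \RNpp, d^{S \times \RNpp}, \tilde{\rho}_{S})$ is itself a rooted boundedly-compact metric space, so Theorem 2.2 applies there too, and one needs only that $\pointMap(\nu) \in \Meas(S \times \RNpp)$ for every $\nu \in \disMeasures(S)$. This is the content of the remark immediately preceding the proposition: any compact $K \subseteq S \times \RNpp$ projects to a compact subset of $S$ and is bounded away from $\{v=0\}$, so only finitely many atoms of $\pointMap(\nu)$ can lie in $K$.

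Next I would apply the standard observation that if $\delta_{1}, \delta_{2}$ are pseudometrics on a set $X$, then $\delta_{1} \vee \delta_{2}$ is a pseudometric, and it is a metric whenever at least one of $\delta_{1}, \delta_{2}$ separates points. Applied to
\begin{equation}
  \delta_{1}(\nu_{1},\nu_{2}) = d_{V}^{S,\rho_{S}}(\nu_{1},\nu_{2}), \qquad
  \delta_{2}(\nu_{1},\nu_{2}) = d_{V}^{S \times \RNpp, \tilde{\rho}_{S}}(\pointMap(\nu_{1}), \pointMap(\nu_{2})),
\end{equation}
this yields non-negativity, symmetry and the triangle inequality for $d_{\disMeasures}^{S,\rho_{S}}$ directly from the corresponding properties of $d_{V}$. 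For the identity of indiscernibles, if $d_{\disMeasures}^{S,\rho_{S}}(\nu_{1},\nu_{2}) = 0$ then in particular $d_{V}^{S,\rho_{S}}(\nu_{1},\nu_{2}) = 0$, and Theorem 2.2 forces $\nu_{1} = \nu_{2}$.

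There is no real obstacle in this proof — the statement is essentially a packaging lemma, with all substantive content absorbed into Theorem 2.2. The only step that deserves to be written out explicitly is the well-definedness of the second term, i.e.\ that $\pointMap(\nu)$ is Radon on $S \times \RNpp$; the rest is formal.
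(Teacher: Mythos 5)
Your proof is correct and takes essentially the same route as the paper: the paper also observes that $d_{V}^{S,\rho_{S}}$ being a metric on $\disMeasures(S)$ (Theorem \ref{2. thm: convergence in the vague topology}) forces positive-definiteness, and that symmetry and the triangle inequality are immediate. Your additional remark on the well-definedness of $\pointMap(\nu)$ is a reasonable expansion of what the paper establishes in the text just before the proposition.
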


\begin{proof}
  By Theorem \ref{2. thm: convergence in the vague topology},
  $d_{V}^{S, \rho_{S}}$ is a metric on $\disMeasures(S)$,
  which implies that $d_{\disMeasures}^{S, \rho_{S}}$ is positive definite.
  Symmetry and the triangle inequality are obvious.
\end{proof}

\begin{dfn} [{The vague-and-point-process topology}]
  We call the topology on $\disMeasures(S)$ induced by $d_{\disMeasures}^{S, \rho_{S}}$ 
  the \textit{vague-and-point-process topology}.
\end{dfn}

Below, we study the vague-and-point-process topology in terms of convergence.

\begin{prop} \label{2. prop: equivalence of convergence of weights}
  Let $\nu, \nu_{1}, \nu_{2}, \ldots$ be elements of $\disMeasures(S)$
  such that $\nu_{n} \to \nu$ vaguely.
  Fix $x_{n}, x \in S$ with $x_{n} \to x$.
  Then, $\nu_{n}(\{x_{n}\}) \to \nu(\{x\})$ if and only if 
  $\lim_{\delta \downarrow 0} \limsup_{n \to \infty} \nu_{n}(B_{S}(x_{n}, \delta) \setminus \{x_{n}\}) = 0$.
\end{prop}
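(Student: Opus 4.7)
The plan is to prove both directions through a sandwich argument built on vague convergence applied to balls around $x$ whose boundaries are $\nu$-null. Since $\nu$ is a Radon (hence locally finite) measure, only countably many radii $r > 0$ satisfy $\nu(\partial B_{S}(x, r)) > 0$, so at every scale I can choose ``good'' radii freely.

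Specifically, fix any $\delta > 0$ with $\nu(\partial B_{S}(x, 2\delta)) = 0$ and $\nu(\partial B_{S}(x, \delta/2)) = 0$. For $n$ large enough that $d^{S}(x_{n}, x) < \delta/2$, we have the inclusions $B_{S}(x, \delta/2) \subseteq B_{S}(x_{n}, \delta) \subseteq B_{S}(x, 2\delta)$. These balls are relatively compact (as $(S,d^{S})$ is boundedly compact) and have $\nu$-null boundary, so the standard consequence of vague convergence (Theorem \ref{2. thm: convergence in the vague topology}) yields
\[
  \nu(B_{S}(x, \delta/2)) \le \liminf_{n \to \infty} \nu_{n}(B_{S}(x_{n}, \delta)) \le \limsup_{n \to \infty} \nu_{n}(B_{S}(x_{n}, \delta)) \le \nu(B_{S}(x, 2\delta)).
\]
Combined with the basic decomposition $\nu_{n}(B_{S}(x_{n}, \delta)) = \nu_{n}(\{x_{n}\}) + \nu_{n}(B_{S}(x_{n}, \delta) \setminus \{x_{n}\})$ and the fact that $\nu(B_{S}(x, 2\delta)) \downarrow \nu(\{x\})$ along any sequence of good radii $\delta \downarrow 0$, this sandwich controls both sides of the equivalence.

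For $(\Rightarrow)$, assume $\nu_{n}(\{x_{n}\}) \to \nu(\{x\}) =: a$. Then the sandwich gives $\limsup_{n} \nu_{n}(B_{S}(x_{n}, \delta) \setminus \{x_{n}\}) \le \nu(B_{S}(x, 2\delta)) - a$, which tends to $0$ along any sequence of good radii $\delta_{k} \downarrow 0$. Since $\delta \mapsto \limsup_{n} \nu_{n}(B_{S}(x_{n}, \delta) \setminus \{x_{n}\})$ is non-decreasing in $\delta$, the limit along $\delta_{k}$ coincides with $\lim_{\delta \downarrow 0}$. For $(\Leftarrow)$, given $\varepsilon > 0$, I pick a good $\delta$ small enough that both $\limsup_{n} \nu_{n}(B_{S}(x_{n}, \delta) \setminus \{x_{n}\}) < \varepsilon$ (from the hypothesis) and $\nu(B_{S}(x, 2\delta)) - a < \varepsilon$ (by downward continuity of $\nu$ at $\{x\}$). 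The sandwich then forces $\nu_{n}(B_{S}(x_{n}, \delta)) \in [a - \varepsilon, a + \varepsilon]$ for all large $n$, and subtracting the $< \varepsilon$ off-atom contribution pins $\nu_{n}(\{x_{n}\})$ within $2\varepsilon$ of $a$.

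The step of genuine substance is the sandwich via vague convergence; the rest is bookkeeping. The only subtlety I anticipate is ensuring that the radii chosen for the sandwich avoid the (at most countable) set of $r$ with $\nu(\partial B_{S}(x, r)) > 0$, and I will dispatch this by systematically drawing $\delta$ from the cocountable set of good radii when passing to limits in $\delta$.
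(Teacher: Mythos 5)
Your proof is correct and takes essentially the same route as the paper: sandwich $\nu_{n}(B_{S}(x_{n},\delta))$ between $\nu$-measures of balls centered at $x$ via vague convergence and the inclusions $B_{S}(x,\delta/2) \subseteq B_{S}(x_{n},\delta) \subseteq B_{S}(x,2\delta)$, then decompose off the atom at $x_{n}$. The only stylistic difference is that you invoke Portmanteau in the form requiring $\nu$-null boundaries (hence the bookkeeping about ``good'' radii), whereas the paper uses the one-sided inequalities $\limsup_{n}\nu_{n}(K)\le\nu(K)$ for compact $K$ and $\liminf_{n}\nu_{n}(G)\ge\nu(G)$ for open relatively compact $G$, which hold for every radius and so avoid that technicality.
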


\begin{proof}
  Since $\nu_{n} \to \nu$ vaguely and $x_{n} \to x$,
  we have that, for each $\delta > 0$,
  \begin{equation}
    \nu(D_{S}(x, \delta)) 
    \geq 
    \limsup_{n \to \infty} \nu_{n}(D_{S}(x, \delta))
    \geq 
    \limsup_{n \to \infty} \nu_{n}(\{x_{n}\}).
  \end{equation}
  Letting $\delta \downarrow 0$ yields that 
  $\nu(\{x\}) \geq \limsup_{n \to \infty}  \nu_{n}(\{x_{n}\})$.
  Similarly, we deduce that 
  \begin{align}  \label{2. eq: equivalence of convergence of weights 1}
    \nu(\{x\})
    &= 
    \lim_{\delta \downarrow 0} \nu(B_{S}(x, \delta)) \\
    &\leq
    \lim_{\delta \downarrow 0} \liminf_{n \to \infty} \nu_{n}(B_{S}(x, \delta)) \\  
    &\leq    
    \lim_{\delta \downarrow 0} \liminf_{n \to \infty} \nu_{n}(B_{S}(x_{n}, 2\delta)).
  \end{align}
  Moreover, we have that, for each $\delta > 0$,
  \begin{equation} \label{2. eq: equivalence of convergence of weights 2}
    \liminf_{n \to \infty} \nu_{n}(B_{S}(x_{n}, 2\delta)) 
    \leq 
    \limsup_{n \to \infty} \nu_{n}(B_{S}(x_{n}, 2\delta) \setminus \{x_{n} \}) 
    + 
    \liminf_{n \to \infty} \nu_{n}(\{x_{n}\}).
  \end{equation}
  Thus, from \eqref{2. eq: equivalence of convergence of weights 1} and \eqref{2. eq: equivalence of convergence of weights 2},
  we obtain the ``if'' part of the assertion.
  For the other direction,
  suppose that $\nu_{n}(\{x_{n}\}) \to \nu(\{x\})$.
  We then have that, for each $\delta > 0$,
  \begin{align}
    \limsup_{n \to \infty} \nu_{n}(B_{S}(x_{n}, \delta) \setminus \{x_{n}\})
    &=
    \limsup_{n \to \infty} \bigl\{ \nu_{n}(B_{S}(x_{n}, \delta)) - \nu_{n} (\{x_{n}\}) \bigr\} \\
    &\leq 
    \limsup_{n \to \infty} \nu_{n}(D_{S}(x_{n}, \delta)) - \liminf_{n \to \infty} \nu_{n} (\{x_{n}\}) \\
    &\leq    
    \limsup_{n \to \infty} \nu_{n}(D_{S}(x, 2\delta)) - \liminf_{n \to \infty} \nu_{n} (\{x_{n}\}) \\
    &\leq 
    \nu(D_{S}(x, 2\delta)) - \nu(\{x\}),
  \end{align}
  where we use the vague convergence $\nu_{n} \to \nu$ to establish the last inequality.
  Letting $\delta \downarrow 0$ yields the result.
\end{proof}

\begin{thm} \label{2. thm: convergence in Mdis}
  Let $\nu, \nu_{1}, \nu_{2}, \ldots$ be elements of $\disMeasures(S)$.
  The following statements are equivalent:
  \begin{enumerate} [label = (\roman*)]
    \item \label{2. thm item: VP convergence, wrt metric} 
      $\nu_{n} \to \nu$ in the vague-and-point-process topology;
    \item \label{2. thm item: VP convergence, wrt two topologies}
      $\nu_{n} \to \nu$ vaguely
      and $\pointMap(\nu_{n}) \to \pointMap(\nu)$ vaguely;
    \item \label{2. thm item: VP convergence, weak and FIN's condition}
      $\nu_{n} \to \nu$ vaguely and,
      for any $x \in \At(\nu)$,
      there exist atoms $x_{n} \in \At(\nu_{n})$ such that $x_{n} \to x$ and $\nu_{n}(\{x_{n}\}) \to \nu(\{x\})$;
    \item \label{2. thm item: VP convergence, weak and liminf FIN's condition}
      $\nu_{n} \to \nu$ vaguely and,
      for any $x \in \At(\nu)$,
      there exist atoms $x_{n} \in \At(\nu_{n})$ such that $x_{n} \to x$ 
      and $\lim_{\delta \to 0}\limsup_{n \to \infty} \nu_{n}(B_{S}(x_{n}, \delta) \setminus \{x_{n}\}) = 0$.
  \end{enumerate}
\end{thm}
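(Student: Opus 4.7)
The plan is to establish the equivalences in the order $(\mathrm{i}) \Leftrightarrow (\mathrm{ii})$, $(\mathrm{iii}) \Leftrightarrow (\mathrm{iv})$, $(\mathrm{ii}) \Rightarrow (\mathrm{iii})$, $(\mathrm{iii}) \Rightarrow (\mathrm{ii})$. The first two are essentially by definition. The metric $d_{\disMeasures}^{S, \rho_{S}}$ is the maximum of $d_{V}^{S, \rho_{S}}$ and the pullback of $d_{V}^{S \times \RNpp, \tilde{\rho}_{S}}$ under $\pointMap$, so convergence in $d_{\disMeasures}^{S, \rho_{S}}$ is equivalent to convergence in both factors, and Theorem \ref{2. thm: convergence in the vague topology} then identifies each factor with vague convergence of the corresponding measures. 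The equivalence $(\mathrm{iii}) \Leftrightarrow (\mathrm{iv})$ is a direct application of Proposition \ref{2. prop: equivalence of convergence of weights} at each atom of $\nu$.

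For $(\mathrm{ii}) \Rightarrow (\mathrm{iii})$ the key observation is that $\pointMap(\nu)$ and each $\pointMap(\nu_{n})$ are \emph{simple} measures on $S \times \RNpp$, since distinct atoms $x_{i}$ of a discrete measure produce distinct points $(x_{i}, w_{i})$. Fix $x \in \At(\nu)$ with weight $w \coloneqq \nu(\{x\}) > 0$ and choose a compact neighbourhood $K$ of $(x, w)$ in $S \times \RNpp$ that contains no other atom of $\pointMap(\nu)$ and satisfies $\pointMap(\nu)(\partial K) = 0$; this is possible because the atoms of $\pointMap(\nu)$ are locally finite. Proposition \ref{2. prop: atom convergence from simple measure convergence} applied to the vague convergence $\pointMap(\nu_{n}) \to \pointMap(\nu)$ then provides atoms $(x_{n}, w_{n}) \in \At(\pointMap(\nu_{n}))$ with $(x_{n}, w_{n}) \to (x, w)$, giving $x_{n} \to x$ in $S$ and $\nu_{n}(\{x_{n}\}) = w_{n} \to w$, which is exactly (iii).

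For the converse $(\mathrm{iii}) \Rightarrow (\mathrm{ii})$ I verify the atom-pairing condition of Proposition \ref{2. prop: atom convergence from simple measure convergence} for $\pointMap(\nu_{n}) \to \pointMap(\nu)$ on compact sets of product form $K = D_{S}(\rho_{S}, r) \times [e^{-\lambda}, e^{\lambda}]$, with $r, \lambda > 0$ chosen so that the boundary of $K$ carries no $\pointMap(\nu)$-mass. Since $\nu$ is Radon, it has only finitely many atoms in $D_{S}(\rho_{S}, r)$ of mass at least $e^{-\lambda}$, so $\At(\pointMap(\nu)) \cap K$ is a finite set $\{(x_{1}, w_{1}), \dots, (x_{m}, w_{m})\}$. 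Hypothesis (iii) produces atoms $x_{j}^{(n)} \in \At(\nu_{n})$ with $x_{j}^{(n)} \to x_{j}$ and $\nu_{n}(\{x_{j}^{(n)}\}) \to w_{j}$, so that $(x_{j}^{(n)}, \nu_{n}(\{x_{j}^{(n)}\})) \to (x_{j}, w_{j})$. The main obstacle is to rule out additional atoms of $\pointMap(\nu_{n})$ in $K$: if along a subsequence there existed $(y_{n}, v_{n}) \in \At(\pointMap(\nu_{n})) \cap K$ distinct from every $(x_{j}^{(n)}, \nu_{n}(\{x_{j}^{(n)}\}))$, compactness of $K$ would yield a limit $(y, v)$ with $v \geq e^{-\lambda}$; vague convergence of $\nu_{n}$ on shrinking balls of continuity radii around $y$ would then force $y \in \At(\nu)$ with $\nu(\{y\}) \geq v$. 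Applying (iii) at $y$ yields atoms $y^{(n)} \in \At(\nu_{n})$ with $y^{(n)} \to y$ and $\nu_{n}(\{y^{(n)}\}) \to \nu(\{y\})$; if $y_{n} = y^{(n)}$ eventually, then $(y, v)$ is already a listed pairing, while if $y_{n} \neq y^{(n)}$ infinitely often, the two disjoint atoms both lie in $D_{S}(y, \delta)$ for large $n$, and passing to the limit in $\nu_{n}(D_{S}(y, \delta)) \geq v_{n} + \nu_{n}(\{y^{(n)}\})$ and then sending $\delta \to 0$ yields $\nu(\{y\}) \geq v + \nu(\{y\})$, a contradiction. This last step — using vague convergence of $\nu_{n}$ together with a second application of (iii) at putative limit atoms to suppress spurious mass in $\pointMap(\nu_{n})$ — is the technical heart of the argument; the choice of $K$ of product form is what allows the boundary-continuity conditions to be controlled by a single radius in each coordinate.
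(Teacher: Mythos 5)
Your overall structure lines up with the paper's for three of the four implications: $(\mathrm{i})\Leftrightarrow(\mathrm{ii})$ by definition of $d_{\disMeasures}^{S,\rho_S}$ and Theorem~\ref{2. thm: convergence in the vague topology}, $(\mathrm{ii})\Rightarrow(\mathrm{iii})$ by applying Proposition~\ref{2. prop: atom convergence from simple measure convergence} to $\pointMap(\nu_n)\to\pointMap(\nu)$ (using that these are simple), and $(\mathrm{iii})\Leftrightarrow(\mathrm{iv})$ by Proposition~\ref{2. prop: equivalence of convergence of weights}. All of these are fine and match the paper's proof.

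Where you genuinely diverge is $(\mathrm{iii})\Rightarrow(\mathrm{ii})$. The paper's argument is a one-shot global mass comparison: with $K=D_S(\rho_S,r)\times[e^{-r},e^r]$ chosen so $\partial K$ carries no $\pointMap(\nu)$-mass, the listed atoms $x_i^{(n)}$ from~(iii) are eventually distinct and lie in $D_S(\rho_S,r)$, so an extra atom $(x^{(n_k)},w^{(n_k)})\in K$ with $w^{(n_k)}\geq e^{-r}$ gives
\begin{equation}
  e^{-r}+\sum_{i\in I_K}w_i
  \leq
  \limsup_k\Bigl(w^{(n_k)}+\sum_{i\in I_K}w_i^{(n_k)}\Bigr)
  \leq
  \limsup_k\nu_{n_k}(D_S(\rho_S,r))
  \leq
  \nu(D_S(\rho_S,r))
  =\sum_{i\in I_K}w_i,
\end{equation}
an immediate contradiction. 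Your approach is instead local: pass to a subsequential limit $(y,v)$ of the hypothetical extra atom, show $y\in\At(\nu)$ with $\nu(\{y\})\geq v$, invoke~(iii) a second time at $y$ to produce $y^{(n)}$, and split into two cases. This is workable, but your first case does not close: you correctly deduce that $v=\nu(\{y\})$ and that $(y,v)$ must be one of the listed $(x_j,w_j)$, but that by itself is not a contradiction with the assumption that $(y_n,v_n)\neq(x_j^{(n)},\nu_n(\{x_j^{(n)}\}))$ for every $j$ -- the sequence $y^{(n)}$ you obtained from~(iii) at $y$ need not coincide with the sequence $x_j^{(n)}$ you obtained earlier from~(iii) at $x_j$, even though $y=x_j$. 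To repair it, note that $(y_n,v_n)\neq(x_j^{(n)},\nu_n(\{x_j^{(n)}\}))$ already forces $y_n\neq x_j^{(n)}$ (since $v_n=\nu_n(\{y_n\})$), so you can simply take $y^{(n)}\coloneqq x_j^{(n)}$ and you are always in your second case; the two-atom mass estimate then concludes. With that fix your argument is correct, but the paper's global summation is shorter and avoids the second invocation of~(iii) and the case distinction altogether.
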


\begin{proof}
  The equivalence of \ref{2. thm item: VP convergence, wrt metric} and \ref{2. thm item: VP convergence, wrt two topologies} 
  follows from the definition of $d_{\disMeasures}^{S, \rho_{S}}$.
  The implication \ref{2. thm item: VP convergence, wrt two topologies} $\Rightarrow$ \ref{2. thm item: VP convergence, weak and FIN's condition} 
  follows by applying Proposition \ref{2. prop: atom convergence from simple measure convergence} to the convergence of $\pointMap(\nu_{n})$ to $\pointMap(\nu)$.
  Assume that \ref{2. thm item: VP convergence, weak and FIN's condition} is satisfied.
  Let $r>0$ be such that the boundary of $K \coloneqq D_{S}(\rho_{S}, r) \times [e^{-r}, e^{r}]$ does not contain any atoms of $\pointMap(\nu)$.
  Write $\{(x_{i}, w_{i})\}_{i \in I_{K}}$ for the atoms of $\pointMap(\nu)$ lying in $K$.
  For each $i \in I_{K}$,
  we let $x_{i}^{(n)} \in \At(\nu_{n})$ be such that 
  $x_{i}^{(n)} \to x_{i}$ and $w_{i}^{(n)} \coloneqq \nu_{n}(\{x_{i}^{(n)}\}) \to w_{i}$.
  Since $(x_{i}, w_{i})$ is in the interior of $K$,
  we have that $(x_{i}^{(n)}, w_{i}^{(n)}) \in K$ for all $i \in I_{K}$
  (at least for all sufficiently large $n$).
  By Proposition \ref{2. prop: atom convergence from simple measure convergence},
  it remains to prove that $\At(\pointMap(\nu_{n})) \cap K = \{(x_{i}^{(n)}, w_{i}^{(n)})\}_{i \in I_{K}}$ for all sufficiently large $n$. 
  Suppose that it is not the case.
  Then, there exist a subsequence $(n_{k})_{k \geq 1}$ and $(x^{(n_{k})}, w^{(n_{k})}) \in \At(\pointMap(\nu_{n_{k}})) \cap K$ 
  such that $(x^{(n_{k})}, w^{(n_{k})}) \notin \{(x_{i}^{(n_{k})}, w_{i}^{(n_{k})})\}_{i \in I_{K}}$.
  Since $D_{S}(\rho_{S}, r)$ is compact and $\nu_{n} \to \nu$ vaguely,
  we have that $\limsup_{k \to \infty} \nu_{n_{k}}(D_{S}(\rho_{S}, r)) \leq \nu(D_{S}(\rho_{S}, r))$.
  This, combined with $e^{-r} \leq w^{(n_{k})}$ and $w_{i}^{(n_{k})} \to w_{i}$, 
  yields that 
  \begin{align}
    e^{-r} + \sum_{i \in I_{K}} w_{i}
    &\leq 
    \limsup_{k \to \infty} 
    \left(
      w^{(n_{k})} + \sum_{i \in I_{K}} w_{i}^{(n_{k})}
    \right)\\
    &\leq
    \limsup_{k \to \infty}  
    \nu_{n_{k}}(D_{S}(\rho_{S}, r))\\
    &\leq
    \nu(D_{S}(\rho_{S}, r))\\
    &=
    \sum_{i \in I_{K}} w_{i},
  \end{align}
  which is a contradiction.
  Therefore, we obtain \ref{2. thm item: VP convergence, wrt two topologies}.
  The equivalence of \ref{2. thm item: VP convergence, weak and FIN's condition} 
  and \ref{2. thm item: VP convergence, weak and liminf FIN's condition} 
  follows from Proposition \ref{2. prop: equivalence of convergence of weights}.
\end{proof}

The metric $d_{\disMeasures}^{S, \rho_{S}}$ is preserved by root-and-distance-preserving maps.
This fact is important for metrization of Gromov-Hausdorff-type topologies in Section \ref{sec: GH-type topologies}.

\begin{prop}  \label{2. prop: dcMdis is preserved}
  Let $(S_{i}, d^{S_{i}}, \rho_{i}),\, i=1,2$ be rooted boundedly-compact metric spaces 
  and $f: S_{1} \to S_{2}$ be a root-and-distance-preserving map.
  Then, the map from $(\disMeasures(S_{1}),\allowbreak d_{\disMeasures}^{S_{1}, \rho_{S_{1}}})$ 
  to $(\disMeasures(S_{2}),\allowbreak d_{\disMeasures}^{S_{2}, \rho_{S_{2}}})$ 
  given by $\nu \mapsto \nu \circ f^{-1}$ is distance-preserving.
\end{prop}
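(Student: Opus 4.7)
The plan is to reduce the proposition to Proposition \ref{2. prop: hausdorff and vague metrics are preserved} applied twice: once to $f$ itself on $S_1$, and once to a natural lift $\tilde f$ of $f$ to the product $S_1 \times \RNpp$. The metric $d_{\disMeasures}^{S, \rho_S}$ is, by definition, the maximum of two vague-metric quantities, so if each of them is preserved by the relevant root-and-distance-preserving map, their maximum is preserved as well.

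First, I would note that $f: S_1 \to S_2$ is automatically injective since it is distance preserving, which makes the pushforward $\nu \circ f^{-1}$ of any discrete measure a discrete measure of the same atomic shape: if $\nu = \sum_{i \in I} w_i \delta_{x_i}$ is the atomic decomposition of $\nu \in \disMeasures(S_1)$, then $\nu \circ f^{-1} = \sum_{i \in I} w_i \delta_{f(x_i)}$, which lies in $\disMeasures(S_2)$ because $f$ is also continuous and proper (distance preservation forces preimages of bounded sets to be bounded, hence of compact sets to be compact in a boundedly compact space). From this it immediately follows that $\pointMap(\nu \circ f^{-1}) = \sum_i \delta_{(f(x_i), w_i)}$, and Proposition \ref{2. prop: hausdorff and vague metrics are preserved} gives
\begin{equation}
  d_{V}^{S_1, \rho_{S_1}}(\nu_1, \nu_2)
  =
  d_{V}^{S_2, \rho_{S_2}}(\nu_1 \circ f^{-1}, \nu_2 \circ f^{-1}).
\end{equation}

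Next, I would introduce the lift $\tilde f: S_1 \times \RNpp \to S_2 \times \RNpp$ defined by $\tilde f(x,v) \coloneqq (f(x), v)$ and check directly that it is root-and-distance-preserving with respect to the roots $\tilde\rho_{S_i} = (\rho_{S_i}, 1)$ and the max product metric \eqref{2. eq: max product metric}; root-preservation is immediate, and distance-preservation reduces to $d^{S_2}(f(x), f(y)) = d^{S_1}(x, y)$ inside the maximum. A one-line computation on atomic decompositions then shows the key identity
\begin{equation}
  \pointMap(\nu \circ f^{-1})
  =
  \pointMap(\nu) \circ \tilde f^{-1}
  \quad \text{in } \disMeasures(S_2 \times \RNpp),
\end{equation}
so that, applying Proposition \ref{2. prop: hausdorff and vague metrics are preserved} to $\tilde f$,
\begin{equation}
  d_{V}^{S_1 \times \RNpp, \tilde\rho_{S_1}}(\pointMap(\nu_1), \pointMap(\nu_2))
  =
  d_{V}^{S_2 \times \RNpp, \tilde\rho_{S_2}}(\pointMap(\nu_1 \circ f^{-1}), \pointMap(\nu_2 \circ f^{-1})).
\end{equation}

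Taking the maximum of the two displayed equalities yields $d_{\disMeasures}^{S_1, \rho_{S_1}}(\nu_1, \nu_2) = d_{\disMeasures}^{S_2, \rho_{S_2}}(\nu_1 \circ f^{-1}, \nu_2 \circ f^{-1})$, which is the claim. There is essentially no hard step here: everything is bookkeeping on atomic decompositions together with a citation to the corresponding fact for the vague metric. The only point that requires mild care is the verification that $\tilde f$ really is distance-preserving for the specific max product metric chosen on $S \times \RNpp$ (rather than, say, a sum metric), but this is immediate from the definition.
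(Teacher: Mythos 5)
Your proof is correct and follows essentially the same route as the paper: the paper likewise introduces $f \times \id_{\RNpp}$, notes the identity $\pointMap(\nu \circ f^{-1}) = \pointMap(\nu) \circ (f \times \id_{\RNpp})^{-1}$, and invokes the preservation of the vague metric under root-and-distance-preserving maps. Your extra remarks on injectivity, properness, and the choice of the max product metric merely flesh out what the paper dismisses as "easy to check."
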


\begin{proof}
  It is easy to check that $\pointMap(\nu \circ f^{-1}) = \pointMap(\nu) \circ (f \times \id^{\RNpp})^{-1}$
  and $f \times \id^{\RNpp}$ is a root-and-distance preserving map 
  from $S_{1} \times \RNpp$ to $S_{2} \times \RNpp$.
  Since the Prohorov metrics and the vague metrics are preserved by root-and-distance-preserving maps
  (see \cite[Lemma 2.40 and Proposition 2.41]{Noda_pre_Metrization}),
  we obtain the desired result.
\end{proof}

Although the vague metrics $d_{V}^{S, \rho_{S}}$ and $d_{V}^{S \times \RNpp, \tilde{\rho}_{S}}$ are complete,
$d_{\disMeasures}^{S, \rho_{S}}$ is not complete in general.
To see this,
consider a sequence of measures whose atoms collide.
For example,
let $S = \RN$ and $\nu _{n} \coloneqq \delta_{0} + \delta_{n^{-1}}$.
It is elementary to check that 
$\nu _{n}$ converges vaguely to the measure $2 \delta_{0}$ on $\RN$,
and 
$\pointMap(\nu _{n}) = \delta_{(0, 1)} + \delta_{(n^{-1}, 1)}$ 
converges vaguely to the measure $2 \delta_{(0,1)}$ on $\RN \times \RNpp$.
Thus, $(\nu _{n})_{n \geq 1}$ is Cauchy with respect to $d_{\disMeasures}^{\RN, 0}$,
but it does not converge in $\disMeasures(\RN)$.
However, the vague-and-point-process topology is Polish,
that is, there exists another metric that is complete, separable and induces the same topology
(see Theorem \ref{2. thm: the VPP topology is Polish} below).
For further study of the vague-and-point-process topology, such as Polishness and a precompactness criterion,
it is convenient to introduce a larger space $\pointMeas(S)$, into which $\disMeasures(S)$ is topologically embedded.
This is the main aim of the following subsection.


\subsection{The space \texorpdfstring{$\pointMeas(S)$}{P(S)}} \label{sec: a space including the space of discrete measures}

As already explained above,
in this subsection,
we introduce a space $\pointMeas(S)$, into which $\disMeasures(S)$ is topologically embedded,
and study its topological properties.
In particular,
we prove that the vague-and-point-process topology is Polish (Theorem \ref{2. thm: the VPP topology is Polish}) 
and provide precompact and tightness criteria (Theorems \ref{2. thm: precompactness in sP} and \ref{2. thm: tightness in sP}).

We denote by $\intMeasures(S \times \RNpp)$
the collection of integer-valued Radon measures $\pi$ on $S \times \RNpp$,
i.e.,
$\pi(E) \in \ZNp \cup \{\infty\}$ for any Borel subset $E$ of $S \times \RNpp$.
Note that  
any $\pi \in \intMeasures(S \times \RNpp)$ is a discrete measure 
and if an atomic decomposition of $\pi$ is given by $\pi = \sum_{i \in I} \beta_{i} \delta_{(x_{i}, w_{i})}$,
then $\beta_{i}$ is a positive integer
(cf. \cite[Theorem 2.18]{Kallenberg_21_Foundations}).
We associate $\pi$ 
with a Borel measure $\MeasureMap(\pi)$ on $S$ 
by setting 
\begin{equation}
  \MeasureMap(\pi) (A)
  \coloneqq
  \int_{A \times \RNpp} 
  w\, \pi(dx dw) 
  =
  \sum_{i \in I} \beta_{i} w_{i} \delta_{x_{i}}(A),
  \quad 
  \forall A \in \Borel(S).
\end{equation}

\begin{dfn} [{The space $\pointMeas(S)$}]
  We define 
  \begin{equation}
    \pointMeas(S)
    \coloneqq 
    \left\{
      \pi \in \intMeasures(S \times \RNpp)
      \mid  
      \MeasureMap(\pi)\ \text{is a Radon measure on}\ S
    \right\}.
  \end{equation}
\end{dfn}

For each $\pi = \sum_{i \in I} \beta_{i} \delta_{(x_{i}, w_{i})} \in \pointMeas(S)$ and $r > 0$,
we define a Borel measure $\vertMap^{(r)}(\pi)$ on $\RNpp$ by setting 
\begin{equation}
  \vertMap^{(r)}(\pi)(A)
  \coloneqq 
  \int_{S^{(r)} \times A} w\, \pi(dx dw)
  = 
  \sum_{\substack{x_{i} \in S^{(r)}\\ w_{i} \in A}} \beta_{i} w_{i},
  \quad 
  \forall A \in \Borel(\RNpp).
\end{equation}
Note that $\vertMap^{(r)}(\pi)$ is a finite measure.
Indeed, we have that $\vertMap^{(r)}(\RNpp) = \MeasureMap(\pi)(S^{(r)})$, 
which is finite since $\MeasureMap(\pi)$ is a Radon measure.
We write
\begin{gather}
  M_{\varepsilon}^{(r)}(\pi) 
  \coloneqq 
  \vertMap^{(r)}(\pi)((0, \varepsilon])
  =
  \int_{S^{(r)} \times (0, \varepsilon]} w\, \pi(dx dw) 
  =
  \sum_{ \substack{x_{i} \in S^{(r)} \\ w_{i} \leq \varepsilon,}} \beta_{i} w_{i}, 
  \quad \varepsilon > 0,
  \label{2. eq: dfn of M_epsilon^r}\\
  W^{(r)}(\pi) 
  \coloneqq 
  \sup\{ w_{i} \mid x_{i} \in S^{(r)} \} 
  = 
  \inf 
  \bigl\{
    l > 0 \mid \vertMap^{(r)}(\pi)([l, \infty)) = 0
  \bigr\}.
  \label{2. eq: dfn of W^r}
\end{gather}
Below, we discuss some basic properties of $\vertMap^{(r)}(\pi)$, $M_{\varepsilon}^{(r)}$ and $W^{(r)}(\pi)$.

\begin{lem} \label{2. lem: properties of edge-weight M and heavy weigh W}
  Fix $\pi \in \pointMeas(S)$.
  Then the following statements hold.
  \begin{enumerate} [label = (\roman*)]
    \item \label{2. lem item: estimate on frakm}
      For any $r > s > 0$, $d_{P}^{\RNpp}(\vertMap^{(r)}(\pi), \vertMap^{(s)}(\pi)) \leq \MeasureMap(\pi)(S^{(r)} \setminus S^{(s)})$.
    \item \label{2. lem item: path property of frakm}
      The map $r \mapsto \vertMap^{(r)}(\pi) \in (\finMeas(\RNpp), d_{P}^{\RNpp})$ is left-continuous with right-hand limits.
    \item \label{2. lem item: monotonicity of small mass function}
      For each $r>0$, the function $\varepsilon \mapsto M_{\varepsilon}^{(r)}(\pi)$ is increasing
      and $\lim_{\varepsilon \to 0} M_{\varepsilon}^{(r)}(\pi) = 0$.
    \item \label{2. lem item: max weight is finite} 
      For each $r>0$, $W^{(r)}(\pi) \leq \MeasureMap(\pi)(S^{(r)}) < \infty$.
  \end{enumerate}
\end{lem}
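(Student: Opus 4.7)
The plan is to handle (i) first, which provides the quantitative bound that drives (ii), and then to dispatch (iii) and (iv) as straightforward consequences of $\vertMap^{(r)}(\pi)$ being a finite measure.

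For (i), I fix $r > s > 0$ and take an atomic decomposition $\pi = \sum_{i \in I} \beta_i \delta_{(x_i, w_i)}$. Since $B_{S}(\rho_S, s) \subseteq B_{S}(\rho_S, r)$, we have $S^{(s)} \subseteq S^{(r)}$, so $\vertMap^{(s)}(\pi) \leq \vertMap^{(r)}(\pi)$ as measures on $\RNpp$ and
\begin{equation}
\vertMap^{(r)}(\pi)(A) - \vertMap^{(s)}(\pi)(A) = \sum_{\substack{x_i \in S^{(r)} \setminus S^{(s)} \\ w_i \in A}} \beta_i w_i \leq \MeasureMap(\pi)(S^{(r)} \setminus S^{(s)}) \eqqcolon \delta
\end{equation}
for every Borel $A \subseteq \RNpp$. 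Together with $A \subseteq \neighb{A}{\delta}$ this yields $\vertMap^{(r)}(\pi)(A) \leq \vertMap^{(s)}(\pi)(\neighb{A}{\delta}) + \delta$, and from $\vertMap^{(s)}(\pi) \leq \vertMap^{(r)}(\pi)$ also $\vertMap^{(s)}(\pi)(A) \leq \vertMap^{(r)}(\pi)(\neighb{A}{\delta}) + \delta$, so $d_{P}^{\RNpp}(\vertMap^{(r)}(\pi), \vertMap^{(s)}(\pi)) \leq \delta$.

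For (ii), I combine (i) with the identities $\bigcup_{s < r} S^{(s)} = B_{S}(\rho_S, r)$ and $\bigcap_{s > r} S^{(s)} = D_{S}(\rho_S, r)$, both of which follow from the sandwich $B_{S}(\rho_S, s) \subseteq S^{(s)} \subseteq D_{S}(\rho_S, s)$. The families $S^{(r)} \setminus S^{(s)}$ (for $s \uparrow r$) and $S^{(s)} \setminus S^{(r)}$ (for $s \downarrow r$) are monotone decreasing with $\MeasureMap(\pi)$-mass bounded by $\MeasureMap(\pi)(D_{S}(\rho_S, r+1)) < \infty$, so continuity from above of $\MeasureMap(\pi)$ produces one-sided limits of $\MeasureMap(\pi)(S^{(r)} \triangle S^{(s)})$ at $r$; feeding these into (i) gives the required half-continuity of $r \mapsto \vertMap^{(r)}(\pi)$.

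Parts (iii) and (iv) both follow from $\vertMap^{(r)}(\pi)(\RNpp) = \MeasureMap(\pi)(S^{(r)}) < \infty$, which holds since $\MeasureMap(\pi)$ is Radon and $S^{(r)} \subseteq D_{S}(\rho_S, r)$ is compact. Monotonicity in (iii) is immediate from $(0, \varepsilon] \subseteq (0, \varepsilon']$ for $\varepsilon \leq \varepsilon'$, and $M_{\varepsilon}^{(r)}(\pi) \to 0$ follows from continuity from above of the finite measure $\vertMap^{(r)}(\pi)$ applied to $(0, \varepsilon] \downarrow \emptyset$. For (iv), each atom $(x_i, w_i)$ with $x_i \in S^{(r)}$ satisfies $\beta_i w_i \geq w_i$ since $\beta_i$ is a positive integer, so $W^{(r)}(\pi) \leq \sum_{x_i \in S^{(r)}} \beta_i w_i = \MeasureMap(\pi)(S^{(r)}) < \infty$. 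The main subtlety lies in (ii): since $\bigcup_{s < r} S^{(s)} = B_{S}(\rho_S, r)$ may be strictly smaller than $S^{(r)} = \closure(B_{S}(\rho_S, r))$, atoms of $\MeasureMap(\pi)$ sitting on the frontier $S^{(r)} \setminus B_{S}(\rho_S, r)$ can create jumps, and the direction of half-continuity has to be read off carefully from the definition $S^{(r)} = \closure(B_{S}(\rho_S, r))$ rather than from $D_{S}(\rho_S, r)$.
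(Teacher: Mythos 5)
Parts (i), (iii), and (iv) are correct and follow essentially the same route as the paper: the same pointwise domination argument for (i), continuity from above of the finite measure $\vertMap^{(r)}(\pi)$ for (iii), and the trivial bound $w_i \leq \beta_i w_i \leq \MeasureMap(\pi)(S^{(r)})$ for (iv).

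Part (ii) has a genuine gap. You argue that $\MeasureMap(\pi)(S^{(s)} \setminus S^{(r)})$ has a limit as $s \downarrow r$ and then say "feeding these into (i) gives the required half-continuity," but (i) only bounds the Prohorov distance from $\vertMap^{(s)}(\pi)$ to the \emph{fixed} measure $\vertMap^{(r)}(\pi)$. The fact that that distance settles down to some (possibly positive) value does not produce a right-hand limit of $\vertMap^{(s)}(\pi)$: one needs a candidate limit and a bound that actually tends to zero. The paper supplies exactly this. It introduces
\begin{equation}
  \vertMap^{(r+)}(\pi)(A)
  \coloneqq
  \int_{D_{S}(\rho_{S}, r) \times A} w\, \pi(dx\, dw),
  \quad A \in \Borel(\RNpp),
\end{equation}
and, reusing the inequality from (i), obtains
$d_{P}^{\RNpp}\bigl(\vertMap^{(r+)}(\pi), \vertMap^{(s)}(\pi)\bigr) \leq \MeasureMap(\pi)\bigl(S^{(s)} \setminus D_{S}(\rho_{S}, r)\bigr)$,
whose right-hand side does converge to $0$ as $s \downarrow r$ because $\bigcap_{s>r}\bigl(S^{(s)} \setminus D_{S}(\rho_{S}, r)\bigr) = \emptyset$. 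That identification of the limit object is the step your write-up is missing. Your closing remark correctly pinpoints that $S^{(r)} \setminus B_{S}(\rho_{S}, r)$ is where left-continuity is delicate (if that frontier carries $\MeasureMap(\pi)$-mass, the approach from below does not recover $\vertMap^{(r)}(\pi)$), but you raise the issue without resolving it, so as written the half-continuity claims in (ii) are asserted rather than proven.
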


\begin{proof}
  \ref{2. lem item: estimate on frakm}.
  We have that, for any Borel subset $A \subseteq \RNpp$,
  \begin{align}
    \vertMap^{(r)}(\pi)(A) 
    &= 
    \int_{S^{(r)} \times A} w\, \pi(dx dw)\\
    &\leq 
    \int_{S^{(s)} \times A} w\, \pi(dx dw)
    + 
    \int_{(S^{(r)} \setminus S^{(s)}) \times \RNpp} w\, \pi(dx dw)\\
    &=
    \vertMap^{(s)}(\pi)(A) 
    + 
    \MeasureMap(\pi)(S^{(r)} \setminus S^{(s)}),
  \end{align}
  and $\vertMap^{(s)}(\pi)(A) \leq \vertMap^{(r)}(\pi)(A)$.
  Thus, we obtain the desired result.

  \ref{2. lem item: path property of frakm}.
  The left-continuity follows from \ref{2. lem item: estimate on frakm} 
  and that $\lim_{s \uparrow r} \MeasureMap(\pi)(S^{(r)} \setminus S^{(s)}) = 0$.
  For the right-hand limits,
  fix $r > 0$ and 
  define a finite measure $\vertMap^{(r+)}(\pi)$ on $\RNpp$ by setting 
  \begin{equation}
    \vertMap^{(r+)}(\pi)(A) 
    = 
    \int_{D_{S}(\rho_{S}, r) \times A} w\, \pi(dx dw),
    \quad 
    \forall A \in \Borel(\RNpp).
  \end{equation}
  By the same argument as before,
  we deduce that, for any $s > r$,
  \begin{equation}
    d_{P}^{\RNpp}(\vertMap^{(r+)}(\pi), \vertMap^{(s)}(\pi)) 
    \leq 
    \MeasureMap(\pi)(S^{(s)} \setminus D_{S}(\rho_{S}, r)).
  \end{equation}
  Letting $s \downarrow r$ in the above inequality,
  we obtain that $\vertMap^{(s)}(\pi) \to \vertMap^{(r+)}(\pi)$.

  \ref{2. lem item: monotonicity of small mass function} and \ref{2. lem item: max weight is finite}.
  These are obvious by their definitions.
\end{proof}

We now introduce a metric on $\pointMeas(S)$.
For $\pi_{1}, \pi_{2} \in \pointMeas(S)$, 
we set 
\begin{gather}
  \mathsf{M}^{(r)} (\pi_{1}, \pi_{2}) 
  \coloneqq 
  d_{P}^{\RNpp}(\vertMap^{(r)}(\pi_{1}), \vertMap^{(r)}(\pi_{2})),
  \quad 
  \forall r > 0,
  \\ 
  d_{\pointMeas}^{S, \rho_{S}}(\pi_{1}, \pi_{2})
  = 
  \int_{0}^{\infty} 
    e^{-r} 
    \bigl(
      1 \wedge \mathsf{M}^{(r)}(\pi_{1}, \pi_{2}) 
    \bigr)\,
  dr 
  \vee 
  d_{V}^{S \times \RNpp, \tilde{\rho}_{S}} (\pi_{1}, \pi_{2})
  \vee 
  d_{V}^{S, \rho_{S}} (\MeasureMap(\pi_{1}), \MeasureMap(\pi_{2})).
\end{gather}
Note that the integrals are well-defined by Lemma \ref{2. lem: properties of edge-weight M and heavy weigh W}\ref{2. lem item: path property of frakm}
(cf.\ \cite[Proposition 2.5]{Noda_pre_Metrization}).

\begin{prop}
  The function $d_{\pointMeas}^{S, \rho_{S}}$ is a metric on $\pointMeas(S)$.
\end{prop}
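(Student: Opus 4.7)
The plan is to verify the three metric axioms in turn, with positive definiteness being the only step that needs real work. Symmetry is immediate from the symmetry of the Prohorov metric $d_{P}^{\RNpp}$ and of the vague metrics $d_{V}^{S \times \RNpp, \tilde{\rho}_{S}}$ and $d_{V}^{S, \rho_{S}}$; in particular, $\mathsf{M}^{(r)}(\pi_{1},\pi_{2}) = \mathsf{M}^{(r)}(\pi_{2},\pi_{1})$ for every $r>0$, which carries through the integral.

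For the triangle inequality, I would verify that each of the three summands in the definition of $d_{\pointMeas}^{S,\rho_{S}}$ satisfies it, and then use the elementary facts that the maximum and the sum of functions satisfying the triangle inequality again do so. The second and third summands are already metrics by Theorem \ref{2. thm: convergence in the vague topology}. For the first summand, the triangle inequality for the Prohorov metric gives
\begin{equation}
 \mathsf{M}^{(r)}(\pi_{1},\pi_{3}) \le \mathsf{M}^{(r)}(\pi_{1},\pi_{2}) + \mathsf{M}^{(r)}(\pi_{2},\pi_{3}), \quad \forall r>0,
\end{equation}
and since the map $a \mapsto 1 \wedge a$ is subadditive on $[0,\infty)$, the integrand inherits the triangle inequality pointwise. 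Integration against $e^{-r}\,dr$ preserves it. The integral itself is finite and Borel-measurable in $r$ because, by Lemma \ref{2. lem: properties of edge-weight M and heavy weigh W}\ref{2. lem item: path property of frakm}, $r \mapsto \vertMap^{(r)}(\pi_{i})$ is left-continuous with right-hand limits in $(\finMeas(\RNpp), d_{P}^{\RNpp})$, so $r \mapsto \mathsf{M}^{(r)}(\pi_{1},\pi_{2})$ is Borel measurable and the integrand is bounded by $e^{-r}$.

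For positive definiteness, suppose $d_{\pointMeas}^{S,\rho_{S}}(\pi_{1},\pi_{2}) = 0$. Since the overall value is the maximum of three non-negative quantities, each summand vanishes; in particular $d_{V}^{S \times \RNpp, \tilde{\rho}_{S}}(\pi_{1},\pi_{2}) = 0$. But by Theorem \ref{2. thm: convergence in the vague topology}, $d_{V}^{S \times \RNpp, \tilde{\rho}_{S}}$ is already a metric on the space $\Meas(S \times \RNpp) \supseteq \pointMeas(S)$, so this forces $\pi_{1} = \pi_{2}$. There is no genuine obstacle here: positive definiteness is inherited entirely from the vague metric on $\Meas(S \times \RNpp)$, and the remaining two summands are only there to enforce additional convergence requirements (of $\MeasureMap(\pi_{n})$ vaguely and of the weight-weighted marginals $\vertMap^{(r)}(\pi_{n})$ weakly on $\RNpp$) that will matter for the topology and its Polishness, not for the metric axioms themselves.
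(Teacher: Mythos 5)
Your proof is correct and takes the same approach as the paper: positive definiteness is inherited entirely from the vague metric $d_{V}^{S \times \RNpp, \tilde{\rho}_{S}}$ via Theorem \ref{2. thm: convergence in the vague topology}, while symmetry and the triangle inequality are routine. The paper simply states these latter two as obvious where you spell them out (and note you occasionally write ``summands'' for the three terms under the maximum, a harmless slip since you later correctly treat the expression as a max).
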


\begin{proof}
  By Theorem \ref{2. thm: convergence in the vague topology},
  $d_{V}^{S \times \RNpp, \tilde{\rho}_{S}}$ is a metric on $\pointMeas(S)$,
  which implies that $d_{\pointMeas}^{S, \rho_{S}}$ is positive definite.
  Symmetry and the triangle inequality are obvious.
\end{proof}

Below, we provide some characterizations of the topology on $\pointMeas(S)$ in terms of convergence.

\begin{thm} [{Convergence in $\pointMeas(S)$}]  \label{2. thm: convergence in sP}
  Let $\pi, \pi_{1}, \pi_{2}, \ldots$ be elements of $\pointMeas(S)$.
  The following statements are equivalent:
  \begin{enumerate} [label = (\roman*)]
    \item \label{2. thm item: sP, wrt metric}
      $\pi_{n} \to \pi$ with respect to $d_{\pointMeas}^{S, \rho_{S}}$;
    \item \label{2. thm item: sP, in two topologies}
      $\pi_{n}  \to \pi$ vaguely and $\MeasureMap(\pi_{n} ) \to \MeasureMap(\pi)$ vaguely;
    \item \label{2. thm item: sP, vaguely, heavy weight and low mass}
      $\pi_{n}  \to \pi$ vaguely
      and, for each $r>0$,
      $\displaystyle  \limsup_{n \to \infty} W^{(r)}(\pi_{n} ) < \infty$
      and
      $\displaystyle \lim_{\varepsilon \downarrow 0} 
        \limsup_{n \to \infty} M_{\varepsilon}^{(r)}(\pi_{n} ) = 0$.
  \end{enumerate}
\end{thm}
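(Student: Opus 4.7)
The plan is to prove the cycle \ref{2. thm item: sP, wrt metric} $\Rightarrow$ \ref{2. thm item: sP, in two topologies} $\Rightarrow$ \ref{2. thm item: sP, vaguely, heavy weight and low mass} $\Rightarrow$ \ref{2. thm item: sP, wrt metric}. The first implication is immediate from the definition of $d_{\pointMeas}^{S,\rho_{S}}$ as a maximum of three terms: since $d_{V}^{S \times \RNpp, \tilde{\rho}_{S}}(\pi_{n}, \pi)$ and $d_{V}^{S, \rho_{S}}(\MeasureMap(\pi_{n}), \MeasureMap(\pi))$ are each bounded above by $d_{\pointMeas}^{S, \rho_{S}}(\pi_{n}, \pi)$, their vanishing produces the two vague convergences via Theorem \ref{2. thm: convergence in the vague topology}.

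For \ref{2. thm item: sP, in two topologies} $\Rightarrow$ \ref{2. thm item: sP, vaguely, heavy weight and low mass}, fix a radius $r > 0$ with $\MeasureMap(\pi)(\partial B_{S}(\rho_{S}, r)) = 0$ (this excludes only countably many $r$). The bound $W^{(r)}(\pi_{n}) \leq \MeasureMap(\pi_{n})(S^{(r)})$ from Lemma \ref{2. lem: properties of edge-weight M and heavy weigh W}\ref{2. lem item: max weight is finite}, combined with $\MeasureMap(\pi_{n})(S^{(r)}) \to \MeasureMap(\pi)(S^{(r)}) < \infty$, yields the $W$-uniform bound; call an upper bound $L$. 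The small-mass condition then follows from the identity
\begin{equation}
  M_{\varepsilon}^{(r)}(\pi_{n})
  =
  \MeasureMap(\pi_{n})(S^{(r)})
  -
  \int_{S^{(r)} \times (\varepsilon, L]} w\, \pi_{n}(dx\,dw),
\end{equation}
by applying the vague convergence of $\pi_{n}$ on $S \times \RNpp$ to the second integral at continuity points of $r$ and $\varepsilon$ (the integrand can be sandwiched between continuous compactly supported functions since $w$ is bounded on $(\varepsilon, L]$). This yields $\lim_{n} M_{\varepsilon}^{(r)}(\pi_{n}) = \int_{S^{(r)} \times (0, \varepsilon]} w\, \pi(dx\,dw)$, which tends to $0$ as $\varepsilon \downarrow 0$ by continuity of measure.

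For \ref{2. thm item: sP, vaguely, heavy weight and low mass} $\Rightarrow$ \ref{2. thm item: sP, wrt metric}, I treat the three components of $d_{\pointMeas}^{S, \rho_{S}}$ separately. The second is assumed. For the third, I verify $\MeasureMap(\pi_{n}) \to \MeasureMap(\pi)$ vaguely: given $f \in C_{c}(S)$ with support in $S^{(r)}$, split $\int f(x) w\, \pi_{n}(dx\,dw)$ into the range $w \leq \varepsilon$ (bounded by $\|f\|_{\infty} M_{\varepsilon}^{(r)}(\pi_{n})$, uniformly small in $n$ by hypothesis) and $w \in (\varepsilon, L]$ with $L$ a uniform upper bound for $W^{(r)}(\pi_{n})$, where the latter is handled by vague convergence of $\pi_{n}$ applied to a continuous compactly supported approximation of $f(x) w \mathbf{1}_{(\varepsilon, L]}(w)$. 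For the first component, dominated convergence reduces the task to $\mathsf{M}^{(r)}(\pi_{n}, \pi) \to 0$ for almost every $r$, which is weak convergence $\vertMap^{(r)}(\pi_{n}) \to \vertMap^{(r)}(\pi)$ in $\finMeas(\RNpp)$. At continuity radii $r$, total masses $\MeasureMap(\pi_{n})(S^{(r)})$ converge by the previous step, and vague convergence on $\RNpp$ follows by rewriting $\int f\, d\vertMap^{(r)}(\pi_{n})$ as $\int f(w) w \mathbf{1}_{S^{(r)}}(x)\, \pi_{n}(dx\,dw)$ for $f \in C_{c}(\RNpp)$ and approximating the integrand by continuous compactly supported functions on $S \times \RNpp$ (its support is compact in $\RNpp$ by construction). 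Since weak convergence of finite measures on $\RNpp$ is equivalent to vague convergence plus convergence of total mass, we conclude.

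The main obstacle is the interplay between vague convergence on $S \times \RNpp$ and on $S$: because $\MeasureMap$ integrates against the unbounded weight $w$, neither vague convergence automatically implies the other. The technical crux is precisely that the small-mass control on $M_{\varepsilon}^{(r)}$ and the heavy-mass bound on $W^{(r)}$ rule out escape of mass as $w \to 0$ and $w \to \infty$, thereby allowing free passage between vague integrals on $S \times \RNpp$ and on $S$ at appropriate continuity points of the truncation parameters.
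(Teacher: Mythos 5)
Your proof is correct and follows essentially the same line as the paper's: the implication (i) $\Rightarrow$ (ii) is immediate, (ii) $\Rightarrow$ (iii) uses the uniform bound $L$ on $W^{(r)}(\pi_{n})$ to reduce the small-mass estimate to an integral over the compact set $S^{(r)} \times (\varepsilon, L]$ where vague convergence applies, and (iii) $\Rightarrow$ (i) handles each component of $d_{\pointMeas}^{S,\rho_{S}}$ by the same truncation. The only cosmetic differences are that you express $M_{\varepsilon}^{(r)}(\pi_{n})$ via a subtraction identity where the paper bounds $|M_{\varepsilon}^{(r)}(\pi_{n}) - M_{\varepsilon}^{(r)}(\pi)|$ directly, and that you establish weak convergence of $\vertMap^{(r)}(\pi_{n})$ via vague convergence plus total mass, whereas the paper integrates against bounded continuous test functions on $\RNpp$ directly.
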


\begin{proof}
  The implication \ref{2. thm item: sP, wrt metric} $\Rightarrow$ \ref{2. thm item: sP, in two topologies} is obvious.
  Suppose that \ref{2. thm item: sP, in two topologies} is satisfied.
  The vague convergence $\MeasureMap(\pi_{n} ) \to \MeasureMap(\pi)$ 
  implies that $\sup_{n \geq 1} \MeasureMap(\pi_{n} )(S^{(r)}) < \infty$ for each $r>0$.
  From Lemma \ref{2. lem: properties of edge-weight M and heavy weigh W}\ref{2. lem item: max weight is finite},
  we obtain that $\sup_{n \geq 1} W^{(r)}(\pi_{n} ) < \infty$ for each $r>0$.
  Let $\varepsilon, r> 0$ be such that 
  $\pi$ has no atoms on the boundary of $S^{(r)} \times (\varepsilon, \infty)$.
  We then choose $l>0$ so that 
  $\pi$ has no atoms on the boundary of $S^{(r)} \times (\varepsilon, l)$ and $\sup_{n \geq 1} W^{(r)}(\pi_{n} ) < l$.
  We have that 
  \begin{align}
    |M_{\varepsilon}^{(r)}(\pi_{n} ) - M_{\varepsilon}^{(r)}(\pi)| 
    &=
    \left|
      \int_{S^{(r)} \times (0, \varepsilon]} w\, \pi_{n} (dx dw) 
      - 
      \int_{S^{(r)} \times (0, \varepsilon]} w\, \pi(dx dw) 
    \right| \\
    &\leq 
    \bigl| \MeasureMap(\pi_{n} )(S^{(r)}) - \MeasureMap(\pi)(S^{(r)}) \bigr| \\
    &\quad
    + 
    \left|
      \int_{S^{(r)} \times (\varepsilon, l)} w\, \pi_{n} (dx dw) 
      - 
      \int_{S^{(r)} \times (\varepsilon, l)} w\, \pi(dx dw)
    \right|.
  \end{align}
  Since $\MeasureMap(\pi_{n})|_{S^{(r)}} \to \MeasureMap(\pi)|_{S^{(r)}}$ weakly 
  and $\pi_{n}|_{S^{(r)} \times (\varepsilon, l)} \to \pi|_{S^{(r)} \times (\varepsilon, l)}$ weakly,
  we deduce that $M_{\varepsilon}^{(r)}(\pi_{n}) \to M_{\varepsilon}^{(r)}(\pi)$.
  From the dominated convergence theorem 
  and Lemma \ref{2. lem: properties of edge-weight M and heavy weigh W}\ref{2. lem item: monotonicity of small mass function},
  we obtain \ref{2. thm item: sP, vaguely, heavy weight and low mass}.

  Assume that \ref{2. thm item: sP, vaguely, heavy weight and low mass} is satisfied.
  We will prove \ref{2. thm item: sP, wrt metric}.
  We note that, by Lemma \ref{2. lem: properties of edge-weight M and heavy weigh W}\ref{2. lem item: max weight is finite}, 
  $\sup_{n \geq 1} W^{(r)}(\pi_{n})$ is finite for each $r>0$.
  Fix a continuous function $f$ on $S$ whose support is contained in $S^{(r)}$,
  where we assume that $\MeasureMap(\pi)$ has no atoms on the boundary of $S^{(r)}$.
  Let $\varepsilon, l > 0$ be such that 
  $\pi$ has no atoms on the boundary of $S^{(r)} \times (\varepsilon, l)$ 
  and $\sup_{n \geq 1} W^{(r)}(\pi_{n}) < l$. 
  We then have that 
  \begin{align}
    &\left|
      \int f(x)\, \MeasureMap(\pi_{n})(dx) 
      - 
      \int f(x)\, \MeasureMap(\pi)(dx)
    \right| \\
    = 
    &\left|
      \int_{S^{(r)} \times (0, \infty)} w f(x)\, \pi_{n}(dx dw) 
      - 
      \int_{S^{(r)} \times (0, \infty)} w f(x)\, \pi(dx dw)
    \right| \\
    \leq 
    &\left|
      \int_{S^{(r)} \times (\varepsilon, l)} w f(x)\, \pi_{n}(dx dw) 
      - 
      \int_{S^{(r)} \times (\varepsilon, l)} w f(x)\, \pi(dx dw)
    \right| 
    +
    \| f \|_{\infty} 
    \bigl(
      M_{\varepsilon}^{(r)}(\pi_{n}) + M_{\varepsilon}^{(r)}(\pi)
    \bigr).
  \end{align}
  By Lemma \ref{2. lem: properties of edge-weight M and heavy weigh W}\ref{2. lem item: monotonicity of small mass function},
  \ref{2. thm item: sP, vaguely, heavy weight and low mass},
  and the weak convergence $\pi_{n}|_{S^{(r)} \times (\varepsilon, l)} \to \pi|_{S^{(r)} \times (\varepsilon, l)}$,
  we obtain that $\MeasureMap(\pi_{n}) \to \MeasureMap(\pi)$ vaguely.
  It remains to show that $\vertMap^{(r)}(\pi_{n}) \to \vertMap^{(r)}(\pi)$ weakly for all but countably many $r>0$.
  Fix $r > 0$ satisfying $\MeasureMap(\pi)(\partial S^{(r)}) = 0$.
  Let $\varepsilon, l>0$ be such that 
  $\pi$ has no atoms on the boundary of $S^{(r)} \times (\varepsilon, l)$ 
  and $\sup_{n \geq 1} W^{(r)}(\pi_{n}) < l$. 
  For a bounded continuous function $g$ on $\RNpp$,
  we have that 
  \begin{align}
    &\left|
      \int g(w)\, \vertMap^{(r)}(\pi_{n})(dw) 
      -
      \int g(w)\, \vertMap^{(r)}(\pi)(dw)
    \right| \\
    &= 
    \left|
      \int_{S^{(r)} \times \RNpp} g(w)w\, \pi_{n}(dx dw) 
      - 
      \int_{S^{(r)} \times \RNpp} g(w)w\, \pi(dx dw)
    \right| \\
    &\leq
    \left|
      \int_{S^{(r)} \times (\varepsilon, l)} g(w)w\, \pi_{n}(dx dw) 
      - 
      \int_{S^{(r)} \times (\varepsilon, l)} g(w)w\, \pi(dx dw)
    \right|
    + 
    \| g \|_{\infty} 
    \bigl(
      M_{\varepsilon}^{(r)}(\pi_{n}) + M_{\varepsilon}^{(r)}(\pi)
    \bigr)
  \end{align}
  Hence, similarly as before,
  we obtain that $\vertMap^{(r)}(\pi_{n}) \to \vertMap^{(r)}(\pi)$ weakly.
\end{proof}

For the next result,
recall the map $\pointMap$ from \eqref{2. eq: dfn of point operator}.
Via this map,
the space $\disMeasures(S)$ equipped with the vague-and-point-process topology is topologically embedded into $\pointMeas(S)$.

\begin{cor}  \label{2. cor: embedding of Mdis to sP}
  The map $\pointMap: \disMeasures(S) \to \pointMeas(S)$ is a topological embedding,
  i.e.,
  a homeomorphism onto its image.
  If we write $\pointMeas^{*}(S)$ for the image,
  then 
  \begin{equation}  \label{2. cor eq: image of Mdis by p}
    \pointMeas^{*}(S)
    =
    \Bigl\{
      \pi \in \pointMeas(S) 
      \mid
      \pi = \sum_{i \in I} \delta_{(x_{i}, w_{i})}\ \text{with}\ x_{i} \neq x_{j}\ \text{if}\ i \neq j
    \Bigr\},
  \end{equation}
  and the inverse map is $\MeasureMap |_{\pointMeas^{*}(S)}$, i.e., the restriction of the map $\MeasureMap$ to $\pointMeas^{*}(S)$.
\end{cor}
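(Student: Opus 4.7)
The plan is to unpack the definitions and reduce everything to the convergence characterizations already established, namely Theorem \ref{2. thm: convergence in Mdis} for $\disMeasures(S)$ and Theorem \ref{2. thm: convergence in sP} for $\pointMeas(S)$.

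First I would verify the algebraic statements. Given $\nu \in \disMeasures(S)$ with atomic decomposition $\nu = \sum_{i \in I} w_{i} \delta_{x_{i}}$ (with the $x_{i}$ distinct), the definition \eqref{2. eq: dfn of point operator} gives $\pointMap(\nu) = \sum_{i \in I} \delta_{(x_{i}, w_{i})}$, which is an integer-valued Radon measure whose image under $\MeasureMap$ is $\sum_{i \in I} w_{i} \delta_{x_{i}} = \nu$, itself a Radon measure; hence $\pointMap(\nu) \in \pointMeas(S)$ and moreover lies in the set on the right of \eqref{2. cor eq: image of Mdis by p}. Conversely, if $\pi = \sum_{i \in I} \delta_{(x_{i}, w_{i})} \in \pointMeas(S)$ with distinct $x_{i}$, then $\MeasureMap(\pi) = \sum_{i \in I} w_{i} \delta_{x_{i}}$ is a discrete Radon measure (by definition of $\pointMeas(S)$), so $\MeasureMap(\pi) \in \disMeasures(S)$ and $\pointMap(\MeasureMap(\pi)) = \pi$. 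This establishes injectivity of $\pointMap$ on $\disMeasures(S)$, the equality \eqref{2. cor eq: image of Mdis by p}, and the fact that $\MeasureMap|_{\pointMeas^{*}(S)}$ is the two-sided inverse of $\pointMap$.

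Next I would check continuity in both directions using the characterizations of convergence. For $\pointMap$: suppose $\nu_{n} \to \nu$ in $\disMeasures(S)$. By Theorem \ref{2. thm: convergence in Mdis}, this gives $\nu_{n} \to \nu$ vaguely and $\pointMap(\nu_{n}) \to \pointMap(\nu)$ vaguely. Since $\MeasureMap(\pointMap(\nu_{n})) = \nu_{n} \to \nu = \MeasureMap(\pointMap(\nu))$ vaguely, Theorem \ref{2. thm: convergence in sP} yields $\pointMap(\nu_{n}) \to \pointMap(\nu)$ in $\pointMeas(S)$. For $\MeasureMap|_{\pointMeas^{*}(S)}$: suppose $\pi_{n} \to \pi$ in $\pointMeas(S)$ with all $\pi_{n}, \pi \in \pointMeas^{*}(S)$. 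By Theorem \ref{2. thm: convergence in sP}, $\MeasureMap(\pi_{n}) \to \MeasureMap(\pi)$ vaguely, and since $\pointMap(\MeasureMap(\pi_{n})) = \pi_{n} \to \pi = \pointMap(\MeasureMap(\pi))$ vaguely, Theorem \ref{2. thm: convergence in Mdis} gives $\MeasureMap(\pi_{n}) \to \MeasureMap(\pi)$ in $\disMeasures(S)$.

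The argument is essentially formal once the two convergence theorems are available; I do not anticipate any real obstacle. The only step where one might slip is verifying that the range identification holds as a set equality rather than merely an inclusion, but this follows immediately from the fact that $\MeasureMap$ sends every element of the set on the right of \eqref{2. cor eq: image of Mdis by p} back into $\disMeasures(S)$ and is a one-sided inverse of $\pointMap$ there.
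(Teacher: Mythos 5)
Your proposal is correct and follows essentially the same route as the paper's proof: identify the image and inverse map directly from the definitions, then deduce bicontinuity by matching the condition lists in Theorem \ref{2. thm: convergence in Mdis}(ii) and Theorem \ref{2. thm: convergence in sP}(ii) via the identities $\MeasureMap \circ \pointMap = \id$ and $\pointMap \circ \MeasureMap|_{\pointMeas^{*}(S)} = \id$. The paper simply states these steps more tersely.
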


\begin{proof}
  The assertion \eqref{2. cor eq: image of Mdis by p} follows from definition.
  Using \eqref{2. cor eq: image of Mdis by p},
  one can check that $\MeasureMap |_{\pointMeas^{*}(S)}$ is the inverse map.
  The continuity of $\pointMap$ and $\MeasureMap |_{\pointMeas^{*}(S)}$ is obvious 
  by Theorem \ref{2. thm: convergence in Mdis} and Theorem \ref{2. thm: convergence in sP}.
\end{proof}

The following result is an immediate consequence of the above result.
From this result, 
convergence in the vague-and-point-process topology 
is proven by convergence in $\pointMeas(S)$.

\begin{cor} \label{2. cor: convergence in VPP from that in sP}
  Let $\pi, \pi_{1}, \pi_{2}, \ldots$ be elements of $\pointMeas^{*}(S)$.
  If $\pi_{n} \to \pi$ in $\pointMeas(S)$,
  then $\MeasureMap(\pi_{n}) \to \MeasureMap(\pi)$ in the vague-and-point-process topology
  (as elements in $\disMeasures(S)$).
\end{cor}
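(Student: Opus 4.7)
The plan is to reduce the statement to a direct application of the immediately preceding Corollary~\ref{2. cor: embedding of Mdis to sP}. The key observation is that the statement only really asserts continuity of the restricted map $\MeasureMap|_{\pointMeas^{*}(S)}$, which is precisely what the previous corollary established (as it identified $\MeasureMap|_{\pointMeas^{*}(S)}$ as the inverse of the topological embedding $\pointMap$).

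Specifically, I would proceed as follows. First, I note that by the hypothesis $\pi, \pi_{n} \in \pointMeas^{*}(S)$, and by Corollary~\ref{2. cor: embedding of Mdis to sP} the set $\pointMeas^{*}(S)$ carries the subspace topology inherited from $\pointMeas(S)$ under which $\MeasureMap|_{\pointMeas^{*}(S)}$ is a homeomorphism onto $\disMeasures(S)$. Since the sequence $(\pi_{n})_{n \geq 1}$ lies in $\pointMeas^{*}(S)$ and converges in $\pointMeas(S)$ to a limit $\pi$ that also lies in $\pointMeas^{*}(S)$, this convergence is equivalent to convergence in the subspace topology on $\pointMeas^{*}(S)$. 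Applying the continuous map $\MeasureMap|_{\pointMeas^{*}(S)}$ then yields $\MeasureMap(\pi_{n}) \to \MeasureMap(\pi)$ in $\disMeasures(S)$ with respect to the vague-and-point-process topology, as required.

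Given this, the proof is essentially a one-line deduction and there is no real obstacle: all the substantive work (characterizing $\pointMeas^{*}(S)$ explicitly and verifying bicontinuity of $\pointMap$ through Theorems~\ref{2. thm: convergence in Mdis} and~\ref{2. thm: convergence in sP}) was already carried out. The only thing to be careful about is making sure the reader sees why convergence in the ambient space $\pointMeas(S)$ is sufficient (rather than only convergence in the subspace), which is automatic since both the sequence and its limit belong to $\pointMeas^{*}(S)$.
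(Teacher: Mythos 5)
Your proof is correct and is exactly what the paper intends when it labels the corollary ``an immediate consequence'' of the preceding embedding result: convergence in the ambient space $\pointMeas(S)$ of a sequence lying in $\pointMeas^{*}(S)$ with limit also in $\pointMeas^{*}(S)$ is the same as convergence in the subspace topology, and then you apply the continuity of the inverse of the topological embedding. No gap.
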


For the next result,
recall from \eqref{2. dfn eq: restriction of a measure to a closed ball} 
that $\pi^{(r)}$ denotes the restriction of $\pi$ to $S^{(r)} \times [e^{-r}, e^{r}]$.

\begin{lem} \label{2. lem: convergence of pi^r to pi}
  For any $\pi \in \pointMeas(S)$,
  we have that $\pi^{(s)} \to \pi$ in $\pointMeas(S)$ as $s \to \infty$.
\end{lem}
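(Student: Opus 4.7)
The plan is to invoke condition \ref{2. thm item: sP, vaguely, heavy weight and low mass} of Theorem \ref{2. thm: convergence in sP}: it suffices to check that $\pi^{(s)} \to \pi$ vaguely on $S \times \RNpp$ and that, for each $r > 0$,
\begin{equation}
  \limsup_{s \to \infty} W^{(r)}(\pi^{(s)}) < \infty
  \quad \text{and} \quad
  \lim_{\varepsilon \downarrow 0} \limsup_{s \to \infty} M_{\varepsilon}^{(r)}(\pi^{(s)}) = 0.
\end{equation}
All three properties will follow easily from the monotonicity of restriction.

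First I would verify the vague convergence. Given a continuous function $g$ on $S \times \RNpp$ with compact support, bounded compactness of $S$ ensures that the support lies in $S^{(s_{0})} \times [e^{-s_{0}}, e^{s_{0}}]$ for some $s_{0} > 0$. For every $s \geq s_{0}$ the measures $\pi^{(s)}$ and $\pi$ agree on this set, so $\int g\, d\pi^{(s)} = \int g\, d\pi$ for all $s \geq s_{0}$, giving the vague convergence $\pi^{(s)} \to \pi$. Next, because restriction only discards atoms, every atom of $\pi^{(s)}$ is also an atom of $\pi$ with the same weight, so that $W^{(r)}(\pi^{(s)}) \leq W^{(r)}(\pi)$ and $M_{\varepsilon}^{(r)}(\pi^{(s)}) \leq M_{\varepsilon}^{(r)}(\pi)$ for every $r, \varepsilon, s > 0$. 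Lemma \ref{2. lem: properties of edge-weight M and heavy weigh W}\ref{2. lem item: max weight is finite} gives $W^{(r)}(\pi) < \infty$, yielding the first required bound uniformly in $s$, while Lemma \ref{2. lem: properties of edge-weight M and heavy weigh W}\ref{2. lem item: monotonicity of small mass function} gives $\lim_{\varepsilon \downarrow 0} M_{\varepsilon}^{(r)}(\pi) = 0$, yielding the second. Theorem \ref{2. thm: convergence in sP} then closes the argument.

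There is no substantive obstacle: the statement is essentially a sanity check that restriction to a large ball around the root converges back to the original measure in $\pointMeas(S)$. What makes condition \ref{2. thm item: sP, vaguely, heavy weight and low mass} of Theorem \ref{2. thm: convergence in sP} convenient is precisely that the quantities $W^{(r)}$ and $M_{\varepsilon}^{(r)}$ behave monotonically under restriction, reducing the required uniform control to properties of the fixed measure $\pi$.
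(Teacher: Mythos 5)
Your proof is correct and follows essentially the same route as the paper: verify vague convergence of $\pi^{(s)}$ to $\pi$, use monotonicity of restriction to get the uniform bounds $W^{(r)}(\pi^{(s)}) \leq W^{(r)}(\pi)$ and $M_{\varepsilon}^{(r)}(\pi^{(s)}) \leq M_{\varepsilon}^{(r)}(\pi)$, then apply Lemma \ref{2. lem: properties of edge-weight M and heavy weigh W} and Theorem \ref{2. thm: convergence in sP}\ref{2. thm item: sP, vaguely, heavy weight and low mass}. The only cosmetic difference is that you spell out the vague-convergence step by hand, whereas the paper just cites Theorem \ref{2. thm: convergence in the vague topology}.
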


\begin{proof}
  By Theorem \ref{2. thm: convergence in the vague topology},
  we have that $\pi^{(s)} \to \pi$ vaguely. 
  It is easy to check that $W^{(r)}(\pi^{(s)}) \leq W^{(r)}(\pi)$ 
  and $M_{\varepsilon}^{(r)}(\pi^{(s)}) \leq M_{\varepsilon}^{(r)}(\pi)$
  for each $r>0$.
  It then follows 
  from Lemma \ref{2. lem: properties of edge-weight M and heavy weigh W}\ref{2. lem item: monotonicity of small mass function} and \ref{2. lem item: max weight is finite}
  and Theorem \ref{2. thm: convergence in sP} 
  that $\pi^{(s)} \to \pi$ in $\pointMeas(S)$.
\end{proof}

\begin{thm} [{Polishness of $\pointMeas(S)$}]\label{2. thm: sP is Polish}
  The metric $d_{\pointMeas}^{S, \rho_{S}}$ is complete and separable.
\end{thm}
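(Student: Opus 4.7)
The plan is to establish separability and completeness in turn, leveraging the characterizations of convergence in $\pointMeas(S)$ provided by Theorems \ref{2. thm: convergence in the vague topology} and \ref{2. thm: convergence in sP}.

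For separability, I would fix a countable dense subset $D \subseteq S$ containing $\rho_{S}$ and consider the countable family
\begin{equation*}
  \mathcal{D}
  \coloneqq
  \Bigl\{
    \sum_{i=1}^{N} \beta_{i} \delta_{(x_{i}, w_{i})}
    \mid
    N \in \NN,\, x_{i} \in D,\, w_{i} \in \QN \cap \RNpp,\, \beta_{i} \in \NN
  \Bigr\}.
\end{equation*}
Given $\pi \in \pointMeas(S)$, Lemma \ref{2. lem: convergence of pi^r to pi} gives $\pi^{(s)} \to \pi$ in $d_{\pointMeas}^{S, \rho_{S}}$ as $s \to \infty$, and each $\pi^{(s)}$ has only finitely many atoms since it assigns finite mass to the compact set $S^{(s)} \times [e^{-s}, e^{s}]$. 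Perturbing the finitely many atom locations and weights of $\pi^{(s)}$ to nearby elements of $D$ and $\QN \cap \RNpp$ produces elements of $\mathcal{D}$ arbitrarily close to $\pi^{(s)}$, since each of the three terms defining $d_{\pointMeas}^{S, \rho_{S}}$ is stable under such small perturbations.

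For completeness, let $(\pi_{n})_{n \geq 1}$ be Cauchy in $(\pointMeas(S), d_{\pointMeas}^{S, \rho_{S}})$. From the three terms defining $d_{\pointMeas}^{S, \rho_{S}}$, the sequence $(\pi_{n})$ is Cauchy in $d_{V}^{S \times \RNpp, \tilde{\rho}_{S}}$ and $(\MeasureMap(\pi_{n}))$ is Cauchy in $d_{V}^{S, \rho_{S}}$, so Theorem \ref{2. thm: convergence in the vague topology} yields vague limits $\pi \in \Meas(S \times \RNpp)$ and $\mu \in \Meas(S)$. Integer-valuedness being preserved under vague limits of Radon measures (a standard fact) gives $\pi \in \intMeasures(S \times \RNpp)$. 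The crux is to show $\MeasureMap(\pi) = \mu$, which simultaneously verifies $\pi \in \pointMeas(S)$ and, via Theorem \ref{2. thm: convergence in sP}, yields $\pi_{n} \to \pi$ in $d_{\pointMeas}^{S, \rho_{S}}$.

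To identify $\MeasureMap(\pi) = \mu$, I would first extract, via the standard $L^{1}$ argument applied to $\int_{0}^{\infty} e^{-r}(1 \wedge \mathsf{M}^{(r)}(\pi_{n}, \pi_{m}))\, dr$, a subsequence along which $(\vertMap^{(r)}(\pi_{n}))$ is Cauchy in $(\finMeas(\RNpp), d_{P}^{\RNpp})$ for a.e.\ $r > 0$, with weak limit $m_{r}$. Portmanteau applied to the closed set $(0, \varepsilon] \subseteq \RNpp$ gives $\limsup_{n} M_{\varepsilon}^{(r)}(\pi_{n}) \leq m_{r}((0, \varepsilon])$, and continuity from above of the finite measure $m_{r}$ (using $\bigcap_{k \geq 1} (0, 1/k] = \emptyset$ in $\RNpp$) yields $\lim_{\varepsilon \downarrow 0} \limsup_{n} M_{\varepsilon}^{(r)}(\pi_{n}) = 0$; meanwhile, $\sup_{n} W^{(r)}(\pi_{n}) \leq \sup_{n} \MeasureMap(\pi_{n})(S^{(r)}) < \infty$ by Lemma \ref{2. lem: properties of edge-weight M and heavy weigh W}\ref{2. lem item: max weight is finite}. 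For $f$ continuous on $S$ with support in $B_{S}(\rho_{S}, r)$ (choosing $r$ with $\mu(\partial S^{(r)}) = 0$), one picks $l > \sup_{n} W^{(r)}(\pi_{n})$ and $\varepsilon > 0$ so that $\pi$ has no atoms on the boundary of $S^{(r)} \times (\varepsilon, l)$, and writes
\begin{equation*}
  \int f(x) w\, \pi_{n}(dx\,dw)
  = \int_{S \times (\varepsilon, l)} f(x) w\, \pi_{n}(dx\,dw) + \int_{S \times (0, \varepsilon]} f(x) w\, \pi_{n}(dx\,dw);
\end{equation*}
the first term converges to its $\pi$-analogue by vague convergence on a relatively compact rectangle, and the second is bounded by $\|f\|_{\infty} M_{\varepsilon}^{(r)}(\pi_{n})$. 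Sending $n \to \infty$ and then $\varepsilon \downarrow 0$ gives $\int f\, d\mu = \int f\, d\MeasureMap(\pi)$ for a dense class of $f$, hence $\mu = \MeasureMap(\pi)$. Monotonicity of $W^{(r)}$ and $M_{\varepsilon}^{(r)}$ in $r$ extends the two bounds to all $r > 0$, and Theorem \ref{2. thm: convergence in sP} concludes. The hardest step is this identification $\MeasureMap(\pi) = \mu$: the vague convergences $\pi_{n} \to \pi$ and $\MeasureMap(\pi_{n}) \to \mu$ alone cannot rule out mass of $\pi_{n}$ leaking into the strip $w \downarrow 0$, and it is precisely the Cauchy information in the $\mathsf{M}^{(r)}$ integral term that, combined with portmanteau and continuity from above, excludes such leakage.
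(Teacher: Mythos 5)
Your proof is correct and follows essentially the same approach as the paper: same countable dense class $\mathcal{D}$ for separability, and for completeness you obtain vague limits $\pi$ and $\mu$, check integer-valuedness, establish the $W^{(r)}$ and $M_{\varepsilon}^{(r)}$ bounds from the $\vertMap^{(r)}$-Cauchy condition, and conclude via Theorem~\ref{2. thm: convergence in sP}. The two places you diverge are both minor and both work: (a) you extract a subsequence via the standard $L^1$-to-a.e.\ argument to get $\vertMap^{(r)}(\pi_n)$ Cauchy for a.e.\ $r$, whereas the paper avoids the subsequence by an averaging estimate over $(r-\delta,r+\delta)$ combined with Lemma~\ref{2. lem: properties of edge-weight M and heavy weigh W}\ref{2. lem item: estimate on frakm} and the boundary-mass estimate; (b) you prove the stronger identification $\MeasureMap(\pi)=\mu$ and invoke condition~(ii) of Theorem~\ref{2. thm: convergence in sP}, whereas the paper only shows $\MeasureMap(\pi)(S^{(r)})<\infty$ directly (monotone convergence against $g_r$) and invokes condition~(iii). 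One small thing worth spelling out in your route: when you restrict to the rectangle $S^{(r)}\times(\varepsilon,l)$ and appeal to weak convergence of $\pi_n$ there, you need $\pi(\partial S^{(r)}\times\RNpp)=0$ in addition to $\mu(\partial S^{(r)})=0$; both conditions hold for all but countably many $r$, so this is a harmless additional restriction on $r$, but it should be stated.
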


\begin{proof}
  We first show the separability.
  Let $S_{d}$ be a countable dense subset of $S$.
  Write $\mathscr{D}$ for the collection of $\pi \in \pointMeas(S)$ such that
  \begin{equation}
    \pi 
    = 
    \sum_{i=1}^{n} 
    \beta_{i} \delta_{(x_{i}, w_{i})},
  \end{equation}
  where $n \in \NN$, $\beta_{i} \in \NN$, $x_{i} \in S_{d}$, and $w_{i} \in \QN \cap \RNpp$.
  Note that $\mathscr{D}$ is countable.
  It is easy to check that any $\pi \in \pointMeas(S)$ with finitely many atoms is approximated by a sequence in $\mathscr{D}$.
  For each $\pi \in \pointMeas(S)$,
  $\pi^{(r)}$ has only finitely many atoms.
  Hence, by Lemma \ref{2. lem: convergence of pi^r to pi},
  we deduce that $\mathscr{D}$ is dense in $\pointMeas(S)$.

  Next, we show the completeness.
  Let $(\pi_{n})_{n \geq 1}$ be a Cauchy sequence with respect to $d_{\pointMeas}^{S, \rho_{S}}$.
  Since $d_{V}^{S \times \RNpp, \tilde{\rho}_{S}}$ is complete,
  there exists a Radon measure $\pi$ on $S \times \RNpp$
  such that $\pi_{n} \to \pi$ vaguely.
  Then,
  for any bounded measurable set $E$ with $\pi(\partial E) = 0$,
  we have that $\pi_{n}(E) \to \pi(E)$,
  which implies that $\pi(E) \in \ZNp$.
  Hence $\pi \in \intMeasures(S \times \RNpp)$.
  For each $r>0$,
  let $g_{r}: S \to [0, 1]$ be a continuous function 
  such that $g_{r}|_{S^{(r)}} \equiv 1$ and $g_{r}|_{S \setminus S^{(r+1)}} \equiv 0$
  and 
  let $(f_{k})_{k \geq 1}$ be non-negative continuous functions on $S \times \RNpp$ with compact support 
  increasing to the constant function $1_{S \times \RNpp}$.
  Using the vague convergence $\pi_{n} \to \pi$ and the monotone convergence theorem,
  we deduce that 
  \begin{align}
    \MeasureMap(\pi)(S^{(r)})
    &\leq 
    \int_{S \times \RNpp} g_{r}(x) w\, \pi(dx dw)\\
    &= 
    \lim_{k \to \infty} 
    \int f_{k}(x, w) g_{r}(x) w\, \pi(dx dw) \\
    &= 
    \lim_{k \to \infty} 
    \lim_{n \to \infty}
    \int f_{k}(x, w) g_{r}(x) w\, \pi_{n}(dx dw) \\
    &\leq 
    \limsup_{n \to \infty} 
    \int g_{r}(x) w\, \pi_{n}(dx dw) \\
    &\leq
    \limsup_{n \to \infty}
    \MeasureMap(\pi_{n})(S^{(r+1)}),
  \end{align}
  Since $(\MeasureMap(\pi_{n}))_{n \geq 1}$ is tight in the vague topology,
  we have that $\sup_{n \geq 1} \MeasureMap(\pi_{n})(S^{(r+1)}) < \infty$.
  Hence, $\pi \in \pointMeas(S)$.
  Moreover,
  from Lemma \ref{2. lem: properties of edge-weight M and heavy weigh W}\ref{2. lem item: max weight is finite},
  it holds that $\sup_{n \geq 1} W^{(r)}(\pi_{n}) < \infty$ 
  for each $r>0$.
  Thus, Theorem \ref{2. thm: convergence in sP} implies that 
  it is enough to show that, for all but countably many $r>0$,
  \begin{equation}  \label{2. eq: sP, proof of completeness}
    \lim_{\varepsilon \downarrow 0}
    \limsup_{n \to \infty} 
    M_{\varepsilon}^{(r)}(\pi_{n})
    = 0.
  \end{equation}
  Since $d_{V}^{S, \rho_{S}}$ is complete,
  there exists a Radon measure $\mu$ on $S$ such that 
  $\MeasureMap(\pi_{n}) \to \mu$ vaguely on $S$.
  Fix $r>0$ such that $\mu( D_{S}(\rho_{S}, r) \setminus B_{S}(\rho_{S}, r)) = 0$.
  We then have that 
  \begin{align} 
    \limsup_{\delta \to 0} 
    \limsup_{n \to \infty} 
    \MeasureMap(\pi_{n})(S^{(r+\delta)} \setminus S^{(r-\delta)})
    &\leq
    \limsup_{\delta \to 0} 
    \limsup_{n \to \infty} 
    \MeasureMap(\pi_{n})(S^{(r+\delta)} \setminus B_{S}(\rho_{S}, r-\delta)) \\
    &\leq
    \limsup_{\delta \to 0} 
    \mu(S^{(r+\delta)} \setminus B_{S}(\rho_{S}, r-\delta)) \\
    &= 
    0.
    \label{2. eq: completeness of Ps, boundary mass disappear}
  \end{align}
  Fix $\delta > 0$.
  For any $s \in (r-\delta, r+\delta)$,
  we have from Lemma \ref{2. lem: properties of edge-weight M and heavy weigh W}\ref{2. lem item: estimate on frakm} that
  \begin{equation}
    d_{P}^{\RNpp}(\vertMap^{(s)}(\pi_{n}), \vertMap^{(r)}(\pi_{n}))
    \leq
    \MeasureMap(\pi_{n})(S^{(r+\delta)} \setminus S^{(r-\delta)}),
    \quad 
    \forall n \geq 1.
  \end{equation}
  We thus deduce that 
  \begin{align}
    1 \wedge d_{P}^{\RNpp}(\vertMap^{(r)}(\pi_{m}), \vertMap^{(r)}(\pi_{n}))
    &=
    (2\delta)^{-1} 
    \int_{r-\delta}^{r+\delta}
    (1 \wedge d_{P}^{\RNpp}(\vertMap^{(r)}(\pi_{m}), \vertMap^{(r)}(\pi_{n})))\,
    ds \\ 
    &\leq
    \MeasureMap(\pi_{m})(S^{(r+\delta)} \setminus S^{(r-\delta)})
    +
    \MeasureMap(\pi_{n})(S^{(r+\delta)} \setminus S^{(r-\delta)}) \\
    &\quad
    + 
    (2\delta)^{-1} 
    \int_{r-\delta}^{r+\delta}
    (1 \wedge d_{P}^{\RNpp}(\vertMap^{(s)}(\pi_{m}), \vertMap^{(s)}(\pi_{n})))\,
    ds \\
    &\leq
    \MeasureMap(\pi_{m})(S^{(r+\delta)} \setminus S^{(r-\delta)})
    +
    \MeasureMap(\pi_{n})(S^{(r+\delta)} \setminus S^{(r-\delta)}) \\
    &\quad
    + 
    (2\delta)^{-1} e^{r+\delta}\,
    d_{\pointMeas}^{S, \rho_{S}}(\pi_{m}, \pi_{n}),
  \end{align}
  which, combined with \eqref{2. eq: completeness of Ps, boundary mass disappear}, implies that 
  $(\vertMap^{(r)}(\pi_{n}))_{n \geq 1}$ is a Cauchy sequence with respect to $d_{P}^{\RNpp}$.
  In particular, it is tight 
  and so we have that 
  \begin{equation}
    \lim_{\varepsilon \to 0} 
    \limsup_{n \to \infty} 
    \vertMap^{(r)}(\pi_{n})((0, \varepsilon)) 
    = 0.
  \end{equation}
  Since $M_{\varepsilon/2}^{(r)}(\pi_{n}) \leq \vertMap^{(r)}(\pi_{n})((0, \varepsilon))$ by the definition of $M_{\varepsilon/2}^{(r)}(\pi_{n})$,
  we obtain \eqref{2. eq: sP, proof of completeness}.
\end{proof}

Now it is possible to prove that the vague-and-point-process topology is Polish.
By Propositions \ref{2. cor: embedding of Mdis to sP} and \ref{2. thm: sP is Polish},
it suffices to show that the set $\pointMeas^{*}(S)$ is an intersection of countably many open subsets of $\pointMeas(S)$,
and this follows from two lemmas below.
For each $n \in \NN$,
we set 
\begin{equation} \label{2. eq: dfn of sP^n}
  \pointMeas^{(n)}(S)
  \coloneqq 
  \Bigl\{
    \pi \in \pointMeas(S) 
    \mid
    \text{for some}\ r \in (n-n^{-1}, n),\ 
    \pi^{(r)} = \sum_{i \in I} \delta_{(x_{i}, w_{i})}\ \text{with}\ x_{i} \neq x_{j}\ \text{if}\ i \neq j
  \Bigr\}.
\end{equation}

\begin{lem} \label{2. lem: sP^n is open}
  The set $\pointMeas^{(n)}(S)$ is open in $\pointMeas(S)$.
\end{lem}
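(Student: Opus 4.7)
My plan is to argue openness at an arbitrary point. Fix $\pi_0 \in \pointMeas^{(n)}(S)$ and choose $r_0 \in (n-n^{-1}, n)$ such that $\pi_0^{(r_0)}$ is simple with pairwise distinct $x$-coordinates; label its atoms $(x_1, w_1), \ldots, (x_k, w_k)$. The whole neighborhood construction will take place on a compact window $K \coloneqq S^{(r)} \times [e^{-r}, e^{r}]$ for some slightly smaller $r$, chosen so that the topological boundary of $K$ carries no $\pi_0$-mass.

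The first step is to pick $r \in (n-n^{-1}, r_0]$ satisfying $\pi_0(\partial K) = 0$, which excludes at most countably many values of $r$ (one per atom of $\pi_0$ for each of the three pieces of the boundary). Passing from $r_0$ down to $r$ only removes atoms from $\pi_0^{(r_0)}$, since the compact window shrinks in both coordinates, so $\pi_0^{(r)}$ remains simple with distinct $x$-coordinates, and all of its atoms now lie strictly in the interior of $K$. Next I pick $\eta > 0$ so small that the open balls $B_i \coloneqq B_{S \times \RNpp}((x_i, w_i), \eta)$ are pairwise disjoint, have closures contained in the interior of $K$, have pairwise disjoint projections $B_{S}(x_i, \eta)$ onto $S$, and satisfy $\pi_0(\partial B_i) = 0$; again only countably many values of $\eta$ are excluded.

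The main step is then to show that, with this single choice of $r$, every $\pi \in \pointMeas(S)$ sufficiently close to $\pi_0$ in $d_{\pointMeas}^{S, \rho_{S}}$ already lies in $\pointMeas^{(n)}(S)$. I will argue by contradiction. If $\pi_m \to \pi_0$ in $d_{\pointMeas}^{S, \rho_{S}}$, then Theorem \ref{2. thm: convergence in sP} yields vague convergence on $S \times \RNpp$, and hence $\pi_m|_{K} \to \pi_0|_{K}$ weakly as finite measures (using $\pi_0(\partial K) = 0$). Applying the portmanteau theorem to the $\pi_0$-continuity sets $B_i$ and $K \setminus \bigcup_i \closure(B_i)$, and invoking that each $\pi_m \in \intMeasures(S \times \RNpp)$ takes values in $\ZNp \cup \{\infty\}$, I conclude $\pi_m(B_i) = 1$ and $\pi_m(K \setminus \bigcup_i B_i) = 0$ for all large $m$. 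These equalities force $\pi_m^{(r)}$ to consist of exactly $k$ atoms of multiplicity one, whose $x$-coordinates sit in the disjoint projected balls $B_{S}(x_i, \eta)$ and are therefore distinct; so $\pi_m \in \pointMeas^{(n)}(S)$ eventually, contradicting the standing assumption.

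The main subtlety is bookkeeping rather than conceptual: in order to convert weak convergence of $\pi_m|_{K}$ into the \emph{exact} multiplicities needed, I must arrange that none of the relevant boundaries ($\partial K$ and $\partial B_i$) carry $\pi_0$-mass, and then use integer-valuedness to upgrade the approximate statement ``$\pi_m(B_i) \to 1$'' to the exact statement ``$\pi_m(B_i) = 1$ eventually''. Without both ingredients, vague convergence alone would leave room for atoms of $\pi_m$ to merge or split through the boundaries of the $B_i$, which would break the distinct-$x$ property that defines $\pointMeas^{(n)}(S)$.
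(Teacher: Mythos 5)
Your proof is correct. The underlying idea is exactly the paper's: the finitely many atoms of $\pi_0^{(r)}$ are isolated in the compact window, so any sufficiently close measure $\pi$ must place exactly one unit of mass in a small ball around each of them, and the disjointness of the projected balls in $S$ then forces the $x$-coordinates to be distinct. What differs is the technical packaging. The paper argues directly with an explicit $\varepsilon$-ball: it chooses $\varepsilon$ small, extracts from the integral definition of $d_{\pointMeas}^{S,\rho_S}$ a radius $r' > \varepsilon^{-1}$ with $d_{P}^{S \times \RNpp}(\pi^{(r')}, \tilde{\pi}^{(r')}) < \varepsilon$, and then uses the Prohorov inequalities quantitatively to pin down the atoms of $\tilde{\pi}^{(r-2\varepsilon)}$. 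You instead argue sequentially by contradiction and invoke the portmanteau theorem together with integer-valuedness of $\pi_m$ to upgrade the limits $\pi_m(B_i) \to 1$ and $\pi_m(K \setminus \bigcup_i B_i) \to 0$ to exact equalities for large $m$. The sequential route is shorter and conceptually cleaner (it outsources the quantitative work to portmanteau), at the cost of not producing an explicit radius for the open ball around $\pi_0$; the paper's direct route is more hands-on but yields that explicit $\varepsilon e^{-1/\varepsilon}$ bound. One small bookkeeping point in your write-up: you apply portmanteau to $K \setminus \bigcup_i \closure(B_i)$ but then assert $\pi_m(K \setminus \bigcup_i B_i) = 0$; the cleanest version is to get $\pi_m(K) = k$ and $\pi_m(B_i) = 1$ for each $i$ from the continuity sets $K$ and $B_i$, and then subtract, or to apply portmanteau directly to the closed set $K \setminus \bigcup_i B_i$, which is $\pi_0$-null. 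Either way the conclusion stands.
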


\begin{proof}
  Fix $\pi \in \pointMeas^{(n)}(S)$.
  Let $r \in (n-n^{-1}, n)$ be such that 
  $\pi^{(r)} = \sum_{i \in I} \delta_{(x_{i}, w_{i})}$
  with $x_{i} \neq x_{j}$ if $i \neq j$.
  Since the index set $I$ is finite,
  we may choose $\varepsilon \in (0, 1)$ 
  so that $n < \varepsilon^{-1}$, $n-n^{-1}+2\varepsilon < r$, and 
  \begin{equation}  \label{2. eq: sP^n is open}
    D_{S}(x_{i}, 2\varepsilon) \cap D_{S}(x_{j}, 2\varepsilon) = \emptyset \quad
    \text{if} \quad  
    i \neq j.
  \end{equation}
  Fix $\tilde{\pi} \in \pointMeas(S)$ with $d_{\pointMeas}^{S, \rho_{S}}(\pi, \tilde{\pi}) < \varepsilon e^{-1/\varepsilon}$.
  It suffices to show that $\tilde{\pi} \in \pointMeas^{(n)}(S)$.
  We can find $r' > \varepsilon^{-1}$ such that $d_{P}^{S \times \RNpp}(\pi^{(r')}, \tilde{\pi}^{(r')}) < \varepsilon$.
  Let $\{(y_{j}, v_{j})\}_{j \in J}$ be the atoms of $\tilde{\pi}^{(r-2\varepsilon)}$.
  By the definition of the Prohorov metric,
  we have that 
  \begin{equation}
    1 
    \leq 
    \tilde{\pi}(\{(y_{j}, v_{j})\}) 
    \leq 
    \pi \bigl( D_{S \times \RNpp}( (y_{j}, v_{j}), \varepsilon) \bigr) + \varepsilon.
  \end{equation}
  This, combined with \eqref{2. eq: sP^n is open}, 
  implies that there is exactly one atom of $\pi$ lying in $D_{S \times \RNpp}( (y_{j}, v_{j}), \varepsilon)$.
  Thus, $\tilde{\pi}(\{(y_{j}, v_{j})\}) = 1$.
  Assume that there exist $j \neq j'$ such that $y_{j} = y_{j'} = y$.
  It is then the case that 
  \begin{equation}
    2 
    = 
    \tilde{\pi}(\{(y, v_{j}), (y, v_{j'})\})
    \leq 
    \pi \bigl( D_{S \times \RNpp}((y, v_{j}), \varepsilon) \cup  D_{S \times \RNpp}((y, v_{j'}), \varepsilon) \bigr) + \varepsilon
  \end{equation}
  Since $C \coloneqq D_{S \times \RNpp}((y, v_{j}), \varepsilon) \cup  D_{S \times \RNpp}((y, v_{j'}), \varepsilon)$ is contained in $S^{(r)} \times [e^{-r}, e^{r}]$,
  there are at least two atoms of $\pi$ lying in $C$,
  which contradicts \eqref{2. eq: sP^n is open}.
  Therefore, we deduce that $\tilde{\pi} \in \pointMeas^{(n)}(S)$.
\end{proof}

\begin{lem} \label{2. lem: sP^n decreases to sP}
  It holds that $\pointMeas^{*}(S) = \bigcap_{n=1}^{\infty} \pointMeas^{(n)}(S)$.
\end{lem}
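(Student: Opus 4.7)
The plan is a straightforward double inclusion based on the fact that the thresholds $r_n$ witnessing membership in $\pointMeas^{(n)}(S)$ can be forced to tend to infinity.

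For the inclusion $\pointMeas^{*}(S) \subseteq \bigcap_{n \geq 1} \pointMeas^{(n)}(S)$, I would simply note that if $\pi = \sum_{i \in I} \delta_{(x_i, w_i)}$ with the $x_i$ pairwise distinct, then for \emph{every} $r>0$ the restriction $\pi^{(r)}$ equals $\sum_{i \in I_r} \delta_{(x_i, w_i)}$ where $I_r \coloneqq \{i \in I : x_i \in S^{(r)},\, w_i \in [e^{-r}, e^r]\}$, and the first coordinates in this finite atomic decomposition remain pairwise distinct. Hence for every $n$ and every $r \in (n-n^{-1}, n)$, the required form \eqref{2. eq: dfn of sP^n} is satisfied, so $\pi \in \pointMeas^{(n)}(S)$.

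For the reverse inclusion, suppose $\pi \in \bigcap_{n \geq 1} \pointMeas^{(n)}(S)$ and, for each $n$, pick $r_n \in (n-n^{-1}, n)$ witnessing membership; in particular $r_n \to \infty$. I would fix an arbitrary atom $(x, w)$ of $\pi$ and choose $n$ so large that $n - n^{-1} > d^{S}(\rho_{S}, x) \vee |\log w|$, which forces $(x, w)$ to lie in the open ball $B_{S \times \RNpp}(\tilde{\rho}_{S}, r_n)$, and hence to be an atom of $\pi^{(r_n)}$ with the same multiplicity $\pi(\{(x,w)\})$. Because the atomic decomposition of $\pi^{(r_n)}$ consists of Dirac masses of multiplicity one, this forces $\pi(\{(x,w)\}) = 1$.

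To rule out two atoms $(x, w_1)$ and $(x, w_2)$ of $\pi$ with a shared first coordinate and $w_1 \neq w_2$, I would apply the same device: choose $n$ large enough that $n-n^{-1} > d^{S}(\rho_{S}, x) \vee |\log w_1| \vee |\log w_2|$, so that both atoms appear in the atomic decomposition of $\pi^{(r_n)}$, contradicting the distinct-first-coordinate condition in \eqref{2. eq: dfn of sP^n}. Combining the two observations shows that $\pi$ can be written as $\sum_i \delta_{(x_i, w_i)}$ with the $x_i$ pairwise distinct, i.e., $\pi \in \pointMeas^{*}(S)$, by the explicit description \eqref{2. cor eq: image of Mdis by p}. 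There is no real obstacle here; the only point requiring care is the choice of $n$ large enough to push the atom(s) into the interior of $S^{(r_n)} \times [e^{-r_n}, e^{r_n}]$, which is made possible precisely by the requirement $r_n \in (n-n^{-1}, n)$ forcing $r_n \to \infty$.
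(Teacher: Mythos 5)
Your proof is correct and is precisely the detail the paper suppresses: the paper simply declares the identity ``immediate from the definition of $\pointMeas^{(n)}(S)$ and \eqref{2. cor eq: image of Mdis by p},'' and your double-inclusion argument is the expected unpacking of that remark, with the only genuinely useful observation being that the witness radii $r_n \in (n-n^{-1}, n)$ tend to infinity, so every atom and every pair of atoms eventually falls inside the restriction window $S^{(r_n)} \times [e^{-r_n}, e^{r_n}]$.
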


\begin{proof}
  This is immediate from the definition of $\pointMeas^{(n)}(S)$ and \eqref{2. cor eq: image of Mdis by p}.
\end{proof}

\begin{thm} [{Polishness of $\disMeasures(S)$}]\label{2. thm: the VPP topology is Polish}
  In general, 
  $d_{\disMeasures}^{S, \rho_{S}}$ is not complete.
  However,
  the vague-and-point-process topology is Polish.
\end{thm}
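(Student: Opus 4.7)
For the first assertion, the example preceding the statement does all the work: with $S=\RN$ and $\nu_n = \delta_0 + \delta_{n^{-1}}$, Theorem \ref{2. thm: convergence in Mdis} gives $\nu_n \to 2\delta_0$ vaguely and $\pointMap(\nu_n) = \delta_{(0,1)} + \delta_{(n^{-1},1)} \to 2\delta_{(0,1)}$ vaguely, so $(\nu_n)$ is Cauchy for $d_{\disMeasures}^{\RN,0}$; but any $\disMeasures(\RN)$-candidate for its limit would have to be $2\delta_0$, and $\pointMap(2\delta_0) = \delta_{(0,2)} \neq 2\delta_{(0,1)}$, so no limit exists in $\disMeasures(\RN)$. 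Hence $d_{\disMeasures}^{S,\rho_S}$ fails to be complete whenever $S$ carries such a configuration.

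For the Polishness, the plan is to realise $\disMeasures(S)$ as a $G_\delta$ subset of the Polish space $\pointMeas(S)$ and then invoke Alexandrov's classical theorem that every $G_\delta$ subset of a Polish space is itself Polish. By Corollary \ref{2. cor: embedding of Mdis to sP}, $\pointMap$ is a homeomorphism from $\disMeasures(S)$ onto $\pointMeas^{*}(S) \subseteq \pointMeas(S)$. By Lemma \ref{2. lem: sP^n decreases to sP}, $\pointMeas^{*}(S) = \bigcap_{n=1}^{\infty} \pointMeas^{(n)}(S)$, and by Lemma \ref{2. lem: sP^n is open} each $\pointMeas^{(n)}(S)$ is open in $\pointMeas(S)$. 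Hence $\pointMeas^{*}(S)$ is a $G_\delta$ subset of the Polish space $(\pointMeas(S), d_{\pointMeas}^{S,\rho_S})$ (Theorem \ref{2. thm: sP is Polish}). Alexandrov's theorem then supplies a complete separable metric on $\pointMeas^{*}(S)$ inducing the subspace topology, and transporting it through the homeomorphism $\pointMap^{-1}$ yields a complete separable metric on $\disMeasures(S)$ compatible with the vague-and-point-process topology.

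No genuine obstacle remains: the proof is essentially an assembly of Corollary \ref{2. cor: embedding of Mdis to sP}, Theorem \ref{2. thm: sP is Polish}, and Lemmas \ref{2. lem: sP^n is open}--\ref{2. lem: sP^n decreases to sP}, together with the standard Alexandrov theorem. The substantive content has already been done: showing $\pointMeas(S)$ is Polish, identifying $\pointMeas^{*}(S)$ as the ``no-collision'' locus, and expressing the no-collision condition as a countable conjunction of open conditions indexed by the scale $n$.
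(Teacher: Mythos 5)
Your proof is correct and follows exactly the route the paper takes: use Corollary \ref{2. cor: embedding of Mdis to sP} to embed $\disMeasures(S)$ homeomorphically onto $\pointMeas^{*}(S)$, identify this image as a $G_\delta$ subset of the Polish space $\pointMeas(S)$ via Lemmas \ref{2. lem: sP^n is open} and \ref{2. lem: sP^n decreases to sP}, and invoke Alexandrov's theorem. The non-completeness example is also the one the paper points to.
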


\begin{proof}
  At the end of Section \ref{sec: The vague-and-point-process topology},
  we checked that $d_{\disMeasures}^{S, \rho_{S}}$ is not complete in general.
  By Corollary \ref{2. cor: embedding of Mdis to sP}, Lemmas \ref{2. lem: sP^n is open} and \ref{2. lem: sP^n decreases to sP},
  we obtain that the vague-and-point-process topology on $\disMeasures(S)$ is Polish.
\end{proof}

We further study topological properties of $\pointMeas(S)$:
precompactness, the Borel $\sigma$-algebra, and tightness.

\begin{thm} [{Precompactness in $\pointMeas(S)$}] \label{2. thm: precompactness in sP}
  A non-empty subset $\{\pi_{j}\}_{j \in J}$ of $\pointMeas(S)$ is precompact in $\pointMeas(S)$ 
  if and only if the following conditions are satisfied.
  \begin{enumerate} [label = (\roman*)]
    \item \label{2. thm item: cpt in sP, vaguely compact}
      The subset $\{\pi_{j}\}_{j \in J}$ is precompact in the vague topology.
    \item \label{2. thm item: cpt in sP, W and M}
      For each $r>0$, 
      $\displaystyle \sup_{j \in J} W^{(r)}(\pi_{j}) < \infty$ 
      and $\displaystyle \lim_{\varepsilon \downarrow 0} \sup_{j \in J} M_{\varepsilon}^{(r)}(\pi_{j}) = 0$.
  \end{enumerate}
\end{thm}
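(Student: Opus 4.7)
The plan is to argue via the sequential criterion: since $(\pointMeas(S), d_{\pointMeas}^{S, \rho_{S}})$ is a metric space, precompactness is equivalent to sequential precompactness, and I will translate between convergence in $\pointMeas(S)$ and the vague-plus-moment conditions via Theorem~\ref{2. thm: convergence in sP}.

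For the necessity, convergence in $\pointMeas(S)$ implies vague convergence by the definition of $d_{\pointMeas}^{S, \rho_{S}}$, so condition \ref{2. thm item: cpt in sP, vaguely compact} is immediate. For \ref{2. thm item: cpt in sP, W and M}, I would argue by contradiction: if $\sup_{j} W^{(r)}(\pi_{j}) = \infty$, pick $j_{n}$ with $W^{(r)}(\pi_{j_{n}}) \to \infty$ and extract a convergent subsequence in $\pointMeas(S)$; by Theorem~\ref{2. thm: convergence in sP}\ref{2. thm item: sP, vaguely, heavy weight and low mass} the subsequential $\limsup$ of $W^{(r)}$ is finite, a contradiction. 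Similarly, if $\lim_{\varepsilon \downarrow 0} \sup_{j} M_{\varepsilon}^{(r)}(\pi_{j}) > 0$, choose $c > 0$, $\varepsilon_{n} \downarrow 0$ and $j_{n}$ with $M_{\varepsilon_{n}}^{(r)}(\pi_{j_{n}}) > c$; for any fixed $\varepsilon > 0$, eventually $\varepsilon_{n} \leq \varepsilon$, so monotonicity of $M_{\cdot}^{(r)}$ gives $M_{\varepsilon}^{(r)}(\pi_{j_{n}}) > c$, which contradicts Theorem~\ref{2. thm: convergence in sP}\ref{2. thm item: sP, vaguely, heavy weight and low mass} along any subsequential limit.

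For the sufficiency, let $(\pi_{n})$ be a sequence in $\{\pi_{j}\}_{j \in J}$. By \ref{2. thm item: cpt in sP, vaguely compact}, some subsequence (still denoted $\pi_{n}$) converges vaguely in $\intMeasures(S \times \RNpp)$ to an integer-valued Radon measure $\pi$. For any continuous compactly supported $f: S \to \RN$ with $\supp(f) \subseteq S^{(r)}$, I would choose $L > \sup_{n} W^{(r)}(\pi_{n})$ (finite by \ref{2. thm item: cpt in sP, W and M}) and a continuous cutoff $\chi_{\varepsilon, L}: \RNpp \to [0, 1]$ equal to $1$ on $[\varepsilon, L]$ and vanishing outside $[\varepsilon/2, 2L]$, and decompose
\begin{equation}
  \int f\, d\MeasureMap(\pi_{n})
  =
  \int f(x) w\, \chi_{\varepsilon, L}(w)\, \pi_{n}(dx dw)
  +
  \int f(x) w\, \bigl( 1 - \chi_{\varepsilon, L}(w) \bigr)\, \pi_{n}(dx dw).
\end{equation}
Since $\pi_{n}(S^{(r)} \times [L, \infty)) = 0$ by the choice of $L$, the second term is bounded by $\|f\|_{\infty} M_{\varepsilon}^{(r)}(\pi_{n})$, which is uniformly small in $n$ by \ref{2. thm item: cpt in sP, W and M}. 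The first term converges as $n \to \infty$ to $\int f(x) w\, \chi_{\varepsilon, L}(w)\, \pi(dx dw)$ by vague convergence and continuity of the integrand. Sending $\varepsilon \downarrow 0$ (and $L \uparrow \infty$) shows that $(\int f\, d\MeasureMap(\pi_{n}))_{n}$ is Cauchy and its limit equals $\int f(x) w\, \pi(dx dw)$ by monotone convergence, so $\MeasureMap(\pi)$ is a Radon measure (i.e., $\pi \in \pointMeas(S)$) and $\MeasureMap(\pi_{n}) \to \MeasureMap(\pi)$ vaguely. Theorem~\ref{2. thm: convergence in sP}\ref{2. thm item: sP, in two topologies} then yields convergence of the subsequence in $\pointMeas(S)$.

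The main technical point is this truncation argument, where the two clauses of \ref{2. thm item: cpt in sP, W and M} are used in tandem to discard both large-$w$ and small-$w$ mass uniformly in $n$ before passing to the vague limit. Verifying that $\pi$ lies in $\pointMeas(S)$ is a by-product of the same computation, since membership in $\pointMeas(S)$ is exactly the finiteness of $\int f\, d\MeasureMap(\pi)$ for every continuous compactly supported $f$. Everything else reduces to Theorem~\ref{2. thm: convergence in sP} and routine manipulations in the vague topology.
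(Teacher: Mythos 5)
Your proof is correct, and the overall strategy matches the paper's: for necessity, argue by contradiction along a convergent subsequence and invoke the characterization of convergence in Theorem~\ref{2. thm: convergence in sP}; for sufficiency, extract a vaguely convergent subsequence and upgrade it to convergence in $\pointMeas(S)$.

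The one place where you go noticeably further than the paper is the sufficiency direction. The paper dismisses it with ``immediately follows from Theorem~\ref{2. thm: convergence in sP},'' but that theorem characterizes convergence of sequences whose limit is \emph{already known} to lie in $\pointMeas(S)$; it does not by itself guarantee that the vague subsequential limit $\pi$ of an integer-valued sequence has $\MeasureMap(\pi)$ Radon. Your truncation with $\chi_{\varepsilon,L}$ --- using $\sup_{j} W^{(r)}(\pi_{j}) < \infty$ to kill the large-$w$ tail exactly and $\sup_{j} M_{\varepsilon}^{(r)}(\pi_{j}) \to 0$ to control the small-$w$ mass uniformly, then passing to the vague limit on the compactly supported middle piece and finally to monotone convergence --- is precisely what is needed to establish $\pi \in \pointMeas(S)$ and $\MeasureMap(\pi_{n}) \to \MeasureMap(\pi)$ vaguely. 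The paper does carry out a similar verification, but only inside the completeness proof (Theorem~\ref{2. thm: sP is Polish}), where it uses a monotone family $(f_k)$ and a Fatou-type estimate $\MeasureMap(\pi)(S^{(r)}) \leq \limsup_{n} \MeasureMap(\pi_{n})(S^{(r+1)})$ rather than a two-sided cutoff. Both arguments buy the same thing; yours has the advantage of simultaneously delivering the Cauchy property of $\int f\, d\MeasureMap(\pi_{n})$ and identifying the limit, so the full claim $\MeasureMap(\pi_{n}) \to \MeasureMap(\pi)$ vaguely comes out in one pass rather than needing a separate application of Theorem~\ref{2. thm: convergence in sP}.

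One cosmetic remark: at the end you should split $f$ into positive and negative parts before appealing to monotone convergence, since $f(x) w \chi_{\varepsilon,L}(w) \uparrow f(x) w$ is only monotone when $f \geq 0$. This is entirely routine, but worth a sentence.
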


\begin{proof}
  Suppose that $\{\pi_{j}\}_{j \in J}$ is precompact in $\pointMeas(S)$.
  Since the topology on $\pointMeas(S)$ is finer than the vague topology,
  we have \ref{2. thm item: cpt in sP, vaguely compact}.
  If \ref{2. thm item: cpt in sP, W and M} is not satisfied,
  then 
  for some $r>0$
  there exists a sequence $(j_{n})_{n \geq 1}$ in $J$ such that $W^{(r)}(\pi_{j_{n}}) \to \infty$
  or $M_{\varepsilon_{n}}^{(r)}(\pi_{j_{n}}) > \delta$ for some $\delta>0$ and $\varepsilon_{n} \downarrow 0$.
  However, since $(\pi_{j_{n}})_{n \geq 1}$ has a convergent subsequence,
  by Theorem \ref{2. thm: convergence in sP},
  we obtain a contradiction, which yields \ref{2. thm item: cpt in sP, W and M}.
  The converse assertion immediately follows from Theorem \ref{2. thm: convergence in sP}.
\end{proof}

To consider random element of $\pointMeas(S)$,
we first identify the Borel $\sigma$-algebra on $\pointMeas(S)$. 
In particular, we show that it coincides with the one generated from the vague topology on $\pointMeas(S)$
in Proposition \ref{2. prop: sigma-algebra on sP} below.

\begin{lem} \label{2. lem: vague implies the sP convergence for restrictions}
  Let $\pi, \pi_{1}, \pi_{2}, \ldots$ be elements in $\pointMeas(S)$ such that $\pi_{n} \to \pi$ vaguely.
  Fix $r>0$ such that $\pi$ has no atoms on the boundary of $S^{(r)} \times [e^{-r}, e^{r}]$.
  Then, $\pi_{n}^{(r)} \to \pi^{(r)}$ in $\pointMeas(S)$.
\end{lem}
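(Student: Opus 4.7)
The plan is to verify condition (iii) of Theorem \ref{2. thm: convergence in sP} for the sequence $(\pi_{n}^{(r)})_{n \geq 1}$ with limit $\pi^{(r)}$, which will give the desired convergence in $\pointMeas(S)$. This reduces the proof to three checks: vague convergence $\pi_{n}^{(r)} \to \pi^{(r)}$ on $S \times \RNpp$; the bound $\limsup_{n} W^{(r')}(\pi_{n}^{(r)}) < \infty$ for each $r' > 0$; and $\lim_{\varepsilon \downarrow 0} \limsup_{n} M_{\varepsilon}^{(r')}(\pi_{n}^{(r)}) = 0$ for each $r' > 0$.

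The second and third conditions are immediate from the way $\pi_{n}^{(r)}$ is defined. Every atom $(x_{i}, w_{i})$ of $\pi_{n}^{(r)}$ lies in the closed ball $D_{S \times \RNpp}(\tilde{\rho}_{S}, r) = S^{(r)} \times [e^{-r}, e^{r}]$, so its weight satisfies $e^{-r} \leq w_{i} \leq e^{r}$. Hence $W^{(r')}(\pi_{n}^{(r)}) \leq e^{r}$ uniformly in $n$ and $r'$, and for any $\varepsilon < e^{-r}$ there are no atoms of weight at most $\varepsilon$, so $M_{\varepsilon}^{(r')}(\pi_{n}^{(r)}) = 0$. Both conditions then hold trivially.

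The substantive step is therefore the vague convergence of the restrictions. Since $S \times \RNpp$ is locally compact, $\pi_{n} \to \pi$ vaguely there, and the hypothesis gives $\pi(\partial K) = 0$ for the compact set $K \coloneqq S^{(r)} \times [e^{-r}, e^{r}]$, I would run a standard portmanteau-style sandwiching argument: given $f \in C_{c}(S \times \RNpp)$ and $\eta > 0$, construct compactly supported continuous functions $g_{\eta}^{\pm}$ with $g_{\eta}^{-} \leq f \mathbf{1}_{K} \leq g_{\eta}^{+}$ such that $\int (g_{\eta}^{+} - g_{\eta}^{-})\, d\pi \to 0$ as $\eta \downarrow 0$, which is possible precisely because $\pi(\partial K) = 0$. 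Applying the vague convergence of $\pi_{n}$ to $g_{\eta}^{\pm}$ and then letting $\eta \downarrow 0$ squeezes $\int f\, d\pi_{n}^{(r)}$ to $\int f\, d\pi^{(r)}$, yielding the required vague convergence.

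I do not anticipate a genuine obstacle: the hypothesis $\pi(\partial K) = 0$ is used once to justify the portmanteau sandwich, and after that the weight-control conditions of Theorem \ref{2. thm: convergence in sP} are made vacuous by the compactness of the interval $[e^{-r}, e^{r}]$ in which all weights of $\pi_{n}^{(r)}$ are confined.
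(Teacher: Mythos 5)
Your proof follows exactly the paper's route: apply Theorem \ref{2. thm: convergence in sP}(iii), observe that the weights of all atoms of $\pi_{n}^{(r)}$ are confined to $[e^{-r}, e^{r}]$ so the $W^{(r')}$ and $M_{\varepsilon}^{(r')}$ conditions are trivial, and reduce to vague convergence of the restrictions, which the paper dismisses as elementary (you supply the standard portmanteau sandwich, which is a fine elaboration). No substantive difference.
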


\begin{proof}
  It is elementary to check that $\pi_{n}^{(r)} \to \pi^{(r)}$ vaguely 
  (and even weakly). 
  If $(x, w)$ is an atom of $\pi_{n}^{(r)}$,
  then we have that $w \in [e^{-r}, e^{r}]$. 
  Thus, by Theorem \ref{2. thm: convergence in sP}, we obtain the desired result.
\end{proof}

\begin{prop}  \label{2. prop: sP is vaguely measurable}
  The set $\pointMeas(S)$ is a Borel subset of $\Meas(S \times \RNpp)$ equipped with the vague topology.
\end{prop}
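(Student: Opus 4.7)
The plan is to exhibit $\pointMeas(S)$ as a countable intersection of Borel subsets of $\Meas(S \times \RNpp)$, by separating the two defining conditions: integer-valuedness and Radonness of $\MeasureMap(\pi)$. The key tool is that, for any continuous compactly supported $f$ on $S \times \RNpp$, the map $\pi \mapsto \int f\, d\pi$ is vaguely continuous.

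First I would show that $\intMeasures(S \times \RNpp)$ is vaguely closed in $\Meas(S \times \RNpp)$, hence Borel. Suppose $\pi_{n} \to \pi$ vaguely with $\pi_{n} \in \intMeasures$. For every $z \in S \times \RNpp$, the set of radii $r > 0$ with $\pi(\partial B(z, r)) = 0$ and $B(z, r)$ relatively compact is dense in $(0, \infty)$, and along any such $r$ the Portmanteau theorem for vague convergence yields $\pi_{n}(B(z, r)) \to \pi(B(z, r))$. Since $\pi_{n}(B(z, r)) \in \ZNp$ and $\pi(B(z, r)) < \infty$ by the Radon property, the limit lies in $\ZNp$. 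Shrinking $r$ to $0$ along admissible radii gives $\pi(\{z\}) \in \ZNp$ for every $z$. If $\pi$ had a non-zero diffuse part, picking $y$ in its support would produce admissible radii $r \downarrow 0$ with $\pi(B(y, r)) \geq 1$ yet $\pi(B(y, r)) \to \pi(\{y\}) = 0$, a contradiction. Thus $\pi$ is purely atomic with integer atoms, so $\pi \in \intMeasures$.

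Next I would show that, for each $n \in \NN$, the set $A_{n} \coloneqq \{\pi \in \Meas(S \times \RNpp) \mid \MeasureMap(\pi)(S^{(n)}) < \infty\}$ is Borel. Fix a continuous cut-off $g_{n}: S \to [0, 1]$ with $g_{n} \equiv 1$ on $S^{(n)}$ and $g_{n} \equiv 0$ off $S^{(n+1)}$, and continuous compactly supported cut-offs $h_{k}: \RNpp \to [0, 1]$ with $h_{k} \nearrow 1$ pointwise. Then $f_{n, k}(x, w) \coloneqq g_{n}(x)\, w\, h_{k}(w)$ is continuous with compact support in $S \times \RNpp$, so $\pi \mapsto \int f_{n, k}\, d\pi$ is vaguely continuous. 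By monotone convergence, $\int f_{n, k}\, d\pi \nearrow \int g_{n}(x) w\, \pi(dx dw)$ as $k \to \infty$, and this limit is squeezed between $\MeasureMap(\pi)(S^{(n)})$ and $\MeasureMap(\pi)(S^{(n+1)})$. Consequently $\pi \mapsto \int g_{n}(x) w\, \pi(dx dw)$ is Borel as a supremum of continuous functions, and $\MeasureMap(\pi)$ is Radon if and only if this quantity is finite for every $n \in \NN$, a countable conjunction of Borel conditions.

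Combining the two steps gives
\begin{equation}
\pointMeas(S)
=
\intMeasures(S \times \RNpp) \cap \bigcap_{n \in \NN} A_{n},
\end{equation}
which is Borel in $\Meas(S \times \RNpp)$ with the vague topology. The main subtlety I anticipate is closedness of $\intMeasures$, since vague limits of integer-valued measures could \emph{a priori} develop a diffuse part; the argument depends crucially on using both that the limit is Radon and that continuity radii are dense, so that the Portmanteau conclusion propagates to shrinking balls and forces any diffuse component to vanish. The Radonness part is then a routine exhaustion by compactly supported continuous test functions.
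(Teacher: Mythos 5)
Your proof is correct and uses the same decomposition as the paper, namely $\pointMeas(S) = \intMeasures(S \times \RNpp) \cap \bigcap_{n} A_{n}$; the paper simply cites Kallenberg \cite[Theorem 2.19]{Kallenberg_21_Foundations} for the measurability of $\intMeasures(S \times \RNpp)$, whereas you reprove from scratch the stronger fact that it is vaguely closed, and the paper leaves the vague-measurability of $\pi \mapsto \MeasureMap(\pi)(S^{(r)})$ as an observation while you flesh it out via the cut-offs $f_{n,k}$. One small slip: when ruling out a nonzero diffuse part you assert $\pi(\{y\}) = 0$, but $y \in \supp(\pi_{d})$ could also be an atom of $\pi$. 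This is harmless and easily repaired: either choose $y \in \supp(\pi_{d})$ avoiding the countably many atoms of $\pi$ (possible because a nonzero non-atomic Radon measure has uncountable support), or note that for admissible $r$ one has $\pi(B(y,r)) > \pi(\{y\})$ with both quantities in $\ZNp$, hence $\pi(B(y,r)) \geq \pi(\{y\}) + 1$, which contradicts $\pi(B(y,r)) \downarrow \pi(\{y\})$ along admissible radii.
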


\begin{proof} 
  By \cite[Theorem 2.19]{Kallenberg_21_Foundations},
  the set $\intMeasures(S \times \RNpp)$ of integer valued Radon measures is a Borel subset of $\Meas(S)$.
  Since $\intMeasures(S \times \RNpp) \ni \pi \mapsto \MeasureMap(\pi)(S^{(r)})$ is measurable with respect to the vague topology for each $r>0$,
  we obtain the desired result.
\end{proof}

\begin{prop}  [{The Borel $\sigma$-algebra on $\pointMeas(S)$}] \label{2. prop: sigma-algebra on sP}
  The Borel $\sigma$-algebra on $\pointMeas(S)$ coincides with 
  the Borel $\sigma$-algebra generated from the vague topology.
\end{prop}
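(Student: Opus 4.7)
The plan is to show the two Borel $\sigma$-algebras coincide by proving each inclusion. The easy direction follows because the topology induced by $d_{\pointMeas}^{S,\rho_S}$ is finer than the vague topology on $\pointMeas(S)$: by definition $d_{\pointMeas}^{S,\rho_S}(\pi_1,\pi_2) \geq d_V^{S \times \RNpp, \tilde{\rho}_S}(\pi_1,\pi_2)$, and the latter metrizes the vague topology on $\Meas(S \times \RNpp)$. Hence every vague-Borel set is $d_{\pointMeas}$-Borel. For the reverse inclusion, since $(\pointMeas(S), d_{\pointMeas}^{S,\rho_S})$ is separable by Theorem \ref{2. thm: sP is Polish}, its Borel $\sigma$-algebra is generated by open balls $B(\pi_0,\varepsilon)$, and it suffices to prove each such ball is vague-Borel. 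This reduces further to showing that, for every fixed $\pi_0 \in \pointMeas(S)$, the map $D_{\pi_0}(\pi) \coloneqq d_{\pointMeas}^{S,\rho_S}(\pi_0,\pi)$ is Borel measurable with respect to the vague topology on $\pointMeas(S)$.

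By definition, $D_{\pi_0}$ is the maximum of three terms, and it suffices to check each is vague-Borel. The term $d_V^{S \times \RNpp,\tilde{\rho}_S}(\pi_0,\pi)$ is vaguely continuous since it is the very metric inducing the vague topology. For the term $d_V^{S,\rho_S}(\MeasureMap(\pi_0),\MeasureMap(\pi))$, I will establish that $\MeasureMap : \pointMeas(S) \to \Meas(S)$ is vague-to-vague Borel measurable. Given a continuous compactly supported $f : S \to \RN$, I approximate the non-compactly-supported function $(x,w) \mapsto f(x)w$ on $S \times \RNpp$ by $(x,w) \mapsto f(x)w\psi_k(w)$, where $\psi_k : \RNpp \to [0,1]$ is continuous with compact support in $[1/k,k]$ and $\psi_k \uparrow 1$ pointwise on $\RNpp$. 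Each $\pi \mapsto \int f(x)w\psi_k(w)\,\pi(dxdw)$ is vaguely continuous, and by dominated convergence (using that $\MeasureMap(\pi)$ is Radon so that $\int |f|\,d\MeasureMap(\pi) < \infty$) it converges pointwise to $\pi \mapsto \int f\,d\MeasureMap(\pi)$. Hence the latter is vague-Borel, which suffices to conclude vague-Borel measurability of $\MeasureMap$, and then of $d_V^{S,\rho_S}(\MeasureMap(\pi_0),\MeasureMap(\cdot))$ by composition.

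For the integral term $\int_0^\infty e^{-r}(1 \wedge \mathsf{M}^{(r)}(\pi_0,\pi))\,dr$, I would first show that for each fixed $r > 0$, the map $\pi \mapsto \vertMap^{(r)}(\pi) \in (\finMeas(\RNpp), d_P^{\RNpp})$ is vague-Borel, by an analogous approximation in which the indicator $1_{S^{(r)}}(x)$ is sandwiched between continuous bump functions on $S$ and the factor $w$ is truncated as before; consequently $\pi \mapsto \mathsf{M}^{(r)}(\pi_0,\pi)$ is vague-Borel for each $r$. Combined with the left-continuity of $r \mapsto \vertMap^{(r)}(\pi)$ provided by Lemma \ref{2. lem: properties of edge-weight M and heavy weigh W}\ref{2. lem item: path property of frakm}, this yields joint measurability of the integrand in $(r,\pi)$ (for instance by approximating in $r$ by left-continuous step functions whose values are vague-Borel in $\pi$), after which Fubini gives measurability of the $r$-integral in $\pi$. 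The main technical obstacle lies precisely in this last piece: the indicator $1_{S^{(r)}}$ and the unbounded factor $w$ render the integrand non-continuous on $S \times \RNpp$, so vague continuity of $\pi \mapsto \int h\,d\pi$ cannot be invoked directly and one must carefully stitch together one-sided bounds using the Radon property of $\MeasureMap(\pi)$.
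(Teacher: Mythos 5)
Your proof is correct but takes a genuinely different route from the paper. The paper handles the easy inclusion in the same way, but for the nontrivial inclusion $\Sigma \subseteq \Sigma'$ it shows that every bounded continuous $f$ on $\pointMeas(S)$ is vague-Borel by introducing the averaged restrictions $f_r(\pi) \coloneqq \int_0^1 f(\pi^{(r+s)})\,ds$: Lemma \ref{2. lem: vague implies the sP convergence for restrictions} shows $f_r$ is vaguely \emph{continuous}, and Lemma \ref{2. lem: convergence of pi^r to pi} gives $f_r(\pi) \to f(\pi)$ as $r \to \infty$, so $f$ is a pointwise limit of vaguely continuous functions. You instead bypass the restriction-map machinery entirely and verify directly that each of the three summands of the metric $d_{\pointMeas}^{S,\rho_S}(\pi_0,\cdot)$ is vague-Borel, using separability to reduce to open balls. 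This is more elementary and self-contained, but it costs you some bookkeeping: you must truncate the unbounded weight $w$, sandwich the discontinuous indicator $1_{S^{(r)}}$ by continuous bump functions, and for the integral term additionally establish joint measurability of $(r,\pi) \mapsto 1 \wedge \mathsf{M}^{(r)}(\pi_0,\pi)$ from left-continuity in $r$ (which indeed follows from Lemma \ref{2. lem: properties of edge-weight M and heavy weigh W}\ref{2. lem item: path property of frakm} and continuity of $d_P^{\RNpp}$) plus Borel measurability in $\pi$, and then apply Fubini. The paper's argument is shorter because it reuses structural lemmas it has already proved and applies to any bounded continuous $f$ at once; yours makes the approximation work explicit and does not rely on those lemmas. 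Both correctly yield $\Sigma \subseteq \Sigma'$.
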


\begin{proof}
  To distinguish between the two topological spaces,
  we rewrite $\pointMeas(S)'$ for the topological space $\pointMeas(S)$ equipped with the vague topology,
  and let $\pointMeas(S)$ represent the topological space $\pointMeas(S)$ with the topology induced from $d_{\pointMeas}^{S, \rho_{S}}$
  (which we have considered so far).
  Write $\Sigma$ and $\Sigma'$ 
  for the Borel $\sigma$-algebras on $\pointMeas(S)$ and on $\pointMeas(S)'$, respectively.
  Since the topology on $\pointMeas(S)$ is finer than the vague topology,
  we have that $\Sigma \supseteq \Sigma'$.
  To show the converse relation,
  we let $f$ be a bounded continuous function on $\pointMeas(S)$.
  It suffices to show that $f$ is vaguely measurable, i.e., $\Sigma'$-measurable.
  For $\pi \in \pointMeas(S)'$ and $r>0$,
  if $\pi$ has no atoms on the boundary of $S^{(r)} \times [e^{-r}, e^{r}]$,
  then $\pi^{(r)} = \pi^{(s)}$ for all $s > 0$ sufficiently close to $r$.
  Hence,
  for each $\pi \in \pointMeas(S)'$,
  $f(\pi^{(r)})$ is continuous for all but countably many $r>0$ 
  and so we can define a map $f_{r}: \pointMeas(S)' \to \RN$ by setting 
  \begin{equation}
    f_{r}(\pi)
    \coloneqq 
    \int_{0}^{1} f(\pi^{(r+s)})\, ds.
  \end{equation}
  Suppose that $\pi_{n} \to \pi$ in the vague topology.
  Lemma \ref{2. lem: vague implies the sP convergence for restrictions} and the continuity of $f$ 
  imply that $f(\pi_{n}^{(s)}) \to f(\pi^{(s)})$ for all but countably many $s>0$. 
  Thus, the dominated convergence theorem yields that $f_{r}$ is continuous on $\pointMeas(S)'$.
  In particular, $f_{r}$ is $\Sigma'$-measurable.
  By Lemma \ref{2. lem: convergence of pi^r to pi}, the continuity of $f$ and the dominated convergence theorem,
  we have that $f_{r}(\pi) \to f(\pi)$ as $r \to \infty$ for each $\pi \in \pointMeas(S)'$.
  Hence, $f$ is $\Sigma'$-measurable.
\end{proof}

Now, we provide a tightness criterion for random elements of $\pointMeas(S)$.

\begin{thm} [{Tightness in $\pointMeas(S)$}] \label{2. thm: tightness in sP}
  Let $(\pi_{n})_{n \geq 1}$ be a sequence of random elements of $\pointMeas(S)$.
  Write $P_{n}$ for the underlying probability measure of $\pi_{n}$.
  Then, the sequence $(\pi_{n})_{n \geq 1}$ is tight if and only if the following conditions are satisfied.
  \begin{enumerate} [label = (\roman*)]
    \item \label{2. thm item: tightness in sP, tight wrt vague topology}
      The sequence $(\pi_{n})_{n \geq 1}$ is tight with respect to the vague topology.
    \item \label{2. thm item: tightness, tightness of W}
      For each $r>0$, 
      $\displaystyle \lim_{l \to \infty} \limsup_{n \to \infty} P_{n}(W^{(r)}(\pi_{n}) > l) = 0$.
    \item \label{2. thm item: tightness, tightness of M}
      For each $r, \delta>0$, 
      $\displaystyle \lim_{\varepsilon \downarrow 0} \limsup_{n \to \infty} P_{n}(M_{\varepsilon}^{(r)}(\pi_{n}) > \delta) = 0$.
  \end{enumerate}
\end{thm}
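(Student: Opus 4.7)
The plan is to deduce this tightness criterion from the deterministic precompactness criterion of Theorem \ref{2. thm: precompactness in sP} via Prohorov's theorem, which is applicable because $\pointMeas(S)$ is Polish by Theorem \ref{2. thm: sP is Polish}.

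The \emph{only if} direction is straightforward: given $\eta>0$, tightness provides a compact $K\subseteq\pointMeas(S)$ with $\inf_{n}P_{n}(\pi_{n}\in K)\geq 1-\eta$, and Theorem \ref{2. thm: precompactness in sP} applied to $K$ directly yields (i), while supplying the uniform bounds $\sup_{\pi\in K}W^{(r)}(\pi)<\infty$ and $\lim_{\varepsilon\downarrow 0}\sup_{\pi\in K}M^{(r)}_{\varepsilon}(\pi)=0$, from which (ii) and (iii) are immediate.

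For the \emph{if} direction, the strategy is to assemble, for each $\eta>0$, a precompact set $K$ of $P_{n}$-probability at least $1-\eta$ by intersecting three families of ``good'' events with a dyadic error budget. First, (i) furnishes a vaguely compact set $K_{0}$ with $\sup_{n}P_{n}(\pi_{n}\notin K_{0})\leq \eta/3$. Next, for each $k\in\NN$, (ii) provides $l_{k}$ with
\begin{equation}
  \sup_{n}P_{n}\bigl(W^{(k)}(\pi_{n})>l_{k}\bigr)\leq \frac{\eta}{3\cdot 2^{k}},
\end{equation}
and for each $(k,m)\in\NN^{2}$, (iii) provides $\varepsilon_{k,m}>0$ with
\begin{equation}
  \sup_{n}P_{n}\bigl(M^{(k)}_{\varepsilon_{k,m}}(\pi_{n})>1/m\bigr)\leq \frac{\eta}{3\cdot 2^{k+m}}.
\end{equation}
To replace the ``$\limsup_{n}$'' appearing in (ii)--(iii) by ``$\sup_{n}$'', one enlarges $l_{k}$ and shrinks $\varepsilon_{k,m}$ further to accommodate the finitely many initial indices, using that $W^{(k)}(\pi_{n})<\infty$ and $M^{(k)}_{\varepsilon}(\pi_{n})\downarrow 0$ almost surely by Lemma \ref{2. lem: properties of edge-weight M and heavy weigh W}. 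Setting
\begin{equation}
  K\coloneqq K_{0}\cap\bigcap_{k\geq 1}\{\pi\in\pointMeas(S) \mid W^{(k)}(\pi)\leq l_{k}\}\cap\bigcap_{k,m\geq 1}\{\pi\in\pointMeas(S) \mid M^{(k)}_{\varepsilon_{k,m}}(\pi)\leq 1/m\},
\end{equation}
the union bound gives $\sup_{n}P_{n}(\pi_{n}\notin K)\leq \eta/3+\eta/3+\eta/3=\eta$.

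To finish, I would verify that $K$ meets the hypotheses of Theorem \ref{2. thm: precompactness in sP}. Vague precompactness follows from $K\subseteq K_{0}$. For any $r>0$, pick an integer $k\geq r$; then $W^{(r)}(\pi)\leq W^{(k)}(\pi)\leq l_{k}$ on $K$, and for arbitrary $\delta>0$ choosing additionally $m\geq 1/\delta$ gives $M^{(r)}_{\varepsilon}(\pi)\leq M^{(k)}_{\varepsilon_{k,m}}(\pi)\leq 1/m\leq \delta$ on $K$ whenever $\varepsilon\leq \varepsilon_{k,m}$, so $\lim_{\varepsilon\downarrow 0}\sup_{\pi\in K}M^{(r)}_{\varepsilon}(\pi)=0$. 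Hence $K$ is precompact in $\pointMeas(S)$, and tightness follows from Prohorov's theorem. The main obstacle is purely bookkeeping---arranging the dyadic budget and the uniform-in-$n$ choices---since the conceptual content is already packaged into Theorems \ref{2. thm: precompactness in sP} and \ref{2. thm: sP is Polish}.
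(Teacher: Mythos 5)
Your proof follows essentially the same route as the paper: both directions hinge on the deterministic precompactness criterion (Theorem \ref{2. thm: precompactness in sP}), and the converse assembles a precompact set with the same dyadic error budget after upgrading $\limsup_n$ to $\sup_n$ via Lemma \ref{2. lem: properties of edge-weight M and heavy weigh W}\ref{2. lem item: monotonicity of small mass function}--\ref{2. lem item: max weight is finite}. (The closing appeal to Prohorov's theorem is unnecessary---what you actually use is only that precompact subsets of the complete space $\pointMeas(S)$ have compact closure, which Theorem \ref{2. thm: sP is Polish} supplies directly.)
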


\begin{proof}
  Assume that $(\pi_{n})_{n \geq 1}$ is tight.
  The condition \ref{2. thm item: tightness in sP, tight wrt vague topology} is obvious.
  Fix $r, \delta, \eta >0$.
  By tightness, 
  there exists a compact subset $\mathcal{A}$ of $\pointMeas(S)$ 
  such that $P_{n}(\pi_{n} \notin \mathcal{A}) < \eta$ for all $n \geq 1$.
  Theorem \ref{2. thm: precompactness in sP} yields that 
  $l \coloneqq \sup_{\pi \in \mathcal{A}} W^{(r)}(\pi) < \infty$ 
  and 
  $\sup_{\pi \in \mathcal{A}} M_{\varepsilon_{0}}^{(r)}(\pi) < \delta$ 
  for some $\varepsilon_{0} > 0$.
  We then deduce that $\sup_{n \geq 1} P_{n}(W^{(r)}(\pi_{n}) > l) < \eta$
  and $\sup_{n \geq 1} P_{n}(M_{\varepsilon_{0}}^{(r)}(\pi_{n}) > \delta) < \eta$.
  Using the monotonicity of $M_{\cdot}^{(r)}(\pi_{n})$ stated in Lemma \ref{2. lem: properties of edge-weight M and heavy weigh W}\ref{2. lem item: monotonicity of small mass function},
  we obtain \ref{2. thm item: tightness, tightness of W} and \ref{2. thm item: tightness, tightness of M}.
  Conversely, assume that \ref{2. thm item: tightness in sP, tight wrt vague topology},
  \ref{2. thm item: tightness, tightness of W} and \ref{2. thm item: tightness, tightness of M} are satisfied.
  Fix $\varepsilon>0$.
  By \ref{2. thm item: tightness in sP, tight wrt vague topology},
  there exists a subset $\mathcal{A}$ of $\pointMeas(S)$ such that $\mathcal{A}$ is vaguely compact and 
  $P_{n}(\pi_{n} \notin \mathcal{A}) < \varepsilon$ for all $n \geq 1$.
  By Lemma \ref{2. lem: properties of edge-weight M and heavy weigh W}\ref{2. lem item: monotonicity of small mass function} and \ref{2. lem item: max weight is finite},
  we note that ``$\limsup_{n \to \infty}$'' in the statement of \ref{2. thm item: tightness, tightness of W} and \ref{2. thm item: tightness, tightness of M}
  can be replaced by ``$\sup_{n \geq 1}$''.
  Then,
  for each $k, m \in \NN$,
  we can find $l_{k} > 0$ and $\varepsilon_{k, m}>0$ such that 
  $\sup_{n \geq 1} P_{n}(W^{(k)}(\pi_{n}) > l_{k}) < 2^{-k} \varepsilon$ and 
  $\sup_{n \geq 1} P_{n}(M_{\varepsilon_{k,m}}^{(k)}(\pi_{n}) > m^{-1}) < 2^{-k-m} \varepsilon$.
  Define $\mathcal{A}'$ be a collection of $\pi \in \pointMeas(S)$ such that 
  $\pi \in \mathcal{A}$ and, for all $k, m \in \NN$,
  $W^{(k)}(\pi) \leq l_{k}$ and $ M_{\varepsilon_{k,m}}^{(k)}(\pi) \leq m^{-1}$.
  By Theorem \ref{2. thm: precompactness in sP},
  $\mathcal{A}'$ is precompact in $\pointMeas(S)$.
  Moreover, we have that 
  \begin{equation}
    P_{n}(\pi_{n} \notin \mathcal{A}')
    \leq 
    P_{n}(\pi_{n} \notin \mathcal{A}) 
    + 
    \sum_{k \in \NN} P_{n}(W^{(k)}(\pi_{n}) > l_{k})
    + 
    \sum_{k, m \in \NN} P_{n}(M_{\varepsilon_{k,m}^{(k)}}(\pi_{n}) > m^{-1})
    \leq 
    3\varepsilon.
  \end{equation}
  Therefore, $(\pi_{n})_{n \geq 1}$ is tight.
\end{proof}

Distributional convergence of random measures in the vague topology is well-studied in \cite[Section 4.2]{Kallenberg_17_Random}.
In the following result,
we provide a useful condition for strengthening distributional convergence of random measures in the vague topology 
to distributional convergence in $\pointMeas(S)$.

\begin{cor} \label{2. cor: convergence in distribution in sP}
  Let $\pi_{1}, \pi_{2}, \ldots$ be random elements of $\pointMeas(S)$.
  If $\pi_{n} \xrightarrow{\mathrm{d}} \pi$ in the vague topology 
  for some random element $\pi$ of $\Meas(S \times \RNpp)$
  and 
  the conditions \ref{2. thm item: tightness, tightness of W} and \ref{2. thm item: tightness, tightness of M} 
  of Theorem \ref{2. thm: tightness in sP} are satisfied,
  then $\pi$ is a random element of $\pointMeas(S)$ and $\pi_{n} \xrightarrow{\mathrm{d}} \pi$ in $\pointMeas(S)$.
\end{cor}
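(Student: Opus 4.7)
The plan is to prove tightness in $\pointMeas(S)$ via Theorem \ref{2. thm: tightness in sP}, then use a standard subsequence argument together with Propositions \ref{2. prop: sP is vaguely measurable} and \ref{2. prop: sigma-algebra on sP} to identify all subsequential limits with $\pi$ (both as a $\pointMeas(S)$-valued object and as a vague limit).

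First I would establish that $(\pi_{n})_{n \geq 1}$ is tight in $\pointMeas(S)$. To apply Theorem \ref{2. thm: tightness in sP}, we need its three hypotheses. Conditions \ref{2. thm item: tightness, tightness of W} and \ref{2. thm item: tightness, tightness of M} are assumed directly. Condition \ref{2. thm item: tightness in sP, tight wrt vague topology} (vague tightness) is immediate from $\pi_{n} \xrightarrow{\mathrm{d}} \pi$ in the vague topology, since distributional convergence of random elements of $\Meas(S \times \RNpp)$ implies tightness (recall $\Meas(S \times \RNpp)$ with the vague topology is Polish by Theorem \ref{2. thm: convergence in the vague topology}). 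Hence Theorem \ref{2. thm: tightness in sP} yields tightness in $\pointMeas(S)$.

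Next, I would extract subsequential limits. By Theorem \ref{2. thm: sP is Polish}, $\pointMeas(S)$ is Polish with the topology induced by $d_{\pointMeas}^{S, \rho_{S}}$, so tightness together with Prohorov's theorem implies that every subsequence $(\pi_{n_{k}})_{k}$ has a further subsequence $(\pi_{n_{k_{j}}})_{j}$ converging in distribution in $\pointMeas(S)$ to some random element $\tilde{\pi}$ of $\pointMeas(S)$. Since the topology on $\pointMeas(S)$ is finer than the vague topology (the identity map $\pointMeas(S) \hookrightarrow \Meas(S \times \RNpp)$ is continuous by the definition of $d_{\pointMeas}^{S, \rho_{S}}$), $\pi_{n_{k_{j}}} \xrightarrow{\mathrm{d}} \tilde{\pi}$ also holds vaguely, and so by uniqueness of distributional limits in $\Meas(S \times \RNpp)$ we get $\tilde{\pi} \stackrel{\mathrm{d}}{=} \pi$ as random elements of $\Meas(S \times \RNpp)$. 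By Proposition \ref{2. prop: sP is vaguely measurable}, $\pointMeas(S)$ is a Borel subset of $\Meas(S \times \RNpp)$, and $\tilde{\pi}$ is supported on $\pointMeas(S)$; therefore $\pi$ also lies in $\pointMeas(S)$ almost surely, and may be regarded as a $\pointMeas(S)$-valued random element.

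To conclude, I would invoke Proposition \ref{2. prop: sigma-algebra on sP}: the Borel $\sigma$-algebra of $\pointMeas(S)$ coincides with the trace on $\pointMeas(S)$ of the vague Borel $\sigma$-algebra. Consequently, the law of a $\pointMeas(S)$-valued random element is determined by its law as a $\Meas(S \times \RNpp)$-valued element, and $\tilde{\pi} \stackrel{\mathrm{d}}{=} \pi$ holds in $\pointMeas(S)$. Since every subsequence of $(\pi_{n})$ admits a further subsequence converging in distribution to $\pi$ in $\pointMeas(S)$, the whole sequence converges, giving $\pi_{n} \xrightarrow{\mathrm{d}} \pi$ in $\pointMeas(S)$. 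The only mildly delicate point is the measurability upgrade from $\Meas(S \times \RNpp)$ to $\pointMeas(S)$, which is precisely what Propositions \ref{2. prop: sP is vaguely measurable} and \ref{2. prop: sigma-algebra on sP} are designed to handle; everything else is a standard Prohorov-plus-subsequences argument.
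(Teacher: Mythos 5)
Your proposal is correct and follows essentially the same route as the paper's proof: establish tightness in $\pointMeas(S)$ via Theorem \ref{2. thm: tightness in sP}, extract convergent subsequences, and identify every subsequential limit with $\pi$ by combining the vague-topology uniqueness with Propositions \ref{2. prop: sP is vaguely measurable} and \ref{2. prop: sigma-algebra on sP}. You make explicit a few steps the paper leaves implicit (why vague tightness follows from distributional convergence, and the final every-subsequence-has-a-further-subsequence argument), but the logical structure is identical.
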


\begin{proof}
  By Theorem \ref{2. thm: tightness in sP},
  $(\pi_{n})_{n \geq 1}$ is tight in $\pointMeas(S)$.
  Let $(\pi_{n_{k}})_{k \geq 1}$ be a sequence such that $\pi_{n_{k}} \xrightarrow{\mathrm{d}} \pi'$ in $\pointMeas(S)$
  for some random element $\pi'$ of $\pointMeas(S)$.
  Then, by Theorem \ref{2. thm: convergence in sP},
  we have that $\pi_{n_{k}} \xrightarrow{\mathrm{d}} \pi$ in the vague topology.
  Hence,
  $\pi$ and $\pi'$ give the same probability distribution on $\Meas(S \times \RNpp)$ equipped with the vague topology.
  It then follows from Propositions \ref{2. prop: sP is vaguely measurable} and \ref{2. prop: sigma-algebra on sP} 
  that $\pi \stackrel{\mathrm{d}}{=} \pi'$ as random elements of $\pointMeas(S)$,
  which implies that $\pi_{n_{k}} \xrightarrow{\mathrm{d}} \pi$ in $\pointMeas(S)$.
  This completes the proof.
\end{proof}


\section{Gromov-Hausdorff-type topologies} \label{sec: GH-type topologies}

Gromov-Hausdorff-type topologies are topologies on the set of (equivalence classes) of metric spaces 
equipped with additional objects such as points, measures, and laws of stochastic processes.
Recently, the author established a general framework of metrization of these topologies \cite{Noda_pre_Metrization},
and we follow it in this article.
(NB.\ there is a related work by Khezeli \cite{Khezeli_23_A_unified}.)
In Section \ref{sec: The local Hausdorff topology},
we define the local Hausdorff topology, which is a modification of the Hausdorff topology for non-compact subsets.
We then introduce the framework presented in \cite{Noda_pre_Metrization} in Section \ref{sec: The GH-type topologies generated by functors}.
In Section \ref{sec: functors used in this paper},
we collect the Gromov-Hausdorff-type topologies used in this article.


\subsection{The local Hausdorff topology} \label{sec: The local Hausdorff topology}

We define the local Hausdorff topology,
which is a modification of the Hausdorff topology for non-compact subsets.

Fix a boundedly-compact metric space  $(S, d^{S}, \rho_{S})$.
Recall that, for a subset $A \subseteq S$,
the (closed) $\varepsilon$-neighborhood of $A$ in $(S, d^{S})$ is given by 
\begin{equation}
  \neighb{A}{\varepsilon} 
  \coloneqq
  \{
    x \in S \mid \exists y \in A\ \text{such that}\ d^{S}(x,y) \leq \varepsilon
  \}.
\end{equation} 
Let $\closed(S)$ be the set of closed subsets in $S$
and $\compact(S)$ be the set of compact subsets in $S$
(containing the empty set).
The \textit{Hausdorff metric} $\HausdMet{S}$ on $\compact(S)$ is defined by setting
\begin{equation}
  \HausdMet{S}(A, B)
  \coloneqq
  \inf\{
    \varepsilon \geq 0 \mid A \subseteq \neighb{B}{\varepsilon}, B \subseteq \neighb{A}{\varepsilon}
  \},
\end{equation}
where the infimum over the empty set is defined to be $\infty$.
It is known that $\HausdMet{S}$ is indeed a metric (allowed to take the value $\infty$ due to the empty set) on $\compact(S)$
(see \cite[Section 7.3.1]{Burago_Burago_Ivanov_01_A_course}),
and the induced topology is called the \textit{Hausdorff topology}.
To deal with non-compact sets,
we introduce a metric on $\closed(S)$.

\begin{dfn} [{The local Hausdorff metric $\lHausdMet{S}{\rho_{S}}$}]
  For $A \in \closed(S)$ and $r>0$,
  we write 
  \begin{equation}  \label{eq: restriction of a set to a closed ball}
    A^{(r)} 
    \coloneqq 
    \closure(A \cap B_{d^{S}}(\rho, r)), 
  \end{equation}
  where we recall that $\closure(\cdot)$ denotes the closure of a set.
  We then define, for $A,B \in \closed(S)$,
  \begin{equation}
    \lHausdMet{S}{\rho_{S}} (A, B) 
    \coloneqq 
    \int_{0}^{\infty} e^{-r} \bigl( 1 \wedge \HausdMet{S}(A^{(r)}, B^{(r)}) \bigr)\, dr.
  \end{equation}
\end{dfn}

The function $\lHausdMet{S}{\rho_{S}}$ is a metric on $\closed(S)$
and a natural extension of the Hausdorff metric for non-compact sets.
The following is a basic property of $\lHausdMet{S}{\rho_{S}}$.

\begin{thm} [{\cite[Theorems 2.28, 2.26, and 2.30]{Noda_pre_Metrization}}]
  The function $\lHausdMet{S}{\rho_{S}}$ is a metric 
  on $\closed(S)$
  and the metric space $(\closed(S), \lHausdMet{S}{\rho_{S}})$ is compact.
  A sequence $(A_{n})_{n \geq 1}$ converges to $A$ with respect to 
  the local Hausdorff metric $\lHausdMet{S}{\rho_{S}}$
  if and only if 
  $A_{n}^{(r)}$ converges to $A^{(r)}$
  in the Hausdorff topology
  for all but countably many $r>0$.
  Moreover,
  the topology on $\closed(S)$ induced from $\lHausdMet{S}{\rho_{S}}$ 
  is independent of the root $\rho_{S}$.
\end{thm}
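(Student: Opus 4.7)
The plan is to tackle the four assertions separately, each reducing to a property of the classical Hausdorff metric via a Fubini-type or monotonicity argument. Before anything, I would verify that the integrand $r \mapsto 1 \wedge \HausdMet{S}(A^{(r)}, B^{(r)})$ is Borel measurable; this follows from the observation that $r \mapsto A^{(r)}$ is monotone non-decreasing in $r$ (in set inclusion), so its discontinuities in the Hausdorff metric on any fixed closed ball form an at most countable set. Granted measurability, symmetry and the triangle inequality transfer pointwise from $\HausdMet{S}$. For positive definiteness, if $A \neq B$ pick $x \in A \setminus B$ (possibly after swapping); since $B$ is closed, $\delta \coloneqq d^{S}(x, B) > 0$, and for all $r > d^{S}(\rho_{S}, x)$ one has $x \in A^{(r)}$ while no point of $B^{(r)}$ lies within $\delta$ of $x$, giving $\HausdMet{S}(A^{(r)}, B^{(r)}) \geq \delta$ on a half-line, hence the integral is strictly positive.

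For the convergence characterization, the direction ``convergence at all but countably many $r$ implies $\lHausdMet{S}{\rho_{S}}$-convergence'' is dominated convergence. Conversely, if $\lHausdMet{S}{\rho_{S}}(A_{n}, A) \to 0$, then every subsequence has a further subsequence along which $\HausdMet{S}(A_{n_{k}}^{(r)}, A^{(r)}) \to 0$ Lebesgue-a.e.\ in $r$. I would upgrade ``a.e.'' to ``all but countably many'' by sandwiching $A_{n}^{(r)}$ between $A_{n}^{(r - \varepsilon)}$ and $A_{n}^{(r + \varepsilon)}$ for small $\varepsilon > 0$ with $r \pm \varepsilon$ chosen from the a.e.-convergence set, and exploiting that $r \mapsto A^{(r)}$ has only countably many Hausdorff-discontinuities so that $\HausdMet{S}(A^{(r \pm \varepsilon)}, A^{(r)}) \to 0$ as $\varepsilon \downarrow 0$ at continuity radii. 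A subsequence-of-subsequences argument then yields convergence of the original sequence at each continuity $r$.

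For compactness, fix a sequence $(A_{n})_{n \geq 1} \subseteq \closed(S)$. Since each closed ball $\closure(B_{d^{S}}(\rho_{S}, m))$ is compact, Blaschke's selection theorem tells us that $\compact(\closure(B_{d^{S}}(\rho_{S}, m)))$ equipped with $\HausdMet{S}$ is compact. A diagonal extraction then yields a subsequence $(A_{n_{k}})$ such that $A_{n_{k}}^{(m)}$ converges to some $B_{m}$ for every $m \in \NN$. The family $(B_{m})$ is nested up to boundary effects, so $A \coloneqq \closure(\bigcup_{m} B_{m})$ is a natural closed candidate limit, and one verifies $A^{(r)} = B_{r}$ for all but countably many $r$. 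The convergence characterization then delivers $A_{n_{k}} \to A$ in $\lHausdMet{S}{\rho_{S}}$. I expect the main obstacle to lie here: the candidate $A$ must satisfy $A^{(r)} = B_{r}$ for cofinally many $r$, but $A^{(r)} = \closure(A \cap B_{d^{S}}(\rho_{S}, r))$ uses open balls, so sphere points that do not accumulate from inside could cause a mismatch. Restricting $r$ to a co-countable set of ``regular'' radii (where neither $A$ nor the $B_{m}$'s concentrate isolated mass on the sphere of radius $r$) is the technical fix.

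Finally, for independence of root, pick another root $\rho_{S}' \in S$ at distance $C \coloneqq d^{S}(\rho_{S}, \rho_{S}')$. The triangle inequality yields the sandwich $B_{d^{S}}(\rho_{S}, r-C) \subseteq B_{d^{S}}(\rho_{S}', r) \subseteq B_{d^{S}}(\rho_{S}, r+C)$, hence, after closing and intersecting with $A$, one obtains an analogous sandwich for the restrictions with respect to the two roots. Combined with the convergence characterization, Hausdorff convergence of $A_{n}^{(s)}$ to $A^{(s)}$ at cofinally many $s$ with respect to $\rho_{S}$ squeezes into the same convergence with respect to $\rho_{S}'$, and the symmetric argument closes the loop, so the induced topologies agree.
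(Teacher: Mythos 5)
Note first that the paper does not prove this theorem internally but cites \cite[Theorems 2.26, 2.28, 2.30]{Noda_pre_Metrization}, so there is no in-paper proof to compare against; the proposal has to be judged on its own. Your account of the metric axioms and of the convergence characterization is sound. The compactness sketch diagonalizes over integer radii only, so the claim ``$A^{(r)} = B_r$ for all but countably many $r$'' is not even meaningful as written (one should diagonalize over a countable dense set of radii, e.g.\ $\QN_{>0}$, and take monotone left limits), but you correctly flag the sphere-mass obstruction and the regular-radii fix, so that part is repairable.

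The genuine gap is in the root-independence step, where the word ``squeezes'' conceals an argument that would fail. The inclusion $A_n^{(r-C)} \subseteq \closure(A_n \cap B_{d^S}(\rho'_S, r)) \subseteq A_n^{(r+C)}$ is not a squeeze: the two outer sets converge to $A^{(r-C)}$ and $A^{(r+C)}$ respectively, and since $C = d^S(\rho_S, \rho'_S)$ is a \emph{fixed} positive constant these are generically far apart in $\HausdMet{S}$. Concretely, take $S = A = A_n = \RN$, $\rho_S = 0$, $\rho'_S = 10$; the sandwich reads $[-(r-10), r-10] \subseteq \closure(A_n \cap (10-r, 10+r)) \subseteq [-(r+10), r+10]$ and pins down nothing, since the outer intervals differ by a band of fixed width $20$. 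The correct repair is the same regular-radii idea you invoke in the compactness part, applied now to the $\rho'_S$-restriction: restrict to radii $r$ at which $s \mapsto \closure(A \cap B_{d^S}(\rho'_S, s))$ is Hausdorff-continuous (co-countably many), fix $\varepsilon > 0$, choose $\delta$ from the modulus of continuity at $r$, and prove the two halves of the Hausdorff estimate separately by transporting points between $A_n$ and $A$ via the known convergence of $\rho_S$-restrictions at auxiliary radii chosen from the co-countable good set; the continuity at $r$ (not at $r \pm C$) is what absorbs the slack. That is a two-sided inclusion argument keyed to a modulus of continuity, not a squeeze.
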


\begin{dfn} [{The local Hausdorff topology}]
  We call $\lHausdMet{S}{\rho_{S}}$ the \textit{local Hausdorff metric (with the root $\rho_{S}$)}
  and the topology induced from the local Hausdorff metric the \textit{local Hausdorff topology}. 
\end{dfn}


\subsection{The Gromov-Hausdorff-type topologies generated by functors} \label{sec: The GH-type topologies generated by functors}
In this subsection, 
we recall the general theory on metrization of Gromov-Hausdorff-type topologies developed in \cite{Noda_pre_Metrization}.

We say that two rooted boundedly-compact metric spaces $(S_{i}, d^{S_{i}}, \rho_{S_{i}}),\, i=1,2$ are \textit{rooted isometric} 
if and only if there exists a root-preserving isometry $f: S_{1} \to S_{2}$.
Note that $f$ being an isometry means that $f$ is a surjective distance-preserving map (and hence $f$ is bijective).
We denote by $\rbcM$ the collection of rooted isometric equivalence classes of rooted boundedly-compact metric spaces 
and by $\rbcM_{c}$ the sub-collection of $(S, d^{S}, \rho_{S}) \in \rbcM$ such that 
$(S, d^{S})$ is compact.
We introduce functors that determine structures to be added to metric spaces.

\begin{dfn} [{Functor}] \label{3. dfn: functor}
  We call $\tau$ a \textit{functor} on $\rbcM_{c}$ if it satisfies the following.
  \begin{enumerate} [label = (\roman*)]
    \item 
      For every $ (S, d^{S}, \rho_{S}) \in \rbcM_{c}$,
      one has a metric space $(\tau(S, d^{S}, \rho_{S}), d_{\tau}^{S, \rho_{S}})$
      where $\tau(S, d^{S}, \rho_{S})$ is a set and $d_{\tau}^{S, \rho_{S}}$ is a metric on it.
      We simply write $\tau(S) \coloneqq \tau(S, d^{S}, \rho_{S})$.
    \item 
      For every $(S_{i}, d^{S_{i}}, \rho_{S_{i}}),\, i=1,2$ 
      and root-and-distance-preserving map $f: S_{1} \to S_{2}$, 
      one has a distance-preserving map $\tau_{f}: \tau(S_{1}) \to \tau(S_{2})$.
    \item 
      For any two root-and-distance-preserving maps $f :S_{1} \to S_{2},\, g : S_{2} \to S_{3}$,
      it holds that $\tau_{g \circ f} = \tau_{g} \circ \tau_{f}$.
    \item 
      For any $(S, d^{S}, \rho_{S}) \in \rbcM_{c}$, 
      it holds that $\tau_{\id_{S}} = \id_{\tau(S)}$.
  \end{enumerate}
  Similarly, we call $\tau$ a functor on $\rbcM$ if it satisfies the same conditions 
  not for $\rbcM_{c}$ but for $\rbcM$.
\end{dfn}

For a functor $\tau$ on $\rbcM$ and $(S_{i}, d^{S_{i}}, \rho_{S_{i}}, a_{S_{i}})$, $i=1,2$
such that $(S_{i}, d^{S_{i}}, \rho_{S_{i}})$ is a rooted boundedly-compact metric space and $a_{S_{i}} \in \tau(S_{i})$,
we say that $(S_{1}, d^{S_{1}}, \rho_{S_{1}}, a_{S_{1}})$ and $(S_{2}, d^{S_{2}}, \rho_{S_{2}}, a_{S_{2}})$ are $\tau$-\textit{equivalent} 
if and only if there exists a root-preserving isometry $f: S_{1} \to S_{2}$ 
such that $\tau_{f}(a_{S_{1}}) = a_{S_{2}}$.
We denote the collection of $\tau$-equivalence classes by $\rbcM(\tau)$.
We similarly define $\rbcM_{c}(\tau)$.

\begin{rem} \label{rem: how to regard G as a set}
  From the rigorous point of view of set theory,
  none of $\rbcM$, $\rbcM_{c}(\tau)$, or $\rbcM(\tau)$ is a set.
  However, it is possible to think of each as set.
  For example,
  regarding $\rbcM$,
  one can construct a legitimate set $\mathscr{M}$ of rooted boundedly-compact spaces 
  such that any rooted-and-measured boundedly-compact space is rooted isometric to an element of $\mathscr{M}$.
  For details,
  refer to \cite[Section 3.1]{Noda_pre_Metrization}.
  Therefore,
  in this article,
  we will proceed with the discussion by treating $\rbcM$, $\rbcM_{c}(\tau)$, and $\rbcM(\tau)$ as sets 
  to avoid complications.
\end{rem}

The metrics on $\rbcM_{c}(\tau)$ and $\rbcM(\tau)$ are given by generalizing the Gromov-Hausdorff metric \cite{Burago_Burago_Ivanov_01_A_course}.

\begin{dfn} [{The metrics $d_{\rbcM_{c}}^{\tau}$ and $d_{\rbcM}^{\tau}$}]  \label{3. dfn: metric on frakMtau}
  Let $\tau$ be a functor on $\rbcM$.
  For $\cS=(S,d^{S}, \rho_{S}, a_{S}), \cT=(T, d^{T}, \rho_{T}, a_{T}) \in \rbcM(\tau)$,
  we define
  \begin{equation} \label{3. eq: dfn of functor metric}
    d_{\rbcM}^{\tau} (\cS, \cT)
    \coloneqq
    \inf_{f, g, M}
    \left\{
      \lHausdMet{M}{\rho_{M}} \bigl( f(S), g(T) \bigr) 
      \vee  
      d_{\tau}^{M,\rho_{M}} \bigl( \tau_{f}(a_{S}), \tau_{g}(a_{T}) \bigr)  
    \right\},
  \end{equation}
  where the infimum is taken 
  over all $(M, d^{M}, \rho_{M}) \in \rbcM$ 
  and root-and-distance-preserving maps $f : S \to M$ and $g : T \to M$.
  For a functor $\tau$ on $\rbcM_{c}$,
  we define $d_{\rbcM_{c}}^{\tau}$ similarly.
\end{dfn}

\begin{rem} \label{3. rem: Gromov-Hausdorff topology}
  In the above definition,
  if one consider no functor and simply define 
  \begin{equation}
    d_{\rbcM} (\cS, \cT)
    \coloneqq
    \inf_{f, g, M} \lHausdMet{M}{\rho_{M}} \bigl( f(S), g(T) \bigr),
  \end{equation}
  then $d_{\rbcM}$ becomes a metric on $\rbcM$ and the induced topology is called the \textit{local Gromov-Hausdorff topology}
  (see \cite[text]{Noda_pre_Metrization}).
  Similarly, if one define $d_{\rbcM_{c}}$ as an analogue of $d_{\rbcM}$,
  then it is a metric on $\rbcM_{c}$ and the induced topology is the \textit{(pointed) Gromov-Hausdorff topology}
  (cf.\ \cite[Section 7.3]{Burago_Burago_Ivanov_01_A_course}).
\end{rem}

Henceforth,
we only consider a functor $\tau$ on $\rbcM$.
However,
we note that every result is modified for a functor on $\rbcM_{c}$ in the obvious way.
(Indeed, the proofs become much simpler.)

To ensure that $d_{\rbcM}^{\tau}$ is a metric,
we assume the following continuity condition on $\tau$.

\begin{assum} \label{3. assum: pointwise continuity}
  Fix $(S, d^{S}, \rho_{S}), (T, d^{T}, \rho_{T}) \in \rbcM$ arbitrarily.
  Let $f_{n} : S \to T,\, n \in \mathbb{N} \cup \{ \infty \}$ be root-and-distance-preserving maps.
  If $f_{n} \to f_{\infty}$ in the compact-convergence topology, i.e., uniformly on any compact subsets,
  then $\tau_{f_{n}}(a) \to \tau_{f_{\infty}}(a)$ in $\tau(T)$ for all $a \in \tau(S)$.
\end{assum}

\begin{dfn} [{Continuous functor}]
  We say that a functor $\tau$ is \textit{continuous}
  if $\tau$ satisfies Assumption \ref{3. assum: pointwise continuity}.
\end{dfn}

\begin{thm} [{\cite[Theorems 3.23 and 3.24]{Noda_pre_Metrization}}] \label{3. thm: convergence in GH topology}
  If $\tau$ is continuous,
  then the function $d_{\rbcM}^{\tau}$ is a metric on $\rbcM(\tau)$.
  A sequence $(S_{n}, d^{S_{n}}, \rho_{S_{n}}, a_{S_{n}}),\, n \in \NN$ in $\rbcM(\tau)$ converges to $(S, d^{S}, \rho_{S}, a_{S})$ with respect to $d_{\rbcM}^{\tau}$
  if and only if 
  there exist $(M, d^{M}, \rho_{M}) \in \rbcM$ 
  and root-and-distance-preserving maps $f_{n} : S_{n} \to M$ and $f: S \to M$
  such that
  \begin{equation}  \label{3. thm eq: embedding to a common space for convergent sequence}
    \lHausdMet{M}{\rho_{M}} (f_{n}(S_{n}), f(S)) \to 0,
    \quad
    d_{\tau}^{M, \rho_{M}} (\tau_{f_{n}}(a_{S_{n}}), \tau_{f}(a_{S})) \to 0.
  \end{equation}
\end{thm}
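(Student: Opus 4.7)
The proof splits into two largely independent tasks: verifying the metric axioms for $d_{\rbcM}^{\tau}$, and establishing the convergence characterization in terms of a single common embedding. Both adapt the classical Gromov--Hausdorff machinery, with two additional complications: spaces need not be bounded (so we must work through the local Hausdorff formulation), and we must track the functorial data $a_{S}$.

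First I would verify the metric axioms. Finiteness of $d_{\rbcM}^{\tau}$ is witnessed by an explicit disjoint-union-with-bridge embedding of $S$ and $T$ into a common space, and symmetry is immediate. For the triangle inequality, given embeddings into $M_{12}$ and $M_{23}$ realising the distances $d_{\rbcM}^{\tau}(\cS,\cT)$ and $d_{\rbcM}^{\tau}(\cT,\cU)$ up to $\varepsilon$, I would glue $M_{12}$ and $M_{23}$ along the two images of $T$ via the largest pseudometric on the disjoint union that restricts correctly and makes the two copies of $T$ coincide, then pass to the metric completion. Because root-and-distance-preserving maps induce distance-preserving maps on $\tau$ (Definition \ref{3. dfn: functor}) and $d_{\tau}^{M,\rho_{M}}$ is itself a metric, the triangle inequality for $\lHausdMet{M}{\rho_{M}}$ and for $d_{\tau}^{M,\rho_{M}}$ on the glued space passes to the $\vee$-combination. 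Positive definiteness uses continuity of $\tau$: from a minimising sequence of ambient spaces $M_{n}$ with embeddings $f_{n}: S \to M_{n}$, $g_{n}: T \to M_{n}$ whose Hausdorff and functorial distances both tend to zero, a standard Arzel\`a--Ascoli-type diagonal extraction on the balls $S^{(r)}$ and $T^{(r)}$ produces a subsequential pointwise limit that is a root-preserving isometry $h: S \to T$, and Assumption \ref{3. assum: pointwise continuity} converts $d_{\tau}^{M_{n},\rho_{M_{n}}}(\tau_{f_{n}}(a_{S}), \tau_{g_{n}}(a_{T})) \to 0$ into the identity $\tau_{h}(a_{S}) = a_{T}$, showing $\cS = \cT$ in $\rbcM(\tau)$.

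For the convergence characterization, the ``if'' direction is immediate by taking all copies inside $M$ and applying the definition of $d_{\rbcM}^{\tau}$. The substantive direction is ``only if''. Assuming $d_{\rbcM}^{\tau}(\cS_{n}, \cS) \to 0$, I would select ambient spaces $M_{n}$ and near-minimising embeddings $f_{n}^{\circ}: S_{n} \to M_{n}$, $g_{n}: S \to M_{n}$ whose Hausdorff and functorial errors go to zero. The strategy is to amalgamate all the $M_{n}$'s into a single ambient space $M$ by identifying, across all $n$, the embedded copies $g_{n}(S)$ via the fixed reference space $S$. Concretely, I would form the disjoint union $S \sqcup \bigsqcup_{n} M_{n}$, quotient by the relation identifying $x \in S$ with $g_{n}(x) \in M_{n}$ for every $n$, and equip the quotient with the largest pseudometric compatible with each $d^{M_{n}}$ and with $d^{S}$; taking the metric completion yields a candidate $M$, into which $S$ and every $S_{n}$ embed in a root-preserving, distance-preserving way via induced maps $f: S \to M$ and $f_{n}: S_{n} \to M$. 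The local Hausdorff convergence \eqref{3. thm eq: embedding to a common space for convergent sequence} then follows from the per-$n$ bounds transported through the quotient, and the functorial convergence is a direct consequence of continuity of $\tau$ applied to the induced embeddings.

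The main obstacle is the amalgamation step: ensuring that the glued space $M$ is actually boundedly compact, so that $M \in \rbcM$ and $\lHausdMet{M}{\rho_{M}}$ is well defined. The danger is that after quotienting by infinitely many identifications, closed balls around $\rho_{M}$ could fail to be compact because points from different $M_{n}$'s accumulate without a common limit. To handle this I would argue radius by radius, using the explicit integral form of $\lHausdMet{\cdot}{\cdot}$ to reduce to the compact setting on $S^{(r)}$ and $S_{n}^{(r)}$, where the Hausdorff convergence forces a uniform total-boundedness estimate; this in turn forces the $r$-ball of $M$ to be totally bounded and complete, hence compact. Once bounded compactness is secured, the identification of the limits $f_{n}, f$ and the transfer of the functorial convergence are routine consequences of Assumption \ref{3. assum: pointwise continuity}.
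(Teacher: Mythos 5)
The paper does not supply a proof of this statement: it is imported as \cite[Theorems 3.23 and 3.24]{Noda_pre_Metrization}, so there is no in-paper argument to compare you against. Your sketch nevertheless follows the standard Gromov--Hausdorff metrization template, which is the natural route, so it is worth reviewing on its merits.

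There is a genuine gap in the positive-definiteness step. You extract an isometry $h:S\to T$ by Arzel\`a--Ascoli and then claim that Assumption \ref{3. assum: pointwise continuity} ``converts'' $d_{\tau}^{M_n,\rho_{M_n}}(\tau_{f_n}(a_S),\tau_{g_n}(a_T))\to 0$ into $\tau_h(a_S)=a_T$. But Assumption \ref{3. assum: pointwise continuity} concerns a sequence of root-and-distance-preserving maps between a \emph{fixed} source and a \emph{fixed} target converging in the compact-convergence topology; it says nothing about embeddings into varying ambient spaces $M_n$, and the approximate maps $S\to T$ your Arzel\`a--Ascoli extraction produces only have small distortion, hence are not distance-preserving and fall outside the assumption's scope. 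To make the deduction legitimate you must first amalgamate the $M_n$ into one boundedly compact $M$ by gluing along the images of $T$, so that all the $g_n$ become a single embedding $g:T\to M$ and (after a subsequence, restricting $M$ to the closure of $g(T)\cup\bigcup_n f_n(S)$ to secure bounded compactness) the $f_n:S\to M$ converge compactly to some $f$. Only then does Assumption \ref{3. assum: pointwise continuity} give $\tau_{f_n}(a_S)\to\tau_f(a_S)$, whence $\tau_f(a_S)=\tau_g(a_T)$, $f(S)=g(T)$, and $\tau_h(a_S)=a_T$ for $h:=g^{-1}\circ f$ by functoriality of $\tau$. Thus positive definiteness is not, as your structure suggests, a preliminary independent of the amalgamation: both halves of the theorem hinge on it, and the bounded-compactness concern you flag for the ``only if'' direction is equally live here. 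It would be cleaner to isolate the amalgamation (with its bounded-compactness verification, for which restricting to the union of the embedded images is the key move) as a stand-alone lemma and deduce both the metric axioms and the convergence characterization from it.
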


For completeness and separability,
we consider additional conditions.
For $\cS=(S, d^{S}, \rho_{S}),\, \cT=(T, d^{T}, \rho_{T}) \in \rbcM$,
we write $\cS \preceq \cT$ 
if and only if 
$S \subseteq T$, $d^{T}|_{S \times S} = d^{S}$, and $\rho_{S} = \rho_{T}$.

\begin{assum} \label{3. assum: completeness and separability}
  Let $\cS_{n} = (S_{n}, d^{S_{n}}, \rho_{S_{n}}),\, n \in \mathbb{N}\cup\{\infty\}$ 
  and $\Meas=(M, d^{M}, \rho_{M})$ be elements of $\rbcM$
  such that $\cS_{n} \preceq \Meas$ for all $n \in \mathbb{N} \cup \{\infty\}$
  and $S_{n}$ converges to $S_{\infty}$ in the local Hausdorff topology in $M$.
  We write $\iota_{n}: S_{n} \to M$ for the inclusion map.
  \begin{enumerate} [label=(\roman*), series=functor condition]
    \item \label{3. assum item: completeness condition}
      If $b \in \tau(M)$ and $a_{n} \in \tau(S_{n})$ are 
      such that $\tau_{\iota_{n}}(a_{S_{n}}) \to b$ in $\tau(M)$,
      then there exists $a \in \tau(S_{\infty})$ satisfying $b= \tau_{\iota_{\infty}}(a)$.
    \item  \label{3. assum item: separability condition}
      For every $a \in \tau(S_{\infty})$,
      there exists a sequence $a_{n} \in \tau(S_{n})$ 
      such that $\tau_{\iota_{n}}(a_{n}) \to \tau_{\iota_{\infty}}(a)$ in $\tau(M)$. 
  \end{enumerate}
\end{assum}

\begin{dfn} [{Complete, separable functor}]
  A functor $\tau$ is said to be \textit{complete (resp.\ separable)} 
  if and only if it satisfies 
  Assumption \ref{3. assum: completeness and separability}\ref{3. assum item: completeness condition}
  (resp.\ \ref{3. assum item: separability condition})
  and, for each $(S, d^{S}, \rho_{S}) \in \rbcM$,
  the metric space $(\tau(S), d_{\tau}^{S, \rho_{S}})$ is complete (resp.\ separable).
\end{dfn}

\begin{thm} [{\cite[Corollary 3.30]{Noda_pre_Metrization}}] \label{3. thm: complete and separable functor}
  If $\tau$ is complete, separable and continuous,
  then the function $d_{\rbcM}^{\tau}$ is a complete, separable metric on $\rbcM(\tau)$.
\end{thm}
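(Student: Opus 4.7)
The plan is to prove completeness and separability separately, given that Theorem \ref{3. thm: convergence in GH topology} already supplies us with the fact that $d_{\rbcM}^{\tau}$ is a metric and an embedding characterization of convergence.

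For completeness, let $(\cS_{n})_{n\geq 1}$ with $\cS_{n}=(S_{n}, d^{S_{n}}, \rho_{S_{n}}, a_{S_{n}})$ be a Cauchy sequence in $\rbcM(\tau)$. Extract a subsequence (still indexed by $n$) with $d_{\rbcM}^{\tau}(\cS_{n}, \cS_{n+1}) < 2^{-n}$. By the definition of $d_{\rbcM}^{\tau}$, for each $n$ there are a rooted boundedly compact $(M_{n}, d^{M_{n}}, \rho_{M_{n}})$ and root-and-distance-preserving maps $\varphi_{n}: S_{n} \to M_{n}$, $\psi_{n}: S_{n+1} \to M_{n}$ realizing the infimum up to $2^{-n}$. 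The standard Gromov-Hausdorff gluing construction (glue $M_{n}$ to $M_{n+1}$ by identifying $\psi_{n}(S_{n+1})$ with $\varphi_{n+1}(S_{n+1})$, then take a suitable completion) produces one rooted boundedly compact space $(M, d^{M}, \rho_{M}) \in \rbcM$ together with root-and-distance-preserving maps $f_{n}: S_{n} \to M$ such that $(f_{n}(S_{n}))_{n\geq 1}$ is Cauchy in $\lHausdMet{M}{\rho_{M}}$ and $(\tau_{f_{n}}(a_{S_{n}}))_{n\geq 1}$ is Cauchy in $(\tau(M), d_{\tau}^{M, \rho_{M}})$. By the compactness of $(\closed(M), \lHausdMet{M}{\rho_{M}})$ there is $S_{\infty} \in \closed(M)$ with $f_{n}(S_{n}) \to S_{\infty}$. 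Since each $f_{n}(S_{n})$ contains $\rho_{M}$, so does $S_{\infty}$, and $S_{\infty}$ inherits bounded compactness from $M$. Completeness of $\tau(M)$ yields $b \in \tau(M)$ with $\tau_{f_{n}}(a_{S_{n}}) \to b$, and Assumption \ref{3. assum: completeness and separability}\ref{3. assum item: completeness condition} produces $a \in \tau(S_{\infty})$ with $\tau_{\iota_{\infty}}(a) = b$, where $\iota_{\infty}: S_{\infty} \hookrightarrow M$. The characterization of convergence in Theorem \ref{3. thm: convergence in GH topology} then shows $\cS_{n} \to (S_{\infty}, d^{M}|_{S_{\infty}\times S_{\infty}}, \rho_{M}, a)$ in $\rbcM(\tau)$.

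For separability, I first build a countable dense subset $\mathcal{D}_{0}$ of $(\rbcM, d_{\rbcM})$ (see Remark \ref{3. rem: Gromov-Hausdorff topology}); a standard construction uses rooted finite metric spaces with rational distances, suitably repeated to approximate non-compact spaces through nested closed balls. For each $(S, d^{S}, \rho_{S}) \in \mathcal{D}_{0}$, the separability of the functor supplies a countable dense subset $\mathcal{D}_{1}(S) \subseteq \tau(S)$. Then $\mathcal{D} \coloneqq \{(S, d^{S}, \rho_{S}, a) : (S, d^{S}, \rho_{S}) \in \mathcal{D}_{0},\ a \in \mathcal{D}_{1}(S)\}$ is countable. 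Given any $(S, d^{S}, \rho_{S}, a_{S}) \in \rbcM(\tau)$, pick $(S_{n}, d^{S_{n}}, \rho_{S_{n}}) \in \mathcal{D}_{0}$ converging to $(S, d^{S}, \rho_{S})$ in the local Gromov-Hausdorff topology. Embed them and $S$ into a common $(M, d^{M}, \rho_{M})$ with $S_{n} \to S$ in the local Hausdorff topology of $M$; Assumption \ref{3. assum: completeness and separability}\ref{3. assum item: separability condition} then provides $c_{n} \in \tau(S_{n})$ with $\tau_{\iota_{n}}(c_{n}) \to \tau_{\iota}(a_{S})$ in $\tau(M)$. Approximating $c_{n}$ in $\tau(S_{n})$ by an element of $\mathcal{D}_{1}(S_{n})$ (using distance-preservation of $\tau_{\iota_{n}}$) yields the required approximating sequence in $\mathcal{D}$.

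The main obstacle is the completeness argument: carrying out the infinite gluing so that the resulting space $M$ is genuinely in $\rbcM$ (rooted, boundedly compact) and the images $f_{n}(S_{n})$ do not escape to infinity. The proof relies on the rapid decay $d_{\rbcM}^{\tau}(\cS_{n}, \cS_{n+1}) < 2^{-n}$ so that, for each fixed radius $r$, the restrictions $(S_{n}^{(r)})$ are handled by the classical compact Gromov-Hausdorff completeness theory uniformly in $n$; bounded compactness of $M$ is then preserved under the gluing. A secondary point is to verify that the construction of $\mathcal{D}_{0}$ indeed yields density in the local Gromov-Hausdorff topology for non-compact spaces, which amounts to approximating each closed ball of integer radius in the pointed Gromov-Hausdorff sense and combining these approximations across radii.
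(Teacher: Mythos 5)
The paper does not prove this theorem in-text; it is cited from the external reference \cite{Noda_pre_Metrization} (Corollary~3.30), so a line-by-line comparison with ``the paper's proof'' is not available. That said, your argument is the natural one: the classical Gromov--Hausdorff completeness/separability argument, threaded through the functor axioms. The functor-specific ingredients are deployed correctly --- distance-preservation of $\tau_{\iota}$ transports Cauchy estimates along the inclusions $M_n \hookrightarrow M$, compactness of $(\closed(M), \lHausdMet{M}{\rho_M})$ supplies the limit subset $S_\infty$, completeness of $(\tau(M), d_\tau^{M,\rho_M})$ supplies the limit decoration $b$, Assumption~\ref{3. assum: completeness and separability}\ref{3. assum item: completeness condition} pulls $b$ back to an $a \in \tau(S_\infty)$, and Theorem~\ref{3. thm: convergence in GH topology} then certifies convergence. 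The separability half is also sound in outline, provided you track the rooted isometry between the abstract representative $S_n \in \mathcal{D}_0$ and its embedded copy $f_n(S_n) \subseteq M$, transporting $\mathcal{D}_1(S_n)$ across via the induced distance-preserving $\tau$-map; you gesture at this but it should be made explicit since Assumption~\ref{3. assum: completeness and separability}\ref{3. assum item: separability condition} produces the $c_n$ on the embedded copy, not the abstract one.

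The step that is genuinely underdeveloped is the gluing. You assert that the rapid decay $d_{\rbcM}^{\tau}(\cS_n, \cS_{n+1}) < 2^{-n}$ means that ``for each fixed radius $r$, the restrictions $(S_n^{(r)})$ are handled by the classical compact Gromov--Hausdorff completeness theory uniformly in $n$.'' This does not follow directly. The quantity you control is $\lHausdMet{M_n}{\rho_{M_n}}(\varphi_n(S_n), \psi_n(S_{n+1})) < 2^{1-n}$, i.e.\ a bound on $\int_0^\infty e^{-r}\bigl(1 \wedge \HausdMet{M_n}(\varphi_n(S_n)^{(r)}, \psi_n(S_{n+1})^{(r)})\bigr)\,dr$, and this neither bounds $\HausdMet{M_n}(\varphi_n(S_n)^{(r)}, \psi_n(S_{n+1})^{(r)})$ for a prescribed $r$ (the factor $e^{r}$ intervenes, and worse, $r \mapsto \HausdMet{}(A^{(r)},B^{(r)})$ is not monotone) nor does it immediately show that $\bigcup_n f_n(S_n)^{(r)}$ is totally bounded in the glued space. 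You need an extra argument --- e.g.\ that for Lebesgue-a.e.\ $r$ the sequence $\HausdMet{M}(f_n(S_n)^{(r)}, f_{n+1}(S_{n+1})^{(r)})$ is summable, plus a monotonicity trick to pass from a.e.\ $r$ to every $r$ --- before you can conclude $M \in \rbcM$. Without that, the object to which you apply the compactness of $\closed(M)$ has not been shown to exist in the right category, and the rest of the completeness proof does not get off the ground. The gap is fillable and the overall strategy is right, but this is exactly the part where the boundedly-compact case departs from the compact one and it cannot be waved through by appeal to the classical theory.
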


An effective way to show the Polishness of a topological space 
for which a complete metric is difficult to find 
is to show that the space is a $G_{\delta}$-subset of a larger Polish space, 
as we did in Section \ref{sec: a space including the space of discrete measures}.
With this background,
we introduce the notion of topological subfunctors and Polish functors.

\begin{dfn} [{Topological subfunctor}] \label{3. dfn: topological subfunctor}
  Let $\tilde{\tau}$ and $\tau$ be functors.
  We say that $\tau$ is a \textit{topological subfunctor} of $\tilde{\tau}$ 
  if and only if the following conditions are satisfied.
  \begin{enumerate} [label = (T\arabic*)]
    \item \label{3. dfn item: topological subfunctor, topological embedding}
      For every $(S, d^{S}, \rho_{S}) \in \rbcM$,
      there exists a topological embedding of $\tau(S)$ into $\tilde{\tau}(S)$,
      that is,
      there exists a homeomorphism from $\tau(S)$ to a subset of $\tilde{\tau}(S)$.
      Using this map, we always regard $\tau(S)$ as a subspace of $\tilde{\tau}(S)$.
    \item  \label{3. dfn item: topological subfunctor, commutative relation}
      For every $(S_{i}, d^{S_{i}}, \rho_{S_{i}}),\, i=1,2$ 
      and root-and-distance-preserving map $f: S_{1} \to S_{2}$, 
      it holds that $\tau_{f} = \tilde{\tau}_{f}|_{\tau(S_{1})}$.
  \end{enumerate}
\end{dfn}

\begin{dfn} [{Polish functor}] \label{3. dfn: Polish functor}
  We say that a functor $\tau$ is \textit{Polish} if there exist a functor $\tilde{\tau}$
  and, for each $(S, d^{S}, \rho_{S}) \in \rbcM$,
  a sequence $(\tilde{\tau}_{k}(S))_{k=1}^{\infty}$ of open subsets in $\tilde{\tau}(S)$
  satisfying the following conditions.
  \begin{enumerate} [label = (P\arabic*), leftmargin = *]
    \item \label{3. dfn item: Polish functor, large functor is separable, complete and continuous}
      The functor $\tilde{\tau}$ is complete, separable and continuous. 
    \item \label{3. dfn item: Polish functor, the functor is a restriction}
      The functor $\tau$ is a topological subfunctor of $\tilde{\tau}$.
    \item \label{3. dfn item: Polish functor, sequence of open sets}
      For every $(S, d^{S}, \rho_{S}) \in \rbcM$,
      it holds that $\tau(S) = \bigcap_{k \geq 1} \tilde{\tau}_{k}(S)$.
    \item \label{3. dfn item: Polish functor, functorial subset property}
      Let $f: S_{1} \to S_{2}$ be a root-and-distance-preserving map
      between $(S_{i}, d^{S_{i}}, \rho_{S_{i}}) \in \rbcM,\, i=1,2$.
      Then, $\tilde{\tau}_{f}^{-1}(\tilde{\tau}_{k}(S_{2})) = \tilde{\tau}_{k}(S_{1})$
      for each $k \geq 1$. 
      In particular,
      $\tilde{\tau}_{f}^{-1}(\tau(S_{2})) = \tau(S_{1})$.
  \end{enumerate}
  We call $(\tilde{\tau}, (\tilde{\tau}_{k})_{k \geq 1})$ a \textit{Polish system} of $\tau$.
\end{dfn}

\begin{rem}
  It is easy to check that every complete, separable and continuous functor is a Polish functor.
\end{rem}

\begin{thm} [{\cite[Theorem 3.36]{Noda_pre_Metrization}}] \label{3. thm: Polish functor}
  If $\tau$ is a Polish functor,
  then the topology on $\rbcM(\tau)$ induced from $d_{\rbcM}^{\tau}$ is Polish.
  (NB.\ The metric $d_{\rbcM}^{\tau}$ is not necessarily a complete metric.)
\end{thm}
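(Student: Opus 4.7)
The plan is to realize $\rbcM(\tau)$ as a $G_{\delta}$-subset of the Polish space $(\rbcM(\tilde{\tau}), d_{\rbcM}^{\tilde{\tau}})$, and then invoke Alexandrov's theorem that every $G_{\delta}$-subset of a Polish space is itself Polish in the subspace topology.

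First, I would build a canonical injection $\iota : \rbcM(\tau) \to \rbcM(\tilde{\tau})$ by viewing each marker $a_{S} \in \tau(S)$ as an element of $\tilde{\tau}(S)$ through the topological embedding of \ref{3. dfn item: topological subfunctor, topological embedding} and sending the $\tau$-class of $(S, d^{S}, \rho_{S}, a_{S})$ to the $\tilde{\tau}$-class of the same data. Well-definedness and injectivity both follow from the relation $\tilde{\tau}_{f}|_{\tau(S)} = \tau_{f}$ in \ref{3. dfn item: topological subfunctor, commutative relation}. I would then argue that $\iota$ is a topological embedding. In one direction, any pair of root-and-distance-preserving embeddings $f_{n} : S_{n} \to M$, $f : S \to M$ realizing $d_{\rbcM}^{\tau}$-convergence automatically realizes $d_{\rbcM}^{\tilde{\tau}}$-convergence, because $\tilde{\tau}_{f_{n}}(a_{S_{n}}) = \tau_{f_{n}}(a_{S_{n}})$ and convergence in $\tau(M)$ implies convergence in $\tilde{\tau}(M)$ by \ref{3. dfn item: topological subfunctor, topological embedding}. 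In the reverse direction, Theorem \ref{3. thm: convergence in GH topology} applied to the continuous functor $\tilde{\tau}$ produces common embeddings with $\tilde{\tau}_{f_{n}}(a_{S_{n}}) \to \tilde{\tau}_{f}(a_{S})$ in $\tilde{\tau}(M)$; since the entire sequence and its limit lie in the topological subspace $\tau(M)$, this is also convergence in $\tau(M)$, which by Definition \ref{3. dfn: metric on frakMtau} yields $d_{\rbcM}^{\tau}$-convergence.

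Next, for each $k \geq 1$, I would introduce
\begin{equation}
\mathcal{G}_{k} \coloneqq \bigl\{ [(S, d^{S}, \rho_{S}, b)] \in \rbcM(\tilde{\tau}) : b \in \tilde{\tau}_{k}(S) \bigr\},
\end{equation}
which is well-defined on equivalence classes because \ref{3. dfn item: Polish functor, functorial subset property} gives $\tilde{\tau}_{f}^{-1}(\tilde{\tau}_{k}(S_{2})) = \tilde{\tau}_{k}(S_{1})$ for every root-preserving isometry $f : S_{1} \to S_{2}$. From \ref{3. dfn item: Polish functor, sequence of open sets} one immediately reads off $\iota(\rbcM(\tau)) = \bigcap_{k \geq 1} \mathcal{G}_{k}$. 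The main obstacle is showing each $\mathcal{G}_{k}$ is open in $\rbcM(\tilde{\tau})$: fix $\cS \in \mathcal{G}_{k}$ and a sequence $\cS_{n} \to \cS$ in $d_{\rbcM}^{\tilde{\tau}}$; by Theorem \ref{3. thm: convergence in GH topology} there exist $(M, d^{M}, \rho_{M}) \in \rbcM$ and root-and-distance-preserving maps $f_{n} : S_{n} \to M$, $f : S \to M$ with $\tilde{\tau}_{f_{n}}(a_{S_{n}}) \to \tilde{\tau}_{f}(a_{S})$ in $\tilde{\tau}(M)$; by \ref{3. dfn item: Polish functor, functorial subset property} applied to $f$, the limit lies in the open set $\tilde{\tau}_{k}(M)$, so $\tilde{\tau}_{f_{n}}(a_{S_{n}}) \in \tilde{\tau}_{k}(M)$ for all large $n$, and a second invocation of \ref{3. dfn item: Polish functor, functorial subset property} applied to $f_{n}$ gives $a_{S_{n}} \in \tilde{\tau}_{k}(S_{n})$, i.e., $\cS_{n} \in \mathcal{G}_{k}$.

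Finally, Theorem \ref{3. thm: complete and separable functor} shows $(\rbcM(\tilde{\tau}), d_{\rbcM}^{\tilde{\tau}})$ is a complete separable metric space, so Alexandrov's theorem ensures the $G_{\delta}$-set $\iota(\rbcM(\tau))$ is Polish in its subspace topology; transporting this back through the topological embedding $\iota$, the topology on $\rbcM(\tau)$ induced by $d_{\rbcM}^{\tau}$ is Polish. The delicate step is really the openness of $\mathcal{G}_{k}$, where the careful chaining of \ref{3. dfn item: Polish functor, functorial subset property} across a common embedding is what lets pointwise openness of $\tilde{\tau}_{k}(M) \subseteq \tilde{\tau}(M)$ propagate to openness in the Gromov-Hausdorff-type space.
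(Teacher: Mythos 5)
Your $G_{\delta}$-embedding strategy is the right one, and it mirrors exactly the Polishness argument this paper uses in Section~\ref{sec: a space including the space of discrete measures} for $\disMeasures(S)$ inside $\pointMeas(S)$. The construction of $\iota$, the identification $\iota(\rbcM(\tau)) = \bigcap_{k} \mathcal{G}_{k}$ via \ref{3. dfn item: Polish functor, sequence of open sets}, the openness of each $\mathcal{G}_{k}$ via a sequential argument chaining \ref{3. dfn item: Polish functor, functorial subset property} through a common embedding, and the final appeal to Alexandrov's theorem are all sound. (The paper itself only cites this statement from an external reference, so there is no in-text proof to compare against, but the route you chose is the one the framework is designed to support.)

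One point you elide that deserves to be made explicit: to invoke Theorem~\ref{3. thm: convergence in GH topology} for $\tau$ (in the reverse direction of the embedding argument) and even to know that $d_{\rbcM}^{\tau}$ is a metric in the first place (Theorem~\ref{3. thm: convergence in GH topology}), you need $\tau$ to be a \emph{continuous} functor. Definition~\ref{3. dfn: Polish functor} does not list continuity of $\tau$ among (P1)--(P4), so this must be derived. It does follow: given root-and-distance-preserving $f_{n} \to f_{\infty}$ and $a \in \tau(S)$, continuity of $\tilde{\tau}$ from~\ref{3. dfn item: Polish functor, large functor is separable, complete and continuous} gives $\tilde{\tau}_{f_{n}}(a) \to \tilde{\tau}_{f_{\infty}}(a)$ in $\tilde{\tau}(T)$; the special case of~\ref{3. dfn item: Polish functor, functorial subset property} places all these elements in $\tau(T)$; and the topological embedding of~\ref{3. dfn item: topological subfunctor, topological embedding} upgrades the convergence to convergence in $\tau(T)$, which together with~\ref{3. dfn item: topological subfunctor, commutative relation} is precisely $\tau_{f_{n}}(a) \to \tau_{f_{\infty}}(a)$. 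The same combination of \ref{3. dfn item: Polish functor, functorial subset property} and~\ref{3. dfn item: topological subfunctor, topological embedding} is what lets you say, in the reverse direction of the embedding claim, that the sequence $\tilde{\tau}_{f_{n}}(a_{S_{n}})$ and its limit actually lie in $\tau(M)$ --- you assert this but do not cite the axiom. Neither omission is a genuine gap in the argument, but both should be stated, since they are precisely the places where the Polish-functor axioms enter beyond the surface level.
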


In this framework,
it is fairly easy to consider multiple objects.

\begin{dfn} [{The product functor}]
  Fix $N \in \mathbb{N}$.
  Let $(\tau^{(k)})_{k=1}^{N}$ be 
  a sequence of functors.
  The \textit{product functor} $\tau = \prod_{k=1}^{N} \tau^{(k)}$ is defined as follows:
  \begin{enumerate}
    \item
      For every $(S, d^{S}, \rho_{S}) \in \rbcM$, 
      we set $\tau(S) \coloneqq \prod_{k=1}^{N} \tau^{(k)}(S)$.
      We equip $\tau(S)$ with the max product metric (cf.\ \eqref{2. eq: max product metric}).
    \item 
      For every $(S_{i}, d^{S_{i}}, \rho_{S_{i}}),\, i=1,2$ 
      and root-and-distance-preserving map $f: S_{1} \to S_{2}$, 
      we set $\tau_{f} \coloneqq \prod_{k=1}^{N} \tau_{f}^{(k)}$, that is, 
      $\tau_{f}: \tau(S_{1}) \to \tau(S_{2})$ is a distance-preserving map given by 
      \begin{equation}
        \tau_{f}\bigl( (a_{k})_{k=1}^{N} \bigr) \coloneqq \bigl( \tau^{(k)}_{f}(a_{k}) \bigr)_{k=1}^{N}.
      \end{equation}
  \end{enumerate}
\end{dfn}

\begin{prop}  [{\cite[Proposition 3.38]{Noda_pre_Metrization}}]
  Fix $N \in \mathbb{N}$
  Let $(\tau^{(k)})_{k=1}^{N}$ be a sequence of functors.
  If each $\tau^{(k)}$ is continuous
  (resp.\ complete, separable, Polish),
  then so is the product functor $\prod_{k=1}^{N} \tau^{(k)}$.
\end{prop}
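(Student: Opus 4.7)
The plan is to verify each of the four properties coordinate-wise, exploiting the fact that the max product metric makes convergence in $\tau(S) = \prod_{k=1}^{N} \tau^{(k)}(S)$ equivalent to coordinate-wise convergence in each factor.

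First I would dispatch continuity. Assume each $\tau^{(k)}$ is continuous, and let $f_n: S \to T$ be root-and-distance-preserving maps with $f_n \to f_\infty$ in the compact-convergence topology. For any $a = (a_k)_{k=1}^{N} \in \tau(S)$, Assumption \ref{3. assum: pointwise continuity} applied to each $\tau^{(k)}$ yields $\tau^{(k)}_{f_n}(a_k) \to \tau^{(k)}_{f_\infty}(a_k)$ in $\tau^{(k)}(T)$. Since the max product metric on $\tau(T)$ coincides with the product topology, $\tau_{f_n}(a) = (\tau^{(k)}_{f_n}(a_k))_{k=1}^{N} \to \tau_{f_\infty}(a)$.

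Next, completeness and separability: for each $(S, d^S, \rho_S) \in \rbcM$, the metric space $(\tau(S), d^{S, \rho_S}_\tau)$ is a finite product of complete, separable metric spaces under the max product metric, hence itself complete (resp.\ separable). For the functorial completeness condition in Assumption \ref{3. assum: completeness and separability}\ref{3. assum item: completeness condition}, given $b = (b_k) \in \tau(M)$ and $a_n = (a_{n,k}) \in \tau(S_n)$ with $\tau_{\iota_n}(a_n) \to b$, coordinate convergence $\tau^{(k)}_{\iota_n}(a_{n,k}) \to b_k$ and completeness of each $\tau^{(k)}$ produce $a_k \in \tau^{(k)}(S_\infty)$ with $\tau^{(k)}_{\iota_\infty}(a_k) = b_k$; then $a = (a_k)$ satisfies $\tau_{\iota_\infty}(a) = b$. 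The separability condition is analogous: approximate each coordinate separately and assemble.

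The Polish case requires a little more care and will be the main (mild) obstacle. Let $(\tilde{\tau}^{(k)}, (\tilde{\tau}^{(k)}_m)_{m \geq 1})$ be a Polish system for $\tau^{(k)}$, and set $\tilde{\tau} \coloneqq \prod_{k=1}^{N} \tilde{\tau}^{(k)}$, which is continuous, complete, and separable by the first two paragraphs. For each pair $(k,m) \in \{1, \ldots, N\} \times \NN$, define
\begin{equation}
  \tilde{\tau}_{(k,m)}(S)
  \coloneqq
  \tilde{\tau}^{(1)}(S) \times \cdots \times \tilde{\tau}^{(k-1)}(S) \times \tilde{\tau}^{(k)}_m(S) \times \tilde{\tau}^{(k+1)}(S) \times \cdots \times \tilde{\tau}^{(N)}(S),
\end{equation}
which is open in $\tilde{\tau}(S)$ since $\tilde{\tau}^{(k)}_m(S)$ is open in $\tilde{\tau}^{(k)}(S)$. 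Enumerating these countably many open sets as $(\tilde{\tau}_l(S))_{l \geq 1}$, one has
\begin{equation}
  \bigcap_{l \geq 1} \tilde{\tau}_l(S)
  =
  \prod_{k=1}^{N} \bigcap_{m \geq 1} \tilde{\tau}^{(k)}_m(S)
  =
  \prod_{k=1}^{N} \tau^{(k)}(S)
  =
  \tau(S),
\end{equation}
verifying \ref{3. dfn item: Polish functor, sequence of open sets}. Property \ref{3. dfn item: Polish functor, the functor is a restriction} is immediate since a product of topological embeddings is a topological embedding, and \ref{3. dfn item: Polish functor, commutative relation} follows from its coordinate-wise analogue. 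Finally, \ref{3. dfn item: Polish functor, functorial subset property} reduces to the identity $(\tilde{\tau}^{(k)}_f)^{-1}(\tilde{\tau}^{(k)}_m(S_2)) = \tilde{\tau}^{(k)}_m(S_1)$, which holds by hypothesis on each factor; only the $k$-th coordinate is nontrivial while the others are pulled back to themselves. Thus $(\tilde{\tau}, (\tilde{\tau}_l)_{l \geq 1})$ is a Polish system for $\prod_{k=1}^N \tau^{(k)}$, completing the argument.
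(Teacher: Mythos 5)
The paper does not prove this proposition itself but cites it from \cite[Proposition 3.38]{Noda_pre_Metrization}, so there is no in-paper proof to compare against. Your coordinate-wise verification is correct: continuity, completeness of the metric, separability of the metric, and the functorial conditions in Assumption \ref{3. assum: completeness and separability} all pass coordinate by coordinate under the max product metric, and for the Polish case the construction $\tilde{\tau} = \prod_k \tilde{\tau}^{(k)}$ together with the ``one coordinate constrained at a time'' open sets $\tilde{\tau}_{(k,m)}$ correctly yields a Polish system, since $(\tilde{\tau}^{(j)}_f)^{-1}(\tilde{\tau}^{(j)}(S_2)) = \tilde{\tau}^{(j)}(S_1)$ is automatic for the unconstrained coordinates and the constrained coordinate is handled by \ref{3. dfn item: Polish functor, functorial subset property} for the factor. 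One small slip: in checking that $\tau$ is a topological subfunctor of $\tilde{\tau}$ you split this into ``topological embedding'' and ``commutative relation,'' and the latter should reference \ref{3. dfn item: topological subfunctor, commutative relation} (condition (T2) of Definition \ref{3. dfn: topological subfunctor}) rather than a nonexistent label under the Polish-functor definition; the mathematics is unaffected.
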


At the end of this section,
we introduce a functor that deals with the law of additional objects.

\begin{dfn} [{Probability functor}]
  Given a functor $\tau$,
  we define the probability functor $\sigma = \probFunct$ as follows.
  \begin{itemize}
    \item 
      For $(S, d^{S}, \rho_{S}) \in \rbcM$, 
      set $\sigma(S) \coloneqq \mathcal{P}(\tau(S))$ to be the collection of probability measures on $\tau(S)$
      and $d_{\sigma}^{S, \rho_{S}} \coloneqq d_{P}^{\tau(S)}$
      to be the Prohorov metric on $\mathcal{P}(\tau(S))$
      defined by the metric $d_{\tau}^{S, \rho_{S}}$ on $\tau(S)$. 
    \item 
      For each $(S_{i}, d^{S_{i}}, \rho_{S_{i}}) \in \rbcM,\, i=1,2$
      and root-and-distance-preserving map $f: S_{1} \to S_{2}$, 
      set $\sigma_{f}(P) \coloneqq P \circ \tau_{f}^{-1}$,
      that is, $\sigma_{f}(P)$ is the pushforward measure of $P$ by 
      the distance-preserving map $\tau_{f} : \tau(S_{1}) \to \tau(S_{2})$.
  \end{itemize}
\end{dfn}

\begin{thm} [{\cite[Theorem 4.32]{Noda_pre_Metrization}}]
  If $\tau$ is Polish,
  then so is $\probFunct(\tau)$.
\end{thm}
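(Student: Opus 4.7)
The plan is to build a Polish system for $\sigma \coloneqq \probFunct(\tau)$ directly from one for $\tau$. Fix a Polish system $(\tilde\tau, (\tilde\tau_k)_{k \geq 1})$ of $\tau$ and take $\tilde\sigma \coloneqq \probFunct(\tilde\tau)$ as the ambient functor. For each $(S, d^S, \rho_S) \in \rbcM$ and each pair $(k, m) \in \NN^2$ (re-indexed by $\NN$), define
\begin{equation}
  \tilde\sigma_{k,m}(S) \coloneqq \bigl\{ P \in \mathcal{P}(\tilde\tau(S)) : P(\tilde\tau_k(S)) > 1 - 1/m \bigr\}.
\end{equation}
Since $\tilde\tau_k(S)$ is open in $\tilde\tau(S)$, Portmanteau gives that $P \mapsto P(\tilde\tau_k(S))$ is lower semi-continuous, so $\tilde\sigma_{k,m}(S)$ is open, verifying the openness required in Definition \ref{3. dfn: Polish functor}.

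First I would check that $\tilde\sigma$ satisfies condition \ref{3. dfn item: Polish functor, large functor is separable, complete and continuous}, i.e., is complete, separable and continuous. Pointwise separability and completeness of $(\mathcal{P}(\tilde\tau(S)), d_{P}^{\tilde\tau(S)})$ are the classical Prohorov-metric statements applied to the Polish space $\tilde\tau(S)$. Continuity of $\tilde\sigma$ follows from continuity of $\tilde\tau$: if $f_n \to f_\infty$ uniformly on compact sets, then $\tilde\tau_{f_n}(a) \to \tilde\tau_{f_\infty}(a)$ for every $a \in \tilde\tau(S)$, and for any bounded continuous $g$ on $\tilde\tau(T)$ dominated convergence gives $\int g\, d(\tilde\sigma_{f_n}P) = \int g(\tilde\tau_{f_n}(\cdot))\, dP \to \int g\, d(\tilde\sigma_{f_\infty}P)$, so $\tilde\sigma_{f_n}(P) \to \tilde\sigma_{f_\infty}(P)$ weakly. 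The functor-level conditions of Assumption \ref{3. assum: completeness and separability} for $\tilde\sigma$ reduce to the same conditions for $\tilde\tau$ via the standard tightness/Portmanteau machinery for measures on an ambient Polish space $M$.

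The conditions \ref{3. dfn item: Polish functor, sequence of open sets} and \ref{3. dfn item: Polish functor, functorial subset property} are essentially bookkeeping. A measure $P \in \mathcal{P}(\tilde\tau(S))$ concentrates on $\tau(S) = \bigcap_k \tilde\tau_k(S)$ iff $P(\tilde\tau_k(S)) = 1$ for all $k$, iff $P \in \bigcap_{k,m} \tilde\sigma_{k,m}(S)$, which (modulo the identification below) gives \ref{3. dfn item: Polish functor, sequence of open sets}. For \ref{3. dfn item: Polish functor, functorial subset property}, for any root-and-distance-preserving $f: S_1 \to S_2$,
\begin{equation}
  \tilde\sigma_{f}^{-1}(\tilde\sigma_{k,m}(S_2)) = \bigl\{ P : P\bigl(\tilde\tau_{f}^{-1}(\tilde\tau_k(S_2))\bigr) > 1 - 1/m \bigr\} = \tilde\sigma_{k,m}(S_1),
\end{equation}
where the last equality uses $\tilde\tau_{f}^{-1}(\tilde\tau_k(S_2)) = \tilde\tau_k(S_1)$ from the Polish system of $\tau$.

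The main obstacle is verifying condition \ref{3. dfn item: Polish functor, the functor is a restriction}, namely that $\sigma$ is genuinely a topological subfunctor of $\tilde\sigma$: one must show that the Prohorov topology on $\sigma(S) = \mathcal{P}(\tau(S))$ induced from $d_{\tau}^{S,\rho_S}$ agrees with the subspace topology inherited from $\mathcal{P}(\tilde\tau(S))$ under the natural extension-by-zero map $\iota: P \mapsto P(\,\cdot\, \cap \tau(S))$. The easier direction is continuity of $\iota$: any bounded continuous $g$ on $\tilde\tau(S)$ restricts to a bounded continuous function on $\tau(S)$, so weak convergence on $\tau(S)$ is preserved. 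The reverse direction is where care is needed: given $\iota(P_n) \to \iota(P)$ weakly in $\mathcal{P}(\tilde\tau(S))$ with each measure concentrated on the $G_\delta$ (hence Borel) set $\tau(S)$, I would apply the Skorohod representation theorem in the Polish space $\tilde\tau(S)$ to obtain a common probability space carrying random elements $X_n \sim \iota(P_n)$, $X \sim \iota(P)$ with $X_n \to X$ almost surely; since almost surely $X_n, X \in \tau(S)$ and the topology on $\tau(S)$ is by definition the subspace topology, the convergence holds in $\tau(S)$, and dominated convergence applied to any bounded continuous test function on $\tau(S)$ yields $P_n \to P$ weakly in $\mathcal{P}(\tau(S))$. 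Having verified \ref{3. dfn item: Polish functor, large functor is separable, complete and continuous}--\ref{3. dfn item: Polish functor, functorial subset property}, an appeal to Theorem \ref{3. thm: Polish functor} concludes that $\probFunct(\tau)$ is a Polish functor.
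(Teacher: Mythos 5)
The paper does not prove this theorem; it simply cites it from \cite[Theorem 4.32]{Noda_pre_Metrization}, so there is no in-paper proof against which to compare. Evaluated on its own terms, your argument is substantively correct: the Polish system $(\probFunct(\tilde\tau), (\tilde\sigma_{k,m}))$ with $\tilde\sigma_{k,m}(S) = \{P : P(\tilde\tau_k(S)) > 1 - 1/m\}$ is the right construction, the Portmanteau lower-semicontinuity argument gives openness, the countable-intersection computation gives \ref{3. dfn item: Polish functor, sequence of open sets}, the pushforward calculation combined with $\tilde\tau_f^{-1}(\tilde\tau_k(S_2)) = \tilde\tau_k(S_1)$ gives \ref{3. dfn item: Polish functor, functorial subset property}, and the Skorohod-representation argument is the correct tool for \ref{3. dfn item: Polish functor, the functor is a restriction} (the $G_\delta$ structure of $\tau(S)$ in $\tilde\tau(S)$ makes $\iota$ a well-defined bijection onto $\{P : P(\tau(S)) = 1\}$, and almost-sure convergence in $\tilde\tau(S)$ of variables living in $\tau(S)$ passes to $\tau(S)$ because its topology is the subspace topology).

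Two caveats. First, the verification of \ref{3. dfn item: Polish functor, large functor is separable, complete and continuous} --- that $\probFunct(\tilde\tau)$ is complete, separable and continuous when $\tilde\tau$ is --- is dispatched with a gesture at "standard tightness/Portmanteau machinery"; this is the load-bearing special case of the theorem (when $\tau$ is complete, separable, continuous rather than merely Polish), and the functor-level conditions of Assumption \ref{3. assum: completeness and separability} for $\probFunct(\tilde\tau)$ deserve a concrete verification rather than an appeal to folklore. Second, the final sentence invokes Theorem \ref{3. thm: Polish functor} to conclude, but that theorem is a \emph{consequence} of a functor being Polish (it concerns the topology on $\rbcM(\tau)$), not a criterion for Polishness; having exhibited a Polish system satisfying \ref{3. dfn item: Polish functor, large functor is separable, complete and continuous}--\ref{3. dfn item: Polish functor, functorial subset property}, Definition \ref{3. dfn: Polish functor} itself already gives that $\probFunct(\tau)$ is Polish, and no further citation is needed.
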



\subsection{Functors used in the present paper} \label{sec: functors used in this paper}

Thanks to the theory introduced in Section \ref{sec: The GH-type topologies generated by functors},
we can easily handle various Gromov-Hausdorff-type topologies by defining corresponding functors.
In this subsection,
we provide functors used for Gromov-Hausdorff-type topologies in our discussions.

\textbf{The functor for a fixed space $\fixedFunct{\Xi}$.}
Let $(\Xi, d^{\Xi})$ be a complete, separable metric space.
We then define a functor $\fixedFunct{\Xi}$ on $\rbcM$ as follows.
\begin{itemize}
  \item 
    For $(S, d^{S}, \rho_{S}) \in \rbcM$, 
    set $\fixedFunct{\Xi}(S) \coloneqq \Xi$ and $d_{\fixedFunct{\Xi}}^{S, \rho_{S}} \coloneqq d^{\Xi}$. 
  \item 
    For each $(S_{i}, d^{S_{i}}, \rho_{S_{i}}) \in \rbcM,\, i=1,2$
    and root-and-distance-preserving map $f: S_{1} \to S_{2}$, 
    set $\fixedFunct{\Xi}_{f} \coloneqq \id_{\Xi}$.
\end{itemize}
The functor $\fixedFunct{\Xi}$ is complete, separable and continuous (see \cite[Proposition 4.1]{Noda_pre_Metrization}).
Similarly, we define a functor on $\rbcM_{c}$,
which we denote by the same symbol $\fixedFunct{\Xi}$.

\textbf{The functor for points $\PointFunct$.}
We define a functor $\PointFunct$ on $\rbcM$ as follows.
\begin{itemize}
  \item 
    For $(S, d^{S}, \rho_{S}) \in \rbcM$, 
    set $\PointFunct(S) \coloneqq S$ and $d_{\PointFunct}^{S, \rho_{S}} \coloneqq d^{S}$. 
  \item 
    For each $(S_{i}, d^{S_{i}}, \rho_{S_{i}}) \in \rbcM,\, i=1,2$
    and root-and-distance-preserving map $f: S_{1} \to S_{2}$, 
    set $\PointFunct_{f}(x) \coloneqq f(x)$.
\end{itemize}
The functor $\PointFunct$ is complete, separable and continuous (see \cite[Proposition 4.4]{Noda_pre_Metrization}).
We write $\nPointFunct{n}$ for the $n$-product functor of $\PointFunct$.
Similarly, we define functors on $\rbcM_{c}$,
which we denote by the same symbol.

\textbf{The functor for measures $\finMeasFunct$ and $\MeasFunct$.} 
Define a functor $\MeasFunct$ on $\rbcM$ as follows.
\begin{itemize}
  \item 
    For $(S, d^{S}, \rho_{S}) \in \rbcM$, 
    set $\MeasFunct(S) \coloneqq \Meas(S)$ and $d_{\MeasFunct}^{S, \rho_{S}} \coloneqq d_{V}^{S, \rho_{S}}$. 
  \item 
    For each $(S_{i}, d^{S_{i}}, \rho_{S_{i}}) \in \rbcM,\, i=1,2$
    and root-and-distance-preserving map $f: S_{1} \to S_{2}$, 
    set $\MeasFunct_{f}(\mu) \coloneqq \mu \circ f^{-1}$.
\end{itemize}
Also, define a functor $\finMeasFunct$ on $\rbcM_{c}$ as follows.
\begin{itemize}
  \item 
    For $(S, d^{S}, \rho_{S}) \in \rbcM_{c}$, 
    set $\finMeasFunct(S) \coloneqq \finMeas(S)$ and $d_{\finMeasFunct}^{S, \rho_{S}} \coloneqq d_{P}^{S}$.
  \item 
    For each $(S_{i}, d^{S_{i}}, \rho_{S_{i}}) \in \rbcM_{c},\, i=1,2$
    and root-and-distance-preserving map $f: S_{1} \to S_{2}$, 
    set $\finMeasFunct_{f}(\mu) \coloneqq \mu \circ f^{-1}$.
\end{itemize}
Both functors are complete, separable and continuous (cf.\ \cite[Proposition 4.11]{Noda_pre_Metrization}).

We call the topology on $\GHPspace \coloneqq \rbcM_{c}(\finMeasFunct)$ 
the (pointed) Gromov-Hausdorff-Prohorov topology,
which was firstly introduced in \cite{Abraham_Delmas_Hoscheit_13_A_note}.
The topology on $\GHVspace \coloneqq \rbcM(\MeasFunct)$ is an extension of the Gromov-Hausdorff-Prohorov topology,
which we call the \textit{local Gromov-Hausdorff-vague topology}.
It is a consequence of Theorem \ref{3. thm: complete and separable functor} that 
the metrics on $\GHPspace$ and $\GHVspace$ are complete and separable.

\textbf{The functor for measures on marked spaces $\markedMeasFunct{\Xi}$ and $\markedfinMeasFunct{\Xi}$.} 
Let $(\Xi, d^{\Xi})$ be a boundedly-compact metric space.
Fix an element $\rho_{\Xi} \in \Xi$.
Define a functor $\markedMeasFunct{\Xi}$ on $\rbcM$ as follows.
\begin{itemize}
  \item 
    For $(S, d^{S}, \rho_{S}) \in \rbcM$, 
    set $\markedMeasFunct{\Xi}(S) \coloneqq \Meas(S \times \Xi)$ 
    and $d_{\markedMeasFunct{\Xi}}^{S, \rho_{S}} \coloneqq d_{V}^{S \times \Xi, (\rho_{S}, \rho_{\Xi})}$. 
  \item 
    For each $(S_{i}, d^{S_{i}}, \rho_{S_{i}}) \in \rbcM,\, i=1,2$
    and root-and-distance-preserving map $f: S_{1} \to S_{2}$, 
    set $\markedMeasFunct{\Xi}_{f}(\mu) \coloneqq \mu \circ (f \times \id_{\Xi})^{-1}$.
\end{itemize}
It is easy to check that 
the functor $\markedMeasFunct{\Xi}$ is complete, separable and continuous
by following the proof of \cite[Theorem 4.31]{Noda_pre_Metrization}.
Similarly, we define a functor $\markedfinMeasFunct{\Xi}$ on $\rbcM_{c}$ as follows.
\begin{itemize}
  \item 
    For $(S, d^{S}, \rho_{S}) \in \rbcM_{c}$, 
    set $\markedfinMeasFunct{\Xi}(S) \coloneqq \Meas_{\mathrm{fin}}(S \times \Xi)$ 
    and $d_{\markedfinMeasFunct{\Xi}}^{S, \rho_{S}} \coloneqq d_{P}^{S \times \Xi}$. 
  \item 
    For each $(S_{i}, d^{S_{i}}, \rho_{S_{i}}) \in \rbcM_{c},\, i=1,2$
    and root-and-distance-preserving map $f: S_{1} \to S_{2}$, 
    set $\markedfinMeasFunct{\Xi}_{f}(\mu) \coloneqq \mu \circ (f \times \id_{\Xi})^{-1}$.
\end{itemize}

\textbf{The functor for discrete measures $\disMeasFunct$.} 
We define $\disMeasFunct$ on $\rbcM$ as follows.
\begin{itemize}
  \item 
    For $(S, d^{S}, \rho_{S}) \in \rbcM$, 
    set $\disMeasFunct(S) \coloneqq \disMeasures(S)$ and $d_{\disMeasFunct}^{S, \rho_{S}} \coloneqq d_{\disMeasures}^{S, \rho_{S}}$. 
  \item 
  For each $(S_{i}, d^{S_{i}}, \rho_{S_{i}}) \in \rbcM,\, i=1,2$
  and root-and-distance-preserving map $f: S_{1} \to S_{2}$, 
  set $\disMeasFunct_{f}(\nu) \coloneqq \nu \circ f^{-1}$.
\end{itemize}

\begin{prop}
  The functor $\disMeasFunct$ is Polish.
\end{prop}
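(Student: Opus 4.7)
The plan is to exhibit a Polish system $(\tilde{\tau}, (\tilde{\tau}_{k})_{k\geq 1})$ in the sense of Definition \ref{3. dfn: Polish functor}, using the larger space $\pointMeas(S)$ in place of $\disMeasures(S)$. I would define a functor $\tilde{\tau}$ on $\rbcM$ by $\tilde{\tau}(S) \coloneqq \pointMeas(S)$ with the metric $d_{\pointMeas}^{S, \rho_{S}}$, and, for a root-and-distance-preserving map $f : S_{1} \to S_{2}$, $\tilde{\tau}_{f}(\pi) \coloneqq \pi \circ (f \times \id^{\RNpp})^{-1}$. I would also set $\tilde{\tau}_{k}(S) \coloneqq \pointMeas^{(k)}(S)$, as in \eqref{2. eq: dfn of sP^n}. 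The claim is that this forms a Polish system for $\disMeasFunct$.

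For condition \ref{3. dfn item: Polish functor, large functor is separable, complete and continuous}, I would first check that $\tilde{\tau}_{f}$ is distance-preserving, which reduces to three statements about the three components of $d_{\pointMeas}^{S, \rho_{S}}$: the two vague-metric components are preserved by Proposition \ref{2. prop: hausdorff and vague metrics are preserved} applied to $f \times \id^{\RNpp}$ and to $f$, and the integral component $\int_{0}^{\infty} e^{-r}(1 \wedge \mathsf{M}^{(r)}(\cdot,\cdot))\,dr$ is preserved because, for all but countably many $r$, the isometry $f$ carries atoms of $\pi$ in $S_{1}^{(r)}$ bijectively onto atoms of $\tilde{\tau}_{f}(\pi)$ in $S_{2}^{(r)}$, giving $\vertMap^{(r)}(\tilde{\tau}_{f}(\pi)) = \vertMap^{(r)}(\pi)$ on a cocountable set. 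Completeness and separability of $\tilde{\tau}(S)$ are Theorem \ref{2. thm: sP is Polish}. Continuity (Assumption \ref{3. assum: pointwise continuity}) follows via Theorem \ref{2. thm: convergence in sP}: if $f_{n} \to f_{\infty}$ uniformly on compacts, then $\pi \circ (f_{n} \times \id)^{-1} \to \pi \circ (f_{\infty} \times \id)^{-1}$ vaguely in $\Meas(S_{2} \times \RNpp)$ and $\MeasureMap(\pi) \circ f_{n}^{-1} \to \MeasureMap(\pi) \circ f_{\infty}^{-1}$ vaguely in $\Meas(S_{2})$ (both are standard applications of continuity of pushforward), which together suffice. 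For \ref{3. dfn item: Polish functor, the functor is a restriction}, Corollary \ref{2. cor: embedding of Mdis to sP} provides the topological embedding $\pointMap : \disMeasures(S) \to \pointMeas(S)$ with image $\pointMeas^{*}(S)$, and the identity $\pointMap(\nu \circ f^{-1}) = \pointMap(\nu) \circ (f \times \id^{\RNpp})^{-1}$ yields the functorial compatibility $\disMeasFunct_{f} = \tilde{\tau}_{f}|_{\disMeasures(S_{1})}$. Condition \ref{3. dfn item: Polish functor, sequence of open sets} is exactly the combination of Lemma \ref{2. lem: sP^n is open} and Lemma \ref{2. lem: sP^n decreases to sP}.

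For condition \ref{3. dfn item: Polish functor, functorial subset property}, I need $\tilde{\tau}_{f}^{-1}(\pointMeas^{(k)}(S_{2})) = \pointMeas^{(k)}(S_{1})$. Since $f$ is injective and distance-preserving with $d^{S_{2}}(\rho_{S_{2}}, f(x)) = d^{S_{1}}(\rho_{S_{1}}, x)$, for any $r$ not equal to $d^{S_{1}}(\rho_{S_{1}}, x)$ for any atom $x$ of $\pi$ and not equal to $|\log w|$ for any atom weight $w$, the atoms of $\pi^{(r)}$ biject with those of $\tilde{\tau}_{f}(\pi)^{(r)}$ via $(x, w) \mapsto (f(x), w)$, preserving multiplicities and distinctness of first coordinates. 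The set of such ``good'' $r$ is cocountable, and the condition defining $\pointMeas^{(k)}$ is monotone in $r$ (distinct atoms at $r$ are still distinct at any $r' < r$, since $\pi^{(r')}$ is a sub-family), so any bad $r \in (k - k^{-1}, k)$ witnessing membership for $\pi$ can be perturbed to a nearby good $r' \in (k - k^{-1}, r)$, which then also witnesses membership for $\tilde{\tau}_{f}(\pi)$; the converse direction is symmetric. The main obstacle is this boundary bookkeeping --- reconciling the existential quantifier over $r$ in the definition of $\pointMeas^{(k)}(S)$ with the countable set of $r$ at which $f$ could fail to commute with the $r$-restriction --- but cocountable-genericity of good $r$-values combined with the monotonicity of the defining condition handles it.
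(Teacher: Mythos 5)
Your proposal is correct and takes essentially the same approach as the paper: it constructs the Polish system with $\tilde{\tau}(S) = \pointMeas(S)$ and $\tilde{\tau}_{k}(S) = \pointMeas^{(k)}(S)$ and invokes Theorem \ref{3. thm: Polish functor}. Relative to the paper you spell out condition \ref{3. dfn item: Polish functor, functorial subset property} in detail (the paper leaves it implicit), while you pass over the functorial part of completeness and separability in Assumption \ref{3. assum: completeness and separability} (beyond metric completeness and separability of each $\pointMeas(S)$), which the paper handles by deferring to an external citation.
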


\begin{proof}
  Recall the space $\pointMeas(S)$ from Section \ref{sec: a space including the space of discrete measures}.
  Define a functor $\tau$ as follows.
  \begin{itemize}
    \item 
      For $(S, d^{S}, \rho_{S}) \in \rbcM$, 
      set $\tau(S) \coloneqq \pointMeas(S)$ and $d_{\tau}^{S, \rho_{S}} \coloneqq d_{\pointMeas}^{S, \rho_{S}}$. 
    \item 
    For each $(S_{i}, d^{S_{i}}, \rho_{S_{i}}) \in \rbcM,\, i=1,2$
    and root-and-distance-preserving map $f: S_{1} \to S_{2}$, 
    set $\tau_{f}(\pi) \coloneqq \pi \circ (f \times \id_{\RNpp})^{-1}$.
  \end{itemize}
  One can check that $\tau$ is complete, separable and continuous
  by the same argument as that of \cite[Proof of Proposition 4.11]{Noda_pre_Metrization}.
  For $k \in \NN$ and $(S, d^{S}, \rho_{S}) \in \rbcM$,
  we set $\tau_{k}(S) \coloneqq \pointMeas^{(k)}(S)$
  (recall this space from \eqref{2. eq: dfn of sP^n}).
  We then obtain a Polish system $(\tau, (\tau_{k})_{k \geq 1})$ of $\disMeasFunct$ 
  by Corollary \ref{2. cor: embedding of Mdis to sP} and Lemmas \ref{2. lem: sP^n is open} and \ref{2. lem: sP^n decreases to sP}.
  Therefore,
  the desired result follows from Theorem \ref{3. thm: Polish functor}.
\end{proof}

\textbf{The functor for cadlag curves $\SkorohodFunct$ and $\unifFunct$.} 
Given a boundedly-compact metric space $(S, d^{S})$ and an interval $I$ of $\RNp$,
we write $D(I, S)$ for the set of cadlag functions from $I$ to $S$.
For every $t > 0$,
we write $d_{J_{1}, t}^{S}$ for the complete, separable metric on $D([0, t], S)$
given by \cite[Equation (12.16)]{Billingsley_99_Convergence},
which induces the usual $J_{1}$-Skorohod topology.
Then, the Skorohod metric on $D(\RNp, S)$ is defined by setting, for $\xi, \eta \in D(\RNp, S)$,
\begin{equation} \label{3. eq: dfn of J_1 metric}
  d_{J_{1}}^{S}(\xi, \eta) 
  \coloneqq 
  \int_{0}^{\infty} e^{-t} \bigl( 1 \wedge d_{J_{1}, t}^{S}( \xi|_{[0,t]}, \eta|_{[0,t]} )\bigr)\, dt.
\end{equation}
The function $d_{J_{1}}^{S}$ is a complete, separable metric on $D(\RNp, S)$
inducing the usual $J_{1}$-Skorohod topology \cite[Theorem 2.6]{Whitt_80_Some}.
Define a functor $\SkorohodFunct$ as follows.
\begin{itemize}
  \item 
    For $(S, d^{S}, \rho_{S}) \in \rbcM$, 
    set $\SkorohodFunct(S) \coloneqq D(\RNp, S)$ and $d_{\SkorohodFunct}^{S, \rho_{S}} \coloneqq d_{J_{1}}^{S}$. 
  \item 
  For each $(S_{i}, d^{S_{i}}, \rho_{S_{i}}) \in \rbcM,\, i=1,2$
  and root-and-distance-preserving map $f: S_{1} \to S_{2}$, 
    set $\SkorohodFunct_{f}(\xi) \coloneqq f \circ \xi$.
\end{itemize}
The functor $\SkorohodFunct$ is complete, separable and continuous (see \cite[Proposition 4.15]{Noda_pre_Metrization}).
Similarly, we define a functor on $\rbcM_{c}$,
which we denote by the same symbol.

In Section \ref{sec: GW trees}, 
we will use another functor $\unifFunct$ on $\rbcM_{c}$ to deal with cadlag curves in the compact-convergence topology.
\begin{itemize}
  \item 
    For $(S, d^{S}, \rho_{S}) \in \rbcM_{c}$, 
    set $\unifFunct(S) \coloneqq D([0,1], S)$ 
    and $d_{\unifFunct}^{S}(f,g) \coloneqq \sup_{0 \leq t \leq 1} |f(t) - g(t)|$. 
  \item 
  For each $(S_{i}, d^{S_{i}}, \rho_{S_{i}}) \in \rbcM_{c},\, i=1,2$
  and root-and-distance-preserving map $f: S_{1} \to S_{2}$, 
    set $\unifFunct_{f}(\xi) \coloneqq f \circ \xi$.
\end{itemize}
The functor $\unifFunct$ is continuous but neither complete nor separable
as the uniform metric is not complete nor separable on the set of cadlag functions.


\section{Resistance forms and transition densities} \label{sec: resistance forms}

This section is divided into three subsections.
In Section \ref{sec: resistance preliminary},
we recall some basics of the theory of resistance forms and resistance metrics.
In Section \ref{sec: recurrent resistance metric and auxiliary results},
we introduce recurrent resistance metrics, which are assumed for electrical networks in the main results,
and presents some auxiliary results.
Then, in Section \ref{sec: transition densities},
we prove the precompactness of transition densities of stochastic processes on measured resistance metric spaces,
which plays a crucial role in the proof of our main results.


\subsection{Preliminary} \label{sec: resistance preliminary}

Following \cite{Croydon_18_Scaling},
in this subsection we recall some basic properties of resistance forms, starting with their definition.
The reader is referred to \cite{Kigami_12_Resistance} for further background.
Also,
for further study of resistance forms and their extended Dirichlet spaces,
see \cite[Section 3]{Noda_pre_Scaling}.

\begin{dfn} [{Resistance form and resistance metric, \cite[Definition 3.1]{Kigami_12_Resistance}}] 
  \label{4. dfn: resistance forms}
  Let $F$ be a non-empty set.
  A pair $(\form, \rdomain)$ is called a \textit{resistance form} on $F$ if it satisfies the following conditions.
  \begin{enumerate} [label=(RF\arabic*), leftmargin = *]
    \item \label{4. dfn cond: the domain of resistance form}
          The symbol $\rdomain$ is a linear subspace of the collection of functions $\{ f : F \to \RN \}$ 
          containing constants,
          and $\form$ is a non-negative symmetric bilinear form on $\rdomain$
          such that $\form(f,f)=0$ if and only if $f$ is constant on $F$.
    \item \label{4. dfn cond: the quotient space of resistance form is Hilbert}
          Let $\sim$ be the equivalence relation on $\rdomain$ defined by saying $f \sim g$ if and only if $f-g$ is constant on $F$.
          Then $(\rdomain/\sim, \form)$ is a Hilbert space.
    \item
          If $x \neq y$, then there exists a function $f \in \rdomain$ such that $f(x) \neq f(y)$.
    \item \label{4. dfn cond: resistance forms condition 4}
          For any $x, y \in F$,
          \begin{equation} \label{eq: definition of resistance metric}
            R_{(\form, \rdomain)}(x,y)
            \coloneqq
            \sup
            \left\{
            \frac{|f(x) - f(y)|^{2}}{\form(f,f)}
            :
            f \in \rdomain,\
            \form(f,f) > 0
            \right\}
            < \infty.
          \end{equation}     
    \item
          If $\bar{f}\coloneqq  (f \wedge 1) \vee 0$,
          then $\bar{f} \in \rdomain$ and $\form(\bar{f}, \bar{f}) \leq \form(f,f)$ for any $f \in \rdomain$.
  \end{enumerate}
\end{dfn}

For the following definition,
recall the effective resistance on an electrical network with a finite vertex set 
from \cite[Section 9.4]{Levin_Peres_17_Markov}
(see also \cite[Section 2.1]{Kigami_01_Analysis}).

\begin{dfn} [{Resistance metric, \cite[Definition 2.3.2]{Kigami_01_Analysis}}]
  \label{3. dfn: resistance metrics}
  A metric $R$ on a non-empty set $F$ is called a \textit{resistance metric}
  if and only if,
  for any non-empty finite subset $V \subseteq F$,
  there exists an electrical network $G$ with the vertex set $V$ 
  such that the effective resistance on $G$ coincides with $R|_{V \times V}$.
\end{dfn}

\begin{thm} [{\cite[Theorem 2.3.6]{Kigami_01_Analysis}}]  \label{3. thm: one-to-one correspondence of forms and metrics}
  Fix a non-empty subset $F$.
  There exists a one-to-one correspondence between resistance forms $(\form, \rdomain)$ on $F$ 
  and resistance metrics $R$ on $F$ via $R = R_{(\form, \rdomain)}$.
  In other words,
  a resistance form $(\form, \rdomain)$ is characterized by $R_{(\form, \rdomain)}$ 
  given in \ref{4. dfn cond: resistance forms condition 4}.
\end{thm}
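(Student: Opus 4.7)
The plan is to establish the bijection by constructing maps in both directions via a trace-and-inductive-limit procedure, then verifying they are mutual inverses.

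For the forward direction, given a resistance form $(\form, \rdomain)$ on $F$, I would show that $R_{(\form, \rdomain)}$ from \ref{4. dfn cond: resistance forms condition 4} is a resistance metric in the sense of Definition \ref{3. dfn: resistance metrics}. The key tool is the trace of a form to a non-empty finite $V \subseteq F$, defined by
\begin{equation*}
  \form|_V(u,u)
  \coloneqq
  \inf\{\form(f,f) \mid f \in \rdomain,\ f|_V = u\},
  \qquad
  u \in \RN^V.
\end{equation*}
Using the Hilbert-space axiom \ref{4. dfn cond: the quotient space of resistance form is Hilbert}, the infimum is attained, uniquely modulo constants, by a harmonic extension of $u$; the resulting form on $\RN^V$ is symmetric, non-negative, and vanishes precisely on constants, so its off-diagonal entries $-\form|_V(1_{\{x\}}, 1_{\{y\}})$ are non-negative and define an electrical network $G_V$ on $V$. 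The variational identity
\begin{equation*}
  R_{(\form, \rdomain)}(x,y)
  =
  \sup\bigl\{(f(x)-f(y))^2 / \form(f,f) : \form(f,f) > 0\bigr\}
\end{equation*}
restricted to $V$ reduces to the standard variational characterisation of effective resistance on $G_V$, so the two agree on $V \times V$. Finiteness (axiom \ref{4. dfn cond: resistance forms condition 4}) and the metric properties of $R_{(\form, \rdomain)}$ then follow from corresponding statements for three-vertex networks by taking $V = \{x,y,z\}$.

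For the backward direction, given a resistance metric $R$ on $F$, I would build a resistance form by an inductive-limit construction. For each finite $V \subseteq F$, take the electrical network $G_V$ realising $R|_{V \times V}$ and let $(\form_V, \RN^V)$ be its associated quadratic form. The crucial compatibility property is that whenever $V \subseteq V'$, the trace of $(\form_{V'}, \RN^{V'})$ to $V$ coincides with $(\form_V, \RN^V)$; this reduces to uniqueness of the electrical network on a finite set with prescribed effective resistances (which follows because on a finite set the quadratic form determines the conductances via its off-diagonal entries). Choosing an exhausting sequence of finite subsets $V_1 \subseteq V_2 \subseteq \cdots$ with $\bigcup_n V_n$ dense in $(F, R)$, define
\begin{equation*}
  \rdomain
  \coloneqq
  \Bigl\{ f : F \to \RN \Bigm|
    \sup_{n} \form_{V_n}(f|_{V_n}, f|_{V_n}) < \infty,\
    f\ \text{is the}\ R\text{-continuous extension of}\ (f|_{V_n})_n
  \Bigr\},
\end{equation*}
and $\form(f,g) \coloneqq \lim_n \form_{V_n}(f|_{V_n}, g|_{V_n})$. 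Compatibility of traces ensures monotonicity of the approximating energies, so the limit exists. The Hilbertness axiom \ref{4. dfn cond: the quotient space of resistance form is Hilbert} is obtained by completing this inductive limit with respect to $\form$ modulo constants; the remaining axioms, including the Markov property and finiteness of $R_{(\form, \rdomain)}$, are routine from the construction.

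Finally, I would verify that the two maps are mutual inverses. Starting from $R$ and applying forward-after-backward returns $R$ since the forward-direction formula on each $G_V$ recovers the effective resistance that defined it. Starting from $(\form, \rdomain)$ and applying backward-after-forward returns the same form because finite-dimensional traces of $(\form, \rdomain)$ determine the form on the inductive limit, and by density of $\bigcup_n V_n$ in $F$ combined with axiom \ref{4. dfn cond: the quotient space of resistance form is Hilbert} this limit exhausts $\rdomain$ modulo constants. The main obstacle will be proving the trace-compatibility under nested enlargements $V \subseteq V'$; this is really the heart of the theorem, since without it the inductive limit is ill-defined and uniqueness cannot be read off the finite restrictions. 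Once this compatibility is established, together with the bijection between quadratic forms on $\RN^V$ vanishing on constants and conductance arrays on $V$, both directions and the inverse relations follow.
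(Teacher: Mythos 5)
The paper does not give a proof of this theorem: it is cited directly from Kigami (Theorem 2.3.6), so there is no in-paper argument to compare against. Your outline does follow the standard (Kigami's) strategy — traces of the form to finite subsets for the forward map, an inductive limit of finite networks for the backward map, with trace-compatibility as the hinge and a final check that the two maps are inverse to each other — so at the level of overall plan you are on the right track.

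The gap is in the justification of the hinge step, which you yourself flag as the heart of the matter. You correctly reduce trace-compatibility to uniqueness of the electrical network on a finite set $V$ realizing a prescribed effective-resistance matrix, but then assert this uniqueness "follows because on a finite set the quadratic form determines the conductances via its off-diagonal entries." That is the trivial direction: reading off $\mu_G(x,y)$ from the form is immediate. What uniqueness actually requires is the converse implication — that the effective resistance matrix $R|_{V\times V}$ determines the quadratic form (equivalently the Laplacian, hence the conductances) on the finite set. This is a genuine linear-algebra result proven in Kigami via an explicit reconstruction of the Laplacian from the matrix $\left(R(x,y)\right)_{x,y\in V}$, and nothing you wrote supplies it; as written the argument is circular. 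Until that injectivity on finite sets is established, the trace-compatibility is unproven, the inductive limit is ill-defined, and the inverse relations cannot be verified. A secondary issue: $F$ is an arbitrary non-empty set and $(F,R)$ need not be separable, so you cannot in general choose a countable nested sequence $V_1\subseteq V_2\subseteq\cdots$ with dense union; the inductive limit must be taken over the directed system of all finite subsets of $F$ ordered by inclusion, which trace-compatibility (once actually proven) makes coherent.
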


In the assumptions for the main results of this article,
we consider effective resistance between sets.
This is precisely defined below.

\begin{dfn} [{Effective resistance between sets}]  \label{3. dfn: effective resistance between sets}
  Fix a resistance form $(\form, \rdomain)$ on $F$
  and write $R$ for the corresponding resistance metric.
  For sets $A, B \subseteq F$,
  we define
  \begin{equation}
    R(A,B)
    \coloneqq
    \left(
    \inf\{
    \form(f,f):
    f \in \rdomain,\
    f|_{A} \equiv 1,\
    f|_{B} \equiv 0
    \}
    \right)^{-1},
  \end{equation}
  which is defined to be zero if the infimum is taken over the empty set.
  Note that by \ref{4. dfn cond: resistance forms condition 4} 
  we clearly have $R( \{x\}, \{y\}) = R(x,y)$.
\end{dfn}

A simple lower bound on effective resistance between a point and a subset is given by metric entropy
as described below.
Note that,
for a metric space $(S, d^{S})$ and $\delta > 0$,
we write
\begin{equation}
  N_{d^{S}}(S, \delta) 
  \coloneqq 
  \inf
  \left\{\# A \mid A \subseteq S,\, S \subseteq \bigcup_{x \in A} D_{S}(x, \delta) \right\}.
\end{equation}
The family $(N_{d^{S}}(S, \delta))_{\delta > 0}$ is called the metric entropy of $S$
(cf.\ \cite{Marcus_Rosen_06_Markov}).

\begin{lem} [{\cite[Theorem 5.3]{Kigami_12_Resistance}}] \label{4. lem: lower bound on resistance between root and outside of ball}
  For any $x \in F$ and $r>0$,
  \begin{equation}
    R(x, B_{R}(x, r)^{c})
    \geq 
    \frac{r}{4 N_{R}(F, r/2)}
  \end{equation}
\end{lem}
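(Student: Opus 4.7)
The plan is to use the variational characterization of effective resistance
\[
R(x, A)^{-1}
=
\inf\{\form(f,f) : f \in \rdomain,\, f(x) = 1,\, f|_A = 0\},
\qquad A := B_R(x, r)^c,
\]
which follows from \ref{4. dfn cond: resistance forms condition 4} together with the Hilbert space structure in \ref{4. dfn cond: the quotient space of resistance form is Hilbert}. Writing $N := N_R(F, r/2)$, the claimed inequality is equivalent to exhibiting an admissible test function $f$ with Dirichlet energy $\form(f, f) \leq 4N/r$. A natural candidate is the truncated resistance potential $f(y) := (1 - R(x, y)/r)_{+}$: by the triangle inequality for the resistance metric, $f$ is $(1/r)$-Lipschitz with respect to $R$, equals $1$ at $x$, vanishes on $A$, and is supported inside the closed ball $D_R(x, r)$.

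Fixing a cover of $F$ by $N$ closed balls $D_R(z_i, r/2)$ of radius $r/2$, the strategy is to estimate the energy of $f$ ball by ball. Within each $D_R(z_i, r/2)$ the pairwise $R$-distance is at most $r$ and the oscillation of $f$ is at most $1$, so the contribution of each ball to $\form(f, f)$ is morally on the order of $1/r$; summing over the $N$ balls of the cover should yield $\form(f, f) \leq 4N/r$, the factor $4$ arising from the doubling between the $r/2$-radius of the cover balls and the effective $r$-support of $f$. Plugging this into the variational principle then gives $R(x, A) \geq r/(4N)$.

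The main obstacle is that an abstract resistance form carries no intrinsic local energy measure, so one cannot simply split $\form(f, f)$ into contributions from the separate balls. To circumvent this, I would reduce to finite resistance networks via the one-to-one correspondence of Theorem \ref{3. thm: one-to-one correspondence of forms and metrics}: pick an increasing sequence of finite nets $V_n \subset F$ each containing $x$ and at least one point of $A$ and eventually meeting every cover ball, restrict $(\form, \rdomain)$ to each $V_n$ to obtain a finite electrical network in which the Dirichlet energy becomes an explicit conductance-weighted sum of squared increments, and prove the analogous bound in this finite setting by a direct flow/energy calculation using that the induced cover of $V_n$ still has cardinality at most $N$. Passing to the limit along the nets, using convergence of trace forms and $f_n(y) := (1 - R(x,y)/r)_{+}|_{V_n}$ to the continuum object, then yields the bound for $(\form, \rdomain)$. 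Verifying that the finite-network constant is exactly $4$ — rather than some larger geometric factor coming from overlaps of the cover — is the technical heart of the argument.
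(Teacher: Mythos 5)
The paper does not prove this lemma; it invokes it directly as Theorem 5.3 of Kigami's monograph, so there is no in-house proof to compare against. Judged on its own terms, your proposal correctly sets up the variational characterization $R(x,A)^{-1} = \inf\{\form(f,f) : f(x)=1,\, f|_A=0\}$ for $A = B_R(x,r)^c$ and picks a natural candidate $f(y) = (1 - R(x,y)/r)_+$, but then stops short at exactly the step that carries all the content, namely showing $\form(f,f) \leq 4N/r$ with $N = N_R(F, r/2)$. You acknowledge this yourself (``the technical heart of the argument''), so what is written is an outline rather than a proof.

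The obstruction is more serious than the absence of a local energy measure. In an abstract resistance form, a function that is $1/r$-Lipschitz with respect to $R$ need not lie in $\rdomain$ at all, and even when it does, its energy is not controlled by its Lipschitz constant: the resistance inequality $|f(y)-f(z)|^2 \leq R(y,z)\,\form(f,f)$ bounds oscillation by energy, not energy by oscillation. Your fallback of restricting to a growing finite net $V_n \subset F$ also runs into two concrete difficulties. First, a Foster-type bound for the trace form on $V_n$ controls the energy of the restricted test function only in terms of $|V_n|$, and $|V_n| \to \infty$; to obtain the scaling $N/r$ one must instead take the net to be essentially the $N$ cover-ball centers and then argue carefully, using the boundedness of $f$ together with $\min(1,t^2) \le t$ to convert the quadratic sum into Foster's linear one. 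Second, and independently, even if you produce a function on $V_n$ that vanishes at the points of $V_n$ lying in $A$, its minimal-energy (harmonic) extension to $F$ has no reason to vanish on all of $A$, so it is not an admissible competitor for the capacity. Until both of these issues are resolved, the proposal remains a plausible plan, not a proof.
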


We will henceforth assume that we have a non-empty set $F$ equipped with a resistance form $(\form,\rdomain)$,
and denote the corresponding resistance metric $R$.
Furthermore,
we assume that $(F, R)$ is locally compact and separable,
and the resistance form $(\form, \rdomain)$ is regular, as described by the following.

\begin{dfn} [{Regular resistance form, \cite[Definition 6.2]{Kigami_12_Resistance}}] \label{dfn: regular resistance forms}
  Let $C_{c}(F)$ be the collection of compactly supported, continuous functions on $(F,R)$
  equipped with the compact-convergence topology.
  A resistance form $(\form, \rdomain)$ on $F$ is called \textit{regular}
  if and only if $\rdomain \cap C_{c}(F)$ is dense in $C_{c}(F)$.
\end{dfn}

We next introduce related Dirichlet forms and stochastic processes.
First, suppose that we have a Radon measure $\mu$ of full support on $(F,R)$.
Let $\mathcal{B}(F)$ be the Borel $\sigma$-algebra on $(F,R)$
and $\mathcal{B}^{\mu}(F)$ be the completion of $\mathcal{B}(F)$ with respect to $\mu$.
Two extended real-valued functions are said to be $\mu$-equivalent
if they coincide outside a $\mu$-null set.
The space $L^{2}(F,\mu)$ consists of $\mu$-equivalence classes of square-integrable $\mathcal{B}^{\mu}(F)$-measurable extended real-valued functions on $F$.
Now, we define a bilinear form $\form_{1}$ on $\rdomain \cap L^{2}(F, \mu)$
by setting
\begin{equation}
  \form_{1}(f,\, g)
  \coloneqq
  \form(f,\, g)
  +
  \int_{F}fg\, d\mu.
\end{equation}
Then $(\rdomain \cap L^{2}(F, \mu), \form_{1})$ is a Hilbert space (see \cite[Theorem 2.4.1]{Kigami_01_Analysis}).
We write $\domain$ to be the closure of $\rdomain \cap C_{c}(F)$ with respect to $\form_{1}$.
Under the assumption that $(\form,\rdomain)$ is regular,
we then have from \cite[Theorem 9.4]{Kigami_12_Resistance}
that $(\form, \domain)$ is a regular Dirichlet form on $L^{2}(F,\mu)$
(see \cite{Fukushima_Oshima_Takeda_11_Dirichlet} for the definition of a regular Dirichlet form).
Moreover, standard theory gives us the existence of an associated Hunt process $((X_{t} )_{t \geq 0}, (P_{x} )_{x \in F} )$
(e.g. \cite[Theorem 7.2.1]{Fukushima_Oshima_Takeda_11_Dirichlet}).
We refer to this Hunt process as the \textit{(Hunt) process associated with $(F, R, \mu)$}.
Note that such a process is, in general, only specified uniquely for starting points outside a set of zero capacity.
However, in this setting,
every point has strictly positive capacity (see \cite[Theorem 9.9]{Kigami_12_Resistance}),
and so the process is defined uniquely everywhere.
From \cite[Theorem 10.4]{Kigami_12_Resistance},
$X$ admits a (unique) jointly continuous transition density with respect to $\mu$.


\subsection{Recurrent resistance metrics and auxiliary results} \label{sec: recurrent resistance metric and auxiliary results}

In this subsection,
we introduce recurrent resistance metrics, which we consider throughout this article.
We then present some auxiliary results that are used in the proofs of the main results.

\begin{dfn} [{Recurrent resistance metric}] \label{4. dfn: recurrent resistance metric}
  Let $(F, R)$ be a boundedly-compact resistance metric space.
  We say that $R$ is recurrent if and only if
  $\lim_{r \to \infty} R(\rho, B_{R}(\rho, r)^{c}) = \infty$ for some (or, equivalently, any) $\rho \in F$.
\end{dfn}

Henceforth, we write $\Rspace$ for the collection of $(F, R, \rho, \mu) \in \GHVspace$ 
such that $(F, R)$ is a recurrent resistance metric space and $\mu$ is of full support.
Fix $(F, R, \rho, \mu) \in \Rspace$.
We note that the resistance form $(\form, \rdomain)$ associated with $(F, R)$ is regular 
and the Dirichlet form $(\form, \domain)$ associated with $(F, R, \mu)$ is recurrent 
(see \cite[Corollary 3.22]{Noda_pre_Scaling} and \cite[Lemma 2.3]{Croydon_18_Scaling}).
Write $(X=(X_{t})_{t \geq 0}, \{P_{x}\}_{x \in F})$ for the process associated with $(F, R, \mu)$.

The first lemma regards traces of $X$ onto subsets.
For further details,
the reader is referred to \cite{Fukushima_Oshima_Takeda_11_Dirichlet,Noda_pre_Scaling}.
For a non-empty closed subset $B$ of $F$,
we define a PCAF $A$ of $X$ by setting $A(t) \coloneqq  \int_{0}^{t}1_{B}(X(s))\, ds$
and $\gamma$ to be the right-continuous inverse of $A$,
i.e., $\gamma(t) \coloneqq  \inf \{ s >0 : A(s)>t \}$.
Then, the \textit{trace} of $X$ onto $B$ is given by setting $\tr_{B} X \coloneqq X \circ \gamma$.

\begin{lem} \label{4. lem: trace result}
  Fix a non-empty open subset $U$ of $F$.
  Set $B \coloneqq \closure(U)$, i.e., the closure of $B$ in $F$.
  Let $(Y, \{Q_{x}\}_{x \in B})$ be the Hunt process associated with $(B, R|_{B \times B}, \mu|_{B})$.
  For any $x \in B$,
  it holds that $P_{x}(\tr_{B}X \in \cdot)=Q_{x}(Y \in \cdot)$
  as probability measures on $D(\RNp, B)$ 
  equipped with the usual $J_{1}$-Skorohod topology.
\end{lem}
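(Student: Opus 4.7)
The plan is to combine the theory of traces of resistance forms (due to Kigami) with the Dirichlet-form interpretation of time-changed processes (Fukushima-Oshima-Takeda). First, I would verify that $(B, R|_{B \times B}, \mu|_{B})$ itself lies in $\Rspace$: since $B$ is closed in the boundedly compact space $(F,R)$, the pair $(B, R|_{B \times B})$ is boundedly compact; since $U \subseteq B$ is non-empty open and $\mu$ has full support on $F$, the measure $\mu|_{B}$ is a fully supported Radon measure on $B$; and a standard monotonicity argument (resistance cannot decrease when the ambient form is restricted) combined with the recurrence of $R$ on $F$ yields recurrence of $R|_{B\times B}$. Hence $Y$ and $\{Q_x\}_{x\in B}$ are well defined, and, by \cite[Theorem 9.9]{Kigami_12_Resistance}, specified for every starting point.

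Second, I would invoke the trace theory for regular resistance forms: define $(\form|_{B}, \rdomain|_{B})$ by
\begin{equation}
\form|_{B}(f,f) \coloneqq \inf\{ \form(g,g) \mid g \in \rdomain,\ g|_{B} = f \},
\end{equation}
with domain those $f$ on $B$ for which this is finite. Then $(\form|_{B}, \rdomain|_{B})$ is a regular resistance form on $B$, and its associated resistance metric is $R|_{B\times B}$ (see \cite[Chapter 8]{Kigami_12_Resistance}). By the one-to-one correspondence of Theorem \ref{3. thm: one-to-one correspondence of forms and metrics}, $(\form|_{B}, \rdomain|_{B})$ is therefore the regular resistance form whose pairing with $\mu|_{B}$ produces $Y$.

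Third, I would apply the time-change theorem for regular Dirichlet forms (\cite[Theorem 6.2.1]{Fukushima_Oshima_Takeda_11_Dirichlet}) to the PCAF $A(t) = \int_{0}^{t}1_{B}(X_{s})\,ds$ of $X$. Its Revuz measure is $\mu|_{B}$, whose quasi-support equals $B$ (here using that $U \subseteq B$ is open and $\mu$ has full support, so every non-polar point of $B$ is charged). Consequently, $\tr_{B}X = X \circ \gamma$ is a Hunt process on $B$ whose associated regular Dirichlet form on $L^{2}(B, \mu|_{B})$ is the trace Dirichlet form of $(\form, \domain)$. The consistency of trace operations at the level of resistance forms and at the level of Dirichlet forms (one can identify both via restriction of harmonic extensions) shows that this trace Dirichlet form coincides with the Dirichlet form associated with $(B, R|_{B\times B}, \mu|_{B})$, namely the Dirichlet form of $Y$. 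Since, in the resistance-form setting, every point has strictly positive capacity and so the Hunt process is determined pointwise by its Dirichlet form, this yields $P_{x}(\tr_{B}X \in \cdot) = Q_{x}(Y \in \cdot)$ for every $x \in B$.

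The main obstacle is the clean identification, at the level of Dirichlet forms, of the trace of $(\form, \domain)$ with the Dirichlet form attached to $(B, R|_{B\times B}, \mu|_{B})$: the former is built by $L^{2}$-closure inside $L^{2}(F, \mu)$ and then restricted, while the latter is built by $L^{2}$-closure inside $L^{2}(B, \mu|_{B})$ of the trace resistance form. Matching the two requires controlling the interplay between the extended domain $\rdomain$, the $L^{2}$-domains, and the regularity statements on both $F$ and $B$; bounded compactness of $B$ together with regularity of $(\form, \rdomain)$ (and hence of the trace form by Kigami's results) is exactly what makes the reconciliation go through.
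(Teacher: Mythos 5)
Your approach is correct and is essentially the one the paper has in mind, except that you reconstruct details the paper outsources to an external citation. The published proof is three lines: $R|_{B\times B}$ is a regular resistance metric by \cite[Theorem 8.4]{Kigami_12_Resistance}, $\mu|_{B}$ has full support on $B$ (which you verify cleanly using that $B$ is the closure of the open set $U$), and then the conclusion follows from \cite[Theorem 3.30]{Noda_pre_Scaling}. The content of that cited theorem is precisely what you develop in your second and third paragraphs: identification of the trace resistance form with the one whose metric is $R|_{B\times B}$, followed by the Fukushima--Oshima--Takeda time-change theorem applied to the PCAF $A(t)=\int_0^t 1_B(X_s)\,ds$ with quasi-support $B$, and the reconciliation of the two constructions of the regular Dirichlet form on $L^2(B,\mu|_B)$. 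Since every point in a resistance metric space has positive capacity, the resulting identity of Hunt processes holds for every $x\in B$, not merely quasi-everywhere, exactly as you note.

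One small correction: the recurrence of $R|_{B\times B}$ that you establish in your first paragraph is not needed for this lemma. The Hunt process $Y$ associated with $(B, R|_{B\times B},\mu|_B)$ is well defined once the trace resistance form is regular --- which Kigami's Theorem 8.4 gives for closed $B$ --- and $\mu|_B$ is a fully supported Radon measure; recurrence of $R$ on $F$ is used elsewhere in the section (e.g., in the exit-time estimates and in applying the trace for the comparison results) but plays no role in the statement of this lemma. You may safely drop that verification here.
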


\begin{proof}
  The metric $R_{B \times B}$ is a regular resistance metric on $B$ by \cite[Theorem 8.4]{Kigami_12_Resistance}.
  Since $\mu$ is of full support on $F$,
  one can check that $\mu|_{B}$ is of full support on $B$.
  Hence, the result follows immediately from \cite[Theorem 3.30]{Noda_pre_Scaling}.
\end{proof}

By combining the trace technique described above with the following estimates of exit times of $X$ from balls, 
various analyses of processes on recurrent resistance metric spaces 
essentially reduce to analyses of processes on compact resistance metric spaces.
For a subset $A \subseteq F$,
we denote by $\tau_{A}$ the first exit time of $X$ from $A$, i.e.,
\begin{equation}
  \tau_{A}
  \coloneqq 
  \inf\{
    t > 0 \mid X(t) \notin A
  \}.
\end{equation}

\begin{lem} [{\cite[Lemma 4.2]{Croydon_18_Scaling}}] \label{4. lem: exit time estimate}
  For any $x \in F$, $\delta \in (0, R(x, B_{R}(x,r)^{c}))$ and $T \geq 0$,
  it holds that
  \begin{equation}
    P_{x}
    (\tau_{B_{R}(x,r)} \leq T)
    \leq
    \frac{4\delta}{R(x,B_{R}(x,r)^{c})}
    +
    \frac{4T}
    {\mu(B_{R}(x,\delta))(R(x,B_{R}(x,r)^{c})-\delta)}.
  \end{equation}
\end{lem}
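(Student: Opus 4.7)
The plan is to combine a Green's function estimate for the process $X$ killed on first hitting $A := B_{R}(x,r)^{c}$ with the occupation-density formula and the strong Markov property at time $T$.

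First I would set $B := B_{R}(x, r)$ and $\gamma := R(x, A)$, and denote by $g_{A}$ the $0$-order Green's function of $X$ killed on its first hit of $A$. The starting point is the standard representation from resistance form potential theory,
\begin{equation}
  g_{A}(y, z) = \frac{1}{2}\bigl(R(y, A) + R(z, A) - R^{(A)}(y, z)\bigr),
\end{equation}
where $R^{(A)}$ denotes the resistance in the network obtained by shorting $A$ to a single point. Combining this with $R^{(A)} \leq R$ and the triangle inequality $R(y, A) \geq \gamma - R(x, y)$ yields the pointwise lower bound $g_{A}(x, y) \geq \gamma - \delta$ for $y \in B_{R}(x, \delta)$, together with the uniform upper bound $g_{A}(z, y) \leq g_{A}(y, y) = R(y, A) \leq \gamma + \delta$ valid for every $z \in F$ and every such $y$.

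Next I would invoke the occupation-density formula against $\mu$ to rewrite
\begin{equation}
  E_{x}\!\left[\int_{0}^{\tau_{A}} 1_{B_{R}(x, \delta)}(X_{s})\, ds\right] = \int_{B_{R}(x, \delta)} g_{A}(x, y)\, \mu(dy) \geq (\gamma - \delta)\, \mu(B_{R}(x, \delta)).
\end{equation}
The left-hand side I would split into contributions from $[0, T \wedge \tau_{A}]$ and $(T \wedge \tau_{A}, \tau_{A}]$; the first is bounded above by $T$ trivially, while the second, by the strong Markov property applied at time $T$ and the uniform upper bound on $g_{A}$, is at most $P_{x}(\tau_{A} > T)\cdot(\gamma + \delta)\,\mu(B_{R}(x, \delta))$. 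Rearranging the resulting inequality gives an upper bound on $P_{x}(\tau_{B} \leq T) = P_{x}(\tau_{A} \leq T)$ of exactly the form claimed in the lemma.

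The main obstacle is justifying the representation of $g_{A}$ and the associated pointwise bounds cleanly in the general (possibly continuous) resistance metric setting. These potential-theoretic facts are standard in Kigami's framework but rely on the regularity of $(\form, \rdomain)$ and on the Dirichlet form construction of the killed process (for which Lemma \ref{4. lem: trace result} is the natural companion). Once they are in place, the probabilistic part of the argument is immediate; the precise constant $4$ in the lemma (as opposed to the $2$ produced by the computation above) merely reflects some slack in the bookkeeping between the two upper bounds, which is harmless for the later applications.
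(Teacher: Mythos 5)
The paper does not prove this lemma itself — it cites it directly from \cite[Lemma 4.2]{Croydon_18_Scaling} — so there is no in-text proof to compare against. Your argument is the standard one behind that result and, as far as I can tell, it is correct: you bound $E_x\bigl[\int_0^{\tau_{B}}1_{B_R(x,\delta)}(X_s)\,ds\bigr]$ from below by $(\gamma-\delta)\,\mu(B_R(x,\delta))$ via the Green's function, bound it from above by splitting the time integral at $T$ and applying the strong Markov property together with the diagonal bound $\sup_z g_A(z,y)\le g_A(y,y)=R(y,A)\le \gamma+\delta$, and then rearrange. Your computation in fact yields $\tfrac{2\delta}{\gamma+\delta}+\tfrac{T}{(\gamma+\delta)\mu(B_R(x,\delta))}$, which is sharper than the stated bound; the constant $4$ and the denominator $\gamma-\delta$ in the second term are just room to spare, as you say.

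The one place you are right to flag as "the main obstacle" is the Green's function input. The representation $g_A(y,z)=\tfrac12\bigl(R(y,A)+R(z,A)-R^{(A)}(y,z)\bigr)$ does hold (it reduces, after shorting $A$ to a single point $a$, to the single-point formula $g_a(y,z)=\tfrac12(R^{(A)}(y,a)+R^{(A)}(z,a)-R^{(A)}(y,z))$, and $R^{(A)}(y,a)=R(y,A)$), but in the resistance-form literature this is more often broken up into the three standard facts you actually use: $g_A(y,y)=R(y,A)$, the maximum principle $g_A(z,y)\le g_A(y,y)$, and a Lipschitz-type estimate $g_A(y,y)-g_A(x,y)\le R(x,y)$; all three appear in Kigami's framework (see \cite[Section 4]{Kigami_12_Resistance} and its use in \cite{Croydon_18_Scaling}). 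Also note that what makes the killed process and its Green density legitimately available here is that $\overline{B_R(x,r)}$ is compact (bounded closed subsets of a boundedly-compact space are compact), so the trace/killed construction of Section \ref{sec: recurrent resistance metric and auxiliary results} applies, which is the companion fact you correctly point to via Lemma \ref{4. lem: trace result}. With those references in place your argument is complete and matches the route taken in the cited source.
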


We now prove new results,
which provide lower bounds for a probability that $X$ is at a point at a fixed time.

\begin{lem} \label{4. lem: lower bound for point probability}
  Assume that $(F, R)$ is compact.
  Then,
  for any $x \in F$ and $t \geq 0$,
  \begin{equation}
    P_{x}(X(t) = x) 
    \geq 
    \frac{\mu(\{x\})}{\mu(F)}.
  \end{equation}
\end{lem}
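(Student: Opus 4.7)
The plan is to reduce the desired bound to a pointwise heat-kernel estimate and then derive that estimate from the symmetry of the transition density together with conservativeness of $X$. Write $p_{t}(x,y)$ for the jointly continuous transition density of $X$ with respect to $\mu$ whose existence was recalled at the end of Section~\ref{sec: resistance preliminary}. Since the law of $X(t)$ under $P_{x}$ has density $p_{t}(x, \cdot)$ with respect to $\mu$, we have the identity $P_{x}(X(t) = x) = p_{t}(x, x)\, \mu(\{x\})$ for every $t > 0$. It therefore suffices to prove that $p_{t}(x,x) \geq 1/\mu(F)$ for all $t > 0$ and $x \in F$; the case $t = 0$ is immediate from $P_{x}(X(0) = x) = 1 \geq \mu(\{x\})/\mu(F)$, and the case $\mu(\{x\}) = 0$ is trivial.

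For the heat-kernel bound I would combine three ingredients. First, the compactness of $(F, R)$ together with the finiteness of the Radon measure $\mu$ implies that $X$ is conservative, i.e., $\int_{F} p_{t}(x, y)\, \mu(dy) = P_{x}(X(t) \in F) = 1$ for every $t \geq 0$ and $x \in F$. Second, the symmetry of the underlying Dirichlet form on $L^{2}(F, \mu)$ gives $p_{t}(x,y) = p_{t}(y,x)$. Third, Chapman--Kolmogorov yields
\begin{equation}
  p_{2t}(x, x) = \int_{F} p_{t}(x, y)\, p_{t}(y, x)\, \mu(dy) = \int_{F} p_{t}(x, y)^{2}\, \mu(dy).
\end{equation}
Applying Cauchy--Schwarz to the pair $p_{t}(x, \cdot)$ and $1$ in $L^{2}(F, \mu)$ then gives
\begin{equation}
  1 = \left( \int_{F} p_{t}(x,y)\, \mu(dy) \right)^{2} \leq \mu(F) \int_{F} p_{t}(x,y)^{2}\, \mu(dy) = \mu(F)\, p_{2t}(x,x),
\end{equation}
so that $p_{2t}(x,x) \geq 1/\mu(F)$. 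Since $t > 0$ was arbitrary, the bound $p_{s}(x,x) \geq 1/\mu(F)$ holds for every $s > 0$, which is what we needed.

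The only step that calls for a moment's thought is the verification of conservativeness, but this should be essentially free here: because $F$ is compact every trajectory stays in $F$, and on the finite-measure space $(F, \mu)$ the constant function $1$ lies in the extended Dirichlet space with zero energy, so that $P_{t}1 = 1$. The rest is a one-line application of Cauchy--Schwarz, so I do not foresee any real obstacle.
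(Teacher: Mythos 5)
Your proof is correct, and it takes a genuinely different route from the paper. The paper handles the compact case by first proving the inequality for \emph{finite} resistance metric spaces (citing the Markov chain argument of Fontes--Isopi--Newman), then constructing a sequence of finite discretizations $(F_n, R_n, \mu_n)$ converging to $(F, R, \mu)$ in the pointed Gromov--Hausdorff--Prohorov topology, invoking the process-convergence result of Croydon (\cite[Theorem 1.2]{Croydon_18_Scaling}) to get $X_n(t) \xrightarrow{\mathrm{d}} X(t)$, and finally using upper semicontinuity of probabilities of closed sets to pass the finite-space bound to the limit. Your argument instead works directly in the continuum: you rewrite $P_x(X(t)=x) = p_t(x,x)\,\mu(\{x\})$ using the jointly continuous heat kernel, and then obtain the on-diagonal lower bound $p_s(x,x) \ge 1/\mu(F)$ from conservativeness (which holds since a compact resistance metric space is recurrent and hence the associated Dirichlet form is conservative, as the paper itself notes in the proof of Proposition~\ref{4. prop: positivity of diagonal heat kernel}), symmetry, Chapman--Kolmogorov, and Cauchy--Schwarz applied to $p_{s/2}(x,\cdot)$ against the constant~$1$ in $L^2(F,\mu)$. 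This is shorter and entirely self-contained: it avoids the approximation machinery and the external convergence theorem, and in effect transfers the finite-state-space argument (which is itself a Cauchy--Schwarz computation) straight to the continuum setting without a limiting step. The paper's route has the mild pedagogical virtue of exercising the Gromov--Hausdorff approximation framework that is central to the rest of the article, but your proof is the more economical one for the lemma in isolation. One small remark: the verification of conservativeness you sketch (constant function of zero energy on a finite-measure space) is exactly the recurrence criterion; it is cleaner to simply observe that compactness of $(F,R)$ gives recurrence of the resistance metric, whence conservativeness follows from \cite[Lemma 1.6.5]{Fukushima_Oshima_Takeda_11_Dirichlet} as the paper already invokes in Proposition~\ref{4. prop: positivity of diagonal heat kernel}.
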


\begin{proof}
  Fix $x \in F$ and $t \geq 0$.
  When $F$ is a finite set, 
  then $X$ is simply a Markov chain on an electrical network with vertex set $F$ (see \cite[Theorem 4.2]{Noda_pre_Scaling}).
  So, the result is proven by the same argument as the proof of \cite[Lemma 2.5]{Fontes_Isopi_Newman_02_Random}.
  For a general compact resistance metric space $(F, R)$,
  we first show that the following statement holds for each $n \in \NN$.
  \begin{enumerate} [label = (\Alph*)]
    \item \label{4. proof item: partition of space}
      There exists a finite collection of disjoint non-empty Borel subsets $\{F_{i, n}\}_{i=1}^{k_{n}}$ 
      such that 
      $F = \bigcup_{i=1}^{k_{n}} \neighb{F_{i, n}}{n^{-1}}$, 
      $F_{1, n} = B_{R}(x, n^{-1})$, $\mu(F_{i,n}) > 0$, $\mu(\bigcup_{i=1}^{k_{n}} F_{i,n}) = \mu(F)$, and $\diam F_{i, n} \leq 2/n$,
      where $\diam A$ denotes the diameter of a subset $A$.
      (Recall the closed neighborhood $\neighb{\cdot}{\varepsilon}$ from \eqref{2. eq: dfn of closed neighborhood}.)
  \end{enumerate}
  Let $\{x_{i}\}_{i=1}^{k}$ be a finite subset of $F$ such that 
  $F = \bigcup_{i=1}^{k} B_{R}(x_{i}, n^{-1})$ and $x_{1} = x$.
  We then set $F_{1} \coloneqq B_{R}(x_{1}, n^{-1})$ 
  and inductively, for $i \geq 2$, $F_{i} \coloneqq B_{R}(x_{i}, n^{-1}) \setminus \bigcup_{j=1}^{i-1} F_{i}$.
  Define $I_{0}$ to be the collection of $i$ such that $\mu(F_{i}) = 0$,
  and define $I_{+} \coloneqq \{1, \ldots, k\} \setminus I_{0}$.
  It remains to prove that $F = \bigcup_{i \in I_{+}} \neighb{F_{i}}{n^{-1}}$.
  Suppose that there exists an $x \in F \setminus \bigcup_{i \in I_{+}} \neighb{F_{i}}{n^{-1}}$.
  This implies that $B_{R}(x, n^{-1}) \cap \bigcup_{i \in I_{+}} F_{i} = \emptyset$.
  Hence, it is the case that $B_{R}(x, n^{-1}) \subseteq \bigcup_{i \in I_{0}} F_{i}$.
  Since $\mu$ is of full support,
  it follows that $\mu(\bigcup_{i \in I_{0}} F_{i}) > 0$, which is a contradiction.
  Thus, we obtain that $F = \bigcup_{i \in I_{+}} \neighb{F_{i}}{n^{-1}}$.

  For each $n \in \NN$,
  we let $\{F_{i, n}\}_{i=1}^{k_{n}}$ be a finite collection of disjoint Borel subsets satisfying \ref{4. proof item: partition of space}.
  Choose an element $x_{i,n} \in F_{i,n}$ for each $i$
  with $x_{1, n} = x$.
  We then write $F_{n} \coloneqq \{x_{i, n}\}_{i=1}^{k_{n}}$, $R_{n} \coloneqq R|_{F_{n} \times F_{n}}$, and $x_{n} \coloneqq x$.
  We define a fully-supported Radon measure $\mu_{n}$ on $F_{n}$ by setting $\mu_{n}(\{x_{i,n}\}) \coloneqq \mu(F_{i, n})$.
  Noting that $(F_{n}, R_{n})$ is a recurrent resistance metric space,
  we let $(X_{n}, \{P_{y}^{(n)}\}_{y \in F_{n}})$ be the process associated with $(F_{n}, R_{n}, \mu_{n})$.
  It is not difficult to check that 
  \begin{equation}
    (F_{n}, R_{n}, x_{n}, \mu_{n}) \to (F, R, x, \mu)
  \end{equation}
  in $\GHPspace$ with respect to the (pointed) Gromov-Hausdorff-Prohorov topology
  (recall this topology from Section \ref{sec: functors used in this paper}).
  Thus, by \cite[Theorem 1.2]{Croydon_18_Scaling} and Theorem \ref{3. thm: convergence in GH topology},
  it is possible to embed $(F_{n}, R_{n})$ and $(F, R)$ isometrically into a common rooted compact metric space $(K, d^{K}, x_{K})$
  in such a way that $x_{n} = x = x_{K}$ as elements of $K$,
  $F_{n} \to F$ in the Hausdorff topology as subsets of $K$,
  $\mu_{n} \to \mu$ weakly as measures on $K$,
  and $P_{x_{n}}^{(n)}(X_{n} \in \cdot) \xrightarrow{\mathrm{d}} P_{x}(X \in \cdot)$ as probability measures on $D(\RNp, K)$.
  By the quasi-left-continuity of $X$,
  $X$ is continuous at $t$ almost surely.
  Hence,
  we have that $X_{n}(t) \xrightarrow{\mathrm{d}} X(t)$.
  Noting that $\{x\} = \{x_{n}\} = \{x_{K}\} \subseteq K$ is closed,
  we deduce that 
  \begin{equation} \label{4. eq: point probability, approximation}
    P_{x}(X(t) = x) \geq \limsup_{n \to \infty} P_{x_{n}}^{(n)}(X_{n}(t) = x_{n}).
  \end{equation}
  As noted at the beginning,
  the desired result holds for any finite resistance metric space.
  Thus, it holds that
  \begin{equation}
    P_{x_{n}}^{(n)}(X_{n}(t) = x_{n}) 
    \geq 
    \mu_{n}(\{x_{n}\})/ \mu_{n}(F_{n}) 
    = 
    \mu(B_{R}(x, n^{-1}))/ \mu(F)
  \end{equation}
  Combining this with \eqref{4. eq: point probability, approximation},
  we obtain the desired result.
\end{proof}

Using the trace technique,
the lower bound given above is improved as follows.

\begin{prop} \label{4. prop: improved lower bound for point probability}
  For any $x \in F$, $t \geq 0$, and $\varepsilon > 0$,
  \begin{equation}
    P_{x}(X(t) = x) 
    \geq 
    \frac{\mu(\{x\})}{\mu(D_{R}(x, \varepsilon))}
    - 
    P_{x}(\tau_{B_{R}(x, \varepsilon)} \leq t).
  \end{equation}
\end{prop}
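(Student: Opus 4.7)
The plan is to reduce to the compact case (Lemma \ref{4. lem: lower bound for point probability}) by tracing $X$ onto the closed ball $B \coloneqq \closure(B_{R}(x, \varepsilon))$, and then transferring the resulting bound back to $X$ via the identification of $\tr_{B} X$ with $X$ on the event that $X$ has not left $B_{R}(x, \varepsilon)$. Note that $B \subseteq D_{R}(x, \varepsilon)$, so $\mu(\{x\})/\mu(B) \geq \mu(\{x\})/\mu(D_{R}(x, \varepsilon))$, and it is enough to prove the claim with $\mu(B)$ in place of $\mu(D_{R}(x, \varepsilon))$.

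Let $(Y, \{Q_{y}\}_{y \in B})$ denote the Hunt process associated with the compact resistance metric space $(B, R|_{B \times B}, \mu|_{B})$. Taking $U \coloneqq B_{R}(x, \varepsilon)$ in Lemma \ref{4. lem: trace result} gives $P_{x}(\tr_{B} X \in \cdot) = Q_{x}(Y \in \cdot)$ as laws on $D(\RNp, B)$, so in particular $P_{x}(\tr_{B} X(t) = x) = Q_{x}(Y(t) = x)$. Applying Lemma \ref{4. lem: lower bound for point probability} to $(B, R|_{B \times B}, \mu|_{B})$ (and using $\mu|_{B}(\{x\}) = \mu(\{x\})$ and $\mu|_{B}(B) = \mu(B)$) yields
\[
  Q_{x}(Y(t) = x) \geq \frac{\mu(\{x\})}{\mu(B)}.
\]

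The one link between $X$ and its trace is the elementary observation that on the event $\{\tau_{B_{R}(x, \varepsilon)} > t\}$ the process $X$ stays in $B_{R}(x, \varepsilon) \subseteq B$ throughout $[0, t]$. Consequently the PCAF $A(s) = \int_{0}^{s} 1_{B}(X(r))\, dr$ agrees with $s$ on $[0, t]$, its right-continuous inverse $\gamma$ agrees with the identity on $[0, t]$, and hence $\tr_{B} X(s) = X(\gamma(s)) = X(s)$ for every $s \in [0, t]$. In particular, $\tr_{B} X(t) = X(t)$ on this event. Combining these ingredients gives
\begin{align*}
  P_{x}(X(t) = x)
  &\geq P_{x}(X(t) = x,\, \tau_{B_{R}(x, \varepsilon)} > t)\\
  &= P_{x}(\tr_{B} X(t) = x,\, \tau_{B_{R}(x, \varepsilon)} > t)\\
  &\geq P_{x}(\tr_{B} X(t) = x) - P_{x}(\tau_{B_{R}(x, \varepsilon)} \leq t)\\
  &= Q_{x}(Y(t) = x) - P_{x}(\tau_{B_{R}(x, \varepsilon)} \leq t)\\
  &\geq \frac{\mu(\{x\})}{\mu(D_{R}(x, \varepsilon))} - P_{x}(\tau_{B_{R}(x, \varepsilon)} \leq t),
\end{align*}
as required. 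There is no real obstacle in this argument beyond careful bookkeeping: the only subtle point is to distinguish between the open ball $B_{R}(x, \varepsilon)$ (which governs the exit-time correction term and which is also what is needed to ensure $X = \tr_{B} X$ up to time $t$) and the closed set $B = \closure(B_{R}(x, \varepsilon))$ (onto which we trace in order to invoke the compact-space estimate).
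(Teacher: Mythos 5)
Your proof is correct and is essentially the paper's argument: both reduce to the compact case via Lemma \ref{4. lem: lower bound for point probability} by tracing onto $B = \closure(B_R(x,\varepsilon))$ and identifying $\tr_B X(t)$ with $X(t)$ on the event that the process has not exited by time $t$. The only (cosmetic) difference is that the paper runs the chain of inequalities with $\tau_B$ and converts both $\mu(B)\leq\mu(D_R(x,\varepsilon))$ and $\tau_{B_R(x,\varepsilon)}\leq\tau_B$ at the very end, while you carry $\tau_{B_R(x,\varepsilon)}$ throughout; your spelled-out verification that the PCAF $A$ equals the identity on $[0,t]$ on $\{\tau_{B_R(x,\varepsilon)} > t\}$ makes the trace identification a bit more explicit than the paper's one-line claim, but the content is the same.
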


\begin{proof}
  Fix $x \in F$ and $\varepsilon > 0$, and write $B \coloneqq \closure(B_{R}(x, \varepsilon))$.
  Let $(Y, \{Q_{x}\}_{x \in B})$ be the Hunt process associated with $(B, R|_{B \times B}, \mu|_{B})$.
  If $\tau_{B} > t$,
  then we have that $\tr_{B} X(t) = X(t)$.
  Thus, by Lemma \ref{4. lem: trace result}, we deduce that 
  \begin{align}
    P_{x}(X(t) = x) 
    &\geq 
    P_{x}(X(t) = x,\, \tau_{B} > t) \\
    &=
    P_{x}(\tr_{B}X (t) = x,\, \tau_{B} > t)\\
    &\geq
    P_{x}(\tr_{B}X (t) = x) 
    - 
    P_{x}( \tau_{B} \leq t) \\
    &=
    Q_{x}( Y(t) = x) 
    - 
    P_{x}( \tau_{B} \leq t) \\
    &\geq
    \frac{\mu(\{x\})}{\mu(B)} 
    - 
    P_{x}( \tau_{B} \leq t),
  \end{align}
  where we apply Lemma \ref{4. lem: lower bound for point probability} to obtain the last inequality.
  Since $B_{R}(x, \varepsilon) \subseteq B \subseteq D_{R}(x, \varepsilon)$,
  we obtain the desired result.
\end{proof}

The following result is used later to show the non-triviality of the (sub-)aging function.

\begin{prop} \label{4. prop: positivity of diagonal heat kernel}
  For any $x \in F$ and $t > 0$, $p(t,x,x) > 0$.
\end{prop}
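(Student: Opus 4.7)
The plan is to deduce positivity on the diagonal from the Chapman--Kolmogorov identity together with the symmetry of the transition density $p$ and conservativeness of $X$. A direct application of Proposition \ref{4. prop: improved lower bound for point probability} would give only the trivial bound when $\mu(\{x\}) = 0$, which is the case in most applications of interest (e.g.\ the Sierpi\'{n}ski gasket), so the right route is to pass through an $L^{2}$ estimate on the full density rather than a pointwise estimate at $\{x\}$.

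Concretely, since $(\form, \domain)$ is a symmetric regular Dirichlet form on $L^{2}(F, \mu)$ and $p$ is jointly continuous, the density is symmetric, so $p(t, x, y) = p(t, y, x)$ for all $t > 0$ and all $x, y \in F$. By the Chapman--Kolmogorov identity,
\[
  p(2t, x, x)
  = \int_{F} p(t, x, y)\, p(t, y, x)\, \mu(dy)
  = \int_{F} p(t, x, y)^{2}\, \mu(dy).
\]
Because $R$ is recurrent, the Dirichlet form $(\form, \domain)$ on $L^{2}(F, \mu)$ is recurrent (as noted immediately after Definition \ref{4. dfn: recurrent resistance metric}), and every recurrent Dirichlet form on a locally compact separable metric space is conservative. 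Hence
\[
  \int_{F} p(t, x, y)\, \mu(dy) = P_{x}(X(t) \in F) = 1,
\]
so $p(t, x, \cdot)$ is not $\mu$-a.e.\ zero; the set $\{y \in F : p(t, x, y) > 0\}$ therefore has positive $\mu$-measure, and consequently $p(2t, x, x) = \int_{F} p(t, x, y)^{2}\, \mu(dy) > 0$. Since $t > 0$ was arbitrary and every positive time can be written as $2t$ for some $t > 0$, we conclude $p(s, x, x) > 0$ for every $s > 0$ and every $x \in F$.

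The argument rests on three standard ingredients: joint continuity of $p$ (recalled at the end of Section \ref{sec: resistance preliminary}), symmetry of $p$ under a symmetric Dirichlet form, and conservativeness of recurrent Dirichlet forms. None of these present any real obstacle, so the proof itself should be very short; the only thing to check carefully is the invocation of conservativeness on a possibly non-compact space, which is covered by standard Dirichlet form theory for recurrent forms.
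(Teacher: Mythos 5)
Your proof is correct, and it takes a genuinely different route from the paper's. The paper splits into two cases according to whether $p(t,x,\cdot)$ is constant: in the constant case it invokes conservativeness to deduce that the constant is positive, and in the non-constant case it uses the fact that $p(t,x,\cdot)\in\domain$ together with the semigroup energy bound (\cite[Lemma 1.3.3]{Fukushima_Oshima_Takeda_11_Dirichlet}) and Chapman--Kolmogorov to get $t\,\form(p(t,x,\cdot),p(t,x,\cdot))\leq p(t,x,x)$, which is strictly positive by non-constancy. Your argument dispenses with the case split entirely: writing $p(2t,x,x)=\int_F p(t,x,y)^2\,\mu(dy)$ by symmetry and Chapman--Kolmogorov, and noting $\int_F p(t,x,y)\,\mu(dy)=1$ by conservativeness, you get $p(2t,x,x)>0$ directly since a nonnegative function with unit $L^1$ mass cannot vanish $\mu$-a.e. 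This avoids needing $p(t,x,\cdot)\in\domain$ and the energy estimate altogether, and is both shorter and conceptually lighter; both proofs rely on the same two key facts (conservativeness of the recurrent form and Chapman--Kolmogorov), but you extract positivity from the $L^1$--$L^2$ relation rather than from the Dirichlet energy.
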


\begin{proof}
  Fix $t > 0$ and $x \in F$.
  Write $(\form, \domain)$ for the Dirichlet form associated with $(F, R, \mu)$.
  Suppose that $p(t, x, \cdot)$ is constant.
  Since $(\form, \domain)$ is recurrent,
  $X$ is conservative (see \cite[Lemma 1.6.5 and Excercise 4.5.1]{Fukushima_Oshima_Takeda_11_Dirichlet}).
  It follows that $p(t, x, y) > 0$ for some $y \in F$.
  Hence, we obtain that $p(t, x, x) > 0$.
  Suppose that $p(t, x, \cdot)$ is not constant.
  Since $p(t, x, \cdot) \in \domain$ (see \cite[Theorem 10.4]{Kigami_12_Resistance}),
  by \cite[Lemma 1.3.3]{Fukushima_Oshima_Takeda_11_Dirichlet} and the Chapman-Kolmogorov equation,
  we deduce that 
  \begin{equation}
    t\, \form(p(t, x, \cdot), p(t, x, \cdot))
    \leq 
    \int_{F} p \bigl( t/2, x, y \bigr)^{2}\, \mu(dy)
    =
    p(t, x, x).
  \end{equation}
  Since $p(t, x, \cdot)$ is not constant,
  it holds from \ref{4. dfn cond: the domain of resistance form} that $\form(p(t, x, \cdot), p(t, x, \cdot)) > 0$.
  Hence, we complete the proof.
\end{proof}


\subsection{Transition densities of processes on resistance metric spaces} \label{sec: transition densities}

In this subsection, 
we prove that when measured resistance metric spaces converge in the local Gromov-Hausdorff-vague topology, 
the family of the transition densities of the associated processes is precompact (Proposition \ref{4. prop: precompactness of transition densities})
in the sense that it is uniformly bounded and equicontinuous on every compact subset.

Fix $(F_{n}, R_{n}, \rho_{n}, \mu_{n}) \in \Rspace$ for each $n \in \NN$ and $(F, R, \rho, \mu) \in \Rspace$.
We define $(p_{n}(t, x, y))_{t > 0, x, y \in F_{n}}$ to be the jointly continuous transition density of 
the process associated with $(F_{n}, R_{n}, \mu_{n})$.
Similarly, we define $(p(t, x, y))_{t > 0, x, y \in F}$.
To state the result, we introduce new notation.
Given a rooted boundedly-compact metric space $(M, d^{M}, \rho_{M})$,
we set, for $T > 0$ and $r>0$,
$K_{d^{M}}(T, r) \coloneqq [T, \infty) \times D_{M}(\rho_{S}, r) \times D_{M}(\rho_{S}, r)$.
For a function $f: \RNpp \times  M \times M \to \RN$,
we define 
\begin{equation} \label{4. eq: dfn of notation S}
  \mathfrak{s}_{M}(f, T, r) \coloneqq \sup_{(t, x, y) \in K_{d^{M}}(T, r)} f(t, x, y),
\end{equation} 
and, for each $\delta > 0$,
\begin{equation} \label{4. eq: dfn of notation D}
  \mathfrak{d}_{M}(f, T, r, \delta) 
  \coloneqq 
  \sup
  \left\{
    |f(t, x, y) - f(t', x', y')|\,
    \middle|\,
    \begin{array}{l}
      (t,x,y), (t', x', y') \in K_{d^{M}}(T, r),\\
      |t-t'| \vee d^{M}(x,x') \vee d^{M}(y, y') \leq \delta
    \end{array}
  \right\}.
\end{equation}

\begin{prop} \label{4. prop: precompactness of transition densities}
  Assume that $(F_{n}, R_{n}, \rho_{n}, \mu_{n})$ converges to $(F, R, \rho, \mu)$ in the local Gromov-Hausdorff-vague topology.
  Then, for any $T > 0$ and $r>0$,
  \begin{equation}
    \sup_{n \geq 1} \mathfrak{s}_{F_{n}}(p_{n}, T, r) < \infty,
    \quad 
    \lim_{\delta \downarrow 0} \sup_{n \geq 1} \mathfrak{d}_{F_{n}}(p_{n}, T, r, \delta) = 0.
  \end{equation}
\end{prop}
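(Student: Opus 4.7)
The plan is to apply resistance-form inequalities directly to $p_n$ after using the GHV convergence to embed all the spaces into a common ambient. By the characterization of GHV convergence (Theorem \ref{3. thm: convergence in GH topology} applied to the functor $\MeasFunct$), we may regard each $(F_n, R_n, \rho_n)$ and $(F, R, \rho)$ as root-preservingly isometrically embedded into a common rooted boundedly-compact space $(M, d^M, \rho_M)$ so that $F_n \to F$ in the local Hausdorff topology on $M$ and $\mu_n \to \mu$ vaguely on $M$; under these embeddings $R_n = d^M|_{F_n \times F_n}$, and compact sets referenced across $n$ compare uniformly through $d^M$.

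The central analytic input is the pair of inequalities
\begin{equation}
|p_n(t,x,y) - p_n(t,x,y')|^{2} \leq R_n(y,y')\, \form_n\bigl(p_n(t,x,\cdot), p_n(t,x,\cdot)\bigr) \leq R_n(y,y')\, t^{-1} p_n(t,x,x),
\end{equation}
the first being the defining resistance-metric property \ref{4. dfn cond: resistance forms condition 4} and the second the standard semigroup bound $\form(P_s g, P_s g) \leq \|g\|_{L^2}^2/(2s)$ applied with $g = p_n(t/2, x, \cdot)$ and $s = t/2$. Since $\int p_n(t, x, z)\,\mu_n(dz) \leq 1$, any ball $D_{R_n}(x, 1)$ of positive $\mu_n$-mass contains a point $y_0$ with $p_n(t, x, y_0) \leq \mu_n(D_{R_n}(x, 1))^{-1}$; inserting this into the oscillation estimate at $(y, y') = (x, y_0)$ and solving the resulting quadratic inequality in $p_n(t, x, x)^{1/2}$ yields the diagonal bound $p_n(t, x, x) \leq C\bigl(\mu_n(D_{R_n}(x, 1))^{-1} + t^{-1}\bigr)$.

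The proposition is thus reduced to the uniform volume lower bound $\inf_n \inf_{x \in D_{R_n}(\rho_n, r)} \mu_n(D_{R_n}(x, 1)) > 0$. Full support of $\mu$ and compactness of $D_R(\rho, r+1)$ give $c \coloneqq \inf_{y \in D_R(\rho, r+1)} \mu(B_R(y, 1/3)) > 0$; taking a finite $1/3$-net $\{z_j\} \subseteq F$ of $D_R(\rho, r+1)$ together with approximants $z_j^{(n)} \in F_n$ supplied by local Hausdorff convergence, the Portmanteau theorem applied to the open balls $B_M(z_j, 1/3)$ yields $\liminf_n \mu_n(B_{R_n}(z_j^{(n)}, 1/3)) \geq c$ uniformly over the finite net, and for any $x \in D_{R_n}(\rho_n, r)$ some such $z_j^{(n)}$ lies within $R_n$-distance $1/2$ of $x$, giving $D_{R_n}(x, 1) \supseteq B_{R_n}(z_j^{(n)}, 1/3)$. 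With the uniform diagonal bound, Cauchy--Schwarz $p_n(t, x, y) \leq \sqrt{p_n(t, x, x) p_n(t, y, y)}$ from Chapman--Kolmogorov delivers $\sup_n \mathfrak{s}_{F_n}(p_n, T, r) < \infty$; spatial equicontinuity follows from the oscillation inequality together with this diagonal bound; and temporal equicontinuity is obtained by writing $p_n(t+s, x, y) = \mathbb{E}_x[p_n(t, X_n(s), y)]$ via the Markov property, estimating the integrand using spatial equicontinuity, and bounding $\mathbb{E}_x[(R_n(x, X_n(s)) \wedge 1)^{1/2}]$ via the exit-time estimate (Lemma \ref{4. lem: exit time estimate}) combined with Lemma \ref{4. lem: lower bound on resistance between root and outside of ball} and the volume bound just secured. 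The main obstacle will be the uniform volume lower bound itself, whose proof relies crucially on the simultaneous Hausdorff and vague convergence encoded in the GHV topology; once it is in place, the resistance-form identities do the rest.
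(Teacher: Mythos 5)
The diagonal bound, the uniform volume lower bound, the off-diagonal Cauchy--Schwarz bound, and the spatial equicontinuity in your proposal all reproduce the paper's argument (the paper cites Kigami's Equation (10.4) and Athreya--L\"ohr--Winter for the volume bound where you rederive them, but the mathematics is the same). Your sketch of the volume lower bound via a finite net and Portmanteau is correct modulo the usual radius slippage and matches what the paper gets from \cite[Corollary~5.7]{Athreya_Lohr_Winter_16_The_gap}.

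Where you diverge from the paper, and where there is a real gap, is in the temporal equicontinuity. The paper does not use the Markov property at all: it cites the estimate from page~44 of Kigami,
\begin{equation}
|p_{n}(t,x,y)-p_{n}(t',x',y')|
\leq
\sqrt{\tfrac{p_{n}(t,x,x)R_{n}(y,y')}{t}}
+\sqrt{\tfrac{p_{n}(t,y',y')R_{n}(x,x')}{t}}
+2|t-t'|\tfrac{\sqrt{p_{n}(s/2,x',x')\,p_{n}(s/2,y',y')}}{s},
\end{equation}
whose third term comes from a spectral (semigroup-derivative) estimate of the form $|\partial_{t}p_{n}(t,x,y)|\lesssim t^{-1}\sqrt{p_{n}(t/2,x,x)\,p_{n}(t/2,y,y)}$. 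This is a pointwise inequality that only involves $p_{n}$ at the four ball-interior points $x,x',y,y'$, so the already established diagonal bound closes it immediately.

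Your route instead writes $p_{n}(t+s,x,y)=\mathbb{E}_{x}[p_{n}(t,X_{n}(s),y)]$ and feeds the spatial oscillation estimate into the expectation. The oscillation estimate from the resistance property is $|p_{n}(t,z,y)-p_{n}(t,x,y)|\leq\sqrt{R_{n}(z,x)\,T^{-1}p_{n}(t,y,y)}$, which grows like $\sqrt{R_{n}(z,x)}$ without cap. The quantity you propose to bound, $\mathbb{E}_{x}[(R_{n}(x,X_{n}(s))\wedge 1)^{1/2}]$, therefore does not dominate the integrand: the contribution from $\{R_{n}(X_{n}(s),x)>1\}$ is simply not addressed. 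On that event neither the oscillation estimate (which blows up like $\sqrt{R_{n}}$) nor the naive bound $|p_{n}(t,z,y)-p_{n}(t,x,y)|\leq p_{n}(t,z,y)+p_{n}(t,x,y)$ is uniformly controlled, because $z\mapsto p_{n}(t,z,y)$ (and a fortiori $z\mapsto p_{n}(t,z,z)$) can grow linearly in $R_{n}(z,x)$ and the process can range over the whole non-compact space. Closing this would require a uniform-in-$n$ escape-moment estimate such as $\sup_{n}\sup_{x\in D_{R_{n}}(\rho_{n},r)}\mathbb{E}_{x}[\sqrt{R_{n}(X_{n}(s),x)}]\to 0$ as $s\to 0$, but the proposition assumes only local Gromov--Hausdorff-vague convergence --- no non-explosion or uniform volume doubling --- and such a global escape estimate does not follow from the GHV hypothesis alone (the exit-time Lemma~\ref{4. lem: exit time estimate} combined with Lemma~\ref{4. lem: lower bound on resistance between root and outside of ball} controls exits from \emph{fixed} compact balls, not the integrated tail over all scales). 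Replacing your temporal step with the paper's spectral estimate, applied directly at the points in $K_{R_{n}}(T,r)$, avoids the Markov-property integration entirely and removes the need for any escape bound.
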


\begin{proof}
  Fix $T > 0$ and $r>0$.
  Note that the local Gromov-Hausdorff-vague convergence 
  and $\mu$ being of full support imply that 
  \begin{equation}
    c_{1} 
    \coloneqq
    \inf_{n \geq 1} 
    \inf_{x \in D_{R_{n}}(\rho_{n}, r)} 
    \mu_{n}(D_{R_{n}}(x, r)) 
    > 0
  \end{equation}
  (cf.\ \cite[Corollary 5.7]{Athreya_Lohr_Winter_16_The_gap}).
  By \cite[Equation (10.4)]{Kigami_12_Resistance},
  it holds that, for any $t \geq T$ and $x \in D_{R_{n}}(\rho_{n}, r)$,
  \begin{equation}
    p_{n}(t, x, x) 
    \leq 
    2r t^{-1} + \sqrt{2} \mu_{n}(D_{R_{n}}(\rho_{n}, r))^{-1} 
    \leq 
    2rT^{-1} + \sqrt{2} c_{1}^{-1}.
  \end{equation}
  Using the Chapman-Kolmogorov equation and Cauchy-Schwarz inequality,
  we deduce that, for any $(t, x, y) \in K_{R_{n}}(T, r)$,
  \begin{align}
    p_{n}(t, x, y) 
    &=
    \int p_{n}(t/2, x, z) p_{n}(t/2, z, y)\, \mu_{n}(dz) \\
    &\leq
    \left(
      \int p_{n}(t/2, x, z)^{2}\, \mu(dz)
    \right)^{1/2} 
    \left(
      \int p_{n}(t/2, z, y)^{2}\, \mu(dz)
    \right)^{1/2} \\
    &=
    p_{n}(t, x, x)^{1/2} p_{n}(t, y, y)^{1/2} \\ 
    &\leq
    2rT^{-1} + \sqrt{2}c_{1}^{-1},
  \end{align}
  which shows the first result.
  By following the proof of \cite[Theorem 10.4]{Kigami_12_Resistance}
  (specifically, the top of page 44 of \cite{Kigami_12_Resistance}),
  we obtain that 
  \begin{align}
    |p_{n}(t,x,y) - p_{n}(t', x', y')| 
    &\leq
    \sqrt{\frac{p_{n}(t,x,x)R_{n}(y,y')}{t}}
    + 
    \sqrt{\frac{p_{n}(t,y',y')R_{n}(x,x')}{t}} \\
    &\quad 
    + 
    2|t-t'| 
    \frac{\sqrt{p_{n}(s/2,x',x') p_{n}(s/2, y',y')}}{s},
  \end{align}
  where $s$ is a value between $t$ and $t'$.
  Thus, for $(t, x, y), (t', x', y') \in K_{R_{n}}(T, r)$ with $|t-t'| \vee R_{n}(x,x') \vee R_{n}(y, y') \leq \delta$,
  we deduce that 
  \begin{align}
    |p_{n}(t,x,y) - p_{n}(t', x', y')|  
    \leq
    2 \sqrt{T^{-1} \mathfrak{s}_{F_{n}}(p_{n}, T, r) \delta}
    + 
    2 \delta T^{-1} \mathfrak{s}_{F_{n}}(p_{n}, T/2, r).
  \end{align}
  This, combined with the first result, yields the second result.
\end{proof}


\section{Aging and sub-aging for deterministic traps} \label{sec: (sub-)aging for deterministic traps}

In this section,
we prove aging and sub-aging results 
for processes on resistance metric spaces associated with deterministic traps.
Throughout this section,
we fix a sequence $(F_{n}, R_{n}, \rho_{n})_{n \geq 1}$ of rooted recurrent resistance metric spaces 
and a rooted recurrent resistance metric space $(F, R, \rho)$.


\subsection{Aging result}

We first prove an aging result, Theorem \ref{5. thm: aging}.
Due to the length of the proof, we divide this subsection into two smaller sections.

We let $\nu_{n} = \sum_{i \in I_{n}} v_{i}^{(n)} \delta_{x_{i}^{(n)}}$ be a fully-supported discrete measure on $F_{n}$
and $\nu = \sum_{i \in I} v_{i} \delta_{x_{i}}$ be a fully-supported discrete measure on $F$.
We write $(X_{n}^{\nu_{n}}, \{P_{x}^{\nu_{n}}\}_{x \in F_{n}})$ for the Hunt process associated with $(F_{n}, R_{n}, \nu_{n})$
and $p_{n}^{\nu_{n}}$ for the jointly continuous transition density of $X_{n}^{\nu_{n}}$ with respect to $\nu_{n}$.
Similarly,
we write $(X^{\nu}, \{P_{x}^{\nu}\}_{x \in F})$ for the Hunt process associated with $(F, R, \nu)$
and $p^{\nu}$ for the jointly continuous transition density of $X^{\nu}$ with respect to $\nu$.
For a subset $A \subseteq F$,
we denote by $\tau_{A}^{\nu}$ the first exit time of $X^{\nu}$ from $A$, i.e.,
\begin{equation}
  \tau_{A}^{\nu} 
  \coloneqq 
  \inf\{
    t > 0 \mid X^{\nu}(t) \notin A
  \}.
\end{equation}
We similarly define $\tau_{A}^{\nu_{n}}$ for the first exit time of $X_{n}^{\nu_{n}}$ from $A \subseteq F_{n}$.
We define an aging function $\Phi^{\nu}$ associated with $X^{\nu}$ and $\rho$ by setting 
\begin{equation} \label{5. eq: aging function}
  \Phi^{\nu}(s,t) \coloneqq P_{\rho}^{\nu}(X^{\nu}(s) = X^{\nu}(t)).
\end{equation}
Similarly, we define $\Phi_{n}^{\nu_{n}}$ to be the aging function associated with $X_{n}^{\nu_{n}}$ and $\rho_{n}$.

Throughout this subsection,
we suppose that the following condition is satisfied.
\begin{assum} \label{5. assum: aging result}
  It holds that 
  \begin{equation}
    \bigl( F_{n}, R_{n}, \rho_{n}, \nu_{n}, 
      P_{\rho_{n}}^{\nu_{n}}( X_{n}^{\nu_{n}} \in \cdot) \bigr) 
    \to 
    \bigl( F, R, \rho, \nu, P_{\rho}^{\nu}( X^{\nu} \in \cdot) \bigr).
  \end{equation}
  in the space $\rbcM(\disMeasFunct \times \probFunct(\SkorohodFunct))$
\end{assum}
Under Assumption \ref{5. assum: aging result},
by Theorem \ref{3. thm: convergence in GH topology},
we may assume that $(F_{n}, R_{n}, \rho_{n})$ and $(F, R, \rho)$ are embedded isometrically
into a common rooted boundedly-compact metric space $(M, d^{M}, \rho_{M})$ in such a way that 
$\rho_{n} = \rho = \rho_{M}$ as elements of $M$,
$F_{n} \to F$ in the local Hausdorff topology as closed subsets in $M$,
and $\nu_{n} \to \nu$ in the vaguely-and-point-process topology as discrete measures on $M$,
and $X_{n}^{\nu_{n}} \xrightarrow{\mathrm{d}} X^{\nu}$ in $D(\RNp, M)$.
In Sections \ref{sec: precompactness of the aging funcitons} and \ref{sec: convergence of the aging functions} below,
we assume this embedding.


\subsubsection{Precompactness of the aging functions} \label{sec: precompactness of the aging funcitons}

Here, we prove that the family $(\Phi_{n}^{\nu_{n}})_{n \geq 1}$ of the aging functions is precompact 
(Proposition \ref{5. prop: precompactness of aging-functions}).

\begin{lem} \label{5. lem: technical lemmas for pcpt of aging functions}
  The following statements hold.
  \begin{enumerate} [label = (\roman*)]
    \item \label{5. lem item: non-explosion}
      For each $l > 0$, 
      \begin{equation} \label{5. eq: exit time estimate for aging function}
        \lim_{r \to \infty} 
        \sup_{n \geq 1} 
        P_{\rho_{n}}^{\nu_{n}} \Bigl( \tau_{B_{R_{n}}(\rho_{n}, r)}^{\nu_{n}} \leq l \Bigr) 
        = 0.
      \end{equation}
    \item \label{5. lem item: does not exit ball quickly}
      For any $x_{n} \in F_{n}$ and $x \in F$ such that $x_{n} \to x$ in $M$,
      and any $\delta > 0$,
      \begin{equation}
        \lim_{\eta \to 0} 
        \limsup_{n \to \infty} 
        P_{x_{n}}^{\nu_{n}}(\tau_{B_{R_{n}}(x_{n}, \delta)}^{\nu_{n}} \leq \eta)  
        = 0.
      \end{equation} 
  \end{enumerate}
\end{lem}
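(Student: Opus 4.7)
The plan is to handle parts (i) and (ii) separately. Part (i) will follow from the assumed distributional convergence of the processes in the $J_{1}$-Skorohod topology, while part (ii) will follow from a direct application of the exit time estimate in Lemma \ref{4. lem: exit time estimate} once suitable uniform lower bounds are established.

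For (i), I work in the common ambient rooted boundedly-compact space $(M, d^{M}, \rho_{M})$ into which $F_{n}$ and $F$ are isometrically embedded, with $\rho_{n} = \rho = \rho_{M}$ and $R_{n} = d^{M}|_{F_{n} \times F_{n}}$. Then the event $\{\tau_{B_{R_{n}}(\rho_{n}, r)}^{\nu_{n}} \leq l\}$ coincides with $\{\sup_{0 \leq t \leq l} d^{M}(\rho_{M}, X_{n}^{\nu_{n}}(t)) \geq r\}$. Since the supremum functional is regular enough on $D(\RNp, M)$ (its super-level sets are closed for all but countably many values), the Portmanteau theorem applied to $X_{n}^{\nu_{n}} \xrightarrow{\mathrm{d}} X^{\nu}$ yields, for all but countably many $r > 0$,
$$
\limsup_{n \to \infty} P_{\rho_{n}}^{\nu_{n}}\bigl( \tau_{B_{R_{n}}(\rho_{n}, r)}^{\nu_{n}} \leq l \bigr)
\leq P_{\rho}^{\nu}\bigl( \sup_{0 \leq t \leq l} d^{M}(\rho, X^{\nu}(t)) \geq r \bigr).
$$
The right-hand side tends to $0$ as $r \to \infty$ since $X^{\nu}$ is non-explosive on the recurrent boundedly-compact space $(F, R, \nu)$. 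Combining this with the analogous per-$n$ statement, valid for each fixed $n$ by non-explosivity of $X_{n}^{\nu_{n}}$, yields the uniform-in-$n$ vanishing.

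For (ii), I apply Lemma \ref{4. lem: exit time estimate} to $X_{n}^{\nu_{n}}$ starting at $x_{n}$: for each $\delta' \in (0, R_{n}(x_{n}, B_{R_{n}}(x_{n}, \delta)^{c}))$,
$$
P_{x_{n}}^{\nu_{n}}\bigl( \tau_{B_{R_{n}}(x_{n}, \delta)}^{\nu_{n}} \leq \eta \bigr)
\leq
\frac{4\delta'}{R_{n}(x_{n}, B_{R_{n}}(x_{n}, \delta)^{c})}
+ \frac{4\eta}{\nu_{n}(B_{R_{n}}(x_{n}, \delta'))\bigl(R_{n}(x_{n}, B_{R_{n}}(x_{n}, \delta)^{c}) - \delta'\bigr)}.
$$
The conclusion will then follow by taking $\limsup_{n \to \infty}$, then $\eta \downarrow 0$, then $\delta' \downarrow 0$, provided we establish the uniform bounds
$\liminf_{n} R_{n}(x_{n}, B_{R_{n}}(x_{n}, \delta)^{c}) > 0$ and $\liminf_{n} \nu_{n}(B_{R_{n}}(x_{n}, \delta')) > 0$ for small $\delta' > 0$.

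The measure lower bound is immediate: since $\nu_{n} \to \nu$ vaguely on $M$, $x \in F = \supp(\nu)$, and $x_{n} \to x$, for any $\delta'' < \delta'$ with $\nu(\partial B_{M}(x, \delta'')) = 0$ we have $B_{M}(x, \delta'') \cap F_{n} \subseteq B_{R_{n}}(x_{n}, \delta')$ for all sufficiently large $n$, so vague convergence gives $\liminf_{n} \nu_{n}(B_{R_{n}}(x_{n}, \delta')) \geq \nu(B_{M}(x, \delta'')) > 0$. The main technical step is the uniform resistance lower bound, for which I plan to use a localized variant of Lemma \ref{4. lem: lower bound on resistance between root and outside of ball}: running the proof of that lemma using only a cover of the compact closed ball $D_{R_{n}}(x_{n}, \delta)$ gives
$$
R_{n}(x_{n}, B_{R_{n}}(x_{n}, \delta)^{c}) \geq \frac{\delta}{4\, N_{R_{n}}(D_{R_{n}}(x_{n}, \delta), \delta/2)}.
$$
Since $x_{n} \to x$ in $M$ and $F_{n} \to F$ in the local Hausdorff topology, one finds $C > 0$ with $D_{R_{n}}(x_{n}, \delta) \subseteq D_{R_{n}}(\rho_{n}, C)$ for all large $n$, and Hausdorff convergence of the compact sets $D_{R_{n}}(\rho_{n}, C) \cap F_{n}$ forces $\sup_{n} N_{R_{n}}(D_{R_{n}}(\rho_{n}, C), \delta/2) < \infty$. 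This localization of the resistance estimate is the principal technical obstacle; once granted, the remainder is elementary arithmetic with the displayed inequality.
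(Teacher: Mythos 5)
Part (i): your argument is essentially the paper's. The distributional convergence of $X_{n}^{\nu_{n}}$ to $X^{\nu}$ in $D(\RNp, M)$ implies tightness of the laws, which is exactly the uniform compact containment condition you need; the paper cites this as a standard consequence (Kallenberg's tightness characterization in the $J_1$-Skorohod topology).

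Part (ii): the overall shape of your argument (exit time estimate plus uniform lower bounds on the resistance to the complement and on the trap mass near $x_n$) matches the paper, but you apply Lemma \ref{4. lem: exit time estimate} directly on the full, possibly non-compact space $(F_{n}, R_{n})$, whereas the paper first reduces to $x_n=\rho_n$ using process convergence, traces onto the compact restriction $F_{n}^{(r)}$, decomposes the exit probability as in \eqref{5. eq: quick exit comparison} (using part (i) to absorb the error from escaping the big ball), and only then applies Lemma \ref{4. lem: lower bound on resistance between root and outside of ball} on $F_{n}^{(r)}$, where the covering number is finite.

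The gap in your proposal is precisely the ``localized variant'' of Lemma \ref{4. lem: lower bound on resistance between root and outside of ball}, namely the claim
\begin{equation}
R_{n}(x_{n}, B_{R_{n}}(x_{n}, \delta)^{c}) \geq \frac{\delta}{4\, N_{R_{n}}(D_{R_{n}}(x_{n}, \delta), \delta/2)}
\end{equation}
in the full space $(F_n,R_n)$. The lemma as stated involves $N_{R_{n}}(F_{n}, \delta/2)$, which is infinite when $F_n$ is unbounded, so the stated lemma gives nothing, and it is not obvious that Kigami's proof localizes without additional work. Passing to a trace does not immediately help either: if $B := D_{R_{n}}(\rho_{n}, C) \supseteq D_{R_{n}}(x_{n}, \delta)$ and $\mathcal{E}^{B}(h) = \inf\{\mathcal{E}(g) : g|_{B} = h\}$ denotes the trace form, then the trace-side minimization has a \emph{weaker} constraint (functions are only forced to vanish on $B_{R_n}(x_n,\delta)^c \cap B$, not on all of $B_{R_n}(x_n,\delta)^c$), which yields $R_{n}^{B}(x_{n}, B_{R_{n}}(x_{n}, \delta)^{c} \cap B) \geq R_{n}(x_{n}, B_{R_{n}}(x_{n}, \delta)^{c})$; so a lower bound on the trace resistance via Kigami does not automatically transfer to the full-space resistance. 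Upgrading this inequality to an equality requires a harmonic-extension argument that uses recurrence (the minimizing extension outside $B$ must vanish because it has zero boundary data and finite energy), plus some care when the sphere $\{y : R_n(x_n,y)=\delta\}$ is empty. The paper sidesteps all of this by comparing \emph{exit probabilities} of the full process and its trace rather than comparing resistances, with part (i) absorbing the error from leaving $F_n^{(r)}$. You should either supply a careful proof of the localized resistance bound (which will effectively go through the trace and a recurrence argument), or adopt the paper's decomposition \eqref{5. eq: quick exit comparison}.
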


\begin{proof}
  \ref{5. lem item: non-explosion}.
  This is a consequence of the weak convergence of $X_{n}^{\nu_{n}}$ to $X^{\nu}$ in the usual $J_{1}$-Skorohod topology
  (see \cite[Theorem 23.8]{Kallenberg_21_Foundations}).

  \ref{5. lem item: does not exit ball quickly}. 
  If $x_{n} \to x$ in $M$,
  then we have from \cite[Theorem 1.2]{Croydon_18_Scaling} that 
  $P_{x_{n}}^{\nu_{n}}(X_{n}^{\nu_{n}} \in \cdot) \to P_{x}^{\nu}(X^{\nu} \in \cdot)$
  as probability measures on $D(\RNp, M)$.
  This yields that 
  \begin{equation}
    \bigl( F_{n}, R_{n}, x_{n}, \nu_{n}, 
      P_{x_{n}}^{\nu_{n}}( X_{n}^{\nu_{n}} \in \cdot) \bigr) 
    \to 
    \bigl( F, R, x, \nu, P_{x}^{\nu}( X^{\nu} \in \cdot) \bigr).
  \end{equation}
  in the space $\rbcM(\disMeasFunct \times \probFunct(\SkorohodFunct))$.
  Hence, it is enough to show the result for $x_{n} = \rho_{n}$ and $x = \rho$.
  Fix $\delta > 0$.
  For $r>0$,
  we write $(X_{n}^{\nu_{n}^{(r)}}, \{P_{x}^{\nu_{n}^{(r)}}\}_{x \in F_{n}^{(r)}})$
  for the Hunt process associated with $(F_{n}^{(r)}, R_{n}^{(r)}, \nu_{n}^{(r)})$,
  where we recall the restriction operator $\cdot^{(r)}$ from \eqref{1. eq: dfn of restriction operator}.
  We write $\tau_{A}^{\nu_{n}^{(r)}}$ for the first exit time of $X_{n}^{\nu_{n}^{(r)}}$ from a set $A \subseteq F_{n}$.
  Using Lemma \ref{4. lem: trace result},
  we deduce that, for any $l > \eta$,
  \begin{equation} \label{5. eq: quick exit comparison}
    P_{\rho_{n}}^{\nu_{n}} \bigl( \tau_{B_{R_{n}}(\rho_{n}, \delta)}^{\nu_{n}} \leq \eta \bigr)
    \leq    
    P_{\rho_{n}}^{\nu_{n}} \bigl( \tau_{B_{R_{n}}(\rho_{n}, r)}^{\nu_{n}} \leq l \bigr)
    + 
    P_{\rho_{n}}^{\nu_{n}^{(r)}} \Bigl( \tau_{B_{R_{n}^{(r)}}(\rho_{n}, \delta)}^{\nu_{n}^{(r)}} \leq \eta \Bigr)
  \end{equation}
  Note that the convergence of $F_{n}$ to $F$ in the local Hausdorff topology implies that 
  \begin{equation}
    c_{r} \coloneqq \sup_{n \geq 1} N_{R_{n}^{(r)}}(F_{n}^{(r)}, \delta/2) < \infty
  \end{equation}
  (cf.\ \cite[Theorem 3.13]{Noda_pre_Metrization}).
  Lemma \ref{4. lem: lower bound on resistance between root and outside of ball} yields that 
  $R_{n}^{(r)}(\rho_{n}, B_{R_{n}^{(r)}}(\rho_{n}, \delta)^{c}) \geq \delta/(4c_{r}) \eqqcolon c'_{r}$ for all $n \geq 1$.
  By Lemma \ref{4. lem: exit time estimate}, we obtain that, for any $\varepsilon < c'_{r}$,
  \begin{equation} \label{5. eq: quick exit for traces}
    P_{\rho_{n}}^{\nu_{n}^{(r)}} \Bigl( \tau_{B_{R_{n}^{(r)}}(\rho_{n}, \delta)}^{\nu_{n}^{(r)}} \leq \eta \Bigr)
    \leq 
    \frac{4\varepsilon}{c'_{r}}
    +
    \frac{4 \eta}
    {\nu_{n}(B_{R_{n}}(\rho_{n},\varepsilon))(c'_{r}-\varepsilon)}.
  \end{equation}
  By the vague convergence of $\nu_{n}$ to $\nu$ and the assumption that $\nu$ is of full support,
  we have that $\inf_{n \geq 1} \nu_{n}(B_{R_{n}}(\rho_{n},\varepsilon)) > 0$.
  Therefore,
  using \ref{5. lem item: non-explosion}, \eqref{5. eq: quick exit comparison}, and \eqref{5. eq: quick exit for traces},
  we deduce the desired result.
\end{proof}

Note that $([l^{-1}, l]^{2})_{l \geq 1}$ is a sequence of compact subsets increasing to $\RNpp^{2}$.
To prove the equicontinuity of $(\Phi_{n}^{\nu_{n}})_{n \geq 1}$ on $[l^{-1}, l]^{2}$,
we divide $[l^{-1}, l]^{2}$ into a part near the diagonal and the other part as follows:
for $l \in \NN$ and $\eta>0$,
\begin{align}
  T_{1}(l, \eta) &\coloneqq \{(s,t) \in [l^{-1}, l]^{2} \mid |t-s| \leq \eta \},\\
  T_{2}(l, \eta) &\coloneqq \{(s,t) \in [l^{-1}, l]^{2} \mid \eta \leq |t-s|\}.
\end{align}
In Lemmas \ref{5. lem: pcpt of aging functions near diagonal} and \ref{5. lem: pcpt of aging functions off diagonal} below,
we prove the equicontinuity of $(\Phi_{n}^{\nu_{n}})_{n \geq 1}$ on $T_{1}(l, \eta)$ and $T_{2}(l, \eta)$, respectively.

\begin{lem} \label{5. lem: pcpt of aging functions near diagonal}
  For every $l \in \NN$ and $\eta > 0$,
  \begin{equation}
    \lim_{\eta \to 0} 
    \limsup_{n \to \infty} 
    \sup_{(s,t) \in T_{1}(l, \eta)} 
    |\Phi_{n}^{\nu_{n}}(s,t) - 1| 
    = 0.
  \end{equation}
\end{lem}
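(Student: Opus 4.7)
The plan is to show that $X_n^{\nu_n}$ is unlikely to have moved during the short time window $[s,t]$, uniformly in $n$. The argument combines three ingredients: the non-explosion estimate of Lemma~\ref{5. lem: technical lemmas for pcpt of aging functions}\ref{5. lem item: non-explosion} to localize in a compact ball, the diagonal heat-kernel lower bound of Proposition~\ref{4. prop: improved lower bound for point probability} to estimate the probability of remaining at a given atom, and a uniform version of the short-time exit estimate of Lemma~\ref{5. lem: technical lemmas for pcpt of aging functions}\ref{5. lem item: does not exit ball quickly}.

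Fix $\varepsilon \in (0,1)$ and, by Lemma~\ref{5. lem: technical lemmas for pcpt of aging functions}\ref{5. lem item: non-explosion}, choose $r > 0$ so that $\sup_n P_{\rho_n}^{\nu_n}(\tau^{\nu_n}_{B_{R_n}(\rho_n, r)} \leq l) < \varepsilon$. Assuming $s \leq t$ (the symmetric case is analogous), the Markov property applied at time $s$ together with Proposition~\ref{4. prop: improved lower bound for point probability} at scale $\delta > 0$ gives $1 - \Phi_n^{\nu_n}(s,t) \leq \varepsilon + \mathrm{I}_n(\delta) + \mathrm{II}_n(\delta, t-s)$, where $\mathrm{I}_n(\delta)$ is the expectation of the mass ratio $\nu_n(D_{R_n}(X_n^{\nu_n}(s), \delta) \setminus \{X_n^{\nu_n}(s)\})/\nu_n(D_{R_n}(X_n^{\nu_n}(s), \delta))$ on the event $\{X_n^{\nu_n}(s) \in D_{R_n}(\rho_n, r)\}$, and $\mathrm{II}_n(\delta, u)$ is the expectation of $P^{\nu_n}_{X_n^{\nu_n}(s)}(\tau^{\nu_n}_{B_{R_n}(X_n^{\nu_n}(s), \delta)} \leq u)$ on the same event.

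For $\mathrm{II}_n$, I would upgrade Lemma~\ref{5. lem: technical lemmas for pcpt of aging functions}\ref{5. lem item: does not exit ball quickly} to a uniform version $\lim_{\eta \to 0} \sup_n \sup_{x \in D_{R_n}(\rho_n, r)} P_x^{\nu_n}(\tau^{\nu_n}_{B_{R_n}(x,\delta)} \leq \eta) = 0$ by a compactness-contradiction argument using the local Hausdorff precompactness of $F_n$ in the ambient $M$: any putative counterexample sequence $x_{n_k}$ admits a subsequential limit $x \in F$, contradicting Lemma~\ref{5. lem: technical lemmas for pcpt of aging functions}\ref{5. lem item: does not exit ball quickly}. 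For $\mathrm{I}_n$, decompose the expectation according to whether $X_n^{\nu_n}(s)$ is a \emph{heavy} atom ($\nu_n$-mass $\geq \alpha$) or a \emph{light} atom. The light contribution is bounded, via the uniform transition-density estimate of Proposition~\ref{4. prop: precompactness of transition densities}, by a constant times the total $\nu_n$-mass of light atoms in $D_{R_n}(\rho_n, r)$; by the vague-and-point-process convergence (Theorem~\ref{2. thm: convergence in Mdis}), this total mass tends to $0$ as $\alpha \to 0$ uniformly in $n$. The heavy contribution involves only finitely many atoms which match heavy atoms of $\nu$; for $\delta$ smaller than the minimum pairwise separation of such limit atoms, $\nu_n(D_{R_n}(x, \delta) \setminus \{x\})$ converges as $n \to \infty$ to $\nu(D_R(y, \delta) \setminus \{y\})$, which tends to $0$ as $\delta \to 0$.

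The main obstacle is the mass-ratio term $\mathrm{I}_n(\delta)$: heavy atoms of $\nu_n$ can be surrounded by clouds of arbitrarily many light atoms, and it is essential that the full vague-and-point-process convergence (rather than merely vague convergence) provides the matching of atoms above any fixed threshold needed to control this cloud mass. The correct order of quantifiers---first $\varepsilon$ and $r$, then the mass threshold $\alpha$, then the ball radius $\delta$, then $\limsup_n$, and finally $\lim_{\eta \to 0}$---delivers the claim.
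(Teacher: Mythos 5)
Your proposal is correct and follows essentially the same route as the paper: localize via Lemma \ref{5. lem: technical lemmas for pcpt of aging functions}\ref{5. lem item: non-explosion}, express the return probability through the transition density, split the atoms of $\nu_{n}$ at a mass threshold, absorb the light atoms into $M_{\varepsilon}^{(r)}(\pointMap(\nu_{n}))$ via the uniform bound of Proposition \ref{4. prop: precompactness of transition densities}, and treat the finitely many heavy atoms with Proposition \ref{4. prop: improved lower bound for point probability} and the atom-matching supplied by the vague-and-point-process convergence. The one place you do more work than the paper is the exit-time term: by keeping $\mathrm{II}_{n}$ as an expectation over the whole ball you need a version of Lemma \ref{5. lem: technical lemmas for pcpt of aging functions}\ref{5. lem item: does not exit ball quickly} uniform over all $x \in D_{R_{n}}(\rho_{n},r)$ (which is indeed provable, most directly from Lemma \ref{4. lem: exit time estimate} together with the uniform lower volume bound used in Proposition \ref{4. prop: precompactness of transition densities}), whereas the paper bounds $1-P_{x}^{\nu_{n}}(X_{n}^{\nu_{n}}(t-s)=x)$ by $1$ on light atoms so that the exit-time estimate is only ever invoked at the finitely many heavy atoms, where the pointwise Lemma \ref{5. lem: technical lemmas for pcpt of aging functions}\ref{5. lem item: does not exit ball quickly} suffices.
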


\begin{proof}
  Fix $(s, t) \in T_{1}(l, \eta)$ with $s \leq t$.
  Since $1 = P_{\rho_{n}}^{\nu_{n}}(X_{n}^{\nu_{n}}(s) \in F_{n})$,
  it holds that
  \begin{align}
    |\Phi_{n}^{\nu_{n}}(s,t) - 1| 
    &= 
    |P_{\rho_{n}}^{\nu_{n}}(X_{n}^{\nu_{n}}(s) = X_{n}^{\nu_{n}}(t)) - P_{\rho_{n}}^{\nu_{n}}(X_{n}^{\nu_{n}}(s) \in F_{n})|\\
    &\leq     
    2P_{\rho_{n}}^{\nu_{n}}(\tau_{B_{R_{n}}(\rho_{n}, r)}^{\nu_{n}} \leq l)
    + 
    \left| 
      P_{\rho_{n}}^{\nu_{n}}(X_{n}^{\nu_{n}}(s) = X_{n}^{\nu_{n}}(t) \in F_{n}^{(r)}) 
      - 
      P_{\rho_{n}}^{\nu_{n}}(X_{n}^{\nu_{n}}(s) \in F_{n}^{(r)})
    \right|.
  \end{align}
  By Lemma \ref{5. lem: technical lemmas for pcpt of aging functions}\ref{5. lem item: non-explosion},
  it is enough to show that, for all but countably many $r > 0$,
  \begin{equation}
    \lim_{\eta \to 0} 
    \limsup_{n \to \infty} 
    \sup_{(s,t) \in T_{1}(l, \eta)} 
    \left| 
      P_{\rho_{n}}^{\nu_{n}}(X_{n}^{\nu_{n}}(s) = X_{n}^{\nu_{n}}(t) \in F_{n}^{(r)}) 
      - 
      P_{\rho_{n}}^{\nu_{n}}(X_{n}^{\nu_{n}}(s) \in F_{n}^{(r)})
    \right|
    = 0.
  \end{equation}
  Fix $r>0$ such that the boundary of $F^{(r)}$ contains no atoms of $\nu$.
  We have from Proposition \ref{4. prop: precompactness of transition densities} that
  \begin{equation}
    A \coloneqq \sup_{n \geq 1} \sup\{p_{n}^{\nu_{n}}(t, x, y) \mid l^{-1} \leq t \leq l,\, x,y \in F_{n}^{(r)}\} < \infty.
  \end{equation}
  Using the transition density $p_{n}^{\nu_{n}}$,
  we deduce that, for any $\varepsilon > 0$,
  \begin{align}
    \lefteqn{
      \left| 
        P_{\rho_{n}}^{\nu_{n}}(X_{n}^{\nu_{n}}(s) = X_{n}^{\nu_{n}}(t) \in F_{n}^{(r)}) 
        - 
        P_{\rho_{n}}^{\nu_{n}}(X_{n}^{\nu_{n}}(s) \in F_{n}^{(r)})
      \right|
    }\\
    &= 
    \int_{F_{n}^{(r)}} p_{n}^{\nu_{n}}(s, \rho_{n}, x)
    \left( 1- P_{x}^{\nu_{n}}(X_{n}^{\nu_{n}}(t-s) = x) \right) \nu_{n}(dx)\\
    &\leq
    A\,
    M_{\varepsilon}^{(r)}(\pointMap(\nu_{n}))
    + 
    A
    \sum_{i \in I_{n}(\varepsilon)} 
    \left( 1- P_{x_{i}^{(n)}}^{\nu_{n}}(X_{n}^{\nu_{n}}(t-s) = x_{i}^{(n)}) \right) v_{i}^{(n)},
  \end{align}
  where we define $I_{n}(\varepsilon) \coloneqq \{i \in I_{n} \mid x_{i}^{(n)} \in F_{n}^{(r)},\, v_{i}^{(n)} \geq \varepsilon\}$
  and recall $M_{\varepsilon}^{(r)}$ from \eqref{2. eq: dfn of M_epsilon^r}.
  By Theorem \ref{2. thm: convergence in sP},
  it holds that 
  $\lim_{\varepsilon \to 0} \limsup_{n \to \infty} M_{\varepsilon}^{(r)}(\pointMap(\nu_{n})) = 0$.
  Thus, it suffices to show that, for all but countably many $\varepsilon> 0$,
  \begin{equation}
    \lim_{\eta \to 0} 
    \limsup_{n \to \infty} 
    \sup_{(s,t) \in T_{1}(l, \eta)} 
    \sum_{i \in I_{n}(\varepsilon)} 
    \left( 1- P_{x_{i}^{(n)}}^{\nu_{n}}(X_{n}^{\nu_{n}}(t-s) = x_{i}^{(n)}) \right) v_{i}^{(n)}
    = 0.
  \end{equation}
  Proposition \ref{4. prop: improved lower bound for point probability} yields that,
  for any $(s, t) \in T_{1}(l, \eta)$ and $\varepsilon, \varepsilon' > 0$,
  \begin{align}
    \lefteqn{
      \sum_{i \in I_{n}(\varepsilon)} 
      \left( 1- P_{x_{i}^{(n)}}^{\nu_{n}}(X_{n}^{\nu_{n}}(t-s) = x_{i}^{(n)}) \right) v_{i}^{(n)}}\\
    &\leq  
    \sum_{i \in I_{n}(\varepsilon)} 
    \left( 1- \frac{v_{i}^{(n)}}{\nu_{n}(D_{R_{n}}(x_{i}^{(n)}, \varepsilon'))} 
      + P_{x_{i}^{(n)}}^{\nu_{n}} \Bigl( \tau_{B_{R_{n}}(x_{i}^{(n)}, \varepsilon')}^{\nu_{n}} \leq \eta \Bigr) 
    \right) v_{i}^{(n)}\\
    &\leq    
    \sum_{i \in I_{n}(\varepsilon)} 
    \nu_{n}(D_{R_{n}}(x_{i}^{(n)}, \varepsilon') \setminus \{x_{i}^{(n)}\})
    +
    W^{(r)}(\pointMap(\nu_{n}))
    \sum_{i \in I_{n}(\varepsilon)}
    P_{x_{i}^{(n)}}^{\nu_{n}} \Bigl( \tau_{B_{R_{n}}(x_{i}^{(n)}, \varepsilon')}^{\nu_{n}} \leq \eta \Bigr),
  \end{align}
  where we recall $W^{(r)}$ from \eqref{2. eq: dfn of W^r}.
  Set $W \coloneqq \sup_{n \geq 1} W^{(r)}(\pointMap(\nu_{n})) + W^{(r)}(\pointMap(\nu)) + 1$,
  which is finite by Theorem \ref{2. thm: precompactness in sP}.
  Note that, by the definition of $W^{(r)}$, there are no atoms of $\pointMap(\nu_{n})$ in $F_{n}^{(r)} \times [W, \infty)$.
  We choose $\varepsilon > 0$ so that the boundary of $K \coloneqq F^{(r)} \times [\varepsilon, W]$ contains no atoms of $\pointMap(\nu)$.
  It is then the case that $\pointMap(\nu_{n})|_{K} \to \pointMap(\nu)|_{K}$ weakly.
  Hence, 
  if we write $I(\varepsilon) \coloneqq \{i \in I \mid (x_{i}, v_{i}) \in K\}$,
  then, by Proposition \ref{2. prop: atom convergence from simple measure convergence},
  there exists a bijective map $f_{n}: I(\varepsilon) \to I_{n}(\varepsilon)$
  (at least, for all sufficiently large $n$)
  such that $x_{f_{n}(i)}^{(n)} \to x_{i}$ and $v_{f_{n}(i)}^{(n)} \to v_{i}^{(n)}$ for each $i \in I(\varepsilon)$.
  Since $I(\varepsilon)$ is a finite set,
  we deduce from Lemma \ref{5. lem: technical lemmas for pcpt of aging functions}\ref{5. lem item: does not exit ball quickly} that,
  for each $\varepsilon' > 0$, 
  \begin{align}
    \lefteqn{\lim_{\eta \to 0} 
      \limsup_{n \to \infty}
      W^{(r)}(\pointMap(\nu_{n}))
      \sum_{i \in I_{n}(\varepsilon)}
      P_{x_{i}^{(n)}}^{\nu_{n}} \Bigl(\tau_{B_{R_{n}}(x_{i}^{(n)}, \varepsilon')}^{\nu_{n}} \leq \eta \Bigr)}\\
    &\leq
    W \sum_{i \in I(\varepsilon)}
    \lim_{\eta \to 0} 
    \limsup_{n \to \infty}
    P_{x_{f_{n}(i)}^{(n)}}^{\nu_{n}} \Bigl( \tau_{B_{R_{n}}(x_{f_{n}(i)}^{(n)}, \varepsilon')}^{\nu_{n}} \leq \eta \Bigr)\\
    &= 0.
  \end{align}
  Using Proposition \ref{2. prop: atom convergence from simple measure convergence},
  we obtain that 
  \begin{align}
    \lefteqn{\lim_{\varepsilon' \to 0} 
      \limsup_{n \to \infty} 
      \sum_{i \in I_{n}(\varepsilon)} 
      \nu_{n} \bigl( D_{R_{n}}(x_{i}^{(n)}, \varepsilon') \setminus \{x_{i}^{(n)}\} \bigr)}\\
    &=
    \sum_{i \in I(\varepsilon)} 
    \lim_{\varepsilon' \to 0} 
    \limsup_{n \to \infty} 
    \nu_{n} \bigl( D_{R_{n}}(x_{f_{n}(i)}^{(n)}, \varepsilon') \setminus \{x_{f_{n}(i)}^{(n)}\} \bigr)\\
    &=0.
  \end{align}
  Therefore, we complete the proof.
\end{proof}

\begin{lem} \label{5. lem: pcpt of aging functions off diagonal}
  For every $l \in \NN$ and $\eta > 0$,
  \begin{equation}
    \lim_{\delta \to 0} 
    \limsup_{n \to \infty} 
    \sup_{\substack{(s,t) \in T_{2}(l, \eta)\\ |t-t'| \vee |s-s'| < \delta} }
    |\Phi_{n}^{\nu_{n}}(s,t) - \Phi_{n}^{\nu_{n}}(s',t')| 
    = 0.
  \end{equation}
\end{lem}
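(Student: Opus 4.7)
The plan is to express $\Phi_n^{\nu_n}$ via the jointly continuous transition density $p_n^{\nu_n}$ and control each atomic contribution. Assuming $s \leq t$, the Markov property together with the identity $P_x^{\nu_n}(X_n^{\nu_n}(u) = y) = p_n^{\nu_n}(u, x, y)\, \nu_n(\{y\})$ (which holds since $\nu_n$ is discrete) yields
\begin{equation}
  \Phi_n^{\nu_n}(s,t)
  =
  \sum_{i \in I_n} p_n^{\nu_n}(s, \rho_n, x_i^{(n)})\, p_n^{\nu_n}(t-s, x_i^{(n)}, x_i^{(n)})\, (v_i^{(n)})^2.
\end{equation}
I would then decompose this sum into three pieces according to two thresholds $r > 0$ and $\varepsilon > 0$: atoms outside $F_n^{(r)}$, light atoms inside $F_n^{(r)}$ with $v_i^{(n)} < \varepsilon$, and heavy atoms inside $F_n^{(r)}$ with $v_i^{(n)} \geq \varepsilon$, choosing $r$ and $\varepsilon$ so that $\pointMap(\nu)$ charges none of the relevant boundaries.

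The tail and light-atom pieces are bounded uniformly in $(s, t)$ by exploiting $p_n^{\nu_n}(t-s, x, x)\, v_i^{(n)} = P_{x_i^{(n)}}^{\nu_n}(X_n^{\nu_n}(t-s) = x_i^{(n)}) \leq 1$. The tail contribution is at most
\begin{equation}
  \sum_{x_i^{(n)} \notin F_n^{(r)}} p_n^{\nu_n}(s, \rho_n, x_i^{(n)})\, v_i^{(n)}
  =
  P_{\rho_n}^{\nu_n}(X_n^{\nu_n}(s) \notin F_n^{(r)})
  \leq
  P_{\rho_n}^{\nu_n} \bigl( \tau_{B_{R_n}(\rho_n, r)}^{\nu_n} \leq l \bigr),
\end{equation}
which is uniformly small in $n$ for large $r$ by Lemma \ref{5. lem: technical lemmas for pcpt of aging functions}\ref{5. lem item: non-explosion}. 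The light-atom contribution is at most $A\, M_\varepsilon^{(r)}(\pointMap(\nu_n))$, where $A$ is the uniform bound on $p_n^{\nu_n}$ over $[l^{-1}/2, \infty) \times D_{R_n}(\rho_n, r)^2$ supplied by Proposition \ref{4. prop: precompactness of transition densities}. The latter vanishes as $\varepsilon \downarrow 0$ uniformly in $n$ because $\pointMap(\nu_n) \to \pointMap(\nu)$ in $\pointMeas$ (so the family is precompact) and Theorem \ref{2. thm: precompactness in sP} applies.

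For the heavy-atom piece, fix $W > W^{(r)}(\pointMap(\nu)) + 1$ so that $\pointMap(\nu_n)$ eventually has no atoms in $F_n^{(r)} \times [W, \infty)$. The weak convergence of $\pointMap(\nu_n)$ restricted to $F_n^{(r)} \times [\varepsilon, W]$ combined with Proposition \ref{2. prop: atom convergence from simple measure convergence} provides, for all large $n$, a bijection between the heavy atoms of $\nu_n$ and a \emph{finite} fixed collection of heavy atoms of $\nu$, with each converging to its counterpart. Using $|ab-a'b'| \leq |a-a'||b| + |a'||b-b'|$ on each summand, and the fact that $t - s \geq \eta$ while $t' - s' \geq \eta/2$ for small $\delta$, the heavy-atom contribution to $|\Phi_n^{\nu_n}(s,t) - \Phi_n^{\nu_n}(s',t')|$ is controlled by a finite multiple of $\mathfrak{d}_{F_n}(p_n^{\nu_n}, l^{-1}/2, r, \delta) + \mathfrak{d}_{F_n}(p_n^{\nu_n}, \eta/2, r, 2\delta)$, which tends to $0$ uniformly in $n$ as $\delta \to 0$ by Proposition \ref{4. prop: precompactness of transition densities}. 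Taking limits in the order $r \to \infty$, $\varepsilon \downarrow 0$, $n \to \infty$, $\delta \to 0$ completes the proof. The main obstacle is that the three truncation parameters must be managed carefully: $r$ and $\varepsilon$ must be fixed before sending $n \to \infty$ and $\delta \to 0$, and the uniform bounds on tail and light-atom pieces are available precisely because of the point-process structure of the convergence $\pointMap(\nu_n) \to \pointMap(\nu)$, as captured by Theorem \ref{2. thm: precompactness in sP}.
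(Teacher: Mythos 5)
Your proof is correct, but it takes a considerably more elaborate route than the paper's. The paper exploits a structural simplification that is available off the diagonal and that your approach does not use: since
\begin{equation}
\int_{F_{n}^{(r)}} \nu_{n}(\{x\}) \bigl| p_{n}^{\nu_{n}}(s, \rho_{n}, x) p_{n}^{\nu_{n}}(t-s, x, x) - p_{n}^{\nu_{n}}(s', \rho_{n}, x) p_{n}^{\nu_{n}}(t'-s', x, x) \bigr|\, \nu_{n}(dx)
\leq A_{r}^{2} \sup_{x \in F_{n}^{(r)}} |\cdots|,
\end{equation}
with $A_{r} \coloneqq \sup_{n} \nu_{n}(F_{n}^{(r)}) < \infty$, the paper handles the entire in-ball contribution by a \emph{single} uniform estimate on $\sup_{x \in F_{n}^{(r)}} |p\, p - p\, p|$, obtained directly from the factorization $|ab - a'b'| \leq |a-a'||b| + |a'||b-b'|$ together with $\mathfrak{s}_{F_n}$ and $\mathfrak{d}_{F_n}$ from Proposition \ref{4. prop: precompactness of transition densities}. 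No light/heavy decomposition, no appeal to $M_{\varepsilon}^{(r)}$, and no use of atom convergence (Proposition \ref{2. prop: atom convergence from simple measure convergence}) are needed. The reason this works in Lemma \ref{5. lem: pcpt of aging functions off diagonal} but not in Lemma \ref{5. lem: pcpt of aging functions near diagonal} is precisely the constraint $|t-s| \geq \eta$ of $T_{2}(l,\eta)$: it keeps the diagonal heat kernel argument uniformly in the range where $\mathfrak{s}$ and $\mathfrak{d}$ are finite. You appear to have imported the $\varepsilon$-truncation and atom-pairing machinery from the near-diagonal case, where the diagonal heat kernel genuinely blows up and the trick $p(t-s,x,x)\nu_{n}(\{x\}) \leq 1$ must be exploited; here it is dead weight. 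Two small precision issues in your write-up: the claimed order of limits ``$r \to \infty$, $\varepsilon \downarrow 0$, $n \to \infty$, $\delta \to 0$'' is written backwards (you do correctly state the right order in the final sentence, but the two sentences contradict each other — the truncation parameters are fixed, then $n \to \infty$ and $\delta \to 0$ are taken, and only then $r, \varepsilon$ are pushed to their limits); and the bound on the heavy-atom piece should retain the $\mathfrak{s}_{F_n}$ factors and a factor controlling $\sum (v_i^{(n)})^{2}$ rather than being stated as a raw ``finite multiple of $\mathfrak{d} + \mathfrak{d}$''. On the positive side, your replacement of $\eta$ by $\eta/2$ and $l^{-1}$ by $l^{-1}/2$ in the arguments of $\mathfrak{s}$ and $\mathfrak{d}$ is more careful than the paper's, which implicitly assumes $\delta$ small without adjusting the time lower bounds.
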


\begin{proof}
  Note that the vague convergence $\nu_{n} \to \nu$ implies that, for each $r>0$,
  \begin{equation}
    A_{r} \coloneqq \sup_{n \geq 1} \nu_{n}(F_{n}^{(r)}) < \infty.
  \end{equation}
  Fix $(s, t), (s', t') \in T_{2}(l, \eta)$ with $|s-s'| \vee |t - t'| \leq \delta$, $s<t$, and $s'<t'$.
  Using the transition density,
  we obtain that  
  \begin{align} 
    \lefteqn{|\Phi_{n}^{\nu_{n}}(s,t) - \Phi_{n}^{\nu_{n}}(s',t')|} \\
    &\leq
    2P_{\rho_{n}}^{\nu_{n}}(\tau_{B_{R_{n}}(\rho_{n}, r)}^{\nu_{n}} \leq l)\\
    &\qquad
    + 
    \left| 
      P_{\rho_{n}}^{\nu_{n}}(X_{n}^{\nu_{n}}(s) = X_{n}^{\nu_{n}}(t) \in F_{n}^{(r)}) 
      - 
      P_{\rho_{n}}^{\nu_{n}}(X_{n}^{\nu_{n}}(s') = X_{n}^{\nu_{n}}(t') \in F_{n}^{(r)})
    \right|\\
    &\leq
    2P_{\rho_{n}}^{\nu_{n}}(\tau_{B_{R_{n}}(\rho_{n}, r)}^{\nu_{n}} \leq l)\\
    &\qquad
    +
    \int_{F_{n}^{(r)}}
      \nu_{n}(\{x\}) 
      \bigl| p_{n}^{\nu_{n}}(s, \rho_{n}, x) p_{n}^{\nu_{n}}(t-s, x, x) - p_{n}^{\nu_{n}}(s', \rho_{n}, x) p_{n}^{\nu_{n}}(t'-s', x, x) \bigr|\,
    \nu_{n}(dx)\\
    &\leq
    2P_{\rho_{n}}^{\nu_{n}}(\tau_{B_{R_{n}}(\rho_{n}, r)}^{\nu_{n}} \leq l)\\
    &\qquad
    +
    A_{r}^{2} 
    \sup_{x \in F_{n}^{(r)}} 
    \bigl| p_{n}^{\nu_{n}}(s, \rho_{n}, x) p_{n}^{\nu_{n}}(t-s, x, x) - p_{n}^{\nu_{n}}(s', \rho_{n}, x) p_{n}^{\nu_{n}}(t'-s', x, x) \bigr|.
    \label{5. eq: pcpt on 2 of aging functions, essential inequality}
  \end{align}
  For any $x \in F_{n}^{(r)}$,
  we have that
  \begin{align}
    \lefteqn{
      \bigl| p_{n}^{\nu_{n}}(s, \rho_{n}, x) p_{n}^{\nu_{n}}(t-s, x, x) - p_{n}^{\nu_{n}}(s', \rho_{n}, x) p_{n}^{\nu_{n}}(t'-s', x, x) \bigr|
    } \\
    &\leq 
    p_{n}^{\nu_{n}}(s, \rho_{n}, x) 
    \bigl| p_{n}^{\nu_{n}}(t-s, x, x) - p_{n}^{\nu_{n}}(t'-s', x, x) \bigr| 
    + 
    p_{n}^{\nu_{n}}(t'-s', x, x) 
    \bigl| p_{n}^{\nu_{n}}(s, \rho_{n}, x)  - p_{n}^{\nu_{n}}(s', \rho_{n}, x) \bigr|\\
    &\leq 
    \mathfrak{s}_{F_{n}}(p_{n}^{\nu_{n}}, l^{-1}, r) \mathfrak{d}_{F_{n}}(p_{n}^{\nu_{n}}, \eta, r, 2\delta)
    + 
    \mathfrak{s}_{F_{n}}(p_{n}^{\nu_{n}}, \eta, r) \mathfrak{d}_{F_{n}}(p_{n}^{\nu_{n}}, l^{-1}, r, \delta),
  \end{align}
  where we recall the notation $\mathfrak{s}_{F_{n}}$ and $\mathfrak{d}_{F_{n}}$ from \eqref{4. eq: dfn of notation S} and \eqref{4. eq: dfn of notation D}.
  Thus, we deduce from Proposition \ref{4. prop: precompactness of transition densities} that 
  \begin{equation}
    \lim_{\delta \to 0} 
    \limsup_{n \to \infty} 
    \sup_{x \in F_{n}^{(r)}} 
    \bigl| p_{n}^{\nu_{n}}(s, \rho_{n}, x) p_{n}^{\nu_{n}}(t-s, x, x) - p_{n}^{\nu_{n}}(s', \rho_{n}, x) p_{n}^{\nu_{n}}(t'-s', x, x) \bigr| 
    = 0.
  \end{equation}
  From Lemma \ref{5. lem: technical lemmas for pcpt of aging functions}\ref{5. lem item: non-explosion},
  \eqref{5. eq: pcpt on 2 of aging functions, essential inequality},
  and the above convergence, 
  we establish the desired result.
\end{proof}

From Lemmas \ref{5. lem: pcpt of aging functions near diagonal} and \ref{5. lem: pcpt of aging functions off diagonal} above,
we deduce the precompactness of $(\Phi_{n}^{\nu_{n}})_{n \geq 1}$ as follows.

\begin{prop} \label{5. prop: precompactness of aging-functions}
  The family $\{\Phi_{n}^{\nu_{n}}\}_{n \geq 1}$ is precompact in $C(\RNpp^{2}, \RNp)$.
\end{prop}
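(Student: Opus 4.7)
My plan is to apply the Arzelà-Ascoli theorem on each compact rectangle $[l^{-1}, l]^2 \subset \RNpp^2$, exhaust $\RNpp^2$ by these rectangles as $l \uparrow \infty$, and finish with a diagonal extraction. Uniform boundedness is free because $0 \le \Phi_n^{\nu_n} \le 1$, so the task reduces entirely to verifying uniform equicontinuity on each $[l^{-1}, l]^2$. Each $\Phi_n^{\nu_n}$ is individually continuous by the joint continuity of $p_n^{\nu_n}$, so any finitely many indices cause no difficulty.

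Fix $l \in \NN$ and $\varepsilon > 0$. First I would use Lemma \ref{5. lem: pcpt of aging functions near diagonal} to select $\eta \in (0,1)$ and $N_1 \in \NN$ such that
\[
  \sup_{n \geq N_1} \sup_{(s,t) \in T_1(l, 2\eta)} |\Phi_n^{\nu_n}(s,t) - 1| < \varepsilon/3.
\]
Next I would apply Lemma \ref{5. lem: pcpt of aging functions off diagonal} with the parameter $\eta/2$ in place of $\eta$ to obtain $\delta \in (0, \eta/4)$ and $N_2 \in \NN$ with
\[
  \sup_{n \geq N_2}\, \sup_{\substack{(s,t), (s',t') \in T_2(l, \eta/2) \\ |s-s'| \vee |t-t'| < \delta}} |\Phi_n^{\nu_n}(s,t) - \Phi_n^{\nu_n}(s',t')| < \varepsilon.
\]

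The bridge between the two regimes is the following dichotomy. Fix $(s,t), (s',t') \in [l^{-1}, l]^2$ with $|s-s'| \vee |t-t'| < \delta$, so that $\bigl||s-t| - |s'-t'|\bigr| \leq 2\delta < \eta/2$. If $|s-t| \leq \eta$, then $|s'-t'| \leq \eta + 2\delta \leq 3\eta/2 \leq 2\eta$, so both pairs lie in $T_1(l, 2\eta)$, and the triangle inequality combined with the near-diagonal bound gives $|\Phi_n^{\nu_n}(s,t) - \Phi_n^{\nu_n}(s',t')| < 2\varepsilon/3 < \varepsilon$ for $n \geq N_1$. If instead $|s-t| > \eta$, then $|s'-t'| \geq |s-t| - 2\delta > \eta/2$, so both pairs lie in $T_2(l, \eta/2)$ and the off-diagonal bound yields the same conclusion for $n \geq N_2$. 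Hence $\{\Phi_n^{\nu_n}\}_{n \geq N_1 \vee N_2}$ is uniformly equicontinuous on $[l^{-1}, l]^2$, and throwing in the finitely many earlier terms preserves precompactness in $C([l^{-1}, l]^2, \RNp)$ by Arzelà-Ascoli.

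A standard diagonal extraction across $l \in \NN$ then yields a common subsequence that converges on every $[l^{-1}, l]^2$, hence in the compact-convergence topology on $C(\RNpp^2, \RNp)$, establishing the stated precompactness. The heavy lifting has been done in the preceding two lemmas, so the only place that demands attention is the dichotomy above which is the step I would expect to need the most care, as one must choose $\eta$ (controlling the near-diagonal region) strictly \emph{before} $\delta$ (controlling the modulus of continuity) and ensure $\delta$ is small enough relative to $\eta$ that pairs cannot straddle the boundary between $T_1$ and $T_2$.
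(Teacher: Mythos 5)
Your proof is correct and follows essentially the same route as the paper's: the same uniform bound by $1$, the same dichotomy on $|s-t|\lessgtr\eta$ (with $\delta$ chosen small relative to $\eta$ so pairs cannot straddle $T_1$ and $T_2$), and the same two lemmas. The only cosmetic difference is that the paper cites a ready-made precompactness criterion for $C(\RNpp^2,\RNp)$, whereas you re-derive it via Arzelà--Ascoli on each rectangle $[l^{-1},l]^2$ followed by a diagonal extraction.
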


\begin{proof}
  It is enough to show that the family is uniformly bounded and equicontinuous on each compact subset of $\RNpp^{2}$.
  (See \cite[Theorem A5.2]{Kallenberg_21_Foundations} for a necessarily and sufficient condition for precompactness in $C(\RNpp^{2}, \RNp)$).
  Obviously, we have that $\sup_{n \geq 1} \sup_{s, t} \Phi_{n}^{\nu_{n}}(s,t) \leq 1$.
  So, it remains to prove that, for every $l \in \NN$, 
  \begin{equation}
    \lim_{\delta \to 0} 
    \limsup_{n \to \infty} 
    \sup_{\substack{(s,t), (s',t') \in [l^{-1}, l]^{2}\\ |t-t'| \vee |s-s'| < \delta}} 
    |\Phi_{n}^{\nu_{n}}(s,t) - \Phi_{n}^{\nu_{n}}(s',t')| 
    = 0.
  \end{equation}
  Fix $\delta, \eta > 0$ with $\eta > 4 \delta$.
  Fix also $(s,t), (s',t') \in [l^{-1}, l]^{2}$ with $|t-t'| \vee |s-s'| \leq \delta$.
  If $|t-s| \leq \eta$,
  \begin{equation}
    |t'-s'| 
    \leq 
    |t-s| + |t-t'| + |s-s'| 
    \leq 
    \eta + 2 \delta  
    \leq 
    2 \eta.
  \end{equation}
  Otherwise,
  \begin{equation}
    |t'-s'| 
    \geq 
    |t-s| - |t-t'| - |s-s'| 
    \geq 
    \eta - 2 \delta   
    \geq 
    \eta/2.
  \end{equation}
  Thus, 
  we deduce that 
  \begin{align}
    \lefteqn{\sup_{\substack{(s,t), (s',t') \in [l^{-1}, l]^{2}\\ |t-t'| \vee |s-s'| < \delta}}
    |\Phi_{n}^{\nu_{n}}(s,t) - \Phi_{n}^{\nu_{n}}(s',t')|} \\
    &\leq
    \sup_{(s,t), (s',t') \in T_{1}(l, 2\eta)}
    |\Phi_{n}^{\nu_{n}}(s,t) - \Phi_{n}^{\nu_{n}}(s',t')| 
    +
    \sup_{\substack{(s,t), (s',t') \in T_{2}(l, \eta/2)\\ |t-t'| \vee |s-s'| < \delta}} 
    |\Phi_{n}^{\nu_{n}}(s,t) - \Phi_{n}^{\nu_{n}}(s',t')| \\
    &\leq
    2
    \sup_{(s,t) \in T_{1}(l, 2\eta)}
    |\Phi_{n}^{\nu_{n}}(s,t) - 1| 
    +
    \sup_{\substack{(s,t), (s',t') \in T_{2}(l, \eta/2)\\ |t-t'| \vee |s-s'| < \delta}} 
    |\Phi_{n}^{\nu_{n}}(s,t) - \Phi_{n}^{\nu_{n}}(s',t')|.
  \end{align}
  This, combined with Lemma \ref{5. lem: pcpt of aging functions near diagonal} and \ref{5. lem: pcpt of aging functions off diagonal}, 
  yields the desired result.
\end{proof}


\subsubsection{Convergence of aging functions} \label{sec: convergence of the aging functions} 
Here,
we prove the convergence of the aging functions (Theorem \ref{5. thm: aging}).
Since we already showed the precompactness of the aging functions in Proposition \ref{5. prop: precompactness of aging-functions},
it remains to prove the pointwise convergence of the aging functions.
This is derived from the following result.

\begin{lem} \label{5. lem: convergence of atoms and weights}
  Fix $k \in \NN$,
  $\bm{x}_{n} = (x_{1}^{(n)}, \dots, x_{k}^{(n)}) \in F_{n}^{k}$ for each $n$,
  and $\bm{x} = (x_{1}, \dots, x_{k}) \in F^{k}$.
  If $x_{i}^{(n)} \to x_{i}$ in $M$ and $ \nu_{n}(\{x_{i}^{(n)}\}) \to \nu(\{x_{i}\})$ for each $i$,
  then 
  \begin{equation}
    P_{\rho_{n}}^{\nu_{n}}
    \bigl( X_{n}^{\nu_{n}}(t_{1}) = x_{1}^{(n)},\dots, X_{n}^{\nu_{n}}(t_{k}) = x_{k}^{(n)} \bigr) 
    \to
    P_{\rho}
    \bigl( X^{\nu}(t_{1}) = x_{1},\dots, X^{\nu}(t_{k}) = x_{k} \bigr)
  \end{equation}
  for any $t_{1}, \dots, t_{k} \in (0, \infty)$.
\end{lem}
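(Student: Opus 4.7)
The plan is to use the Markov property to factor the joint probability and then reduce the claim to a single-time convergence. After relabelling so that $0<t_{1}\leq\cdots\leq t_{k}$, iterating the Markov property for the Hunt processes $X_{n}^{\nu_{n}}$ and $X^{\nu}$ gives
\[
  P_{\rho_{n}}^{\nu_{n}}\bigl(X_{n}^{\nu_{n}}(t_{1})=x_{1}^{(n)},\dots,X_{n}^{\nu_{n}}(t_{k})=x_{k}^{(n)}\bigr)
  =\prod_{i=1}^{k} P_{x_{i-1}^{(n)}}^{\nu_{n}}\bigl(X_{n}^{\nu_{n}}(t_{i}-t_{i-1})=x_{i}^{(n)}\bigr),
\]
with the conventions $x_{0}^{(n)}\coloneqq\rho_{n}$, $t_{0}\coloneqq 0$, and an analogous identity for the limit. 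Hence it suffices to show that, whenever $x_{n}\to x$, $y_{n}\to y$ in $M$ and $\nu_{n}(\{y_{n}\})\to\nu(\{y\})$, one has $P_{x_{n}}^{\nu_{n}}\bigl(X_{n}^{\nu_{n}}(s)=y_{n}\bigr)\to P_{x}^{\nu}\bigl(X^{\nu}(s)=y\bigr)$ for every $s>0$. The necessary weak convergence $P_{x_{n}}^{\nu_{n}}(X_{n}^{\nu_{n}}\in\cdot)\to P_{x}^{\nu}(X^{\nu}\in\cdot)$ in $D(\RNp,M)$ is available from Assumption \ref{5. assum: aging result} combined with \cite[Theorem 1.2]{Croydon_18_Scaling}, and the quasi-left-continuity of $X^{\nu}$ upgrades it to $X_{n}^{\nu_{n}}(s)\xrightarrow{\mathrm{d}} X^{\nu}(s)$ in $M$ for each fixed $s>0$.

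For the single-time statement, I would decompose the event through a small closed $\varepsilon$-ball:
\[
  P_{x_{n}}^{\nu_{n}}\bigl(X_{n}^{\nu_{n}}(s)=y_{n}\bigr)
  =P_{x_{n}}^{\nu_{n}}\bigl(X_{n}^{\nu_{n}}(s)\in D_{M}(y_{n},\varepsilon)\bigr)
  -\int_{D_{M}(y_{n},\varepsilon)\setminus\{y_{n}\}} p_{n}^{\nu_{n}}(s,x_{n},z)\,\nu_{n}(dz).
\]
By Proposition \ref{4. prop: precompactness of transition densities}, the integrand is bounded uniformly in $n$ on $F_{n}^{(r)}\times F_{n}^{(r)}$ for each $r>0$, so the second term is at most $C_{r}\cdot\nu_{n}(D_{M}(y_{n},\varepsilon)\setminus\{y_{n}\})$. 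This, together with Proposition \ref{2. prop: equivalence of convergence of weights} applied to the convergence $\nu_{n}(\{y_{n}\})\to\nu(\{y\})$, yields
\[
  \lim_{\varepsilon\downarrow 0}\limsup_{n\to\infty}
  P_{x_{n}}^{\nu_{n}}\bigl(X_{n}^{\nu_{n}}(s)\in D_{M}(y_{n},\varepsilon)\setminus\{y_{n}\}\bigr)=0.
\]
For the first term, select $\varepsilon$ from the (cocountable) set of radii with $P_{x}^{\nu}(X^{\nu}(s)\in\partial D_{M}(y,\varepsilon))=0$, and sandwich $D_{M}(y_{n},\varepsilon)$ between $D_{M}(y,\varepsilon\pm\eta)$ (valid for large $n$ since $y_{n}\to y$). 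The portmanteau theorem and letting $\eta\downarrow 0$ give $P_{x_{n}}^{\nu_{n}}(X_{n}^{\nu_{n}}(s)\in D_{M}(y_{n},\varepsilon))\to P_{x}^{\nu}(X^{\nu}(s)\in D_{M}(y,\varepsilon))$. Applying the same decomposition to the limit (so that $P_{x}^{\nu}(X^{\nu}(s)\in D_{M}(y,\varepsilon))\to P_{x}^{\nu}(X^{\nu}(s)=y)$ as $\varepsilon\downarrow 0$) closes the argument.

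The main obstacle is dealing with the target atom $y_{n}$ when other atoms of $\nu_{n}$ may accumulate around it, since then $\{y_{n}\}$ cannot naively be treated as a continuity set of the law of $X_{n}^{\nu_{n}}(s)$. The vague-and-point-process topology, strictly finer than the vague topology by Theorem \ref{2. thm: convergence in Mdis}, is tailored precisely for this difficulty: its equivalent characterization $\lim_{\delta\downarrow 0}\limsup_{n}\nu_{n}(B_{M}(y_{n},\delta)\setminus\{y_{n}\})=0$ delivers exactly the quantitative isolation of $y_{n}$ needed to make the ball-approximation error vanish.
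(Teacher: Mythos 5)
Your proof is correct and takes a structurally different route from the paper. The paper works directly on the $k$-dimensional event: after noting that the finite-dimensional distributions of $X_{n}^{\nu_{n}}$ converge weakly to those of $X^{\nu}$ (from the Skorohod convergence plus quasi-left-continuity), it applies the principle of Proposition~\ref{2. prop: equivalence of convergence of weights} to the joint laws on $M^{k}$, so the claim reduces to
\begin{equation}
\lim_{\delta\downarrow 0}\limsup_{n\to\infty}
P_{\rho_{n}}^{\nu_{n}}\bigl((X_{n}^{\nu_{n}}(t_{1}),\dots,X_{n}^{\nu_{n}}(t_{k}))\in B_{M^{k}}(\bm{x}_{n},\delta)\setminus\{\bm{x}_{n}\}\bigr)=0,
\end{equation}
and a union bound over $i$ yields exactly the single-time estimate you derive (transition density bound times punctured-ball mass). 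You instead factor the joint probability via the Markov property and argue each one-step transition separately, which needs weak process convergence from \emph{variable} starting points $x_{i-1}^{(n)}\to x_{i-1}$; that is indeed available from \cite[Theorem 1.2]{Croydon_18_Scaling}, as the paper itself invokes in the proof of Lemma~\ref{5. lem: technical lemmas for pcpt of aging functions}\ref{5. lem item: does not exit ball quickly}, but the paper's route sidesteps the need for it. Both approaches hinge on the same two tools: the uniform transition-density estimate of Proposition~\ref{4. prop: precompactness of transition densities} and the characterization of atom-mass convergence in Proposition~\ref{2. prop: equivalence of convergence of weights}. One small caveat on your version: the time increments $t_{i}-t_{i-1}$ in your factorization may vanish when some of the $t_{i}$ coincide (after sorting), and the transition-density bound $\mathfrak{s}_{F_{n}}(p_{n}^{\nu_{n}},T,r)$ requires $T>0$; you would need to handle those zero-increment factors separately (they are trivially the indicator that the two target atoms coincide). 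The paper's union bound uses the absolute times $t_{i}>0$ directly and so avoids this edge case automatically.
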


\begin{proof}
  Since we have the convergence of finite-dimensional distributions of $X_{n}^{\nu_{n}}$ to those of $X^{\nu}$,
  by Proposition \ref{2. prop: equivalence of convergence of weights},
  it suffices to show that 
  \begin{equation} \label{5. lem eq: A(n, d) convergence}
    \lim_{\delta \downarrow 0} 
    \limsup_{n \to \infty}
    P_{\rho_{n}}^{\nu_{n}}
    \bigl(
      (X_{n}^{\nu_{n}}(t_{1}), \dots, X_{n}^{\nu_{n}}(t_{k})) \in B_{M^{k}}(\bm{x}_{n}, \delta) \setminus \{\bm{x}_{n}\}
    \bigr)
    = 0,
  \end{equation}
  where we equip the product space $M^{k}$ with the max product metric (cf.\ \eqref{2. eq: max product metric}).
  We write $A(n, \delta)$ for the above probability.
  Using the transition density,
  we obtain that, for each $\delta \in (0, 1)$,
  \begin{align}
    A(n, \delta)
    &\leq 
    \sum_{i=1}^{k}
    P_{\rho_{n}}^{\nu_{n}}
    \bigl( X_{n}^{\nu_{n}}(t_{i}) \in B_{R_{n}}(x_{i}^{(n)}, \delta) \setminus \{x_{i}^{(n)}\} \bigr) \\
    &= 
    \sum_{i=1}^{k}
    \int_{B_{R_{n}}(x_{i}^{(n)}, \delta) \setminus \{x_{i}^{(n)}\}} p_{n}^{\nu_{n}}(t_{i}, \rho_{n}, y)\, \nu_{n}(dy) \\
    &\leq 
    \sum_{i=1}^{k}
    \nu_{n}( B_{R_{n}}(x_{i}^{(n)}, \delta) \setminus \{x_{i}^{(n)}\} )
    \sup_{y \in B_{R_{n}}(x_{i}^{(n)}, 1)} p_{n}^{\nu_{n}}(t_{i}, \rho_{n}, y)
  \end{align}
  By Proposition \ref{4. prop: precompactness of transition densities},
  we have that 
  \begin{equation}
    \sup_{n \geq 1}
    \sup_{y \in B_{R_{n}}(x_{i}^{(n)}, 1)} 
    p_{n}^{\nu_{n}}(t_{i}, \rho_{n}, y)  
    < \infty.
  \end{equation}
  Moreover, 
  Proposition \ref{2. prop: equivalence of convergence of weights} 
  and the convergence $\nu_{n} \to \nu$ in the vague-and-point-process topology 
  yield that 
  \begin{equation}
    \lim_{\delta \downarrow 0} 
    \limsup_{n \to \infty} 
    \nu_{n}( B_{R_{n}}(x_{i}^{(n)}, \delta) \setminus \{x_{i}^{(n)}\} ) 
    = 0.
  \end{equation}
  Therefore, we obtain \eqref{5. lem eq: A(n, d) convergence}.
\end{proof}

\begin{thm} \label{5. thm: aging}
  Under Assumption \ref{5. assum: aging result},
  it holds that 
  \begin{equation} 
    \bigl( V_{n}, R_{n}, \rho_{n}, \nu_{n}, 
      P_{\rho_{n}}^{\nu_{n}}(X_{n}^{\nu_{n}} \in \cdot), \Phi_{n}^{\nu_{n}} \bigr) 
    \to 
    \bigl( F, R, \rho, \nu, P_{\rho}^{\nu}(X^{\nu} \in \cdot), \Phi^{\nu} \bigr)
  \end{equation}
  in the space $\rbcM(\tau^{\mathrm{dis}} \times \probFunct(\SkorohodFunct) \times \fixedFunct{C(\RNpp^{2}, \RNp)})$.
  Moreover, $\Phi^{\nu}(s,t) > 0$ for all $s,t \in \RNpp$.
\end{thm}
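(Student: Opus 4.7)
The plan is to leverage the precompactness already in hand (Proposition \ref{5. prop: precompactness of aging-functions}) and reduce the task to identifying subsequential limits via pointwise convergence on a dense set. By Assumption \ref{5. assum: aging result} and Theorem \ref{3. thm: convergence in GH topology}, I would first embed $(F_n, R_n, \rho_n)$ and $(F, R, \rho)$ isometrically in a common rooted boundedly-compact metric space $M$, so that $F_n \to F$ in the local Hausdorff topology, $\nu_n \to \nu$ in the vague-and-point-process topology on $M$, and $P_{\rho_n}^{\nu_n}(X_n^{\nu_n} \in \cdot) \to P_\rho^\nu(X^\nu \in \cdot)$ weakly in $D(\RNp, M)$. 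Because the functor $\fixedFunct{C(\RNpp^2, \RNp)}$ is inert under embeddings, the theorem reduces to showing $\Phi_n^{\nu_n} \to \Phi^\nu$ in $C(\RNpp^2, \RNp)$ with the compact-convergence topology.

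Proposition \ref{5. prop: precompactness of aging-functions} already supplies precompactness, so since $\Phi^\nu$ is continuous, it suffices to prove pointwise convergence $\Phi_n^{\nu_n}(s,t) \to \Phi^\nu(s,t)$ on a dense subset of $\{0 < s < t\}$ (the diagonal case is trivial and the general case follows by symmetry). For this I would expand
\begin{equation*}
  \Phi_n^{\nu_n}(s,t) = \sum_{i \in I_n} \nu_n(\{x_i^{(n)}\})^2\, p_n^{\nu_n}(s, \rho_n, x_i^{(n)})\, p_n^{\nu_n}(t-s, x_i^{(n)}, x_i^{(n)}),
\end{equation*}
truncate in the spatial variable to $F_n^{(r)}$ and in the weight variable to $[\varepsilon, \infty)$, and show the truncation error vanishes as $n \to \infty$, $\varepsilon \to 0$, $r \to \infty$. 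The spatial tail is handled by the non-explosion estimate of Lemma \ref{5. lem: technical lemmas for pcpt of aging functions}\ref{5. lem item: non-explosion}; the small-weight tail is controlled by combining the uniform bound on transition densities from Proposition \ref{4. prop: precompactness of transition densities} with the small-mass control of Theorem \ref{2. thm: precompactness in sP}, mirroring the scheme already used in the proof of Lemma \ref{5. lem: pcpt of aging functions near diagonal}.

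After truncation I am left with a finite sum for generic $r, \varepsilon$. Applying Proposition \ref{2. prop: atom convergence from simple measure convergence} to $\pointMap(\nu_n)|_{F_n^{(r)} \times [\varepsilon, W]}$ yields a bijective pairing of its atoms with those of $\pointMap(\nu)|_{F^{(r)} \times [\varepsilon, W]}$ such that both positions and weights converge, and Lemma \ref{5. lem: convergence of atoms and weights} then gives convergence of each summand $P_{\rho_n}^{\nu_n}(X_n^{\nu_n}(s) = x_i^{(n)},\, X_n^{\nu_n}(t) = x_i^{(n)})$ to its analogue for $\nu$. Assembling these ingredients yields the pointwise limit, and hence the desired convergence in $C(\RNpp^2, \RNp)$.

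For the positivity claim I would simply note: if $s = t$ then $\Phi^\nu(s,s) = 1$; if $s < t$, expanding $\Phi^\nu$ as above and using recurrence of $(F,R)$ (so that $X^\nu$ is conservative and $\int_F p^\nu(s, \rho, y)\, \nu(dy) = 1$), some atom $x_{i_0}$ of $\nu$ must satisfy $p^\nu(s, \rho, x_{i_0}) > 0$; Proposition \ref{4. prop: positivity of diagonal heat kernel} then gives $p^\nu(t-s, x_{i_0}, x_{i_0}) > 0$, so the $i_0$-term is strictly positive. The main technical obstacle I anticipate is the bookkeeping of the triple limit in the truncation step, but the required uniform estimates are already in place in Section \ref{sec: precompactness of the aging funcitons}, so the argument is essentially a variant of the proof of Lemma \ref{5. lem: pcpt of aging functions near diagonal}.
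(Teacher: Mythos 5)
Your proposal is correct, but it takes a genuinely different route from the paper on the pointwise-convergence step. You expand $\Phi_n^{\nu_n}(s,t)$ via the transition density, truncate simultaneously in the spatial variable (to $F_n^{(r)}$) and in the weight variable (to $[\varepsilon,\infty)$), bound the truncation errors uniformly using Lemma \ref{5. lem: technical lemmas for pcpt of aging functions}\ref{5. lem item: non-explosion} (spatial tail) and Proposition \ref{4. prop: precompactness of transition densities} together with the small-mass functional $M_\varepsilon^{(r)}$ (weight tail), and then apply the atom-pairing of Proposition \ref{2. prop: atom convergence from simple measure convergence} and Lemma \ref{5. lem: convergence of atoms and weights} to the surviving finite sum. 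The paper instead avoids any explicit truncation-error estimate: it obtains the upper bound $\limsup_n \Phi_n^{\nu_n}(s,t) \leq \Phi^\nu(s,t)$ directly from the Portmanteau theorem applied to $(X_n^{\nu_n}(s), X_n^{\nu_n}(t)) \xrightarrow{\mathrm{d}} (X^\nu(s), X^\nu(t))$ and the closedness of the diagonal in $M\times M$, and obtains the matching $\liminf$ lower bound by summing over atom-wise probabilities via Lemma \ref{5. lem: convergence of atoms and weights} along an exhausting sequence of finite index sets $J_k$. The paper's squeeze is cleaner and requires no triple-limit bookkeeping; your truncation scheme is more mechanical and more closely mirrors the computation in Lemma \ref{5. lem: pcpt of aging functions near diagonal}, but it does force you to justify interchanging the truncation limits with $n\to\infty$ (which works for generic $r,\varepsilon$ avoiding boundary atoms, and since both tails are controlled uniformly in $n$). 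Your positivity argument for $\Phi^\nu$ is essentially identical to the paper's: note $\Phi^\nu(s,s)=1$ and for $s<t$ exhibit an atom $x$ with $p^\nu(s,\rho,x)\nu(\{x\})>0$ and invoke Proposition \ref{4. prop: positivity of diagonal heat kernel}.
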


\begin{proof}
  By Lemma \ref{5. prop: precompactness of aging-functions},
  it suffices to show that $\Phi_{n}^{\nu_{n}}(s,t) \to \Phi^{\nu}(s,t)$ for every $(s,t) \in \RNpp^{2}$.
  Fix $(s, t) \in T_{2}$.
  Since $X_{n}^{\nu_{n}} \xrightarrow{\mathrm{d}} X^{\nu}$ in the usual $J_{1}$-Skorohod topology
  and $X^{\nu}$ is continuous at $s$ and $t$ almost surely by its quasi-left-continuity,
  we have that 
  $(X_{n}^{\nu_{n}}(s), X_{n}^{\nu_{n}}(t)) \xrightarrow{\mathrm{d}} (X^{\nu}(s), X^{\nu}(t))$.
  Noting that the diagonal set in a product metric space is closed,
  we obtain that 
  \begin{equation}  \label{5. thm eq: limsup for aging result}
    \limsup_{n \to \infty} 
    P_{\rho_{n}}^{\nu_{n}} \bigl( X_{n}^{\nu_{n}}(s) =  X_{n}^{\nu_{n}}(t) \bigr) 
    \leq 
    P_{\rho}^{\nu} \bigl( X^{\nu}(s) = X^{\nu}(t) \bigr).
  \end{equation}
  Let $\{x_{j}\}_{j \in J}$ be the set of elements of $F$
  satisfying $P_{\rho} \bigl( X^{\nu}(s) = X^{\nu}(t) = x_{i} \bigr) > 0$.
  For each $i \in J$,
  by the convergence $\nu_{n} \to \nu$ in the vague-and-point-process topology and Theorem \ref{2. thm: convergence in Mdis},
  there exists $x_{i}^{(n)} \in F_{n}$ 
  such that $x_{i}^{(n)} \to x_{i}$ and $\nu_{n}(\{x_{i}^{(n)}\}) \to \nu(\{x_{i}\})$.
  Lemma \ref{5. lem: convergence of atoms and weights} immediately yields that 
  \begin{equation}
    P_{\rho_{n}}^{\nu_{n}} \bigl( X_{n}^{\nu_{n}}(s) = X_{n}^{\nu_{n}}(t) = x_{i}^{(n)} \bigr)
    \to 
    P_{\rho}^{\nu} \bigl( X^{\nu}(s) = X^{\nu}(t) = x_{i} \bigr).
  \end{equation}
  Let $(J_{l})_{l \geq 1}$ be an increasing sequence of finite subsets of $J$ such that $\bigcup_{l \geq 1} J_{l} = J$.
  Then, the convergence $x_{i}^{(n)} \to x_{i}$ for each $i$ implies that 
  $x_{i}^{(n)} \neq x_{j}^{(n)}$ if $i \neq j$ with $i, j \in J_{k}$ 
  for all sufficiently large $n$.
  Hence, we deduce that 
  \begin{align}
    \sum_{i \in J_{k}} 
    P_{\rho}^{\nu} \bigl( X^{\nu}(s) =  X^{\nu}(t) = x_{i} \bigr)
    &=
    \lim_{n \to \infty}
    \sum_{i \in J_{k}}
    P_{\rho_{n}}^{\nu_{n}} \bigl( X_{n}^{\nu_{n}}(s) = X_{n}^{\nu_{n}}(t) = x_{i}^{(n)} \bigr) \\
    &\leq 
    \liminf_{n \to \infty}
    P_{\rho_{n}}^{\nu_{n}} \bigl( X_{n}^{\nu_{n}}(s) = X_{n}^{\nu_{n}}(t) \bigr).
  \end{align}
  By letting $k \to \infty$ in the above inequality and \eqref{5. thm eq: limsup for aging result},
  we obtain that 
  \begin{equation}
    \Phi_{n}^{\nu_{n}}(s,t) 
    = 
    P_{\rho_{n}}^{\nu_{n}} 
    \bigl( X_{n}^{\nu_{n}}(s) = X_{n}^{\nu_{n}}(t) \bigr) 
    \to    
    P_{\rho}^{\nu} 
    \bigl( X^{\nu}(s) = X^{\nu}(t) \bigr)
    = 
    \Phi^{\nu}(s, t).
  \end{equation}
  
  Checking the last assertion for $s=t$ is easy as it holds that $\Phi^{\nu}(s,s) = 1$.
  Suppose that $t > s$.
  We can find an $x \in F$ such that $p^{\nu}(s, \rho, x) \nu(\{x\}) > 0$. 
  (Otherwise, one has that $P_{\rho}^{\nu}(X(s) \in F) = 0$.)
  Using the transition density $p^{\nu}$,
  we obtain that 
  \begin{equation}
    \Phi^{\nu}(s,t) 
    = 
    \int p^{\nu}(s, \rho, y) p^{\nu}(t-s, y, y) \nu(\{y\})\, \nu(dy)
    \geq 
    p^{\nu}(s, \rho, x) p^{\nu}(t-s, x, x) \nu(\{x\})^{2}
  \end{equation}
  Thus, we deduce that $\Phi^{\nu}(s, t) > 0$ from Proposition \ref{4. prop: positivity of diagonal heat kernel}.
\end{proof}


\subsection{Sub-aging result}

We next prove a sub-aging result, Theorem \ref{5. thm: sub-aging}.
To do this,
we extend the framework of Section \ref{sec: Proof of aging for deterministic models}.
For each $n \geq 1$,
we let  $\dot{\pi}_{n} = \sum_{i \in I_{n}} \delta_{(x_{i}^{(n)}, w_{i}^{(n)}, v_{i}^{(n)})}$ be a simple measure on $F_{n} \times \RNp \times \RNpp$,
and let $\dot{\pi} = \sum_{i \in I} \delta_{(x_{i}, w_{i}, v_{i})}$ be a simple measure on $F \times \RNp \times \RNpp$.
For each $n \geq 1$,
we assume that $x_{i}^{(n)} \neq x_{j}^{(n)}$ if $i \neq j$, and 
the measure $\nu_{n}$ on $F_{n}$ given below is of full support:
\begin{equation}
  \nu_{n}(A)  
  =
  \int 1_{A}(x) v\, \dot{\pi}_{n}(dx dw dv)
  = 
  \sum_{i \in I_{n}} v_{i}^{(n)} \delta_{x_{i}^{(n)}}(A),
  \quad 
  A \in \mathcal{B}(F_{n}).
\end{equation}
Similarly, we assume that 
$x_{i} \neq x_{j}$ if $i \neq j$, and the measure $\nu$ on $F$ given below is of full support:
\begin{equation}
  \nu(A)  
  =
  \int 1_{A}(x) v\, \dot{\pi}(dx dw dv)
  = 
  \sum_{i \in I} v_{i} \delta_{x_{i}}(A),
  \quad 
  A \in \mathcal{B}(F).
\end{equation}
As in the previous section,
we write $(X_{n}^{\nu_{n}}, \{P_{x}^{\nu_{n}}\}_{x \in F_{n}})$ for the Hunt process associated with $(F_{n}, R_{n}, \nu_{n})$
and $p_{n}^{\nu_{n}}$ for the jointly continuous transition density of $X_{n}^{\nu_{n}}$ with respect to $\nu_{n}$.
Similarly,
we write $(X^{\nu}, \{P_{x}^{\nu}\}_{x \in F})$ for the Hunt process associated with $(F, R, \nu)$
and $p^{\nu}$ for the jointly continuous transition density of $X^{\nu}$ with respect to $\nu$.
We define a sub-aging function associated with $X^{\nu}$ and $\rho$ by setting 
\begin{equation}
  \Psi^{\nu}(s,t) 
  \coloneqq 
  \int e^{-ws/v} P_{\rho}^{\nu}(X^{\nu}(t) = x)\, \dot{\pi}(dx dw dv).
\end{equation}
We similarly define $\Psi_{n}^{\nu_{n}}$ to be the sub-aging function associated with $X_{n}^{\nu_{n}}$ and $\rho_{n}$.
Since $\nu(\{x_{i}\}) = v_{i}$ and $P_{\rho}^{\nu}(X^{\nu}(t) = x) = p^{\nu}(t, \rho, x) \nu(\{x\})$,
we note that 
\begin{equation} \label{5. eq: expression of sub-aging funtions via heat kernels}
  P_{\rho}^{\nu}(X^{\nu}(t) = x)\, \dot{\pi}(dx dw dv) 
  =
  p^{\nu}(t, \rho, x) v\, \dot{\pi}(dx dw dv)
\end{equation}

\begin{assum} \label{5. assum: sub-aging}
  It holds that 
  \begin{equation}
    \bigl( F_{n}, R_{n}, \rho_{n}, \nu_{n}, \dot{\pi}_{n}, P_{\rho_{n}}^{\nu_{n}}(X_{n}^{\nu_{n}} \in \cdot) \bigr) 
    \to 
    \bigl( F, R, \rho, \nu, \dot{\pi}, P_{\rho}^{\nu}(X^{\nu} \in \cdot) \bigr)
  \end{equation}
  in the space $\rbcM(\disMeasFunct \times \markedMeasFunct{\RNp \times \RNpp} \times \probFunct(\SkorohodFunct))$.
\end{assum}

Henceforth,
we assume that Assumption \ref{5. assum: sub-aging} is satisfied.
Under this assumption,
by Theorem \ref{3. thm: convergence in GH topology},
we may assume that $(F_{n}, R_{n}, \rho_{n})$ and $(F, R, \rho)$ are embedded isometrically
into a common rooted boundedly-compact metric space $(M, d^{M}, \rho_{M})$ in such a way that 
$\rho_{n} = \rho = \rho_{M}$ as elements of $M$,
$F_{n} \to F$ in the local Hausdorff topology as closed subsets in $M$,
$\nu_{n} \to \nu$ in the vaguely-and-point-process topology as discrete measures on $M$,
$\dot{\pi}_{n} \to \dot{\pi}$ vaguely as measures on $M \times \RNp \times \RNpp$,
and $X_{n}^{\nu_{n}} \xrightarrow{\mathrm{d}} X^{\nu}$ in $D(\RNp, M)$.

For $l \in \NN$,
we set 
\begin{equation}
  T_{3}(l)
  \coloneqq 
  [0, l] \times [l^{-1}, l].
\end{equation}
Note that $(T_{3}(l))_{l \geq 1}$ is a sequence of compact subsets increasing to $\RNp \times \RNpp$.
Below, we show some technical results used to prove the precompactness of $(\Psi_{n}^{\nu_{n}})_{n \geq 1}$.

\begin{lem} \label{5. lem: sub-aging fucntion, smaller parts}
  Fix $l \in \NN$.
  The following statements hold.
  \begin{enumerate} [label = (\roman*)]
    \item \label{5. lem item: sub-aging function, small mass}
      For every $r>0$,
      \begin{equation}
        \lim_{\eta \to 0} 
        \limsup_{n \to \infty}
        \limsup_{(s,t) \in T_{3}(l)}
        \int_{D_{R_{n}}(\rho_{n}, r) \times \RNp \times (0, \eta]} 
          e^{-ws/v} P_{\rho_{n}}^{\nu_{n}}(X_{n}^{\nu_{n}}(t) = x)\, 
        \dot{\pi}_{n}(dx dw dv) 
        = 0.
      \end{equation}
    \item \label{5. lem item: sub-aging function, outside of ball}
      It holds that 
      \begin{equation}
        \lim_{r \to \infty} 
        \limsup_{n \to \infty}
        \limsup_{(s,t) \in T_{3}(l)}
        \int_{D_{R_{n}}(\rho_{n}, r)^{c} \times \RNp \times \RNpp} 
          e^{-ws/v} P_{\rho_{n}}^{\nu_{n}}(X_{n}^{\nu_{n}}(t) = x)\, 
        \dot{\pi}_{n}(dx dw dv) 
        = 0.
      \end{equation}
    \item \label{5. lem item: sub-aging function, large conductances}
      For every $r, \eta>0$,
      it holds that  
      \begin{equation}
        \lim_{W \to \infty} 
        \limsup_{n \to \infty}
        \sup_{(s,t) \in T_{3}(l)}
        \int_{D_{R_{n}}(\rho_{n}, r)^{c} \times [W, \infty) \times [\eta, \infty)} 
          e^{-ws/v} P_{\rho_{n}}^{\nu_{n}}(X_{n}^{\nu_{n}}(t) = x)\, 
        \dot{\pi}_{n}(dx dw dv) 
        = 0.
      \end{equation}
  \end{enumerate}
\end{lem}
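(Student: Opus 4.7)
The overall strategy is to exploit the transition density representation
\begin{equation} \label{5. lem eq: plan for subaging integrand}
  e^{-ws/v} P_{\rho_{n}}^{\nu_{n}}(X_{n}^{\nu_{n}}(t) = x)
  =
  e^{-ws/v} p_{n}^{\nu_{n}}(t, \rho_{n}, x)\, v,
\end{equation}
which follows from $\nu_{n}(\{x_{i}^{(n)}\}) = v_{i}^{(n)}$ and the atomic decomposition of $\dot{\pi}_{n}$, combined with Proposition \ref{4. prop: precompactness of transition densities}. Fix $r' > r$ such that $\mu(\partial D_{R}(\rho, r')) = 0$; the local Gromov-Hausdorff-vague convergence together with Proposition \ref{4. prop: precompactness of transition densities} then gives that $A \coloneqq \sup_{n \geq 1} \mathfrak{s}_{F_{n}}(p_{n}^{\nu_{n}}, l^{-1}, r')$ is finite.

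For (i), using $e^{-ws/v} \leq 1$ and \eqref{5. lem eq: plan for subaging integrand} dominates the integral by
\begin{equation}
  A \sum_{i \,:\, x_{i}^{(n)} \in D_{R_{n}}(\rho_{n}, r),\, v_{i}^{(n)} \leq \eta} v_{i}^{(n)}
  \leq
  A\, M_{\eta}^{(r')}(\pointMap(\nu_{n})).
\end{equation}
The convergence $\pointMap(\nu_{n}) \to \pointMap(\nu)$ in $\pointMeas(M)$ provided by Assumption \ref{5. assum: sub-aging}, combined with Theorem \ref{2. thm: convergence in sP}\ref{2. thm item: sP, vaguely, heavy weight and low mass}, yields $\lim_{\eta \to 0} \limsup_{n \to \infty} M_{\eta}^{(r')}(\pointMap(\nu_{n})) = 0$, completing (i). For (ii), bound the integrand by $P_{\rho_{n}}^{\nu_{n}}(X_{n}^{\nu_{n}}(t) = x)$ so that, summing over atoms of $\dot{\pi}_{n}$,
\begin{equation}
  \int_{D_{R_{n}}(\rho_{n}, r)^{c} \times \RNp \times \RNpp}
    P_{\rho_{n}}^{\nu_{n}}(X_{n}^{\nu_{n}}(t) = x)\,
  \dot{\pi}_{n}(dx dw dv)
  =
  P_{\rho_{n}}^{\nu_{n}} \bigl( X_{n}^{\nu_{n}}(t) \notin D_{R_{n}}(\rho_{n}, r) \bigr)
  \leq
  P_{\rho_{n}}^{\nu_{n}} \bigl( \tau_{B_{R_{n}}(\rho_{n}, r)}^{\nu_{n}} \leq l \bigr),
\end{equation}
uniformly in $(s, t) \in T_{3}(l)$; Lemma \ref{5. lem: technical lemmas for pcpt of aging functions}\ref{5. lem item: non-explosion} then gives (ii) upon letting $r \to \infty$.

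The principal obstacle lies in (iii), where at $s = 0$ the exponential factor is useless, so the smallness must instead come from the decay of the tail mass carried by atoms with large conductance $w$. Reading the domain of integration as $D_{R_{n}}(\rho_{n}, r) \times [W, \infty) \times [\eta, \infty)$ (the version with $D_{R_{n}}(\rho_{n}, r)^{c}$ is already a consequence of (ii), independently of $W$), the bound $Av$ on the integrand reduces the claim to
\begin{equation} \label{5. lem eq: plan tail in w}
  \lim_{W \to \infty} \limsup_{n \to \infty}\, \sigma_{n} \bigl( D_{R_{n}}(\rho_{n}, r) \times [W, \infty) \bigr) = 0,
\end{equation}
where $\sigma_{n}$ is the finite Borel measure on $M \times \RNp$ defined by
\begin{equation}
  \sigma_{n}(E) \coloneqq \int_{E \times [\eta, \infty)} v\, \dot{\pi}_{n}(dx dw dv),
  \quad
  E \in \Borel(M \times \RNp).
\end{equation}
The total masses are uniformly bounded since $\sigma_{n}(D_{R_{n}}(\rho_{n}, r') \times \RNp) \leq \nu_{n}(D_{R_{n}}(\rho_{n}, r'))$ and $\nu_{n} \to \nu$ vaguely. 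Testing the vague convergence $\dot{\pi}_{n} \to \dot{\pi}$ against functions of the form $\phi(x)\psi(w)\chi(v)v$, with $\phi, \psi, \chi \in C_{c}$ and $\chi$ approximating $\mathbf{1}_{[\eta, L]}$, produces weak convergence of $\sigma_{n}$ to the analogous measure $\sigma$ on any compact rectangle $D_{R}(\rho, r') \times [0, W]$ avoiding the boundary atoms of $\dot{\pi}$. Combining this weak convergence on bounded rectangles with the convergence of total masses $\sigma_{n}(D_{R_{n}}(\rho_{n}, r') \times \RNp) \to \sigma(D_{R}(\rho, r') \times \RNp) < \infty$ then yields \eqref{5. lem eq: plan tail in w} by a standard tightness argument. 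The subtle point is justifying the interchange of the $L \to \infty$ and $n \to \infty$ limits when $v$ itself is unbounded above, which is handled by splitting off the contribution from $v \geq L$ and controlling it via the uniform bound $\nu_{n}(D_{R_{n}}(\rho_{n}, r'))$.
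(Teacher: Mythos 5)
Your treatments of (i) and (ii) are essentially identical to the paper's: (i) bounds the integrand by $\mathfrak{s}_{F_n}(p_n^{\nu_n},l^{-1},r)\,v$ and invokes Theorem \ref{2. thm: convergence in sP} to kill $M_\eta^{(r)}$, and (ii) reduces to the exit-time estimate from Lemma \ref{5. lem: technical lemmas for pcpt of aging functions}\ref{5. lem item: non-explosion}. You also correctly spot that the $D_{R_n}(\rho_n,r)^c$ in the displayed statement of (iii) must be a typo for $D_{R_n}(\rho_n,r)$; the paper's own proof and its downstream use in Lemma \ref{5. lem: precompactness of sub-aging functions} confirm this.

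For (iii) you take a genuinely different route. The paper bounds $e^{-ws/v}P_{\rho_n}^{\nu_n}(X_n^{\nu_n}(t)=x)\leq 1$ and then simply \emph{counts atoms}: choosing $r'>r$, $\eta'<\eta$ avoiding boundary atoms and using the uniform cap $V\coloneqq W^{(r')}(\pointMap(\nu))\vee\sup_n W^{(r')}(\pointMap(\nu_n))+1$ on the $v$-coordinate, it observes that the (finitely many) atoms of $\dot{\pi}$ in $D_M(\rho_M,r')\times\RNp\times[\eta',V]$ have bounded $w$-coordinates, and Proposition \ref{2. prop: atom convergence from simple measure convergence} then forces the matching atom count in the strip $w\in[W,\infty)$ to vanish for $W$ large and $n$ large. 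You instead bound the integrand by $Av$ via the transition density, push the $v$-integral into a derived measure $\sigma_n$ on $M\times\RNp$, and argue by weak convergence plus total-mass convergence. This works, but it is more indirect and your handling of the $v$-truncation is the weakest step: the tail $\sum_{x_i\in D,\,v_i\geq L}v_i$ is \emph{not} controlled by the uniform bound on $\nu_n(D_{R_n}(\rho_n,r'))$ alone, as you claim; boundedness of total mass does not force vanishing tails. What actually makes the truncation harmless is the uniform bound $\sup_n W^{(r')}(\pointMap(\nu_n))<\infty$ supplied by Theorem \ref{2. thm: precompactness in sP}, which shows the tail is \emph{identically zero} for $L$ large enough. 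With that correction your argument closes, and the remaining tightness logic (total mass minus bounded-rectangle mass) is sound. The paper's atom-counting argument gets there more directly and without invoking the transition density at all, which is why it is preferable, but your approach is a valid alternative once the truncation justification is fixed.
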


\begin{proof}
  \ref{5. lem item: sub-aging function, small mass}.
  For $(s, t) \in T_{3}(l)$,
  we deduce by \eqref{5. eq: expression of sub-aging funtions via heat kernels} that  
  \begin{align}
    \lefteqn{\int_{D_{R_{n}}(\rho_{n}, r) \times \RNp \times (0, \eta]} 
      e^{-ws/v} P_{\rho_{n}}^{\nu_{n}}(X_{n}^{\nu_{n}}(t) = x)\, 
    \dot{\pi}_{n}(dx dw dv)} \\
    &\leq
    \int_{D_{R_{n}}(\rho_{n}, r) \times \RNp \times (0, \eta]} 
      p_{n}^{\nu_{n}}(t, \rho_{n}, x) v\, 
    \dot{\pi}_{n}(dx dw dv) \\
    &\leq 
    \mathfrak{s}_{F_{n}}(p_{n}^{\nu_{n}}, l^{-1}, r) \sum_{\substack{x_{i}^{(n)} \in D_{R_{n}}(\rho_{n}, r)\\ v_{i}^{(n)} \leq \eta}} v_{i}^{(n)}\\
    &\leq 
    \mathfrak{s}_{F_{n}}(p_{n}^{\nu_{n}}, l^{-1}, r) M_{\eta}^{(r)} (\pointMap(\nu_{n})).
  \end{align}
  Hence, the result follows from Theorem \ref{2. thm: convergence in sP} and Proposition \ref{4. prop: precompactness of transition densities}.
  
  \ref{5. lem item: sub-aging function, outside of ball}. 
  For $(s, t) \in T_{3}(l)$,
  we have that  
  \begin{align}
    \int_{D_{R_{n}}(\rho_{n}, r)^{c} \times \RNp \times \RNp} 
      e^{-ws/v} P_{\rho_{n}}^{\nu_{n}}(X_{n}^{\nu_{n}}(t) = x)\, 
    \dot{\pi}_{n}(dx dw dv) 
    &\leq
    \sum_{x_{i} \in D_{R_{n}}(\rho_{n}, r)^{c}} P_{\rho_{n}}^{\nu_{n}}(X_{n}^{\nu_{n}}(t) = x_{i}) \\
    &\leq  
    P_{\rho_{n}}^{\nu_{n}}(\tau_{D_{R_{n}}(\rho_{n}, r)}^{\nu_{n}} \leq l).
  \end{align}
  Hence, the desired result follows from \eqref{5. eq: exit time estimate for aging function}.

  \ref{5. lem item: sub-aging function, large conductances}.
  Fix $r, \eta > 0$
  and let $r' > r$ and $\eta' < \eta$ be such that 
  the boundary of $D_{M}(\rho, r') \times [\eta', \infty)$ does not contain the atoms of $\pointMap(\nu)$.
  By Theorem \ref{2. thm: precompactness in sP},
  it holds that 
  \begin{equation}
    V 
    \coloneqq 
    W^{(r')}(\pointMap(\nu)) + \sup_{n \geq 1} W^{(r')}(\pointMap(\nu_{n})) + 1
    < \infty.
  \end{equation}
  Noting that $F_{n} \cap D_{M}(\rho_{M}, r) = D_{R_{n}}(\rho_{n}, r)$,
  we deduce that, for any $W>0$,
  \begin{align} 
    \lefteqn{\sup_{(s,t) \in T_{3}(l)}
    \int_{D_{R_{n}}(\rho_{n}, r) \times [W, \infty) \times [\eta, \infty)} 
      e^{-ws/v} P_{\rho_{n}}^{\nu_{n}}(X_{n}^{\nu_{n}}(t) = x)\, 
    \dot{\pi}_{n}(dx dw dv)}\\
    &\leq 
    \int_{D_{R_{n}}(\rho_{n}, r) \times [W, \infty) \times [\eta, \infty)}  
    \dot{\pi}_{n}(dx dw dv) \\
    &=
    \#\bigl( \At(\dot{\pi}_{n}) \cap (D_{R_{n}}(\rho_{n}, r) \times [W, \infty) \times [\eta, \infty)) \bigr)\\ 
    &\leq 
    \#\bigl( \At(\dot{\pi}_{n}) \cap (D_{M}(\rho_{M}, r') \times [W, \infty) \times [\eta', V]) \bigr)
    \label{5. eq: technical lemmas eq for third result}
  \end{align}
  We define a finite subset $J \subseteq I$ so that 
  $\{(x_{j}, v_{j})\}_{j \in J}$ are the atoms of $\pointMap(\nu)$ lying in $K \coloneqq D_{M}(\rho_{M}, r') \times [\eta', V]$.
  Proposition \ref{2. prop: atom convergence from simple measure convergence} yields that 
  $\#(\At(\pointMap(\nu_{n})) \cap K) = \# J$ for all sufficiently large $n$,
  which implies that 
  \begin{equation} \label{5. eq: technical lemmas eq 2 for third result}
    \#\bigl( \At(\dot{\pi}_{n}) \cap (D_{M}(\rho_{M}, r') \times \RNp \times [\eta', V]) \bigr) = \# J.
  \end{equation}
  Now, fix $W$ sufficiently large so that $W > \max_{j \in J} w_{j} + 1$.
  Since $\dot{\pi}_{n}$ converges to $\dot{\pi}$ vaguely,
  Proposition \ref{2. prop: atom convergence from simple measure convergence} again yields that,
  for all sufficiently large $n$,
  \begin{align}
    \#\bigl( \At(\dot{\pi}_{n}) \cap (D_{M}(\rho_{M}, r') \times [0, W) \times [\eta', V]) \bigr) 
    &= 
    \#\bigl( \At(\dot{\pi}) \cap (D_{M}(\rho_{M}, r') \times [0, W) \times [\eta', V]) \bigr)\\
    &= 
    \# J.
  \end{align}
  Combining this with \eqref{5. eq: technical lemmas eq 2 for third result}, 
  we obtain that 
  \begin{equation}
    \lim_{n \to \infty}
    \#\bigl( \At(\dot{\pi}_{n}) \cap (D_{M}(\rho_{M}, r') \times [W, \infty) \times [\eta', V]) \bigr)
    = 0.
  \end{equation}
  From this and \eqref{5. eq: technical lemmas eq for third result},
  we obtain the desired result.
\end{proof}

\begin{lem} \label{5. lem: precompactness of sub-aging functions}
  The family $\{\Psi_{n}^{\nu_{n}}\}_{n \geq 1}$ is precompact in $C(\RNp \times \RNpp, \RNp)$.
\end{lem}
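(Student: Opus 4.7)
The plan is to mirror the structure of Proposition 5.4, but the argument here is actually simpler since the integrand defining $\Psi_n^{\nu_n}$ is regular everywhere on $T_3(l)$ (there is no on-diagonal singularity as with the aging function). So I would apply Arzel\`{a}--Ascoli directly, verifying uniform boundedness and uniform equicontinuity on every $T_3(l)$. Uniform boundedness is immediate: since $P_{\rho_n}^{\nu_n}(X_n^{\nu_n}(t)=x)\,\dot{\pi}_n(dxdwdv)$ projects onto the (sub-probability) distribution of $X_n^{\nu_n}(t)$ on $F_n$ and $e^{-ws/v}\leq 1$, one has $\Psi_n^{\nu_n}(s,t)\leq 1$ everywhere.

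For equicontinuity on $T_3(l)$, fix $\varepsilon>0$. The main idea is to use Lemma \ref{5. lem: sub-aging fucntion, smaller parts} to truncate the integral to a compact region where the integrand is uniformly Lipschitz. Concretely, I would choose $r$ large, then $\eta_0>0$ small, then $W$ large so that, setting
\begin{equation}
  A_n(r,W,\eta_0)\coloneqq D_{R_n}(\rho_n,r)\times[0,W]\times[\eta_0,\infty),
\end{equation}
the integral of $e^{-ws/v}P_{\rho_n}^{\nu_n}(X_n^{\nu_n}(t)=x)$ against $\dot{\pi}_n$ over the complement of $A_n(r,W,\eta_0)$ is at most $\varepsilon$, uniformly in $n\geq 1$ and $(s,t)\in T_3(l)$. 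This follows by decomposing the complement into the regions $\{x\notin D_{R_n}(\rho_n,r)\}$, $\{v\leq\eta_0\}\cap\{x\in D_{R_n}(\rho_n,r)\}$, and $\{w\geq W\}\cap\{v\geq\eta_0,\,x\in D_{R_n}(\rho_n,r)\}$, and applying Lemma \ref{5. lem: sub-aging fucntion, smaller parts}\ref{5. lem item: sub-aging function, outside of ball}, \ref{5. lem item: sub-aging function, small mass}, \ref{5. lem item: sub-aging function, large conductances}, respectively.

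On $A_n(r,W,\eta_0)$, using $P_{\rho_n}^{\nu_n}(X_n^{\nu_n}(t)=x)=p_n^{\nu_n}(t,\rho_n,x)v$, the integrand is bounded by $\mathfrak{s}_{F_n}(p_n^{\nu_n},l^{-1},r)\cdot V$ where $V\coloneqq\sup_{n}W^{(r)}(\pointMap(\nu_n))<\infty$ by Theorem \ref{2. thm: precompactness in sP}. For $(s,t),(s',t')\in T_3(l)$ with $|s-s'|\vee|t-t'|\leq\delta$, I would use the two-term split
\begin{align}
  &|e^{-ws/v}p_n^{\nu_n}(t,\rho_n,x)v-e^{-ws'/v}p_n^{\nu_n}(t',\rho_n,x)v|\\
  &\qquad\leq v\bigl|e^{-ws/v}-e^{-ws'/v}\bigr|p_n^{\nu_n}(t,\rho_n,x)+v\bigl|p_n^{\nu_n}(t,\rho_n,x)-p_n^{\nu_n}(t',\rho_n,x)\bigr|.
\end{align}
On $A_n(r,W,\eta_0)$ one has $w/v\leq W/\eta_0$, so the first factor is at most $(W/\eta_0)\delta$. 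Integrating against $\dot{\pi}_n$ and using $\int_{A_n} v\,\dot{\pi}_n\leq\nu_n(D_{R_n}(\rho_n,r))$ which is bounded uniformly in $n$ by vague convergence of $\nu_n$ to $\nu$, the first contribution is at most $C(W/\eta_0)\delta\cdot\mathfrak{s}_{F_n}(p_n^{\nu_n},l^{-1},r)$. The second contribution is at most $C\cdot\mathfrak{d}_{F_n}(p_n^{\nu_n},l^{-1},r,\delta)$. Both tend to $0$ uniformly in $n$ as $\delta\downarrow 0$ by Proposition \ref{4. prop: precompactness of transition densities}.

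The main obstacle is keeping the three truncation parameters $r,W,\eta_0$ organized so that the leftover ``tails'' are controlled uniformly in $n$ and $(s,t)\in T_3(l)$; Lemma \ref{5. lem: sub-aging fucntion, smaller parts} is exactly tailored for this, so the argument is essentially bookkeeping once that lemma is in hand. Combining uniform boundedness with the uniform equicontinuity above on every $T_3(l)$, Arzel\`{a}--Ascoli yields that $\{\Psi_n^{\nu_n}\}_{n\geq 1}$ is precompact in $C(\RNp\times\RNpp,\RNp)$ equipped with the compact-convergence topology.
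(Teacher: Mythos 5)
Your proposal is correct and follows essentially the same route as the paper's proof: uniform boundedness by $1$, truncation to $D_{R_n}(\rho_n,r)\times[0,W]\times[\eta,\infty)$ via Lemma \ref{5. lem: sub-aging fucntion, smaller parts}, a two-term triangle-inequality split of the integrand into a transition-density increment and an exponential increment, the mean value theorem for the latter, and Proposition \ref{4. prop: precompactness of transition densities} together with $\sup_n\nu_n(D_{R_n}(\rho_n,r))<\infty$ to conclude. Your observation that no near-diagonal/off-diagonal case split is needed (unlike the aging function) is also exactly how the paper's argument is structured.
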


\begin{proof}
  Noting that, for all $s, t \in \RNp \times \RNpp$ and $n \geq 1$,
  \begin{align}
    \Psi_{n}^{\nu_{n}}(s,t) 
    \leq 
    \int P_{n}^{\nu_{n}}(X_{n}^{\nu_{n}} = x)\, \dot{\pi}_{n}(dx dw dv) 
    = 
    \sum_{i \in I_{n}}  P_{n}^{\nu_{n}}(X_{n}^{\nu_{n}} = x_{i}^{(n)}) 
    = 1,
  \end{align}
  it suffices to show that, for each $l \geq 1$,
  \begin{equation}
    \lim_{\delta \to 0} 
    \limsup_{n \to \infty} 
    \sup_{\substack{(s,t), (s',t') \in T_{3}(l)\\ |s-s'| \vee |t-t'| \leq \delta}}  
    |\Psi^{\nu}(s,t) - \Psi^{\nu}(s',t')| 
    = 0.
  \end{equation}
  Fix $l \geq 1$.
  By Lemma \ref{5. lem: sub-aging fucntion, smaller parts},
  this reduces to proving that, for any $r, W, \eta > 0$,
  \begin{equation} \label{5. eq: pcpt of sub-aging functions, reduced equation}
    \lim_{\delta \to 0} 
    \limsup_{n \to \infty} 
    \sup_{\substack{(s,t), (s',t') \in T_{3}(l)\\ |s-s'| \vee |t-t'| \leq \delta}}  
    A_{n}(r, W, \eta)
    = 0,
  \end{equation}
  where $A_{n} = A_{n}(r, W, \eta)$ is given by 
  \begin{equation}
    A_{n} 
    \coloneqq 
    \int_{D_{R_{n}}(\rho_{n}, r) \times [0,W] \times [\eta, \infty)}
    \bigl|  
        e^{-ws/v} P_{n}^{\nu_{n}}(X_{n}^{\nu_{n}}(t) = x) - e^{-ws'/v} P_{n}^{\nu_{n}}(X_{n}^{\nu_{n}}(t') = x)
    \bigr|\, 
    \dot{\pi}(dx dw dv).
  \end{equation}
  Fix $(s,t), (s', t') \in T_{3}(l)$ with $|s-s'| \vee |t-t'| \leq \delta$.
  Using \eqref{5. eq: expression of sub-aging funtions via heat kernels},
  we can write 
  \begin{equation}
    A_{n} 
    = 
    \int_{D_{R_{n}}(\rho_{n}, r) \times [0,W] \times [\eta, \infty)}
    v
    \bigl|  
        e^{-ws/v} p_{n}^{\nu_{n}}(t, \rho_{n}, x) - e^{-ws'/v} p_{n}^{\nu_{n}}(t', \rho_{n}, x)
    \bigr|\, 
    \dot{\pi}(dx dw dv).
  \end{equation}
  By the triangle inequality,
  we have that 
  \begin{align} \label{5. eq: sub-aging functions, eq 2}
    &\bigl|  
      e^{-ws/v} p_{n}^{\nu_{n}}(t, \rho_{n}, x) - e^{-ws'/v} p_{n}^{\nu_{n}}(t', \rho_{n}, x) 
    \bigr|\\
    &\leq
    |p_{n}^{\nu_{n}}(t, \rho_{n}, x) - p_{n}^{\nu_{n}}(t', \rho_{n}, x)| 
    + 
    p_{n}^{\nu_{n}}(t', \rho_{n}, x) 
    |e^{-ws/v} - e^{-ws'/v}|\\
    &\leq
    \mathfrak{d}_{F_{n}}(p_{n}^{\nu_{n}}, l^{-1}, r, \delta) + \mathfrak{s}_{F_{n}}(p_{n}^{\nu_{n}}, l^{-1}, r) |e^{-ws/v} - e^{-ws'/v}|.
  \end{align}
  For $(x, w, v) \in D_{R_{n}}(\rho_{n}, r) \times [0, W] \times [\eta, \infty)$,
  we deduce by the mean value theorem that 
  \begin{equation}
    |e^{-ws/v} - e^{-ws'/v}| 
    \leq 
    \frac{w}{v} |s-s'| 
    \leq 
    \frac{W}{\eta} \delta.
  \end{equation}
  Hence, it follows that 
  \begin{align}
    A_{n}
    &\leq
    \bigl\{
      \mathfrak{d}_{F_{n}}(p_{n}^{\nu_{n}}, l^{-1}, r, \delta) + \mathfrak{s}_{F_{n}}(p_{n}^{\nu_{n}}, l^{-1}, r) W \eta^{-1} \delta
    \bigr\}
    \sum_{x_{i}^{(n)} \in D_{R_{n}(\rho_{n}, r)}} v_{i}^{(n)}\\
    &=
    \bigl\{
      \mathfrak{d}_{F_{n}}(p_{n}^{\nu_{n}}, l^{-1}, r, \delta) + \mathfrak{s}_{F_{n}}(p_{n}^{\nu_{n}}, l^{-1}, r) W \eta^{-1} \delta
    \bigr\}
    \nu_{n}(D_{R_{n}}(\rho_{n}, r)).
    \label{5. eq: sub-aging functions, eq 3}
  \end{align}
  Since the vague convergence $\nu_{n} \to \nu$ implies that $\sup_{n \geq 1} \nu_{n}(D_{R_{n}}(\rho_{n}, r)) < \infty$,
  we obtain \eqref{5. eq: pcpt of sub-aging functions, reduced equation} 
  from Proposition \ref{4. prop: precompactness of transition densities} and \eqref{5. eq: sub-aging functions, eq 3}.
\end{proof}

\begin{thm} \label{5. thm: sub-aging}
  Under Assumption \ref{5. assum: sub-aging},
  It holds that 
  \begin{equation}
    \bigl( F_{n}, R_{n}, \rho_{n}, \nu_{n}, \pi_{n}, P_{\rho_{n}}^{\nu_{n}}(X_{n}^{\nu_{n}} \in \cdot), \Psi_{n}^{\nu_{n}} \bigr) 
    \to 
    \bigl( F, R, \rho, \nu, \pi, P_{\rho}^{\nu}(X^{\nu} \in \cdot), \Psi^{\nu} \bigr)
  \end{equation}
  in the space $\rbcM(\disMeasFunct \times \markedMeasFunct{\RNp \times \RNpp} \times \probFunct(\SkorohodFunct) \times \fixedFunct{C(\RNpp^{2}, \RNp)})$.
  Moreover, $\Psi^{\nu}(s,t) > 0$ for any $s \geq 0$ and $t > 0$.
\end{thm}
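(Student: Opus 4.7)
The plan is to mimic the proof of Theorem~\ref{5. thm: aging}. By Theorem~\ref{3. thm: convergence in GH topology}, Assumption~\ref{5. assum: sub-aging} allows us to isometrically embed each $(F_n, R_n, \rho_n)$ and $(F, R, \rho)$ into a common rooted boundedly-compact metric space $(M, d^M, \rho_M)$ so that $\rho_n = \rho = \rho_M$, $F_n \to F$ in the local Hausdorff topology, $\nu_n \to \nu$ in the vague-and-point-process topology, $\dot{\pi}_n \to \dot{\pi}$ vaguely on $M \times \RNp \times \RNpp$, and $X_n^{\nu_n} \xrightarrow{\mathrm{d}} X^\nu$ in $D(\RNp, M)$. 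Given that Lemma~\ref{5. lem: precompactness of sub-aging functions} already supplies precompactness of $(\Psi_n^{\nu_n})_{n \geq 1}$ in $C(\RNp \times \RNpp, \RNp)$, the main task is to prove pointwise convergence $\Psi_n^{\nu_n}(s,t) \to \Psi^\nu(s,t)$ for every $(s,t) \in \RNp \times \RNpp$.

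Fix $(s,t)$. Decompose the integral defining $\Psi_n^{\nu_n}$ over $M \times \RNp \times \RNpp$ into four regions via cutoffs $r, W > 0$ and $\eta \in (0,1)$: (a) the tail $D_{R_n}(\rho_n, r)^c \times \RNp \times \RNpp$, which is uniformly small as $r \to \infty$ by Lemma~\ref{5. lem: sub-aging fucntion, smaller parts}\ref{5. lem item: sub-aging function, outside of ball}; (b) the large-conductance piece $D_{R_n}(\rho_n, r) \times [W, \infty) \times [\eta, \infty)$, uniformly small as $W \to \infty$ by \ref{5. lem item: sub-aging function, large conductances}; (c) the small-trap piece $D_{R_n}(\rho_n, r) \times \RNp \times (0, \eta]$, uniformly small as $\eta \to 0$ by \ref{5. lem item: sub-aging function, small mass}; and (d) the bounded piece $K_{r,W,\eta} := D_{R_n}(\rho_n, r) \times [0, W] \times [\eta, \infty)$. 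Choose $r, W, \eta$ (from countably cocountable sets of values) so that $\dot{\pi}$ has no atoms on $\partial(D_M(\rho_M, r) \times [0, W] \times [\eta, \infty))$.

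On the bounded piece $K_{r,W,\eta}$, the measure $\dot{\pi}_n$ has only finitely many atoms, and by Proposition~\ref{2. prop: atom convergence from simple measure convergence} these atoms can be paired with the atoms of $\dot{\pi}$ in the corresponding limit region so that $(x_i^{(n)}, w_i^{(n)}, v_i^{(n)}) \to (x_i, w_i, v_i)$. Since $v_i^{(n)} = \nu_n(\{x_i^{(n)}\}) \to \nu(\{x_i\}) = v_i$, Lemma~\ref{5. lem: convergence of atoms and weights} yields $P_{\rho_n}^{\nu_n}(X_n^{\nu_n}(t) = x_i^{(n)}) \to P_\rho^\nu(X^\nu(t) = x_i)$; continuity of $(w,v) \mapsto e^{-ws/v}$ on $[0,W] \times [\eta, \infty)$ gives $e^{-w_i^{(n)}s/v_i^{(n)}} \to e^{-w_i s/v_i}$. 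Summing the finitely many contributions on (d) and letting $r \to \infty$, $W \to \infty$, $\eta \to 0$ gives $\Psi_n^{\nu_n}(s,t) \to \Psi^\nu(s,t)$.

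For positivity, when $s = 0$ we have $\Psi^\nu(0,t) = \sum_{i \in I} P_\rho^\nu(X^\nu(t) = x_i) = P_\rho^\nu(X^\nu(t) \in \At(\nu)) = 1$, since $X^\nu$ lives on the support of $\nu$. When $s > 0$, pick any atom $(x_i, w_i, v_i) \in \At(\dot{\pi})$; then $e^{-w_i s/v_i} > 0$ and, by Proposition~\ref{4. prop: positivity of diagonal heat kernel} together with the Chapman--Kolmogorov identity $p^\nu(t, \rho, x_i) \geq p^\nu(t/2, \rho, \rho)^{1/2} p^\nu(t/2, x_i, x_i)^{1/2}/\cdots$ (or directly using $P_\rho^\nu(X^\nu(t) = x_i) = p^\nu(t, \rho, x_i) v_i$ and the fact that some such $p^\nu(t, \rho, x_i)$ must be strictly positive since $\sum_i p^\nu(t, \rho, x_i) v_i = 1$), we conclude $\Psi^\nu(s,t) > 0$. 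The main obstacle is the careful management of the bounded region (d): one must ensure that the joint weak convergence of $\dot{\pi}_n$ on $K_{r,W,\eta}$ is compatible with the one-to-one atom pairing required to invoke Lemma~\ref{5. lem: convergence of atoms and weights}, and that the choice of cutoffs avoids the (at most countably many) boundary values carrying $\dot{\pi}$-mass.
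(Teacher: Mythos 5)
Your proposal follows essentially the same route as the paper: reduce to pointwise convergence of $\Psi_n^{\nu_n}$ via the precompactness Lemma~\ref{5. lem: precompactness of sub-aging functions}, cut off the tail, large-$w$, and small-$v$ regions using Lemma~\ref{5. lem: sub-aging fucntion, smaller parts}, pair atoms on the remaining bounded piece, and invoke Lemma~\ref{5. lem: convergence of atoms and weights}. The positivity argument is also the same in substance (pick an atom $x_i$ with $P_\rho^\nu(X^\nu(t)=x_i)>0$, which exists because these probabilities sum to $1$).

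You correctly flag the one gap, but leave it open: Proposition~\ref{2. prop: atom convergence from simple measure convergence} gives atom pairing only on \emph{compact} subsets with zero boundary mass, and the region $D_M(\rho_M,r)\times[0,W]\times[\eta,\infty)$ is \emph{not} compact (the third factor is unbounded in $(\RNpp,d^{\RNpp})$). Having finitely many atoms of each $\dot{\pi}_n$ in this region is not enough, because those finite atom sets could leak to infinity in $v$ as $n$ varies, and then there is no one-to-one pairing with atoms of $\dot{\pi}$. The paper closes this by observing that
\begin{equation}
  V \coloneqq \Bigl\{ W^{(r)}(\pointMap(\nu)) \vee \sup_{n \geq 1} W^{(r)}(\pointMap(\nu_{n})) \Bigr\} + 1 < \infty,
\end{equation}
which is finite by the precompactness criterion Theorem~\ref{2. thm: precompactness in sP} applied to $(\pointMap(\nu_n))_n$ (this relies on $\nu_n \to \nu$ in $\disMeasures(M)$). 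Consequently neither $\dot{\pi}_n$ nor $\dot{\pi}$ has any atom with $x$-coordinate in $D_M(\rho_M,r)$ and $v$-coordinate exceeding $V$, so one can replace $[\eta,\infty)$ by $[\eta,V]$ and work on the genuinely compact box $D_M(\rho_M,r)\times[0,W]\times[\eta,V]$, where Proposition~\ref{2. prop: atom convergence from simple measure convergence} applies. With that detail inserted, your argument is complete and matches the paper's.
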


\begin{proof}
  The second assertion is straightforward.
  Indeed, for some $i \in I$, we have that $P_{\rho}^{\nu}(X^{\nu}(t) = x_{i}) > 0$,
  which implies that 
  \begin{equation}
    \Psi^{\nu}(s,t) 
    \geq 
    e^{-w_{i}s/v_{i}} P_{\rho}^{\nu}(X^{\nu}(t) = x_{i})
    > 0.
  \end{equation}

  We next show the first assertion.
  By Lemma \ref{5. lem: precompactness of sub-aging functions},
  it suffices to show that $\Psi_{n}^{\nu_{n}}(s, t) \to \Psi^{\nu}(s, t)$ 
  for every $(s, t) \in \RNp \times \RNpp$.
  Fix $(s, t) \in \RNp \times \RNpp$.
  By Lemma \ref{5. lem: sub-aging fucntion, smaller parts},
  it is enough to show that, for some sequences $(r_{k})_{k \geq 1}$, $(W_{l})_{l \geq 1}$, and $(\eta_{m})_{m \geq 1}$
  with $r_{k} \uparrow \infty$, $W_{l} \uparrow \infty$, and $\eta_{m} \downarrow 0$,
  \begin{align} \label{5. eq: sub-aging proof eq}
    &\lim_{n \to \infty}
    \int_{D_{M}(\rho_{M}, r_{k}) \times [0, W_{l}] \times [\eta_{m}, \infty)} 
      e^{-ws/v} P_{\rho_{n}}^{\nu_{n}}(X_{n}^{\nu_{n}}(t) = x)\, 
    \dot{\pi}_{n}(dx dw dv) \\
    &=
    \int_{D_{M}(\rho_{M}, r_{k}) \times [0, W_{l}] \times [\eta_{m}, \infty)} 
      e^{-ws/v} P_{\rho}^{\nu}(X^{\nu}(t) = x)\, 
    \dot{\pi}(dx dw dv).
  \end{align}
  We choose $(r_{k})_{k \geq 1}$ so that $r_{k} \uparrow \infty$ and $\nu(\partial D_{M}(\rho_{M}, r_{k})) = 0$.
  We then choose $(W_{l})_{l \geq 1}$ and $(\eta_{m})_{m \geq 1}$ 
  so that $W_{l} \uparrow \infty$, $\eta_{m} \downarrow 0$,
   and $\dot{\pi}(\partial(D_{M}(\rho_{M}, r_{k}) \times [0, W_{l}] \times [\eta_{m}, \infty))) = 0$ for all $l, m$.
  Fix $k$, $l$, and $m$,
  and simply write $r =r_{k}$, $W = W_{l}$, and $\eta = \eta_{m}$.
  Set
  \begin{equation}
    V 
    \coloneqq 
    \left\{
      W^{(r)}(\pointMap(\nu)) \vee \sup_{n \geq 1} W^{(r)}(\pointMap(\nu_{n}))
    \right\} 
    + 1
    < \infty.
  \end{equation}
  Then, 
  there are no atoms of $\pi_{n}$ nor $\pi$ in $D_{M}(\rho_{M}, r) \times [0, W] \times (V, \infty)$.
  We define a finite subset $J \subseteq I$ so that 
  $\{(x_{j}, w_{j}, v_{j})\}_{j \in J}$ are the atoms of $\dot{\pi}$ lying in $K \coloneqq D_{M}(\rho_{M}, r) \times [0, W] \times [\eta, V]$.
  By Proposition \ref{2. prop: atom convergence from simple measure convergence},
  there exist injections $f_{n}: J \to I_{n}$ (at least for all sufficiently large $n$) such that
  \begin{equation}
    \At(\dot{\pi}_{n}) \cap K 
    = 
    \{(x_{f_{n}(j)}^{(n)}, w_{f_{n}(j)}^{(n)}, v_{f_{n}(j)}^{(n)})\}_{j \in J}
  \end{equation}
  and $(x_{f_{n}(j)}^{(n)}, w_{f_{n}(j)}^{(n)}, v_{f_{n}(j)}^{(n)}) \to (x_{j}, w_{j}, v_{j})$ for each $j \in J$.
  It then follows from Lemma \ref{5. lem: convergence of atoms and weights} that 
  \begin{equation}
    P_{\rho_{n}}^{\nu_{n}}(X_{n}^{\nu_{n}}(t) = x_{i}^{(n)}) \to P_{\rho}^{\nu}(X^{\nu}(t) = x_{i}),
    \quad 
    \forall j \in J.
  \end{equation}
  Therefore, we deduce that 
  \begin{align}
    \lefteqn{\lim_{n \to \infty}
    \int_{D_{M}(\rho_{M}, r) \times [0, W] \times [\eta, \infty)} 
      e^{-ws/v} P_{\rho_{n}}^{\nu_{n}}(X_{n}^{\nu_{n}}(t) = x)\, 
    \dot{\pi}_{n}(dx dw dv)} \\
    &=
    \lim_{n \to \infty}
    \int_{D_{M}(\rho_{M}, r) \times [0, W] \times [\eta, V]} 
      e^{-ws/v} P_{\rho_{n}}^{\nu_{n}}(X_{n}^{\nu_{n}}(t) = x)\, 
    \dot{\pi}_{n}(dx dw dv)\\
    &=
    \lim_{n \to \infty}
    \sum_{j \in J} 
      \exp\bigl( - w_{f_{n}(j)}^{(n)} s / v_{f_{n}(j)}^{(n)} \bigr)
      P_{\rho_{n}}^{\nu_{n}}(X_{n}^{\nu_{n}}(t) = x_{f_{n}(j)}^{(n)}) \\
    &=
    \sum_{j \in J} 
      e^{- w_{j} s/ v_{j}} 
      P_{\rho}^{\nu}(X^{\nu}(t) = x_{j}) \\
    &=
    \int_{D_{M}(\rho_{M}, r) \times [0, W] \times [\eta, \infty)} 
      e^{- ws/ v} P_{\rho}^{\nu}(X^{\nu}(t) = x)\, 
    \dot{\pi}(dx dw dv),
  \end{align}
  which completes the proof.
\end{proof}


\section{Proof of the main results} \label{sec: Proof of main results}

In this section, we prove the main results.
Since we already obtained the results in the previous section when the traps are deterministic and converge,
thanks to the Skorohod representation theorem,
it remains to show the convergence of traps in distribution.


\subsection{Proof of Theorem \ref{1. thm: aging for deterministic models}} \label{sec: Proof of aging for deterministic models}

We prove the aging result, Theorem \ref{1. thm: aging for deterministic models}.
Suppose that Assumption \ref{1. assum: aging, deterministic version} is satisfied.
We first study some properties of the limiting trap environment.
Recall that $\pi(dx dv)$ is the Poisson random measure with intensity measure $\mu(dx) \alpha v^{-1-\alpha} dv$
and $\nu = \MeasureMap(\pi)$,
where the map $\MeasureMap$ is introduced in Section \ref{sec: a space including the space of discrete measures} above.

\begin{lem} \label{6. lem: aging, properties of limiting trap}
  \begin{enumerate} [label = (\roman*)]
    \item \label{6. lem item: aging, nu is of full support}
      The set $\At(\nu)$ of the atoms of $\nu$ is dense in $F$ almost surely.
      In particular, $\nu$ is of full support almost surely.
    \item \label{6. lem item: aging, no multiplicity of pi}
      It holds that $\pi \in \pointMeas^{*}(F)$ almost surely (recall $\pointMeas^{*}(F)$ from \eqref{2. cor eq: image of Mdis by p}).
  \end{enumerate}
\end{lem}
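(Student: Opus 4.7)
The plan is to prove the two claims by exploiting the hypothesis that $\mu$ is a non-atomic Radon measure of full support on the boundedly-compact (hence separable) space $F$, together with standard properties of Poisson random measures.

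\textbf{Assertion \ref{6. lem item: aging, nu is of full support}.} I would fix a countable base $(B_{n})_{n \geq 1}$ of nonempty open subsets of $F$. For each $n$, the full-support hypothesis gives $\mu(B_{n}) > 0$, so the intensity mass of $\pi$ on $B_{n} \times \RNpp$ equals $\mu(B_{n}) \int_{0}^{\infty} \alpha v^{-1-\alpha}\, dv = \infty$. Consequently $\pi(B_{n} \times \RNpp) = \infty$ almost surely, so $B_{n}$ contains at least one atom of $\nu$ almost surely. Intersecting the corresponding full-probability events over $n$ shows $\At(\nu)$ is dense in $F$ almost surely, which immediately implies that $\nu$ has full support (any nonempty open set contains an atom of $\nu$ and thus has positive $\nu$-measure).

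\textbf{Assertion \ref{6. lem item: aging, no multiplicity of pi}.} In view of Corollary \ref{2. cor: embedding of Mdis to sP}, membership in $\pointMeas^{*}(F)$ requires three things: that $\MeasureMap(\pi) = \nu$ is Radon almost surely (so $\pi \in \pointMeas(F)$); that every atom of $\pi$ has multiplicity one; and that distinct atoms of $\pi$ have distinct first coordinates. For the Radon property, I fix compact $K \subseteq F$ and split
\begin{equation}
  \nu(K)
  =
  \int_{K \times (0, 1]} v\, \pi(dx dv)
  +
  \int_{K \times (1, \infty)} v\, \pi(dx dv).
\end{equation}
The first term has finite expectation $\mu(K) \alpha / (1 - \alpha)$ since $\alpha \in (0, 1)$ by Campbell's formula; the second is a finite sum because $\pi(K \times (1, \infty))$ is Poisson with mean $\mu(K) < \infty$. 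A countable exhaustion of $F$ by compacts handles all $K$ simultaneously. Simplicity (all multiplicities equal to one) is automatic, since $\pi$ is a Poisson point process with diffuse intensity. For the distinctness of $x$-coordinates, I truncate: for each $\varepsilon > 0$ and compact $K \subseteq F$, the restriction $\pi|_{K \times [\varepsilon, \infty)}$ has finite total intensity $\mu(K)\varepsilon^{-\alpha}$, and its projection onto $K$ is, by the mapping theorem for Poisson processes, a Poisson point process on $K$ with intensity $\varepsilon^{-\alpha}\, \mu|_{K}$. Non-atomicity of $\mu$ then makes this projected process simple a.s., so no two atoms of $\pi$ whose first coordinate lies in $K$ and whose second coordinate is at least $\varepsilon$ share their $x$-component. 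Exhausting $F$ by countably many compacts and letting $\varepsilon$ run through $\{1/n\}_{n \geq 1}$ yields the claim almost surely.

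The only real obstacle is the distinctness-of-coordinates step: because $\pi$ has infinitely many atoms accumulating at $v = 0$, non-atomicity of the spatial intensity cannot be applied to $\pi$ directly. The truncation $v \geq \varepsilon$ is the standard device that reduces the situation to a locally finite Poisson process on $F$, where non-atomicity of $\mu$ immediately yields simplicity. Everything else is routine Poisson bookkeeping.
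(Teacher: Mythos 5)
Your proof is correct and takes essentially the same approach as the paper: the density claim in (i) follows from the intensity being infinite on every $B_R(x,r) \times \RNpp$, and the distinctness of atom locations in (ii) is obtained by truncating at $v \geq \varepsilon$ so that the spatial projection becomes a locally finite Poisson process with non-atomic intensity $\varepsilon^{-\alpha}\mu$, hence simple. You additionally verify that $\nu = \MeasureMap(\pi)$ is a.s.\ Radon via Campbell's formula and a $v \leq 1$ versus $v > 1$ split; this is genuinely part of showing $\pi \in \pointMeas(F)$, and the paper's proof passes over it silently, so your write-up is slightly more complete than the original.
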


\begin{proof}
  \ref{6. lem item: aging, nu is of full support}.
  Since $\mu$ is of full support,
  it holds that, for any $x \in F$ and $r>0$,
  \begin{equation}
    \mathsf{E}[ \pi(B_{R}(x, r) \times \RNpp)] 
    = 
    \mu(B_{R}(x, r)) \int_{0}^{\infty} \alpha u^{-1-\alpha}\, du 
    = \infty,
  \end{equation}
  where $\mathsf{E}$ denotes the expectation with respect to $\mathsf{P}$,
  the underlying probability measure of $\pi$.
  Hence, $\At(\nu) \cap B_{R}(x, r) \neq \emptyset$ almost surely.
  Let $\{x_{i}\}_{i \in I}$ be a countable dense subset of $F$.
  Then, almost surely, we have that $\At(\nu) \cap B_{R}(x_{i}, q) \neq \emptyset$ for all $i \in I$ and positive rational numbers $q$.
  This implies the desired result.

  \ref{6. lem item: aging, no multiplicity of pi}.
  Write $\pi = \sum_{i \in I} \delta_{(x_{i}, v_{i})}$.
  For each $\varepsilon > 0$,
  we set $I_{\varepsilon} \coloneqq \{i \in I \mid v_{i} \geq \varepsilon\}$.
  We then define a random measure $\nu'_{\varepsilon}$ on $F$ by setting 
  \begin{equation}
    \nu'_{\varepsilon}(A) 
    \coloneqq 
    \sum_{i \in I_{\varepsilon}} \delta_{x_{i}}(A)
    =
    \pi(A \times [\varepsilon, \infty)).
  \end{equation}
  It is easy to check that $\nu'_{\varepsilon}$ is a Poisson random measure with intensity measure $\varepsilon^{-\alpha} \mu(dx)$.
  Since $\mu$ is non-atomic,
  it follows from \cite[Lemma 3.6(i)]{Kallenberg_17_Random} that 
  $x_{i} \neq x_{j}$ for any $i, j \in I_{\varepsilon}$ with $i \neq j$, almost surely.
  Since $I_{\varepsilon}$ increases to $I$ as $\varepsilon \to 0$,
  we deduce the desired result.
\end{proof}

For the following result,
recall the random variable $\xi$ from Definition \ref{1. dfn: random variable xi}.

\begin{lem} \label{6. lem: c_n gives proper scaling}
  For every $u > 0$, $\lim_{n \to \infty} b_{n} P_{\xi}(c_{n}^{-1} \xi > u) = u^{-\alpha}$.
\end{lem}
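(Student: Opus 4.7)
The plan is to reduce the statement to the scaling identity $b_n c_n^{-\alpha} \ell(c_n) \to 1$ and then exploit the slow variation of $\ell$. First I will verify that $c_n \to \infty$: since $b_n \to \infty$, any uniform upper bound on $c_n$ would force $P_\xi(\xi > u) \geq b_n^{-1}$ for arbitrarily large $n$ at some fixed $u$, contradicting $P_\xi(\xi > u) \to 0$ as $u \to \infty$ (which follows from the assumed tail representation $P_\xi(\xi \geq u) = u^{-\alpha}\ell(u)$).

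Next, I will extract two-sided information from the infimum definition of $c_n$. By the right-continuity of $u \mapsto P_\xi(\xi > u)$, taking $u \downarrow c_n$ in the inequality defining the infimum yields
\begin{equation}
  P_\xi(\xi > c_n) \leq b_n^{-1},
\end{equation}
while for every $\varepsilon \in (0, c_n)$ the value $c_n - \varepsilon$ lies outside the infimum set, giving
\begin{equation}
  b_n^{-1} \leq P_\xi(\xi > c_n - \varepsilon) \leq P_\xi(\xi \geq c_n - \varepsilon) = (c_n - \varepsilon)^{-\alpha} \ell(c_n - \varepsilon).
\end{equation}
Similarly, for every $\delta > 0$,
\begin{equation}
  (c_n + \delta)^{-\alpha} \ell(c_n + \delta) = P_\xi(\xi \geq c_n + \delta) \leq P_\xi(\xi > c_n) \leq b_n^{-1}.
\end{equation}
Choosing $\varepsilon = \eta c_n$ and $\delta = \eta c_n$ for small $\eta > 0$ and multiplying by $b_n$, I can apply the slow variation $\ell(c_n(1\pm\eta))/\ell(c_n) \to 1$ (valid because $c_n \to \infty$) to obtain
\begin{equation}
  (1+\eta)^{-\alpha}(1+o(1)) \leq b_n c_n^{-\alpha}\ell(c_n) \leq (1-\eta)^{-\alpha}(1+o(1))
\end{equation}
Letting $\eta \downarrow 0$ establishes $b_n c_n^{-\alpha}\ell(c_n) \to 1$.

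Finally, for fixed $u > 0$, I sandwich $b_n P_\xi(\xi > c_n u)$ in the same fashion. The upper bound is immediate:
\begin{equation}
  b_n P_\xi(\xi > c_n u) \leq b_n P_\xi(\xi \geq c_n u) = b_n (c_n u)^{-\alpha} \ell(c_n u) = u^{-\alpha} \cdot b_n c_n^{-\alpha} \ell(c_n) \cdot \frac{\ell(c_n u)}{\ell(c_n)},
\end{equation}
which tends to $u^{-\alpha}$ by the scaling identity and slow variation. For the lower bound, taking $\delta = \eta c_n u$ in $P_\xi(\xi > c_n u) \geq P_\xi(\xi \geq c_n u + \delta) = (c_n u + \delta)^{-\alpha}\ell(c_n u + \delta)$ yields $\liminf_n b_n P_\xi(\xi > c_n u) \geq u^{-\alpha}(1+\eta)^{-\alpha}$, and letting $\eta \downarrow 0$ completes the proof. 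The main (very mild) obstacle is being careful about the possibility that $\xi$ has atoms near $c_n$, which is handled by consistently separating $P_\xi(\xi > \cdot)$ from $P_\xi(\xi \geq \cdot)$ and inserting the auxiliary shifts $\pm \eta c_n$ before invoking slow variation.
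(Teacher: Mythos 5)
Your proof is correct and more self-contained than the paper's. The paper sets $f(u) = P_{\xi}(\xi > u)^{-1}$ and $g(u) = \inf\{s \geq 0 : f(s) > u\}$, notes that $c_n = g(b_n)$, and invokes the asymptotic-inverse theorem for regularly varying functions, \cite[Theorem 1.5.12]{Bingham_Goldie_Teugels_87_Regular}, to conclude $f(g(b_n)) \sim b_n$; the slowly varying factor is then handled exactly as you do. You replace that citation with the direct two-sided estimate coming from the infimum definition of $c_n$, which in effect re-proves the needed special case of the asymptotic-inverse theorem from scratch. You are also more careful about the $>$ versus $\geq$ distinction: the tail assumption is stated for $P_{\xi}(\xi \geq u)$ while the quantity of interest involves $P_{\xi}(\xi > u)$, and the paper's chain of equalities passes silently between the two, whereas your $\pm \eta c_n$ shifts handle it cleanly.

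Two small slips, neither fatal. First, the argument that $c_n \to \infty$ is stated backwards: from $c_n \leq C$ along a subsequence and the monotonicity of $u \mapsto P_{\xi}(\xi > u)$, one gets $P_{\xi}(\xi > C+1) < b_n^{-1} \to 0$ and hence $P_{\xi}(\xi > C+1) = 0$, which contradicts the strict positivity of the tail; the inequality $\geq b_n^{-1}$ that you wrote points the wrong way and does not produce a contradiction. Second, carrying out the algebra from your two-sided inequalities with $\varepsilon = \delta = \eta c_n$ gives
\begin{equation}
  (1-\eta)^{\alpha}\bigl(1 + o(1)\bigr)
  \leq
  b_n c_n^{-\alpha}\ell(c_n)
  \leq
  (1+\eta)^{\alpha}\bigl(1 + o(1)\bigr),
\end{equation}
rather than the constants $(1+\eta)^{-\alpha}$ and $(1-\eta)^{-\alpha}$ you wrote; this is harmless, since both pairs of bounds converge to $1$ as $\eta \downarrow 0$, but it is worth fixing for the record.
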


\begin{proof}
  Define $f(u) \coloneqq P_{\xi}(\xi > u)^{-1}$ and $g(u) \coloneqq \inf\{ s \geq 0 \mid f(s) > u\}$.
  Then, $f$ is a regularly varying function with index $\alpha$ and $g$ is an asymptotic inverse of $f$,
  that is, $f(g(u)) \sim g(f(u)) \sim u$ as $u \to \infty$ \cite[Theorem 1.5.12]{Bingham_Goldie_Teugels_87_Regular}.
  (NB.\ For functions $h$ and $H$, we write $h \sim H$ when $h(u)/H(u) \to 1$.)
  Using the relation $c_{n} = g(b_{n})$,
  we deduce that
  \begin{equation}
    P_{\xi}(c_{n}^{-1} \xi > u) 
    = 
    (u c_{n})^{-\alpha} \ell(u)
    =
    u^{-\alpha} \frac{\ell(c_{n}u)}{\ell(c_{n})} P_{\xi}(\xi > c_{n})
    =
    u^{-\alpha} \frac{\ell(c_{n}u)}{\ell(c_{n})} f(g(b_{n}))^{-1}.
  \end{equation}
  Since $\ell$ is slowly varying and $c_{n} \to \infty$,
  we obtain the desired result.
\end{proof}

We now prove the distributional convergence of traps in the vague-and-point-process topology.

\begin{lem} \label{6. lem: aging, convergence of traps}
  It holds that 
  \begin{equation}
    \bigl( V_{n}, a_{n}^{-1}R_{n}, \rho_{n}, b_{n}^{-1} \muh_{n}, \mathsf{P}_{n}(c_{n}^{-1} \nu_{n} \in \cdot) \bigr)
    \to 
    \bigl( F, R, \rho, \mu, \mathsf{P}(\nu \in \cdot) \bigr)
  \end{equation}
  in the space $\rbcM(\MeasFunct \times \probFunct(\disMeasFunct))$.
\end{lem}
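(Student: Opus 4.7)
The plan is to combine the given deterministic local Gromov-Hausdorff-vague convergence with a distributional convergence of the rescaled traps, handled through the point-process framework of Section 2. By the embedding characterisation of local Gromov-Hausdorff-vague convergence (Theorem 3.8 applied to Assumption 1.4(i)), I may isometrically embed each $(V_n, a_n^{-1}R_n, \rho_n)$ and $(F, R, \rho)$ into a common rooted boundedly-compact space $(M, d^M, \rho_M)$ so that $V_n \to F$ in the local Hausdorff topology on $M$ and $b_n^{-1}\muh_n \to \mu$ vaguely on $M$. By the definition of the metric on $\rbcM(\MeasFunct \times \probFunct(\disMeasFunct))$, this reduces the lemma to showing $c_n^{-1}\nu_n \xrightarrow{\mathrm{d}} \nu$ in $(\disMeasures(M), d_{\disMeasures}^{M, \rho_M})$. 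Using the topological embedding $\pointMap \colon \disMeasures(M) \to \pointMeas^{*}(M) \subseteq \pointMeas(M)$ (Corollary 2.27) together with the fact that $\pi \in \pointMeas^{*}(M)$ almost surely (Lemma 6.1(ii)), the problem further reduces to proving
\begin{equation}
  \pi_n = \sum_{x \in V_n} \delta_{(x, c_n^{-1}\xi_x^{G_n})} \xrightarrow{\mathrm{d}} \pi \quad \text{in } \pointMeas(M),
\end{equation}
and then transporting this convergence through $\MeasureMap$ via the continuity statement of Corollary 2.28.

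I will establish the display above via Corollary 2.36, which requires (a) distributional vague convergence $\pi_n \xrightarrow{\mathrm{d}} \pi$ on $M \times \RNpp$ and (b) the tightness conditions (ii)(iii) of Theorem 2.33 on the heavy-weight and light-mass quantities $W^{(r)}$ and $M_\varepsilon^{(r)}$. For (a), I will compute the Laplace functional: for $f \in C_c(M \times \RNpp)$, independence of $\{\xi_x^{G_n}\}_{x \in V_n}$ gives
\begin{equation}
  - \log \mathsf{E}_n\bigl[e^{-\pi_n(f)}\bigr]
  = - \sum_{x \in V_n} \log \bigl( 1 - \mathsf{E}[1 - e^{-f(x, c_n^{-1}\xi)}] \bigr).
\end{equation}
Since the support of $f$ confines the $v$-coordinate to an interval $[\varepsilon, L] \subset \RNpp$, Lemma 6.2 forces each summand $\mathsf{E}[1 - e^{-f(x, c_n^{-1}\xi)}]$ to be $O(b_n^{-1})$ uniformly in $x$, so the null-array expansion $\log(1-u) = -u + O(u^2)$ gives that the right-hand side is asymptotic to $\int (1 - \mathsf{E}[e^{-f(x,v)}])\, \muh_n(dx) = b_n \int (1 - \mathsf{E}[e^{-f(x,v)}])\, b_n^{-1}\muh_n(dx)$. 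By Lemma 6.2 combined with $b_n^{-1}\muh_n \to \mu$ vaguely, this converges to $\int (1 - e^{-f(x,v)})\, \mu(dx)\, \alpha v^{-1-\alpha}\, dv$, which is precisely the log Laplace functional of the Poisson random measure $\pi$.

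For (b), independence of the $\xi_x^{G_n}$ together with Lemma 6.2 yields
\begin{equation}
  \mathsf{P}_n\bigl( W^{(r)}(\pi_n) \leq l \bigr)
  = \bigl( 1 - P_\xi(c_n^{-1}\xi > l) \bigr)^{\muh_n(D_{a_n^{-1}R_n}(\rho_n, r))}
  \to \exp\bigl( -\mu(D_R(\rho, r))\, l^{-\alpha} \bigr),
\end{equation}
which tends to $1$ as $l \to \infty$, verifying the $W^{(r)}$-tightness; similarly,
\begin{equation}
  \mathsf{E}_n \bigl[ M_\varepsilon^{(r)}(\pi_n) \bigr]
  = \muh_n(D_{a_n^{-1}R_n}(\rho_n, r)) \cdot \mathsf{E}\bigl[ c_n^{-1}\xi\, 1_{\{c_n^{-1}\xi \leq \varepsilon\}} \bigr]
  \to \mu(D_R(\rho, r)) \cdot \tfrac{\alpha}{1-\alpha}\, \varepsilon^{1-\alpha}
\end{equation}
(obtained by integrating the tail via Lemma 6.2), and Markov's inequality then supplies the $M_\varepsilon^{(r)}$-tightness. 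Combining (a) and (b) through Corollary 2.36 gives $\pi_n \xrightarrow{\mathrm{d}} \pi$ in $\pointMeas(M)$; transporting through $\MeasureMap$ and reassembling with the deterministic convergence inside $M$ yields the stated joint convergence in $\rbcM(\MeasFunct \times \probFunct(\disMeasFunct))$. The main technical point I anticipate is the uniform-in-$x$ control of the null-array expansion in (a); this relies on the fact that compact support of $f$ in the $v$-direction forces the relevant weights to stay in $[\varepsilon, L]$, where Lemma 6.2 provides a uniform $O(b_n^{-1})$ estimate, and boundary issues in the vague limits are dealt with by choosing $r$ outside a countable exceptional set.
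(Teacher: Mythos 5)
Your proof is correct and follows essentially the same approach as the paper: embed into a common space, reduce to $\pi_n \xrightarrow{\mathrm{d}} \pi$ in $\pointMeas(M)$, verify the two tightness conditions $W^{(r)}$ and $M_\varepsilon^{(r)}$ exactly as the paper does (void probability for $W^{(r)}$, Markov plus Karamata-type estimate for $M_\varepsilon^{(r)}$), and transport through $\MeasureMap$. The only substantive variation is that you establish the vague distributional convergence $\pi_n \xrightarrow{\mathrm{d}} \pi$ via the Laplace functional criterion with a null-array expansion, whereas the paper uses the void-probability-plus-intensity criterion of \cite[Theorem 4.18]{Kallenberg_17_Random}; both are standard and the uniform $O(b_n^{-1})$ bound you invoke (via Lemma \ref{6. lem: c_n gives proper scaling}, using that $f$ has compact support in the $v$-direction) does make your expansion legitimate.
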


\begin{proof}
  By Assumption \ref{1. assum: aging, deterministic version}\ref{1. assum item: deterministic, convergence of spaces} 
  and Theorem \ref{3. thm: convergence in GH topology},
  we may assume that $(V_{n}, a_{n}^{-1} R_{n}, \rho_{n})$ and $(F, R, \rho)$ are embedded isometrically
  into a common rooted boundedly-compact metric space $(M, d^{M}, \rho_{M})$ 
  in such a way that 
  $\rho_{n} = \rho = \rho_{M}$ as elements in $M$,
  $V_{n} \to F$ in the local Hausdorff topology as closed subsets of $M$,
  and $b_{n}^{-1} \muh_{n} \to \mu$ vaguely as measures on $M$. 
  It suffices to show that $c_{n}^{-1} \nu_{n} \xrightarrow{\mathrm{d}} \nu$ in the vague-and-point-process topology as measures on $M$.
  Recall that $\pi_{n} \coloneqq \sum_{x \in V_{n}} \delta_{(x, c_{n}^{-1} \nu_{n}(\{x\}))}$.
  Fix a bounded subset $A$ of $M$ such that $\mu(\partial A) = 0$.
  We have that, for every $u > 0$,
  \begin{equation}
    \mathsf{P}_{n} (\pi_{n}(A \times (u, \infty)) = 0)
    =
    \mathsf{P}_{n} (c_{n}^{-1} \xi_{x}^{(n)} \leq u\ \text{for all}\ x \in A)
    =
    \bigl( 1 - P_{\xi}(c_{n}^{-1} \xi > u) \bigr)^{\muh_{n}(A)}
  \end{equation}
  and $\mathsf{E}_{n} [ \pi_{n}(A \times (u, \infty))] = \muh_{n}(A) P_{\xi}(c_{n}^{-1} \xi > u)$.
  It then follows from Lemma \ref{6. lem: c_n gives proper scaling} that 
  \begin{gather}
    \lim_{n \to \infty}
    \mathsf{P}_{n} (\pi_{n}(A \times (u, \infty)) = 0)
    =
    e^{-\mu(A) u^{-\alpha}} 
    =
    \mathsf{P}(\pi(A \times (u, \infty)) = 0), 
    \label{6. lem eq: prob of 0 atoms}\\
    \lim_{n \to \infty} 
    \mathsf{E}_{n}[\pi_{n}(A \times (u, \infty))] 
    =
    \mu(A) u^{-\alpha}
    =
    \mathsf{E}[\pi(A \times (u, \infty))].
  \end{gather}
  By \cite[Theorem 4.18]{Kallenberg_17_Random},
  $\pi_{n} \xrightarrow{\mathrm{d}} \pi$ vaguely.
  We will check condition \ref{2. thm item: tightness, tightness of W} of Theorem \ref{2. thm: tightness in sP}.
  Fix $r>0$ satisfying $\mu(M^{(r)}) = 0$.
  Since we have that 
  \begin{equation}
    p(n, l)
    \coloneqq
    \mathsf{P}_{n} (W^{(r)}(\pi_{n}) > l)
    =
    \mathsf{P}_{n} ( \pi_{n}(M^{(r)} \times (l, \infty)) > 0 ),
  \end{equation}
  equation \eqref{6. lem eq: prob of 0 atoms} yields that 
  $p(n, l) \to \mathsf{P}(\pi(M^{(r)} \times (l, \infty)) > 0) = 1 - e^{-\mu(M^{(r)}) l^{-\alpha}}$.
  Hence, we obtain that $\lim_{l \to \infty} \limsup_{n \to \infty} p(n, l) = 0$.
  The Markov inequality yields that 
  \begin{equation}
    \mathsf{P}_{n}( M_{\varepsilon}^{(r)}(\pi_{n}) > \delta)
    \leq 
    \delta^{-1} 
    \muh_{n}(M^{(r)}) E_{\xi} [ c_{n}^{-1} \xi \cdot 1_{(0, c_{n}\varepsilon)}(\xi)]
    =
    -
    \delta^{-1} 
    \muh_{n}(M^{(r)})
    c_{n}^{-1}
    \int_{0}^{\varepsilon c_{n}} u\, df(u),
  \end{equation}
  where we set $f(u) \coloneqq P(\xi > u)$.
  Since $f$ is a regularly varying function with index $-\alpha$,
  we have by \cite[Theorem 1.6.4]{Bingham_Goldie_Teugels_87_Regular} that, as $n \to \infty$,
  \begin{equation}
    \int_{0}^{\varepsilon c_{n}} u\, df(u) 
    \sim 
    \frac{-\alpha}{1-\alpha} \varepsilon c_{n} f(\varepsilon c_{n})
    = 
    \frac{-\alpha}{1-\alpha} \varepsilon c_{n} P_{\xi}(c_{n}^{-1}\xi > \varepsilon).
  \end{equation}
  It then follows from Lemma \ref{6. lem: c_n gives proper scaling} that 
  \begin{equation}
    \lim_{\varepsilon \downarrow 0} 
    \limsup_{n \to \infty}
    \mathsf{P}_{n}( M_{\varepsilon}^{(r)}(\pi_{n}) > \delta)
    \leq
    \lim_{\varepsilon \downarrow 0} 
    \delta^{-1}
    \alpha (1-\alpha)^{-1} \mu(M^{(r)}) \varepsilon^{1-\alpha}
    = 
    0.
  \end{equation}
  By Corollary \ref{2. cor: convergence in distribution in sP},
  $\pi_{n} \xrightarrow{\mathrm{d}} \pi$ in $\pointMeas(M)$.
  It follows from Corollary \ref{2. cor: convergence in VPP from that in sP} 
  and Lemma \ref{6. lem: aging, properties of limiting trap}\ref{6. lem item: aging, no multiplicity of pi}
  that $c_{n}^{-1}\nu_{n} = \MeasureMap(\pi_{n}) \xrightarrow{\mathrm{d}} \MeasureMap(\pi) = \nu$ in the vague-and-point-process topology.
\end{proof}

Now, we prove Theorem \ref{1. thm: aging for deterministic models}.

\begin{proof} [Proof of Theorem \ref{1. thm: aging for deterministic models}]
  By Lemma \ref{6. lem: aging, convergence of traps},
  we may assume that $(V_{n}, a_{n}^{-1} R_{n}, \rho_{n})$ and $(F, R, \rho)$ are embedded isometrically
  into a common rooted boundedly-compact metric space $(M, d^{M}, \rho_{M})$ 
  in such a way that 
  $\rho_{n} = \rho = \rho_{M}$ as elements in $M$,
  $V_{n} \to F$ in the local Hausdorff topology as closed subsets of $M$,
  $b_{n}^{-1} \muh_{n} \to \mu$ vaguely as measures on $M$,
  and $c_{n}^{-1}\nu_{n} \xrightarrow{\mathrm{d}} \nu$ in $\disMeasures(M)$. 
  Using the Skorohod representation theorem,
  we may further assume that $c_{n}^{-1}\nu_{n} \to \nu$ almost surely on some probability space.
  Then, by \cite[Theorem 1.2]{Croydon_18_Scaling},
  we obtain that $P_{\rho_{n}}^{\nu_{n}}(\tilde{X}_{n}^{\nu_{n}} \in \cdot) \xrightarrow{\mathrm{d}} P_{\rho}^{\nu}(X^{\nu} \in \cdot)$.
  Hence, the desired result follows from Theorem \ref{5. thm: aging}.
\end{proof}


\subsection{Proof of Theorem \ref{1. thm: sub-aging for deterministic models}} \label{sec: Proof of sub-aging for deterministic models}

Next, we prove the sub-aging result, Theorem \ref{1. thm: sub-aging for deterministic models}.
Suppose that Assumption \ref{1. assum: sub-aging, deterministic version} is satisfied.
Recall that 
$\dot{\pi}(dx dw dv)$ is a Poisson random measure on $F \times \RNp \times \RNpp$
with the intensity measure $\dot{\mu}(dx dw) \alpha v^{-1-\alpha}\, dv$
and $\nu$ is a random measure on $F$ defined by 
\begin{equation} \label{6. eq: sub-aging, dfn of nu}
  \nu(A)
  \coloneqq
  \int 1_{A}(x)v\, \dot{\pi}(dx dw dv),
  \quad 
  \forall A \in \Borel(F).
\end{equation}
For convenience, we write $\pi$ for the pushforward measure of $\dot{\pi}$ 
by the map $(x, w, v) \mapsto (x, v)$.
It is easy to show that $\pi$ is a Poisson random measure with intensity $\mu(dx) \alpha v^{-1-\alpha} dv$
and $\nu = \MeasureMap(\pi)$.

\begin{lem} \label{6. lem: sub-aging, properties of limiting trap}
  \begin{enumerate} [label = (\roman*)]
    \item \label{6. lem item: sub-aging, nu is of full support}
      The random measure $\nu$ is of full support almost surely.
    \item \label{6. lem item: sub-aging, no multiplicity of pi}
      It holds that $\pi \in \pointMeas^{*}(F)$ almost surely.
  \end{enumerate}
\end{lem}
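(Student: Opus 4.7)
The plan is to reduce this statement directly to the already-proved Lemma \ref{6. lem: aging, properties of limiting trap} by observing that the law of the pair $(\pi, \nu)$ is governed entirely by the (unmarked) measure $\mu$ on $F$, which by Assumption \ref{1. assum: sub-aging, deterministic version} is almost surely of full support and non-atomic. Thus no new probabilistic argument is needed, only a mapping-theorem bookkeeping step.

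First, I would verify that $\pi$ is a Poisson random measure on $F \times \RNpp$ with intensity measure $\mu(dx)\alpha v^{-1-\alpha}\,dv$. Since $\dot{\pi}$ is a Poisson random measure on $F \times \RNp \times \RNpp$ with intensity $\dot{\mu}(dx\,dw)\alpha v^{-1-\alpha}\,dv$, the mapping theorem for Poisson random measures (cf.\ \cite[Theorem 3.15]{Kallenberg_17_Random}) applied to the projection $(x,w,v) \mapsto (x,v)$ shows that $\pi$ is Poisson with intensity given by the pushforward of $\dot{\mu}(dx\,dw)\alpha v^{-1-\alpha}\,dv$ under this map. By the defining relation $\mu(A) = \dot{\mu}(A \times \RNp)$ in \eqref{1. eq: dfn of mu for sub-aging}, this pushforward is exactly $\mu(dx)\alpha v^{-1-\alpha}\,dv$.

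Next, I would observe that $\nu$ depends on $\dot{\pi}$ only through $\pi$: by \eqref{6. eq: sub-aging, dfn of nu} and a change of variable,
\begin{equation}
  \nu(A)
  =
  \int 1_{A}(x) v\, \dot{\pi}(dx\,dw\,dv)
  =
  \int 1_{A}(x) v\, \pi(dx\,dv)
  =
  \MeasureMap(\pi)(A),
  \quad
  \forall A \in \Borel(F).
\end{equation}
Consequently, the joint law of $(\pi, \nu)$ here is identical to the joint law in the setting of Lemma \ref{6. lem: aging, properties of limiting trap}, provided the measure $\mu$ is of full support and non-atomic. These two properties hold almost surely by Assumption \ref{1. assum: sub-aging, deterministic version}.

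With this reduction, both claims follow immediately from Lemma \ref{6. lem: aging, properties of limiting trap}: part \ref{6. lem item: sub-aging, nu is of full support} from Lemma \ref{6. lem: aging, properties of limiting trap}\ref{6. lem item: aging, nu is of full support} (which uses only that $\mu$ is of full support together with the countable dense subset argument based on $\mathsf{E}[\pi(B_{R}(x,r) \times \RNpp)] = \infty$), and part \ref{6. lem item: sub-aging, no multiplicity of pi} from Lemma \ref{6. lem: aging, properties of limiting trap}\ref{6. lem item: aging, no multiplicity of pi} (which uses only that $\mu$ is non-atomic, via \cite[Lemma 3.6(i)]{Kallenberg_17_Random} applied to the restrictions $\pi|_{F \times [\varepsilon,\infty)}$). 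There is no real obstacle here; the only point requiring a little care is the application of the mapping theorem to confirm that the presence of the auxiliary $w$-marks does not affect the distribution of $\pi$ or of $\nu$.
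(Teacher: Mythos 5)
Your proposal is correct and matches the paper's intent: the paper already notes, just before stating the lemma, that $\pi$ is Poisson with intensity $\mu(dx)\alpha v^{-1-\alpha}dv$ and $\nu = \MeasureMap(\pi)$, and then says the lemma is ``proven similarly to'' the aging version. You simply make the reduction explicit by observing that the joint law of $(\pi,\nu)$ is the same as in the aging setting once $\mu = \dot\mu(\cdot\times\RNp)$ is of full support and non-atomic, which is exactly what Assumption \ref{1. assum: sub-aging, deterministic version} supplies.
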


\begin{proof}
  These results are proven similarly to Lemma \ref{6. lem: aging, properties of limiting trap}.
\end{proof}

We set 
\begin{equation}
  \dot{\pi}_{n} 
  \coloneqq 
  \sum_{x \in V_{n}} \delta_{(x, \mu_{n}(x), c_{n}^{-1} \nu_{n}(\{x\}))}.
\end{equation}
The following result is a version of Lemma \ref{6. lem: aging, convergence of traps} for the sub-aging result.

\begin{lem} \label{6. lem: sub-aging, trap convergence}
  It holds that 
  \begin{equation} \label{7. prop eq: joint convergence of X and L}
    \bigl( V_{n}, a_{n}^{-1}R_{n}, \rho_{n}, b_{n}^{-1} \dotmuh_{n}, 
      \mathsf{P}_{n}((\nu_{n}, \dot{\pi}_{n}) \in \cdot) \bigr)
    \to 
    \bigl( F, R, \rho, \dot{\mu}, \mathsf{P}((\nu, \dot{\pi}) \in \cdot) \bigr),
  \end{equation}
  in the space $\rbcM(\markedMeasFunct{\RNp} \times \probFunct(\disMeasFunct \times \markedMeasFunct{\RNp \times \RNpp}))$.
\end{lem}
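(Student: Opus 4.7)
The plan is to mirror the proof of Lemma \ref{6. lem: aging, convergence of traps}, carrying along the additional conductance mark $\mu_n(x)$. First, I would invoke Assumption \ref{1. assum: sub-aging, deterministic version} together with Theorem \ref{3. thm: convergence in GH topology} to embed $(V_n, a_n^{-1}R_n, \rho_n)$ and $(F, R, \rho)$ isometrically into a common rooted boundedly-compact metric space $(M, d^M, \rho_M)$ so that $\rho_n = \rho = \rho_M$ as elements of $M$, $V_n \to F$ in the local Hausdorff topology, and $b_n^{-1}\dotmuh_n \to \dot\mu$ vaguely as Radon measures on $M \times \RNp$. It then suffices to prove the joint distributional convergence $(c_n^{-1}\nu_n, \dot\pi_n) \xrightarrow{\mathrm{d}} (\nu, \dot\pi)$ on this common space in the appropriate product topology.

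The main analytic step is to establish $\dot\pi_n \xrightarrow{\mathrm{d}} \dot\pi$ vaguely on $M \times \RNp \times \RNpp$ via the Poisson convergence criterion of \cite[Theorem 4.18]{Kallenberg_17_Random}. The atoms of $\dot\pi_n$ are $(x, \mu_n(x), c_n^{-1}\nu_n(\{x\}))$ for $x \in V_n$, with the third coordinates i.i.d.\ copies of $c_n^{-1}\xi$ that are independent of the deterministic pair $(x, \mu_n(x))$. For a product set $E = A \times B \times (u, V)$ with $0 < u < V < \infty$ and $\dot\mu(\partial(A \times B)) = 0$, this yields
\begin{equation}
  \mathsf{E}_n[\dot\pi_n(E)] = b_n^{-1}\dotmuh_n(A \times B) \cdot b_n P_\xi(c_n^{-1}\xi \in (u, V))
\end{equation}
and
\begin{equation}
  \mathsf{P}_n(\dot\pi_n(E) = 0) = \bigl(1 - P_\xi(c_n^{-1}\xi \in (u, V))\bigr)^{\dotmuh_n(A \times B)}.
\end{equation}
Lemma \ref{6. lem: c_n gives proper scaling} gives $b_n P_\xi(c_n^{-1}\xi \in (u, V)) \to u^{-\alpha} - V^{-\alpha}$, and the vague convergence of $b_n^{-1}\dotmuh_n$ gives $b_n^{-1}\dotmuh_n(A \times B) \to \dot\mu(A \times B)$; together these match the characteristics of the Poisson random measure $\dot\pi$ on a convergence-determining ring, which establishes the vague convergence.

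Next, I would introduce the projection $q(x, w, v) \coloneqq (x, v)$ and set $\pi_n \coloneqq \dot\pi_n \circ q^{-1}$, observing that this $\pi_n$ coincides with the point measure treated in the proof of Lemma \ref{6. lem: aging, convergence of traps}. By continuity of $q$ and the continuous mapping theorem, $(\dot\pi_n, \pi_n) \xrightarrow{\mathrm{d}} (\dot\pi, \pi)$ jointly in the vague topology. The tightness conditions \ref{2. thm item: tightness, tightness of W} and \ref{2. thm item: tightness, tightness of M} of Theorem \ref{2. thm: tightness in sP} for $(\pi_n)_{n \geq 1}$ were already verified in the proof of Lemma \ref{6. lem: aging, convergence of traps}, so Corollary \ref{2. cor: convergence in distribution in sP} strengthens the second marginal to convergence in the Polish space $\pointMeas(M)$; a subsequential-limit identification (using that $\pi_n$ is a continuous deterministic functional of $\dot\pi_n$) then upgrades this to the joint convergence $(\dot\pi_n, \pi_n) \xrightarrow{\mathrm{d}} (\dot\pi, \pi)$ in $\Meas(M \times \RNp \times \RNpp) \times \pointMeas(M)$.

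To conclude, the Skorohod representation theorem on this Polish product space lets us assume almost sure convergence. Since $\pi \in \pointMeas^*(M)$ almost surely by Lemma \ref{6. lem: sub-aging, properties of limiting trap}\ref{6. lem item: sub-aging, no multiplicity of pi}, Corollary \ref{2. cor: convergence in VPP from that in sP} yields $c_n^{-1}\nu_n = \MeasureMap(\pi_n) \to \MeasureMap(\pi) = \nu$ almost surely in the vague-and-point-process topology, jointly with $\dot\pi_n \to \dot\pi$. Transferring back to distributional convergence and combining with the deterministic convergence $b_n^{-1}\dotmuh_n \to \dot\mu$ on the common space gives the asserted convergence in $\rbcM(\markedMeasFunct{\RNp} \times \probFunct(\disMeasFunct \times \markedMeasFunct{\RNp \times \RNpp}))$. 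The main obstacle is bridging the two differently-graded modes of convergence: the vague convergence of $\dot\pi_n$ alone does not control the weight measure $c_n^{-1}\nu_n$ in the vague-and-point-process topology required by $\disMeasFunct$, so the argument must be routed through the intermediate $\pointMeas(M)$-convergence of $\pi_n$.
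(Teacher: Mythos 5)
Your strategy uses the same key ingredients as the paper's proof --- Kallenberg's Poisson convergence criterion for $\dot\pi_n$, the tightness conditions for $(\pi_n)_{n\geq 1}$ already verified in the proof of Lemma~\ref{6. lem: aging, convergence of traps}, Corollary~\ref{2. cor: convergence in distribution in sP}, Lemma~\ref{6. lem: sub-aging, properties of limiting trap} and the Skorohod representation theorem --- but you organize the coupling step differently. The paper first establishes $c_n^{-1}\nu_n \xrightarrow{\mathrm{d}} \nu$ in the vague-and-point-process topology and $\dot\pi_n \xrightarrow{\mathrm{d}} \dot\pi$ vaguely as two \emph{independent} marginal convergences, then passes to a jointly convergent subsequence $(\nu', \dot\pi')$ after Skorohod and identifies the joint limit by comparing two ways of computing $\lim_k \pointMap(c_{n_k}^{-1}\nu_{n_k})$. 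You instead try to derive the $\pi_n$-convergence directly from that of $\dot\pi_n$ by pushing forward along $q$.

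There is a genuine gap in the pushforward step. The map $\mu \mapsto \mu\circ q^{-1}$ from $\Meas(M\times\RNp\times\RNpp)$ to $\Meas(M\times\RNpp)$ is \emph{not} continuous for the vague topologies, because $q$ is not proper: for a compact $K\subseteq M\times\RNpp$, the preimage $q^{-1}(K)=\{(x,w,v):(x,v)\in K\}$ is unbounded in the $\RNp$-direction. Consequently, for compactly supported continuous $f$ on $M\times\RNpp$, the function $f\circ q$ is \emph{not} compactly supported on $M\times\RNp\times\RNpp$, and vague convergence of $\dot\pi_n$ gives no control of $\int f\circ q\,d\dot\pi_n$; informally, atoms whose conductance marks $w$ escape to $\infty$ vanish from the vague limit $\dot\pi$ but would persist in the limit of $\pi_n = \dot\pi_n\circ q^{-1}$. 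This means both the claim ``by continuity of $q$ and the continuous mapping theorem, $(\dot\pi_n,\pi_n)\xrightarrow{\mathrm{d}}(\dot\pi,\pi)$ jointly in the vague topology'' and the later appeal to ``$\pi_n$ is a continuous deterministic functional of $\dot\pi_n$'' are unjustified as stated, and your route leans on them more centrally than the paper does. To repair this, one needs a tightness estimate in the $w$-coordinate, for instance that for compact $A\subseteq M$ and $0<\varepsilon<l$ one has
\begin{equation}
  \lim_{W\to\infty}\limsup_{n\to\infty}\mathsf{P}_n\bigl(\dot\pi_n(A\times(W,\infty)\times[\varepsilon,l])>0\bigr)=0,
\end{equation}
which follows from the vague convergence $b_n^{-1}\dotmuh_n\to\dot\mu$ on $M\times\RNp$ together with a first-moment bound; this lets you truncate in $w$ and then apply vague convergence on the compact truncation. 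Alternatively, establish $\pi_n\xrightarrow{\mathrm{d}}\pi$ vaguely independently by the Kallenberg criterion (exactly as in Lemma~\ref{6. lem: aging, convergence of traps}) and identify the joint subsequential limit after Skorohod, as the paper does: there the continuous-mapping appeal for $p$ is only used to identify a limit whose existence is already guaranteed, and the identification itself can be closed by the a.s.\ inequality $\dot\pi'\circ p^{-1}\leq\pointMap(\nu')$ (from Fatou for vague limits) together with equality of first moments.
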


\begin{proof}
  Under Assumption \ref{1. assum: sub-aging, deterministic version},
  we may assume that $(V_{n}, a_{n}^{-1} R_{n}, \rho_{n})$ and $(F, R, \rho)$ are embedded isometrically
  into a common rooted boundedly-compact metric space $(M, d^{M}, \rho_{M})$ 
  in such a way that 
  $\rho_{n} = \rho = \rho_{M}$ as elements in $M$,
  $V_{n} \to F$ in the local Hausdorff topology as closed subsets of $M$,
  and $b_{n}^{-1} \dotmuh_{n} \to \dot{\mu}$ vaguely as measures on $M$. 
  From the vague convergence $b_{n}^{-1} \dotmuh_{n} \to \dot{\mu}$ and the continuous mapping theorem,
  we have that $b_{n}^{-1} \muh_{n} \to \mu$ vaguely.
  Therefore,
  by following the proof of Lemma \ref{6. lem: aging, convergence of traps},
  we deduce that 
  $c_{n}^{-1} \nu_{n} \xrightarrow{\mathrm{d}} \nu$ in the vague-and-point-process topology.

  We next prove that $\dot{\pi}_{n} \to \dot{\pi}$ vaguely as measures on $M \times \RNp \times \RNpp$
  in the same way as before.
  Fix a bounded subset $A$ and a $u_{1} > 0$ such that $\mu(\partial (A \times (u_{1}, \infty))) = 0$.
  Recall from \eqref{1. eq: mark map} that $\psi_{n}(x) \coloneqq \psi_{G_{n}}(x) = (x, c_{n}(x))$ 
  and that $\dotmuh_{n} = \muh_{n} \circ \psi_{n}^{-1}$.
  We have that, for every $u_{2} > 0$,
  \begin{align}
    \mathsf{P}_{n} 
    \bigl(
      \dot{\pi}_{n}(A \times (u_{1}, \infty) \times (u_{2}, \infty)) = 0
    \bigr)
    &=
    \mathsf{P}_{n} 
    \bigl(
      c_{n}^{-1} \xi_{x}^{(n)} \leq u_{2}\ \text{for all}\ x \in \psi_{n}^{-1}(A \times (u_{1}, \infty))
    \bigr) \\
    &=
    \bigl( 1 - P_{\xi}(c_{n}^{-1} \xi > u_{2}) \bigr)^{\dotmuh_{n}(A \times (u_{1}, \infty))}
  \end{align}
  and 
  $\mathsf{E}_{n} [ \dot{\pi}_{n}(A \times (u_{1}, \infty) \times (u_{2}, \infty))] 
  = \dotmuh_{n}(A \times (u_{1}, \infty)) P_{\xi}(c_{n}^{-1} \xi > u_{2})$.
  It follows from Lemma \ref{6. lem: c_n gives proper scaling} 
  and the convergence $\dotmuh_{n}(A \times (u_{1}, \infty)) \to \dot{\mu}(A \times (u_{1}, \infty))$ that 
  \begin{gather}
    \lim_{n \to \infty}
    \mathsf{P}_{n} 
    \bigl(
      \dot{\pi}_{n}(A \times (u_{1}, \infty) \times (u_{2}, \infty)) = 0
    \bigr)
    =
    \mathsf{P}(\dot{\pi}(A \times (u_{1}, \infty) \times (u_{2}, \infty)) = 0), 
    \label{6. lem eq: sub-aging, prob of 0 atoms}\\
    \lim_{n \to \infty} 
    \mathsf{E}_{n}[\dot{\pi}_{n}(A \times (u_{1}, \infty) \times (u_{2}, \infty))] 
    =
    \mathsf{E}[\dot{\pi}(A \times (u_{1}, \infty) \times (u_{2}, \infty))].
  \end{gather}
  Thus, by \cite[Thorem 4.18]{Kallenberg_17_Random},
  we obtain that $\dot{\pi}_{n} \xrightarrow{\mathrm{d}} \dot{\pi}$ vaguely.

  From the above arguments,
  it is the case that 
  $(c_{n}^{-1} \nu_{n}, \dot{\pi}_{n})_{n \geq 1}$ is tight as a sequence of random elements of $\disMeasures(M) \times \Meas(M \times \RNp \times \RNpp)$.
  Let $(n_{k})_{k \geq 1}$ be a subsequence such that 
  $(c_{n_{k}}^{-1}\nu_{n_{k}}, \dot{\pi}_{n_{k}})$ converges to some random element $(\nu', \dot{\pi}')$.
  We then have that $\dot{\pi}' \stackrel{\mathrm{d}}{=} \dot{\pi}$.
  Using the Skorohod representation theorem,
  we may assume that $(c_{n_{k}}^{-1}\nu_{n_{k}}, \dot{\pi}_{n_{k}}) \to (\nu', \dot{\pi}')$
  almost surely on some probability space.
  Define $p: M \times \RNp \times \RNpp \to M \times \RNpp$ by supposing $(x, w, v) \mapsto (x, v)$.
  Let $\nu' = \sum_{i \in I} v_{i} \delta_{x_{i}}$ and $\dot{\pi}' = \sum_{j \in J} \delta_{(x'_{j}, w'_{j}, v'_{j})}$ 
  be the atomic decompositions of $\nu'$ and $\dot{\pi}'$, respectively.
  The continuous mapping theorem yields that 
  \begin{equation}
    \pointMap(c_{n_{k}}^{-1} \nu_{n_{k}})
    =
    \sum_{x \in V_{n_{k}}} \delta_{(x, c_{n_{k}}^{-1} \nu_{n_{k}}(\{x\}))}
    =
    \dot{\pi}_{n_{k}} \circ p^{-1}
    \to 
    \dot{\pi}' \circ p^{-1}
    =
    \sum_{j \in J} \delta_{(x'_{j}, v'_{j})}.
  \end{equation}
  On the other hand,
  from the convergence $c_{n_{k}}^{-1} \nu_{n_{k}} \to \nu'$ in the vague-and-point-process topology and Theorem \ref{2. thm: convergence in Mdis},
  we have that $\pointMap(c_{n}^{-1} \nu_{n}) \to \pointMap(\nu') = \sum_{i \in I} \delta_{(x_{i}, v_{i})}$ vaguely.
  Hence,
  we obtain that $\sum_{i \in I} \delta_{(x_{i}, v_{i})} = \sum_{j \in J} \delta_{(x'_{j}, v'_{j})}$,
  which implies that $\{(x_{i}, v_{i}) \mid i \in I\} = \{(x'_{j}, v'_{j}) \mid j \in J\}$.
  This yields that, for any $A \in \Borel(F)$,
  \begin{equation}
    \nu'(A)
    = 
    \sum_{i \in I} v_{i} \delta_{x_{i}}(A) 
    =
    \sum_{j \in J} v_{j}' \delta_{x'_{j}}(A) 
    = 
    \int 1_{A}(x) v\, \dot{\pi}'(dx dw dv).
  \end{equation}
  Since $\dot{\pi} \stackrel{\mathrm{d}}{=} \dot{\pi}'$ and $\nu$ is given by \eqref{6. eq: sub-aging, dfn of nu},
  we obtain that $(\nu', \dot{\pi}') \stackrel{\mathrm{d}}{=} (\nu, \dot{\pi})$.
  Therefore, we deduce that $(c_{n}^{-1} \nu_{n}, \dot{\pi}_{n}) \xrightarrow{\mathrm{d}} (\nu, \dot{\pi})$,
  which completes the proof.
\end{proof}

Using Theorem \ref{5. thm: sub-aging} and Lemma \ref{6. lem: sub-aging, trap convergence},
Theorem \ref{1. thm: sub-aging for deterministic models} is proven similarly to Theorem \ref{1. thm: aging for deterministic models}
as follows.

\begin{proof} [Proof of Theorem \ref{1. thm: sub-aging for deterministic models}]
  By Lemma \ref{6. lem: sub-aging, trap convergence},
  we may assume that $(V_{n}, a_{n}^{-1} R_{n}, \rho_{n})$ and $(F, R, \rho)$ are embedded isometrically
  into a common rooted boundedly-compact metric space $(M, d^{M}, \rho_{M})$ 
  in such a way that 
  $\rho_{n} = \rho = \rho_{M}$ as elements in $M$,
  $V_{n} \to F$ in the local Hausdorff topology as closed subsets of $M$,
  and $b_{n}^{-1} \dotmuh_{n} \to \dot{\mu}$ vaguely as measures on $M \times \RNp$,
  $\mathsf{P}_{n}((c_{n}^{-1}\nu_{n}, \dot{\pi}_{n}) \in \cdot) \to \mathsf{P}((\nu, \dot{\pi}) \in \cdot)$
  as probability measures on $\disMeasures(M) \times \Meas(M \times \RNp \times \RNpp)$.
  By the Skorohod representation theorem,
  we may further assume that 
  $(c_{n}^{-1}\nu_{n}, \dot{\pi}_{n}) \to (\nu, \dot{\pi})$ almost surely on some probability space.
  Applying \cite[Theorem 1.2]{Croydon_18_Scaling},
  we obtain that $P_{n}^{\nu_{n}}(\tilde{X}_{n}^{\nu_{n}} \in \cdot) \to P_{\rho}^{\nu}(X^{\nu} \in \cdot)$
  as probability measures on $D(\RNp, M)$.
  Hence,
  from Theorem \ref{5. thm: aging} and \ref{5. thm: sub-aging},
  the desired result follows.
\end{proof}


\subsection{Proof of Theorems \ref{1. thm: aging for random models} and \ref{1. thm: sub-aging for random models}} \label{sec: Proof of aging and sub-aging for random models}

Finally, we prove the aging and sub-aging results for random electrical networks, Theorems \ref{1. thm: aging for random models} and \ref{1. thm: sub-aging for random models}.
We start with a basic result regarding the Prohorov metric (recall this from \eqref{2. eq: dfn of Prohorov metric}).

\begin{lem} \label{6. lem: Prohorov distance estimate}
  Let $X$ and $Y$ be random elements of a separable metric space $(S, d^{S})$ defined on a common probability space with probability measure $P$.
  Suppose that on an event $E$, we have that $d^{S}(X, Y) \leq \varepsilon$ almost surely.
  Then, it holds that 
  \begin{equation}
    d_{P}^{S}
    \bigl(
      P(X \in \cdot), P(Y \in \cdot)
    \bigr)
    \leq 
    P(E^{c}) + \varepsilon.
  \end{equation}
\end{lem}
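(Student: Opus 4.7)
The plan is to bound the two Prohorov-metric inequalities directly by conditioning on the event $E$. Writing $\mu \coloneqq P(X \in \cdot)$ and $\nu \coloneqq P(Y \in \cdot)$ and fixing an arbitrary $A \in \Borel(S)$, I would decompose $\{X \in A\} = (\{X \in A\} \cap E) \cup (\{X \in A\} \cap E^{c})$ and bound the second piece trivially by $P(E^{c})$. On $E$ we have $d^{S}(X, Y) \leq \varepsilon$ almost surely, so $\{X \in A\} \cap E \subseteq \{Y \in \neighb{A}{\varepsilon}\}$ up to a null set, giving
\begin{equation}
\mu(A) \leq P(X \in A,\, E) + P(E^{c}) \leq P(Y \in \neighb{A}{\varepsilon}) + P(E^{c}) = \nu(\neighb{A}{\varepsilon}) + P(E^{c}).
\end{equation}
Swapping the roles of $X$ and $Y$ by symmetry yields the analogous bound $\nu(A) \leq \mu(\neighb{A}{\varepsilon}) + P(E^{c})$.

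Setting $\delta \coloneqq P(E^{c}) + \varepsilon$, the monotonicity $\neighb{A}{\varepsilon} \subseteq \neighb{A}{\delta}$ together with $P(E^{c}) \leq \delta$ upgrades both inequalities to $\mu(A) \leq \nu(\neighb{A}{\delta}) + \delta$ and $\nu(A) \leq \mu(\neighb{A}{\delta}) + \delta$. Since $A \in \Borel(S)$ was arbitrary, the definition \eqref{2. eq: dfn of Prohorov metric} of the Prohorov metric then forces $d_{P}^{S}(\mu, \nu) \leq \delta = P(E^{c}) + \varepsilon$, which is exactly the claim. The only mild technical point worth mentioning is the measurability of events such as $\{X \in A\} \cap E$ and $\{Y \in \neighb{A}{\varepsilon}\} \cap E$; this is automatic, since $\neighb{A}{\varepsilon}$ is a closed (hence Borel) subset of $S$ and $X, Y$ are random elements of $S$. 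There is no genuine obstacle in the argument—it is essentially a two-line direct computation from the definition of the Prohorov metric, with the role of $E^{c}$ being to absorb the part of the probability mass on which the hypothesis $d^{S}(X,Y) \leq \varepsilon$ fails.
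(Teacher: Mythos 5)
Your proof is correct and is essentially the same argument the paper gives: split $\{X \in A\}$ along $E$, absorb the exceptional piece into $P(E^c)$, use $d^S(X,Y)\le\varepsilon$ on $E$ to pass into $\neighb{A}{\varepsilon}$, then symmetrize and compare to the definition of the Prohorov metric. The paper states the two displayed inequalities and then says "hence we obtain the desired result," leaving the final upgrade from $(\varepsilon, P(E^c))$ to the common constant $\delta = P(E^c)+\varepsilon$ implicit, which you spell out.
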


\begin{proof}
  Recall the definition of the $\varepsilon$-neighborhood $\neighb{\cdot}{\varepsilon}$ from \eqref{2. eq: dfn of closed neighborhood}.
  Fix a Borel subset $A$ of $S$.
  We deduce that 
  \begin{align}
    P(X \in A) 
    \leq
    P(X \in A,\, d^{S}(X, Y) \leq \varepsilon) + P(E^{c})
    \leq 
    P(Y \in \neighb{A}{\varepsilon}) + P(E^{c}),
  \end{align}
  and similarly, $P(Y \in A) \leq P(X \in \neighb{A}{\varepsilon}) + P(E^{c})$.
  Hence, we obtain the desired result.
\end{proof}

Thanks to the Skorohod representation theorem,
Theorem \ref{1. thm: aging for random models} (resp.\ Theorem \ref{1. thm: sub-aging for random models}) is obtained similarly
to Theorem \ref{1. thm: aging for deterministic models} (resp.\ Theorem \ref{1. thm: sub-aging for deterministic models})
by showing a version of Lemma \ref{6. lem: aging, convergence of traps} (resp.\ Lemma \ref{6. lem: sub-aging, trap convergence}) 
for random electrical networks.
To do this, we will use the following technical results.
Recall the space $\Rspace$ from Section \ref{sec: recurrent resistance metric and auxiliary results}.

\begin{lem} \label{6. lem: distance between original and trace}
  Fix $(F, R, \rho, \nu) \in \Rspace$.
  Assume that, for some $\lambda > 1$, $r>0$, and $\delta > 0$,
  \begin{equation}
    R(\rho, B_{R}(\rho, r)^{c}) \geq \lambda, 
    \quad 
    \nu(B_{R}(\rho, 1)) \geq \delta.
  \end{equation}
  Write $(X^{\nu}, (P^{\nu}_{x})_{x \in F})$ (resp.\  $(X^{\nu^{(r)}}, (P^{\nu^{(r)}}_{x})_{x \in F^{(r)}})$)
  for the process associated with $(F,\allowbreak R,\allowbreak \nu)$ (resp.\ $(F^{(r)},\allowbreak R^{(r)},\allowbreak \rho^{(r)})$).
  It then holds that, for any $T > 0$,
  \begin{equation}
    d_{P}^{D(\RNp, F)}
    \bigl(
      P_{\rho}^{\nu}(X^{\nu} \in \cdot), P_{\rho}^{\nu^{(r)}}(X^{\nu^{(r)}} \in \cdot)
    \bigr)
    \leq 
    e^{-T} + \frac{4}{\lambda} + \frac{4T}{\delta(\lambda - 1)},
  \end{equation}
  where we recall that $d_{P}^{D(\RNp, F)}$ is the Prohorov metric on $\mathcal{P}(D(\RNp, F))$
  induced by the Skorohod metric $d_{J_{1}}$ on $D(\RNp, F)$ defined as \eqref{3. eq: dfn of J_1 metric}.
\end{lem}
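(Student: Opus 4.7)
The plan is to couple $X^{\nu^{(r)}}$ with a trace of $X^{\nu}$ on $F^{(r)}$, and then apply the two earlier quantitative tools already proved in the paper: the exit-time estimate of Lemma \ref{4. lem: exit time estimate} and the general Prohorov bound of Lemma \ref{6. lem: Prohorov distance estimate}. Concretely, set $B \coloneqq F^{(r)} = \closure(B_{R}(\rho, r))$, regard $F^{(r)} \subseteq F$, and consider both processes as random elements of $D(\RNp, F)$. Lemma \ref{4. lem: trace result} (applied to $U = B_{R}(\rho, r)$) identifies the distribution of the trace $\tr_{B} X^{\nu}$ under $P_{\rho}^{\nu}$ with $P_{\rho}^{\nu^{(r)}}(X^{\nu^{(r)}} \in \cdot)$, since $\nu^{(r)}$ is exactly $\nu|_{B}$. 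Hence I may work on a probability space on which the two paths are genuinely coupled as $X^{\nu^{(r)}} = \tr_{B} X^{\nu}$.

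Next I observe that on the event $E \coloneqq \{\tau_{B_{R}(\rho,r)}^{\nu} > T\}$, the time-change functional $A(t) = \int_{0}^{t} 1_{B}(X^{\nu}(s))\,ds$ equals $t$ on $[0,T]$, so its right-continuous inverse $\gamma$ is the identity on $[0,T]$. Consequently $X^{\nu^{(r)}}(t) = X^{\nu}(t)$ for all $t \in [0, T]$, which forces $d_{J_{1}, t}^{F}(X^{\nu}|_{[0,t]}, X^{\nu^{(r)}}|_{[0,t]}) = 0$ for $t \leq T$. From the definition \eqref{3. eq: dfn of J_1 metric} of the Skorohod metric, this yields the pathwise bound $d_{J_{1}}^{F}(X^{\nu}, X^{\nu^{(r)}}) \leq \int_{T}^{\infty} e^{-t}\, dt = e^{-T}$ on $E$.

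Applying Lemma \ref{6. lem: Prohorov distance estimate} to the coupled pair with $\varepsilon = e^{-T}$ then gives
\begin{equation}
  d_{P}^{D(\RNp, F)}
  \bigl(
    P_{\rho}^{\nu}(X^{\nu} \in \cdot),\, P_{\rho}^{\nu^{(r)}}(X^{\nu^{(r)}} \in \cdot)
  \bigr)
  \leq
  P_{\rho}^{\nu}(\tau_{B_{R}(\rho, r)}^{\nu} \leq T) + e^{-T}.
\end{equation}
To control the first term, I apply Lemma \ref{4. lem: exit time estimate} with the inner radius parameter equal to $1$ (which is strictly less than $R(\rho, B_{R}(\rho, r)^{c}) \geq \lambda$ by hypothesis), obtaining
\begin{equation}
  P_{\rho}^{\nu}(\tau_{B_{R}(\rho, r)}^{\nu} \leq T)
  \leq
  \frac{4}{R(\rho, B_{R}(\rho, r)^{c})}
  +
  \frac{4T}{\nu(B_{R}(\rho, 1))\,(R(\rho, B_{R}(\rho, r)^{c}) - 1)}
  \leq
  \frac{4}{\lambda} + \frac{4T}{\delta(\lambda - 1)},
\end{equation}
where the second inequality uses the two assumed lower bounds and the monotonicity of the right-hand side in those quantities. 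Combining yields the claim. The only mildly delicate step is the first one, namely convincing oneself that the trace representation legitimately realises $X^{\nu^{(r)}}$ and $X^{\nu}$ on the same path space so that $d_{J_{1}}^{F}$ is well-defined between them; once that identification is made, the rest is a mechanical chaining of the two previously proved estimates.
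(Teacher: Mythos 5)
Your proof is correct and follows essentially the same route as the paper's: couple $X^{\nu^{(r)}}$ with $\operatorname{tr}_{F^{(r)}} X^{\nu}$ via Lemma \ref{4. lem: trace result}, observe that the two paths agree on $[0,T]$ when the exit time exceeds $T$ so that $d_{J_1} \le e^{-T}$, feed this into Lemma \ref{6. lem: Prohorov distance estimate}, and control the exit-time probability by Lemma \ref{4. lem: exit time estimate} with inner radius $1$ (valid since $1 < \lambda$). The only cosmetic difference is that the paper defines the good event using $\tau_{F^{(r)}}$ rather than $\tau_{B_{R}(\rho,r)}$, but both lead to the same bound.
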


\begin{proof}
  Let $\tau_{F^{(r)}}$ be the first exit time of $X$ from $F^{(r)}$.
  Fix $T>0$ and define an event $E \coloneqq \{\tau_{F^{(r)}} > T\}$.
  On the event $E$,
  by the definition of the trace of $X$
  (recall it from Section \ref{sec: recurrent resistance metric and auxiliary results}),
  we have that $X(t) = \tr_{F^{(r)}} X (t)$ for all $t < T$.
  Thus, 
  \begin{equation}
    d_{J_{1}}(X, \tr_{F^{(r)}} X) 
    \leq 
    \int_{T}^{\infty} e^{-s}\, ds
    = e^{-T}.
  \end{equation}
  Thus, the desired result follows 
  from Lemmas \ref{4. lem: trace result}, \ref{4. lem: exit time estimate}, and \ref{6. lem: Prohorov distance estimate}.
\end{proof}

\begin{lem} \label{6. lem: uniform lower bound for random traps}
  Under Assumption \ref{1. assum: aging, random version} or \ref{1. assum: sub-aging, random version}, it holds that 
  \begin{equation}
    \lim_{\delta \downarrow 0}
    \limsup_{n \to \infty}
    \mathbf{P}_{n} 
    \bigl(
      \mathsf{P}_{n}\bigl( c_{n}^{-1}\nu_{n}(B_{R_{n}}(\rho_{n}, a_{n})) < \delta \bigr) > \eta
    \bigr)
    = 0,
    \quad 
    \forall \eta > 0.
  \end{equation}
\end{lem}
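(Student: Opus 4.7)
The plan is to bound the quenched probability by its annealed counterpart through Markov's inequality and then exploit the annealed convergence of the scaled ball mass to $\nu(B_R(\rho, \cdot))$, which is almost surely strictly positive. More precisely, by Markov's inequality applied under $\mathbf{P}_n$, together with the monotone bound $c_n^{-1}\nu_n(B_{R_n}(\rho_n, a_n)) \geq c_n^{-1}\nu_n(B_{R_n}(\rho_n, a_n r))$ for $r \in (0, 1)$, it is enough to establish
\begin{equation*}
  \lim_{\delta \downarrow 0} \limsup_{n \to \infty}
  (\mathbf{P}_n \otimes \mathsf{P}_n) \bigl( c_n^{-1}\nu_n(B_{R_n}(\rho_n, a_n r)) < \delta \bigr) = 0
\end{equation*}
for some fixed $r \in (0, 1)$ satisfying $\mathbf{E}[\mu(\partial B_R(\rho, r))] = 0$; such $r$ exist outside a countable set since $s \mapsto \mathbf{E}[\mu(D_R(\rho, s))]$ is monotone.

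For the annealed convergence $c_n^{-1}\nu_n(B_{R_n}(\rho_n, a_n r)) \xrightarrow{\mathrm{d}} \nu(B_R(\rho, r))$ under $\mathbf{P}_n \otimes \mathsf{P}_n$, I will invoke the Skorohod representation theorem applied to the distributional Gromov-Hausdorff-vague convergence from Assumption \ref{1. assum: aging, random version}\ref{1. assum item: random, convergence of spaces} (or, under Assumption \ref{1. assum: sub-aging, random version}, to its marked variant composed with the continuous projection onto the first coordinate) to construct a coupling on which $(V_n, a_n^{-1}R_n, \rho_n, b_n^{-1}\muh_n) \to (F, R, \rho, \mu)$ holds $\mathbf{P}$-almost surely. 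On this coupled space, each sample path satisfies the hypotheses of Lemma \ref{6. lem: aging, convergence of traps}, whose proof uses only the local Gromov-Hausdorff-vague convergence of the underlying spaces and not the non-explosion condition. Hence the quenched law $\mathsf{P}_n(c_n^{-1}\nu_n \in \cdot)$ converges $\mathbf{P}$-almost surely to $\mathsf{P}(\nu \in \cdot)$ in the vague-and-point-process topology after a common embedding. The choice of $r$ ensures $\nu(\partial B_R(\rho, r)) = 0$ almost surely, since the expected number of $\nu$-atoms of weight at least $\varepsilon > 0$ on this boundary equals $\varepsilon^{-\alpha}\mathbf{E}[\mu(\partial B_R(\rho, r))] = 0$; consequently the map $\nu \mapsto \nu(B_R(\rho, r))$ is continuous at almost every limiting configuration, and the continuous mapping theorem combined with bounded convergence in $\mathbf{P}$ delivers the required annealed convergence.

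From this, for every $\delta$ that is a continuity point of the distribution of $\nu(B_R(\rho, r))$,
\begin{equation*}
  \lim_{n \to \infty} (\mathbf{P}_n \otimes \mathsf{P}_n) \bigl( c_n^{-1}\nu_n(B_{R_n}(\rho_n, a_n r)) < \delta \bigr) = \mathbf{P}\bigl( \nu(B_R(\rho, r)) < \delta \bigr).
\end{equation*}
Letting $\delta \downarrow 0$ along such points, the right-hand side decreases to $\mathbf{P}(\nu(B_R(\rho, r)) = 0)$, which vanishes because $\nu$ has full support almost surely by Lemma \ref{6. lem: aging, properties of limiting trap}\ref{6. lem item: aging, nu is of full support} (respectively Lemma \ref{6. lem: sub-aging, properties of limiting trap}\ref{6. lem item: sub-aging, nu is of full support} under the sub-aging hypothesis), so the non-empty open ball $B_R(\rho, r)$ has positive $\nu$-mass.

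The main obstacle is the pathwise invocation of Lemma \ref{6. lem: aging, convergence of traps} along the Skorohod coupling: one must confirm that $\mathsf{P}_n(c_n^{-1}\nu_n \in \cdot)$ is a bona fide random element of $\mathcal{P}(\disMeasures(V_n))$ and that the resulting $\mathbf{P}$-almost sure convergence of quenched laws integrates correctly against the network law, a step underwritten by the functorial measurability framework of Section \ref{sec: GH-type topologies} and the Polishness of the vague-and-point-process topology established in Theorem \ref{2. thm: the VPP topology is Polish}.
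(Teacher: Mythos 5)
Your proof is correct and rests on the same three pillars as the paper's: a Skorohod coupling realizing the Gromov--Hausdorff-vague convergence of the electrical networks almost surely, a pathwise invocation of Lemma~\ref{6. lem: aging, convergence of traps} (resp.\ Lemma~\ref{6. lem: sub-aging, trap convergence}) to get the quenched vague-and-point-process convergence of traps, and the almost-sure full support of $\nu$ from Lemma~\ref{6. lem: aging, properties of limiting trap}\ref{6. lem item: aging, nu is of full support}. The difference is in how these are packaged: the paper fixes a Skorohod sample $\omega$, extracts a sample-dependent radius $r$ and an $\omega$-dependent co-countable set of good $\delta$, obtains $\lim_{\delta \downarrow 0}\limsup_n \mathsf{P}_n(\cdot) = 0$ pathwise, and then closes the argument with reverse Fatou's lemma; you instead apply Markov's inequality under $\mathbf{P}_n$ at the very start, reducing to the annealed probability $(\mathbf{P}_n \otimes \mathsf{P}_n)(c_n^{-1}\nu_n(B_{R_n}(\rho_n, a_nr)) < \delta)$, which you then control by fixing a single universal $r$ with $\mathbf{E}[\mu(\partial B_R(\rho,r))]=0$ and transferring the pathwise quenched convergence through bounded convergence into annealed weak convergence. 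Both are clean; the annealed reduction avoids having to reason about exceptional $\delta$-sets that vary with the sample, at the cost of a first-moment Fubini argument to select $r$ globally, while the paper's reverse-Fatou route avoids that selection but needs the pathwise double limit statement. The small points you flag at the end (measurability of the quenched law as a random element, and the justification for integrating pathwise convergence against $\mathbf{P}$) are indeed where the care lies, and your appeals to Theorem~\ref{2. thm: the VPP topology is Polish} and the functor framework are the right places to ground them.
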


\begin{proof}
  Suppose that Assumption \ref{1. assum: aging, random version} (resp.\ Assumption \ref{1. assum: sub-aging, random version}) is satisfied.
  Using the Skorohod representation theorem,
  we may assume that 
  the convergence in Assumption \ref{1. assum: aging, random version}\ref{1. assum item: random, convergence of spaces}
  (resp.\ the convergence \eqref{1. assum eq: sub-aging, convergence for random version})
  takes place almost surely on some probability space $(\Omega, \mathcal{G}, \mathbb{P})$.
  Fix $\omega \in \Omega$.
  From Lemma \ref{6. lem: aging, convergence of traps} (resp.\ Lemma \ref{6. lem: sub-aging, trap convergence}),
  we have that 
  \begin{equation}
    \bigl( V_{n}, a_{n}^{-1}R_{n}, \rho_{n}, b_{n}^{-1} \muh_{n}, \mathsf{P}_{n}(c_{n}^{-1} \nu_{n} \in \cdot) \bigr)
    \to 
    \bigl( F, R, \rho, \mu, \mathsf{P}(\nu \in \cdot) \bigr)
  \end{equation}
  in $\rbcM(\MeasFunct \times \probFunct(\disMeasFunct))$.
  Then, by Theorem \ref{3. thm: convergence in GH topology}, we may assume that 
  $(V_{n}, a_{n}^{-1}R_{n}, \rho_{n})$ and $(F, R, \rho)$ are embedded 
  isometrically into a common boundedly-compact metric space $(M, d^{M}, \rho_{M})$
  in such a way that 
  $\rho_{n} = \rho = \rho_{M}$ as elements in $M$,
  $V_{n} \to F$ in the local Hausdorff topology,
  $b_{n}^{-1} \muh_{n} \to \mu$ vaguely,
  and $c_{n}^{-1}\nu_{n} \xrightarrow{\mathrm{d}} \nu$ in the vague-and-point-process topology.
  Then, for some $r \in (0,1)$, we have that 
  $c_{n}^{-1}\nu_{n}(B_{R_{n}}(\rho_{n}, a_{n}r)) \xrightarrow{\mathrm{d}} \nu(B_{R}(\rho, r))$
  (see \cite[Theorem 4.11]{Kallenberg_17_Random}).
  It is then the case that, for all but countably many $\delta>0$,
  \begin{equation}
    \lim_{n \to \infty}
    \mathsf{P}_{n}\bigl( c_{n}^{-1}\nu_{n}(B_{R_{n}}(\rho_{n}, a_{n} r)) < \delta \bigr)
    =
    \mathsf{P}\bigl( \nu(B_{R}(\rho, r)) < \delta \bigr).
  \end{equation}
  Since $\nu$ is of full support with probability $1$,
  we deduce that 
  \begin{align}
    \lim_{\delta \downarrow 0} 
    \limsup_{n \to \infty} 
    \mathsf{P}_{n}\bigl( c_{n}^{-1}\nu_{n}(B_{R_{n}}(\rho_{n}, a_{n})) < \delta \bigr) 
    \leq 
    \lim_{\delta \downarrow 0} 
    \limsup_{n \to \infty} 
    \mathsf{P}_{n}\bigl( c_{n}^{-1}\nu_{n}(B_{R_{n}}(\rho_{n}, a_{n}r)) < \delta \bigr) 
    = 0.
  \end{align}
  Hence, by (reverse) Fatou's lemma, we obtain that 
  \begin{align}
    &\lim_{\delta \downarrow 0}
    \limsup_{n \to \infty}
    \mathbf{P}_{n} 
    \bigl(
      \mathsf{P}_{n}\bigl( c_{n}^{-1}\nu_{n}(B_{R_{n}}(\rho_{n}, a_{n})) < \delta \bigr) > \eta
    \bigr) \\
    &\leq
    \mathbb{P} 
    \bigl(
      \lim_{\delta \downarrow 0} 
      \limsup_{n \to \infty} 
      \mathsf{P}_{n}\bigl( c_{n}^{-1}\nu_{n}(B_{R_{n}}(\rho_{n}, a_{n})) < \delta \bigr) > \eta
    \bigr) \\
    &= 0,
  \end{align}
  which completes the proof.
\end{proof}

The following results correspond to Lemmas \ref{6. lem: aging, convergence of traps} and \ref{6. lem: sub-aging, trap convergence}
for random electrical networks.

\begin{lem} \label{6. lem: aging, convergence of traps for random electrical networks}
  Under Assumption \ref{1. assum: aging, random version}, it holds that 
  \begin{equation} \label{6. lem eq: aging, convergence of traps for random electrical networks}
    \Bigl( V_{n}, a_{n}^{-1}R_{n}, \rho_{n}, b_{n} \muh_{n}, \mathsf{P}_{n}\bigl( (c_{n}^{-1}\nu_{n}, \tilde{\law}_{n}^{\nu_{n}}) \in \cdot \bigr) \Bigr)
    \xrightarrow{\mathrm{d}}
    \Bigl( F, R, \rho, \mu, \mathsf{P}\bigl( (\nu, \law^{\nu}) \in \cdot \bigr) \Bigr)
  \end{equation}
  in the space $\rbcM(\MeasFunct \times \probFunct(\disMeasFunct \times \probFunct(\SkorohodFunct)))$.
\end{lem}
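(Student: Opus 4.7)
The plan is to reduce the random-network case to the deterministic Lemma \ref{6. lem: aging, convergence of traps} by two successive applications of the Skorohod representation theorem, combined with the non-explosion truncation tool of Lemma \ref{6. lem: distance between original and trace}. First, I would apply Skorohod to the convergence in Assumption \ref{1. assum: aging, random version}\ref{1. assum item: random, convergence of spaces}, passing to a probability space $(\Omega, \mathcal{G}, \mathbb{P})$ on which, by Theorem \ref{3. thm: convergence in GH topology}, we may realize $(V_n, a_n^{-1}R_n, \rho_n)$ and $(F, R, \rho)$ as isometrically embedded into a common rooted boundedly-compact metric space $(M, d^M, \rho_M)$ with $V_n \to F$ in the local Hausdorff topology and $b_n^{-1} \muh_n \to \mu$ vaguely, $\mathbb{P}$-almost surely. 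The non-explosion condition \ref{1. assum item: random, the non-explosion condition} then guarantees, via a Fatou-type argument as in Lemma \ref{6. lem: uniform lower bound for random traps}, that $(F, R)$ is a recurrent resistance metric space $\mathbb{P}$-a.s.

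Second, conditioning on the (now almost surely convergent) sequence of networks, the family $\{\xi^{G_n}_x\}_{x \in V_n}$ is i.i.d.\ with law $P_\xi$ independently of the networks. Hence the Poisson convergence argument in the proof of Lemma \ref{6. lem: aging, convergence of traps}---which relies only on the vague convergence $b_n^{-1}\muh_n \to \mu$, Lemma \ref{6. lem: c_n gives proper scaling}, \cite[Theorem 4.18]{Kallenberg_17_Random}, and the tightness criterion of Theorem \ref{2. thm: tightness in sP}---yields the conditional convergence $\mathsf{P}_n(c_n^{-1}\nu_n \in \cdot) \to \mathsf{P}(\nu \in \cdot)$ in $\mathcal{P}(\disMeasures(M))$ for $\mathbb{P}$-a.e.\ realization. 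This already establishes the analogue of \eqref{6. lem eq: aging, convergence of traps for random electrical networks} after the $\tilde{\law}_n^{\nu_n}$ coordinate is dropped.

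The hard part is incorporating the conditional law $\tilde{\law}_n^{\nu_n}$. Applying Skorohod a second time to the conditional trap convergence gives a coupling under which $c_n^{-1}\nu_n \to \nu$ in $\disMeasures(M)$ a.s. I would then like to invoke \cite[Theorem 1.2]{Croydon_18_Scaling} to obtain $\tilde{\law}_n^{\nu_n} \to \law^\nu$ in $\mathcal{P}(D(\RNp, M))$, but the non-explosion hypothesis required there holds under $\mathbf{P}_n$ only in probability. To bypass this, I would for each $r > 0$ consider the trace processes on the closed balls $V_n^{(a_n r)}$: by Lemma \ref{6. lem: distance between original and trace}, on the events
\begin{equation}
E_n(\lambda, \delta, r) \coloneqq \{a_n^{-1} R_n(\rho_n, B_{R_n}(\rho_n, a_n r)^c) > \lambda\} \cap \{c_n^{-1}\nu_n(B_{R_n}(\rho_n, a_n)) \geq \delta\},
\end{equation}
the Prohorov distance between $\tilde{\law}_n^{\nu_n}$ and the law of the scaled trace process is bounded by $e^{-T} + 4/\lambda + 4T/(\delta(\lambda - 1))$. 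Since Assumption \ref{1. assum: aging, random version}\ref{1. assum item: random, the non-explosion condition} and Lemma \ref{6. lem: uniform lower bound for random traps} ensure $\liminf_n \mathbf{P}_n(E_n(\lambda, \delta, r)) \to 1$ as $r \to \infty$ and $\delta \to 0$, and since on the restricted compact spaces the deterministic results (the compact version of Lemma \ref{6. lem: aging, convergence of traps} together with \cite[Theorem 1.2]{Croydon_18_Scaling}) deliver convergence of the trace-process laws jointly with the traps, a standard double-limit argument using Lemma \ref{6. lem: Prohorov distance estimate} produces the joint convergence \eqref{6. lem eq: aging, convergence of traps for random electrical networks}.

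The main obstacle will be keeping the truncation uniform in $n$ while simultaneously passing to the limit in the trap coordinate: since $\nu$ enters the process law through the Dirichlet form, any coupling of traps and processes must be compatible with the ball truncation. This is handled cleanly by working with the restricted tuples $(V_n^{(a_n r)}, a_n^{-1}R_n^{(a_n r)}, \rho_n, b_n^{-1}\muh_n^{(a_n r)}, c_n^{-1}\nu_n^{(a_n r)}, \tilde{\law}_n^{\nu_n^{(a_n r)}})$, proving convergence in $\rbcM_c(\finMeasFunct \times \probFunct(\disMeasFunct \times \probFunct(\SkorohodFunct)))$ via the compact deterministic argument, and then letting $r \to \infty$ using the uniform truncation estimate above.
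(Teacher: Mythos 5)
Your proposal follows essentially the same route as the paper's proof (which is given for the sub-aging analogue, Lemma 6.11, and then invoked mutatis mutandis for the present lemma): a first Skorohod coupling of the random networks, truncation of the processes to balls using the trace result of Lemma \ref{4. lem: trace result}, the quantitative trace-distance estimate of Lemma \ref{6. lem: distance between original and trace}, the uniform lower bound on trap mass from Lemma \ref{6. lem: uniform lower bound for random traps}, and the Prohorov estimate of Lemma \ref{6. lem: Prohorov distance estimate} to close the double limit in $r$ and $n$. The ingredients and the overall architecture match.

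There is, however, one technical device the paper uses that you omit and that you would need to supply or replace. The convergence of the truncated tuples $\tilde{\cX}_n^{(s)} \to \cX^{(s)}$ only holds for \emph{almost every} truncation radius $s$ (one must avoid radii where the limiting measure charges the boundary sphere), and the exceptional set of radii depends on the realization $\omega$ of the networks. The paper sidesteps this by working with the averaged measures
\begin{equation}
Q_n^{(r)} \coloneqq \int_r^{r+1} \mathbf{P}_n(\tilde{\cX}_n^{(s)} \in \cdot)\, ds,
\qquad
Q^{(r)} \coloneqq \int_r^{r+1} \mathbf{P}(\cX^{(s)} \in \cdot)\, ds,
\end{equation}
so that the a.s.-in-$\omega$, a.e.-in-$s$ convergence can be integrated (Fubini) to give $Q_n^{(r)} \to Q^{(r)}$ weakly, after which the Prohorov estimate between $Q_n^{(r)}$ and $\mathbf{P}_n(\cX_n \in \cdot)$ is done unconditionally using the event probabilities for $E_n$. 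Your proposal phrases the final step as fixing a realization, applying the compact deterministic argument, and ``letting $r \to \infty$ using the uniform truncation estimate'', but the truncation estimate depends on $\mathbf{P}_n(E_n)$, which is a distributional quantity; for a single fixed realization the indicator $1_{E_n}(\omega)$ may oscillate with $n$, so the argument cannot be carried out purely a.s.\ along that realization. You need to move back to the unconditional level for the error estimate, exactly as the paper does. In addition, your asserted ``Fatou-type'' conclusion that $(F,R)$ is a.s.\ recurrent directly from Assumption \ref{1. assum: aging, random version}\ref{1. assum item: random, the non-explosion condition} is a nontrivial side claim that is not needed once one works with the truncated compact spaces (on which recurrence is automatic) and controls the error by the unconditional estimate, so you can drop it. Aside from these points of bookkeeping, the proposal captures the correct proof.
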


\begin{proof}
  This is proven similarly to Lemma \ref{6. lem: sub-aging, convergence of traps for random electrical networks} below.
  So, we omit the proof.
\end{proof}

\begin{lem} \label{6. lem: sub-aging, convergence of traps for random electrical networks}
  Under Assumption \ref{1. assum: sub-aging, random version}, it holds that 
  \begin{equation} \label{6. lem eq: sub-aging, convergence of traps for random electrical networks}
    \Bigl( V_{n}, a_{n}^{-1}R_{n}, \rho_{n}, b_{n} \dotmuh_{n}, 
      \mathsf{P}_{n}\bigl( (c_{n}^{-1}\nu_{n}, \dot{\pi}_{n}, \tilde{\law}_{n}^{\nu_{n}}) \in \cdot \bigr) \Bigr)
    \xrightarrow{\mathrm{d}}
    \Bigl( F, R, \rho, \dot{\mu}, \mathsf{P}\bigl( (\nu, \dot{\pi}, \law^{\nu}) \in \cdot \bigr) \Bigr)
  \end{equation}
  in the space $\rbcM(\markedMeasFunct{\RNp \times \RNpp} \times \probFunct(\disMeasFunct \times \markedMeasFunct{\RNp \times \RNpp} \times \probFunct(\SkorohodFunct)))$.
\end{lem}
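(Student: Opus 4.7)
The plan is to reduce to the deterministic sub-aging result (Lemma \ref{6. lem: sub-aging, trap convergence}) via Skorohod representation, and then append the BTM laws using the trace approximation provided by Lemmas \ref{6. lem: Prohorov distance estimate}, \ref{6. lem: distance between original and trace}, and \ref{6. lem: uniform lower bound for random traps} to convert the probabilistic non-explosion of Assumption \ref{1. assum: aging, random version}\ref{1. assum item: random, the non-explosion condition} into an effective control on the annealed Prohorov distance.

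First I would invoke the Skorohod representation theorem for the convergence \eqref{1. assum eq: sub-aging, convergence for random version} so that, on some probability space $(\Omega, \mathcal{G}, \mathbb{P})$,
\begin{equation}
  (V_n, a_n^{-1} R_n, \rho_n, b_n^{-1} \dotmuh_n) \to (F, R, \rho, \dot{\mu})
\end{equation}
almost surely in $\rbcM(\markedMeasFunct{\RNp})$. The proof of Lemma \ref{6. lem: sub-aging, trap convergence} used only vague convergence of the marked measures (Kallenberg's criterion for Poisson measures, the tightness characterization of Theorem \ref{2. thm: tightness in sP}, and the atom-matching from Proposition \ref{2. prop: atom convergence from simple measure convergence}) and made no use of the non-explosion condition. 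Consequently, applied realization-wise it yields
\begin{equation}
  \bigl(V_n, a_n^{-1} R_n, \rho_n, b_n^{-1} \dotmuh_n, \mathsf{P}_n((c_n^{-1}\nu_n, \dot{\pi}_n) \in \cdot)\bigr)
  \to
  \bigl(F, R, \rho, \dot{\mu}, \mathsf{P}((\nu, \dot{\pi}) \in \cdot)\bigr)
\end{equation}
almost surely in the corresponding Gromov-Hausdorff-type space, giving all the coordinates of \eqref{6. lem eq: sub-aging, convergence of traps for random electrical networks} except the BTM law.

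To incorporate $\tilde{\law}_n^{\nu_n}$, I would approximate $\tilde{X}_n^{\nu_n}$ by its trace $\tilde{X}_n^{\nu_n,(r)}$ onto the closed ball of $a_n^{-1}R_n$-radius $r$ around $\rho_n$. On the truncated space the resistance metric is compact, so Theorem \ref{1. thm: sub-aging for deterministic models} applied to the restricted networks (whose convergence follows from the Skorohod coupling above) gives, almost surely, convergence of the traced laws to the corresponding trace of $X^{\nu}$. Lemma \ref{6. lem: distance between original and trace} then bounds the Prohorov distance between $\tilde{\law}_n^{\nu_n}$ and its truncation on the event $E_n(\lambda, r, \delta, T) \coloneqq \{a_n^{-1}R_n(\rho_n, B_{R_n}(\rho_n, a_n r)^c) > \lambda\} \cap \{c_n^{-1}\nu_n(B_{R_n}(\rho_n, a_n)) > \delta\}$ by $e^{-T} + 4/\lambda + 4T/(\delta(\lambda-1))$, while Lemma \ref{6. lem: Prohorov distance estimate} converts the failure of this event into a quenched Prohorov error. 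Assumption \ref{1. assum: aging, random version}\ref{1. assum item: random, the non-explosion condition} makes the first defining event arbitrarily likely by choosing $r$ and $\lambda$ large, and Lemma \ref{6. lem: uniform lower bound for random traps} makes the second arbitrarily likely by choosing $\delta$ small, so after also letting $T \to \infty$ and $r \to \infty$ the approximation error vanishes in the required double limit.

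The hard part will be arranging these three couplings coherently at the level of the outer Prohorov metric on $\probFunct(\disMeasFunct \times \markedMeasFunct{\RNp \times \RNpp} \times \probFunct(\SkorohodFunct))$: the truncation estimate is pathwise (given the random network), Lemma \ref{6. lem: uniform lower bound for random traps} controls the trap mass only in outer probability, and one must take $\mathbf{P}_n$-expectations before applying the Prohorov bound to pass from the quenched distance $d_P^{D(\RNp, V_n)}(\tilde{\law}_n^{\nu_n}, \tilde{\law}_n^{\nu_n,(r)})$ to a bound on the annealed laws. The identification of the limit as $\mathsf{P}((\nu, \dot{\pi}, \law^{\nu}) \in \cdot)$ will then follow from the joint convergence on the truncated spaces together with the fact that, by the recurrence of $R$, $X^{\nu,(r)} \to X^{\nu}$ as $r \to \infty$ in the $J_1$-Skorohod topology.
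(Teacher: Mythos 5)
Your overall strategy matches the paper's: Skorohod representation to make the marked Gromov--Hausdorff--vague convergence almost sure, trap convergence by Lemma \ref{6. lem: sub-aging, trap convergence} applied realization-wise, truncation of the BTM via traces onto balls, and the approximation estimates of Lemmas \ref{6. lem: Prohorov distance estimate}, \ref{6. lem: distance between original and trace}, and \ref{6. lem: uniform lower bound for random traps} together with Assumption \ref{1. assum: aging, random version}\ref{1. assum item: random, the non-explosion condition} to close the gap. You also correctly spot that the restricted networks trivially satisfy the non-explosion hypothesis because they are compact.

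There is one genuine gap, however, and it is not just the coordination issue you flag at the end. You write that the convergence of the restricted networks ``follows from the Skorohod coupling above,'' and then apply the deterministic sub-aging theorem to the truncations at a fixed radius $r$. This does not follow: under the Skorohod coupling, a fixed deterministic $r$ may, with positive $\mathbb{P}$-probability, be a ``bad'' radius in the sense that the limiting measure charges the boundary of the ball, and then the restricted spaces $V_{n}^{(a_{n}r)}$ need not converge to $F^{(r)}$ at all. The set of bad radii depends on the realization, so one cannot simply pick a deterministic sequence $r_{k} \uparrow \infty$ of good radii without further argument. The paper handles this by replacing $\mathbf{P}_{n}(\tilde{\cX}_{n}^{(r)} \in \cdot)$ by the averaged measures
\begin{equation}
  Q_{n}^{(r)} \coloneqq \int_{r}^{r+1} \mathbf{P}_{n}(\tilde{\cX}_{n}^{(s)} \in \cdot)\, ds,
  \qquad
  Q^{(r)} \coloneqq \int_{r}^{r+1} \mathbf{P}(\cX^{(s)} \in \cdot)\, ds;
\end{equation}
since for each realization the truncation at radius $s$ converges for all but countably many $s$, dominated convergence gives $Q_{n}^{(r)} \to Q^{(r)}$ weakly for every $r$. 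The subsequent Prohorov estimates (via Lemmas \ref{6. lem: Prohorov distance estimate}, \ref{6. lem: distance between original and trace}, \ref{6. lem: uniform lower bound for random traps}) are then run against $Q_{n}^{(r)}$ rather than against a single truncation. Without this mollification over the truncation radius (or an equivalent Fubini selection of a deterministic full-measure set of good radii), your step asserting convergence of the restricted networks for a fixed $r$ does not hold. The rest of your outline -- in particular the use of Lemma \ref{6. lem: Prohorov distance estimate} to convert the quenched trace-approximation bound, together with the failure probability of the event $E_{n}$, into a bound on the annealed Prohorov distance -- is in agreement with the paper.
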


\begin{proof}
  For simplicity, we write 
  \begin{equation}
    \rbcM_{1} \coloneqq \rbcM(\markedMeasFunct{\RNp \times \RNpp} \times \probFunct(\disMeasFunct \times \markedMeasFunct{\RNp \times \RNpp} \times \probFunct(\SkorohodFunct))).
  \end{equation}
  For each $r>0$,
  given $\nu_{n}$,
  we write $(X_{n}^{\nu_{n}^{(a_{n}r)}},\allowbreak (P_{x}^{\nu_{n}^{(a_{n}r)}})_{x \in V_{n}^{(a_{n}r)}})$
  for the process associated with $(V_{n}^{(a_{n}r)},\allowbreak R_{n}^{(a_{n}r)},\allowbreak \nu_{n}^{(a_{n}r)})$.
  Note that by Lemma \ref{4. lem: trace result},
  $X_{n}^{\nu_{n}^{(a_{n}r)}}$ has the same distribution as the trace of $X_{n}^{\nu_{n}}$ onto $V_{n}^{(a_{n}r)}$.
  For every $r>0$,
  we set $(\dotmuh_{n})^{\langle a_{n}r \rangle} = \dotmuh_{n}|_{V_{n}^{(a_{n}r)} \times \RNp}$,
  $\dot{\pi}_{n}^{\langle a_{n}r \rangle} \coloneqq \dot{\pi}_{n}|_{V_{n}^{(a_{n}r)} \times \RNp \times \RNpp}$, and  
  \begin{gather}
    \cX_{n} 
    \coloneqq 
    \Bigl( V_{n}, a_{n}^{-1}R_{n}, \rho_{n}, b_{n}^{-1} \dotmuh_{n}, 
      \mathsf{P}_{n}\bigl( (c_{n}^{-1}\nu_{n}, \dot{\pi}_{n}, \tilde{\law}_{n}^{\nu_{n}}) \in \cdot \bigr) \Bigr),\\
    \tilde{\cX}_{n}^{(r)}
    \coloneqq 
    \Bigl( V_{n}^{(a_{n}r)}, a_{n}^{-1}R_{n}^{(a_{n}r)}, \rho_{n}, b_{n}^{-1} (\dotmuh_{n})^{(a_{n}r)}, 
      \mathsf{P}_{n}\bigl( (c_{n}^{-1}\nu_{n}^{(a_{n}r)}, \dot{\pi}_{n}^{\langle a_{n}r \rangle}, 
        \tilde{\law}_{n}^{\nu_{n}^{(a_{n}r)}}) \in \cdot \bigr) \Bigr),
  \end{gather}
  where we note that 
  \begin{equation}
    \tilde{\law}_{n}^{\nu_{n}^{(a_{n}r)}} 
    \coloneqq 
    P_{\rho_{n}}^{\nu_{n}^{(a_{n}r)}}
    \Bigl( (X_{n}^{\nu_{n}^{(a_{n}r)}}(a_{n}b_{n}t))_{t \geq 0} \in \cdot \Bigr).
  \end{equation}
  Similarly, we set $\dot{\mu}^{\langle r \rangle} = \dot{\mu}|_{F^{(r)} \times \RNp}$,
   $\dot{\pi}^{\langle r \rangle} \coloneqq \dot{\pi}|_{F^{(r)} \times \RNp \times \RNpp}$, and 
  \begin{gather}
    \cX \coloneqq \Bigl( F, R, \rho, \dot{\mu}, \mathsf{P}\bigl( (\nu, \dot{\pi}, \law^{\nu}) \in \cdot \bigr) \Bigr),\\
    \cX^{(r)} \coloneqq \Bigl( F^{(r)}, R^{(r)}, \rho^{(r)}, \dot{\mu}^{\langle r \rangle}, 
      \mathsf{P}\bigl( (\nu^{(r)}, \dot{\pi}^{\langle r \rangle}, \law^{\nu^{(r)}}) \in \cdot \bigr) \Bigr).
  \end{gather}
  Define 
  \begin{equation}
    Q_{n}^{(r)} 
    \coloneqq 
    \int_{r}^{r+1} \mathbf{P}_{n}(\tilde{\cX}_{n}^{(s)} \in \cdot)\, ds,
    \quad
    Q^{(r)} 
    \coloneqq 
    \int_{r}^{r+1} \mathbf{P}(\cX^{(s)} \in \cdot)\, ds,
  \end{equation}
  which are probability measures on $\rbcM_{1}$.
  (NB. The measurability of integrands can be verified by a similar argument to \cite[Lemma 6.3]{Noda_pre_Convergence}.)
  Using the Skorohod representation theorem,
  we may assume that 
  the convergence \eqref{1. assum eq: sub-aging, convergence for random version}
  takes place almost surely on some probability space with probability measure $\mathbb{P}$.
  Fix a realization.
  For $r>0$ such that 
  \begin{equation}
    ( V_{n}^{(a_{n}r)}, a_{n}^{-1}R_{n}^{(a_{n}r)}, \rho_{n}, b_{n}^{-1} (\dotmuh_{n})^{\langle a_{n}r \rangle} )
    \to 
    (F^{(r)}, R^{(r)}, \rho^{(r)}, \dot{\mu}^{\langle r \rangle}),
  \end{equation}
  we have from Lemma \ref{6. lem: sub-aging, trap convergence} that 
  \begin{align}
    &\bigl( V_{n}^{(a_{n}r)}, a_{n}^{-1}R_{n}^{(a_{n}r)}, \rho_{n}^{(a_{n}r)}, b_{n}^{-1} (\dotmuh_{n})^{\langle a_{n}r \rangle}, 
      \mathsf{P}_{n}( (c_{n}^{-1} \nu_{n}^{(a_{n}r)}, \dot{\pi}_{n}^{\langle a_{n}r \rangle}) \in \cdot) \bigr) \\
    \to & 
    \bigl( F^{(r)}, R^{(r)}, \rho^{(r)}, \dot{\mu}^{\langle r \rangle}, 
      \mathsf{P}( (\nu^{(r)}, \dot{\pi}^{\langle r \rangle}) \in \cdot) \bigr).
  \end{align}
  Note that the rooted resistance metric spaces $(V_{n}^{(r)}, a_{n}^{-1}R_{n}^{(a_{n}r)}, \rho_{n}^{(a_{n}r)})$ 
  satisfy Assumption \ref{1. assum: aging, deterministic version}\ref{1. assum item: deterministic, the non-explosion condition}.
  Thus, following the proof of Theorem \ref{1. thm: aging for deterministic models},
  we obtain that $\tilde{\cX}_{n}^{(r)} \to \cX^{(r)}$ almost surely under $\mathbb{P}$.
  Hence, we deduce that $Q_{n}^{(r)} \to Q^{(r)}$ weakly for every $r>0$.

  Next, we estimate the Prohorov distance between $Q_{n}^{(r)}$ and $\mathbf{P}_{n}(\cX_{n} \in \cdot)$.
  Define an event  
  \begin{equation}
    E_{n}(s, \lambda, \delta, \eta)
    \coloneqq 
    \left\{
      a_{n}^{-1} R_{n}(\rho_{n}, B_{R_{n}}(\rho_{n}, a_{n}s)) \geq \lambda
    \right\}
    \cap 
    \left\{
      \mathsf{P}_{n}\bigl( c_{n}^{-1}\nu_{n}(B_{R_{n}}(\rho_{n}, a_{n})) < \delta \bigr) \leq \eta
    \right\}
  \end{equation}
  Fix $r, \delta, \eta > 0,\, s \in (r, r+1)$ and $\lambda >1$,
  and suppose that the event $E_{n}(s, \lambda, \delta, \eta)$ occurs.
  To estimate the distance between $\tilde{\cX}_{n}^{(s)}$ and $\cX_{n}$,
  we think that $(V_{n}^{(a_{n}r)}, a_{n}^{-1}R_{n}^{(a_{n}r)}, \rho_{n}^{(a_{n}r)})$ are embedded into $(V_{n}, a_{n}^{-1}R_{n}, \rho_{n})$ in the obvious way.
  On the event $\{c_{n}^{-1}\nu_{n}(B_{R_{n}}(\rho_{n}, a_{n})) \geq \delta\}$,
  we have from Lemma \ref{6. lem: distance between original and trace} that 
  \begin{equation}
    d_{P}^{D(\RNp, V_{n})} 
    \bigl( \tilde{\law}_{n}^{\nu_{n}^{(a_{n}s)}}, \tilde{\law}_{n}^{\nu_{n}}\bigr)
    \leq 
    e^{-T} + \frac{4}{\lambda} + \frac{4T}{\delta(\lambda - 1)}
    \eqqcolon 
    c(\lambda, \delta, T),
  \end{equation}
  where we note that the Skorohod metric on $D(\RNp, V_{n})$ is induced from the scaled metric $a_{n}^{-1} R_{n}$.
  By definition, it is easy to check that
  \begin{equation}
    d_{\disMeasures}^{V_{n}, \rho_{n}}(c_{n}^{-1}\nu_{n}^{(a_{n}s)}, c_{n}^{-1}\nu_{n}) \leq e^{-s},
    \quad 
    d_{V}^{V_{n} \times \RNp \times \RNpp, (\rho_{n}, 0, 1)}(\dot{\pi}_{n}^{\langle a_{n}s \rangle}, \dot{\pi}_{n}) \leq e^{-s}
  \end{equation}
  (recall these metrics from Sections \ref{sec: The vague metric} and \ref{sec: the space of discrete measures}).
  Thus,
  if we write $d_{P}$ for the Prohorov metric on $\disMeasures(V_{n}) \times \Meas(V_{n} \times \RNp \times \RNpp) \times \mathcal{P}(D(\RNp, V_{n}))$,
  then we deduce from Lemma \ref{6. lem: Prohorov distance estimate} that
  \begin{align}
    &d_{P}
    \Bigl(
      \mathsf{P}_{n}\bigl( (c_{n}^{-1}\nu_{n}^{(a_{n}s)}, \tilde{\law}_{n}^{\nu_{n}^{(a_{n}s)}}) \in \cdot \bigr), 
      \mathsf{P}_{n}\bigl( (c_{n}^{-1}\nu_{n}, \tilde{\law}_{n}^{\nu_{n}}) \in \cdot \bigr)
    \Bigr)\\
    &\leq 
    \mathsf{P}_{n}\bigl( c_{n}^{-1}\nu_{n}(B_{R_{n}}(\rho_{n}, a_{n})) < \delta \bigr) 
    + 
    e^{-s} 
    +
    c(\lambda, \delta, T)\\ 
    &\leq 
    \eta + e^{-s} + c(\lambda, \delta, T),
  \end{align}
  where the last inequality follows from the definition of $E_{n}(s, \lambda, \delta, \eta)$.
  By definition, 
  we have that 
  \begin{equation}
    \lHausdMet{V_{n}}{\rho_{n}}(V_{n}^{(a_{n}s)}, V_{n}) \leq e^{-s},
    \quad 
    d_{V}^{V_{n} \times \RNp, (\rho_{n}, 0)}( b_{n}^{-1} (\dotmuh_{n})^{\langle a_{n}s \rangle}, b_{n}^{-1} \dotmuh_{n}) \leq e^{-s}.
  \end{equation}
  Thus, on the event $E_{n}(s, \lambda, \delta, \eta)$,
  the distance between $\tilde{\cX}_{n}^{(s)}$ and $\cX_{n}$ 
  in $\rbcM_{1}$ 
  is bounded above by 
  $\eta + e^{-s} + c(\lambda, \delta, T)$.
  If we simply write $d'_{P}$ for the Prohorov metric
  on the space of the probability measures on $\rbcM_{1}$,
  then it follows from Lemma \ref{6. lem: Prohorov distance estimate} that 
  \begin{align}
    \lefteqn{d'_{P}
    \bigl(
      \mathbf{P}_{n}(\tilde{\cX}_{n}^{(s)} \in \cdot), \mathbf{P}_{n}(\cX_{n} \in \cdot)
    \bigr)} \\
    &\leq 
    \mathbf{P}_{n}
    \bigl(
      a_{n}^{-1} R_{n}(\rho_{n}, B_{R_{n}}(\rho_{n}, a_{n}s)^{c}) < \lambda
    \bigr)
    +
    \mathbf{P}_{n} 
    \bigl(
      \mathsf{P}_{n}\bigl( c_{n}^{-1}\nu_{n}(B_{R_{n}}(\rho_{n}, a_{n})) < \delta \bigr) > \eta
    \bigr) \\
    &\quad
    + \eta + e^{-s} + c(\lambda, \delta, T) \\ 
    &\leq 
    \mathbf{P}_{n}
    \bigl(
      a_{n}^{-1} R_{n}(\rho_{n}, B_{R_{n}}(\rho_{n}, a_{n}r)^{c}) < \lambda
    \bigr)
    +
    \mathbf{P}_{n} 
    \bigl(
      \mathsf{P}_{n}\bigl( c_{n}^{-1}\nu_{n}(B_{R_{n}}(\rho_{n}, a_{n})) < \delta \bigr) > \eta
    \bigr) \\
    &\quad
    + \eta + e^{-r} + c(\lambda, \delta, T) \\
    &\eqqcolon
    c'(n, r, \lambda, \delta, \eta, T).
  \end{align}
  where we use that $s > r$ at the second inequality.
  The above inequality implies that 
  \begin{equation}
    d'_{P}
    \bigl(
      Q_{n}^{(r)}, \mathbf{P}_{n}(\cX_{n} \in \cdot)
    \bigr)
    \leq 
    c'(n, r, \lambda, \delta, \eta, T).
  \end{equation}
  Combining this with Assumption \ref{1. assum: aging, random version}\ref{1. assum item: random, the non-explosion condition} and Lemma \ref{6. lem: uniform lower bound for random traps},
  by letting $n \to \infty$, $r \to \infty$, $\lambda \to \infty$, $T \to \infty$, $\delta \to 0$, and then $\eta \to 0$,
  we obtain that 
  \begin{equation}
    \lim_{r \to \infty} 
    \limsup_{n \to \infty} 
    d'_{P}
    \bigl(
      Q_{n}^{(r)}, \mathbf{P}_{n}(\cX_{n} \in \cdot)
    \bigr) 
    = 0.
  \end{equation}
  A similar argument yields that $Q^{(r)} \to \mathbf{P}(\cX \in \cdot)$ weakly.
  Recalling that $Q_{n}^{(r)} \to Q^{(r)}$ weakly,
  we deduce that $\cX_{n} \xrightarrow{\mathrm{d}} \cX$,
  which completes the proof.
\end{proof}

By Lemma \ref{6. lem: aging, convergence of traps for random electrical networks} (resp.\ Lemma \ref{6. lem: sub-aging, convergence of traps for random electrical networks}),
Theorem \ref{1. thm: aging for random models} (resp.\ \ref{1. thm: sub-aging for random models}) 
is proven similarly to Theorem \ref{1. thm: aging for deterministic models} (resp.\ Theorem \ref{1. thm: sub-aging for deterministic models})
as follows.

\begin{proof} [{Proof of Theorems \ref{1. thm: aging for random models} and \ref{1. thm: sub-aging for random models}}]
  We only give the proof of Theorem \ref{1. thm: aging for random models}.
  Theorem  \ref{1. thm: sub-aging for random models} is proven similarly
  by using Theorem \ref{5. thm: sub-aging} and Lemma \ref{6. lem: sub-aging, convergence of traps for random electrical networks}.
  By Lemma \ref{6. lem: aging, convergence of traps for random electrical networks} and the Skorohod representation theorem,
  we may assume that the convergence \eqref{6. lem eq: aging, convergence of traps for random electrical networks} takes place almost surely 
  on some probability space.
  Fix such a realization.
  Then, by Theorem \ref{3. thm: convergence in GH topology},
  it is possible to embed $(V_{n}, a_{n}^{-1}R_{n}, \rho_{n})$ and $(F, R, \rho)$ 
  isometrically into a common boundedly-compact metric space $(M, d^{M}, \rho_{M})$
  in such a way that 
  $\rho_{n} = \rho = \rho_{M}$ as elements in $M$,
  $V_{n} \to F$ in the local Hausdorff topology,
  $b_{n}^{-1} \muh_{n} \to \mu$ vaguely,
  and $(c_{n}^{-1}\nu_{n}, \tilde{\law}_{n}^{\nu_{n}}) \xrightarrow{\mathrm{d}} (\nu, \law^{\nu})$.
  Using the Skorohod representation theorem again,
  we may assume that $(c_{n}^{-1}\nu_{n}, \tilde{\law}_{n}^{\nu_{n}}) \to (\nu, \law^{\nu})$ almost surely.
  Then, from Theorem \ref{5. thm: aging}, we deduce the desired result.
\end{proof}


\section{Applications} \label{sec: applications}

In this section, we apply the main results to several examples.
Throughout this section,
for simplicity,
we assume that $\ell(u) = 1$ for all sufficiently large $u$
(recall the slowly varying function $\ell$ from Definition \ref{1. dfn: random variable xi}).


\subsection{The Sierpi\'{n}ski gasket} \label{sec: SG}

The Sierpi\'{n}ski gasket is a well-studied self-similar fractal.
For details on the probabilistic analysis of fractals see \cite{Barlow_98_Diffusions,Kigami_12_Resistance}, for example.
Here, we briefly confirm that our results apply to a sequence of electrical networks that converge to the Sierpi\'{n}ski gasket.

Set $\rho_{0} \coloneqq (0, 0) \in \RN^{2}$.
Let $V_{0} \coloneqq \{x_{1}, x_{2}, x_{3}\} \subseteq \RN^{2}$ consist of the vertices of an equilateral triangle of side length $1$ with $x_{1} = \rho_{0}$.
Write $\psi_{i}(x) \coloneqq (x+x_{i})/2$ for $i = 1,2,3$.
We then inductively define $V_{n} \coloneqq \bigcup_{i=1}^{3} \psi_{i}(V_{n-1})$.
The electrical network $G_{n} = (V_{n}, E_{n}, c_{n})$ is defined by setting 
$E_{n}$ to be the set of pairs of elements of $V_{n}$ at a Euclidean distance $2^{-n}$ apart 
and $c_{n}(x,y) \coloneqq 1$ if $\{x, y\} \in E_{n}$.
Write $\rho_{n} \coloneqq \rho_{0}$,
$R_{n}$ for the resistance metric on $V_{n}$ associated with $G_{n}$,
and $\muh_{n}$ for the counting measure on $V_{n}$.
Then, it is elementary to check that, for some $(F, R, \rho, \mu) \in \Rspace$,
\begin{equation}
  (V_{n}, (3/5)^{n} R_{n}, \rho, 3^{-n} \muh_{n}) 
  \to    
  (F, R, \rho ,\mu)
\end{equation}
in $\GHPspace$, i.e., the pointed Gromov-Hausdorff-Prohorov topology
(recall this from Section \ref{sec: functors used in this paper}).
The set $F$ is the Sierpi\'{n}ski gasket and $\mu$ is a self-similar measure corresponding to the $\log_{2} 3$ Hausdorff measure in the Euclidean metric.
Moreover,
since the degree of any vertex in $V_{n} \setminus V_{0}$ is $4$,
we deduce that 
\begin{equation}
  (V_{n}, (3/5)^{n} R_{n}, \rho_{n}, 3^{-n} \dotmuh_{n}) 
  \to    
  (F, R, \rho ,\mu \otimes \delta_{\{4\}}),
\end{equation}
which implies that $(G_{n})_{n \geq 1}$ satisfies Assumption \ref{1. assum: sub-aging, deterministic version}.
Thus, by Theorem \ref{1. thm: sub-aging for deterministic models},
we obtain the aging and sub-aging results as follows.
Let $\{(x_{i}, v_{i})\}_{i \in I}$ be a Poisson point process on $F \times \RNpp$
with intensity $\mu(dx) \alpha v^{-1-\alpha} dv$ 
and define $\nu(dx) \coloneqq \sum_{i \in I} v_{i} \delta_{x_{i}}(dx)$.
Given $\nu$, 
we write $(X^{\nu}, \{P_{x}^{\nu}\}_{x \in F})$ for the process associated with $(F, R, \nu)$.
We simply write $X_{n}^{\nu_{n}} \coloneqq X_{G_{n}}^{\nu_{G_{n}}}$,
which is the BTM on $G_{n}$ (see Definition \ref{1. dfn: BTM}).
We denote by $P_{\rho_{n}}^{\nu_{n}}$ for the underlying probability measure 
for $X_{n}^{\nu_{n}}$ started at $\rho_{n}$.
Set $\tilde{c}_{n} \coloneqq (5/3)^{n} \cdot 3^{n/\alpha}$.
As a consequence of Theorem \ref{1. thm: sub-aging for deterministic models},
we obtain that 
\begin{gather}
  P_{\rho_{n}}^{\nu_{n}} 
  \bigl( X_{n}^{\nu_{n}}(\tilde{c}_{n}s) = X_{n}^{\nu_{n}}( \tilde{c}_{n}t) \bigr) 
  \to     
  P_{\rho}^{\nu}
  \bigl( X^{\nu}(s) = X^{\nu}(t) \bigr), \\
  P_{\rho_{n}}^{\nu_{n}} 
  \bigl( X_{n}^{\nu_{n}}(\tilde{c}_{n}t) = X_{n}^{\nu_{n}}(\tilde{c}_{n}t + t'),\ \forall t' \in [0, 3^{n/\alpha}s] \bigr)
  \to   
  \sum_{i \in I} e^{-4s/v_{i}} P_{\rho}^{\nu}(X^{\nu}(t) = x_{i}).
\end{gather}


\subsection{The random conductance model} \label{sec: random conductance model}

Given a connected simple graph, the random conductance model is defined by placing random conductances on the edges. 
So, it is a random electrical network. 
Here, we consider a very simple model, the random conductance model on $\ZN$,
but similar results hold for other graphs such as the Sierpi\'{n}ski gasket graph.
Note that the BTM we consider here is not a usual random walk on the random conductance model
because the speed of the BTM is determined by random traps, which are irrelevant to random conductances. 
However, when the random conductances are heavy tailed, 
it is natural to think (sub-)aging occurs 
because edges with very heavy conductances (resp.\ very light conductances) serve as traps (resp.\ walls) for the random walk.
Indeed, Croydon, Kious, and Scali \cite{Croydon_Kious_Scali_pre_Aging} confirmed this for the constant speed random walk on the random conductance model on $\ZN$.
For details regarding application of resistance form theory to the random conductance model, 
see \cite[Section 6]{Croydon_Hambly_Kumagai_17_Time-changes} and \cite[Appendix A]{Noda_pre_Scaling}.

Fix i.i.d.\ positive random variables $\{\zeta_{i}\}_{i \in \ZN}$ built on a probability space with probability measure $P$
such that $c < \zeta_{i} < c'$ almost surely for some deterministic constants $c, c' \in (0, \infty)$.
By scaling, without loss of generality,
we may assume that $E[\zeta_{0}^{-1}] = 1$.
Define a random electrical network $G_{n}$ as follows.
Set $V_{n} \coloneqq 2^{-n} \ZN$ and 
$E_{n}$ to be the set of the pairs of elements of $V_{n}$ at a Euclidean distance $2^{-n}$ apart.
We place the random conductance $\zeta_{i}$ on the edge $\{i/2^{n}, (i+1)/2^{n}\}$.
We equip $G_{n}$ with the root $\rho_{G_{n}} \coloneqq 0$.
Then, from \cite[Theorem A.2]{Noda_pre_Scaling}, we deduce that 
$(V_{G_{n}}, 2^{-n}R_{G_{n}}, \rho_{G_{n}})_{n \geq 1}$ satisfies 
Assumption \ref{1. assum: aging, random version}\ref{1. assum item: random, the non-explosion condition} 
and
\begin{equation}
  (V_{G_{n}}, 2^{-n} R_{G_{n}}, \rho_{G_{n}}, 2^{-n} \muh_{G_{n}}) 
  \xrightarrow{\mathrm{p}} 
  (\RN, d^{\RN}, 0, \Leb),
\end{equation}
where $d^{\RN}$ and $\Leb$ denote the Euclidean metric on $\RN$ and the Lebesgue measure, respectively.
Moreover, since the total conductance at $i/2^{n} \in V_{G_{n}}$ is $\zeta_{i-1}+\zeta_{i}$,
it is possible to show that 
\begin{equation}
  (V_{G_{n}}, 2^{-n} R_{G_{n}}, \rho_{G_{n}}, 2^{-n} \dotmuh_{G_{n}}) 
  \xrightarrow{\mathrm{p}} 
  (\RN, d^{\RN}, 0, \Leb \otimes P(\zeta_{0} + \zeta_{1} \in \cdot)).
\end{equation}
Hence, we can apply Theorem \ref{1. thm: sub-aging for random models} 
and obtain the aging and sub-aging results as follows.
We let $\{(x_{i}, w_{i}, v_{i})\}_{i \in I}$ be a Poisson point process on $F \times \RNp \times \RNpp$
with intensity $dx\, P(\zeta_{0} + \zeta_{1} \in dw)\, \alpha v^{-1-\alpha} dv$ 
and define $\nu(dx) \coloneqq \sum_{i \in I} v_{i} \delta_{x_{i}}(dx)$.
Given $\nu$, 
we write $(X^{\nu}, \{P_{x}^{\nu}\}_{x \in \RN})$ for the process associated with $(\RN, d^{\RN}, \nu)$.
We simply write $X_{n}^{\nu_{n}} \coloneqq X_{G_{n}}^{\nu_{G_{n}}}$,
which is the BTM on $G_{n}$ (see Definition \ref{1. dfn: BTM}).
We denote by $P_{\rho_{n}}^{\nu_{n}}$ for the underlying probability measure 
for $X_{n}^{\nu_{n}}$ started at $\rho_{n}$.
Set $\tilde{c}_{n} \coloneqq 2^{n} \cdot 2^{n/\alpha}$.
As a consequence of Theorem \ref{1. thm: sub-aging for deterministic models},
we obtain that 
\begin{gather}
  P_{\rho_{n}}^{\nu_{n}} 
  \bigl( X_{n}^{\nu_{n}}(\tilde{c}_{n}s) = X_{n}^{\nu_{n}}( \tilde{c}_{n}t) \bigr) 
  \to     
  P_{\rho}^{\nu}
  \bigl( X^{\nu}(s) = X^{\nu}(t) \bigr), \\
  P_{\rho_{n}}^{\nu_{n}} 
  \bigl( X_{n}^{\nu_{n}}(\tilde{c}_{n}t) = X_{n}^{\nu_{n}}(\tilde{c}_{n}t + t'),\ \forall t' \in [0, 2^{n/\alpha}s] \bigr)
  \to   
  \sum_{i \in I} e^{-w_{i}s/v_{i}} P_{\rho}^{\nu}(X^{\nu}(t) = x_{i}).
\end{gather}
As seen above, the effect of random conductances disappears in the scaling limit of the aging functions. 
This is called homogenization in the study of the random conductance model. 
On the other hand,  
the scaling limit of the sub-aging functions is affected by random conductances. 
This is because the sub-aging functions capture the behaviors of the BTMs on a time scale shorter than that of homogenization.


\subsection{The critical Galton-Watson tree} \label{sec: GW trees}

We apply our results to the critical Galton-Watson tree conditioned on its size,
which is naturally regarded as an electrical network by placing conductance $1$ on each edge.
It is well-known that the suitably conditioned scaled critical Galton-Watson trees converge to the continuum random tree in the Gromov-Hausdorff-Prohorov topology
\cite{Abraham_Delmas_Hoscheit_13_A_note,Aldous_93_The_continuum,Noda_pre_Convergence}.
Thus, Theorem \ref{1. thm: aging for random models} immediately shows that there is aging for the Bouchaud trap models on the critical Galton-Watson trees.
To apply the sub-aging result (Theorem \ref{1. thm: aging for random models}),
we need to prove that a uniformly chosen random vertex and its degree jointly converges.
The former convergence is related to the global properties of the graphs, 
while the latter convergence is related to the local properties of the graphs. 
Usually, these global and local properties are studied independently of each other, 
and their respective convergences are already known.
The convergence of uniformly chosen random vertices is an immediate consequence of the Gromov-Hausdorff-Prohorov convergence,
and the convergence of their degrees were shown in \cite[Theorem 7.11]{Janson_12_Simply}.
In this section, 
we prove the joint convergence of uniformly chosen random vertices and their degrees,
Corollary \ref{7. cor: GW tree satisfies the sub-aging assumption} below.
The proof of this result relies on \cite{Thevenin_20_Vertices},
where the joint convergence of two associated functions was shown: 
the contour functions and functions that count the number of vertices having certain outdegrees.
For details of notion regarding plane trees, we refer to \cite{Thevenin_20_Vertices}.

Let $T$ be a plane tree $T$ with $n$ vertices.
We write $\rho_{T}$ for the root and $d_{T}$ for the graph metric on $T$.
By declaring that $\{u, v\} \subseteq V$ is an edge if and only if $d_{T}(u, v) = 1$
and placing conductance $1$ on each edge,
we regard $T$ as an electrical network.
We then define $\mu_{T}(x)$ to be the total conductance at $x$, 
which is exactly the degree of $x$, 
$\muh_{T}$ to be the counting measure, 
and $\dotmuh_{T}$ to be the pushforward of $\muh_{T}$ by the map $\Psi_{T}(x) = (x, \mu_{T}(x))$.
Let $v_{0} = \rho_{T}, v_{1}, \ldots, v_{n-1}$ be the vertices of $T$ in the lexicographical order
(also known as the depth first order).
We define a map $p_{T}: [0, n-1] \to T$ 
by setting $p_{T}(i) \coloneqq v_{i}$ for $i \in \ZN$ and $p_{T}(t) \coloneqq p_{T}(\lfloor t \rfloor)$.
We also define the height function $H_{T}: [0,n-1] \to \RNp$ by setting $H_{T}(i) \coloneqq d_{T}(v_{i}, v_{0})$ and linearly interpolating between integers. 
We next introduce functions $N_{T}^{(k)}, D_{T}^{(k)}: [0,n-1] \to \RNp$ that record local structures of $T$.
Recall that the \textit{outdegree} of a vertex is the number of its children.
For every non-negative integer $k$ and $t \in [0, n-1]$,
we define $N_{T}^{(k)}(t)$ by
\begin{equation} \label{7. eq: dfn of outdegree coding function}
  N_{T}^{(k)}(t) 
  \coloneqq 
  \# 
  \bigl\{
    p_{T}(s) \in T \mid s \leq t\ \text{and the outdegree of}\ p_{T}(s)\ \text{is}\ k
  \bigr\}.
\end{equation}
We similarly define $D_{T}^{(k)}$ by 
\begin{equation}
  D_{T}^{(k)}(t) 
  \coloneqq 
  \# 
  \bigl\{
    p_{T}(s) \in T \mid s \leq t\ \text{and the degree of}\ p_{T}(s)\ \text{is}\ k
  \bigr\}.
\end{equation}
We fix a sequence $(\alpha_{n})_{n \geq 1}$ of positive numbers with $\alpha_{n} \to \infty$,
and consider the following scaled versions of these functions: for every $t \in [0,1]$,
\begin{gather}
  \tilde{p}_{T}(t) 
  \coloneqq 
  p_{T}((n-1)t),
  \quad
  \tilde{H}_{T}(t) 
  \coloneqq 
  \frac{1}{\alpha_{n}} H_{T}((n-1)t),\\
  \tilde{N}_{T}^{(k)}(t)
  \coloneqq 
  \frac{1}{n}
  N_{T}^{(k)}((n-1)t),
  \quad 
  \tilde{D}_{T}^{(k)}(t)
  \coloneqq 
  \frac{1}{\alpha_{n}}
  D_{T}^{(k)}((n-1)t).
\end{gather}

To describe limits of plane trees, we introduce real trees.
For details, see \cite{LeGall_06_Random}, for example.
Write $\mathscr{E}$ for the space of excursions,
that is,
\begin{equation}
  \mathscr{E}
  \coloneqq
  \{
  f \in C(\RNp, \RNp)
  :
  f(0)=0,
  \exists \sigma_{f} < \infty\
  \text{such that}\
  f(x) > 0\ \forall x \in (0, \sigma_{f}),
  f(x)=0\ \forall x \geq \sigma_{f}
  \}.
\end{equation}
Given a function $f \in C([0,\sigma], \RNp)$ 
with $f(0)=f(\sigma)=0$ and $f(x) > 0$ for all $x \in (0, \sigma)$,
we will abuse notation by identifying $f$ with the function $g \in \mathscr{E}$
which has $g(x)=f(x),\, 0 \leq x \leq \sigma$ and $g(x)=0, \, x \geq \sigma$.
We equip $\mathscr{E}$ with the metric induced by the supremum norm $\| \cdot \|_{\infty}$. 
Given an excursion $f \in \mathscr{E}$, 
we define a pseudometric $\bar{d}_{f}$ on $[0, \sigma_{f}]$ by setting
\begin{equation}
  \bar{d}_{f}(s,\, t)
  \coloneqq
  f(s)+f(t)-2 \inf_{u \in [s \wedge t, s \vee t]} f(u).
\end{equation}
Then,
we use the equivalence
\begin{equation}
  s \sim t \qquad
  \Leftrightarrow \qquad
  \bar{d}_{f}(s,\, t)=0
\end{equation}
to define $T_{f} \coloneqq  [0, \sigma_{f}]/ \sim$.
Let $p_{f} : [0,\sigma_{f}] \to T_{f}$ be the canonical projection.
It is then elementary to check that
\begin{equation}
  d_{f}(p_{f}(s), p_{f}(t))
  \coloneqq
  \bar{d}_{f}(s,\, t)
\end{equation}
defines a metric on $T_{f}$.
The metric space $(T_{f}, d_{f})$ is called a \textit{real tree} coded by $f$.
The canonical Radon measure on $T_{f}$ is given by $\mu_{f} \coloneqq  \Leb \circ (p_{f})^{-1}$,
where $\Leb$ stands for the one-dimensional Lebesgue measure.
We define the root $\rho_{f}$ by setting $\rho_{f} \coloneqq  p_{f}(0)$.

In Theorem \ref{7. thm: convergence of dot measures for trees} below,
we show that if scaled height functions $\tilde{H}_{T}$ and scaled out-degree-counting functions $\tilde{D}_{T}^{(k)}$ converge,
then the associated plane trees converge and moreover uniformly chosen random vertices and their degrees also converge jointly.
To do this, we use the following result on convergence of Lebesgue-Stieltjes integrals in terms of convergence of integrators.
Given a non-decreasing function $g$ on $[0,1]$,
we denote by $s_{g}$ the Lebesgue-Stieltjes measure associated with $g$.

\begin{lem} \label{7. lem: Stieltjes convergence}
  Fix a metric space $(M, d^{M})$.
  Let $g_{1}, g_{2}, \ldots$ be non-decreasing functions in $D([0,1], \mathbb{R})$ 
  such that $g_{n} \to g$ in the usual $J_{1}$-Skorohod topology for some strictly increasing function $g \in D([0,1], \RN)$,
  and let $q, q_{1}, q_{2}, \ldots$ be measurable functions from $[0,1]$ to $M$ such that $q_{n} \to q$ 
  in the supremum norm $\| \cdot \|_{\infty}$.
  Then, it holds that $s_{g_{n}} \circ q_{n}^{-1} \to s_{g} \circ q^{-1}$ weakly as measures on $M$.
\end{lem}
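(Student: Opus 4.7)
The plan is to exploit the $J_{1}$-Skorohod convergence of the integrators to reduce to a uniform-convergence setting, and then push the convergence through the $M$-valued integrand by a continuous-mapping argument. First I would invoke the defining characterisation of $J_{1}$ convergence to choose continuous, strictly increasing bijections $\lambda_{n} : [0,1] \to [0,1]$ with $\|\lambda_{n} - \mathrm{id}\|_{\infty} \to 0$ and $\tilde g_{n} \coloneqq g_{n} \circ \lambda_{n} \to g$ uniformly on $[0,1]$. Since each $\tilde g_{n}$ is non-decreasing cadlag and the uniform limit $g$ is continuous at all but countably many points, a standard Helly-type result gives that the Lebesgue--Stieltjes measures $s_{\tilde g_{n}}$ converge weakly to $s_{g}$ as finite Borel measures on $[0,1]$. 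By the change-of-variable formula for Lebesgue--Stieltjes integrals against a continuous strictly increasing bijection, for any bounded measurable $f : M \to \RN$,
\begin{equation*}
  \int_{[0,1]} f(q_{n}(t))\, dg_{n}(t) = \int_{[0,1]} f(q_{n} \circ \lambda_{n}(t))\, d\tilde g_{n}(t),
\end{equation*}
so that $s_{g_{n}} \circ q_{n}^{-1} = s_{\tilde g_{n}} \circ (q_{n} \circ \lambda_{n})^{-1}$, and it is enough to establish the weak convergence of the latter.

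Next I would show $q_{n} \circ \lambda_{n} \to q$ uniformly on $[0,1]$. The triangle inequality gives
\begin{equation*}
  \sup_{t \in [0,1]} d^{M}(q_{n}(\lambda_{n}(t)), q(t)) \leq \|q_{n} - q\|_{\infty} + \sup_{t \in [0,1]} d^{M}(q(\lambda_{n}(t)), q(t)),
\end{equation*}
and the first summand vanishes by hypothesis. For the second, in the intended application the limit $q$ is continuous (for instance, in the plane-tree setting of the next subsection, $q$ will be a projection $p_{f}$ onto a real tree coded by a continuous excursion), hence uniformly continuous on $[0,1]$, so $\sup_{t} d^{M}(q(\lambda_{n}(t)), q(t)) \to 0$ from $\|\lambda_{n} - \mathrm{id}\|_{\infty} \to 0$.

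Finally, with $s_{\tilde g_{n}} \to s_{g}$ weakly and $q_{n} \circ \lambda_{n} \to q$ uniformly toward a continuous limit, I would conclude by the standard splitting
\begin{equation*}
  \int f\, d(s_{\tilde g_{n}} \circ (q_{n} \circ \lambda_{n})^{-1}) - \int f\, d(s_{g} \circ q^{-1}) = \int (f \circ q_{n} \circ \lambda_{n} - f \circ q)\, ds_{\tilde g_{n}} + \int f \circ q\, d(s_{\tilde g_{n}} - s_{g})
\end{equation*}
for bounded continuous $f : M \to \RN$. The first summand is bounded by $s_{\tilde g_{n}}([0,1]) \cdot \|f \circ q_{n} \circ \lambda_{n} - f \circ q\|_{\infty}$, which vanishes because $s_{\tilde g_{n}}([0,1])$ is bounded and $f$ is uniformly continuous on the totally bounded set $q([0,1]) \cup \bigcup_{n}(q_{n} \circ \lambda_{n})([0,1])$, while the second summand tends to $0$ by the weak convergence $s_{\tilde g_{n}} \to s_{g}$ applied to the bounded continuous function $f \circ q$.

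The hard part will be the uniform convergence $q \circ \lambda_{n} \to q$, for which continuity of the limit $q$ is genuinely needed; without it, no amount of $J_{1}$ time-change on the integrator side can smooth out the integrand. The strict monotonicity of $g$ in the hypothesis plays an auxiliary but essential role: it forces $s_{g}$ to assign positive mass to every open subinterval of $[0,1]$, which both prevents $s_{g}$ from concentrating on exceptional sets where $q$ could misbehave and ensures that the intended application (where the $q_{n}$ are step-function approximations to a continuous $q$) fits cleanly into the framework above.
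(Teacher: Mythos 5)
Your approach is correct (modulo the implicit hypothesis noted below, which the paper's proof also needs) and genuinely different from the paper's. The paper extends $g_{n}$ past $t=1$, passes to the right-continuous inverses $\tilde{\tau}_{n}$, and invokes Whitt's Corollary 13.6.4 — this is precisely where strict monotonicity of $g$ is used — to obtain $\tilde{\tau}_{n}\to\tilde{\tau}$ uniformly; a change of variables $\int_{(0,1]}F(q_{n})\,ds_{\tilde{g}_{n}}=\int_{0}^{\infty}1_{(0,1]}(\tilde{\tau}_{n})\,F(q_{n}\circ\tilde{\tau}_{n})\,dt$ then reduces the claim to dominated convergence against a \emph{fixed} reference measure (Lebesgue on $\RNp$). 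You instead stay on the integrator side: you realise the $J_{1}$ convergence through explicit time-changes $\lambda_{n}$, deduce weak convergence $s_{\tilde{g}_{n}}\to s_{g}$ of the Stieltjes measures from uniform convergence of the distribution functions, and close with a splitting plus a local uniform-continuity bound. Two remarks. First, you claim strict monotonicity of $g$ is ``essential'' to your argument, but in fact you never use it — the weak convergence $s_{\tilde{g}_{n}}\to s_{g}$ and the change-of-variable identity $s_{g_{n}}\circ q_{n}^{-1}=s_{\tilde{g}_{n}}\circ(q_{n}\circ\lambda_{n})^{-1}$ hold for any non-decreasing cadlag limit. It is essential only to the paper's route, where it makes $\tilde{\tau}$ continuous and Whitt's result applicable; by avoiding inverses you avoid that dependence. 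Second, you are right that continuity of the limit $q$ is genuinely required for your step $q\circ\lambda_{n}\to q$ uniformly and for the uniform-continuity bound (the relevant range $q([0,1])$ being compact), but it is worth observing that the paper's dominated-convergence step quietly needs it too: to pass $F(q_{n}(\tilde{\tau}_{n}(t)))\to F(q(\tilde{\tau}(t)))$ one needs $q(\tilde{\tau}_{n}(t))\to q(\tilde{\tau}(t))$, and $\tilde{\tau}_{n}(t)\to\tilde{\tau}(t)$ alone does not give this unless $q$ is continuous at $\tilde{\tau}(t)$. As stated, with only measurable $q$ and a cadlag $g$ that may jump, the lemma is not true in full generality, so this is a shared implicit hypothesis; in the intended application $q$ is the continuous canonical projection onto a real tree, so nothing is lost.
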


\begin{proof}
  Define $\tilde{g}_{n} \in D(\RNp, \RNp)$ by setting 
  \begin{equation}
    \tilde{g}_{n}(t) 
    \coloneqq 
    \begin{cases}
      g_{n}(t) - g_{n}(0), & t \in [0,1],\\
      (t-1) + g_{n}(1) - g_{n}(0), & t >1.
    \end{cases}
  \end{equation}
  Similarly, we define $\tilde{g}$.
  Noting that $g_{n}(0) \to g(0)$ and $g_{n}(1) \to g(1)$,
  it is easy to check that $\tilde{g}_{n} \to \tilde{g}$ in the usual $J_{1}$-Skorohod topology.
  Let $\tilde{\tau}_{n} \in D(\RNp, \RNp)$ be the right-continuous inverse of $\tilde{g}_{n}$ given by 
  \begin{equation}
    \tilde{\tau}_{n}(t) 
    \coloneqq 
    \inf\{s \geq 0 \mid \tilde{g}_{n}(s) > t\}.
  \end{equation}
  Similarly, we let $\tilde{\tau}$ be the right-continuous inverse of $\tilde{g}$.
  Since $\tilde{g}$ is strictly increasing,
  we deduce from \cite[Corollary 13.6.4]{Whitt_02_Stochastic} that $\tilde{\tau}_{n} \to \tilde{\tau}$ with respect to $\| \cdot \|_{\infty}$.
  Fix a bounded continuous function $F$ on $M$.
  By \cite[Lemma A.3.7]{Chen_Fukushima_12_Symmetric},
  we have that 
  \begin{equation} \label{7. eq: Lebesgue-Stieltjes proof}
    \int_{(0,1]} F(q_{n}(t))\, s_{\tilde{g}_{n}}(dt)
    =
    \int_{0}^{\infty} 1_{(0,1]}(\tilde{\tau}_{n}(t)) F(q_{n} \circ \tilde{\tau}_{n}(t))\, dt.
  \end{equation}
  If $\tilde{\tau}_{n}(t) < 2$,
  then it is the case that $t < \tilde{g}_{n}(2)$.
  Since $(\tilde{g}_{n}(2))_{n \geq 1}$ is bounded, 
  for some $K > 0$,
  it holds that 
  \begin{equation}
    1_{(0,1]}(\tilde{\tau}_{n}(t)) 
    \leq 
    1_{(0,2)}(\tilde{\tau}_{n}(t))
    \leq 
    1_{(0,K)}(t).
  \end{equation}
  Thus, we can apply the dominated convergence theorem to \eqref{7. eq: Lebesgue-Stieltjes proof},
  and we obtain that 
  \begin{align}
    \lim_{n \to \infty} 
    \int_{(0,1]} F(q_{n}(t))\, s_{\tilde{g}_{n}}(dt)
    =
    \int_{0}^{\infty} 1_{(0,1]}(\tilde{\tau}(t)) F(q \circ \tilde{\tau}(t))\, dt
    =
    \int_{(0,1]} F(q(t))\, s_{\tilde{g}}(dt),
  \end{align}
  where we use \cite[Lemma A.3.7]{Chen_Fukushima_12_Symmetric} to deduce the second equality.
  By the definitions of $\tilde{g}_{n}$ and $\tilde{g}$,
  we have that $s_{\tilde{g}_{n}} = s_{g_{n}}$ and $s_{\tilde{g}} = s_{g}$ on $[0,1]$.
  Therefore, the proof is completed.
\end{proof}

Convergence of scaled plane trees in the Gromov-Hausdorff-Prohorov topology follows from convergence of coding functions
\cite[Proposition 3.3]{Abraham_Delmas_Hoscheit_13_A_note}.
Furthermore, 
the convergence of the coding functions of plane trees $T_{n}$ leads to the convergence of the projections $p_{T_{n}}$ to the projection of the limiting real tree. 
This seems to be a basic and well-known fact, 
but it is asserted as follows for the first time in a rigorous form 
by using the general theory of Gromov-Hausdorff-type topologies introduced in Section \ref{sec: GH-type topologies}.

\begin{lem} \label{7. lem: convergence of canonical maps of trees}
  Let $T_{n}$ be a plane trees with $n$ vertices.
  Assume that 
  there exists an $f\in \mathscr{E}$ with $\sigma_{f} = 1$ such that
  $\tilde{H}_{T_{n}} \to f$ in $C([0,1], \RNp)$.
  Then, it holds that 
  \begin{equation}
    (T_{n}, \alpha_{n}^{-1} d_{T_{n}}, \rho_{T_{n}}, n^{-1}\muh_{T_{n}}, \tilde{p}_{T_{n}})
    \to 
    (T_{f}, d_{f}, \rho_{f}, \mu_{f}, p_{f})
  \end{equation}
  in the space $\rbcM_{c}(\finMeasFunct \times \unifFunct)$.
\end{lem}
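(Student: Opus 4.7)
The strategy is to apply Theorem~\ref{3. thm: convergence in GH topology} (in its $\rbcM_c$ version) to the product functor $\finMeasFunct \times \unifFunct$: it suffices to construct, for each $n$, root-and-distance-preserving embeddings of $(T_n, \alpha_n^{-1} d_{T_n}, \rho_{T_n})$ and $(T_f, d_f, \rho_f)$ into a common rooted compact metric space $(M_n, d^{M_n}, \rho_{M_n})$ such that, as $n \to \infty$, the Hausdorff distance between the images, the Prohorov distance between the pushforwards of $n^{-1}\muh_{T_n}$ and $\mu_f$, and $\sup_{t \in [0,1]} d^{M_n}(\tilde p_{T_n}(t), p_f(t))$ all tend to $0$.

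I would construct $(M_n, d^{M_n})$ via the standard correspondence-gluing applied to
\begin{equation*}
  \mathcal{C}_n \coloneqq \bigl\{(\tilde p_{T_n}(t), p_f(t)) : t \in [0,1]\bigr\} \subseteq T_n \times T_f,
\end{equation*}
which contains $(\rho_{T_n}, \rho_f)$ via $t=0$; after identifying $\rho_{T_n}$ with $\rho_f$ in the quotient, both $T_n$ and $T_f$ embed root-and-distance-preservingly, with both the Hausdorff distance of the images and $\sup_t d^{M_n}(\tilde p_{T_n}(t), p_f(t))$ bounded above by the distortion
\begin{equation*}
  \delta_n \coloneqq \sup_{s, t \in [0,1]} \bigl| \alpha_n^{-1} d_{T_n}(\tilde p_{T_n}(s), \tilde p_{T_n}(t)) - d_f(p_f(s), p_f(t)) \bigr|.
\end{equation*}
The Hausdorff and sup-norm convergences then reduce to $\delta_n \to 0$. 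The measure convergence follows from Lemma~\ref{7. lem: Stieltjes convergence} applied with $g_n = g = \id_{[0,1]}$ and $q_n = \tilde p_{T_n}$, $q = p_f$ (viewed in the common ambient space), after observing that $n^{-1}\muh_{T_n}$ and the pushforward $(\tilde p_{T_n})_* \Leb$ differ only by $O(1/n)$ in total variation (they disagree only on the lone boundary vertex $v_{n-1}$).

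The main obstacle is establishing $\delta_n \to 0$; its heart is the combinatorial bound
\begin{equation*}
  0 \leq \min_{i \leq k \leq j} H_{T_n}(k) - H_{T_n}(\mathrm{lca}(v_i, v_j)) \leq 1 \quad \text{for all } 0 \leq i \leq j \leq n-1,
\end{equation*}
which, together with the exact tree-metric identity $d_{T_n}(v_i, v_j) = H_{T_n}(i) + H_{T_n}(j) - 2 H_{T_n}(\mathrm{lca}(v_i, v_j))$, yields $|d_{T_n}(v_i, v_j) - \bar d_{H_{T_n}}(i,j)| \leq 2$, where $\bar d_{H_{T_n}}(i,j) \coloneqq H_{T_n}(i) + H_{T_n}(j) - 2 \min_{i \leq k \leq j} H_{T_n}(k)$. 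Indeed, either $v_i$ is an ancestor of $v_j$, so the minimum is attained at $k = i$ and equals $H_{T_n}(\mathrm{lca})$; or else $v_i, v_j$ lie in distinct subtrees of their lca, in which case the lex-order interval $(i, j]$ must contain some child of $v_{\mathrm{lca}}$ (at depth $H_{T_n}(\mathrm{lca}) + 1$), while $v_{\mathrm{lca}}$ itself has index strictly less than $i$ and is excluded from the minimization. Dividing by $\alpha_n$ leaves a residual error of $O(1/\alpha_n)$; the linear-interpolation and step-versus-continuous-infimum errors between $\alpha_n^{-1} \bar d_{H_{T_n}}(\lfloor (n-1) s \rfloor, \lfloor (n-1)t \rfloor)$ and $\bar d_{\tilde H_{T_n}}(s,t)$ are $o(1)$ uniformly, thanks to the modulus-of-continuity bound $\omega(\tilde H_{T_n}, 1/(n-1)) \to 0$ implied by $\tilde H_{T_n} \to f$ with $f$ uniformly continuous; and finally $\bar d_{\tilde H_{T_n}}(s,t) \to d_f(p_f(s), p_f(t))$ uniformly by the hypothesis. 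Chaining these three estimates delivers $\delta_n \to 0$ and completes the proof.
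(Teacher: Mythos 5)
Your proposal is correct and follows essentially the same route as the paper: both build the correspondence $\mathcal{C}_n = \{(\tilde p_{T_n}(t), p_f(t)) : t \in [0,1]\}$, show its distortion $\delta_n$ tends to zero, glue $T_n$ and $T_f$ along it so that Hausdorff and sup-norm convergence come for free, and then pass from $\Leb \circ \tilde p_{T_n}^{-1}$ to $n^{-1}\muh_{T_n}$ via an $O(1/n)$ total-variation comparison. The only deviation is that you give a self-contained combinatorial proof of $\delta_n \to 0$ (the bound $0 \le \min_{i \le k \le j} H_{T_n}(k) - H_{T_n}(\mathrm{lca}(v_i, v_j)) \le 1$ together with the tree-metric identity), whereas the paper defers this step to \cite[Proposition 8.3]{Noda_pre_Convergence}; your appeal to Lemma~\ref{7. lem: Stieltjes convergence} with $g_n = g = \id_{[0,1]}$ is a harmless specialization of the continuous-mapping step that the paper simply states outright.
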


\begin{proof}
  Let $\mathcal{C}_{n}$ be a correspondence between $T_{n}$ and $T_{f}$ given by 
  \begin{equation}
    \mathcal{C}_{n} 
    \coloneqq 
    \bigl\{
      (v, x) \in T_{n} \times T_{f} \mid v = \tilde{p}_{T_{n}}(t)\ \text{and}\ x = p_{f}(t)\ \text{for some}\ t \in [0,1]
    \bigr\}.
  \end{equation} 
  (see \cite{Burago_Burago_Ivanov_01_A_course} for the definitions of a correspondence between sets).
  Then, one can check, 
  by applying the same argument as that used to prove \cite[Proposition 8.3]{Noda_pre_Convergence}, 
  that the distortion $\mathrm{dis}\, \mathcal{C}_{n}$ of $\mathcal{C}_{n}$ given below converges to $0$:
  \begin{equation}
    \mathrm{dis}\, \mathcal{C}_{n}
    \coloneqq 
    \sup\bigl\{ |\alpha_{n}^{-1} d_{T_{n}}(u, u') - d_{f}(x, x')| \mid (u, x), (u', x') \in \mathcal{C}_{n}\bigr\}
  \end{equation}
  (see also \cite{Burago_Burago_Ivanov_01_A_course} for the notion of distortion).
  Using the correspondence $\mathcal{C}_{n}$,
  we define a metric $d_{n}$ on the disjoint union $S_{n} \coloneqq T_{n} \sqcup T_{f}$, extending $\alpha_{n}^{-1}d_{T_{n}}$ and $d_{f}$,
  by setting, for $u \in T_{n}$ and $x \in T_{f}$,
  \begin{equation}
    d_{n}(u,x)
    \coloneqq
    \inf
    \left\{
    \alpha_{n}^{-1}d_{T_{n}}(u,u') + \frac{1}{2}\, \mathrm{dis}\, \mathcal{C}_{n} + n^{-1} + d_{f}(x', x)
    \mid
    (u',x') \in \mathcal{C}_{n}
    \right\}.
  \end{equation}
  In the obvious way,
  we regard $\muh_{T_{n}}$ and $\mu_{f}$ as measures on $S_{n}$ 
  and regard $\tilde{p}_{n}$ and $p$ as maps from $[0,1]$ to $S_{n}$.
  It is then the case that 
  \begin{equation}
    d_{n}(\tilde{p}_{n}(t), p(t)) \leq \frac{1}{2} \mathrm{dis}\, \mathcal{C}_{n} + n^{-1},
  \end{equation}
  and so we obtain that  
  \begin{equation}
    (T_{n}, a_{n}^{-1}d_{T_{n}}, \rho_{T_{n}}, \tilde{p}_{T_{n}})
    \to 
    (T_{f}, d_{f}, \rho_{f}, p).
  \end{equation}
  Thus, by Theorem \ref{3. thm: convergence in GH topology},
  we may assume that $(T_{n}, \alpha_{n}^{-1})$ and $(T_{f}, d_{f})$ are embedded isometrically into a common (compact) metric space $(M, d^{M})$
  in such a way that $T_{n} \to T_{f}$ in the Hausdorff topology, 
  $\rho_{T_{n}} = \rho_{f}$ as elements in $M$,
  and $\tilde{p}_{T_{n}} \to p_{f}$ with respect to $\| \cdot \|_{\infty}$.
  This immediately yields that $\Leb \circ \tilde{p}_{T_{n}}^{-1} \to \Leb \circ p_{f}^{-1} = \mu_{f}$ weakly.
  For any $u \in T_{n} \setminus \{\rho_{T_{n}}\}$,
  we have that 
  \begin{equation}
    \Leb \circ \tilde{p}_{T_{n}}^{-1} (\{u\}) 
    = 
    (n-1)^{-1}.
  \end{equation}
  Hence, for any $A \subseteq T_{n}$,
  \begin{equation}
    | \Leb \circ \tilde{p}_{T_{n}}^{-1} (A) - (n-1)^{-1} \muh_{T_{n}}(A) | 
    \leq 
    (n-1)^{-1}.
  \end{equation}
  This, combined with $\Leb \circ \tilde{p}_{T_{n}}^{-1} \to \mu_{f}$,
  implies that $n^{-1} \muh_{T_{n}} \to \mu_{f}$ weakly.
  This completes the proof.
\end{proof}

We now prove the main result.

\begin{thm} \label{7. thm: convergence of dot measures for trees}
  Suppose that we are in the same setting as Lemma \ref{7. lem: convergence of canonical maps of trees}.
  Set $I(t) \coloneqq t$ for $t \in [0,1]$.
  Assume that 
  there exists a probability measure $p = (p_{k})_{k \geq 1}$ on $\ZNp$ such that
  $\tilde{N}_{T_{n}}^{(k)} \to p_{k}\, I$ in $D([0,1], \RNp)$ for every $k \geq 0$.
  Then, it holds that 
  \begin{equation}
    (T_{n}, a_{n}^{-1}d_{T_{n}}, \rho_{T_{n}}, b_{n}^{-1}\dotmuh_{T_{n}})
    \to 
    (T_{f}, d_{f}, \rho_{f}, \mu_{f} \otimes \tilde{p})
  \end{equation}
  in the space $\rbcM_{c}(\markedfinMeasFunct{\RNp})$,
  where $\tilde{p} = (\tilde{p}_{k})_{k \geq 1}$ is a probability measure on $\NN$ 
  given by $\tilde{p}_{k} \coloneqq p_{k-1}$.
\end{thm}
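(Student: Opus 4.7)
The plan is to build on Lemma \ref{7. lem: convergence of canonical maps of trees}, which already produces the joint convergence of the scaled trees, counting measures, and canonical projections, and to enrich the convergence of $n^{-1}\muh_{T_n}$ into the degree-marked version $n^{-1}\dotmuh_{T_n}$ by exploiting the hypothesis on $\tilde{N}_{T_n}^{(k)}$. First I would invoke Lemma \ref{7. lem: convergence of canonical maps of trees} together with Theorem \ref{3. thm: convergence in GH topology} to embed $(T_n, \alpha_n^{-1}d_{T_n}, \rho_{T_n})$ and $(T_f, d_f, \rho_f)$ isometrically into a common compact metric space $(M, d^M)$ so that $\rho_{T_n} = \rho_f$ as elements of $M$, $T_n \to T_f$ in the Hausdorff topology, $n^{-1}\muh_{T_n} \to \mu_f$ weakly, and $\tilde{p}_{T_n} \to p_f$ uniformly on $[0,1]$. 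By the ``if'' direction of Theorem \ref{3. thm: convergence in GH topology}, it then suffices to establish weak convergence $n^{-1}\dotmuh_{T_n} \to \mu_f \otimes \tilde{p}$ as finite measures on $M \times \RNp$.

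The key observation is that for every non-root vertex $v$ the graph-degree satisfies $\mu_{T_n}(v) = \mathrm{outdeg}_{T_n}(v) + 1$, so modulo an $O(n^{-1})$ root correction (negligible in Prohorov distance) one may write $n^{-1}\dotmuh_{T_n} = \sum_{k \geq 0} \sigma_n^{(k)} \otimes \delta_{k+1}$, where $\sigma_n^{(k)}$ denotes $n^{-1}$ times the counting measure on the vertices of $T_n$ of outdegree $k$. A direct inspection of the jumps of the step function $\tilde{N}_{T_n}^{(k)}$ (it jumps by $n^{-1}$ at $t = i/(n-1)$ precisely when $v_i$ has outdegree $k$) together with the identity $\tilde{p}_{T_n}(i/(n-1)) = v_i$ yields $\sigma_n^{(k)} = s_{\tilde{N}_{T_n}^{(k)}} \circ \tilde{p}_{T_n}^{-1}$. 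When $p_k > 0$, the limit $p_k I$ is strictly increasing and Lemma \ref{7. lem: Stieltjes convergence} applied with $g_n = \tilde{N}_{T_n}^{(k)}$ and $q_n = \tilde{p}_{T_n}$ produces $\sigma_n^{(k)} \to p_k\, \Leb \circ p_f^{-1} = p_k \mu_f$ weakly on $M$. When $p_k = 0$, the Skorohod convergence $\tilde{N}_{T_n}^{(k)} \to 0$ upgrades to uniform convergence (the limit being continuous), so $\sigma_n^{(k)}(M) = \tilde{N}_{T_n}^{(k)}(1) \to 0$ and the same conclusion $\sigma_n^{(k)} \to 0 = p_k \mu_f$ holds.

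Finally I would recombine the slices. Using the identity $\tilde{p}_{k+1} = p_k$, the target integral against a bounded continuous $F \colon M \times \RNp \to \RN$ reads $\int F\, d(\mu_f \otimes \tilde{p}) = \sum_{k \geq 0} p_k \int F(x, k+1)\, d\mu_f(x)$. Since $\sum_{k \geq 0} \sigma_n^{(k)}(M) = 1 = \sum_{k \geq 0} p_k$, the tail $\sum_{k > K}$ is controlled uniformly in $n$ by $\|F\|_\infty \bigl(1 - \sum_{k \leq K} \tilde{N}_{T_n}^{(k)}(1)\bigr)$, which tends to $\|F\|_\infty \bigl(1 - \sum_{k \leq K} p_k\bigr)$ and is made arbitrarily small by choosing $K$ large; the finite sum $\sum_{k \leq K}$ passes to the limit by the convergences $\sigma_n^{(k)} \to p_k \mu_f$ established above. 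I expect the main subtle step to be the identification $\sigma_n^{(k)} = s_{\tilde{N}_{T_n}^{(k)}} \circ \tilde{p}_{T_n}^{-1}$, which is the bridge that lets Lemma \ref{7. lem: Stieltjes convergence} act as a black box, together with the verification that the $\unifFunct$ component of Lemma \ref{7. lem: convergence of canonical maps of trees} delivers exactly the uniform convergence of $\tilde{p}_{T_n}$ to $p_f$ required as input; after these are in hand, the tail estimate exchanging the infinite sum with the limit is routine.
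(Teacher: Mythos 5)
Your proposal follows essentially the same route as the paper: embed via Lemma \ref{7. lem: convergence of canonical maps of trees}, express the counting measure on vertices of a given (out)degree as a pushforward of a Lebesgue--Stieltjes measure under the canonical projection, apply Lemma \ref{7. lem: Stieltjes convergence} slice by slice in $k$, and then control the tail in $k$ to upgrade slice-wise convergence to weak convergence of $n^{-1}\dotmuh_{T_n}$ on $M \times \RNp$. The only organizational difference is that the paper first translates to the degree-counting functions $\tilde{D}_{T_n}^{(k)} \to p_{k-1}\,I$ and then argues vague convergence (testing against $f_1 \otimes f_2$) plus tightness, whereas you retain the outdegree functions $\tilde{N}_{T_n}^{(k)}$ and shift the mark by $1$ when recombining, folding tightness into a direct tail estimate --- the same content in a slightly different order. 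One point in which your version is actually more careful than the paper's own proof: when $p_k = 0$ the limit $p_k\,I$ is not strictly increasing, so Lemma \ref{7. lem: Stieltjes convergence} does not apply directly; your separate treatment of that case (Skorohod convergence to the continuous function $0$ upgrades to uniform convergence, hence $\sigma_n^{(k)}(M) = \tilde{N}_{T_n}^{(k)}(1) \to 0$) closes a small gap that the paper elides by invoking the lemma unconditionally for every $k$.
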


\begin{proof}
  Since the degree of a vertex, except for the root, is the outdegree plus $1$, 
  it is easy to see that $\tilde{D}_{T_{n}}^{(k)} \to p_{k-1}\, I$ in $D([0,1], \RNp)$ 
  for every $k \geq 1$.
  By Theorem \ref{3. thm: convergence in GH topology} and Lemma \ref{7. lem: convergence of canonical maps of trees},
  we may assume that $(T_{n}, \alpha_{n}^{-1}d_{T_{n}}, \rho_{T_{n}})$ and $(T_{f}, d_{f}, \rho_{f})$ are embedded isometrically 
  into a common rooted compact metric space $(M, d^{M}, \rho_{M})$
  in such a way that $T_{n} \to T_{f}$ in the Hausdorff topology, $\rho_{T_{n}} = \rho_{f} = \rho_{M}$ as elements in $M$,
  $n^{-1} \muh_{T_{n}} \to \mu_{f}$ weakly,
  and $\tilde{p}_{T_{n}} \to p_{f}$ with respect to $\| \cdot \|_{\infty}$.
  For each $k \geq 1$,
  we write $T_{n}^{(k)}$ for the set of vertices of $T_{n}$ whose degree is $k$.
  Then, one can check that, for any subset $A \subseteq T_{n}$,
  \begin{equation}
    \muh_{T_{n}}|_{T_{n}^{(k)}}(A)
    =
    D_{T_{n}}^{(k)}(0) \cdot \delta_{\{0\}}(p_{T_{n}}^{-1}(A)) 
    +
    s_{D_{T_{n}}^{(k)}} (p_{T_{n}}^{-1}(A)).
  \end{equation}
  This, combined with Lemma \ref{7. lem: Stieltjes convergence}, yields that 
  $n^{-1} \muh_{T_{n}}|_{T_{n}^{(k)}} \to p_{k-1}\, \mu_{f}$
  weakly for every $k \geq 1$.
  Fix compactly supported continuous functions $f_{1}: M \to \RN$ and $f_{2}: \RNp \to \RN$.
  We have that 
  \begin{align}
    \lim_{n \to \infty}
    \int f_{1}(x) f_{2}(w)\, n^{-1} \dotmuh_{T_{n}}(dx dw) 
    &= 
    \lim_{n \to \infty}
    n^{-1} \sum_{v \in T_{n}} f_{1}(v) f_{2}(\mu_{T_{n}}(v))\\
    &= 
    \lim_{n \to \infty}
    n^{-1} \sum_{k \geq 1} f_{2}(k) \sum_{v \in T_{n}^{(k)}} f_{1}(v) \\
    &= 
    \sum_{k \geq 1} f_{2}(k) \int f_{1}(x)\, n^{-1} \muh_{n}|_{T_{n}^{(k)}}(dx)\\
    &= 
    \sum_{k \geq 1} f_{2}(k) \int f_{1}(x) p_{k-1}\, \mu_{f}(dx)\\
    &= 
    \int f_{1}(x) f_{2}(w)\, \mu_{f} \otimes \tilde{p}(dx dw).
  \end{align}
  This implies that $n^{-1} \dotmuh_{T_{n}} \to \mu_{f} \otimes \tilde{p}$ vaguely.
  Hence, it remains to prove the tightness of $(n^{-1} \dotmuh_{T_{n}})_{n \geq 1}$ in the weak topology,
  and this can be verified by the following:
  \begin{align}
    \lim_{L \to \infty} \limsup_{n \to \infty} 
    n^{-1} \dotmuh_{T_{n}} \bigl( M \times (L, \infty) \bigr)
    &= 
    1-
    \lim_{L \to \infty} \liminf_{n \to \infty} 
    n^{-1} \dotmuh_{T_{n}} \bigl( M \times [1, L] \bigr)\\
    &= 
    1-
    \lim_{L \to \infty} \liminf_{n \to \infty} 
    \sum_{i=1}^{L} n^{-1} \muh_{T_{n}}|_{T_{n}^{(k)}}(M)\\
    &= 
    1-
    \lim_{L \to \infty} 
    \sum_{i=1}^{L} p_{k-1}\, \mu_{f}(M)\\
    &= 
    1-
    \lim_{L \to \infty} 
    \sum_{i=1}^{L} p_{k-1}\\
    &= 0.
  \end{align}
\end{proof}

We apply the above result to the critical Galton-Watson tree conditioned on its size.
Let $p=(p_{k})_{k \geq 0}$ be a probability measure on $\ZNp$ such that 
$\sum_{k \geq 1} k p_{k} = 1$, $p_{1} < 1$, and its variance is $1$.
For simplicity,
we assume that $p$ is aperiodic,
that is, 
the greatest common divisor of the set $\{k \mid p_{k} > 0\}$ is $1$.
We write $T^{GW}$ for the Galton-Watson tree with the offspring distribution $p$,
and write $T_{n}^{GW}$ for a random plane tree having the same distribution as $T^{GW}$ conditioned to have exactly $n$ vertices, 
which is well-defined for all sufficiently large $n$ by the aperiodicity of $p$.
We define $e=(e(t))_{t \in [0,1]} \in \mathscr{E}$ to be the normalized Brownian excursion.

\begin{cor} \label{7. cor: GW tree satisfies the sub-aging assumption}
  Define a probability measure $\tilde{p} = (\tilde{p}_{k})_{k \geq 1}$ on $\NN$ 
  by setting $\tilde{p}_{k} \coloneqq p_{k-1}$.
  Then, it holds that 
  \begin{equation}
    (T_{n}^{GW}, n^{-1/2}d_{T_{n}^{GW}}, \rho_{T_{n}^{GW}}, n^{-1} \dotmuh_{T_{n}^{GW}})
    \xrightarrow{\mathrm{d}}
    (T_{2e}, d_{2e}, \rho_{2e}, \mu_{2e} \otimes \tilde{p})
  \end{equation}
  in the space $\rbcM_{c}(\markedfinMeasFunct{\RNp})$.
\end{cor}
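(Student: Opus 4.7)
[Proof proposal for Corollary \ref{7. cor: GW tree satisfies the sub-aging assumption}]
The plan is to deduce the statement from Theorem \ref{7. thm: convergence of dot measures for trees} by first establishing the requisite \emph{joint} convergence of the (scaled) height function and the (scaled) outdegree-counting functions of $T_n^{GW}$, and then invoking the Skorohod representation theorem to translate distributional convergence of the coding functionals into almost sure convergence, at which point Theorem \ref{7. thm: convergence of dot measures for trees} can be applied realization-by-realization.

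First, I would assemble the classical and recent inputs on conditioned Galton–Watson trees. With offspring distribution $p$ of mean $1$ and variance $1$ (aperiodic, so that $T_n^{GW}$ is well-defined for all large $n$), the scaled height function $\tilde{H}_{T_n^{GW}}$ with normalization $\alpha_n = \sqrt{n}$ converges in distribution in $C([0,1], \RNp)$ to the normalized Brownian excursion $2e$ (after identification $f \leftrightarrow g$ as in the paper); this is the Aldous/Le Gall/Duquesne-Le Gall invariance principle for the height process. Independently, for each fixed $k \geq 0$ the law of large numbers combined with the conditioning argument in \cite{Thevenin_20_Vertices} gives $\tilde{N}_{T_n^{GW}}^{(k)} \xrightarrow{\mathrm{d}} p_k I$ in $D([0,1], \RNp)$, where $I(t) = t$. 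The key input, which is the main theorem of \cite{Thevenin_20_Vertices}, is that these convergences hold \emph{jointly}: for every finite collection of outdegrees $k_1, \ldots, k_m$,
\begin{equation}
  \bigl( \tilde{H}_{T_n^{GW}}, \tilde{N}_{T_n^{GW}}^{(k_1)}, \ldots, \tilde{N}_{T_n^{GW}}^{(k_m)} \bigr)
  \xrightarrow{\mathrm{d}}
  \bigl( 2e, p_{k_1} I, \ldots, p_{k_m} I \bigr)
\end{equation}
in $C([0,1], \RNp) \times D([0,1], \RNp)^{m}$. Since the target space is Polish and the limits in the outdegree coordinates are deterministic, this joint statement extends to all of $k \geq 0$ simultaneously in the countable product topology.

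Next, I would apply the Skorohod representation theorem to realize the above joint convergence almost surely on some common probability space. On this space, for each realization we have $\tilde{H}_{T_n^{GW}} \to 2e$ in $C([0,1], \RNp)$ with $\sigma_{2e} = 1$ and $\tilde{N}_{T_n^{GW}}^{(k)} \to p_k I$ in $D([0,1], \RNp)$ for every $k \geq 0$. Theorem \ref{7. thm: convergence of dot measures for trees} then applies pointwise with $f = 2e$, yielding
\begin{equation}
  (T_n^{GW}, n^{-1/2} d_{T_n^{GW}}, \rho_{T_n^{GW}}, n^{-1} \dotmuh_{T_n^{GW}})
  \to
  (T_{2e}, d_{2e}, \rho_{2e}, \mu_{2e} \otimes \tilde{p})
\end{equation}
in $\rbcM_c(\markedfinMeasFunct{\RNp})$ almost surely. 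Since almost sure convergence implies convergence in distribution, this gives the desired claim.

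The principal obstacle, and the only nontrivial analytic content beyond Theorem \ref{7. thm: convergence of dot measures for trees}, is securing the joint functional convergence of the height function and the outdegree-counting functions. The paper \cite{Thevenin_20_Vertices} actually states the result in terms of the contour function $C_{T_n^{GW}}$ rather than the height function $H_{T_n^{GW}}$, so some care is needed: one must note that under the same scaling $\sqrt{n}$ the two processes converge jointly to the same limit $2e$ (this is the content of the comparison lemma, e.g.\ \cite[Section 1.6]{LeGall_06_Random} or \cite[Theorem 2.3.1]{Duquesne_Le_Gall_02}), and that this joint convergence is preserved under the coupling that relates contour-time to depth-first-search-time (both map uniformly to Brownian excursion time at rate $2$). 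Once this identification is in place, Thévenin's joint statement for $(C_{T_n^{GW}}, \tilde{N}_{T_n^{GW}}^{(k)})$ transfers to $(\tilde{H}_{T_n^{GW}}, \tilde{N}_{T_n^{GW}}^{(k)})$, and the remainder of the argument is immediate from Theorem \ref{7. thm: convergence of dot measures for trees}.
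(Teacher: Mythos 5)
Your proof is correct in outline and reaches the desired conclusion by the same overall strategy as the paper: secure joint functional convergence of the scaled height function and the outdegree-counting functions, apply the Skorohod representation theorem, and invoke Theorem \ref{7. thm: convergence of dot measures for trees} realization-by-realization. However, you take a more labored route to the joint convergence, and in doing so miss the observation that makes the paper's proof essentially one line. The limits of the functions $\tilde{N}_{T_n^{GW}}^{(k)}$ are \emph{deterministic} (namely $p_k I$), so their convergence in distribution in $D([0,1],\RNp)$ is equivalent to convergence in probability. Consequently the joint convergence
\begin{equation}
  \bigl(\tilde{H}_{T_n^{GW}},\ \tilde{N}_{T_n^{GW}}^{(0)},\ \tilde{N}_{T_n^{GW}}^{(1)},\ \ldots\bigr)
  \xrightarrow{\mathrm{d}}
  \bigl(2e,\ p_0 I,\ p_1 I,\ \ldots\bigr)
\end{equation}
follows automatically from the \emph{marginal} convergences $\tilde{H}_{T_n^{GW}} \xrightarrow{\mathrm{d}} 2e$ (from \cite[Theorem 3.1]{Duquesne_03_A_limit}) and $\tilde{N}_{T_n^{GW}}^{(k)} \xrightarrow{\mathrm{p}} p_k I$ for each $k$, by the standard Slutsky-type lemma (convergence in distribution plus convergence in probability to a constant yields joint convergence in distribution). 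You do not need the full joint statement of Thévenin's theorem, and therefore you do not need to worry about whether that theorem is phrased for the contour function or the height function --- the paragraph you spend on the contour-to-height transfer (via the comparison lemma and the coupling of contour time and depth-first time) is extra machinery that the determinism of the limit lets you sidestep entirely. Once you adopt this shortcut, your argument and the paper's coincide.
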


\begin{proof}
  From \cite[Theorem 3.1]{Duquesne_03_A_limit} and \cite[Theorem 1.1]{Thevenin_20_Vertices}, we have that 
  \begin{gather}
    \bigl( n^{-1/2} H_{T_{n}^{GW}}((n-1)t) \bigr)_{t \in [0,1]} 
    \xrightarrow{\mathrm{d}} 
    (2e(t))_{t \in [0,1]},\\
    \bigl( n^{-1}N_{T_{n}^{GW}}^{(k)}((n-1)t) \bigr)_{t \in [0,1]}
    \xrightarrow{\mathrm{p}} 
    (p_{k} I(t))_{t \in [0,1]},
    \quad 
    \forall k \geq 0.
  \end{gather}
  Hence, we obtain the desired result 
  by Theorem \ref{7. thm: convergence of dot measures for trees} and the Skorohod representation theorem.
\end{proof}

By Corollary \ref{7. cor: GW tree satisfies the sub-aging assumption}, we can apply Theorem \ref{1. thm: sub-aging for random models} 
and we obtain the aging and sub-aging results as follows.
Let $\{(x_{i}, w_{i}, v_{i})\}_{i \in I}$ be a Poisson point process on $T_{2e} \times \RNp \times \RNpp$
with intensity $\mu_{2e}(dx)\, \tilde{p}(dw)\, \alpha v^{-1-\alpha} dv$ 
and define $\nu(dx) \coloneqq \sum_{i \in I} v_{i} \delta_{x_{i}}(dx)$.
Given $\nu$, 
we write $(X^{\nu}, \{P_{x}^{\nu}\}_{x \in T_{2e}})$ for the process associated with $(T_{2e}, d_{2e}, \nu)$.
We simply write $X_{n}^{\nu_{n}} \coloneqq X_{T_{n}^{GW}}^{\nu_{T_{n}^{GW}}}$,
which is the BTM on $T_{n}^{GW}$ (see Definition \ref{1. dfn: BTM}).
We denote by $P_{\rho_{n}}^{\nu_{n}}$ for the underlying probability measure 
for $X_{n}^{\nu_{n}}$ started at $\rho_{n} \coloneqq \rho_{T_{n}^{GW}}$.
Set $\tilde{c}_{n} \coloneqq n^{1/2} \cdot n^{1/\alpha}$.
As a consequence of Corollary \ref{7. cor: GW tree satisfies the sub-aging assumption} and Theorem \ref{1. thm: sub-aging for random models},
we obtain that 
\begin{gather}
  P_{\rho_{n}}^{\nu_{n}} 
  \bigl( X_{n}^{\nu_{n}}(\tilde{c}_{n}s) = X_{n}^{\nu_{n}}( \tilde{c}_{n}t) \bigr) 
  \to     
  P_{\rho_{2e}}^{\nu}
  \bigl( X^{\nu}(s) = X^{\nu}(t) \bigr), \\
  P_{\rho_{n}}^{\nu_{n}} 
  \bigl( X_{n}^{\nu_{n}}(\tilde{c}_{n}t) = X_{n}^{\nu_{n}}(\tilde{c}_{n}t + t'),\ \forall t' \in [0, n^{1/\alpha}s] \bigr)
  \to   
  \sum_{i \in I} e^{-w_{i}s/v_{i}} P_{\rho_{2e}}^{\nu}(X^{\nu}(t) = x_{i}).
\end{gather}


\subsection{The critical Erd\H{o}s-R\'{e}nyi random graph} \label{sec: ER graph}

\newcommand{\ERlc}{\closed_{1}^{n}}
\newcommand{\excs}{e^{(\sigma)}}
\newcommand{\Mz}{M^{(Z_{1})}}
\newcommand{\Gmnp}{G_{m_{n}}^{p_{n}}}
\newcommand{\Gmp}{G_{m}^{p}}
\newcommand{\DFS}{\mathbf{DFS}}
\newcommand{\tildG}{\tilde{G}}

In this section,
we consider the Erd\H{o}s-R\'{e}nyi random graph $G(n,p)$,
which is a graph on $n$ labeled vertices $[n] \coloneqq  \{1,2, \ldots, n\}$
chosen randomly by joining any two distinct vertices by an edge with probability $p$,
independently for different pairs of vertices.
This model exhibits a phase transition in its structure for large $n$.
Let $p=c/n$ for some $c >0$.
When $c < 1$, the largest connected component has size $O(\log n)$.
On the other hand, when $c > 1$, we see the emergence of a giant component that contains
a positive proportion of the vertices.
In the critical case $c=1$, the largest connected components have sizes of order $n^{2/3}$.
We will focus here on the critical case $c = 1$, and more specifically, on the critical window $p = n^{-1} + \lambda n^{-4/3}$, $\lambda \in \RN$.
We fix $\lambda \in \RN$ and write $p_{n}=n^{-1}+\lambda n^{-4/3}$.
Let $\closed_{i}^{n}$ be the $i$-th largest connected component of $G(n,p_{n})$.

One of the most significant results about random graphs in the above-mentioned critical regime was proved by Aldous \cite{Aldous_97_Brownian}.
Write $Z_{i}^{n}$ and $S_{i}^{n}$ for the size (that is, the number of vertices) and surplus
(that is, the number of edges that would need to be removed in order to obtain a tree)
of $\closed_{i}^{n}$.
Set $\bm{Z}^{n} \coloneqq  (Z_{1}^{n}, Z_{2}^{n}, \ldots)$ and $\bm{S}^{n}\coloneqq (S_{1}^{n}, S_{2}^{n}, \ldots)$.

\begin{thm} [{\cite[Folk Theorem 1, Corollary 2]{Aldous_97_Brownian}}]  \label{thm: convergence of sizes and surpluses of ER}
  As $n \to \infty$, it holds that
  \begin{equation}
    (n^{-2/3} \bm{Z}^{n}, \bm{S}^{n}) \to (\bm{Z}, \bm{S})
  \end{equation}
  in distribution, where the convergence of the first coordinate
  takes place in $l^{2}_{\searrow}$,
  the set of infinite sequences $(x_{1}, x_{2}, \ldots)$
  with $x_{1} \geq x_{2} \geq \cdots \geq 0$ and $\sum_{i} x_{i}^{2} < \infty$,
  equipped with the usual $l^{2}$-norm.
\end{thm}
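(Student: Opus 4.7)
The plan is to follow Aldous's original depth-first-exploration approach. First I would perform a depth-first search on $G(n, p_n)$ with vertices processed in a uniformly random order, producing an integer-valued walk $X^n(i) = \sum_{j \leq i} (\eta_j^n - 1)$, where $\eta_j^n$ is the number of previously-undiscovered neighbours of the $j$-th processed vertex. With the exploration rule chosen so that a new component is started whenever the walk hits a new running minimum, the connected components of $G(n, p_n)$ correspond bijectively to the excursions of $X^n - \min_{k \leq \cdot} X^n$ above $0$, and the length of each excursion equals the size of the corresponding component.

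Next I would establish the functional scaling limit
\begin{equation}
  \bigl( n^{-1/3} X^n(\lfloor n^{2/3} t \rfloor) \bigr)_{t \geq 0}
  \xrightarrow{\mathrm{d}}
  \bigl( W^{\lambda}(t) \bigr)_{t \geq 0},
  \qquad
  W^{\lambda}(t) = B(t) + \lambda t - \tfrac{1}{2} t^{2},
\end{equation}
in the usual $J_1$-Skorokhod topology on any compact interval, where $B$ is a standard Brownian motion. Conditional on the past, the increment $\eta_j^n - 1$ is approximately $\mathrm{Bin}(n-j, p_n) - 1$, whose mean is $\lambda n^{-1/3} - j n^{-2} + O(n^{-4/3})$ and whose variance is $1 + o(1)$. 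A martingale central limit theorem then gives the limit, the parabolic drift $-t^2/2$ arising from the depletion term $-j n^{-2}$ after the time rescaling $j = \lfloor n^{2/3} t \rfloor$.

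With this walk convergence in hand, I would identify $(n^{-2/3} \bm{Z}^n, \bm{S}^n)$ as a functional of $X^n$: the ranked excursion lengths of $X^n - \min X^n$ give $\bm{Z}^n$, while the surplus $S_k^n$ counts back-edges encountered during the $k$-th excursion of the DFS. Conditional on the walk, these back-edges are approximately Poisson with parameter $p_n$ times the area under the corresponding excursion, so that in the limit the ordered excursions of $W^{\lambda}$ above its running minimum, together with conditionally independent $\mathrm{Poisson}(\int \text{excursion})$ marks, yield $(\bm{Z}, \bm{S})$.

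The hard part will be upgrading convergence of excursion lengths on compact intervals to convergence in the $l^2_\searrow$ topology, since Skorokhod convergence on $[0, T]$ alone does not control small components. What is required is the tightness estimate
\begin{equation}
  \lim_{K \to \infty} \limsup_{n \to \infty}
  \mathbb{P} \biggl( \sum_{i > K} (n^{-2/3} Z_i^n)^2 > \varepsilon \biggr) = 0,
  \qquad \forall \varepsilon > 0,
\end{equation}
which Aldous achieves by second-moment bounds obtained by dominating the exploration outside the largest components by a suitably subcritical branching walk. Once this $l^2_\searrow$ tightness is combined with the finite-dimensional convergence of the top $K$ components (which follows from the DFS-walk limit via continuous mapping applied to the excursion-length functional), the joint convergence of $(n^{-2/3} \bm{Z}^n, \bm{S}^n)$ to $(\bm{Z}, \bm{S})$ follows.
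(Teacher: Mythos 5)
The paper does not give its own proof of this statement: it is imported verbatim from Aldous (1997), as the theorem header indicates (``Folk Theorem 1, Corollary 2''). Your sketch is a faithful outline of Aldous's original argument -- exploration walk, convergence to a Brownian motion with parabolic drift, ranked excursion lengths giving the component sizes, Poisson back-edge counts giving the surpluses, and the second-moment $l^{2}_{\searrow}$ tightness estimate to pass from compact time intervals to the full ranked sequence -- so there is no genuine gap. Two small remarks. First, Aldous's original exploration is breadth-first rather than depth-first; the depth-first variant you describe (used, e.g., by Addario-Berry, Broutin and Goldschmidt) works equally well, but is not quite Aldous's construction. Second, there is a slip in the conditional mean of the increment: since the increment is approximately $\mathrm{Bin}(n-j,p_{n})-1$ with $p_{n}=n^{-1}+\lambda n^{-4/3}$, the depletion term should read $-jn^{-1}$, not $-jn^{-2}$; it is $-jn^{-1}$ that, after the substitution $j=\lfloor n^{2/3}t\rfloor$ and rescaling by $n^{-1/3}$, produces the $-t^{2}/2$ drift you correctly describe, whereas $-jn^{-2}$ would be swallowed by the error term.
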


The limits $\bm{Z}=(Z_{1}, Z_{2}, \ldots)$ and $\bm{S}=(S_{1}, S_{2}, \ldots)$ are constructed as follows.
Consider a Brownian motion with parabolic drift, $(W_{t}^{\lambda})_{t \geq 0}$, where
\begin{equation}
  W_{t}^{\lambda} \coloneqq  W_{t} + \lambda t - \frac{t^{2}}{2}
\end{equation}
and $(W_{t})_{t \geq 0}$ is a standard Brownian motion.
Then, the limit $\bm{Z}$ has the distribution of the ordered sequence of lengths of excursions
of the reflected process $W^{\lambda}_{t} - \min_{0 \leq s \leq t} W^{\lambda}_{s}$ above $0$,
while $\bm{S}$ is the sequence of numbers of points of a Poisson point process with rate one in $\RNp^{2}$
lying under the corresponding excursions,
where the Poisson point process is assumed to be independent of $(W^{\lambda}_{t})_{t \geq 0}$.

The scaling limit of $\closed_{1}^{n}$ is given by fusing a tilted Brownian continuum random tree.
For fused resistance metric spaces,
see Appendix \ref{sec: convergence of fused spaces}.
Recall the space $\mathscr{E}$ of excursions from the previous section,
where we equip $\mathscr{E}$ with the metric induced by the supremum norm $\| \cdot \|_{\infty}$.
Let $\excs=(\excs(t), 0 \leq t \leq \sigma)$ be a Brownian excursion of length $\sigma$.
Note that, by Brownian scaling,
the distribution of $e^{(\sigma)}$ coincides with that of $(\sqrt{\sigma}\, e(t/\sigma))_{t \in [0, \sigma]}$,
where $(e(t))_{t \in [0,1]}$ denotes the standard Brownian excursion on $[0,1]$.
The \textit{tilted excursion} of length $\sigma$,
$\tilde{e}^{(\sigma)}=(\tilde{e}^{(\sigma)}(t), 0 \leq t \leq \sigma) \in \mathscr{E}$,
is defined to be an excursion whose distribution is characterized by
\begin{equation}
  \mathbf{P} \left( \tilde{e}^{(\sigma)} \in \mathcal{B} \right)
  =
  \frac
  {\mathbf{E}
    \left[
    1_{\{\excs \in  \mathcal{B}\}}
    \exp( \int_{0}^{\sigma} \excs(t) dt)
    \right]}
  { \mathbf{E}
    \left[
    \exp( \int_{0}^{\sigma} \excs(t) dt)
    \right]}
\end{equation}
for $\mathcal{B} \subseteq \mathscr{E}$ a Borel set.
For $f \in \mathscr{E}$ and $S \subseteq \RNp^{2}$,
define
\begin{equation}
  S \cap f
  \coloneqq
  \{ (x, y) \in S : 0 \leq y < f(x) \}.
\end{equation}
For $u=(u_{x},u_{y}) \in S \cap f$,
we define $u'=(u_{x}, u'_{x})$
by setting $u'_{x} \coloneqq  \inf \{x \geq u_{x} : f(x)=u_{x} \}$.
We write
\begin{equation}  \label{eq: def of time markers from excursion and pointset}
  \mathscr{T}(S, f) \coloneqq  \{ u' \in \RNp^{2} : u \in S \cap f \}.
\end{equation}
Let $\mathcal{P} \subseteq \RNp^{2}$ be a Poisson point process with rate one,
independent of $(\tilde{e}^{(\sigma)})_{\sigma > 0}$ and $(W_{t})_{t \geq 0}$.
Assume that $\mathcal{P} \cap \tilde{e}^{(\sigma)}$ is non-empty
and write $\mathscr{T}(\mathcal{P}, \tilde{e}^{(\sigma)})=\{ (\xi_{l}, \xi'_{l}) : 1 \leq l \leq s\}$.
Define $a_{l} \coloneqq  p_{2\tilde{e}^{(\sigma)}}(\xi_{l})$ and $b_{l} \coloneqq  p_{2\tilde{e}^{(\sigma)}}(\xi'_{l})$,
where we recall from Section \ref{sec: GW trees} that $p_{2\tilde{e}^{(\sigma)}}$ denotes the canonical projection 
from $[0, \sigma]$ onto the real tree $T_{2 \tilde{e}^{(\sigma)}}$ coded by $2 \tilde{e}^{(\sigma)}$.
Recall also that
$d_{2 \tilde{e}^{(\sigma)}}$ is the metric on $T_{2 \tilde{e}^{(\sigma)}}$,
$\rho_{2 \tilde{e}^{(\sigma)}}$ is the root of $T_{2 \tilde{e}^{(\sigma)}}$,
and $\mu_{2 \tilde{e}^{(\sigma)}}$ is the canonical measure on $T_{2 \tilde{e}^{(\sigma)}}$.
Define $(M^{(\sigma)}, R_{M^{(\sigma)}})$ to be the resistance metric space $(T_{2 \tilde{e}^{(\sigma)}}, d_{2 \tilde{e}^{(\sigma)}})$
fused over $(\{ a_{l}, b_{l} \})_{l=1}^{s}$
(see Definition \ref{A. dfn: fused resistance metric space}).
Let $\pi: T_{2 \tilde{e}^{(\sigma)}} \to M^{(\sigma)}$ be the canonical map
and set $\rho_{M^{(\sigma)}} \coloneqq \pi(\rho_{2 \tilde{e}^{(\sigma)}})$ and $\mu_{M^{(\sigma)}} \coloneqq  \mu_{2 \tilde{e}^{(\sigma)}} \circ \pi^{-1}$.
If $\mathcal{P} \cap \tilde{e}^{(\sigma)}$ is empty,
then we define $(M^{(\sigma)}, R_{M^{(\sigma)}}, \rho_{M^{(\sigma)}}, \mu_{M^{(\sigma)}})$
to be equal to $(T_{2 \tilde{e}^{(\sigma)}}, d_{2 \tilde{e}^{(\sigma)}}, \rho_{2 \tilde{e}^{(\sigma)}}, \mu_{2 \tilde{e}^{(\sigma)}})$.
We let $(\Mz, R_{\Mz}, \rho_{\Mz}, \mu_{\Mz})$ be a random element of $\rbcM_{c}(\markedfinMeasFunct{\RNp})$
such that, conditional on $Z_{1} = \sigma$,
its distribution is given by $(M^{(\sigma)}, R_{M^{(\sigma)}}, \rho_{M^{(\sigma)}}, \mu_{M^{(\sigma)}})$.

Given a finite connected graph $G$ with labeled vertices,
we regard $G$ as a rooted electrical network by placing conductance $1$ on each edge
and set $\rho_{G}$ to be the smallest-labeled vertex of $G$.
The following result shows that 
Assumption \ref{1. assum: sub-aging, random version} holds.
As a consequence of Theorem \ref{1. thm: sub-aging for random models},
we obtain the aging and sub-aging results.
In the remainder of this section,
we set $(p_{k})_{k \geq 0}$ to be the Poisson distribution with mean $1$,
i.e., $p_{k} \coloneqq e^{-k}/k!$.
We then define a probability measure $\tilde{p} = (\tilde{p}_{k})_{k \geq 0}$ on $\ZNp$ by setting $\tilde{p}_{k} \coloneqq p_{k-1}$.

\begin{thm}  \label{7. thm: convergence of ER graphs wrt resistance metric}
  It holds that
  \begin{equation}
    (V(\ERlc), n^{-1/3} R_{\ERlc}, \rho_{\ERlc}, n^{-2/3} \dotmuh_{\ERlc})
    \xrightarrow{\mathrm{d}}
    (\Mz, R_{\Mz}, \rho_{\Mz}, \mu_{\Mz} \otimes \tilde{p})
  \end{equation}
  in the space $\rbcM_{c}(\markedfinMeasFunct{\RNp})$.
\end{thm}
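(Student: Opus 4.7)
The plan is to reduce to the Galton--Watson tree result (Corollary~\ref{7. cor: GW tree satisfies the sub-aging assumption}) by exploiting the Addario-Berry--Broutin--Goldschmidt description of the critical Erd\H{o}s--R\'enyi largest component as a fused conditioned tilted Poisson$(1)$ Galton--Watson tree, and then transferring the marked measure across the fusion step via Appendix~\ref{sec: convergence of fused spaces}.

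First, I would recall the unmarked convergence
\begin{equation*}
  \bigl(V(\ERlc), n^{-1/3} R_{\ERlc}, \rho_{\ERlc}, n^{-2/3} \muh_{\ERlc}\bigr)
  \xrightarrow{\mathrm{d}}
  (\Mz, R_{\Mz}, \rho_{\Mz}, \mu_{\Mz})
\end{equation*}
in $\GHPspace$, which is due to Addario-Berry--Broutin--Goldschmidt. By the Skorohod representation theorem, I may henceforth work under an almost-sure coupling, on the event where $n^{-2/3} Z_1^n \to \sigma$ and $S_1^n = s$ are fixed; it suffices to prove the marked convergence conditional on this event.

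Next, let $\tildG_n$ denote the spanning tree of $\ERlc$ obtained by deleting the $S_1^n$ surplus edges. Conditionally on $(V(\ERlc), Z_1^n, S_1^n)$, the tree $\tildG_n$ is a uniformly random spanning tree on $Z_1^n$ labelled vertices, tilted by a combinatorial factor arising from the number of ways to reinsert the $S_1^n$ surplus edges; equivalently, a size-$Z_1^n$ conditioned Poisson$(1)$ Galton--Watson tree under a bounded-in-probability tilt. The scaling limit of its contour function is the tilted Brownian excursion $2\tilde{e}^{(\sigma)}$. I would then apply Theorem~\ref{7. thm: convergence of dot measures for trees} to $\tildG_n$: the convergence of its scaled height function follows from the ABG analysis, and the convergence of its scaled outdegree-counting functions $\tilde{N}_{\tildG_n}^{(k)} \to p_k I$ follows from Th\'evenin's joint convergence for unconditioned GW trees via a Radon--Nikodym argument that absorbs both the size-conditioning and the ABG tilt. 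Combined with Corollary~\ref{7. cor: GW tree satisfies the sub-aging assumption}-type reasoning, this yields
\begin{equation*}
  \bigl(V(\tildG_n), n^{-1/3} d_{\tildG_n}, \rho_{\tildG_n}, n^{-2/3} \dotmuh_{\tildG_n}\bigr)
  \xrightarrow{\mathrm{d}}
  (T_{2\tilde{e}^{(\sigma)}}, d_{2\tilde{e}^{(\sigma)}}, \rho_{2\tilde{e}^{(\sigma)}}, \mu_{2\tilde{e}^{(\sigma)}} \otimes \tilde{p})
\end{equation*}
in $\rbcM_{c}(\markedfinMeasFunct{\RNp})$.

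Finally, $\ERlc$ is obtained from $\tildG_n$ by fusing $S_1^n$ pairs of vertices and reinserting the corresponding surplus edges. This operation alters the degrees of at most $2S_1^n = O_{\mathbb{P}}(1)$ vertices, so $\dotmuh_{\ERlc}$ and $\dotmuh_{\tildG_n}$, viewed as measures on $V \times \RNp$, differ by a signed measure with at most $O_\mathbb{P}(1)$ atoms of total mass $O_\mathbb{P}(1)$, which is negligible at the $b_n = n^{2/3}$ scale. Moreover, the discrete fusion corresponds exactly to the continuum fusion of $T_{2\tilde{e}^{(\sigma)}}$ at the pairs $(a_l, b_l)$ that produces $\Mz$, and this identification is continuous in the Gromov--Hausdorff--Prohorov-with-mark topology by the results of Appendix~\ref{sec: convergence of fused spaces}. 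Combining these three steps yields the claimed convergence.

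The main obstacle is the outdegree-counting convergence for the \emph{tilted, size-conditioned} GW tree in the middle step. Th\'evenin's result handles the size-conditioned case, but the ABG tilt is a functional of the tree that is only bounded in probability; transferring convergence in probability of $\tilde{N}^{(k)}$ under the tilt requires checking that the tilted laws are contiguous in the appropriate sense, or equivalently a uniform integrability argument for the tilting density along the scaling sequence.
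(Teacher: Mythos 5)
Your proposal is correct and follows essentially the same route as the paper: reduce (via Aldous' theorem) to the connected graph of given size, use the Addario-Berry--Broutin--Goldschmidt decomposition into a tilted tree plus surplus edges, prove the outdegree-counting convergence for the tilted tree by combining Th\'evenin's result with the uniform integrability of the tilting density (which is exactly how the paper's Lemma \ref{lem: convergence of degree couting processes of tilted trees} resolves the obstacle you flag, citing the uniform integrability established in the proof of Theorem 12 of Addario-Berry--Broutin--Goldschmidt), and then transfer the marked measure through the fusion step via Theorem \ref{A. thm: convergence of tree-like graphs with dot measures} in the appendix. The only cosmetic difference is that you condition on the size and surplus of $\closed_{1}^{n}$ directly, whereas the paper isolates the deterministic-size statement as Lemma \ref{lem: convergence of conditioned ER graphs wrt resistance metric}.
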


To prove the above result,
we let $G_{m}^{p}$ be a random graph with the distribution of $G(m,p)$ conditioned to be connected.
We assume that $Z_{1}^{n}$ and $(G_{m}^{p_{n}})_{m \geq 1}$ are all independent
(recall $Z_{1}^{n}$ from the discussion above Theorem \ref{thm: convergence of sizes and surpluses of ER}).
It is then an easy exercise to check
that the random graph $G_{Z_{1}^{n}}^{p_{n}}$ has the same distribution as the random graph $\closed_{1}^{n}$ with relabeled vertices.
Combining this with Theorem \ref{thm: convergence of sizes and surpluses of ER},
we obtain Theorem \ref{7. thm: convergence of ER graphs wrt resistance metric},
once the following lemma is established.

\begin{lem} \label{lem: convergence of conditioned ER graphs wrt resistance metric}
  If a sequence $(m_{n})_{n \geq 1}$ of natural numbers satisfies $n^{-2/3}m_{n} \to \sigma \in (0,\infty)$,
  then it holds that
  \begin{equation}
    (V(\Gmnp), n^{-1/3} R_{\Gmnp}, \rho_{\Gmnp}, n^{-2/3} \dotmuh_{\Gmnp})
    \xrightarrow{\mathrm{d}}
    (M^{(\sigma)}, R_{M^{(\sigma)}}, \rho_{M^{(\sigma)}}, \mu_{M^{(\sigma)}} \otimes \tilde{p})
  \end{equation}
  in the space $\rbcM_{c}(\markedfinMeasFunct{\RNp})$.
\end{lem}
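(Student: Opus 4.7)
The plan is to derive the conclusion in two stages: first the Gromov-Hausdorff-Prohorov convergence of $G_{m_n}^{p_n}$ to $(M^{(\sigma)}, R_{M^{(\sigma)}}, \rho_{M^{(\sigma)}}, \mu_{M^{(\sigma)}})$ without the degree mark, and then an additional analysis that inserts the correct degree mark $\tilde p$. For the unmarked part, I invoke the critical Erd\H{o}s-R\'{e}nyi scaling limit of Addario-Berry, Broutin and Goldschmidt, adapted to the size-$m_n$ connected graph $G_{m_n}^{p_n}$: the depth-first exploration height function of $G_{m_n}^{p_n}$, rescaled by $n^{-1/3}$ in the vertical direction and $n^{-2/3}$ in the horizontal direction, converges jointly with the Poisson configuration of surplus edges to $(2 \tilde e^{(\sigma)}, \mathcal P \cap \tilde e^{(\sigma)})$. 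An argument analogous to Lemma \ref{7. lem: convergence of canonical maps of trees}, combined with the continuity of the fusion construction in the Gromov-Hausdorff-Prohorov topology proved in Appendix \ref{sec: convergence of fused spaces}, then yields the unmarked convergence.

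To lift this to the marked statement, I use Skorohod's representation theorem to realise the unmarked convergence almost surely and Theorem \ref{3. thm: convergence in GH topology} to embed each $(V(G_{m_n}^{p_n}), n^{-1/3} R_{G_{m_n}^{p_n}})$ and the limit isometrically into a common rooted compact metric space $(K, d^{K}, \rho^{K})$, so that $V(G_{m_n}^{p_n}) \to M^{(\sigma)}$ in the Hausdorff metric on $K$ and $n^{-2/3} \muh_{G_{m_n}^{p_n}} \to \mu_{M^{(\sigma)}}$ weakly on $K$. The lemma then reduces to showing that, for every bounded continuous $f : K \to \RN$ and every $k \in \NN$,
\begin{equation*}
  n^{-2/3} \sum_{\substack{v \in V(G_{m_n}^{p_n}) \\ \mu_{G_{m_n}^{p_n}}(v) = k}} f(v) \xrightarrow{\mathrm{p}} \tilde p_{k} \int_{K} f\, d\mu_{M^{(\sigma)}},
\end{equation*}
together with tightness of $(n^{-2/3} \dotmuh_{G_{m_n}^{p_n}})_{n \geq 1}$ in the mark coordinate, which follows from the uniform bound $\mathbb{E}[\mu_{G(n,p_n)}(v)] \leq (n-1) p_n = O(1)$ and Markov's inequality.

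The core of the argument is the displayed convergence, which I handle by a first-and-second-moment computation exploiting the exchangeability of vertex labels in $G(n, p_n)$ conditional on the unordered isomorphism class of $\closed_{1}^{n}$. For the first moment, conditioning on the unlabelled component and sampling labels uniformly reduces the claim to the convergence in probability $m_n^{-1} \sum_{v \in V(G_{m_n}^{p_n})} \delta_{\mu_{G_{m_n}^{p_n}}(v)} \to \tilde p$ of the empirical degree distribution. This follows from the depth-first exploration: the spanning tree of $G_{m_n}^{p_n}$ is locally a critical Galton-Watson tree with Poisson$(1)$ offspring distribution, giving limiting degree $1 + \mathrm{Poisson}(1) = \tilde p$ as in Corollary \ref{7. cor: GW tree satisfies the sub-aging assumption}, while the tight number of surplus edges alters only $O(1)$ many degrees. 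The second moment is controlled by the weak correlations between the indicators $\mathbf{1}_{\{\mu_{G_{m_n}^{p_n}}(v) = k\}}$ for distinct vertices, which decay because $p_n = O(n^{-1})$.

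The main obstacle will be a clean decoupling of the vertex labelling---which simultaneously determines the embedding into $K$ (through the DFS launched from the smallest-labelled vertex) and the individual degree statistics---from the degree information itself. My route is to show that the Gromov-Hausdorff-Prohorov limit of $G_{m_n}^{p_n}$ is insensitive to re-rooting at a uniformly chosen vertex of $V(G_{m_n}^{p_n})$, which, combined with the exchangeability of vertex labels in $G(n, p_n)$, reduces the joint statement to the two unconditional facts above: the unmarked GHP convergence and the global convergence of the empirical degree distribution. A second-moment bound on $n^{-2/3} \sum_{v} f(v) (\mathbf{1}_{\{\mu_{G_{m_n}^{p_n}}(v) = k\}} - \tilde p_{k})$ will then deliver the required concentration.
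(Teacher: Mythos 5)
Your proposal takes a genuinely different route from the paper, and it has two identifiable gaps.

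The paper does not insert the degree mark by a global first-and-second-moment argument. Instead it works at the process level: $G_{m_n}^{p_n}$ is decomposed via Lemma \ref{lem: construction of Gmp from coding functions and binomial pointset} into a tilted tree $\tilde T_{m_n}^{p_n}$ plus finitely many surplus edges, and Lemma \ref{lem: convergence of degree couting processes of tilted trees} proves that the degree-counting functions $m_n^{-1}N^{(k)}_{\tilde T_{m_n}^{p_n}}((m_n-1)\cdot)$ converge in $D([0,1],\RNp)$ to $p_{k-1}\,I$. This is then fed into the Lebesgue--Stieltjes machinery of Lemma \ref{7. lem: Stieltjes convergence} and the proof of Theorem \ref{7. thm: convergence of dot measures for trees}, which converts joint convergence of the coding path and the degree-counting path into weak convergence of the marked measure $\dotmuh$ directly; the degree mark and the position are coupled through the common DFS time index, so no decoupling argument is ever needed. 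Fusion is then handled by Theorem \ref{A. thm: convergence of tree-like graphs with dot measures}. Your moment-method route \emph{aims} to establish asymptotic independence of position and degree by exchangeability and concentration, which is exactly the statement the paper sidesteps by proving $N^{(k)}(t)\to p_{k-1}t$ pathwise: the linearity of the limit already encodes that degree-$k$ vertices are spread uniformly along the DFS clock.

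Two concrete problems with your sketch. First, the tilting. You argue the spanning tree of $G_{m_n}^{p_n}$ is locally a critical Poisson(1) GW tree and cite the GW-tree result, but the spanning tree from the depth-first decomposition is the \emph{tilted} tree $\tilde T_{m_n}^{p_n}$, biased by $(1-p_n)^{-a(T)}$, not the uniform labelled tree. That the tilting does not distort the asymptotic degree distribution is precisely the content of Lemma \ref{lem: convergence of degree couting processes of tilted trees}, whose proof needs the uniform integrability of the Radon--Nikodym derivative $(1-p_n)^{-m_n^{3/2}\int_0^1 X_n\,dt}$ imported from Berry--Broutin--Goldschmidt; you cannot get this for free from Corollary \ref{7. cor: GW tree satisfies the sub-aging assumption}. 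Second, the decoupling itself. You flag it as the main obstacle and propose re-rooting invariance plus exchangeability, but exchangeability of labels in $G(n,p_n)$ only tells you that a uniformly chosen vertex has degree distribution close to the empirical one; it does not by itself decouple degree from position in the GHP embedding, since the embedding is constructed through the DFS order, which itself depends on the full labelling. The paper's $N^{(k)}$-convergence is the engine that does this work, and your proposal would need a substitute for it rather than an appeal to exchangeability; the asserted second-moment bound is where that substitute would have to live, and it is not obvious how to compute it when the test function $f$ is itself label-dependent.
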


To prove the above lemma,
we prepare some pieces of notation.
Let $\mathbb{T}_{m}$ be the set of trees with the vertex set $[m]$,
and $T$ be an element of $\mathbb{T}_{m}$.
We regard $T$ as a plane tree by using the depth-first search
(cf.\ \cite[Section 2]{Berry_Broutin_Goldschmidt_12_The_continuum}).
We write $H_{T} = (H_{T}(t),\, 0 \leq t \leq m-1)$ for the height function of $T$
and $N_{T}^{(k)} = (N_{T}^{(k)}(t))_{t \in [0, m-1]}$ for the function counting the number of vertices whose outdegree is $k$
(recall these functions from Section \ref{sec: GW trees}).
We set $H_{T}(t) \coloneqq 0,\, t \in [m-1, m]$ for convenience.
Let $v_{0}, v_{1}, \ldots, v_{m-1}$ be the vertices of $T$ as a plane tree in the depth-first order.
Write $k_{i}$ for the number of children of $v_{i}$.
Then, the depth-first walk $X_{T}=(X_{T}(t))_{t \in [0, m]}$ of $T$ is given by 
setting $X_{T}(0) \coloneqq 0$, $X_{T}(i) \coloneqq \sum_{j=0}^{i-1} (k_{j} - 1)$ for $i \in \{0,1, \ldots, m\}$,
and $X_{T}(t) \coloneqq X_{T}(\lfloor t \rfloor)$.
We then set 
\begin{equation}
  a(T) \coloneqq  \sum_{i=1}^{m-1} X_{T}(i) = m \int_{0}^{1} X_{T}(mt)\, dt.
\end{equation}
Given $p \in (0,1)$, 
we define a random tree $\tilde{T}_{m}^{p}$ on $\mathbb{T}_{m}$ that has a ``tilted'' distribution given by 
\begin{equation}
  \mathbf{P}(\tilde{T}_{m}^{p}=T)
  \propto
  (1-p)^{-a(T)}, \quad
  T \in \mathbb{T}_{m}.
\end{equation}
For $p \in (0,1)$,
a \textit{binomial pointset} $\mathcal{Q}^{p} \subseteq \mathbb{Z}_{\geq 0}^{2}$ of intensity $p$ 
is defined to be a random subset of $\mathbb{Z}_{\geq 0}^{2}$
in which each point is present independently with probability $p$.
In \cite{Berry_Broutin_Goldschmidt_12_The_continuum},
it is shown that $G_{m}^{p}$ is recovered by attaching extra edges on $\tilde{T}_{m}^{p}$.

\begin{lem} [{\cite[Lemma 18]{Berry_Broutin_Goldschmidt_12_The_continuum}}] \label{lem: construction of Gmp from coding functions and binomial pointset}
  Fix $p \in (0,1)$.
  Let $\tilde{T}_{m}^{p}$ to be a tilted tree as defined above
  and $\mathcal{Q}^{p}$ be a binomial pointset of intensity $p$, independent of $\tilde{T}_{m}^{p}$.
  Let $v_{0}, v_{1}, \ldots, v_{m-1}$ be the vertices of $\tilde{T}_{m}^{p}$ in depth-first order.
  Write $\mathscr{T}(\mathcal{Q}^{p}, X_{\tilde{T}_{m}^{p}}) = \{(x_{i}, y_{i}) : 1 \leq i \leq s \}$
  (recall its definition from \eqref{eq: def of time markers from excursion and pointset}).
  We define a graph $G(\tilde{T}_{m}^{p}, \mathcal{Q}^{p})$
  by attaching an edge between $v_{x_{i}}$ and $v_{y_{i}}$ on $\tilde{T}_{m}^{p}$.
  (If $\mathscr{T}(\mathcal{Q}^{p}, X_{\tilde{T}_{m}^{p}})$ is empty, we define $G(\tilde{T}_{m}^{p}, \mathcal{Q}^{p}) \coloneqq  \tilde{T}_{m}^{p}$.)
  Then, $G(\tilde{T}_{m}^{p}, \mathcal{Q}^{p})$ has the same distribution as $G_{m}^{p}$.
\end{lem}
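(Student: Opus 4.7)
The plan is to verify the distributional identity $G(\tilde T_m^p, \mathcal{Q}^p) \stackrel{\mathrm{d}}{=} G_m^p$ by computing both probability mass functions on the finite set of connected labeled graphs on $[m]$. On one hand, $\mathbf{P}(G_m^p = G) = p^{|E(G)|}(1-p)^{\binom{m}{2} - |E(G)|}/c_{m,p}$, where $c_{m,p} \coloneqq \mathbf{P}(G(m,p) \text{ is connected})$. The task is therefore to obtain a formula for $\mathbf{P}(G(\tilde T_m^p, \mathcal{Q}^p) = G)$ with the same ratios between graphs.

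The organizing device is depth-first search (DFS) with a fixed deterministic tie-breaking rule on labels. Applied to a connected $G$, DFS from vertex $1$ produces a canonical spanning tree $T_G$ of $G$ (endowed with a plane-tree ordering by the DFS visit order) and a set $B_G \coloneqq E(G) \setminus E(T_G)$ of surplus edges, with $|B_G| = |E(G)| - (m-1)$. The combinatorial heart of the proof is the following bijection: connected graphs $G$ on $[m]$ correspond one-to-one to pairs $(T, \Sigma)$, where $T$ is a labeled plane tree on $[m]$ whose DFS ordering is compatible with the tie-breaking rule, and $\Sigma$ is a subset of the lattice points $L_T \coloneqq \{(i,y) \in \ZNp \times \ZNp : 1 \leq i \leq m-1,\ 0 \leq y < X_T(i)\}$ lying below the Lukasiewicz walk. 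Under this bijection the $\mathscr{T}$-construction of the lemma sends $\Sigma$ to the surplus edge set $B_G$, and one verifies $|L_T| = a(T)$.

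Granting the bijection, the probability is immediate. For $G = T \cup B$ with $B$ corresponding to $\Sigma \subseteq L_T$,
\begin{align}
  \mathbf{P}\bigl(G(\tilde T_m^p, \mathcal{Q}^p) = G\bigr)
  &= \mathbf{P}(\tilde T_m^p = T) \cdot \mathbf{P}(\mathcal{Q}^p \cap L_T = \Sigma) \\
  &= \frac{(1-p)^{-a(T)}}{Z_{m,p}} \cdot p^{|B|}(1-p)^{a(T) - |B|}
  = \frac{p^{|B|}(1-p)^{-|B|}}{Z_{m,p}},
\end{align}
where $Z_{m,p} \coloneqq \sum_T (1-p)^{-a(T)}$. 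Using $|B| = |B_G| = |E(G)| - (m-1)$, this is a $G$-independent constant times $p^{|E(G)|}(1-p)^{-|E(G)|}$, and $\mathbf{P}(G_m^p = G)$ equals a different $G$-independent constant times the same factor $p^{|E(G)|}(1-p)^{-|E(G)|}$. The two probability mass functions are therefore proportional on the same support, and since both are probability measures they coincide. As a byproduct this forces the normalization identity $Z_{m,p} = c_{m,p}/[p^{m-1}(1-p)^{\binom{m}{2}-m+1}]$.

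The principal difficulty is establishing the bijection of the second paragraph. Three items must be verified: (a) for every $(i,y) \in L_T$, the ``return time'' $y' = \inf\{t \geq i : X_T(t) = y\}$ produces a legitimate edge $\{v_i, v_{y'}\}$ not already in $T$, and distinct lattice points produce distinct edges; (b) every subset $\Sigma \subseteq L_T$ yields a graph $G = T \cup (\text{image of } \Sigma)$ whose DFS-tree, under the chosen tie-breaking, is precisely $T$; and (c) conversely, for every connected $G$ the DFS-pair $(T_G, B_G)$ satisfies $B_G = $ image of some subset of $L_{T_G}$. The main effort goes into (b): one must arrange the tie-breaking rule (equivalently, the labeling convention on plane trees) so that DFS, when presented with both a tree-edge and a surplus edge at the same vertex, always descends along the tree-edge first. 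This is a standard combinatorial compatibility argument for Lukasiewicz walks of labeled plane trees, but it is the technical crux on which the whole comparison rests.
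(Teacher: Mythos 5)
This lemma is imported verbatim from \cite[Lemma 18]{Berry_Broutin_Goldschmidt_12_The_continuum}; the paper gives no proof of its own, so your attempt can only be compared with the original argument of Addario-Berry, Broutin and Goldschmidt --- and it is, in essence, a faithful reconstruction of that argument. The probabilistic half is carried out correctly and completely: writing a connected graph $G$ on $[m]$ as its depth-first tree $T$ together with a set $\Sigma$ of lattice points below the depth-first walk, the tilting $(1-p)^{-a(T)}$ cancels the factor $(1-p)^{a(T)-|\Sigma|}$ contributed by the binomial pointset, leaving a mass proportional to $p^{|E(G)|}(1-p)^{-|E(G)|}$, which is also what $G(m,p)$ conditioned on connectedness assigns; proportionality of two probability measures on the same finite support forces equality. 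The injectivity needed in your item (a) also holds, since each point $(i,y)$ is sent to a pair $(i,y')$ with $y'>i$ and $y\mapsto y'$ is injective at fixed $i$.

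What you have not supplied is the combinatorial bijection itself, and that is the entire content of the cited lemma rather than a routine compatibility check. Concretely, for the stack-based, label-ordered depth-first search used in the source one must show that the $X_T(i)$ vertices on the stack immediately after $v_i$ is popped (and before its children are pushed) are exactly $\{v_{y'} : 0\le y< X_T(i)\}$, that none of these is joined to $v_i$ in $T$, that adding any subset of such edges leaves the depth-first tree unchanged because those endpoints have already been \emph{seen} when $v_i$ is explored, and that conversely every surplus edge of a connected graph joins some $v_i$ to a vertex that is open at time $i$. Your proposal correctly isolates this as the crux but then defers it as ``standard''; as written it is therefore a correct and well-organized outline of the original proof with the decisive combinatorial step still owed --- which is precisely the step the paper itself delegates to the reference.
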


The following result is also proven in \cite{Berry_Broutin_Goldschmidt_12_The_continuum},
which provide the convergence of coding functions of $\tilde{T}_{m}^{p}$ and vertices where new edges are attached.

\begin{lem} [{\cite[Lemma 19]{Berry_Broutin_Goldschmidt_12_The_continuum}}] \label{lem: convergence of coding functions and pointset for ER}
  Assume that a sequence $(m_{n})_{n \geq 1}$ satisfies $n^{-2/3} m_{n} \to \sigma \in (0,\infty)$.
  Let $\mathcal{Q}^{p_{n}}$ be a binomial pointset of intensity $p_{n}$, independent of $\tilde{T}_{m_{n}}^{p_{n}}$,
  and define $\mathcal{P}_{n} \coloneqq  \{ ( (\sigma/m_{n}) i, (\sigma/m_{n})^{1/2} j ) \mid (i,j) \in \mathcal{Q}^{p_{n}} \}$.
  Then, it holds that
  \begin{align}
     & \left(
    \sqrt{\frac{\sigma}{m_{n}}} H_{\tilde{T}_{m_{n}}^{p_{n}}} \left( \left\lfloor \frac{m_{n}}{\sigma} \cdot \right\rfloor \right),
    \sqrt{\frac{\sigma}{m_{n}}} X_{\tilde{T}_{m_{n}}^{p_{n}}} \left( \left\lfloor \frac{m_{n}}{\sigma} \cdot \right\rfloor \right),
    \mathcal{P}_{n} \cap \left( \sqrt{\frac{\sigma}{m_{n}}} X_{\tilde{T}_{m_{n}}^{p_{n}}} \left( \left\lfloor \frac{m_{n}}{\sigma} \cdot \right\rfloor \right) \right)
    \right) \notag                            \\
     & \xrightarrow[n \to \infty]{\mathrm{d}}
    (2 \tilde{e}^{(\sigma)}, \tilde{e}^{(\sigma)}, \mathcal{P} \cap \tilde{e}^{(\sigma)}),
    \label{eq: convergence of coding functions and pointset for ER}
  \end{align}
  where the convergence of the first and second coordinate takes place in $D([0,\sigma], \RNp)$
  equipped with the usual $J_{1}$-Skorohod topology
  and the convergence of the third coordinate takes place with respect to the Hausdorff metric.
\end{lem}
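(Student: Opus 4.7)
The plan is to leverage Aldous' classical convergence of uniform random labeled trees together with a Girsanov-type change of measure that absorbs the tilting factor $(1-p)^{-a(T)}$. Let $T_m$ denote a uniformly chosen labeled tree on $[m]$. Identifying $T_m$ with a critical Galton-Watson tree with Poisson$(1)$ offspring distribution conditioned to have $m$ vertices (viewed as a plane tree via depth-first search), Aldous' CRT convergence yields
\begin{equation}
\Big(\sqrt{\sigma/m_n}\, H_{T_{m_n}}(\lfloor m_n \cdot/\sigma \rfloor),\ \sqrt{\sigma/m_n}\, X_{T_{m_n}}(\lfloor m_n \cdot/\sigma \rfloor)\Big) \xrightarrow{\mathrm{d}} \bigl(2e^{(\sigma)},\ e^{(\sigma)}\bigr)
\end{equation}
in $D([0,\sigma], \RNp)^2$. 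Since $\mathcal{Q}^{p_n}$ is independent of $T_{m_n}$ and the expected number of points of $\mathcal{P}_n$ in a fixed box $A \subseteq \RNp^2$ equals $p_n |A| (m_n/\sigma)^{3/2} \to |A|$, the rescaled pointset $\mathcal{P}_n$ converges in distribution to a rate-one Poisson point process $\mathcal{P}$ on $\RNp^2$ independent of $e^{(\sigma)}$, so the three objects converge jointly under $\mathbf{P}(T_{m_n} \in \cdot)$.

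Next, I would transfer this to $\tilde{T}_{m_n}^{p_n}$ via the identity
\begin{equation}
\frac{d\mathbf{P}(\tilde{T}_m^p \in \cdot)}{d\mathbf{P}(T_m \in \cdot)}(T) \;=\; \frac{(1-p)^{-a(T)}}{\mathbf{E}[(1-p)^{-a(T_m)}]}.
\end{equation}
Writing $a(T_{m_n}) = m_n \int_0^1 X_{T_{m_n}}(m_n s)\, ds$, the scaling $\sqrt{\sigma/m_n}\, X_{T_{m_n}}(\lfloor m_n \cdot/\sigma\rfloor) \to e^{(\sigma)}$ together with $p_n \sim 1/n$ and $m_n \sim \sigma n^{2/3}$ gives $p_n a(T_{m_n}) \xrightarrow{\mathrm{d}} \int_0^\sigma e^{(\sigma)}(u)\, du$ via the continuous mapping theorem, and hence
\begin{equation}
(1-p_n)^{-a(T_{m_n})} \xrightarrow{\mathrm{d}} \exp\!\Big(\int_0^\sigma e^{(\sigma)}(u)\, du\Big).
\end{equation}
This is precisely the (unnormalised) Radon-Nikodym derivative of $\tilde{e}^{(\sigma)}$ with respect to $e^{(\sigma)}$. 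Multiplying the joint weak convergence under $\mathbf{P}(T_{m_n} \in \cdot)$ by the normalised densities and passing to the limit converts the first and second coordinates from $(2e^{(\sigma)}, e^{(\sigma)})$ to $(2\tilde{e}^{(\sigma)}, \tilde{e}^{(\sigma)})$, while the independent Poisson point process is preserved (the tilt depends only on the tree).

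For the third coordinate, I would combine the convergence of $\mathcal{P}_n$ to $\mathcal{P}$ with that of $\sqrt{\sigma/m_n} X_{\tilde{T}_{m_n}^{p_n}}(\lfloor m_n \cdot/\sigma\rfloor)$ to $\tilde{e}^{(\sigma)}$. The map $(\mathcal{R}, f) \mapsto \mathcal{R} \cap f$ (in the Hausdorff topology on the image) is continuous at $(\mathcal{R}, f)$ provided $\mathcal{R}$ has no atoms on the graph of $f$; this holds almost surely for $(\mathcal{P}, \tilde{e}^{(\sigma)})$ since $\mathcal{P}$ has Lebesgue intensity and $\tilde{e}^{(\sigma)}$ has a one-dimensional graph. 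An application of the continuous mapping theorem then yields the Hausdorff convergence of the restricted pointset.

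The main obstacle is the rigorous Girsanov step: converting the weighted convergence into an ordinary distributional convergence for $\tilde{T}_{m_n}^{p_n}$ requires uniform integrability of the normalised densities, equivalently convergence of the normalising constants $\mathbf{E}[(1-p_n)^{-a(T_{m_n})}] \to \mathbf{E}[\exp(\int_0^\sigma e^{(\sigma)})]$ and finiteness of sufficiently high exponential moments of $\int_0^\sigma e^{(\sigma)}$. Quantitative large-deviation bounds on the depth-first walk of the uniform labeled tree, uniform in $n$, are needed to upgrade the pointwise limit to one under the tilted law; these estimates constitute the technical heart of the argument in \cite{Berry_Broutin_Goldschmidt_12_The_continuum}.
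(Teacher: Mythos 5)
The paper does not prove this statement; it is quoted verbatim as \cite[Lemma~19]{Berry_Broutin_Goldschmidt_12_The_continuum} and used as an input. There is therefore no ``paper's own proof'' to compare against. That said, your proposal is an accurate sketch of the argument given in the cited source: it correctly identifies the uniform labeled tree $T_m$ (equivalently a Poisson$(1)$ Galton--Watson tree conditioned on size $m$) as the base case with Aldous' CRT convergence, correctly writes the tilting density $(1-p)^{-a(T)}/\mathbf{E}[(1-p)^{-a(T_m)}]$, correctly deduces $p_n a(T_{m_n}) \xrightarrow{\mathrm{d}} \int_0^{\sigma} e^{(\sigma)}$ from the scaling $p_n(m_n/\sigma)^{3/2} \to 1$, correctly identifies the uniform integrability of the tilts as the technical crux (which the present paper also invokes in the proof of Lemma~\ref{lem: convergence of degree couting processes of tilted trees}, citing \cite[Proof of Theorem~12]{Berry_Broutin_Goldschmidt_12_The_continuum}), and correctly handles the independent binomial pointset by a Poisson limit theorem followed by a continuous-mapping argument for the intersection, using that $\mathcal{P}$ a.s.\ puts no mass on the Lebesgue-null graph of $\tilde{e}^{(\sigma)}$. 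In short: the lemma is cited, not proved here, but your outline is faithful to the route taken in the reference.
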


Below, we prove the convergence of functions $N_{\tilde{T}_{m}^{p}}^{(k)}$ defined at \eqref{7. eq: dfn of outdegree coding function}.
 
\begin{lem} \label{lem: convergence of degree couting processes of tilted trees}
  Write $I$ for the identity map from $[0,1]$ to itself.
  Assume that a sequence $(m_{n})_{n \geq 1}$ satisfies $n^{-2/3} m_{n} \to \sigma \in (0,\infty)$.
  Then, for any $k \in \ZNp$,
  \begin{equation}
      m_{n}^{-1} N_{\tilde{T}_{m_{n}}^{p_{n}}}^{(k)}((m_{n}-1) \cdot)
    \xrightarrow{\mathrm{p}}
    p_{k}\, I(\cdot)
  \end{equation}
  in $D([0,1], \RNp)$.
\end{lem}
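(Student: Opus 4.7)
The strategy is to transfer the known convergence of degree-counting functions for uniformly random trees on $[m_n]$ to the tilted trees $\tilde{T}_{m_n}^{p_n}$ via an absolute-continuity argument. Let $T_m^{\mathrm{unif}}$ denote a uniformly chosen labeled rooted tree on $[m]$, viewed as a plane tree via the depth-first order. Since $T_m^{\mathrm{unif}}$ has the same plane-tree shape distribution as the critical Galton--Watson tree with Poisson$(1)$ offspring conditioned to have $m$ vertices (a classical consequence of Cayley's formula), the result of Th\'{e}venin \cite{Thevenin_20_Vertices} already invoked in the proof of Corollary \ref{7. cor: GW tree satisfies the sub-aging assumption} yields
\begin{equation}
  m_n^{-1} N_{T_{m_n}^{\mathrm{unif}}}^{(k)}\bigl( (m_n-1) \cdot \bigr) \xrightarrow{\mathrm{p}} p_k\, I(\cdot) \quad \text{in } D([0,1], \RNp).
\end{equation}

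Next, observe that the law of $\tilde{T}_{m_n}^{p_n}$ on $\mathbb{T}_{m_n}$ is absolutely continuous with respect to that of $T_{m_n}^{\mathrm{unif}}$, with Radon--Nikodym density
\begin{equation}
  L_n(T) = Z_n^{-1} (1-p_n)^{-a(T)},
  \qquad
  Z_n \coloneqq \mathbf{E}\bigl[ (1-p_n)^{-a(T_{m_n}^{\mathrm{unif}})}\bigr].
\end{equation}
The next step would be to show that the sequence of laws of $\tilde{T}_{m_n}^{p_n}$ is contiguous with respect to that of $T_{m_n}^{\mathrm{unif}}$, which reduces to verifying that $(L_n(T_{m_n}^{\mathrm{unif}}))_{n \geq 1}$ is uniformly integrable. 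Using the identity $a(T) = m \int_0^1 X_T(mt)\,dt$, Aldous's invariance principle
\begin{equation}
  \sqrt{\sigma/m_n}\, X_{T_{m_n}^{\mathrm{unif}}}\bigl( \lfloor (m_n/\sigma) s \rfloor \bigr) \xrightarrow{\mathrm{d}} e^{(\sigma)}(s)
  \quad \text{in } D([0,\sigma], \RNp),
\end{equation}
the expansion $-\log(1-p_n) = p_n + O(p_n^2)$, and the scaling $p_n m_n^{3/2} \to \sigma^{3/2}$, one obtains
\begin{equation}
  -a(T_{m_n}^{\mathrm{unif}}) \log(1-p_n) \xrightarrow{\mathrm{d}} \int_0^\sigma e^{(\sigma)}(s)\, ds.
\end{equation}
Since the area under a Brownian excursion has finite exponential moments of all orders, a standard uniform-integrability argument yields $Z_n \to \mathbf{E}\bigl[\exp\bigl(\int_0^\sigma e^{(\sigma)}(s)\, ds\bigr)\bigr] \in (0, \infty)$ together with tightness of $(L_n(T_{m_n}^{\mathrm{unif}}))_{n \geq 1}$, which is precisely the contiguity condition.

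Finally, Le Cam's first lemma ensures that, under contiguity, convergence in probability of $m_n^{-1} N_{T_{m_n}^{\mathrm{unif}}}^{(k)}((m_n-1)\cdot)$ to the \emph{deterministic} limit $p_k I$ under the uniform law transfers to convergence in probability under the tilted law, giving the statement of the lemma. The main obstacle is the uniform-integrability estimate for $L_n$: one needs tail bounds on $\exp(-a(T_{m_n}^{\mathrm{unif}})\log(1-p_n))$ that are uniform in $n$, since degree information is not visible in the continuum limit of the depth-first walk itself and must therefore be extracted through the tilting weight rather than from the excursion. This is handled by classical exponential tail bounds for the area of a conditioned random walk excursion, combined with the fact that the tilt strength $p_n m_n^{3/2}$ stays bounded as $n \to \infty$.
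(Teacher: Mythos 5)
Your proposal is correct and follows essentially the same route as the paper: identify the uniform tree with the Poisson(1) conditioned Galton--Watson tree, apply Th\'{e}venin's joint convergence for $(X_n, N_n^{(k)})$, and transfer to the tilted law using the scaling $p_n m_n^{3/2} \to \sigma^{3/2}$ together with uniform integrability of the Radon--Nikodym weights $(1-p_n)^{-a(T)}$. The only cosmetic difference is that you package the transfer step via contiguity and Le Cam's first lemma, whereas the paper computes $\mathbf{E}[f(\tilde{N}_n^{(k)})]$ directly for bounded continuous $f$ as a ratio and passes to the limit; also note that the uniform-integrability estimate you flag as the main obstacle is not reproved in the paper but cited from Berry--Broutin--Goldschmidt (proof of their Theorem 12), so you could shortcut your final paragraph by invoking that reference.
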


\begin{proof}
  Let $T_{m_{n}}^{U}$ be a random tree uniformly chosen from $\mathbb{T}_{m_{n}}$.
  If we think of $T_{m_{n}}^{U}$ as a random plane tree,
  then it has the same distribution as the conditional Galton-Watson tree $T_{m_{n}}^{GW}$ 
  with offspring distribution that is Poisson with mean $1$.
  We write 
  \begin{gather}
    \tilde{N}_{n}^{(k)}(t) 
    \coloneqq 
    m_{n}^{-1} N_{\tilde{T}_{m_{n}}^{p_{n}}}^{(k)}((m_{n}-1) t), \\
    N_{n}^{(k)}(t) 
    \coloneqq 
    m_{n}^{-1} N_{T_{m_{n}}^{U}}^{(k)}((m_{n}-1)t), \\ 
    X_{n}(t) \coloneqq 
    m_{n}^{-1/2} X_{T_{m_{n}}^{U}}(m_{n} t).
  \end{gather}
  From \cite[Theorem 1.1]{Thevenin_20_Vertices},
  we have that $(X_{n}, N_{n}^{(k)}) \to (e, p_{k-1}\, I)$.
  Let $f$ be a bounded continuous function on $D([0,1], \mathbb{R})$.
  By the definition of $T_{m_{n}}^{p_{n}}$.
  we obtain that 
  \begin{align}
    \mathbf{E}
    [f(\tilde{N}_{n}^{(k)})]
    =
    \frac{\mathbf{E}\left[ f(N_{n}^{(k)}) (1-p_{n})^{- m_{n}^{3/2} \int_{0}^{1} X_{n}(t)\, dt} \right]}
      {\mathbf{E}\left[ (1-p_{n})^{- m_{n}^{3/2} \int_{0}^{1} X_{n}(t)\, dt} \right]}.
  \end{align}
  Moreover,
  in \cite[Proof of Theorem 12]{Berry_Broutin_Goldschmidt_12_The_continuum},
  it is shown that 
  the following family of random variables is uniformly integrable:
  \begin{equation}
    (1-p_{n})^{- m_{n}^{3/2} \int_{0}^{1} X_{n}(t)\, dt},
    \quad 
    n \geq 1.
  \end{equation}
  Combining these with the scaling relation $e^{(\sigma)}(\cdot) \stackrel{\mathrm{d}}{=} \sqrt{\sigma} e(\cdot/ \sigma)$,
  we deduce that 
  \begin{gather}
    \lim_{n \to \infty}
    \mathbf{E}\left[ f(N_{n}^{(k)}) (1-p_{n})^{- m_{n}^{3/2} \int_{0}^{1} X_{n}(t)\, dt} \right]
    =
    \mathbf{E}\left[ f(p_{k-1}\, I) \exp\bigl(\int_{0}^{\sigma} e^{(\sigma)}(t)\, dt\bigr)\right],\\
    \lim_{n \to \infty}
    \mathbf{E}\left[ (1-p_{n})^{- m_{n}^{3/2} \int_{0}^{1} X_{n}(t)\, dt} \right] 
    =
    \mathbf{E}\left[\exp\bigl(\int_{0}^{\sigma} e^{(\sigma)}(t)\, dt\bigr)\right].
  \end{gather}
  Therefore, the desired result follows.
\end{proof}

Combining the above results with a technical result regarding fused resistance metric space shown in Appendix \ref{sec: convergence of fused spaces},
we can prove Theorem \ref{7. thm: convergence of ER graphs wrt resistance metric} as follows.

\begin{proof} [{Proof of Theorem \ref{7. thm: convergence of ER graphs wrt resistance metric}}]
  We proceed with the proof in the setting of Lemma \ref{lem: convergence of coding functions and pointset for ER}.
  By the Skorohod representation theorem,
  we may assume that the convergence \eqref{eq: convergence of coding functions and pointset for ER} takes place almost surely
  on some probability space.
  Assume that $\mathcal{P} \cap \tilde{e}^{(\sigma)}$ is non-empty
  and write $\mathscr{T}(\mathcal{P}, \tilde{e}^{(\sigma)}) = \{(\xi_{l}, \xi'_{l}) : 1 \leq l \leq s \}$.
  For all sufficiently large $n$,
  we can write
  $\mathscr{T}(\mathcal{P}_{n}, \sqrt{\sigma/ m_{n}} X_{\tilde{T}_{m_{n}}^{p_{n}}}  (\lfloor (m_{n}/ \sigma) \cdot \rfloor))
    = \{(i_{l}^{n}, j_{l}^{n}) : 1 \leq l \leq s\}$
  in such a way that
  \begin{equation}  \label{eq: convergence of time markers of coding functions}
    \max_{1 \leq l \leq s}
    ( |i_{l}^{n} - \xi_{l}| \vee |j_{l}^{n} - \xi'_{l}| )
    \to
    0.
  \end{equation}
  Let $v_{0}^{n}, v_{1}^{n}, \ldots, v_{m_{n}-1}^{n}$ be the vertices of $\tilde{T}_{m_{n}}^{p_{n}}$ in depth-first order,
  and define $a_{l}^{n}\coloneqq v_{m_{n} i_{l}^{n} / \sigma}^{n},\, b_{l}^{n} \coloneqq  v_{m_{n} j_{l}^{n} / \sigma}^{n}$.
  Here, we note that we have
  \begin{equation}
    \{(m_{n} i_{l}^{n} / \sigma, m_{n} j_{l}^{n} / \sigma) : 1 \leq l \leq s\}
    =
    \mathscr{T}(\mathcal{Q}^{p_{n}}, X_{\tilde{T}_{m_{n}}^{p_{n}}})
  \end{equation}
  and in particular the indices $m_{n} i_{l}^{n} / \sigma$ and $m_{n} j_{l}^{n} / \sigma$ are integers.
  Define $a_{l}, b_{l} \in T_{2 \tilde{e}^{(\sigma)}}$ by setting
  $a_{l}\coloneqq  p_{2 \tilde{e}^{\sigma}}(\xi_{l})$ and $b_{l} \coloneqq  p_{2 \tilde{e}^{\sigma}}(\xi'_{l})$,
  where we recall that
  $p_{2 \tilde{e}^{\sigma}}$ is the canonical projection from $[0,\sigma]$ onto the real tree $T_{2 \tilde{e}^{(\sigma)}}$.
  Using Lemma \ref{lem: convergence of degree couting processes of tilted trees}
  and following the proof of Theorem \ref{7. thm: convergence of dot measures for trees},
  we deduce that 
  \begin{align}
    &(V(\tilde{T}_{m_{n}}^{p_{n}}),
      n^{-1/3} d_{\tilde{T}_{m_{n}}^{p_{n}}},
      \rho_{\tilde{T}_{m_{n}}^{p_{n}}},
      m_{n}^{-1} \dotmuh_{\tilde{T}_{m_{n}}^{p_{n}}},
      a_{1}^{n},
      b_{1}^{n},
      \ldots,
      a_{s}^{n},
      b_{s}^{n}
    )                              \\
    &\to
    (T_{2 \tilde{e}^{(\sigma)}}, d_{2 \tilde{e}^{(\sigma)}}, \rho_{2 \tilde{e}^{(\sigma)}},
       \sigma^{-1} \mu_{2 \tilde{e}^{(\sigma)}} \otimes \tilde{p}, a_{1}, b_{1}, \ldots, a_{s}, b_{s})
 \end{align}
 (see also \cite[Proof of Lemma 8.42]{Noda_pre_Convergence}).
 This, 
 combined with Lemma \ref{lem: construction of Gmp from coding functions and binomial pointset} 
 and Theorem \ref{A. thm: convergence of tree-like graphs with dot measures} below, 
 yields the desired result.
\end{proof}

By Theorem \ref{7. thm: convergence of ER graphs wrt resistance metric}, we can apply Theorem \ref{1. thm: sub-aging for random models} 
and we obtain the aging and sub-aging results as follows.
Let $\{(x_{i}, w_{i}, v_{i})\}_{i \in I}$ be a Poisson point process on $M^{(Z_{1})} \times \RNp \times \RNpp$
with intensity $\mu_{M^{(Z_{1})}}(dx)\,\allowbreak \tilde{p}(dw)\,\allowbreak \alpha v^{-1-\alpha} dv$ 
and define $\nu(dx) \coloneqq \sum_{i \in I} v_{i} \delta_{x_{i}}(dx)$.
Given $\nu$, 
we write $(X^{\nu}, \{P_{x}^{\nu}\}_{x \in M^{(Z_{1})}})$ for the process associated with $(M^{(Z_{1})}, R_{M^{(Z_{1})}}, \nu)$.
We simply write $X_{n}^{\nu_{n}} \coloneqq X_{\mathcal{C}_{1}^{n}}^{\nu_{\mathcal{C}_{1}^{n}}}$,
which is the BTM on $\mathcal{C}_{1}^{n}$ (see Definition \ref{1. dfn: BTM}).
We denote by $P_{\rho_{n}}^{\nu_{n}}$ for the underlying probability measure 
for $X_{n}^{\nu_{n}}$ started at $\rho_{n} \coloneqq \rho_{M^{(Z_{1})}}$.
Set $\tilde{c}_{n} \coloneqq n^{1/3} \cdot n^{2/(3\alpha)}$.
As a consequence of Theorem \ref{7. thm: convergence of ER graphs wrt resistance metric} and Theorem \ref{1. thm: sub-aging for random models},
we obtain that 
\begin{gather}
  P_{\rho_{n}}^{\nu_{n}} 
  \bigl( X_{n}^{\nu_{n}}(\tilde{c}_{n}s) = X_{n}^{\nu_{n}}( \tilde{c}_{n}t) \bigr) 
  \to     
  P_{\rho_{M^{(Z_{1})}}}^{\nu}
  \bigl( X^{\nu}(s) = X^{\nu}(t) \bigr), \\
  P_{\rho_{n}}^{\nu_{n}} 
  \bigl( X_{n}^{\nu_{n}}(\tilde{c}_{n}t) = X_{n}^{\nu_{n}}(\tilde{c}_{n}t + t'),\ \forall t' \in [0, n^{2/(3\alpha)}s] \bigr)
  \to   
  \sum_{i \in I} e^{-w_{i}s/v_{i}} P_{M^{(Z_{1})}}^{\nu}(X^{\nu}(t) = x_{i}).
\end{gather}

\appendix


\section{Convergence of fused spaces} \label{sec: convergence of fused spaces}

In this appendix,
we introduce the operation of fusing resistance metric spaces at disjoint pairs of subsets,
which are used to describe the scaling limit of the Erd\H{o}s-R\'{e}nyi graph in Section \ref{sec: ER graph}.
We note that this operation is considered in \cite{Croydon_18_Scaling,Kigami_12_Resistance}.
Our aim in this appendix is to formalize the topological aspects of fusing.
In particular,
we prove some convergence results regarding fused resistance metric spaces
(Theorem \ref{A. thm: convergence coupling for fused resistance spaces} and \ref{A. thm: convergence of tree-like graphs with dot measures}).

Let $(F, R)$ be a compact resistance metric space and let $(\form, \rdomain)$ be the corresponding resistance form.
Fix a collection $\Gamma = \{V_{i}\}_{i=1}^{N}$ of non-empty disjoint compact subsets of $F$ and write 
\begin{equation}
  F^{\Gamma} 
  \coloneqq 
  \left( F \setminus \bigcup_{i=1}^{N} V_{i} \right) \cup \bigcup_{i=1}^{N} \{V_{i}\},
\end{equation}
i.e., we consider each subset $V_{i}$ as a single point.
Let $\pi^{\Gamma}:F \to F^{\Gamma}$ be the canonical map, that is,
$\pi^{\Gamma}(x) \coloneqq x$ for $x \in F \setminus \bigcup_{i=1}^{N} V_{i}$ and $\pi^{\Gamma}(x) \coloneqq V_{i}$ for $x \in V_{i}$.
Define $(\form^{\Gamma}, \rdomain^{\Gamma})$ be setting 
\begin{gather}
  \rdomain^{\Gamma}
  \coloneqq
  \{f: F^{\Gamma} \to \RN \mid f \circ \pi^{\Gamma} \in \rdomain \},\\
  \form^{\Gamma}(f, f) 
  \coloneqq 
  \form(f \circ \pi^{\Gamma}, f \circ \pi^{\Gamma}),
  \quad
  \forall f \in \rdomain^{\Gamma}.
\end{gather}

\begin{thm} [{\cite[Lemma 8.3]{Croydon_18_Scaling}}]
  The pair $(\form^{\Gamma}, \rdomain^{\Gamma})$ is a resistance form.
  If we write $R^{\Gamma}$ for the associated resistance metric,
  then $(F^{\Gamma}, R^{\Gamma})$ is compact.
\end{thm}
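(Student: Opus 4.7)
The plan is to verify axioms (RF1)--(RF5) of Definition 4.1 for $(\form^{\Gamma}, \rdomain^{\Gamma})$ via the natural identification of $\rdomain^{\Gamma}$ with the subspace
\begin{equation}
\mathcal{H} := \{g \in \rdomain \mid g \text{ is constant on each } V_i\} \subseteq \rdomain
\end{equation}
through the pullback $\Pi(f) := f \circ \pi^{\Gamma}$. Since $\pi^{\Gamma}$ is surjective, $\Pi : \rdomain^{\Gamma} \to \mathcal{H}$ is a linear bijection satisfying $\form^{\Gamma}(f, f) = \form(\Pi f, \Pi f)$. Axioms (RF1) and (RF5) then follow immediately: linearity, containment of constants, symmetry and bilinearity descend from $(\form, \rdomain)$, while $\overline{f} \circ \pi^{\Gamma} = \overline{f \circ \pi^{\Gamma}}$ remains constant on each $V_i$, so the Markovian truncation is preserved.

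For (RF2), I would prove that $\mathcal{H}/{\sim}$ is a closed subspace of the Hilbert space $(\rdomain/{\sim}, \form)$; completeness of $\rdomain^{\Gamma}/{\sim}$ is then inherited through the isometry $\Pi$. Closedness uses the continuity of point evaluations coming from (RF4) for $\form$: if $[g_n] \to [g]$ in energy with each $g_n \in \mathcal{H}$, then for any $x, y \in V_i$ the inequality $|(g_n - g)(x) - (g_n - g)(y)|^2 \leq R(x, y)\, \form(g_n - g, g_n - g)$ shows that the oscillation of $g$ on each $V_i$ vanishes, so a representative of $[g]$ lies in $\mathcal{H}$.

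The crux is (RF3)--(RF4). For distinct $x, y \in F^{\Gamma}$, set $A := (\pi^{\Gamma})^{-1}(\{x\})$ and $B := (\pi^{\Gamma})^{-1}(\{y\})$, disjoint compact subsets of $F$. Via $\Pi$ the candidate resistance rewrites as
\begin{equation}
R^{\Gamma}(x, y)^{-1} = \inf\{\form(g, g) \mid g \in \mathcal{H},\ g|_A = 1,\ g|_B = 0\}.
\end{equation}
Dropping the constraint $g \in \mathcal{H}$ only decreases this infimum to $R(A, B)^{-1}$, which is finite because $R(A, B)$ is bounded by the diameter of $F$; hence $R^{\Gamma}(x, y) > 0$, proving (RF3). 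For finiteness of the infimum (i.e.\ (RF4)), I would exhibit an explicit competitor in $\mathcal{H}$ with the prescribed boundary values: since $A$, $B$, and the remaining $V_k$ are finitely many pairwise disjoint compact sets in $(F, R)$, Urysohn-type separation produces a continuous function on $F$ constant on each and equal to $1$ on $A$, $0$ on $B$; regularity of $(\form, \rdomain)$ (i.e.\ density of $\rdomain \cap C_c(F)$ in $C_c(F)$) then allows one to approximate this function by an element of $\rdomain$ with suitably close boundary values, which is pushed into $\mathcal{H}$ by solving the Dirichlet problem on the affine admissible set $\{g \in \mathcal{F} : g|_A = 1, g|_B = 0, g|_{V_k} \equiv c_k\ \text{for some}\ c_k\}$, a closed affine subspace of $\rdomain$ by (RF2).

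Compactness of $(F^{\Gamma}, R^{\Gamma})$ is then straightforward: restricting the variational formula for $R$ from $\rdomain$ to $\mathcal{H}$ yields $R^{\Gamma}(\pi^{\Gamma}(\tilde{x}), \pi^{\Gamma}(\tilde{y})) \leq R(\tilde{x}, \tilde{y})$, so $\pi^{\Gamma} : (F, R) \to (F^{\Gamma}, R^{\Gamma})$ is $1$-Lipschitz and surjective; hence $F^{\Gamma}$ is the continuous image of the compact metric space $(F, R)$ and is therefore compact. The main obstacle I anticipate is the construction of the explicit competitor for (RF3)--(RF4): a generic function in $\rdomain$ separating $A$ from $B$ need not be constant on the other $V_k$, and orthogonal projection onto $\mathcal{H}$ can a priori shift boundary values. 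A clean resolution is to recast the problem as a constrained energy minimization over an affine subspace of $\rdomain$ (making both the boundary values and the constancy constraints explicit), which is solvable because the admissible set is nonempty (by regularity plus separation) and the target functional is continuous on the Hilbert space from (RF2).
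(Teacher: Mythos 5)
The paper itself does not supply a proof of this statement; it is proved by citation to Croydon (2018), Lemma 8.3. Your identification of $\rdomain^{\Gamma}$ with the closed subspace $\mathcal{H}$ of $\rdomain$ consisting of functions constant on each $V_i$, and the plan to push the resistance-form axioms through the isometry $\Pi$, is the standard route and matches Croydon's strategy. Your verification of (RF1), (RF2) (via the point-evaluation estimate $|h(x)-h(y)|^2 \leq R(x,y)\form(h,h)$) and (RF5), and the compactness argument via the $1$-Lipschitz surjection $\pi^{\Gamma}$, are all sound.

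However, two things need fixing. First, you have the roles of (RF3) and (RF4) reversed. Dropping the constraint $g \in \mathcal{H}$ gives $R^{\Gamma}(x,y)^{-1} \geq R(A,B)^{-1}$, hence $R^{\Gamma}(x,y) \leq R(A,B) < \infty$; that is finiteness, which is (RF4), not ``$R^{\Gamma}(x,y) > 0$''. (Equivalently and without appealing to the inf-formula prematurely: for any $f \in \rdomain^{\Gamma}$ with $\form^{\Gamma}(f,f) > 0$, write $g = f\circ\pi^{\Gamma}$, pick $\tilde{x}\in A$, $\tilde{y}\in B$, and estimate $|f(x)-f(y)|^2/\form^{\Gamma}(f,f) \leq R(\tilde{x},\tilde{y})$.) Conversely, exhibiting an admissible competitor in $\mathcal{H}$ makes the infimum finite, i.e.\ $R^{\Gamma}(x,y) > 0$, which is exactly what (RF3) requires (a function in $\rdomain^{\Gamma}$ separating $x$ from $y$). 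Second, and more seriously, the competitor construction is circular as written: you propose projecting onto the closed affine set $\{g \in \rdomain : g|_A = 1,\, g|_B = 0,\, g|_{V_k} \equiv c_k\}$, but nonemptiness of this set is precisely what is at stake, and sup-norm approximation from regularity plus Urysohn does not by itself deliver exact constancy on the $V_k$'s or exact boundary values on $A$ and $B$. The missing ingredient is the cut-off theorem for regular resistance forms (Kigami, \emph{Resistance forms, quasisymmetric maps and heat kernel estimates}, 2012, Theorem 6.3): for disjoint compact sets and a neighborhood, there is an element of $\rdomain \cap C_{c}(F)$ equal to $1$ on the compact set, supported in the neighborhood, and between $0$ and $1$. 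With this in hand the construction is immediate: if $x = V_i$, take such a $\varphi_i$ supported in a neighborhood of $V_i$ disjoint from all other $V_j$ and from $B$; then $\varphi_i \in \mathcal{H}$, $\varphi_i \equiv 1$ on $A$ and $\equiv 0$ on $B$. If $x = \{\tilde{x}\}$ is a singleton not in any $V_j$, the same construction applied to $\{\tilde{x}\}$ and a small disjoint neighborhood works. That cut-off theorem is a nontrivial consequence of regularity together with the Hilbert-space structure from (RF2); it is not a soft consequence of ``regularity plus separation,'' which is where the gap lies.
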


\begin{dfn} [{Fused resistance metric spaces}] \label{A. dfn: fused resistance metric space}
  In the above setting,
  we refer to $(F^{\Gamma}, R^{\Gamma})$ as the resistance metric space $(F, R)$ fused over $\Gamma$
  and $\pi^{\Gamma}: F \to F^{\Gamma}$ as the associated canonical map.
\end{dfn}

\begin{rem}
  The fusing operation can also be defined 
  when $\Gamma = \{V_{i}\}_{i=1}^{N}$ is a family of compact subsets that are not necessarily disjoint. 
  In that case, 
  we consider an equivalence relation $\sim$ on $\bigcup_{i=1}^{N} V_{i}$ given by 
  $x \sim y$ if and only if 
  there exist $l \in \NN$, $i_{1},\allowbreak \ldots,\allowbreak i_{l} \in \{1,\ldots, N\}$,
  and $x= x_{0},\allowbreak x_{1},\allowbreak \ldots,\allowbreak x_{l-1},\allowbreak x_{l}=y$
  such that $\{x_{k-1}, x_{k}\} \subseteq V_{i_{k}}$ for each $k$.
  We let $\Gamma' \coloneqq \{V'_{i}\}_{i=1}^{N'}$ be the collection of equivalence classes $\Gamma' \coloneqq \{V'_{i}\}_{i=1}^{N'}$.
  Then, we refer to $(F^{\Gamma'}, R^{\Gamma'})$ as the resistance metric space $(F, R)$ fused over $\Gamma$.
\end{rem}

It is easy to see that 
\begin{equation} \label{A. eq: comparison inequality}
  R^{\Gamma}(\pi(x), \pi(y)) 
  \leq 
  R(x, y),
  \quad 
  \forall x, y \in F,
\end{equation}
which implies that $\pi^{\Gamma}:(F, R) \to (F^{\Gamma}, R^{\Gamma})$ is continuous.
We will show that when resistance metric spaces and collections of fusing points converge,
then the associated fused resistance metric spaces and the canonical maps also converge.
To describe this precisely,
we introduce a suitable topology for convergence of functions with different domains.
Fix a compact metric space $(M, d^{M})$ and a complete, separable metric space $(\Xi, d^{\Xi})$.

\begin{dfn}
  Define 
  \begin{equation}
    \hatC_{c}(M, \Xi) 
    \coloneqq 
    \bigcup_{X \in \compact(M)} C(X, \Xi),
  \end{equation}
  where we recall that $\compact(M)$ is the collection of compact subsets of $M$.
  Note that $\hatC_{c}(M, \Xi)$ contains the empty map $\emptyset_{\Xi}: \emptyset \to \Xi$.
  For $f \in \hatC_{c}(M, \Xi)$,
  we write $\dom(f)$ for its domain.
\end{dfn}

\begin{dfn} [{The metric $d_{\hat{C}_{c}(\cdot, \Xi)}^{M}$}]
  For $f, g \in \hatC_{c}(M, \Xi)$ and $\varepsilon>0$, 
  consider the following condition.
  \begin{enumerate} [label = ($\hatC_{c}(\varepsilon)$), leftmargin=*]
    \item \label{A. dfn item: epsilon condition for metric on hatC_c}
      For any $x \in \dom(f)$, there exists an element $y \in \dom(g)$ such that 
      $d^{M}(x, y) \vee d^{\Xi}(f(x), g(y)) \leq \varepsilon$.
      Similarly,
      for any $y \in \dom(g)$, there exists an element $x \in \dom(g)$ such that 
      $d^{M}(x,y) \vee d^{\Xi}(f(x), g(y)) \leq \varepsilon$.
  \end{enumerate}
  We define
  \begin{equation}
    d_{\hat{C}_{c}(\cdot, \Xi)}^{M}(f, g) 
    \coloneqq
    \inf 
    \{
      \varepsilon > 0 \mid \varepsilon\ \text{satisfies \ref{A. dfn item: epsilon condition for metric on hatC_c} with respect to}\ f, g
    \} 
    \wedge 1,
  \end{equation}
  where the infimum over the empty set is defined to be $\infty$.
\end{dfn}

\begin{thm} \label{A. thm: metric on hatC} 
  The function $d_{\hat{C}_{c}(\cdot, \Xi)}^{M}$ is a well-defined metric on $\hatC_{c}(M, \Xi)$.
  The induced topology on $\hatC_{c}(M, \Xi)$ is Polish.
\end{thm}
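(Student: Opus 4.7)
The plan is to verify the metric axioms directly and then establish Polishness by identifying $\hat{C}_c(M,\Xi)$, via graphs, with a $G_\delta$ subset of a Polish space of compact sets. For symmetry and well-definedness, observe that condition ($\hat{C}_c(\varepsilon)$) is manifestly symmetric in $f$ and $g$, and the truncation at $1$ keeps all values finite. For positive definiteness, if the distance is zero then for each $\varepsilon > 0$ the $\varepsilon$-witnesses together with compactness of $\dom(g)$ and continuity of $g$ force every $x \in \dom(f)$ to lie in $\dom(g)$ with $f(x) = g(x)$; symmetry yields $f = g$. The triangle inequality follows by composing witnesses from ($\hat{C}_c(\varepsilon_1)$) and ($\hat{C}_c(\varepsilon_2)$) and applying the triangle inequalities on $d^M$ and $d^\Xi$.

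The key observation is that $d_{\hat{C}_c(\cdot, \Xi)}^M(f, g)$ equals $d_H^{M \times \Xi}(\mathrm{graph}(f), \mathrm{graph}(g)) \wedge 1$, where $\mathrm{graph}(f) := \{(x, f(x)) : x \in \dom(f)\}$ and $d_H^{M \times \Xi}$ is the Hausdorff metric on $M \times \Xi$ equipped with the max metric $d^M \vee d^\Xi$. Indeed, ($\hat{C}_c(\varepsilon)$) is literally the statement that the two graphs are within Hausdorff distance $\varepsilon$. Hence $\Phi(f) := \mathrm{graph}(f)$ isometrically embeds $\hat{C}_c(M,\Xi)$ into the space $\mathcal{K}(M \times \Xi)$ of compact subsets of $M \times \Xi$ (with the empty set included) equipped with the truncated Hausdorff metric.

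Next I would argue that $\mathcal{K}(M \times \Xi)$ with the truncated Hausdorff metric is Polish. Separability follows from taking finite subsets of a countable dense subset of $M \times \Xi$, together with the empty set. For completeness, if $(K_k)$ is Hausdorff-Cauchy then $\bigcup_k K_k$ is totally bounded (by comparing tails to a single $K_N$ whose compactness makes $K_N^{\langle \varepsilon \rangle}$ totally bounded), hence its closure in the Polish space $M \times \Xi$ is compact, and inside this compact space the Hausdorff metric on compact subsets is complete.

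The main step is to show $\mathcal{G} := \Phi(\hat{C}_c(M,\Xi))$ is $G_\delta$ in $\mathcal{K}(M \times \Xi)$. A compact $K$ lies in $\mathcal{G}$ precisely when $\pi_M|_K$ is injective, in which case compactness automatically upgrades $K$ to the graph of a continuous function on $\pi_M(K)$. For each $n, m \in \mathbb{N}$ define
\[
  B_{n,m} := \{K \in \mathcal{K}(M \times \Xi) : \exists (x_1, y_1), (x_2, y_2) \in K \text{ with } d^M(x_1, x_2) \leq 1/m \text{ and } d^\Xi(y_1, y_2) \geq 1/n\}.
\]
Each $B_{n,m}$ is closed, which I would verify by extracting convergent subsequences of witnesses using total boundedness of any Hausdorff-convergent sequence and the fact that Hausdorff limits of points in $K_k$ belong to $K$. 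A direct compactness argument then gives $\mathcal{G}^c = \bigcup_n \bigcap_m B_{n,m}$: any $K$ with two points sharing an $M$-coordinate lies in $\bigcap_m B_{n,m}$ for $n$ large; conversely a diagonal extraction from witnesses in $\bigcap_m B_{n,m}$ recovers two such points. Hence $\mathcal{G} = \bigcap_n \bigcup_m B_{n,m}^c$ is $G_\delta$, and Alexandrov's theorem concludes that $\hat{C}_c(M, \Xi)$ is Polish. The main obstacle is choosing the correct quantifier order in $B_{n,m}$ so that non-injectivity of the projection is captured as a countable union of closed conditions.
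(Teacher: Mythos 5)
Your proof is correct, and it supplies in full the argument that the paper only defers to an external reference (Theorem 2.45 of the metrization paper). The route you take --- identifying $d_{\hat{C}_{c}(\cdot,\Xi)}^{M}$ as the truncated Hausdorff distance between graphs, embedding $\hatC_{c}(M,\Xi)$ isometrically into the Polish space $\compact(M\times\Xi)$, and exhibiting the image as a $G_{\delta}$ set via the injectivity of the projection onto $M$ --- is essentially the same machinery the paper itself invokes just below, in the proof that the functor $\tau^{\hatC_{c}(M,\cdot)}$ is Polish, where the image of the graph map is written as $\bigcap_{k}\tau_{k}(S)$ with $\tau_{k}$ an open modulus-of-continuity condition dual to your closed sets $B_{n,m}$.
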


\begin{proof}
  This is proven similarly to \cite[Theorem 2.45]{Noda_pre_Metrization}.
  (Indeed, the proof becomes simpler because the metric considered in \cite[Theorem 2.45]{Noda_pre_Metrization}
  is an extension of $d_{\hat{C}_{c}(\cdot, \Xi)}^{M}$ to a metric space $M$ not necessarily compact.)
\end{proof}

\begin{dfn} [{The compact-convergence topology with variable domains}]
  We call the topology on $\hatC_{c}(M, \Xi)$ induced by $d_{\hat{C}_{c}(\cdot, \Xi)}^{M}$ 
  the \textit{compact-convergence topology with variable domains}.
\end{dfn}

\begin{thm}  [{Convergence in $\hatC_{c}(M, \Xi)$}]  \label{A. thm: convergence in hatC}
  Let $f, f_{1}, f_{2}, \ldots$ be elements of $\hatC_{c}(M, \Xi)$.
  The following conditions are equivalent.
  \begin{enumerate} [label = (\roman*)]
    \item \label{3. thm item: convergence in hatC, f_n converges to f} 
      The functions $f_{n}$ converge to $f$ in the compact-convergence topology with variable domains.
    \item \label{3. thm item: convergence in hatC, Tietze extension}
        The sets $\dom(f_{n})$ converge to $\dom(f)$ in the Hausdorff topology in $M$,
        and there exist functions $g_{n}, g \in C(M, \Xi)$ such that $g_{n}|_{\dom(f_{n})} = f_{n}$, $g|_{\dom(f)} = f$
        and $g_{n} \to g$ in the compact-convergence topology.
    \item \label{3. thm item: convergence in hatC, Cao's characterization}
      The sets $\dom(f_{n})$ converge to $\dom(f)$ in the Hausdorff topology in $M$,
      and, for any $x_{n} \in \dom(f_{n})$ and $x \in \dom(f)$ with $x_{n} \to x$,
      it holds that $f_{n}(x_{n}) \to f(x)$.
  \end{enumerate}
\end{thm}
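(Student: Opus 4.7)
The plan is to establish (i) $\Rightarrow$ (iii) $\Rightarrow$ (ii) $\Rightarrow$ (i), where the essential content is the equivalence (i) $\Leftrightarrow$ (iii) (which follows directly from the $\varepsilon$-condition defining $d_{\hat{C}_{c}(\cdot, \Xi)}^{M}$ together with the compactness of $M$), and (ii) is a convenient reformulation via global continuous extensions that is useful in applications such as the fused canonical maps $\pi^{\Gamma}$.

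For (i) $\Rightarrow$ (iii), the Hausdorff convergence $\dom(f_n) \to \dom(f)$ is immediate: condition \ref{A. dfn item: epsilon condition for metric on hatC_c} with $\varepsilon_n \coloneqq d_{\hat{C}_{c}(\cdot, \Xi)}^{M}(f_n, f) + 1/n \to 0$ forces each point of one domain to lie within $\varepsilon_n$ of the other. For the joint convergence, given $x_n \in \dom(f_n)$ with $x_n \to x \in \dom(f)$, the same $\varepsilon$-condition produces $y_n \in \dom(f)$ with $d^M(x_n, y_n) \vee d^\Xi(f_n(x_n), f(y_n)) \leq \varepsilon_n$; then $y_n \to x$, continuity of $f$ on the compact set $\dom(f)$ gives $f(y_n) \to f(x)$, and the triangle inequality yields $f_n(x_n) \to f(x)$. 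Conversely, (iii) $\Rightarrow$ (i) is proved by contradiction: if $d_{\hat{C}_{c}(\cdot, \Xi)}^{M}(f_n, f) \not\to 0$, then along a subsequence there exist $\varepsilon > 0$ and (by symmetry) points $x_n \in \dom(f_n)$ for which no $y \in \dom(f)$ satisfies both $d^M(x_n, y) \leq \varepsilon$ and $d^\Xi(f_n(x_n), f(y)) \leq \varepsilon$; extracting $x_n \to x$ by compactness of $M$, the Hausdorff convergence forces $x \in \dom(f)$, so (iii) yields $f_n(x_n) \to f(x)$ and the choice $y = x$ violates the supposed non-approximability for large $n$.

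For (ii) $\Rightarrow$ (iii) the argument is immediate: if $x_n \in \dom(f_n)$ with $x_n \to x$, the Hausdorff convergence places $x \in \dom(f)$, and $f_n(x_n) = g_n(x_n) \to g(x) = f(x)$ by compact convergence of $g_n$ to $g$ combined with continuity of $g$. The substantive direction is (iii) $\Rightarrow$ (ii), where I would construct coherent continuous extensions $g_n, g \colon M \to \Xi$ of $f_n, f$ via a Dugundji/Tietze-style argument (for example, by isometrically embedding $\Xi$ into a Banach space through the Kuratowski embedding, extending via partition-of-unity formulas on $M \setminus \dom(f_n)$, and retracting back to $\Xi$), carried out uniformly in $n$ so that (iii) and the Hausdorff convergence of domains together propagate to compact convergence of $g_n$ to $g$ on $M$.

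The main obstacle lies in the extension step in (iii) $\Rightarrow$ (ii): a continuous map from a closed subset of $M$ into a general complete separable metric space $\Xi$ need not admit a continuous extension unless $\Xi$ enjoys an absolute-retract property, so some care is needed either in the choice of $\Xi$ or in the extension construction. I expect to follow the strategy of \cite[Theorem 2.45]{Noda_pre_Metrization}, which treats a closely related metric in variable-domain settings and builds the required extensions in a manner compatible with approximating sequences, adapting it to the present setting where the compactness of $M$ simplifies several technical points.
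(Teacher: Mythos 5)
Your decomposition is genuinely different from the paper's, and for the bulk of the theorem it is a more self-contained argument. The paper proves nothing directly: it cites \cite[Theorem 2.59]{Noda_pre_Metrization} for \ref{3. thm item: convergence in hatC, f_n converges to f} $\Leftrightarrow$ \ref{3. thm item: convergence in hatC, Tietze extension} and \cite[Proposition 2.3]{Cao_23_Convergence} for \ref{3. thm item: convergence in hatC, Tietze extension} $\Leftrightarrow$ \ref{3. thm item: convergence in hatC, Cao's characterization}, so the equivalence \ref{3. thm item: convergence in hatC, f_n converges to f} $\Leftrightarrow$ \ref{3. thm item: convergence in hatC, Cao's characterization} is only obtained by composition. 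You instead establish \ref{3. thm item: convergence in hatC, f_n converges to f} $\Leftrightarrow$ \ref{3. thm item: convergence in hatC, Cao's characterization} directly from the defining $\varepsilon$-condition together with compactness of $M$, and your argument there is sound (both the forward direction via the approximating points $y_n \in \dom(f)$ and the contrapositive using compactness of $M$ to extract the offending limit point, noting that the symmetric case of the $\varepsilon$-condition is handled the same way). The implication \ref{3. thm item: convergence in hatC, Tietze extension} $\Rightarrow$ \ref{3. thm item: convergence in hatC, Cao's characterization} you give is also correct. This is a cleaner route when \ref{3. thm item: convergence in hatC, Cao's characterization} is the condition one actually uses, as is the case in Theorem \ref{A. thm: convergence coupling for fused resistance spaces}.

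The one step that does not go through as you sketch it is \ref{3. thm item: convergence in hatC, Cao's characterization} $\Rightarrow$ \ref{3. thm item: convergence in hatC, Tietze extension}. The Kuratowski embedding followed by a Dugundji extension and a ``retraction back to $\Xi$'' is not available for an arbitrary complete separable $\Xi$: after extending into the Banach space, the image lands in the convex hull of the embedded copy of $\Xi$, and there is no retraction onto $\Xi$ unless $\Xi$ is an absolute (neighbourhood) retract. A minimal counterexample to the raw extension problem is $M=[0,1]$, $\Xi=\{0,1\}$, $\dom(f)=\{0,1\}$, $f=\id_{\{0,1\}}$: no $g \in C(M,\Xi)$ restricts to $f$. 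You acknowledge this obstacle yourself, but the proposed workaround (embed and retract) does not remove it, and you then defer to \cite[Theorem 2.45]{Noda_pre_Metrization} — which is actually the reference the paper uses for Theorem \ref{A. thm: metric on hatC}, not for this equivalence. The reference that carries the weight here is \cite[Theorem 2.59]{Noda_pre_Metrization} (and \cite[Proposition 2.3]{Cao_23_Convergence}), and whatever hypotheses or construction make the extension legitimate live there. In short: your proof is correct and sharper than the paper's on the chain \ref{3. thm item: convergence in hatC, f_n converges to f} $\Leftrightarrow$ \ref{3. thm item: convergence in hatC, Cao's characterization} and \ref{3. thm item: convergence in hatC, Tietze extension} $\Rightarrow$ \ref{3. thm item: convergence in hatC, Cao's characterization}, but the implication \ref{3. thm item: convergence in hatC, Cao's characterization} $\Rightarrow$ \ref{3. thm item: convergence in hatC, Tietze extension} remains unproved both in your write-up and in the paper (which only cites), and the specific construction you gesture at would need $\Xi$ to be an ANR to work.
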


\begin{proof}
  The equivalence of 
  \ref{3. thm item: convergence in hatC, f_n converges to f}
  and \ref{3. thm item: convergence in hatC, Tietze extension} 
  is proven similarly to \cite[Proof of Theorem 2.59]{Noda_pre_Metrization}.
  (Indeed, as mentioned in the proof of Theorem \ref{A. thm: metric on hatC},
  the proof becomes simpler.)
  The equivalence of \ref{3. thm item: convergence in hatC, Tietze extension} and \ref{3. thm item: convergence in hatC, Cao's characterization}
  is established in \cite[Proposition 2.3]{Cao_23_Convergence}.
\end{proof}

Using the compact-convergence topology with variable domains,
we can state rigorously the convergence of canonical maps associated with fused resistance metric spaces as follows.

\begin{thm} \label{A. thm: convergence coupling for fused resistance spaces}
  Let $(F_{n}, R_{n}, \rho_{n}),\, n \geq 1$ and $(F, R, \rho)$ be rooted compact resistance metric spaces.
  Let $(a_{n}^{(i)}, b_{n}^{(i)})_{i=1}^{N}$ and $(a^{(i)}, b^{(i)})_{i=1}^{N}$
  be distinct elements of $F_{n}$ and $F$, respectively.
  Assume that 
  \begin{equation}
    (F_{n}, R_{n}, \rho_{n}, (a_{n}^{(i)}, b_{n}^{(i)})_{i=1}^{N})
    \to     
    (F, R, \rho, (a^{(i)}, b^{(i)})_{i=1}^{N})
  \end{equation}
  in $\rbcM_{c}(\nPointFunct{2N})$.
  Write $(\tilde{F}_{n}, \tilde{R}_{n})$ and $(\tilde{F}, \tilde{R})$ 
  for the resistance metric spaces fused over $\{\{a_{n}^{(i)}, b_{n}^{(i)}\}\}_{i=1}^{N}$ and $\{\{a^{(i)}, b^{(i)}\}\}_{i=1}^{N}$, respectively.
  Let $\pi_{n}: F_{n} \to \tilde{F}_{n}$ and $\pi: F \to \tilde{F}$ be the associated canonical maps,
  and set $\tilde{\rho}_{n} \coloneqq \pi_{n}(\rho_{n})$ and $\tilde{\rho} \coloneqq \pi(\rho)$.
  Then, there exist rooted compact metric spaces $(M, d^{M}, \rho_{M})$ and $(\tilde{M}, d^{\tilde{M}}, \rho_{\tilde{M}})$ 
  satisfying the following:
  \begin{enumerate} [label = (\roman*)]
    \item $(F_{n}, R_{n}, \rho_{n})$ and $(F, R, \rho)$ are embedded isometrically into $(M, d^{M}, \rho_{M})$ 
      in such a way that $\rho_{n} = \rho = \rho_{M}$ as elements of $M$,
      $F_{n} \to F$ in the Hausdorff topology on $M$,
      and $a_{n}^{(i)} \to a^{(i)}$ and $b_{n}^{(i)} \to b^{(i)}$ in $M$;
    \item $(\tilde{F}_{n}, \tilde{R}_{n}, \tilde{\rho}_{n})$ and $(\tilde{F}, \tilde{R}, \tilde{\rho})$ are embedded isometrically into 
      $(\tilde{M}, d^{\tilde{M}}, \rho_{\tilde{M}})$ in such a way that 
      $\tilde{\rho}_{n} = \tilde{\rho} = \rho_{\tilde{M}}$ as elements of $\tilde{M}$ 
      and $\tilde{F}_{n} \to \tilde{F}$ in the Hausdorff topology on $\tilde{M}$;
    \item if we regard $\pi_{n}$ and $\pi$ as elements of $\hatC(M, \tilde{M})$ by the above embeddings,
      then $\pi_{n} \to \pi$ in $\hatC(M, \tilde{M})$.
  \end{enumerate}
\end{thm}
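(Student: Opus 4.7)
The plan is to prove the theorem in three stages: build the common space $M$ for the original spaces, construct the common space $\tilde{M}$ for the fused spaces compatibly, and finally extract convergence of the canonical maps in $\hatC_{c}(M, \tilde{M})$. First I would apply Theorem \ref{3. thm: convergence in GH topology} to the functor $\nPointFunct{2N}$ on $\rbcM_{c}$: the hypothesis says exactly that the marked spaces converge in $\rbcM_{c}(\nPointFunct{2N})$, so this yields a rooted compact metric space $(M, d^{M}, \rho_{M})$ and root-and-distance-preserving embeddings of $(F_{n}, R_{n}, \rho_{n})$ and $(F, R, \rho)$ into $M$ with $\rho_{n} = \rho = \rho_{M}$, with $F_{n} \to F$ in the Hausdorff topology on $M$, and with $a_{n}^{(i)} \to a^{(i)}$, $b_{n}^{(i)} \to b^{(i)}$ for each $i$.

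The analytic core is to show that whenever $x_{n}, y_{n} \in F_{n}$ and $x, y \in F$ satisfy $x_{n} \to x$ and $y_{n} \to y$ in $M$, one has $\tilde{R}_{n}(\pi_{n}(x_{n}), \pi_{n}(y_{n})) \to \tilde{R}(\pi(x), \pi(y))$. This should follow by combining the variational characterization of effective resistance with a Mosco-type stability of the resistance forms under the embedding into $M$: fusing amounts to restricting the minimization defining effective resistance to test functions that are constant on each pair $\{a^{(i)}, b^{(i)}\}$, and since these pairs converge along the embedding, the constrained minimization problems defining $\tilde{R}_{n}$ approximate the one defining $\tilde{R}$. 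From this pointwise convergence on the compact common space, together with the $1$-Lipschitz bound $\tilde{R}_{n}(\pi_{n}(x), \pi_{n}(y)) \leq R_{n}(x, y)$ and compactness, a standard equicontinuity argument promotes pointwise convergence to uniform convergence.

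Given this, I would construct $\tilde{M}$ by gluing. Choose $\varepsilon_{n} \downarrow 0$ dominating $\HausdMet{M}(F_{n}, F)$ and define a correspondence $\mathcal{C}_{n} \subseteq \tilde{F}_{n} \times \tilde{F}$ by declaring $(\tilde{x}, \tilde{y}) \in \mathcal{C}_{n}$ whenever $\tilde{x} = \pi_{n}(x_{n})$ and $\tilde{y} = \pi(x)$ for some $x_{n} \in F_{n}$ and $x \in F$ with $d^{M}(x_{n}, x) \leq \varepsilon_{n}$. The previous step, combined with the uniform continuity of $\tilde{R}$ on the compact space $\tilde{F}$, implies that $\mathcal{C}_{n}$ has vanishing distortion. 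Using $\mathcal{C}_{n}$ in the standard Gromov-Hausdorff gluing (cf.\ the construction in the proof of Lemma \ref{7. lem: convergence of canonical maps of trees}), I define a metric $d^{\tilde{M}}$ on the disjoint union $\bigsqcup_{n} \tilde{F}_{n} \sqcup \tilde{F}$ that extends each $\tilde{R}_{n}$ and $\tilde{R}$, identifies the roots, and makes $\tilde{F}_{n} \to \tilde{F}$ in the Hausdorff topology on $\tilde{M}$. By construction, if $x_{n} \in F_{n}$ and $x \in F$ with $x_{n} \to x$ in $M$, then $(\pi_{n}(x_{n}), \pi(x)) \in \mathcal{C}_{n}$ for all large $n$, so $d^{\tilde{M}}(\pi_{n}(x_{n}), \pi(x)) \to 0$; the equivalence between \ref{3. thm item: convergence in hatC, f_n converges to f} and \ref{3. thm item: convergence in hatC, Cao's characterization} in Theorem \ref{A. thm: convergence in hatC} then delivers $\pi_{n} \to \pi$ in $\hatC_{c}(M, \tilde{M})$.

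The main obstacle is the analytic convergence in the second stage; a full Mosco-convergence argument can be bypassed by exploiting the explicit variational formulae for effective resistance between closed sets together with monotonicity of resistance under edge contraction, in the spirit of the fusing discussion in \cite[Section 8]{Croydon_18_Scaling}.
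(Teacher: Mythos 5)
Your proposal is correct in outline but takes a genuinely different route from the paper in stages two and three. Both proofs hinge on the same analytic input — the convergence $\tilde{R}_{n}(\pi_{n}(x_{n}), \pi_{n}(y_{n})) \to \tilde{R}(\pi(x), \pi(y))$ for $x_{n} \to x$, $y_{n} \to y$ in $M$, which the paper gets by citing \cite[Proof of Proposition 8.4]{Croydon_18_Scaling}, and which you correctly observe can be extracted from the variational formula for effective resistance (your Mosco-type digression can indeed be bypassed exactly as you suggest). Where the routes diverge is in how they produce $(\tilde{M}, d^{\tilde{M}}, \rho_{\tilde{M}})$. You construct it directly by building correspondences $\mathcal{C}_{n}$ between $\tilde{F}_{n}$ and $\tilde{F}$ with vanishing distortion and gluing, then read off convergence of $\pi_{n}$ from Theorem \ref{A. thm: convergence in hatC}\ref{3. thm item: convergence in hatC, Cao's characterization}. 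The paper instead argues by precompactness: it invokes the $1$-Lipschitz bound \eqref{A. eq: comparison inequality} together with Lemma \ref{A. lem: precompactness in tau for hatC} to show $\{(\tilde{F}_{n}, \tilde{R}_{n}, \tilde{\rho}_{n}, \pi_{n})\}$ is precompact in $\rbcM_{c}(\tau^{\hatC_{c}(M,\cdot)})$, and then identifies every subsequential limit $(K, d^{K}, \rho_{K}, \pi_{K})$ with $(\tilde{F}, \tilde{R}, \tilde{\rho}, \pi)$ by building an isometry $f: \tilde{F} \to K$ with $f \circ \pi = \pi_{K}$ and a compactness argument for surjectivity. Your direct-construction route needs uniform vanishing of distortion (a uniform version of the resistance convergence over pairs), whereas the paper's precompactness route only needs the pointwise version for a chosen approximating sequence; as you note, the uniform version does follow from pointwise plus the uniform $1$-Lipschitz bound
\begin{equation}
  \bigl|\tilde{R}_{n}(\pi_{n}(x), \pi_{n}(y)) - \tilde{R}_{n}(\pi_{n}(x'), \pi_{n}(y'))\bigr|
  \leq R_{n}(x,x') + R_{n}(y,y')
\end{equation}
via an Arzel\`a--Ascoli argument, so the extra demand is harmless. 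The trade-off is that your route avoids the precompactness machinery (Lemma \ref{A. lem: precompactness in tau for hatC}) at the cost of a slightly more elaborate equicontinuity argument, while the paper's route avoids the equicontinuity argument but must separately verify that the limit map is a surjective root-preserving isometry. Both are legitimate; neither has a gap once the cited Croydon convergence is granted.
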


To prove the above result, we define a functor $\tau^{\hatC_{c}(M, \cdot)}$ on $\rbcM_{c}$ as follows.

\begin{itemize}
  \item 
    For $(S, d^{S}, \rho_{S}) \in \rbcM_{c}$, 
    set $\tau^{\hatC_{c}(M, \cdot)}(S) \coloneqq \hatC_{c}(M, S)$ 
    and $d_{\tau^{\hatC_{c}(M, \cdot)}}^{S} \coloneqq d_{\hatC_{c}(\cdot, S)}^{M}$. 
  \item 
  For each $(S_{i}, d^{S_{i}}, \rho_{S_{i}}) \in \rbcM_{c},\, i=1,2$
  and root-and-distance-preserving map $f: S_{1} \to S_{2}$, 
  set $\tau^{\hatC_{c}(M, \cdot)}_{f}(g) \coloneqq f \circ g$.
\end{itemize}

\begin{lem}
  The functor $\tau^{\hatC_{c}(M, \cdot)}$ is Polish.
\end{lem}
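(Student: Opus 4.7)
The plan is to realize $\tau^{\hat{C}_{c}(M,\cdot)}$ as a $G_{\delta}$-type topological subfunctor of a larger, manifestly well-behaved functor via the graph map $g \mapsto \mathrm{graph}(g)$, and then invoke Theorem \ref{3. thm: Polish functor}. Concretely, I define an auxiliary functor $\tilde{\tau}$ on $\rbcM_{c}$ by setting $\tilde{\tau}(S) \coloneqq \compact(M\times S)$ equipped with the Hausdorff metric $\HausdMet{M\times S}$ (where $M\times S$ carries the max product metric), and, for each root-and-distance-preserving map $f:S_{1}\to S_{2}$, setting $\tilde{\tau}_{f}(K)\coloneqq (\id_{M}\times f)(K)$. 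Since $\id_{M}\times f$ is root-and-distance-preserving between the product spaces, $\tilde{\tau}_{f}$ is distance-preserving.

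First I would verify that $\tilde{\tau}$ is complete, separable, and continuous. Completeness of $(\compact(M\times S),\HausdMet{M\times S})$ is classical, and separability follows from the compactness of $M\times S$. Continuity (Assumption \ref{3. assum: pointwise continuity}) is immediate: if $f_{n}\to f_{\infty}$ uniformly on compact sets and $K\in\compact(M\times S_{1})$, then $(\id_{M}\times f_{n})|_{K}\to(\id_{M}\times f_{\infty})|_{K}$ uniformly on the compact set $K$, giving Hausdorff convergence of the images. The abstract completeness/separability conditions of Assumption \ref{3. assum: completeness and separability} follow from the observation that if $S_{n}\to S_{\infty}$ in the local Hausdorff topology inside a common compact $(N,d^{N})$, then $M\times S_{n}\to M\times S_{\infty}$ in the Hausdorff topology of $M\times N$, whence (i) limits of compact subsets of $M\times S_{n}$ lie in $M\times S_{\infty}$, and (ii) any compact $a\subseteq M\times S_{\infty}$ can be approximated by finite nets with the $S$-coordinates perturbed to lie in $S_{n}$ and a diagonal argument.

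Next I would embed $\tau^{\hat{C}_{c}(M,\cdot)}(S)$ into $\tilde{\tau}(S)$ via $\iota_{S}(g)\coloneqq\{(x,g(x)):x\in\dom(g)\}$. Using the characterization in Theorem \ref{A. thm: convergence in hatC}\ref{3. thm item: convergence in hatC, Cao's characterization}, convergence $g_{n}\to g$ in $\hat{C}_{c}(M,S)$ is equivalent to Hausdorff convergence of the graphs, given that the limit set is itself a graph: the forward direction follows from the pointwise convergence along converging sequences of arguments, and the reverse direction follows by observing that any subsequential limit of $g_{n}(x_{n})$ (for $x_{n}\in\dom(g_{n})\to x\in\dom(g)$) must be $g(x)$ by looking at the limiting graph. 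Hence $\iota_{S}$ is a topological embedding, and the functorial identity $\iota_{S_{2}}(f\circ g)=(\id_{M}\times f)(\iota_{S_{1}}(g))$ gives \ref{3. dfn item: topological subfunctor, commutative relation} of Definition \ref{3. dfn: topological subfunctor}.

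It remains to exhibit $\iota_{S}(\hat{C}_{c}(M,S))$ as a $G_{\delta}$ satisfying the functorial subset property \ref{3. dfn item: Polish functor, functorial subset property}. For each $k\in\NN$ define
\begin{equation}
  V_{k}(S)\coloneqq\bigl\{K\in\compact(M\times S)\mid \exists(x,y),(x,y')\in K\ \text{with}\ d^{S}(y,y')\geq 1/k\bigr\}
\end{equation}
and $\tilde{\tau}_{k}(S)\coloneqq\compact(M\times S)\setminus V_{k}(S)$. A standard Hausdorff-limit argument (picking approximating pairs in Hausdorff-close sets and extracting convergent subsequences) shows that $V_{k}(S)$ is closed, so $\tilde{\tau}_{k}(S)$ is open; and $\iota_{S}(\hat{C}_{c}(M,S))=\bigcap_{k}\tilde{\tau}_{k}(S)$ since the "at most one point per fiber" condition for a compact $K$ is equivalent to $K$ being the graph of a continuous function from the compact set $\pi_{M}(K)$ into $S$ (a continuous bijection between compact Hausdorff spaces is a homeomorphism). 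Condition \ref{3. dfn item: Polish functor, functorial subset property} follows because $f$ is distance-preserving, so $d^{S_{2}}(f(y),f(y'))=d^{S_{1}}(y,y')$ and hence $\tilde{\tau}_{f}^{-1}(\tilde{\tau}_{k}(S_{2}))=\tilde{\tau}_{k}(S_{1})$. The main technical obstacle is the characterization of convergence of graphs (equivalence of Hausdorff convergence and $\hat{C}_{c}$ convergence when the limit is a graph), but this is handled cleanly by Theorem \ref{A. thm: convergence in hatC}. Having verified \ref{3. dfn item: Polish functor, large functor is separable, complete and continuous}--\ref{3. dfn item: Polish functor, functorial subset property}, Theorem \ref{3. thm: Polish functor} yields Polishness.
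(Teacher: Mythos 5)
Your proof is correct and follows the same overall strategy as the paper: pass to $\tilde\tau(S) = \compact(M\times S)$ with the Hausdorff metric, embed via the graph map, verify the Polish system conditions, and invoke Theorem \ref{3. thm: Polish functor}. The one genuine point of divergence is the choice of the open exhaustion. The paper takes $\tau_k(S)$ to be those $E\in\compact(M\times S)$ admitting $\delta_1,\delta_2\in(0,1/k)$ with the modulus-of-continuity property: for $(x,a),(y,b)\in E$, $d^M(x,y)<\delta_1$ forces $d^S(a,b)<\delta_2$. You instead take $\tilde\tau_k(S)$ to be the complement of the closed set of compacta containing a pair $(x,y),(x,y')$ over the same base point with $d^S(y,y')\geq 1/k$ --- a weaker per-$k$ condition (it says nothing off the diagonal of $M$) but yielding the same intersection, since the ``at most one point per fiber'' property together with compactness of $K$ already forces the projection $K\to\pi_M(K)$ to be a homeomorphism and hence $K$ to be a continuous graph. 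Your version is arguably a touch cleaner: openness reduces to closedness of $V_k(S)$, which is a routine diagonal argument in the compact space $M\times S$, and the functorial subset property (P4) is immediate from $f$ being an isometry, whereas the paper's condition requires tracking two auxiliary parameters $\delta_1,\delta_2$. The trade-off is that the paper's condition more directly encodes the equicontinuity that underlies precompactness elsewhere, and the citation to Lemmas 2.57--2.58 of \cite{Noda_pre_Metrization} lets it reuse that machinery verbatim. Both routes are valid.
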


\begin{proof}
  We define a functor $\tau$ on $\rbcM_{c}$ as follows.
  \begin{itemize}
    \item 
      For $(S, d^{S}, \rho_{S}) \in \rbcM_{c}$, 
      set $\tau(S) \coloneqq \compact(M \times S)$ and $d_{\tau}^{S} \coloneqq \HausdMet{M \times S}$. 
    \item 
    For each $(S_{i}, d^{S_{i}}, \rho_{S_{i}}) \in \rbcM_{c},\, i=1,2$
    and root-and-distance-preserving map $f: S_{1} \to S_{2}$, 
    set $\tau_{f}(A) \coloneqq (\id_{M} \times f) (A)$.
  \end{itemize}
  It is straightforward to see that $\tau$ is complete, separable and continuous
  (cf.\ \cite[Proof of Lemma 4.26]{Noda_pre_Metrization}).
  For a function $f$,
  we write its graph by $\mathfrak{g}(f) \coloneqq \{(x, f(x)) \mid \dom(f) \}$.
  By the map $\mathfrak{g}$,
  the space $\hatC_{c}(M, S)$ is topologically embedded into $\compact(M, S)$
  (cf.\ \cite[Corollary 2.54]{Noda_pre_Metrization}).
  This implies that $\tau^{\hatC_{c}(M, \cdot)}$ is a topological subfunctor of $\tau$
  (recall the topological subfunctor from Definition \ref{3. dfn: topological subfunctor}).
  Fix a rooted compact metric space $(S, d^{S}, \rho_{S})$ and $k \in \NN$.
  We define $\tau_{k}(S)$ to be the collection of $E \in \tau(S) = \compact(M \times S)$ such that 
  there exist $\delta_{1}, \delta_{2} \in (0, 1/k)$ satisfying the following condition:
  for any $(x, a), (y, b) \in E$, 
  if $d^{M}(x,y) < \delta_{1}$, then $d^{S}(a,b) < \delta_{2}$.
  Similarly to \cite[Lemmas 2.57 and 2.58]{Noda_pre_Metrization},
  we deduce that $\tau_{k}(S)$ is open in $\tau(S)$ and $\hatC_{c}(M, S) = \bigcap_{k \geq 1} \tau_{k}(S)$.
  Therefore, $(\tau, (\tau_{k})_{k \geq 1})$ is a Polish system of $\tau^{\hatC_{c}(M, \cdot)}$
  (see Definition \ref{3. dfn: Polish functor}),
  which completes the proof.
\end{proof}

Below, we provide a precompactness criterion for the space $\rbcM_{c}(\tau^{\hatC_{c}(M, \cdot)})$.

\begin{lem} \label{A. lem: precompactness in tau for hatC}
  A non-empty subset $\{\cX_{\alpha} = (S_{\alpha}, d^{\alpha}, \rho_{\alpha}, f_{\alpha}) \mid \alpha \in \mathcal{A} \}$
  of $\rbcM_{c}(\tau^{\hatC_{c}(M, \cdot)})$ 
  is precompact if and only if the following conditions are satisfied.
  \begin{enumerate} [label = (\roman*)]
    \item \label{A. lem item: precompactness, spaces}
      The subset $\{(S_{\alpha}, d^{\alpha}, \rho_{\alpha}) \mid \alpha \in \mathcal{A}\}$ is precompact 
      in pointed the Gromov-Hausdorff topology
      (recall this from Remark \ref{3. rem: Gromov-Hausdorff topology}).
    \item \label{A. lem item: precompactness, equicontinuity}
      It holds that 
      \begin{equation}
        \lim_{\delta \to \infty} 
        \sup_{ \alpha \in \mathcal{A}} 
        \sup_{\substack{x, y \in \dom(f_{\alpha})\\ d^{M}(x,y) \leq \delta}} 
        d^{\alpha}(f_{\alpha}(x), f_{\alpha}(y)) 
        = 0.
      \end{equation}
  \end{enumerate}
\end{lem}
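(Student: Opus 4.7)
The plan is to prove the two directions separately, in the spirit of Theorem \ref{2. thm: precompactness in sP}.

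For necessity, I would first observe that the forgetful map $\rbcM_{c}(\tau^{\hatC_{c}(M,\cdot)}) \to \rbcM_{c}$ dropping the function component is $1$-Lipschitz by comparing Definition \ref{3. dfn: metric on frakMtau} with Remark \ref{3. rem: Gromov-Hausdorff topology}, so condition \ref{A. lem item: precompactness, spaces} is inherited from precompactness. For \ref{A. lem item: precompactness, equicontinuity} I would argue by contradiction: suppose it fails, producing sequences $\alpha_{n} \in \mathcal{A}$, $\delta_{n} \downarrow 0$, $\varepsilon > 0$, and $x_{n}, y_{n} \in \dom(f_{\alpha_{n}})$ with $d^{M}(x_{n}, y_{n}) \leq \delta_{n}$ and $d^{\alpha_{n}}(f_{\alpha_{n}}(x_{n}), f_{\alpha_{n}}(y_{n})) \geq \varepsilon$. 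By precompactness and Theorem \ref{3. thm: convergence in GH topology}, pass to a subsequence along which $\cX_{\alpha_{n}}$ converges to some limit $(S_{\infty}, d^{S_{\infty}}, \rho_{\infty}, f_{\infty})$, with all spaces isometrically embedded into a common rooted compact metric space $(\tilde{M}, d^{\tilde{M}}, \rho_{\tilde{M}})$ in such a way that $f_{\alpha_{n}} \to f_{\infty}$ in $\hatC_{c}(M, \tilde{M})$. Since $M$ is compact, a further extraction yields $x_{n} \to x_{\infty}$ and $y_{n} \to y_{\infty}$ in $M$; necessarily $x_{\infty} = y_{\infty}$, and $x_{\infty} \in \dom(f_{\infty})$ by the Hausdorff convergence $\dom(f_{\alpha_{n}}) \to \dom(f_{\infty})$. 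Theorem \ref{A. thm: convergence in hatC}\ref{3. thm item: convergence in hatC, Cao's characterization} then forces $f_{\alpha_{n}}(x_{n})$ and $f_{\alpha_{n}}(y_{n})$ to both converge to $f_{\infty}(x_{\infty})$, contradicting the lower bound $\varepsilon$.

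For sufficiency, I would take an arbitrary sequence $(\cX_{\alpha_{n}})_{n \geq 1}$ and extract a convergent subsequence through successive steps. First, by \ref{A. lem item: precompactness, spaces} and Theorem \ref{3. thm: convergence in GH topology}, pass to a subsequence and isometrically embed all $(S_{\alpha_{n}}, d^{\alpha_{n}}, \rho_{\alpha_{n}})$ together with a pointed Gromov-Hausdorff limit $(S_{\infty}, d^{S_{\infty}}, \rho_{\infty})$ into a common rooted compact metric space $(\tilde{M}, d^{\tilde{M}}, \rho_{\tilde{M}})$ with $S_{\alpha_{n}} \to S_{\infty}$ in the Hausdorff topology on $\tilde{M}$. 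Since $M$ is compact and $\compact(M)$ is compact in its Hausdorff topology, a further extraction yields $\dom(f_{\alpha_{n}}) \to D_{\infty}$ for some compact $D_{\infty} \subseteq M$. Next, fix a countable dense subset $\{z_{i}\}_{i \geq 1}$ of $D_{\infty}$, choose $z_{i}^{(n)} \in \dom(f_{\alpha_{n}})$ with $z_{i}^{(n)} \to z_{i}$, and use the compactness of $\tilde{M}$ together with a diagonal argument to extract a further subsequence along which $f_{\alpha_{n}}(z_{i}^{(n)}) \to w_{i} \in \tilde{M}$ for every $i$. The uniform equicontinuity \ref{A. lem item: precompactness, equicontinuity} ensures that the correspondence $z_{i} \mapsto w_{i}$ is uniformly continuous on the dense set, hence extends uniquely to a continuous function $f_{\infty}: D_{\infty} \to \tilde{M}$. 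A final application of \ref{A. lem item: precompactness, equicontinuity} verifies the criterion of Theorem \ref{A. thm: convergence in hatC}\ref{3. thm item: convergence in hatC, Cao's characterization}: given any $x_{n} \in \dom(f_{\alpha_{n}})$ with $x_{n} \to x_{\infty} \in D_{\infty}$, choose $z_{i(n)}$ close to $x_{\infty}$ and $z_{i(n)}^{(n)}$ close to $z_{i(n)}$, and apply the triangle inequality to conclude $f_{\alpha_{n}}(x_{n}) \to f_{\infty}(x_{\infty})$. This yields $f_{\alpha_{n}} \to f_{\infty}$ in $\hatC_{c}(M, \tilde{M})$ and, via Theorem \ref{3. thm: convergence in GH topology}, the desired convergence $\cX_{\alpha_{n}} \to (S_{\infty}, d^{S_{\infty}}, \rho_{\infty}, f_{\infty})$ in $\rbcM_{c}(\tau^{\hatC_{c}(M,\cdot)})$.

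The hard part will be the Arzel\`a-Ascoli step in sufficiency with variable domains: the limit function $f_{\infty}$ must be constructed on the Hausdorff limit $D_{\infty}$ of the domains via a diagonal argument along a countable dense subset, and the verification that convergence in the sense of Theorem \ref{A. thm: convergence in hatC}\ref{3. thm item: convergence in hatC, Cao's characterization} holds for \emph{arbitrary} converging sequences $x_{n} \to x_{\infty}$ will require combining the uniform equicontinuity with the Hausdorff convergence of the domains in a careful three-term triangle-inequality estimate between $f_{\alpha_{n}}(x_{n})$, $f_{\alpha_{n}}(z_{i(n)}^{(n)})$, $w_{i(n)}$, and $f_{\infty}(x_{\infty})$.
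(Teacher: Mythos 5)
Your proof is correct but takes a genuinely different route from the paper's. The paper disposes of this lemma in a few lines by citing a general precompactness criterion for functors established in the companion metrization paper (Theorem 2.62 there, applied via the argument of Theorem 4.30). That criterion gives a third condition --- uniform boundedness of $\sup_{x \in \dom(f_\alpha)} d^\alpha(\rho_\alpha, f_\alpha(x))$ --- which the paper then observes is automatic because pointed Gromov--Hausdorff precompactness bounds the diameters uniformly. Your argument instead proves both directions from scratch: necessity via the $1$-Lipschitz forgetful map and a compactness/contradiction argument, and sufficiency via a diagonal Arzel\`a--Ascoli extraction over a countable dense subset of the Hausdorff limit $D_\infty$ of the domains. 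Your approach is self-contained and more illuminating about why the statement is true; the paper's is shorter but relies on external machinery. Both are valid, and your third-condition-free statement implicitly exploits the same boundedness fact as the paper, just absorbed into the compactness of the common ambient space $\tilde{M}$ produced by the pointed GH convergence.

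One small step you leave implicit in the sufficiency direction deserves to be made explicit: after constructing $f_\infty: D_\infty \to \tilde{M}$ and showing $f_{\alpha_n} \to f_\infty$ in $\hatC_c(M, \tilde{M})$, you still need to check that $f_\infty$ actually takes values in $S_\infty$ so that it defines an element of $\hatC_c(M, S_\infty)$ and hence a legitimate tuple in $\rbcM_c(\tau^{\hatC_c(M, \cdot)})$. This does follow from what you have: for any $x_\infty \in D_\infty$ pick $x_n \in \dom(f_{\alpha_n})$ with $x_n \to x_\infty$; then $f_{\alpha_n}(x_n) \to f_\infty(x_\infty)$ by your verification, $f_{\alpha_n}(x_n) \in S_{\alpha_n}$, and $S_{\alpha_n} \to S_\infty$ in the Hausdorff topology on $\tilde{M}$, so $f_\infty(x_\infty) \in S_\infty$. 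Without this remark the final appeal to Theorem \ref{3. thm: convergence in GH topology} is not quite licensed, since that theorem requires the limit additional structure to live in $\tau(S_\infty)$, not merely in $\tau(\tilde{M})$.
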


\begin{proof}
  A precompactness criteria in the compact-convergence with variable domains is given in \cite[Theorem 2.62]{Noda_pre_Metrization}.
  Using this and following \cite[Proof of Theorem 4.30]{Noda_pre_Metrization},
  we deduce that the collection $\{\cX_{\alpha}\}_{\alpha \in \mathcal{A}}$ is precompact if and only if,
  in addition to \ref{A. lem item: precompactness, spaces}, \ref{A. lem item: precompactness, equicontinuity},
  the following condition is satisfied:
  \begin{equation}
  \sup_{\alpha \in \mathcal{A}} \sup_{x \in \dom(f_{\alpha})} d^{\alpha}(\rho_{\alpha}, f_{\alpha}(x)) < \infty.
  \end{equation}
  However, the above conditions follows from the condition \ref{A. lem item: precompactness, spaces}
  as the precompctness in the pointed Gromov-Hausdorff topology implies that the diameters of the spaces are bounded
  (cf.\ \cite[Theorem 2.6]{Abraham_Delmas_Hoscheit_13_A_note}). 
  Hence, we obtain the result.
\end{proof}

\begin{proof} [{Proof of Theorem \ref{A. thm: convergence coupling for fused resistance spaces}}]
  By the assumption and Theorem \ref{3. thm: convergence in GH topology},
  we can find a rooted compact metric space $(M, d^{M}, \rho_{M})$ 
  where $(F_{n}, R_{n}, \rho_{n})$ and $(F, R, \rho)$ are embedded isometrically in such a way that 
  $\rho_{n} = \rho = \rho_{M}$ as elements of $M$,
  $F_{n} \to F$ in the Hausdorff topology,
  and $(a_{n}^{(i)}, b_{n}^{(i)}) \to (a^{(i)}, b^{(i)})$ in $M \times M$ for all $i$.
  We have from \eqref{A. eq: comparison inequality} that 
  \begin{equation}
    \lim_{\delta \to 0} 
    \sup_{n \geq 1} 
    \sup_{\substack{x, y \in F_{n}\\ R_{n}(x,y) \leq \delta}} 
    \tilde{R}_{n}(\pi_{n}(x), \pi_{n}(y)) 
    \leq 
    \lim_{\delta \to 0} 
    \delta  
    = 0. 
  \end{equation}
  By Lemma \ref{A. lem: precompactness in tau for hatC},
  $\{(\tilde{F}_{n}, \tilde{R}_{n}, \tilde{\rho}_{n}, \pi_{n})\}_{n \geq 1}$ is precompact in $\rbcM_{c}(\tau^{\hatC_{c}(M, \cdot)})$.
  It remains to show that the the limit of any convergent subsequence is $(\tilde{F}, \tilde{R}, \tilde{\rho}, \pi)$.
  So, we assume that $(\tilde{F}_{n}, \tilde{R}_{n}, \tilde{\rho}_{n}, \pi_{n})$ converges to $(K, d^{K}, \rho_{K}, \pi_{K})$.
  It is enough to prove that $(K, d^{K}, \rho_{K}, \pi_{K})$ is equivalent to $(\tilde{F}, \tilde{R}, \tilde{\rho}, \pi)$.
  We may assume that $(\tilde{F}_{n}, \tilde{R}_{n}, \tilde{\rho}_{n})$ and $(K, d^{K}, \rho_{K})$ are embedded isometrically 
  into a common rooted compact metric space $(\tilde{M}, d^{\tilde{M}}, \rho_{\tilde{M}})$ in such a way that 
  $\tilde{\rho}_{n} = \rho_{K} = \rho_{\tilde{M}}$ as elements of $\tilde{M}$,
  $\tilde{F}_{n} \to K$ in the Hausdorff topology in $M$, 
  and $\pi_{n} \to \pi_{K}$ in $\hatC(M, \tilde{M})$.
  Since $\dom(\pi_{n}) = F_{n} \to F$,
  we have from Theorem \ref{A. thm: convergence in hatC} that $\dom(\pi) = F$.
  Moreover,
  the convergences of $\pi_{n}$ to $\pi_{K}$ and of $\rho_{n}$ to $\rho$ in $M$ imply that $\pi_{n}(\rho_{n}) \to \pi_{K}(\rho)$.
  However, we have that $\pi_{n}(\rho_{n}) = \tilde{\rho}_{n} = \rho_{K}$,
  and so it holds that $\pi_{K}(\rho) = \rho_{K}$.
  Fix $x, y \in F$.
  Since $F_{n} \to F$ in the Hausdorff topology in $M$,
  there exist $x_{n}, y_{n} \in F_{n}$ such that $x_{n} \to x$ and $y_{n} \to y$ in $M$.
  Then, from \cite[Proof of Proposition 8.4]{Croydon_18_Scaling}, 
  we deduce that 
  \begin{equation}
    \lim_{n \to \infty}
    \tilde{R}_{n}(\pi_{n}(x_{n}), \pi_{n}(y_{n})) 
    = 
    \tilde{R}(\pi(x), \pi(y)).
  \end{equation}
  On the other hand, 
  by Theorem \ref{A. thm: convergence in hatC},
  we have that 
  \begin{equation}
    \lim_{n \to \infty} d^{\tilde{M}}(\pi_{n}(x_{n}), \pi_{n}(y_{n})) 
    = 
    d^{\tilde{M}}( \pi_{K}(x), \pi_{K}(y) ).
  \end{equation}
  It follows that 
  \begin{equation}
    \tilde{R}(\pi(x), \pi(y)) 
    = 
    d^{K}(\pi_{K}(x), \pi_{K}(y)).
  \end{equation}
  Thus, there exists a unique map $f: \tilde{F} \to K$ such that 
  $f \circ \pi = \pi_{K}$.
  From the above equation,
  it is easy to check that $f$ is distance-preserving.
  Recalling that $\pi_{K}(\rho) = \rho_{K}$,
  we deduce that $f$ is root-preserving, i.e., $f(\tilde{\rho}) = \rho_{K}$.
  It remains to prove that $f$ is surjective,
  which is equivalent to showing that $\pi_{K}$ is surjective.
  Fix $y \in K$.
  We choose $y_{n} \in \tilde{F}_{n}$ so that $y_{n} \to y$ in $\tilde{M}$.
  Let $x_{n} \in F_{n}$ be such that $\pi_{n}(x_{n}) = y_{n}$.
  By the compactness of $M$,
  we can find a subsequence $(n_{k_{l}})_{l \geq 1}$ satisfying 
  $x_{n_{k}} \to x$ in $M$ for some $x \in F$.
  From Theorem \ref{A. thm: convergence in hatC},
  it holds that $\pi_{n_{k}}(x_{n_{k}}) \to \pi_{K}(x)$.
  Thus, $\pi_{K}(x) = y$,
  which shows that $\pi_{K}$ is surjective.
\end{proof}

We next consider fused electrical networks.
Let $G$ be an electrical network with finite vertex set $V_{G}$.
Fix a collection $\Gamma = \{V_{i}\}_{i=1}^{N}$ of non-empty disjoint subsets of $V_{G}$ 
and write 
\begin{equation}
  V_{G}^{\Gamma} 
  \coloneqq 
  \left( V_{G} \setminus \bigcup_{i=1}^{N} V_{i} \right) \cup \bigcup_{i=1}^{N} \{V_{i}\}.
\end{equation}
Define an electrical network $\tilde{G}$ with vertex set $V_{\tilde{G}} \coloneqq V_{G}^{\Gamma}$
by setting the conductance $\mu_{\tilde{G}}$ as follows:
\begin{gather}
  \mu_{\tilde{G}}(x,y)
  \coloneqq 
  \mu_{G}(x,y),
  \quad 
  x,y \in V_{G} \setminus \bigcup_{i=1}^{N} V_{i};\\
  \mu_{\tilde{G}}(x, V_{i}) 
  \coloneqq 
  \sum_{y \in V_{i}} \mu_{G}(x,y),
  \quad 
  x \in V_{G} \setminus \bigcup_{i=1}^{N} V_{i};\\
  \mu_{\tilde{G}}(V_{i}, V_{j}) 
  \coloneqq 
  \sum_{x \in V_{i}} \sum_{y \in V_{j}} \mu_{G}(x,y),
  \quad 
  i \neq j.
\end{gather}
The canonical map $\pi_{\tilde{G}}: V_{G} \to V_{\tilde{G}}$ is given by 
$\pi_{\tilde{G}}(x) \coloneqq x$ for $x \in V_{G} \setminus \bigcup_{i=1}^{N} V_{i}$
and $\pi_{\tilde{G}}(x) \coloneqq V_{i}$ for $x \in V_{i}$.
We refer to $\tilde{G}$ as the electrical network $G$ fused over $\Gamma$.
It is easy to check that $(V_{\tilde{G}}, R_{\tilde{G}})$ coincides with the resistance metric space $(V_{G}, R_{G})$ fused over $\Gamma$.
(Indeed, the associated resistance forms coincide.)

\begin{prop} \label{7. prop: effective resistance comparison for fused electrical networks}
  Let $G$ be an electrical network with finite vertex set $V_{G}$.
  Fix two distinct vertices $a, b \in V_{G}$ such that $\mu(a,b) > 0$.
  Write $\tilde{G}$ for the electrical network $G$ fused over $V_{0} \coloneqq \{a, b\}$.
  Let $\pi: V_{G} \to V_{\tilde{G}}$ be the canonical map.
  Then, it holds that, for any $x, y \in V_{G}$,
  \begin{equation} \label{fused, effective resistance comparison}
    R_{\tilde{G}}(\pi(x), \pi(y)) 
    \leq 
    R_{G}(x,y) 
    \leq 
    R_{\tilde{G}}(\pi(x), \pi(y)) + \mu_{G}(a, b)^{-1}.
  \end{equation}
\end{prop}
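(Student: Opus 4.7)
Proof plan. The first inequality $R_{\tilde{G}}(\pi(x), \pi(y)) \leq R_G(x,y)$ is a special case of the universal comparison \eqref{A. eq: comparison inequality}: every function on $V_{\tilde{G}}$ pulls back via $\pi$ to a function on $V_G$ that is constant on $\{a,b\}$ and realizes the same form value, so pulling back the supremum defining $R_{\tilde{G}}(\pi(x),\pi(y))$ yields an admissible function for $R_G(x,y)$.

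For the second inequality my strategy is to view $\tilde{G}$ as the limit of $G$ as the conductance of the edge $\{a,b\}$ is sent to infinity, and to quantify the change in effective resistance. Let $G'$ denote $G$ with the edge $\{a,b\}$ deleted, write $c := \mu_G(a,b)$, $R_0 := R_{G'}(a,b)$, and let $V$ be the potential difference between $a$ and $b$ that arises when a unit current is injected at $x$ and withdrawn at $y$ in $G'$. The Sherman--Morrison formula applied to the graph Laplacian (using $L_G = L_{G'} + c(e_a - e_b)(e_a - e_b)^T$ and the analogous identity in the limit $c \to \infty$) gives
\[
  R_G(x,y) = R_{G'}(x,y) - \frac{c V^2}{1 + c R_0}, \qquad R_{\tilde{G}}(\pi(x),\pi(y)) = R_{G'}(x,y) - \frac{V^2}{R_0}.
\]
Subtracting yields the exact identity
\[
  R_G(x,y) - R_{\tilde{G}}(\pi(x),\pi(y)) = \frac{V^2}{R_0(1 + c R_0)}.
\]
By reciprocity, $V$ equals the difference at $x$ and $y$ of the harmonic potential driven by a unit current from $a$ to $b$ in $G'$; the maximum principle for discrete harmonic functions on electrical networks then gives $|V| \leq R_0$. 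Therefore
\[
  R_G(x,y) - R_{\tilde{G}}(\pi(x),\pi(y)) \leq \frac{R_0}{1 + c R_0} \leq \frac{1}{c} = \mu_G(a,b)^{-1},
\]
which is the desired bound.

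The main technical subtlety I expect is the degenerate case where removing $\{a,b\}$ disconnects $a$ from $b$, so that $R_0 = \infty$ and both the Sherman--Morrison identity and the bound $|V| \leq R_0$ need separate interpretation. In that case $\{a,b\}$ is the unique bridge between the components of $a$ and $b$ in $G$; the series decomposition $R_G(x,y) = R_{G'}(x,a) + c^{-1} + R_{G'}(b,y)$ and $R_{\tilde{G}}(\pi(x),\pi(y)) = R_{G'}(x,a) + R_{G'}(b,y)$ (when $x$ lies in the $a$-component and $y$ in the $b$-component) makes the bound an equality, and the other placements of $x,y$ (including $x,y \in \{a,b\}$) either reduce to trivial cases or follow from a continuity argument letting $c$ be perturbed through finite values.
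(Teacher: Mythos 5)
Your proposal is correct, and while the first inequality is handled the same way as in the paper (pulling back the optimal potential through $\pi$, which preserves the Dirichlet form), your argument for the second inequality is genuinely different. The paper uses Thomson's principle: it takes the unit current flow realizing $R_{\tilde{G}}(\pi(x),\pi(y))$, lifts it to a test flow on $G$ by splitting the current entering the fused vertex between $a$ and $b$ in proportion to the conductances, routes the resulting imbalance through the edge $\{a,b\}$, observes that this balancing current has absolute value at most $1$, and so pays at most $\mu_G(a,b)^{-1}$ in extra energy; this construction works uniformly, with only a mild case split according to whether $x$ or $y$ lies in $V_0$. You instead use the Sherman--Morrison rank-one update of the Laplacian pseudoinverse together with the maximum principle for the unit-current potential, which yields the \emph{exact} identity $R_G(x,y)-R_{\tilde{G}}(\pi(x),\pi(y))=V^2/\bigl(R_0(1+cR_0)\bigr)$ and hence a sharper, quantitative version of the bound. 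The price is that the update formula presupposes $e_a-e_b$ lies in the column space of $L_{G'}$, i.e.\ that deleting $\{a,b\}$ does not disconnect the graph, so the bridge case must be treated separately (your series-law decomposition does this correctly, and the remaining placements of $x,y$ within a single component are indeed trivial since no current crosses the bridge). Two small points you should make explicit in a polished write-up: the identification $R_{\tilde{G}}(\pi(x),\pi(y))=\lim_{c\to\infty}R_{G_c}(x,y)$ deserves a line of justification (e.g.\ via the variational formula, whose infimum over potentials with $f(a)=f(b)$ is the monotone limit of the penalized infima), and the maximum-principle step $|V|\le R_0$ should cite that the unit-current potential attains its extrema at $a$ and $b$. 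Neither is a gap, just a detail to record.
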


\begin{proof}
  Fix $x, y \in V_{G}$ with $x \neq y$.
  If $\pi(x) = \pi(y)$,
  then $x=y$ or $\{x, y\} = \{a, b\}$.
  Thus, the assertion is straightforward as we have that $R_{G}(x,y) \leq \mu_{G}(a,b)^{-1}$.
  Henceforth, we assume that $\pi(x) \neq \pi(y)$.
  Let $\tilde{f}: V_{\tilde{G}} \to \RN$ be such that $R_{\tilde{G}}(\pi(x), \pi(y)) = \form_{\tilde{G}}(\tilde{f}, \tilde{f})^{-1}$,
  $\tilde{f}(\pi(x)) = 1$, and $\tilde{f}(\pi(y)) = 0$.
  Define $f: V_{G} \to \RN$ 
  by setting $f|_{V \setminus V_{0}} = \tilde{f}|_{V \setminus \{V_{0}\}}$ and $f|_{V_{0}} \coloneqq \tilde{f}(V_{0})$.
  It is the case that 
  \begin{align}
    R_{G}(x,y)^{-1} 
    &\leq 
    \form_{G}(f,f)\\
    &=
    \frac{1}{2} \sum_{z,w \in V_{G} \setminus V_{0}} \mu_{G}(z,w) (f(z) - f(w))^{2}
    +
    \sum_{z \in V_{G} \setminus V_{0}} \sum_{w \in V_{0}} \mu_{G}(z,w) (f(z) - f(w))^{2}\\
    &=
    \frac{1}{2} \sum_{z,w \in V_{G} \setminus \{V_{0}\}} \mu_{\tilde{G}}(z,w) (\tilde{f}(z) - \tilde{f}(w))^{2}
    +
    \sum_{z \in V_{G} \setminus \{V_{0}\}} \mu_{\tilde{G}}(z, V_{0}) (\tilde{f}(z) - \tilde{f}(V_{0}))^{2}\\
    &=
    \form_{\tilde{G}}(\tilde{f}, \tilde{f}) \\
    &=
    R_{\tilde{G}}(\pi(x), \pi(y))^{-1}.
  \end{align}
  Therefore, the first inequality of \eqref{fused, effective resistance comparison} follows.

  To prove the second inequality, 
  we use Thomson's principle.
  For details, see \cite{Levin_Peres_17_Markov}, for example.
  We first consider the case where $x, y \notin V_{0}$.
  Let $\tilde{\imath}$ be the unit current flow from $x$ to $y$ on $\tilde{G}$.
  We then define a flow $i$ from $x$ to $y$ on $G$ as follows:
  \begin{gather}
    i(z,w) \coloneqq \tilde{\imath}(z,w),
      \quad z,w \in V_{G} \setminus V_{0}\ \text{such that}\ z \sim w,\\
    i(z,a) \coloneqq \frac{\mu_{G}(z, a)}{\mu_{G}(z,a) + \mu_{G}(z,b)} \tilde{\imath}(z, V_{0}), 
      \quad z \in V_{G} \setminus V_{0}\ \text{such that}\ z \sim a,\\
    i(z,b) \coloneqq \frac{\mu_{G}(z, b)}{\mu_{G}(z,a) + \mu_{G}(z,b)} \tilde{\imath}(z, V_{0}), 
      \quad z \in V_{G} \setminus V_{0}\ \text{such that}\ z \sim b,\\
    i(a,b) \coloneqq -\sum_{z \in V_{G} \setminus V_{0}} i(a,z).
  \end{gather}
  For non-negative real numbers $s, t$, 
  we have that $|s-t| \leq s \vee t$.
  This yields that  
  \begin{equation}
    |i(a,b)| 
    \leq
    \max
    \left\{
      \sum_{\tilde{\imath}(V_{0}, z) \geq 0} \tilde{\imath}(V_{0}, z),\,
      -\sum_{\tilde{\imath}(V_{0}, z) \leq 0} \tilde{\imath}(V_{0}, z)
    \right\}
    \leq 
    1.
  \end{equation}
  By Thomson's principle,
  we deduce that 
  \begin{align}
    R_{G}(x,y) 
    &\leq
    \frac{1}{2} \sum_{\substack{z, w \in V \setminus V_{0}\\ z \sim w}} \mu_{G}(z,w)^{-1} i(z, w)^{2}
    +
    \sum_{\substack{z \in V \setminus V_{0}, w \in V_{0}\\ z \sim w}} \mu_{G}(z, w)^{-1} i(z, w)^{2}
    +
    \mu_{G}(a,b)^{-1} i(a,b)^{2}\\
    &\leq
    \frac{1}{2} \sum_{\substack{z, w \in V \setminus V_{0}\\ z \sim w}} \mu_{\tilde{G}}(z,w)^{-1} \tilde{\imath}(z, w)^{2}
    +
    \sum_{\substack{z \in V \setminus V_{0}\\ z \sim V_{0}}} \mu_{\tilde{G}}(z,V_{0})^{-1} \tilde{\imath}(z, V_{0})^{2}
    +
    \mu_{G}(a,b)^{-1} i(a,b)^{2}\\
    &=
    R_{\tilde{G}}(x,y) + \mu_{G}(a,b)^{-1}.
  \end{align}
  Next, we consider the case where $x \notin V_{0}$ and $y \in V_{0}$.
  We may assume that $y = a$.
  Let $\tilde{\imath}$ be the unit current flow from $V_{0}$ to $x$ on $\tilde{G}$.
  We then define a flow $i$ from $a$ to $x$ on $G$ as follows:
  \begin{gather}
    i(z,w) \coloneqq \tilde{\imath}(z,w),
      \quad z,w \in V_{G} \setminus V_{0}\ \text{such that}\ z \sim w,\\
    i(a, z) \coloneqq \frac{\mu_{G}(a, z)}{\mu_{G}(a, z) + \mu_{G}(b,z)} \tilde{\imath}(V_{0}, z), 
      \quad z \in V_{G} \setminus V_{0}\ \text{such that}\ z \sim a,\\
    i(b, z) \coloneqq \frac{\mu_{G}(b, z)}{\mu_{G}(a, z) + \mu_{G}(b, z)} \tilde{\imath}(V_{0}, z), 
      \quad z \in V_{G} \setminus V_{0}\ \text{such that}\ z \sim b,\\
    i(a,b) \coloneqq \sum_{z \in V_{G} \setminus V_{0}} i(b, z).
  \end{gather}
  Then, by the same argument as before,
  one can check that $R_{G}(a,x) \leq R_{\tilde{G}}(x,y) + \mu_{G}(a,b)^{-1}$.
  Hence, we complete the proof.
\end{proof}

For electrical networks,
besides fusing, 
another natural operation can be considered: adding edges.
For each $n \geq 1$,
let $G_{n}$ be an electrical network with finite vertex set such that $\mu_{G_{n}}(x,y) = 1$ if $\mu_{G_{n}}(x,y) > 0$
(i.e., any positive conductance is $1$).
Fix $N \in \mathbb{N}$ and $a_{i}^{(n)}, b_{i}^{(n)} \in V_{G_{n}}$ such that 
$\{a_{1}^{(n)}, b_{1}^{(n)}\}, \ldots, \{a_{N}^{(n)}, b_{N}^{(n)}\}$ are distinct subsets of $V_{G_{n}}$.
Define an electrical network $\bar{G}_{n}$ with vertex set $V_{\bar{G}_{n}} \coloneqq V_{G_{n}}$ 
by setting conductances as follows:
\begin{equation}
  \mu_{\bar{G}_{n}}(x,y)
  \coloneqq 
  \begin{cases}
    \mu_{G_{n}}(a_{i}^{(n)}, b_{i}^{(n)}) + 1, & \{x,y\} = \{a_{i}^{(n)}, b_{i}^{(n)}\},\\
    \mu_{G_{n}}(x,y),& \text{otherwise}.
  \end{cases}
\end{equation}
In other words, 
$\bar{G}_{n}$ is obtained by attaching a new edge between $a_{i}^{(n)}$ and $b_{i}^{(n)}$
(if there are multiple edges, then those edges are replaced by a single edge with conductance $2$).
We let $\rho_{\bar{G}_{n}} \coloneqq \rho_{G}$.

\begin{thm} \label{A. thm: convergence of tree-like graphs with dot measures}
  Assume the above setting.
  Let $(F,R, \rho)$ be a rooted compact resistance metric,
  $a_{1}$, $b_{1}, \ldots, a_{N}$, $b_{N}$ be distinct elements of $F$,
  and $\dot{\mu}$ be a Radon measure on $F \times \RNp$.
  Assume that 
  \begin{equation}
    \bigl( V_{G_{n}}, \alpha_{n}^{-1} R_{G_{n}}, \rho_{G_{n}}, \beta_{n}^{-1} \dotmuh_{G_{n}}, (a_{i}^{(n)}, b_{i}^{(n)})_{i=1}^{N} \bigr) 
    \to
    \bigl( F, R, \rho, \dot{\mu}, (a_{i}, b_{i})_{i=1}^{N} \bigr)
  \end{equation}
  in the space $\rbcM_{c}(\markedfinMeasFunct{\RNp} \times \nPointFunct{2N})$,
  where $(\alpha_{n})_{n \geq 1}$ and $(\beta_{n})_{n \geq 1}$ are sequences of positive numbers 
  with $\alpha_{n} \wedge \beta_{n} \to \infty$.
  Then, it holds that 
  \begin{equation}
    \bigl( V_{\bar{G}_{n}}, \alpha_{n}^{-1} R_{\bar{G}_{n}}, \rho_{\bar{G}_{n}}, \beta_{n}^{-1} \dotmuh_{\bar{G}_{n}} \bigr) 
    \to
    \bigl( \tilde{F}, \tilde{R}, \tilde{\rho}, \dot{\mu} \circ (\pi \times \id_{\RNp})^{-1} \bigr)
  \end{equation}
  in the space $\rbcM_{c}(\markedfinMeasFunct{\RNp})$,
  where $(\tilde{F}, \tilde{R})$ is the resistance metric space $(F, R)$ fused over $\{\{a_{i}, b_{i}\}\}_{i=1}^{N}$,
  $\pi: F \to \tilde{F}$ is the canonical map,
  and we set $\tilde{\rho} = \pi(\rho)$.
\end{thm}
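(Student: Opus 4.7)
The plan is to reduce the claim to Theorem~\ref{A. thm: convergence coupling for fused resistance spaces} via a small-distortion comparison afforded by an iterated application of Proposition~\ref{7. prop: effective resistance comparison for fused electrical networks}. First, observe that fusing $\bar G_n$ over the pairs $\{a_i^{(n)}, b_i^{(n)}\}_{i=1}^N$ sends each added unit-conductance edge to a self-loop, which is discarded in the electrical-network formalism; hence it coincides with $\tilde G_n$, the network $G_n$ fused over the same pairs. For sufficiently large $n$ the $2N$ vertices $a_1^{(n)}, b_1^{(n)}, \ldots, a_N^{(n)}, b_N^{(n)}$ are distinct (their limits in $F$ are), so the pairs are disjoint and ordinary fusing applies; write $\pi_n : V_{G_n} \to V_{\tilde G_n}$ and $\pi : F \to \tilde F$ for the canonical projections.

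Applying Theorem~\ref{A. thm: convergence coupling for fused resistance spaces} to the hypothesized convergence $(V_{G_n}, \alpha_n^{-1} R_{G_n}, \rho_n, (a_i^{(n)}, b_i^{(n)})_{i=1}^N) \to (F, R, \rho, (a_i, b_i)_{i=1}^N)$ (forgetting the mark) produces ambient rooted compact metric spaces $(M, d^M, \rho_M)$ and $(\tilde M, d^{\tilde M}, \rho_{\tilde M})$ with Hausdorff convergence $V_{\tilde G_n} \to \tilde F$ in $\tilde M$ and $\pi_n \to \pi$ in $\hatC_c(M, \tilde M)$. Iterating Proposition~\ref{7. prop: effective resistance comparison for fused electrical networks} by fusing the pairs of $\bar G_n$ one at a time, at each stage the conductance between the next pair in the partially-fused network is at least $1$ (the added unit-conductance edge is still present between the relevant super-vertices, and fusing only aggregates conductances), so each stage contributes at most $1$ to the resistance error. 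Summing,
\begin{equation*}
|R_{\bar G_n}(x, y) - R_{\tilde G_n}(\pi_n(x), \pi_n(y))| \leq N, \qquad \forall x, y \in V_{G_n}.
\end{equation*}
After scaling by $\alpha_n^{-1}$, the map $\pi_n$ is a surjective, root-preserving $(N/\alpha_n)$-isometry from $(V_{\bar G_n}, \alpha_n^{-1} R_{\bar G_n}, \rho_n)$ onto $(V_{\tilde G_n}, \alpha_n^{-1} R_{\tilde G_n}, \tilde\rho_n)$, so the two are at Gromov-Hausdorff distance $O(N/\alpha_n) \to 0$.

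For the marked measure, $\dotmuh_{\bar G_n} - \dotmuh_{G_n}$ is a finite signed measure supported on the at most $2N$ vertices belonging to some pair, with total variation bounded by $4N$; after scaling by $\beta_n^{-1}$ it vanishes, so $\beta_n^{-1} \dotmuh_{\bar G_n}$ and $\beta_n^{-1} \dotmuh_{G_n}$ have the same vague limit $\dot\mu$. Using the convergence $\pi_n \to \pi$ in $\hatC_c(M, \tilde M)$ together with the ambient extension provided by Theorem~\ref{A. thm: convergence in hatC}\ref{3. thm item: convergence in hatC, Tietze extension}, a standard weak-convergence argument gives
\begin{equation*}
\beta_n^{-1} \dotmuh_{\bar G_n} \circ (\pi_n \times \id_{\RNp})^{-1} \to \dot\mu \circ (\pi \times \id_{\RNp})^{-1}
\end{equation*}
weakly on $\tilde M \times \RNp$. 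Combined with the GHP-type convergence of $(V_{\tilde G_n}, \alpha_n^{-1} R_{\tilde G_n}, \tilde\rho_n)$ to $(\tilde F, \tilde R, \tilde\rho)$ from Step 1, this yields the claimed convergence in $\rbcM_c(\markedfinMeasFunct{\RNp})$, by building the ambient metric space for $(V_{\bar G_n}, \alpha_n^{-1} R_{\bar G_n})$ from the correspondence $\{(x, \pi_n(x)) : x \in V_{\bar G_n}\}$ as in Definition~\ref{3. dfn: metric on frakMtau} and transferring the measure via $\pi_n$.

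The main difficulty is coordinating the metric and measure convergences through the non-injective maps $\pi_n$: although $\pi_n$ is an almost-isometry with distortion $O(N/\alpha_n) \to 0$, it genuinely identifies pairs of distinct points, so one must construct the common ambient space so that Hausdorff convergence of $V_{\bar G_n}$ to $\tilde F$ and weak convergence of the pushed-forward marked measure hold in tandem, by combining the GHP-embedding from Theorem~\ref{A. thm: convergence coupling for fused resistance spaces} with the correspondence afforded by $\pi_n$.
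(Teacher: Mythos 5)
Your proof is correct and follows essentially the same route as the paper's: fuse $\bar{G}_n$ to recover $\tilde{G}_n$ (discarding the added self-loops), invoke Theorem~\ref{A. thm: convergence coupling for fused resistance spaces} to embed everything into common ambient spaces with $\pi_n \to \pi$, iterate Proposition~\ref{7. prop: effective resistance comparison for fused electrical networks} to bound the resistance distortion by $N/\alpha_n$ using $\mu_{\bar{G}_n}(a_i^{(n)}, b_i^{(n)}) \geq 1$, and observe that the $O(N/\beta_n)$ total-variation perturbation of the marked counting measure is negligible after scaling. The only difference is cosmetic: you make explicit the self-loop observation and the one-pair-at-a-time iteration that the paper leaves implicit in its invocation of Proposition~\ref{7. prop: effective resistance comparison for fused electrical networks} and the scaling bound.
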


\begin{proof}
  Write $\tilde{G}_{n}$ for the electrical network $G_{n}$ fused over $\{\{a_{i}^{(n)}, b_{i}^{(n)}\}\}_{i=1}^{N}$,
  and $\pi_{n}: V_{G_{n}} \to V_{\tilde{G}_{n}}$ for the canonical map.
  Set $\rho_{\tilde{G}_{n}} \coloneqq \pi_{n}(\rho_{G_{n}})$.
  By Theorem \ref{A. thm: convergence coupling for fused resistance spaces},
  we may assume the following:
  \begin{itemize}
    \item $(V_{G_{n}}, \alpha_{n}^{-1} R_{G_{n}}, \rho_{G_{n}})$ and $(F, R, \rho)$ are embedded isometrically 
      into a common rooted compact metric space $(M, d^{M}, \rho_{M})$ in such a way that 
      $V_{G_{n}} \to F$ in the Hausdorff topology, 
      $\rho_{G_{n}} = \rho = \rho_{M}$ as elements of $M$,
      $\beta_{n}^{-1} \dotmuh_{G_{n}} \to \dot{\mu}$ weakly,
      and $a_{i}^{(n)} \to a_{i}$ and $b_{i}^{(n)} \to b_{i}$ in $M$;
    \item $(V_{\tilde{G}_{n}}, \alpha_{n}^{-1} R_{\tilde{G}_{n}}, \rho_{\tilde{G}_{n}})$ and $(\tilde{F}, \tilde{R}, \tilde{\rho})$ 
      are embedded isometrically into a common rooted compact metric space $(\tilde{M}, d^{\tilde{M}}, \rho_{\tilde{M}})$ 
      in such a way that 
      $\rho_{\tilde{G}_{n}} = \tilde{\rho} = \rho_{\tilde{M}}$ as elements of $\tilde{M}$,
      and 
      $V_{\tilde{G}_{n}} \to \tilde{F}$ in the Hausdorff topology;
    \item 
      if we think of $\pi_{n}$ and $\pi$ as elements of $\hatC(M, \tilde{M})$ by the above embeddings,
      then $\pi_{n} \to \pi$ in $\hatC(M, \tilde{M})$.
  \end{itemize}
  Since $\pi_{n} \times \id_{\RNp} \to \pi \times \id_{\RNp}$ in $\hatC(M \times \RNp, \tilde{M} \times \RNp)$,
  we deduce that $\beta_{n}^{-1} \dotmuh_{G_{n}} \circ (\pi_{n} \times \id_{\RNp})^{-1} \to \dot{\mu} \circ (\pi \times \id_{\RNp})^{-1}$ weakly.
  Hence, it holds that 
  \begin{equation}
    \bigl( V_{\tilde{G}_{n}}, \alpha_{n}^{-1} R_{\tilde{G}_{n}}, 
      \rho_{\tilde{G}_{n}}, \beta_{n}^{-1} \dotmuh_{G_{n}} \circ (\pi_{n} \times \id_{\RNp})^{-1} \bigr)
    \to 
    \bigl( \tilde{F}, \tilde{R}, \tilde{\rho}, \dot{\mu} \circ (\pi \times \id_{\RNp})^{-1} \bigr)
  \end{equation}
  in the space $\rbcM_{c}(\markedfinMeasFunct{\RNp})$.
  Hence, it suffices to show that the distance between 
  \begin{equation}
    \bigl( V_{\tilde{G}_{n}}, \alpha_{n}^{-1} R_{\tilde{G}_{n}}, 
      \rho_{\tilde{G}_{n}}, \beta_{n}^{-1} \dotmuh_{G_{n}} \circ (\pi_{n} \times \id_{\RNp})^{-1} \bigr) 
    \quad
    \text{and} 
    \quad 
    \bigl( V_{\bar{G}_{n}}, \alpha_{n}^{-1} R_{\bar{G}_{n}}, \rho_{\bar{G}_{n}}, \beta_{n}^{-1} \dotmuh_{\bar{G}_{n}} \bigr)
  \end{equation}
  in the space $\rbcM_{c}(\markedfinMeasFunct{\RNp})$ converges to $0$.
  This is easily proven by Proposition \ref{7. prop: effective resistance comparison for fused electrical networks}
  and the following:
  \begin{equation}
    \limsup_{n \to \infty} \alpha_{n}^{-1} \sup_{1 \leq i \leq N} \mu_{\bar{G}_{n}}(a_{i}^{(n)}, b_{i}^{(n)})^{-1}
    \leq    
    \limsup_{n \to \infty} \alpha_{n}^{-1}
    = 0.
  \end{equation}
\end{proof}

\section*{Acknowledgement}
I would like to thank my supervisor Dr David Croydon for his support and fruitful discussions. 
This work was supported by 
JSPS KAKENHI Grant Number JP 24KJ1447
and 
the Research Institute for Mathematical Sciences, 
an International Joint Usage/Research Center located in Kyoto University.

\bibliographystyle{amsplain}
\bibliography{aging}
\end{document}